\documentclass[11p t,a4paper]{report}

% Essential packages
 \usepackage{tocloft}
\usepackage[affil-it]{authblk}
\usepackage{amsmath, amssymb, amsthm}
\usepackage{pdfpages}
\usepackage{graphicx,color}
\usepackage[left=1.2in, right=1.2in, top=1in, bottom=1in, includefoot, headheight=13.6pt]{geometry}
\usepackage{hyperref}
\usepackage{CJKutf8}
\usepackage[initials,sorted,sorted-cites]{amsrefs}
\usepackage[toc,style=super4col]{glossaries}
\usepackage[makeroom]{cancel}

 %\renewcommand*{\glsclearpage}{}
%\usepackage{cite}
%\usepackage{apacite}
% Page layout  

\usepackage{hhline,booktabs}
\usepackage{amsmath}
\usepackage{amsthm}
\usepackage{amssymb,amsfonts}
\usepackage{MnSymbol,mathrsfs}
\usepackage[utf8]{inputenc}
\usepackage[makeroom]{cancel}
\usepackage{xcolor}
\usepackage[shortlabels]{enumitem}
\usepackage[T1]{fontenc}

\usepackage{enumitem}

\newlist{myitemize}{itemize}{3}
\setlist[myitemize]{label=\textbullet,leftmargin=0.8in}

\newlist{myitemize2}{itemize}{3}
\setlist[myitemize2]{label=\textbullet,leftmargin=1.2in}

\newtheorem{thm}{Theorem}[section]
\newtheorem{cor}[thm]{Corollary}
\newtheorem{prop}[thm]{Proposition}
\newtheorem{lem}[thm]{Lemma}

%-----------------------
%\newtheorem{thm}{Theorem}[section]
%\newtheorem{cor}[thm]{Corollary}
%\newtheorem{prop}[thm]{Proposition}
%\newtheorem{lem}[thm]{Lemma}
%\newtheorem{conj}[thm]{Conjecture}
%\newtheorem{quest}[thm]{Question}
\theoremstyle{definition}
\newtheorem{defn}[thm]{Definition}

\newtheorem{est}[thm]{Estimate} 
\newtheorem{exmp}[thm]{Example} 
\newtheorem{remk}[thm]{Remark}

%\theoremstyle{plain}

%-----------------------
%
\newcommand{\prnt}[1]{\left( #1 \right)}
\newcommand{\Prnt}[1]{\left[ #1 \right]}
\newcommand{\cprnt}[1]{\left\{ #1 \right\}}

\newcommand{\Avg}[2]{\left< #1,\, #2 \right>}
\newcommand{\Abs}[1]{\left| #1\right|}
\newcommand{\tl}[1]{\tilde{ #1} }

\DeclareMathOperator*\uplim{\overline{lim}}

%Definitions

\newcommand{\Poinc}{Poincar\'{e} }

\newcommand{\wrm}{WRM}
\newcommand{\cwrm}{CWRM}
\newcommand{\mesi}{MESI}

% Derivatives:
%%%%%%%%
\newcommand{\deriv}[2]{\frac{d#1}{d#2}}
\newcommand{\dderiv}[2]{\frac{d^2#1}{d#2^2}}
\newcommand{\pd}[2]{\frac{\partial#1}{\partial#2}}

\newcommand{\conv}[2]{t #1 + (1-t) #2}
\newcommand{\convar}[2]{(1-t) #1 + t #2}
\newcommand{\convars}[2]{(1-s) #1 + s #2}

%Letters
%Uppercase Letters
\newcommand{\R}{\mathbb{R}}
\newcommand{\B}{\mathcal{B}}
\newcommand{\Rcal}{\mathcal{R}}

\newcommand{\Z}{\mathcal{Z}}

\newcommand{\Hcal}{\mathcal{H}}

\newcommand{\Ical}{\mathcal{I}}
\newcommand{\Ncal}{\mathcal{N}}
\newcommand{\N}{\mathbb{N}_0}
\newcommand{\Nbb}{\mathbb{N}}
\newcommand{\NN}{\mathcal{N}}
\renewcommand{\P}{\mathcal{P}}
\newcommand{\F}{\mathcal{F}}
\newcommand{\tlF}{\tilde{\mathcal{F}}}

\newcommand{\D}{\mathcal{D}}

\newcommand{\A}{\mathcal{A}}
\newcommand{\U}{\mathcal{U}}
\newcommand{\Hbb}{\mathbb{H}}
\newcommand{\Sbb}{\mathbb{S}}
%Lower Letters
\newcommand{\si}{\mathfrak{s}} 
\newcommand{\co}{\mathfrak{c}}

\newcommand{\gfrak}{\mathfrak{g}} 
\newcommand{\zfrak}{\mathfrak{z}}

%Standard Symbols
\newcommand{\nab}{\nabla}
\newcommand{\half}{\frac{1}{2}}
\newcommand{\on}{\frac{1}{n}}
\newcommand{\CC}{\alpha_{K,N,D}}

\newcommand{\Tc}{\mathcal{T}}
\newcommand{\lamp}{\lambda^{(1,p)}}
\newcommand{\lampc}{\lambda^{(1,2)}}
\newcommand{\lam}{\lambda}
\newcommand{\lamM}{\lambda^{\star}}

\newcommand{\g}{\gamma}
\newcommand{\pip}{\frac{\pi_p}{2}}
\newcommand{\hfrak}{\mathfrak{h}}

\newcommand{\hChe}{\mathfrak{h}_{Che}}
\newcommand{\lLed}{\mathfrak{l}_{Led}}
\newcommand{\Exr}[1]{\mathfrak{E}xr\prnt{#1}}
\newcommand{\Ext}[1]{\mathfrak{E}x\prnt{#1}}
% Equations
\newcommand{\beq}{\begin{equation}}
\newcommand{\eeq}{\end{equation}}
\newcommand{\bea}{\begin{eqnarray}}
\newcommand{\eea}{\end{eqnarray}}
\newcommand{\eq}[1]{\begin{align*}#1\end{align*}}
\newcommand{\eql}[1]{\begin{align}#1\end{align}}
%
% Other

\newcommand{\slim}{\sum\limits}

\newcommand{\Ry}[2]{Ray[#1](#2)}
\newcommand{\Ryb}[2]{Ray_b[#1](#2)}

\definecolor{aogreen}{rgb}{0.0, 0.5, 0.0}
\definecolor{deeppink}{rgb}{1.0, 0.08, 0.58}
\definecolor{deeppink_a}{rgb}{1.0, 0.56, 0.98}

\newcommand{\pink}[1]{\textcolor{black}{#1}}
\newcommand{\red}[1]{\textcolor{black}{#1}}
\newcommand{\blue}[1]{\textcolor{black}{#1}}
\newcommand{\green}[1]{\textcolor{black}{#1}}
\newcommand{\pinka}[1]{\textcolor{black}{#1}}

%

%\newcommand{\FF}{\mathcal{M}}
%\newcommand{\Fknd}{\FF_{[K,N,D]}}
%\newcommand{\Fkndff}{\FF_{[K,N,D];h}}
%\newcommand{\Fkndf}{\FF_{[K,N,D],h}}
%\newcommand{\Fkn}{\FF_{[K,N]}}
%\newcommand{\Fkndg}[3]{\FF_{[#1,#2,#3]}}
%\newcommand{\Fkng}[2]{\FF_{[#1,#2]}}
%\newcommand{\Pknd}{\mathcal{P}_{[K,N,D]}}
%\newcommand{\Pkn}{\mathcal{P}_{[K,N]}}
%\newcommand{\Pkndf}{\mathcal{P}_{[K,N,D],h}}
%\newcommand{\Pkndff}{\mathcal{P}_{[K,N,D];h}}
%
%Special Sets
\newcommand{\isupp}{\mathfrak{supp}}
\newcommand{\Cinf}{C^{\infty}}
\newcommand{\M}{\mathcal{M}}

\newcommand{\bM}{\mathfrak{ M}}

\newcommand{\Pkinfd}{\mathcal{P}_{[K,\infty,D]}}
\newcommand{\FF}{\mathcal{M}}
\newcommand{\Fknd}{\FF_{[K,N,D]}}

\newcommand{\Fkndf}{\FF_{[K,N,D],h}}

\newcommand{\Fkn}{\FF_{[K,N]}}
\newcommand{\Fkndg}[3]{\FF_{[#1,#2,#3]}}
\newcommand{\FkndgM}[3]{\FF^M_{[#1,#2,#3]}}

\newcommand{\Fkinfd}{\FF_{[K,\infty,D]}}
\newcommand{\Fkninf}{\FF_{[K,N,\infty]}}
\newcommand{\Fkndreg}{\FF^{M,reg}_{[K,N,D]}(\R)}

\newcommand{\Pknd}{\mathcal{P}_{[K,N,D]}}
\newcommand{\Pkn}{\mathcal{P}_{[K,N]}}
\newcommand{\Pkndf}{\mathcal{P}_{[K,N,D],h}}
\newcommand{\Pkndhf}{\mathcal{P}_{[K,N,D],h_f}}
\newcommand{\Pkndhff}{\hat{\mathcal{P}}_{[K,N,D],h_f}}
\newcommand{\Pkndff}{\hat{\mathcal{P}}_{[K,N,D],h}}

\newcommand{\Pknds}{\mathcal{P}^*_{[K,N,D]}}
\newcommand{\Pkndfs}{\mathcal{P}^*_{[K,N,D],h}}
\newcommand{\Pkndffs}{\hat{\mathcal{P}}^*_{[K,N,D],h}}

\newcommand{\Tr}{\mathscr{T}\prnt{\mu_{+},\mu_{-}}}  %\textfrak{T}
\newcommand{\Pir}{\Pi(\mu_{+},\mu_{-})}
\newcommand{\Lip}{Lip_1}

\newcommand{\Zc}{\mathcal{Z}}
%
%
%

%\overset{\shortstack[l]{$\scriptstyle \sqrt{k}\delta_0 D\geq 1$,\\ {\scriptsize \ref{est:6}}}}{\cong}
\newcommand{\ff}{G}
\newcommand{\fs}{\ff_s(t)}
\newcommand{\fss}{\ff_s(t_*)}
\newcommand{\fsvar}{\tilde{\ff}_s(t)}
\newcommand{\fssvar}{\tilde{\ff}_s(t_*)}

\newcommand{\UU}{\mathcal{B}}
\newcommand{\Us}{\UU^{(s)}}
\newcommand{\Ust}{\UU^{(s)}(t)}
\newcommand{\Uss}{\UU^{(s)}(t_*)}
\newcommand{\Ustvar}{\tilde{\UU}^{(s)}(t)}

\newcommand{\Ussvar}{\tilde{\UU}^{(s)}(t_*)}
\newcommand{\At}{\A^{(s)}(t)}
\newcommand{\Ats}{\A^{(s)}(t_*)}

\def\Xint#1{\mathchoice
{\XXint\displaystyle\textstyle{#1}}%
{\XXint\textstyle\scriptstyle{#1}}%
{\XXint\scriptstyle\scriptscriptstyle{#1}}%
{\XXint\scriptscriptstyle\scriptscriptstyle{#1}}%
\!\int}
\def\XXint#1#2#3{{\setbox0=\hbox{$#1{#2#3}{\int}$}
\vcenter{\hbox{$#2#3$}}\kern-.5\wd0}}

\def\dashint{\Xint-}

\usepackage{float}

\parindent 0pt
\parskip 1ex
\pagenumbering{roman}

%\renewcommand{\baselinestretch}{1.33}
%\numberwithin{equation}{section}
%\renewcommand{\bibname}{References}
%\renewcommand{\contentsname}{Contents}
%\bibliographystyle{unsrtnat}

\makeglossaries
 \newglossaryentry{CD(K,N)}%
{%
  name={\ensuremath{CD(K,N)}},
  description={Curvature-Dimension conditions},
  sort={e00a}
}
\newglossaryentry{CDD(K,N,D)}%
{%
  name={\ensuremath{CDD(K,N,D)}},
  description={Curvature-Dimension-Diameter conditions},
  sort={e00b}
}
\newglossaryentry{CD_b(K,N)}%
{%
  name={\ensuremath{CD_b(K,N)}},
  description={\red{$CD(K,N)$ and finite measure conditions}},
  sort={e00a1}
}
\newglossaryentry{CDD_b(K,N,D)}%
{%
  name={\ensuremath{CDD_b(K,N,D)}},
  description={\red{$CDD(K,N,D)$ and finite measure conditions}},
  sort={e00b1}
}

\newglossaryentry{zeta_max}%
{%
  name={\ensuremath{\zfrak_{+}}},
  description={\blue{First zero of a function, which is non-negative at 0, to the right of 0}},
  sort={c23}
}
\newglossaryentry{zeta_min}%
{%
  name={\ensuremath{\zfrak_{-}}},
  description={\blue{First zero of a function, which is non-negative at 0, to the left of 0}},
  sort={c23}
}

\newglossaryentry{f_vee}%
{%
  name={\ensuremath{f_{\vee}}},
  description={\blue{Defined as $\max\{ f, 0\}$}},
  sort={c23a}
}
\newglossaryentry{delta}%
{%
  name={\ensuremath{\delta}},
  description={The expression $\frac{K}{N-1}$},
  sort={e01}
}
\newglossaryentry{l_{delta}}%
{%
  name={\ensuremath{l_{\delta}}},
  description={Automatic diameter upper bound, namely $\frac{\pi}{ \sqrt{\delta}}$ if $\delta > 0$ , $+\infty$ otherwise },
  sort={e01b}
}
\newglossaryentry{Delta_g}%
{%
  name={\ensuremath{\Delta_{\gfrak}}},
  description={\pinka{The Laplace-Beltrami operator}},
  sort={e001aa}
}
\newglossaryentry{Delta_g_mu}%
{%
  name={\ensuremath{\Delta_{\gfrak,\mu}}},
  description={\pinka{The weighted-Laplacian associated with a measure $\mu$}},
  sort={e001aab}
}
\newglossaryentry{Delta_p}%
{%
  name={\ensuremath{\Delta_{p,\xi}}},
  description={\pinka{The weighted p-Laplacian associated with a measure $\xi$ supported in $\R$}},
  sort={e001aac}
}
\newglossaryentry{L_g_mu}%
{%
  name={\ensuremath{L_{\gfrak,\mu}}},
  description={\pink{The diffusion operator $-\Delta_{\gfrak,\mu}$}},
  sort={e001ab}
}

\newglossaryentry{D}%
{%
  name={\ensuremath{D}},
  description={Diameter upper bound},
  sort={e01a}
}
\newglossaryentry{D_{delta}}%
{%
  name={\ensuremath{D_{\delta}}},
  description={Effective diameter upper bound, namely  $\min\{D, l_{\delta}\}$},
  sort={e01c}
}
\newglossaryentry{N}%
{%
  name={\ensuremath{N}},
  description={Generalized/Effective dimension upper bound},
  sort={e01}
}

\newglossaryentry{K}%
{%
  name={\ensuremath{K}},
  description={Generalized Ricci lower bound},
  sort={e01}
}
\newglossaryentry{gmetric}%
{%
  name={\ensuremath{\gfrak}},
  description={Riemannian Metric},
  sort={e000}
}
\newglossaryentry{Ric}%
{%
  name={\ensuremath{Ric_{\gfrak}}},
  description={Ricci tensor },
  sort={e001a}
}
\newglossaryentry{Ric_N}%
{%
  name={\ensuremath{Ric_{\gfrak,\mu,N}}},
  description={Generalized Ricci tensor},
  sort={e001a0}
}

\newglossaryentry{Lambda_Poi}%
{%
  name={\ensuremath{\Lambda_{Poi}}},
  description={The \Poinc constant},
  sort={a010}
}

\newglossaryentry{Lambda_p_Poi}%
{%
  name={\ensuremath{\Lambda^{(p)}_{Poi}}},
  description={The p-\Poinc constant},
  sort={a010}
}
\newglossaryentry{Lambda_LS}%
{%
  name={\ensuremath{\Lambda_{LS}}},
  description={The log-Sobolev constant},
  sort={a010}
}

\newglossaryentry{si_fun}%
{%
  name={\ensuremath{\si_{\delta}(x)}},
  description={Generalized sine function},
  sort={c235}
}
\newglossaryentry{co_fun}%
{%
  name={\ensuremath{\co_{\delta}(x)}},
  description={Generalized cosine  function},
  sort={c235}
}

\newglossaryentry{Jknh}%
{%
  name={\ensuremath{J_{K, N, \hfrak}(x)}},
  description={The maximally supported model density of slope $\hfrak$},
  sort={c23}
}

\newglossaryentry{lambda_p_knd}%
{%
  name={\ensuremath{\lamp_{K,N,D}}},
  description={The sharp lower bound for the p-\Poinc constant},
  sort={b}
}

\newglossaryentry{rho_knd}%
{%
  name={\ensuremath{\rho_{K,N,D}}},
  description={The sharp lower bound for the  log-Sobolev constant},
  sort={b}
}

\newglossaryentry{sigma_knd}%
{%
  name={\ensuremath{\sigma^{(t)}_{K,\NN}(\theta)}},
  description={Distortion coefficients},
  sort={ee1}
}
\newglossaryentry{tau_knd}%
{%
  name={\ensuremath{\tau^{(t)}_{K,N}(\theta)}},
  description={Distortion coefficients},
  sort={ee1}
}

\newglossaryentry{M_knd}%
{%
  name={\ensuremath{M_{K,\NN}^{(t)}[d](a,b)}},
  description={Distorted means},
  sort={ee1a}
}
\newglossaryentry{tlM_knd}%
{%
  name={\ensuremath{\tl{M}_{K,N}^{(t)}[d](a,b)}},
  description={Distorted means},
  sort={ee1b}
}
\newglossaryentry{theta_K}%
{%
  name={\ensuremath{\theta_{K}(A_0,A_1)}},
  description={Distorted distance function},
  sort={ee1}
}

\newglossaryentry{M_cl_knd}%
{%
  name={\ensuremath{\Fknd(I)}},
  description={The `Synthetic' \red{$CDD_b(K,N,D)$} class of measures supported in $I\subset \R$},
  sort={f00}
}
\newglossaryentry{M_cl_M_knd}%
{%
  name={\ensuremath{\Fknd^M(I)}},
  description={The subset of  $\Fknd(I)$ of model measures},
  sort={f030}
}
\newglossaryentry{Phi_Ray}%
{%
  name={\ensuremath{\Phi_{u^*,v^*}(\xi)}},
  description={Generalized Rayleigh quotient},
  sort={g13a}
}
\newglossaryentry{alpha_knd}%
{%
  name={\ensuremath{\CC}},
  description={General optimal lower bound},
  sort={b}
}
\newglossaryentry{F_fun}%
{%
  name={\ensuremath{\F_*(\xi)}},
  description={Main function space},
  sort={L12}
}
\newglossaryentry{F_fun_aux}%
{%
  name={\ensuremath{\F^a_*(\xi)}},
  description={Auxiliary function space},
  sort={L12a}
}
\newglossaryentry{Ext_poi}%
{%
  name={\ensuremath{\Ext{A}}},
  description={The extreme points of a set $A$},
  sort={f0451}
}
\newglossaryentry{Ext_ray}%
{%
  name={\ensuremath{\Exr{K}}},
  description={The extreme rays of a cone $K$},
  sort={f0451}
}
\newglossaryentry{conic_hull}%
{%
  name={\ensuremath{co(x)}},
  description={The conic hull of $x$},
  sort={f0452}
}
\newglossaryentry{probabilities}%
{%
  name={\ensuremath{\P}},
  description={The Borel probability measures on $\R$},
  sort={f0}
}
\newglossaryentry{Pknd}%
{%
  name={\ensuremath{\Pknd(I)}},
  description={The set of measures  $\xi\in \P\cap \Fknd(I)$},
  sort={f041}
}
\newglossaryentry{finite_Radon}%
{%
  name={\ensuremath{\M_b}},
  description={The set of non-negative finite Radon measures on $\R$},
  sort={f0}
}

\newglossaryentry{Mknd_ac}%
{%
  name={\ensuremath{\Fknd^{ac}(I)}},
  description={The subset of $\Fknd(I)$ of a.c. measures},
  sort={f030}
}

\newglossaryentry{Mknd_Ck}%
{%
  name={\ensuremath{\Fknd^{C^{k}}(I)}},
  description={The subset of $\Fknd^{ac}(I)$ with $k$-smooth densities},
  sort={f0300}
}

\newglossaryentry{Mknd_s}%
{%
  name={\ensuremath{\Fknd^{s}(I)}},
  description={The subset of $\Fknd(I)$ of singular measures},
  sort={f030}
}
\newglossaryentry{hfrak}%
{%
  name={\ensuremath{\hfrak}},
  description={The slope of $J_{K,N,\hfrak}$ at 0},
  sort={a2}
}
\newglossaryentry{Interval_h}%
{%
  name={\ensuremath{I_h}},
  description={Convex hull of $supp(h)$ },
  sort={f0410}
}
\newglossaryentry{Pknd_h}%
{%
  name={\ensuremath{\Pkndfs(I_h)}},
  description={The set of measures  $\xi \in \Pknd^*(I_h)$ s.t. $\int hd\xi=0$ \red{( for $*=$ `ac', `s',`M', or `\,\,\,\,')}},
  sort={f041a}
}
\newglossaryentry{hatPknd_h}%
{%
  name={\ensuremath{\Pkndffs(I_h)}},
  description={The set of measures  $\xi\in \Pkndfs(I_h)$ s.t. $\int_{-\infty}^x hd\xi\neq 0 \,\,\forall x\in int(supp(\xi))$ },
  sort={f041a}
}

\newglossaryentry{supp}%
{%
  name={\ensuremath{supp}},
  description={Functions/measures support},
  sort={a000}
}

\newglossaryentry{ssupp}%
{%
  name={\ensuremath{\isupp}},
  description={The modified support},
  sort={a0000}
}

\newglossaryentry{Yknh}%
{%
  name={\ensuremath{Y_{K,N,\hfrak}(x)}},
  description={General model density function},
  sort={c234}
}

\newglossaryentry{D_knd}%
{%
  name={\ensuremath{\D^{reg}_{(K,N,D)}}},
  description={The parametric domain of regularity},
  sort={g02}
}

\newglossaryentry{xi_knd_rc}%
{%
  name={\ensuremath{\xi_{(K,N),r,c}}},
  description={Measure map to the model class},
  sort={g02}
}
\newglossaryentry{xi_knd_01}%
{%
  name={\ensuremath{\xi_{(K,N)}}},
  description={Measure map to the model class},
  sort={g02}
}
\newglossaryentry{M_knd_reg}%
{%
  name={\ensuremath{\Fkndreg}},
  description={The regularity domain},
  sort={g02}
}

\newglossaryentry{Ray_quo}%
{%
  name={\ensuremath{Ray[f](\xi)}},
  description={The Rayleigh quotient},
  sort={g13}
}

\newglossaryentry{lam_star}%
{%
  name={\ensuremath{\lamM_{K,N,D}}},
  description={The sharp lower bound for  the \Poinc constant},
  sort={b}
}

\newglossaryentry{f_p_1}%
{%
  name={\ensuremath{f^{(p-1)}(x)}},
  description={Defined as the assignment $|f(x)|^{p-2}f(x)$},
  sort={c247}
}
\newglossaryentry{pi_p}%
{%
  name={\ensuremath{\pi_p}},
  description={The constant  $\frac{2\pi}{p\sin(\frac{\pi}{p})}$},
  sort={a5}
}
\newglossaryentry{alpha_lambda}%
{%
  name={\ensuremath{\alpha}},
  description={The constant  $\prnt{\frac{\lambda}{p-1}}^{\frac{1}{p}}$},
  sort={a010}
}
\newglossaryentry{B_plus}%
{%
  name={\ensuremath{\B_{+}}},
  description={Bobkov-G\"{o}tze constant},
  sort={a010}
}
\newglossaryentry{B_minus}%
{%
  name={\ensuremath{\B_{-}}},
  description={Bobkov-G\"{o}tze constant},
  sort={a010}
}

\newglossaryentry{Isoper}%
{%
  name={\ensuremath{\Ical}},
  description={The isoperimetric profile},
  sort={a04}
}

\newglossaryentry{isop_flat}%
{%
  name={\ensuremath{\Ical^{\flat}}},
  description={Bobkov's rays isoperimetric profile},
  sort={a04}
}
\newglossaryentry{Cheg_const}%
{%
  name={\ensuremath{\hChe}},
  description={Cheeger's constant},
  sort={a010}
}
\newglossaryentry{Ledo_const}%
{%
  name={\ensuremath{\lLed}},
  description={Ledoux's constant},
  sort={a010}
}
\newglossaryentry{Bus_Led_fcn}%
{%
  name={\ensuremath{E_{Poi}}},
  description={Buser-Ledoux minimum expression},
  sort={m2}
}
\newglossaryentry{Led_fcn}%
{%
  name={\ensuremath{E_{LS}}},
  description={Ledoux's minimum expression},
  sort={m2}
}

 \newglossaryentry{cos_p}%
{%
  name={\ensuremath{\cos_p(x)}},
  description={$p$-cosine function},
  sort={c246}
}
 \newglossaryentry{sin_p}%
{%
  name={\ensuremath{\sin_p(x)}},
  description={$p$-sine function},
  sort={c246}
}

\begin{document}

 \begin{titlepage}
    \begin{center}
        \vspace*{1cm}
 
        {\Huge
        Functional Inequalities on Weighted Riemannian Manifolds Subject to Curvature-Dimension Conditions}
 
        \vspace{0.5cm}
        {\Large
        Research Thesis}
 
        \vspace{1.5cm}
        
         {\LARGE Eran Calderon}
 
 \vspace{1.5cm}
 
 {\Large A thesis submitted in partial fulfillment of the requirements for the degree of Doctor of Philosophy. }
        \vfill

        \vspace{0.8cm}

        \Large
        Submitted to the Senate of the\\
        Technion, Israel Institute of Technology, Haifa\\
        %Date: April, 2019, Tamuz, 5779
        Date: November, 2018, Kislev 5779
 
    \end{center}
    \newpage 
    The research was done under the supervision of Prof. Emanuel Milman, in the department of Mathematics of the Technion. 
    
    \vspace{1.5cm}
    
    {\Large \bf Acknowledgement }

    The research was done in the Technion, Haifa; the services and the facilities of the institute were valuable for this research. 
    
    The generous financial help of the Crown Family Doctoral Fellowship and the Technion Is gratefully acknowledged.
    
    The research leading to these results is part of a project that has received funding from the European Research Council (ERC) under the European Union's Horizon 2020 research and innovation programme (grant agreement No. 637851).
    \vspace{1.1cm}

    I wish to express my sincere gratitude to Prof. Emanuel Milman for being my professional mentor. In particular, for the exposure I was given to an interesting topic, the helpful words of advice during our discussions, the constructive criticism, and the review as well as proofreading of the thesis, which has greatly improved it.  It was a great honor to be his student. 
    
    Above all I express my sincere gratitude and appreciation to my beloved family who has always been to my side, and supported me at anytime and in anyplace.
    
\end{titlepage}
 
\begin{abstract}
In this work we establish new sharp inequalities of \Poinc or log-Sobolev type, 
on geodesically-convex weighted Riemannian manifolds $(M,\gfrak,\mu)$ whose (generalized) Ricci curvature $Ric_{\gfrak,\mu,N}$ with effective dimension parameter $N\in (-\infty,\infty]$ is 
bounded from below by a constant $K\in\R$, and whose diameter is bounded above by $D\in (0,\infty]$. When this condition holds we say that $M$ satisfies the $CDD(K,N,D)$ condition (CDD for Curvature-Dimension-Diameter).  
Specifically, we derive lower bounds for the Poincar\'{e}, p-Poincar\'{e} and log-Sobolev constants, depending on the parameters $K$, $N$ and $D$.

To this end we establish a general method which complements 
the `localization' Theorem which has recently been established by B. Klartag. 
Klartag's theorem is based on optimal transport techniques, leading to a disintegration 
of the manifold measure into marginal measures supported on geodesics of the manifold. 
This leads to a reduction of the problem of proving a n-dimensional inequality into 
an optimization problem over a class of measures with 1-dimensional supports. In this work we firstly develop a general approach which leads to a reduction of this optimization problem into a simpler optimization problem, on a subclass of measures, which will be referred to as `model measures'. This reduction is based on functional analytic techniques, in particular a classification of extreme points of a specific subset of measures, and showing that the solution to the optimization problem (which is over a non-linear function) is attained on this set of extreme points. 
This reduction is not restricted to the optimization problems associated with the three inequalities mentioned; it is general and can be in principle applied to many other functional inequalities not studied here.

By employing ad-hoc analytical techniques we solve the optimization problems 
associated with the Poincar\'{e}, p-Poincar\'{e} and the log-Sobolev inequalities subject to 
specific Curvature-Dimension-Diameter conditions. 
Notably, we prove new sharp \Poinc inequalities for $N\in (-\infty,0]$, and \pink{quite} unexpectedly we find that for $N\in (-1,0]$ the characterization of the sharp lower bound on the \Poinc constant is of completely different nature,  an observation which hints on a new phenomena; in addition we derive new lower bounds on the log-Sobolev constant under $CDD(K,\infty,D)$ conditions where $K\in\R$ and $D\in (0,\infty]$, which up to numeric constants are best possible.  
\end{abstract}
 \tableofcontents 
	
 \renewcommand{\cftfignumwidth}{6em}
 \renewcommand{\cftfigpresnum}{Figure }

 \renewcommand\cfttabaftersnum{:}
 \renewcommand\cfttabpresnum{Table~}
 \cftsetindents{tab}{1.5em}{5em}

\newpage
 \listoffigures
 \listoftables
	
\pagenumbering{arabic}

 \newpage
{\bf Common Abbreviations:}\\
\red{a.c. - Absolutely Continuous}\\
\red{a.e. - Almost Everywhere}\\
BC - Boundary Conditions\\ 
BVP - Boundary Value Problem\\
C.S. - Cauchy-Schwarz\\
\cwrm  - Convex Weighted Riemannian Manifold\\
diam - Diameter\\
IVP - Initial Value Problem\\
LS - Log-Sobolev\\
\mesi - Monotonic Exhausting Sequence of Intervals\\
resp. - Respectively\\
RHS/LHS - Right/Left Hand Side\\
s.t. - Such That\\
supp - Support\\
t.v.s. - Topological Vector Space\\
\red{u.s.c. - Upper semi-continuous}\\
w.l.o.g. - Without Loss of Generality\\
\wrm  - Weighted Riemannian Manifold\\
w.r.t. - With Respect To\\

\subsection*{Remarks about Notation :}
{\footnotesize 
\begin{itemize}
%\item Given a closed set $A$,  the statement $f\in C(\bar{A})\cap C^2(\mathring{A})$ will be written simply as $f\in C^2(A)$. 
\item We denote by $\R_+$ (resp. $\R_+^*$) the set of non-negative (resp. positive) real numbers. We denote by $\Nbb$ the set of natural numbers, and by $\N$ the set $\Nbb\cup\{0\}$. 
\item We denote by $B_x(\epsilon)$ the open ball of radius $\epsilon$ around $x$ (when the metric should be clear from the context). 
\item We denote the interior of a set $A$ either by $\mathring{A}$ or $int(A)$. The complement of $A$ is denoted by $A^c$.
\item Given two sets $A,B\neq \emptyset$, we denote by $d(A,B)$ the distance between them (the metric should be understood from the context). 
%\item If $A$ is closed (or non-open) then the set $C^k(A)$ (of $k$-times continuously differentiable functions) should be interpreted as $C^k(int(A))\cap C(A)$.  
\item Given $\epsilon>0$ we will write $A_{\epsilon}$ for the set $\{x: d(x,A)< \epsilon\}$ (for points $x,y$ on a Riemannian manifold, $d(x,y)$ stands for the Riemannian distance). 
\item $\nu$ in the proper context stands for the unit outer normal on the boundary of a manifold.
\item $UTM$ stands for the unit tangent bundle of a manifold $M$.
\item In the proper context, the symbols $D_s$ and $\partial_s$ stand for derivative $\deriv{}{s}$ and $\pd{}{s}$ w.r.t. the variable $s$. 
\item Given a domain $\Omega$ we denote by $C(\Omega)$ (resp. $C^{k}(\Omega)$/$C^{\infty}(\Omega)$) the continuous (resp. $k$-smooth/$\infty$-smooth) functions $f:\Omega\to \R$. We denote by $C(\Omega; F)$ the functions of $C(\Omega)$ whose image is contained in $F$. 
\item We denote by $C_c^{\infty}(\Omega)$ the functions $f\in C^{\infty}(\Omega)$ s.t $supp(f)$ is a compact subset of $\Omega$. We denote by $C_b(\Omega)$ the set of bounded functions on $\Omega$. We interpret $C^k(\overline{\Omega})$ as the set $C(\overline{\Omega})\cap C^k(\Omega)$. 
\item We denote by $AC(\Omega)$ (resp. $AC_{loc}(\Omega)$) the space of a.c. (resp. locally a.c.) real-valued functions on $\Omega$. 
\item We denote by $\nabla_g f$ the gradient of a function $f$ associated with the Riemannian metric $g$. We denote by $Hess_g[f]$ or $\nab_g^2 f$ the Hessian of $f$ (calculated w.r.t. the metric $g$).
	\item We define $I(x; \epsilon)$ to be the open interval $(x-\epsilon , x+\epsilon )$.
		\item Throughout this work we mostly use Greek letters for measures. We denote the Lebesgue measure by $\mu_{Leb}$, {\bf however} for the Lebesgue measure on $\R$ we exclusively use the letter $m$.
	\item When $\mu$ is a measure and $f$ is a $\mu$ measurable function, we define $\mu(f):=\int fd\mu$.
    \item We denote by $\M(I)$ the set of all non-negative Radon measures on an interval $I$, and define the set of bounded (resp. probability) measures $\xi\in \M(I)$ by $\M_b(I)$ (resp. $\P(I)$). 
	\item The letter $\xi$ will exclusively be used to denote measures on $\R$. 
	\item The support of a \blue{(signed)} measure $\xi$ on $\R$ is defined by
	\blue{
\[ supp(\xi)=\{x\in \R: |\xi|(I(x; r))>0,\qquad \forall r>0\}\,. \]
}
It is always a closed set. Similarly for a weighted Riemannian manifold with a measure $\mu$ we define \[ supp(\mu)=\{x\in M: \mu(B_x(r))>0,\qquad \forall r>0\}\,. \]
 \item We say $\xi$ is supported `on' (resp. `in') $I$ if $I=supp(\xi)$ (resp. $supp(\xi)\subset I$).
\item The support of a continuous function $f:\R\to \R$ is defined as $\overline{\{|f|>0\}}$. The support of a measurable \blue{function $f:\R\to\R\cup\{\pm \infty\}$,} denoted by $supp(f)$, is defined as the support of the measure $\xi$ defined by $d\xi=fdm$.
\item For a measurable function $f:\R\to \R\cup\{+\infty\}$ we define $\isupp(f):=supp(f \cdot m)\cap supp(f^{-1}\cdot m)$. 
\item We write $1_A(x)$ for the indicator function associated with a set $A$.
\item Integration without any specification of domain should be interpreted as integration over $\R$. 
\item Given two points $x_0,x_1\in \R$ (resp. sets $A_0,A_1\subset \R$), for any $t\in [0,1]$ we define $x_t:=\convar{x_0}{x_1}$ (resp.  $A_t=\convar{A_0}{A_1}:=\{(1-t)x_0+tx_1:\,\, x_0\in A_0,\, x_1\in A_1\}$).
\item $\P(\R)$ stands for the class of Borel probability measures on $\R$.
\item Given two functions $f,g:B\to \R_{+}^*$ defined on some domain $B$, we write $f\lesssim g$ (resp. $f\gtrsim g$) if for some constant $c>0$ it holds that that $f\leq c g$ (resp. $f\geq c g$) on $B$. We write $f\eqsim g$ if $f\lesssim g$ and $g\lesssim f$. 
\end{itemize}
}
\pagebreak
%
% \setglossarystyle{glslistdottedwidth}
 
 \printglossary[title={List of Symbols}]
%\nopagebreak

\chapter{Introduction}

\section{Curvature-Dimension-Diameter conditions}

We denote by $(M^n,\gls{gmetric})$ a smooth connected complete $n$-dimensional Riemannian manifold, with (possibly empty) boundary $\partial M$.
We say $\partial M$ is locally convex if the second fundamental form $\Pi_{\partial M}$ is non-negative on $\partial M$. $M$ is called geodesically convex if any two points in $M$ are connected by some length minimizing geodesic of $M$. We remark that geodesic convexity of $int(M)$ implies geodesic convexity of $M$, and hence (according to \cite{Bis}) local-convexity of $\partial M$. We denote by $\mu_{\gfrak}$ and by $\gls{Ric}$ the natural Riemannian measure and the Ricci tensor associated with $\gfrak$ respectively. 
\bigskip

% Throughout we assume $X,Y,Z$ are smooth vector fields on $M$. For simplicity of notation we omit specifying the point $p\in M$ at which tensors are evaluated. 

\bigskip

%%\Avg{R(e_i,X)Y}{e_i}
%
%which in terms of an orthonormal frame $\{e_i\}_{i=1}^n$ $Ric(X,Y)=\sum_{i=1}^n\Avg{R(e_i,X)Y}{e_i}$, and its components are $R_{ij}:=Ric(\partial_i,\partial_j)$. 

%
%
%Recall that the 2-tensor $Ric_{\gfrak}$ is defined by $Ric_{\gfrak}(X,Y)=Trace\prnt{Z\mapsto R(Z,X)Y}$, where $X,Y,Z$ are smooth vector fields and $R$ is the Riemann (1,3) tensor , which in coordinates can be expressed as $$R(\partial_i,\partial_j)\partial_k=\nabla_{\partial_i}\nabla_{\partial_j}\partial_k-\nabla_{\partial_j}\nabla_{\partial_i}\partial_k=\nab_{\partial}R_{kij}^l\partial_l\,.$$
%If we fixed $u\in T_pM$ then 
%$$Ric_{\gfrak}(Z,Z)=\frac{1}{n-1}\sum_{k=1}^{n-1}K_p(u,e_k)\qquad \text{where } \, K_p(Z,e_k):=R(Z,e_k,Z,e_k)\,.$$
% 

 Lower bound on the Ricci tensor, i.e. a bound of the form $Ric_{\gfrak}\geq K$ for some $K\in \R$ (which means $Ric_{\gfrak}\geq K \cdot \gfrak$ as 2 tensors), has many geometric, topological as well as analytic implications. In this work we focus on the analytic aspects, specifically functional inequalities on manifolds, under the additional assumption that the manifold's diameter $diam(M)$ is bounded above by $D\in (0,\infty]$. The setting of Riemannian manifolds has been extensively studied; in this work we consider the more general setting of Weighted Riemannian Manifolds (\wrm{}), which for the purposes of this introduction are defined to be triples $(M,\gfrak,\mu)$, where $(M,\gfrak)$ is a smooth Riemannian manifold and $\mu=e^{-V}\mu_{\gfrak}$ is a finite measure on $M$ where $V\in C^{\infty}(M; \R)$. Many results pertinent to the class of Riemannian manifolds with $Ric_{\gfrak}\geq K$ extend mutatis-mutandis to the class of weighted Riemannian manifolds which satisfy $Ric_{\gfrak,\mu,N}\geq K$, where $Ric_{\gfrak,\mu,N}$ is the generalized Ricci tensor associated with a parameter $N\in (-\infty,0]\cup [n,\infty]$. In general for $N\in (-\infty,\infty]$ we define $\gls{Ric_N}$ as follows:
\[ Ric_{\gfrak,\mu,N}:=Ric_{\gfrak}+Hess_{\gfrak}[V]-\frac{1}{N-n}\nab_{\gfrak} V\otimes \nab_{\gfrak} V \,, \]
where the last term is zero by definition when $N=\infty$, and in addition the only case where $N=n$ is when $V=\text{const}$. This object encapsulates data which is attributed to the metric as well as to the measure. The development of this object as well as the realization of its role is attributed to Bakry and \'{E}mery \cite{BE1,BE2}, who where inspired by previous works of Lichnerowicz \cite{Lic1, Lic2}.
\bigskip

We say that $(M^n,\gfrak,\mu)$ satisfies $CD(K,N)$ (Curvature-Dimension conditions) if $Ric_{\gfrak,\mu,N}\geq K$. If in addition $diam(M)\leq D$ where $D\in (0,\infty]$, we say $(M^n,\gfrak,\mu)$ satisfies $CDD(K,N,D)$ (Curvature-Dimension-Diameter conditions).
\subsubsection{Examples of spaces satisfying $CD(K,N)$}
\begin{enumerate}
	\item $\pmb{CD(0,n)}$: $(M,|\cdot|,dx)$ which corresponds to a bounded open  convex set $M\subset \R^n$ with Lebesgue measure $dx$. 
	\item $\pmb{CD(K,\infty)}$: $(\R^n,|\cdot|,\prnt{\frac{K}{2\pi}}^{\frac{n}{2}}e^{-\half K|x|^2}dx)$ ($K>0$) $\R^n$ equipped with a Gaussian measure.
	\item $\pmb{CD(n-1,n)}$: $(\Sbb^n,\gfrak_{\Sbb^n},\mu_{\gfrak_{\Sbb^n}})$ the canonical $n$-sphere. 
	\item $\pmb{CD(-(n-1),n)}$: $(\Hbb^n,\gfrak_{\Hbb^n},\mu_{\gfrak_{\Hbb^n}})$ the hyperbolic n-space. Notice that $\mu_{\gfrak_{\Hbb^n}}$ is not a finite measure (an assumption which we included in the definition of a weighted Riemannian manifold), yet the $CD(K,N)$ condition is still meaningful.  
	\item $\pmb{CD(0,-\alpha)}$: $(\R^n, |\cdot |,J_{n,\alpha}(x)dx)$ where $J_{n,\alpha}(x):=\frac{1}{(1+|x|^2)^{\frac{n+\alpha}{2}}}$ (with $\alpha>0$). These are `heavy tailed measures' (for which $\frac{1}{\prnt{J_{n,\alpha}(x)}^{\frac{1}{n-N}}}$ is convex). 
	\item $\pmb{CD(n-1-\frac{n+1}{4},-1)}$: $(\Sbb^n,\gfrak_{\Sbb^n},  \frac{1-|x_0|^2}{|y-x_0|^{n+1}}d\mu_{\gfrak_{\Sbb^n}}(y))$ where $|x_0|<1$ and $n\geq 2$. `Harmonic measures'. See \cite{Mil7} for a proof. 
\end{enumerate}

\subsection{Functional inequalities}\label{subsec:intro_func_ineq}

Subject to these conditions we study the following 3 types of functional inequalities: 
{\small 
\begin{itemize}
	\item {\bf \Poinc inequality:} For some $C>0$:
	\eql{\label{defn:PoincareIneq_intro}C \int_Mf(x)^2d\mu(x)\leq \int_M |\nab_{\gfrak} f(x)|^2d\mu(x) \qquad\qquad\qquad \forall f\in\mathcal{F}_{Poi}(M,\mu)\,,}
	where $\F_{Poi}(M,\mu):=\cprnt{0\not\equiv f\in C_c^{\infty}(M)\quad \text{s.t.}\quad \int_Mf(x)d\mu(x)=0}$. 
	\item {\bf $p$-\Poinc inequality:} For $p\in (1,\infty)$, for some $C^{(p)}>0$:
	\eql{ C^{(p)} \int_M|f(x)|^pd\mu(x)\leq \int_M |\nab_{\gfrak} f(x)|^pd\mu(x) \qquad\qquad\qquad \forall f\in \mathcal{F}^{(p)}_{Poi}(M,\mu)\,,} 
	where $\F^{(p)}_{Poi}(M,\mu):=\cprnt{0\not\equiv f\in C_c^{\infty}(M)\quad \text{s.t.}\quad \int_M |f(x)|^{p-2}f(x)d\mu(x)=0}$. 
	\item {\bf Log-Sobolev inequality:} For some $C_{LS}>0$:
	\eql{\label{defn:LSIneq_intro} \frac{C_{LS}}{2}\cdot \int_M 
	f(x)^2\log\prnt{f(x)^2}d\mu(x)\leq \int_M|\nab_{\gfrak} f(x)|^2d\mu(x) \qquad \forall f\in\mathcal{F}_{LS}(M,\mu)\,,}
	where
	$$\F_{LS}(M,\mu):=\cprnt{ f\in\Cinf(\R):\quad f^2=1+g \quad \text{where }\, g\in \F_{Poi}(M,\mu)}\,.$$
	We may equivalently express it as 
		\eql{ \frac{C_{LS}}{2}Ent_{\mu}(f^2)\leq \int_M|\nab_{\gfrak} f(x)|^2d\mu(x) \qquad \forall f\in\tl{\mathcal{F}}_{LS}(M,\mu)\,,}
		where 
		$$Ent_{\mu}(f^2) :=\int_M f(x)^2\log(f(x)^2)d\mu(x)-\int_Mf(x)^2d\mu(x)\log\prnt{\int f(x)^2d\bar{\mu}(x)}\qquad \bar{\mu}:=\frac{1}{\mu(1)}\mu$$ 
		is the Entropy of $f^2$ w.r.t. $\mu$, and 
		$$\tlF_{LS}(M,\mu):=\{ f\in\Cinf(M):\quad  f^2=c+g \quad \text{where}\quad c>0,\,\, g\in \F_{Poi}(M,\mu)\}\,.$$
	 Note that whenever $M$ is compact there is no distinction between $\tlF_{LS}(M,\mu)$  and $\{f\in \Cinf(M):\, \int f^2d\mu>0\}$. 
	
		%\[ \F_{LS}(M,\mu) :=\cprnt{ f\in \tlF_{LS}(M,\mu):\,\, \int f^2(x)d\bar{\mu}(x)=1}\,.\]
	%
	%\text{const}\not\equiv f\in C_c^{\infty}(M)\quad \text{s.t}\quad \frac{1}{\mu(M)}\int_M f(x)^2d\mu(x)=1\}$. 
\end{itemize}
}

We define the \blue{Poincar\'{e}, p-Poincar\'{e} and log-Sobolev} constants :
\eql{ 
 \label{dfn:Constants1} \gls{Lambda_Poi}(M,\gfrak,\mu)&:=\inf_{f\in \F_{Poi}(M,\mu)}\frac{\int_M |\nab_{\gfrak} f(x)|^2d\mu(x)}{\int_Mf(x)^2d\mu(x)}\,,\\
 \label{dfn:Constants2}\gls{Lambda_p_Poi}(M,\gfrak,\mu)&:=\inf_{f\in \F^{(p)}_{Poi}(M,\mu)}\frac{\int_M |\nab_{\gfrak} f(x)|^pd\mu(x)}{\int_M|f(x)|^pd\mu(x)}\,,\\
 \label{dfn:Constants3}\gls{Lambda_LS}(M,\gfrak,\mu)&:=\inf_{f\in \F_{LS}(M,\mu)}\frac{2\int_M |\nab_{\gfrak} f(x)|^2d\mu(x)}{\int f(x)^2\log(f(x)^2)d\mu(x)}\,.}
%\inf_{f\in \tlF_{LS}(M,\mu)}\frac{2\int_M |\nab_{\gfrak} f(x)|^2d\mu(x)}{Ent_{\mu}(f^2)}=
These are the best (meaning largest possible) constants $C, C^{(p)}$ and $C_{LS}$ for which the \blue{above corresponding} inequalities are satisfied. 
\begin{remk} The following abbreviations will be assumed throughout:
\begin{itemize}
	\item We write $\Lambda_{*}$ when we generally refer to any of the above constants.
	\item Whenever $M=\R$ and $\gfrak$ is the Euclidean metric $|\cdot |$, we abbreviate and write $\Lambda_{*}(\mu)$ instead of $\Lambda_{*}(\R,\gfrak,\mu)$. When $(M,\gfrak)$ is a Riemannian manifold and the measure is the Riemannian volume measure $\mu_{\gfrak}$, we abbreviate and write $\Lambda_{*}(M,\gfrak)$ instead of $\Lambda_{*}(M,\gfrak,\mu_{\gfrak})$. 
\end{itemize} 
\end{remk}
%%
%Positivity of either of the above constants implies that the corresponding inequality holds on $M$ with constant which is at least $\Lambda_{*}$. 

For many purposes it is beneficial to know how large are these constants, since these constants can quantify properties such as stability, or the rate of convergence to equilibrium values of certain processes (such as the variance or the entropy under the heat flow; more on that in the next chapter), measure concentration, etc. 
%Our main goal in this work is to derive new, sharp or asymptotic, estimates for these constants which characterizes one of the following type of inequalities: \Poinc inequalities, p-\Poinc inequalities and log-Sobolev inequalities. We study all three inequalities on  weighted Riemannian manifolds $(M^n, g,\mu)$ ($n$ dimensional Riemannian manifolds with a metric $g$ equipped with a possibly non-trivial measure $\mu$), subject to Curvature-Dimension-Diameter conditions. These conditions are characterized by 3 numbers $K,N$ and $D$, and we abbreviate $CDD(K,N,D)$ when we refer to these conditions. The parameter $D$ corresponds to an upper bound on the diameter of $M$, and the couple $K$ and $N$ corresponds to the condition $Ric_{g,N}\geq K\cdot g$ (as 2 tensors), where $Ric_{g,N}$ is a  $N$ dependent generalization of the Ricci tensor $Ric_g$ to Riemannian manifolds with weight;  it coincides with $Ric_g$ when $N=n$. The latter condition, which depends on the parameters $K$ and $N$ will be

In this work our main goal is to derive lower bounds for these three constants, depending on the parameters $K$, $N$ and $D$.  As it turns out, sharp lower bounds for these constants can be identified as the respective constants $\Lambda_{*}(\xi)$ of measures $\xi$ supported on $\R$. In order to clarify this statement we consider the following two estimates for $\Lambda_{Poi}(M,\gfrak)$ (when $\mu=\mu_{\gfrak}$):

\begin{thm}[Li-Yau 1980, Zhong-Yang 1984] \label{thm:ZhongYang} Let $(M^n,\gfrak)$ be a compact connected Riemannian manifold of dimension $n\geq 1$, with (possibly empty) locally convex boundary, and assume $Ric_{\gfrak}\geq 0$ and that $diam(M)\leq D\in (0,\infty)$; then $\Lambda_{Poi}(M,\gfrak)\geq \frac{\pi^2}{D^2}$.
\end{thm}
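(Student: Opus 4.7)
The plan is to follow the classical Bochner gradient-estimate approach on the first non-trivial Neumann eigenfunction. Since $M$ is compact with locally-convex boundary, standard spectral theory guarantees that the infimum defining $\Lambda_{Poi}(M,\gfrak)$ is attained by some $u\in C^{\infty}(M)$ with $\int_M u\,d\mu_{\gfrak}=0$ and $-\Delta_{\gfrak}u=\lambda u$, where $\lambda=\Lambda_{Poi}(M,\gfrak)$; when $\partial M\neq\emptyset$, $u$ additionally satisfies the Neumann condition $\partial_{\nu}u=0$. After rescaling and possibly flipping sign, I normalize so $\max_{M}u=1$ and $\min_{M}u=-k$ with $k\in(0,1]$.

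The core step is a pointwise gradient estimate, which in the symmetric case $k=1$ takes the form
\[ |\nabla_{\gfrak}u|^{2} \;\leq\; \lambda\bigl(1-u^{2}\bigr)\,. \]
I would establish it via the maximum principle applied to an auxiliary function of the form $F=|\nabla_{\gfrak}u|^{2}+\lambda\,\Psi(u)$ for an appropriate smooth $\Psi$ (taking $\Psi(u)=u^{2}$ in the symmetric case). The computation rests on the Bochner identity
\[ \tfrac{1}{2}\Delta_{\gfrak}|\nabla_{\gfrak}u|^{2} \;=\; |Hess_{\gfrak}[u]|^{2}+\langle\nabla_{\gfrak}u,\nabla_{\gfrak}\Delta_{\gfrak}u\rangle+Ric_{\gfrak}(\nabla_{\gfrak}u,\nabla_{\gfrak}u)\,, \]
combined with $Ric_{\gfrak}\geq 0$ and the eigenvalue equation $-\Delta_{\gfrak}u=\lambda u$. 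On the boundary, the Neumann condition together with $\Pi_{\partial M}\geq 0$ ensure (through the Hopf-type identity relating $\partial_{\nu}|\nabla u|^{2}$ to the second fundamental form) that any putative boundary maximum of $F$ can be ruled out or reduced to an interior one.

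Once the gradient estimate is in place, let $p_{\pm}\in M$ realize the max and min of $u$ and let $\gamma:[0,L]\to M$ be a minimizing geodesic joining them, so $L\leq D$. Setting $\alpha(t):=u(\gamma(t))$, one has $|\alpha'(t)|\leq|\nabla_{\gfrak}u|(\gamma(t))\leq\sqrt{\lambda(1-\alpha(t)^{2})}$, and substituting $\beta=\arcsin(\alpha)$ yields
\[ \pi \;=\; \arcsin(1)-\arcsin(-1) \;\leq\; \int_{0}^{L}\frac{|\alpha'(t)|}{\sqrt{1-\alpha(t)^{2}}}\,dt \;\leq\; \sqrt{\lambda}\,L \;\leq\; \sqrt{\lambda}\,D\,, \]
whence $\lambda\geq \pi^{2}/D^{2}$.

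The main obstacle will be the asymmetric case $k<1$: the naive estimate $|\nabla u|^{2}\leq\lambda(1-u^{2})$ breaks down and only produces the weaker Li-Yau bound $\pi^{2}/(2D^{2})$. Recovering the sharp constant $\pi^{2}/D^{2}$ demands Zhong-Yang's refined choice of $\Psi$, which introduces a correction term depending on $k$ whose favorable sign relies on delicate cancellations in the Bochner-based computation; this is the technically involved part of the argument. From the perspective of the present thesis, an alternative route is to invoke Klartag's needle decomposition and reduce the inequality to a family of one-dimensional Poincar\'e inequalities on $CD(0,N)$ needles of diameter at most $D$, which can then be treated within the one-dimensional optimization framework developed in the sequel, bypassing the delicate asymmetric gradient estimate altogether.
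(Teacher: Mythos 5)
Your proposal is correct, but it follows the classical gradient-estimate route, which is genuinely different from the machinery this thesis builds. The paper never proves Theorem \ref{thm:ZhongYang} directly — it is stated as a known background result, and then recovered as the special case $K=0$, $N=n$ of the much more general Theorems \ref{thm:Poin_Intro} and \ref{Cor:results}, whose proof runs entirely through Klartag's needle decomposition, the extreme-point characterization (Theorem \ref{thm:ExtremePoints}), and 1-dimensional Sturm--Liouville comparison. Your Bochner/maximum-principle argument is the original one: compact spectral theory gives a smooth Neumann eigenfunction $u$; the estimate $|\nabla_{\gfrak}u|^{2}\leq\lambda\,\Psi(u)$ is obtained by applying the maximum principle to $F=|\nabla_{\gfrak}u|^{2}+\lambda\Psi(u)$, with local convexity of $\partial M$ plus the Neumann condition killing boundary maxima of $F$; and integration along a minimizing geodesic between the extrema gives $\pi\leq\sqrt{\lambda}\,D$. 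You also correctly flag the genuine difficulty: with the naive $\Psi(u)=u^{2}$ and asymmetric normalization $\min u=-k$, $k<1$, the same integration only yields $\tfrac{\pi}{2}+\arcsin(k)\leq\sqrt{\lambda}D$, i.e.\ Li--Yau's $\pi^{2}/(2D^{2})$, and one needs Zhong--Yang's $k$-dependent correction to $\Psi$ — whose sign analysis is the hard core of the classical argument — to recover the sharp $\pi^{2}/D^{2}$.

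Each route buys something different. The gradient-estimate proof is self-contained and pointwise, but it leans on compactness (to have an eigenfunction to differentiate), on smoothness of the density, and on a very carefully engineered test function for the asymmetric case; extending it beyond $N\geq n$ (to $N\leq 0$, to $D=\infty$, or to rough densities) is nontrivial, which is precisely what Kröger and Bakry--Qian had to grapple with. The thesis's localization route sidesteps the asymmetric-case subtlety entirely: it reduces to 1-dimensional measures $\xi\in\Fknd$, then to the model class $\Fknd^{M}$ via the extreme-point theorem, after which the Sturm--Liouville monotonicity Theorem \ref{thm:SpectralMonotonicity} shows that the worst constant is attained at the symmetric profile $J_{K,N,0}$ — this replaces the delicate Zhong--Yang cancellation by an ODE comparison argument. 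The cost is the heavy abstract apparatus of Chapter \ref{chp:ExPoints}, but the payoff is uniformity across $N\in(-\infty,0]\cup[\max(n,2),\infty]$ and $D\in(0,\infty]$, which the Bochner argument does not readily deliver. You anticipate this alternative in your closing remark, so the gap is one of emphasis rather than correctness: if the goal is to situate the result inside this thesis, the localization argument is the intended proof, with the gradient estimate as historical context.
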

\smallskip 

\begin{thm}[Lichnerowicz 1958] \label{thm:Lichnerowicz} Let $(M^n,\gfrak)$ be a compact connected Riemannian manifold of dimension $n>1$, with (possibly empty) locally convex boundary, and assume $Ric_{\gfrak}\geq K>0$ ; then $\Lambda_{Poi}(M,\gfrak)\geq K\frac{n}{n-1}$.
\end{thm}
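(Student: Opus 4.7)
My plan is to follow the classical Bochner–Lichnerowicz route: spectral theory reduces the infimum defining $\Lambda_{Poi}(M,\gfrak)$ to a Rayleigh quotient attained by a smooth Neumann eigenfunction, and integrating Bochner's identity against this eigenfunction converts the Ricci lower bound, together with the trace inequality for the Hessian, into the desired sharp estimate. Since $M$ is compact, $C_c^{\infty}(M)=C^{\infty}(M)$ in the definition of $\F_{Poi}$, and standard elliptic theory for the Neumann Laplacian supplies a discrete spectrum $0=\lambda_0<\lambda_1\leq \lambda_2\leq\cdots$ whose first non-trivial eigenvalue satisfies $\lambda_1=\Lambda_{Poi}(M,\gfrak)$ and is attained by some non-constant smooth eigenfunction $f$ with $-\Delta_{\gfrak}f=\lambda_1 f$, $\int_M f\,d\mu_{\gfrak}=0$, and the Neumann condition $\nu(f)=0$ on $\partial M$.

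The next step is to apply the Bochner–Lichnerowicz identity
\[ \half \Delta_{\gfrak}|\nab f|^2 = |Hess_{\gfrak}f|^2+\langle\nab f,\nab \Delta_{\gfrak}f\rangle_{\gfrak}+Ric_{\gfrak}(\nab f,\nab f), \]
integrate it against $\mu_{\gfrak}$, and convert the left-hand side by the divergence theorem into $\int_{\partial M}\half\nu(|\nab f|^2)\,d\sigma$. Because $\nu(f)=0$, the vector field $\nab f$ is tangent to $\partial M$, and the standard computation gives $\half\nu(|\nab f|^2)=Hess_{\gfrak}f(\nab f,\nu)=-\Pi_{\partial M}(\nab f,\nab f)$, which is pointwise non-positive by local convexity. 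Combining this with the eigenvalue equation (so that $\langle \nab f,\nab\Delta_{\gfrak}f\rangle_{\gfrak}=-\lambda_1 |\nab f|^2$), the integrated identity becomes
\[ -\int_{\partial M}\Pi_{\partial M}(\nab f,\nab f)\,d\sigma \;=\; \int_M|Hess_{\gfrak}f|^2\,d\mu_{\gfrak} - \lambda_1\int_M|\nab f|^2\,d\mu_{\gfrak} + \int_M Ric_{\gfrak}(\nab f,\nab f)\,d\mu_{\gfrak}. \]

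Finally, I would substitute in the pointwise trace inequality $|Hess_{\gfrak}f|^2\ge (\Delta_{\gfrak}f)^2/n = \lambda_1^2 f^2/n$, the curvature hypothesis $Ric_{\gfrak}(\nab f,\nab f)\ge K|\nab f|^2$, and the identity $\int_M|\nab f|^2\,d\mu_{\gfrak}=\lambda_1\int_M f^2\,d\mu_{\gfrak}$. The right-hand side of the displayed identity is then bounded below by $\lambda_1\bigl(K-\tfrac{n-1}{n}\lambda_1\bigr)\int_M f^2\,d\mu_{\gfrak}$, while the left-hand side is non-positive; since $\lambda_1,\int_M f^2\,d\mu_{\gfrak}>0$ this forces $\lambda_1\ge \tfrac{n}{n-1}K$, as claimed. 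The main obstacle I anticipate is carrying out Step 2 rigorously: one must justify regularity of $f$ up to $\partial M$ for the integration by parts and carefully verify via a Reilly-type boundary computation that the Neumann condition indeed turns $\half\nu(|\nab f|^2)$ into $-\Pi_{\partial M}(\nab f,\nab f)$. This is precisely the point at which local convexity of $\partial M$ is indispensable — without it the boundary term could have the wrong sign and the entire estimate would collapse.
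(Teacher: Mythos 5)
Your proposal is correct and follows essentially the same Bochner--Lichnerowicz route that the paper sketches in its background chapter (there carried out in the more general weighted setting $Ric_{\gfrak,\mu,N}\geq K\gfrak$, of which your argument is the case $N=n$, $\mu=\mu_{\gfrak}$). The only substantive difference is that you explicitly track the boundary term, verifying via the Reilly-type computation that the Neumann condition $\nu(f)=0$ turns $\tfrac12\nu(|\nab_{\gfrak} f|^2)$ into $-\Pi_{\partial M}(\nab_{\gfrak} f,\nab_{\gfrak} f)\leq 0$ under local convexity, whereas the paper invokes the integration-by-parts formula stated for compactly supported functions and leaves this sign analysis implicit.
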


Denote by $m$ the Lebesgue measure on $\R$. The Euler-Lagrange equation which is associated with the calculation of the \Poinc constant $\Lambda_{Poi}(\R, \xi)$ for measures $\xi=p\cdot m$ supported on $[a,b]\subset \R$ gives rise to the Sturm-Liouville problem $(pf')'=-\lambda pf$ with BC $f'(a)=f'(b)=0$. For reasons which will be immediately clarified we recast these two estimates into an equivalent form:
\begin{itemize}
	\item The Li-Yao/Zhong-Yang estimate can be written as $\Lambda_{Poi}(M,\gfrak)\geq \lam_{0, n, D}$, where $\lam_{0, n, D}=\Lambda_{Poi}(1_{[\frac{D}{2},\frac{D}{2}]}\cdot m)$; indeed $\frac{\pi^2}{D^2}$ is the first non-zero eigenvalue of the Sturm-Liouville equation $f''=-\lambda f$ with BC $f'(-\frac{D}{2})=f'(\frac{D}{2})=0$.
	\item The Lichnerowicz estimate can be written as $\Lambda_{Poi}(M,\gfrak)\geq \lam_{K, n, \pi}$, where $\lam_{K, n, \pi}=\Lambda_{Poi}(\cos^{n-1}(\sqrt{\delta}x) 1_{[-\frac{\pi}{2\sqrt{\delta}},\frac{\pi}{2\sqrt{\delta}}]}(x)\cdot m)$ with $\delta:=\frac{K}{n-1}$; indeed $\frac{Kn}{n-1}$ is the the first non-zero eigenvalue of the Sturm-Liouville problem $\prnt{\cos^{n-1}(\sqrt{\delta}x)f'}'=-\lambda \cos^{n-1}(\sqrt{\delta}x) f$ with BC $f'(-\frac{\pi}{2\sqrt{\delta}})=f'(\frac{\pi}{2\sqrt{\delta}})=0$ (as can be verified by substitution of the eigenfunction $f_1(x):=\sin(\sqrt{\delta}x)$). 
	We remark that although no diameter bound was specified in the Lichnerowicz estimate (which amounts to $D=\infty$), a diameter bound essentially exists as implied by the Bonnet-Myers theorem, which states that under $Ric_{\gfrak}\geq K$ with $K>0$ it holds that $diam(M)\leq \frac{\pi}{\sqrt{\delta}}$.
\end{itemize}

\subsection{A unified framework: comparison with 1-dimensional problems}

These results can be incorporated into a unified general framework. To this end we define for $K\in \R$, $N\in (-\infty,0]\cup (1,\infty)$ and $D\in (0,\infty]$: 
\[  \delta:=\frac{K}{N-1} \qquad \text{and} \qquad \gls{D_{delta}}:=\begin{cases} \min\{D, \frac{\pi}{\sqrt{\delta}}\} & \mbox{if } \delta>0 \\ D & \mbox{otherwise} \end{cases} \,,\]
and
\[ \gls{si_fun}:=\begin{cases} \sin(\sqrt{\delta}x)/\sqrt{\delta} & \delta>0 \\
x & \delta = 0 
\\
\sinh(\sqrt{-\delta}x)/\sqrt{-\delta} & \delta<0 \end{cases} \quad \text{and} \quad
\gls{co_fun}:=\si_{\delta}(x)'=\begin{cases} \cos(\sqrt{\delta}x) & \delta>0 \\
1 & \delta = 0 
\\
\cosh(\sqrt{-\delta}x) & \delta>0 \end{cases}\quad\,. \]
Notice that $\co_{\delta}^{n-1}(x)$ coincides with the function 1 when $\delta =0$ and with $\cos^{n-1}(\sqrt{\delta}x)$ when $\delta=\frac{K}{N-1}>0$; these are the densities of the measures on $\R$ which we encountered in the equivalent formulations of the Li-Yau/Zhong-Yang and the Lichnerowicz estimates. The following theorem unifies the previous estimates into a single framework, which incorporates also weighted Riemannian manifolds. It was firstly proved for compact Riemannian manifolds (i.e. $N=n$) by P. Kr\"{o}ger \cite{Kro}, and was later extended to the setting of weighted Riemannian manifolds which satisfy $CDD(K,N,D)$ with $N\in (1,\infty]$ by D. Bakry and Z. Qian \cite{BaQi}.

\begin{thm}[Kr\"{o}ger 1992, Bakry-Qian 2000]
Let $(M^n,\gfrak,\mu)$ be a compact \wrm{} of dimension $n\geq 1$, with (possibly empty) locally-convex boundary, and which verifies $CDD(K,N,D)$ where $K\in \R$, $1<N\in [n,\infty]$ and $D\in (0,\infty)$. Then
\[ \Lambda_{Poi}(M,\gfrak)\geq \lam_{K,N,D}\,,\]
 where 
\begin{itemize}
	\item If $N\in [n,\infty)$ then $\lam_{K,N,D}=\Lambda_{Poi}(\co_{\delta}^{N-1}(x) 1_{[-\frac{D_{\delta}}{2},\frac{D_{\delta}}{2}]}(x)\cdot m)$, 
	\item If $N=\infty$ then $\lam_{K,\infty,D}=\Lambda_{Poi}(e^{-\frac{Kx^2}{2}} 1_{[-\frac{D}{2},\frac{D}{2}]}(x)\cdot m)$\,.
\end{itemize}
Moreover, these estimates are sharp. 
\end{thm}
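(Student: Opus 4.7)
The plan is to reduce the n-dimensional spectral problem to a one-dimensional optimization problem over a class of measures on $\R$, and then, via an extreme-point argument, further reduce it to an explicit computation on the model density $\co_\delta^{N-1}(x) 1_{[-D_\delta/2, D_\delta/2]}(x) \cdot m$ (or the Gaussian-weighted interval in the case $N = \infty$).

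\textbf{Step 1 (Localization).} Given any test function $f \in \F_{Poi}(M,\gfrak,\mu)$ with $\int f d\mu = 0$, I would apply Klartag's localization theorem (mentioned explicitly in the abstract) to obtain a disintegration $\mu = \int_A \mu_\alpha \, d\nu(\alpha)$, where each conditional measure $\mu_\alpha$ is supported on a minimizing geodesic ``needle'' of the manifold, satisfies $\int f d\mu_\alpha = 0$, and has a density (with respect to arclength) that inherits the 1-dimensional $CD(K,N)$ condition. Since the needle length is bounded by $\text{diam}(M) \leq D$, and since the gradient of $f$ along the needle is dominated by $|\nabla_\gfrak f|$, one obtains
\[
\int_M f^2 d\mu = \int_A \int f^2 d\mu_\alpha \, d\nu(\alpha), \qquad \int_M |\nabla_\gfrak f|^2 d\mu \geq \int_A \int (f|_\alpha)'^2 d\mu_\alpha \, d\nu(\alpha),
\]
so the manifold inequality follows from its 1-dimensional counterpart for every measure in the class $\Pknd(I)$ of mean-zero admissible measures on intervals $I \subset \R$ of length $\leq D_\delta$.

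\textbf{Step 2 (Reduction to model measures).} It thus suffices to show that $\Lambda_{Poi}(\xi) \geq \lam_{K,N,D}$ for every $\xi \in \Pknd(I)$. This is where the framework announced in the abstract enters. I would show that the sublevel sets of the Rayleigh quotient functional, intersected with $\Pknd(I)$, form a convex set whose extreme rays are precisely the model densities $\co_\delta^{N-1}((x-c)\,\cdot) 1_{[a,b]}(x)\cdot m$ supported on sub-intervals $[a,b] \subseteq I$ (respectively, translated Gaussian-weighted intervals when $N = \infty$). A Choquet-type integral representation then implies that the infimum of the (non-linear) Rayleigh quotient over $\Pknd(I)$ is attained at an extreme point, reducing the problem to the subclass $\FkndgM{K}{N}{}(I)$ of model measures.

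\textbf{Step 3 (Solution on the model).} Finally, among all model measures, one identifies the maximally supported one (length $D_\delta$, centered) as the minimizer. For $N \in [n,\infty)$ this amounts to solving the Sturm–Liouville problem $(\co_\delta^{N-1}(x) f')' = -\lambda \co_\delta^{N-1}(x) f$ with Neumann boundary conditions on $[-D_\delta/2, D_\delta/2]$ and verifying monotonicity of the first non-trivial eigenvalue in the support length, together with monotonicity in the slope at the boundary. For $N = \infty$ the analogous analysis is performed on $e^{-Kx^2/2}1_{[-D/2,D/2]} \cdot m$. Sharpness is immediate because the extremizing model measure itself lies in $\Pknd(I)$, making the inequality an equality for an explicit eigenfunction (e.g.\ $\si_\delta(x)$ in the Lichnerowicz regime, $x$ in the Li–Yau regime).

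\textbf{Main obstacle.} The hard part is Step 2: the Rayleigh quotient is \emph{not} linear in $\xi$, so extremality alone does not immediately transfer the infimum to extreme points. The real technical content is in choosing the correct affine slice of the cone of admissible measures (with constraints such as total mass, mean-zero condition, and a fixed level of the numerator of the Rayleigh quotient) so that its extreme points can be explicitly classified as model densities, and then in arguing—likely via an exchange/bang-bang argument on the density—that the infimum is actually attained on an extreme ray. Once this structural reduction is in place, Step 3 becomes a concrete ODE computation.
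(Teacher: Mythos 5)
Your proposal reproduces the paper's own localization--extreme-points strategy, which is indeed the approach the paper takes to prove its main new result (Theorem~\ref{thm:Poin_Intro}); but that is \emph{not} the proof of the statement as it appears here. The statement is attributed to Kr\"oger and Bakry--Qian, and the paper explicitly remarks that their original proofs ``rely on gradient estimates of the eigenfunctions'' (in the spirit of Li--Yau's gradient comparison), not on any disintegration of the measure. The paper then introduces the optimal-transport localization route precisely as a ``fundamentally different approach'' whose raison d'\^etre is to reach the regime $N\leq 0$ and $D=\infty$, where no spectral-gap eigenfunction need exist. So your route is not a reconstruction of the cited proof; it is an alternative (and novel) one, and the trade-offs are exactly those the paper advertises: gradient estimates are conceptually direct for compact manifolds and cover all $N>1$, while the localization--extreme-points method dispenses with compactness and extends naturally to $N\leq 0$ and $p$-Poincar\'e/log-Sobolev, at the cost of requiring a nontrivial structural classification of a convex set of measures.

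There is also a concrete gap if your method is offered as a proof of the theorem \emph{as stated}. The extreme-points classification (the paper's Theorem~\ref{thm:ExtremePoints}, inherited from the Fradelizi--Gu\'edon argument) is established only for $N\in(-\infty,0]\cup[2,\infty]$, and the paper explicitly concedes that extending it to $N\in(1,2)$ is an open difficulty of the method. Since the Kr\"oger--Bakry--Qian statement allows $1<N\in[n,\infty]$ (which includes $N\in(1,2)$ when $n=1$), your Step~2 does not go through on that sliver. Your Step~2 is also phrased in terms of ``sublevel sets of the Rayleigh quotient'' and a ``Choquet-type integral representation,'' which is not quite the mechanism the paper uses: because the Rayleigh quotient is nonlinear in $\xi$, the paper instead \emph{fixes} the test function $f$, passes to the affine slice $\{\xi:\int h_f\,d\xi=0\}\cap\Pknd(I_{h_f})$, and invokes a Bauer-type result (Proposition~\ref{thm:infExtreme}) showing that for a \emph{ratio of two linear functionals} the infimum over a $w^*$-compact convex set is attained on its extreme points; swapping the order of infima afterwards yields the reduction. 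Your ``main obstacle'' paragraph correctly identifies the nonlinearity issue but points towards a bang-bang/Choquet resolution rather than the reordering-plus-Bauer resolution that actually appears.
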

We remark that the proofs of these theorems rely on gradient estimates of the eigenfunctions. Our first goal in this work is to fill the gap regarding the range $N\in (-\infty, 0]$. 
Throughout we will assume that $D<\frac{\pi}{\sqrt{\delta}}$ if $K<0$ and $N\leq 0$; this proviso will be clarified later on in this work (note that $D\leq \frac{\pi}{\sqrt{\delta}}$ automatically if $K>0$ and $N\geq n>1$ by a generalization of the Bonnet-Meyers theorem \cite{KTS3}). 
Since in general we permit $D=\infty$, we prefer not to work with eigenfunctions estimates (the spectral-gap might not correspond to an eigenfunction). We employ a fundamentally different approach, using tools which are based on optimal transport techniques and functional analysis. The optimal transport core of this approach has been developed in a recent work of B. Klartag \cite{Kla}, and we complement his work by a development of the functional analysis ingredient (specifically, characterization of extreme points of a certain set of probabilities). The first main result we aim to prove is the following theorem : 

\begin{thm}\label{thm:Poin_Intro}
Let $(M,\gfrak,\mu)$ be a \wrm{} of dimension $n\geq 1$, s.t. $int(M)$ is geodesically-convex, and which verifies $CDD(K,N,D)$ where $K\in \R$, $N\in (-\infty,0] \cup [\max(n,2),\infty]$ and $D\in (0,\infty]$.  Then:
\[ \Lambda_{Poi}(M,\gfrak)\geq \lam_{K,N,D}\,,\]
 where 
\begin{itemize}
	\item If $N\in (-\infty, -1]\cup [max(n,2),\infty)$ then $\lam_{K,N,D}=\Lambda_{Poi}(\co_{\delta}^{N-1}(x) 1_{[-\frac{D_{\delta}}{2},\frac{D_{\delta}}{2}]}(x)\cdot m)$, 
	\item If $N=\infty$ then $\lam_{K,\infty,D}=\Lambda_{Poi}(e^{-\frac{Kx^2}{2}} 1_{[-\frac{D}{2},\frac{D}{2}]}(x)\cdot m)$,
	\item If $N\in [-1,0]$ then $\lam_{K,N,D}=\lim_{\epsilon\to 0+}\Lambda_{Poi}(\si_{\delta}^{N-1}(x) 1_{[\epsilon+0,\epsilon+D]}(x)\cdot m)$\,,
	subject to the proviso that $D<\frac{\pi}{\sqrt{\delta}}$  when $K<0$.
\end{itemize}
Moreover, these estimates are sharp. 
\end{thm}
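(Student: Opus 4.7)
The plan is to follow a three-step reduction, complementing Klartag's localization theorem with a functional-analytic extremal principle, and then solving an explicit one-dimensional Sturm-Liouville problem in each regime of $N$.

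First, I would apply Klartag's localization/needle decomposition \cite{Kla} to the measure $\mu$. For any candidate $f\in\mathcal{F}_{Poi}(M,\mu)$, the condition $\int f\,d\mu=0$ together with $CDD(K,N,D)$ produces a disintegration of $\mu$ into marginal probability measures $\xi_\alpha$, each supported on a minimizing geodesic (a ``needle'' parametrized by arclength) of length at most $D$, such that each $\xi_\alpha$ is a synthetic one-dimensional $CDD(K,N,D_\alpha)$ measure and $\int f\,d\xi_\alpha=0$ along $\mu$-a.e. needle. Combining the one-dimensional Poincar\'e bound $\int f^2 d\xi_\alpha\le \Lambda_{Poi}(\xi_\alpha)^{-1}\int|f'|^2 d\xi_\alpha$ on each needle with integration over the transversal parameter reduces matters to showing $\inf_{\xi\in\Pknd(\R)}\Lambda_{Poi}(\xi)\ge \lam_{K,N,D}$, where the infimum ranges over the synthetic one-dimensional class.

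Second, I would carry out the extreme-point reduction announced in the abstract. The convex set $\Pkndffs(I_h)$ of synthetic $CDD(K,N,D)$ probability measures $\xi$ on $\R$ satisfying the linear constraint $\int h\,d\xi=0$ (where $h$ encodes the mean-zero condition adapted to the Rayleigh quotient) admits, by a Choquet-type argument, a classification of its extreme points as precisely the model densities $J_{K,N,\hfrak}$ of the form $\co_\delta^{N-1}$, $\si_\delta^{N-1}$, or $e^{-Kx^2/2}$ on a suitable interval. Since the Rayleigh quotient $Ray[f](\xi)=\int|f'|^2 d\xi/\int f^2 d\xi$ is linear-over-linear in $\xi$, and since $\Lambda_{Poi}(\xi)=\inf_f Ray[f](\xi)$, a min-max / extreme-ray argument shows that the infimum of $\Lambda_{Poi}$ over $\Pknd(\R)$ with the mean-zero constraint is attained on the family of model measures, collapsing the variational problem to a low-dimensional optimization over explicit one-dimensional densities.

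Third, for each range of $N$ I would compute $\Lambda_{Poi}$ of the model measures via the Sturm-Liouville problem $(\rho f')'=-\lambda\rho f$ with Neumann BC at the endpoints of $supp(\rho)$. In the ranges $N\in [\max(n,2),\infty)$ and $N\in(-\infty,-1]$, a direct analysis shows the symmetric interval $[-D_\delta/2,D_\delta/2]$ with density $\co_\delta^{N-1}$ minimizes among models (by monotonicity of the first eigenvalue in the parameters of the model family), and analogously for Gaussian models when $N=\infty$. In the exotic regime $N\in[-1,0]$ I expect the optimizer to degenerate onto a one-sided needle of density $\si_\delta^{N-1}$ starting from the singularity at $0$, yielding the limit $\lim_{\epsilon\to 0^+}\Lambda_{Poi}(\si_\delta^{N-1}1_{[\epsilon,\epsilon+D]}\cdot m)$. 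Sharpness in each case is obtained by constructing a sequence of thin tubular neighborhoods of geodesics in a warped-product model space whose \Poinc constant approaches $\lam_{K,N,D}$.

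The hard part will be the range $N\in[-1,0]$, where the familiar symmetric model ceases to be extremal: since $N-1<0$, the density $\si_\delta^{N-1}$ blows up at $0$ and the support must avoid this singularity, giving a genuinely asymmetric (one-sided) extremal shape whose existence is hinted at only in the singular limit $\epsilon\to 0^+$. Establishing that the extreme rays of $\Fknd(\R)$ really collapse to this degenerate one-sided family in this regime — as opposed to some intermediate two-sided configuration — requires a delicate reworking of the extreme-point classification, since the concavity/convexity properties underlying the cone structure change sign as $N$ crosses $0$. A secondary technical hurdle is the non-compact case $D=\infty$ combined with $N\le 0$, where the proviso $D<\pi/\sqrt{\delta}$ for $K<0$ must be justified intrinsically (the Bonnet-Myers substitute breaks down), and where the spectral gap need not correspond to an eigenfunction, which is precisely why the approach avoids the gradient-estimate technique of Kr\"{o}ger and Bakry-Qian in favor of the localization-plus-extreme-points framework.
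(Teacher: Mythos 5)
Your three-step architecture --- Klartag localization, extreme-point reduction via a Bauer/Krein--Milman argument after swapping the order of $\inf_f$ and $\inf_\xi$, then explicit optimization over the one-parameter model family --- matches the paper's strategy closely, including the observation that the constraint functional $h\mapsto\int h\,d\xi$ must be adapted to $f$ and that the map $\xi\mapsto\Phi_{u_f^*,v_f^*}(\xi)$ is a ratio of linear functionals. You also correctly identify that avoiding the gradient-estimate technique is forced by $D=\infty$.

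However, your diagnosis of where the anomaly for $N\in[-1,0]$ arises is off. You write that establishing the one-sided degeneracy requires ``a delicate reworking of the extreme-point classification, since the concavity/convexity properties underlying the cone structure change sign as $N$ crosses $0$.'' This is not what happens. The extreme-point classification (Theorem \ref{thm:ExtremePoints}) is \emph{uniform} across the entire range $N\in(-\infty,0]\cup[2,\infty]$: in all cases the extreme points of $\hat{\mathcal P}_{[K,N,D],h}(I_h)$ are the model measures $\Pkndff^M(I_h)$ together with the Dirac masses, and the asymmetric one-sided density $\si_\delta^{N-1}1_{[\epsilon,\epsilon+D]}$ is already one of these model measures (it is simply a restriction-plus-translation of $J_{K,N,\hfrak}$ for large $|\hfrak|$). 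What actually changes across $N$ is the answer to the \emph{subsequent} question of which model density minimizes $\Lambda_{Poi}$, and this is governed by the monotonicity of $\hfrak\mapsto\lam(\hfrak,d)$ established in Theorem \ref{thm:SpectralMonotonicity}. The sign reversal there comes from the factor $N^2-1$ appearing in the Liouville-transformed potential \eqref{exp:H0_2}, so the threshold is at $|N|=1$ (i.e.\ $N=-1$), not at $N=0$; the mechanism is a Sturm--Picone comparison on the transformed Dirichlet problem, not any change in the convex-geometric structure of the measure cone. The threshold $N=-1$ is also why the theorem reports two competing formulas that agree there. Your outline should be revised to reflect that the hard analytic work in the anomalous regime is in Step 3 (the eigenvalue comparison on the model family), while Step 2 goes through unchanged; otherwise you will look in the wrong place and will not find an obstruction, since there is none.

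Two smaller remarks. First, you will need an upper-semicontinuity statement for $\xi\mapsto\Lambda_{Poi}(\xi)$ in the weak topology (the paper's Theorem \ref{thm:PoiUSC}) to pass from the regularity domain $\Fkndreg$, where the eigenvalue is that of a regular Neumann Sturm--Liouville problem, to the full model class $\Fknd^M(\R)$, which contains degenerate-endpoint and infinite-diameter members; this is not automatic and is needed precisely to make sense of the $\epsilon\to 0^+$ limit in the anomalous range. Second, for sharpness the paper simply observes that the one-dimensional interval with the extremal $\xi\in\Fknd(\R)$ is itself a weighted Riemannian manifold satisfying the hypotheses, so no warped-product construction is needed (though one is available for higher topological dimension via \cite{Mil2}).
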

While for $N\in (-\infty, -1)$ our estimates are in the spirit of Kr\"{o}ger-Bakry-Qian (KBQ), for $N=-1$, the threshold point,  we have seemingly two different characterizations of the same sharp estimate, and when $N\in (-1,0]$ the KBQ type of estimates are {\bf no-longer valid}, what indicates on a new-phenomena, and we refer to the latter domain  as anomalous. Some hints for irregularity in the domain $N\in (-1,0]$ actually could be found even prior to this result. The Lichnerowicz estimate, which states that when a compact Riemannian manifold $(M,\gfrak)$ of dimension $n\geq 2$ satisfies  $Ric_{\gfrak}\geq K$ then $\Lambda_{Poi}(M,\gfrak)\geq \frac{Kn}{n-1}$, was generalized to the setting of weighted Riemannian manifolds; for a proof when $N\in (1,\infty]$ the reader is referred to \cite[p.215]{BGL}; extension to $N\in (-\infty, 0)$ can be found in \cite{Oht1} and \cite{Mil5}; specifically it is proved that if $(M,\gfrak,\mu)$ is compact with either empty or locally-convex boundary which satisfies $Ric_{\gfrak, \mu,N}\geq K$ with $N\in (-\infty,0)\cup [n,\infty]$ then $\Lambda_{Poi}(M,\gfrak,\mu)\geq \frac{KN}{N-1}$. However in \cite{Mil5} sharpness could be proved only for $N\in (-\infty,-1]\cup [n,\infty]$. It was shown in \cite{Mil3} that the Lichnerowicz estimate is no longer sharp as $N<0$ tends to 0.

\bigskip

The main tools which we employ for the proof of this theorem are not restricted to the problem of estimating $\Lambda_{Poi}(M,\gfrak,\mu)$, but will apply also to the estimates of $\Lambda^{(p)}_{Poi}(M,\gfrak,\mu)$ and $\Lambda_{LS}(M,\gfrak,\mu)$, and can easily extend to many other types of functional inequalities of similar flavor.
\smallskip

More general than the \Poinc inequality is the $p$-\Poinc inequality, with $p\in(1,\infty)$ (where $p=2$ corresponds to the classical \Poinc inequality). 
Our derivation of the sharp lower bounds relies on Sturm-Liouville properties of the eigenvalue problem associated with the $p$-\Poinc constant in dimension 1, similar to the case $p=2$. In Chapter \ref{chp:pPoinc} we list relevant properties of the `$p$--Sturm-Liouville' theory'. These properties are crucial for our analysis; most of them were verified for our specific `$p$-Sturm--Liouville problem', however we have not found a reference for one of the properties detailed in Subsection \ref{Properties:pLap_egns}. Therefore the following result is stated in a conditioned manner.

%At present there are many results supporting that  for various boundary conditions and weights, however we were unable to find a reference which supports our specific problem, hence we acknowledge that these properties need to be verified. Conditioned that the $p$ Sturm-Liouville theory associated with the 1 dimensional p-\Poinc eigenvalue problem (which is stated in Chapter \ref{chp:pPoinc}) is valid, the following result is proved:
\begin{thm}\label{thm:pPoin_Intro}
Let $(M,\gfrak,\mu)$ be a \wrm{} of dimension $n\geq 1$, s.t. $int(M)$ is geodesically-convex, and which verifies $CDD(K,N,D)$ where $K\in \R$ and $N\in [\max(n,2),\infty]$ and $D\in (0,\infty]$. Then under the technical assumption detailed in Chapter 
\ref{chp:pPoinc}, for all $p\in (1,\infty)$:
\[ \Lambda^{(p)}_{Poi}(M,\gfrak)\geq \lamp_{K,N,D}\,,\]
 where 
\begin{itemize}
	\item If $N\in [n,\infty)$ then $\lamp_{K,N,D}=\Lambda^{(p)}_{Poi}( \co_{\delta}^{N-1}(x) 1_{[-\frac{D_{\delta}}{2},\frac{D_{\delta}}{2}]}(x)\cdot m)$,
	\item If $N=\infty$ then $\lamp_{K,\infty,D}=\Lambda^{(p)}_{Poi}( e^{-\frac{Kx^2}{2}} 1_{[-\frac{D}{2},\frac{D}{2}]}\cdot m)$\,.
\end{itemize}
Moreover, these estimates are sharp. 
\end{thm}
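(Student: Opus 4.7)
The plan is to follow the same three-stage argument outlined for the classical Poincar\'{e} Theorem~\ref{thm:Poin_Intro}, replacing the linear Sturm--Liouville analysis by the nonlinear $p$-Sturm--Liouville framework of Chapter~\ref{chp:pPoinc}: (i) localize the $n$-dimensional Rayleigh quotient to a family of 1-dimensional quotients on geodesic needles via Klartag's localization; (ii) reduce the resulting optimization over the synthetic class $\Fknd(I)$ to an optimization over its model (extreme) subclass using the functional-analytic classification of extreme points developed in the thesis; and (iii) solve the remaining finite-parameter optimization explicitly with $p$-Sturm--Liouville tools.

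\textbf{Stage (i): localization.} Given $f\in \F^{(p)}_{Poi}(M,\mu)$, the natural guide function for Klartag's localization is $u=|f|^{p-2}f$, since the vanishing of $\int_M u\,d\mu$ is precisely the defining constraint of $\F^{(p)}_{Poi}(M,\mu)$. Klartag's theorem then disintegrates $\mu$ into 1-dimensional conditional measures $\{\xi_y\}$ supported on geodesic segments, each (after identification with an interval of $\R$) belonging to $\Fknd(I)$ and satisfying $\int |f_y|^{p-2}f_y\,d\xi_y=0$, where $f_y$ denotes the pullback of $f$ to the $y$-th needle. Since $|\nab_{\gfrak} f|\geq |f_y'|$ pointwise on each needle, Fubini-type averaging over $y$ yields
\[ \frac{\int_M |\nab_{\gfrak} f|^p d\mu}{\int_M |f|^p d\mu}\;\geq\; \inf_{y}\,\frac{\int |f_y'|^p d\xi_y}{\int |f_y|^p d\xi_y}\;\geq\; \inf_{\xi\in \Pkndf(I_h)} \Lambda^{(p)}_{Poi}(\xi)\,, \]
reducing the $n$-dimensional problem to a uniform 1-dimensional lower bound.

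\textbf{Stages (ii)--(iii): extreme points and 1-dimensional analysis.} For the second stage I would invoke the extreme-point classification from the body of the thesis: the convex set $\Pkndf(I_h)$ is weakly-$*$ closed, its extreme points coincide (modulo the scaling inherent in the cone $\Fknd(I)$) with the model measures $Y_{K,N,\hfrak}\cdot m$, and the Rayleigh-type functional $\xi\mapsto \Lambda^{(p)}_{Poi}(\xi)$ attains its infimum on extreme points by a concavity / Bauer-type maximum principle argument. In stage (iii), the problem collapses to analyzing the first non-trivial eigenvalue of the 1-dimensional weighted $p$-Laplacian
\[ \bigl(Y_{K,N,\hfrak}\,|f'|^{p-2}f'\bigr)' \;=\; -\lambda\, Y_{K,N,\hfrak}\,|f|^{p-2}f \]
with Neumann boundary conditions at the endpoints of the support. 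Using the properties of $p$-sine and $p$-cosine functions and the monotonicity-in-parameters of the first $p$-eigenvalue supplied by the technical assumption of Chapter~\ref{chp:pPoinc}, I would identify the extremal model as the symmetric $\co_{\delta}^{N-1}$ interval measure when $N\in [n,\infty)$ and as the truncated Gaussian when $N=\infty$, thereby matching $\lamp_{K,N,D}$. Sharpness is then immediate by testing the inequality on the 1-dimensional extremizer itself (viewed as a degenerate weighted manifold).

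\textbf{Main obstacle.} The delicate point is stage (iii): because the $p$-Laplace eigenvalue problem is genuinely nonlinear for $p\neq 2$, the standard linear spectral apparatus is unavailable, and classical oscillation/comparison lemmas must be replaced by their $p$-nonlinear counterparts. In particular, the monotonicity of the first $p$-eigenvalue of the model family in the parameters $(\hfrak,r,c)\in\D^{reg}_{(K,N,D)}$ is exactly the technical statement in Chapter~\ref{chp:pPoinc} that I would need to assume; removing this assumption — via a robust nonlinear comparison theorem for $p$-Sturm--Liouville problems on weighted intervals — is the principal remaining obstacle. Once such a comparison is in place, the identification of the extremal model and the derivation of the sharp constant proceed in parallel to the Bakry--Qian argument, with $\sin_p,\cos_p$ playing the role of the trigonometric eigenfunctions.
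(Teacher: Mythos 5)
Your three-stage plan — localize with the guide $|f|^{p-2}f$, reduce from the synthetic class to the model class via the extreme-point classification, then solve the residual one-parameter optimization with $p$-Sturm--Liouville tools — is exactly the architecture the thesis uses: Stage (i) is Theorem~\ref{thm:PoinInequality}, Stage (ii) is Corollary~\ref{cor:CC} built on Theorem~\ref{thm:ExtremePoints}, and Stage (iii) is Chapter~\ref{chp:pPoinc}. However, there is a substantive mischaracterization in your Stage (iii) that needs fixing.

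You write that the monotonicity of the first $p$-eigenvalue of the model family in $\hfrak$ ``is exactly the technical statement in Chapter~\ref{chp:pPoinc} that I would need to assume.'' That is not what the technical assumption is. The assumption of Subsection~\ref{subsec:p_SL_Theory} is a purely analytic regularity statement: that the $p$-Laplacian eigenvalues of the Neumann BVP depend continuously differentiably (in the Fr\'{e}chet sense) on the weight, with continuous dependence of eigenfunctions, in analogy with the classical Kong--Zettl theory for $p=2$. The monotonicity in $\hfrak$ is \emph{proved}, not assumed: Theorem~\ref{thm:SpectralMonotonicity_p} derives it by passing to the Pr\"{u}ffer polar phase $\phi$, inverting to get $\varphi_{\alpha,s}=\phi^{-1}$, differentiating the phase ODE in the translation parameter $s$, and showing that $G_1 - G_2$ has a definite sign on $[0,\pi_p/2]$ (a nonlinear Picone/diameter-comparison argument). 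The regularity assumption is only needed to justify the inverse-function-theorem step that converts ``$\partial_s(\text{right endpoint}) \ge 0$'' into an honest inequality between eigenvalues (the analogue of Theorem~\ref{cont:SL_Eval}). So your proposal, as stated, assumes the heart of the 1-dimensional analysis rather than carrying it out; to match the paper you would need to actually run the Pr\"{u}ffer/polar-phase comparison and use the regularity assumption only for the implicit-function bookkeeping.

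Two minor points. First, $\Pkndf(I_h)$ is not a convex set; the Bauer-type minimum-on-extreme-points argument (Propositions~\ref{thm:infExtreme} and~\ref{prop:infExtreme}) is run on the $w^*$-compact convex hull $\overline{conv}(\Pkndf(I_h))$ and then transferred back via Milman's theorem, so the phrase ``the convex set $\Pkndf(I_h)$'' should be corrected. Second, the regularity domain $\D^{reg}_{(K,N,D)}$ is parametrized by $(\hfrak,d)$, not $(\hfrak,r,c)$; the scaling and translation parameters $r,c$ are quotiented out precisely because $\Lambda^{(p)}_{Poi}$ is invariant under them, and the diameter monotonicity in $d$ is obtained independently (Lemma~\ref{D_monotonicity}) rather than from the Pr\"{u}ffer analysis.
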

This type of estimate is reminiscent to the lower bounds we proved for $\Lambda_{Poi}(M,\gfrak,\mu)$, yet by the time constraints of this work we have not managed to get into conclusions pertinent to the domain $N\in (-\infty, 0]$. The case $N=n$ and $K\leq 0$ has already been proved by A. Naber and D. Valtorta \cite{Val, NaVa}. 
\smallskip

For the log-Sobolev constant we derive the following  estimates:
\begin{thm}\label{thm:LS_Intro}
Let $(M,\gfrak,\mu)$ be a \wrm{} of dimension $n\geq 1$, s.t. $int(M)$ is geodesically-convex, and which verifies $CDD(K,\infty,D)$ where $D\in (0,\infty]$. Then
\[ \Lambda_{LS}(M,\gfrak,\mu)\gtrsim \Lambda_{LS}(e^{-\frac{Kx^2}{2}}1_{[-\frac{D}{2},\frac{D}{2}]}(x)\cdot m)\eqsim \begin{cases} \max\{ K, \frac{1}{D^2}\} & K>0\\  \frac{1}{D^2}& K=0 \\ \max\cprnt{ \sqrt{|K|}, \frac{1}{D}}\prnt{\frac{|K|D}{e^{\frac{|K|D^2}{8}-1}}} & K<0 \,. \end{cases}\]
Moreover, these estimates are best possible, up to universal numeric constants. 

\end{thm}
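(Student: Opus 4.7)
The plan is to follow the same three-layer reduction that the paper sets up for the other inequalities: first invoke Klartag's needle decomposition to reduce the $n$-dimensional log-Sobolev estimate on $(M,\gfrak,\mu)$ to a lower bound on $\Lambda_{LS}(\xi)$ uniformly over all one-dimensional measures $\xi\in \Fkndg{K}{\infty}{D}(I)$; second, apply the functional-analytic machinery (classification of extreme points of the relevant subset of $\Pkndg{K}{\infty}{D}$ subject to the log-Sobolev mass-balancing constraint $\int g\,d\xi=0$ with $f^2=c+g$) to further reduce the infimum to the class of \emph{model measures} $\Fkndg{K}{\infty}{D}^M$, which in the $N=\infty$ regime are (up to affine change of variable) of the form $\alpha e^{-Kx^2/2+hx}\cdot 1_I\cdot m$ for subintervals $I$ of length at most $D$; third, estimate $\Lambda_{LS}$ for these explicit one-dimensional densities directly, which is an analytic problem.

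For the analytic step I would use the Bobkov--G\"otze characterization of the one-dimensional log-Sobolev constant: for a probability measure $\xi=\rho\cdot m$ with median $m_*$ supported on $[a,b]$, one has $\Lambda_{LS}(\xi)\eqsim 1/\max\{\Bc_+,\Bc_-\}$, where
\begin{equation*}
\Bc_+ \;=\; \sup_{x\in(m_*,b)}\xi\bigl([x,b]\bigr)\log\!\tfrac{1}{\xi([x,b])}\int_{m_*}^{x}\tfrac{dt}{\rho(t)}\,,
\end{equation*}
and similarly for $\Bc_-$. A scaling and translation argument then reduces the symmetric worst case to the centred truncated Gaussian $\rho_{K,D}(x)=\alpha e^{-Kx^2/2}1_{[-D/2,D/2]}(x)$, for which the three regimes give: (i) if $K>0$ then $\rho_{K,D}$ is log-concave, the Bakry--\'Emery bound already yields $\Lambda_{LS}\gtrsim K$, and a standard Rothaus/Miclo comparison with the uniform measure on $[-D/2,D/2]$ yields $\Lambda_{LS}\gtrsim 1/D^2$, and these combine to $\max\{K,1/D^2\}$; (ii) if $K=0$ the measure is uniform on an interval and one gets the classical $\Lambda_{LS}\eqsim 1/D^2$; (iii) if $K<0$ the density is log-convex with mass concentrating near $\pm D/2$, and direct computation of $\Bc_\pm$ for $\rho_{K,D}$ produces the announced $\max\{\sqrt{|K|},1/D\}\cdot|K|D\cdot e^{1-|K|D^2/8}$ factor.

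The main obstacle will be the $K<0$ regime. Here the density is \emph{not} log-concave, so Bakry--\'Emery gives nothing, and the Bobkov--G\"otze weights display an exponentially delicate balance: the tail mass $\xi([x,D/2])$ near the endpoint is of order $e^{|K|D^2/8}(D/2-x)$, while $\log(1/\xi([x,D/2]))$ is of order $\log(1/(D/2-x))-|K|D^2/8$, and $\int_0^x dt/\rho(t)$ saturates at roughly $\sqrt{\pi/(2|K|)}\mathrm{erf}(\sqrt{|K|/2}\cdot D/2)$. Maximising the product over $x$ produces the characteristic trade-off $|K|D\cdot e^{-|K|D^2/8}$, and a careful case split on whether $|K|D^2$ is small or large recovers both the $\sqrt{|K|}$ and the $1/D$ prefactors. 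The proviso that needs attention throughout is the non-compact case $D=\infty$ under $K\leq 0$, where the infimum must be interpreted as a limit over expanding intervals, and the log-Sobolev constant may degenerate to zero.

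Finally, sharpness of the lower bound (the $\eqsim$ in the second step) is established by exhibiting for each parameter regime a test function on the one-dimensional model measure $\rho_{K,D}\cdot m$ achieving the displayed order: for $K\geq 0$ one takes approximate first Hermite-type functions (tuned between the low-temperature $\sim x$ and high-temperature uniform regime), while for $K<0$ one uses smooth approximations of indicators localised near one endpoint, for which the Bobkov--G\"otze expression is attained up to universal constants. Lifting to $(M,\gfrak,\mu)$ via explicit product or warped-product examples on $[-D/2,D/2]$ with weight $e^{-Kx^2/2}$ then shows the bound is best possible up to numeric constants, completing the proof.
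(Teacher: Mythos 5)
Your proposal follows essentially the same three-layer strategy as the paper: Klartag's needle decomposition to reduce to the one-dimensional synthetic class, the extreme-points/Krein--Milman reduction (via Corollary~\ref{cor:CC}) to the model densities $J_{K,\infty,\hfrak}1_I$, and then the Bobkov--G\"otze criterion together with a Bakry--\'Emery bound for $K>0$ and a case-split in $|K|D^2$ for $K<0$ to pin down the explicit constants, with sharpness witnessed by the one-dimensional model. The only place where the paper expends substantially more effort than your "scaling and translation argument" suggests is in proving that the \emph{centred} truncated Gaussian $e^{-Kx^2/2}1_{[-D/2,D/2]}$ is (up to constants) the worst among all shifts of the window --- this is Propositions~\ref{prop:Xi_0} and~\ref{prop:Xi_s}, which for $K<0$ require a rather delicate case analysis on the location of the Bobkov--G\"otze maximiser; otherwise the outline is a faithful blueprint of the paper's argument.
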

We remark that for the log-Sobolev constant we only provide estimates up-to numeric constants since the Euler-Lagrange equation, which is associated with the log-Sobolev constants $\Lambda_{LS}( \xi)$ for measures $\xi$ supported on $\R$, is not linear nor even half-linear, hence imposes more difficulties in comparison to the calculation of $\Lambda_{Poi}(\xi)$ and $\Lambda^{(p)}_{Poi}(\xi)$. 
\bigskip

\subsection{The localization paradigm}
All these results are proved by following a procedure which begins with the following reduction: rather  than estimating from below the constant $\Lambda_{*}(M,\gfrak,\mu)$ of the respective inequality on a manifold of dimension $n$, estimate from below $\inf_{\xi\in \Pknd}\Lambda_{*}(\xi)$ where $\Pknd$ is a class of probability measures $\xi$ supported in $\R$ and satisfy certain generalized concavity conditions. 
Conceptually this shares resemblance to the Kr\"{o}ger-Bakry-Qian proofs which were also relying on bounding from below $\Lambda_{Poi}(M,\gfrak,\mu)$ by $\inf_{\xi\in \Pknd}\Lambda_{Poi}(\xi)$; however on a technical level the methods which lead to this comparison are fundamentally different. We approach the problem via a method which conceptually dates back to the 1960's, in the work of Payne and Weinberger on estimating the \Poinc constant on convex domains in Euclidean space, where they applied iterative bisections of a convex body in order to reduce the problem from dimension $n$ to dimension 1 (`localization'). The localization paradigm gained further development notably in the works of Gromov and V. Milman \cite{GM}, and Kannan-Lovasz-Simonovits \cite{LS,KLS}. Recently B. Klartag \cite{Kla} has established an extension of the localization paradigm to the (weighted) Riemannian setting using optimal transport techniques. 
\bigskip 

We firstly state the Payne-Weinberger result and provide an elaborated sketch of its proof, since it serves as the prototype problem which inspired our approach. 
\begin{thm}[Payne-Weinberger \cite{PaWe}] Let $K\subset \R^n$ be a convex open set of diameter $D\in (0,\infty)$, and {\bf $\mu$  the Lebesgue measure} on $K$, then
for any $f\in C^1(K;\R)$ s.t. $\int_K fd\mu=0$ it holds that:
\[ \frac{\pi^2}{Diam(K)^2}\int_K f^2d\mu \leq \int_K |\nab f|^2d\mu\,.\]
\end{thm}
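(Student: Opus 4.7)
The plan is to implement the Payne--Weinberger localization via iterated bisection of $K$. First, I would observe that for any unit direction $v \in \mathbb{S}^{n-1}$, one can find an affine hyperplane $H$ normal to $v$ that splits $K$ into two convex subsets $K_1, K_2$ with $\int_{K_i} f \, dm = 0$ for $i=1,2$. Indeed, sliding $H$ across $K$ continuously changes $\int_{K_1} f\, dm$ from $0$ (when $K_1 = \emptyset$) to $\int_K f\, dm = 0$ (when $K_1 = K$), so an intermediate hyperplane achieves the mean-zero split. Both $K_1, K_2$ remain convex with diameter at most $D$.

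Next, I would iterate this bisection, producing a sequence of partitions of $K$ into convex pieces $\{K^{(m)}_j\}_j$, each carrying zero mean for $f$ and bounded by diameter $D$. By choosing bisection directions to cyclically cover a dense set of directions (or always perpendicular to the longest axis), one arranges that the pieces become arbitrarily thin in $n-1$ directions while remaining of length at most $D$ in the remaining direction. Since both sides of the desired inequality are additive over the partition and the mean-zero condition is preserved piece-by-piece, it suffices in the limit to prove the inequality on each thin ``needle''.

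On a thin needle, the problem reduces to a one-dimensional Poincar\'e inequality: for any $g \in C^1([a,b])$ with $\int_a^b g \, dx = 0$ and $b-a \leq D$, one has
\[ \int_a^b g(x)^2 \, dx \;\leq\; \frac{D^2}{\pi^2} \int_a^b g'(x)^2 \, dx . \]
This sharp bound is obtained by expanding $g$ in the Neumann cosine basis $\{\cos(k\pi(x-a)/(b-a))\}_{k \geq 0}$ on $[a,b]$: the zero-mean hypothesis kills the $k=0$ mode, and the smallest nonzero Neumann eigenvalue of $-d^2/dx^2$ on $[a,b]$ is $(\pi/(b-a))^2 \geq (\pi/D)^2$. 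Integrating this inequality along the needle axis, and noting that $|\nabla f|^2$ dominates the squared directional derivative along the needle, yields the claim on each piece; summing recovers the full inequality on $K$.

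The principal obstacle I expect is the rigorous justification of the limiting localization argument: one never literally reaches a disintegration into 1-dimensional segments, so a careful quantitative approximation is needed. Concretely, after $m$ bisections one controls the cross-sectional diameters by $O(2^{-m/(n-1)})\cdot \mathrm{diam}(K)$; using uniform continuity of $f$ and $\nabla f$ on $\overline{K}$, the Riemann-sum errors on each piece tend to zero, while the 1D inequality gives a clean bound piecewise. Letting $m \to \infty$ and combining, one concludes $\int_K f^2 \, dm \leq (D/\pi)^2 \int_K |\nabla f|^2 \, dm$, as required.
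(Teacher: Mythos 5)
Your overall strategy (iterated bisection of $K$ into mean-zero pieces, pass to a limit of 1D ``needles'', reduce to a 1D Poincar\'e inequality) matches the paper's, but the reduction at the end contains a genuine gap. When a convex piece $K_j$ becomes thin in $n-1$ directions, the integrals over $K_j$ do not reduce to the \emph{unweighted} one-dimensional problem: writing $A_j(t)$ for the $(n-1)$-dimensional cross-sectional volume of $K_j$ at position $t$ along its axis, one has
\begin{align*}
\int_{K_j} f^2\,dm \approx \int_{a_j}^{b_j} g(t)^2\,A_j(t)\,dt,\qquad
\int_{K_j} |\nabla f|^2\,dm \gtrsim \int_{a_j}^{b_j} g'(t)^2\,A_j(t)\,dt,
\end{align*}
with the mean-zero constraint becoming $\int g\,A_j\,dt=0$ (not $\int g\,dt=0$). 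So the 1D problem you actually need is the \emph{weighted} Neumann Poincar\'e inequality with weight $A_j(t)$ on an interval of length $\le D$. Unlike the unweighted case, this does \emph{not} follow from a cosine expansion; one must use the fact (Brunn--Minkowski) that $A_j(t)^{1/(n-1)}$ is concave, and prove the Sturm--Liouville estimate that the lowest non-zero Neumann eigenvalue of $(pg')' = -\lambda p g$ is at least $\pi^2/D^2$ whenever $p^{1/(n-1)}$ is concave on an interval of length $\le D$. That is exactly the non-trivial 1D lemma in the Payne--Weinberger proof, and it is the part your argument omits. The cross-sectional weight does not become constant as the pieces thin — a thin slab of a cone, for instance, still has a highly non-uniform cross-sectional profile — so the weighted inequality cannot be waved away as an approximation error.

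A secondary remark: your bisection step (sliding a hyperplane normal to a chosen direction until the split has zero mean) is the simplest way to achieve a mean-zero partition, and it is fine. The paper instead bisects through the barycenter while simultaneously achieving the mean-zero split (a ham-sandwich--type choice), which keeps the limiting objects well controlled. Either choice, carried out carefully, produces a disintegration of Lebesgue measure into needles with $\frac{1}{n-1}$-concave densities; but either way the weighted 1D Poincar\'e estimate for such densities must be established, and that is what your proof is currently missing.
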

\begin{proof} The proof relies on iterative bisections defined inductively, and a statement regarding the \Poinc constant of the limit objects.
Goal: Show that for any $f\in \Cinf(K; \R)$ s.t. $\int_K fd\mu=0$ holds: 
\[ \frac{\pi^2}{Diam(K)^2}\int_K f^2d\mu \leq \int_K |\nab f|^2d\mu \,. \]
\begin{itemize} 
	\item {\bf Step 1:} Find a hyperplane $H$ passing through the barycenter of $K$ s.t. $\int_{K\cap H_{+}}fd\mu = \int_{K\cap H_{-}}fd\mu=0$. 
	\item It suffices now to prove $\frac{\pi^2}{Diam(K\cap H_i)^2}\int_{K\cap H_i} f^2d\mu \leq \int_{K\cap H_i} |\nab f|^2d\mu $ for $i\in\{+,-\}$. Indeed if this holds then:
	\eq{&\frac{\pi^2}{Diam(K)^2}\int_K f^2d\mu \leq \min_{i\in\{+,-\}}\cprnt{\frac{\pi^2}{Diam(K\cap H_i)^2}}\sum_{i\in\{+,-\}} \int_{K\cap H_i} f^2d\mu\\&\leq 
	\sum_{i\in\{+,-\}} \frac{\pi^2}{Diam(K\cap H_i)^2}\int_{K\cap H_i} f^2d\mu
	\leq \sum_{i\in\{+,-\}}\int_{K\cap H_i} |\nab f|^2d\mu= \int_{K} |\nab f|^2d\mu\,. }
	\item {\bf Step $k$:} bisect the current $2^{k-1}$ convex bodies through their barycenter; obtain $2^k$ convex bodies $\{K_j\}_{j=1..2^k}$ s.t. $\int_{K_j}fd\mu=0$ for all $j\in \{1,...,2^{k}\}$. 	It is now sufficient to prove $\Lambda_{Poi}(K_j, \gfrak_{Euc})\geq \frac{\pi^2}{diam(K_j)^2}$ for each of the bodies $\{K_j\}_{j=1..2^k}$.	

		\item In the limit $k\to \infty$ we get :
	\begin{itemize}
		\item A partition of $K$ into segments: $\{K_q\}_{q\in Q}$ ($Q$ is a set of representative points).
		\item A disintegration of the measure: probability measures $\{\mu_q\}_{q\in Q}$ and a measure $\zeta$ on $Q$ s.t. $\mu=\int_{q\in Q}\mu_q d\zeta(q)$.  
		\item For $\zeta$ a.e. $q\in Q$: $\mu_q$ is supported on $K_q$, $d\mu_q=p_q dm$ with $p_q^{\frac{1}{n-1}}$ a concave function, and $\int_{K_q}fd\mu_q =0$. 
	\end{itemize}
\end{itemize} 
Indeed, if we fix coordinates so that $K_q$ coincides with the line $(0, l)$ along the $\bar{e}_1$ axis of $\R^n$, then 
\begin{itemize}
	\item We can identify $p_q(t)$ as the limit (as $j\to\infty$) of $p_q^{(j)}(t):=\frac{\mu_{n-1}(K_j\cap \{x\in \R^n:\, x_1=t\})}{\mu_{n}(K_j)}$.\\
  \item $p_q^{(j)}(t)^{\frac{1}{n-1}}$ is concave by the Brunn-Minkowski theorem.
\end{itemize}

The final ingredient is a 1-dimensional lemma. We define by $\P_{n,D}$ the class of probability  measures $\xi=p\cdot m$ supported on $I=[a,b]\subset \R$ s.t. $|b-a|\leq D$ and s.t. $p^{\frac{1}{n-1}}$ is concave on its support $I$. We define $\P^M_{n,D}$ to be the subclass of $\P_{n,D}$ of `model' measures $\xi=p\cdot m$ s.t. $p^{\frac{1}{n-1}}$ is affine on $I$ (i.e. convex as well as concave).

\begin{lem} If $\xi\in \P_{n,D}$ then for any smooth function $f$ on $I$ s.t. $\int_Ifd\xi=0$: 
\[ \frac{\pi^2}{D^2}\int_I f(x)^2p(x)dm(x) \leq \int_I |\nab f(x)|^2p(x)dm(x) \]
\end{lem}
In order to prove the lemma one shows that the lowest non-zero eigenvalue of any Sturm-Liouville Neumann problem
   \[ (pf')'+\lambda pf=0 \qquad f'(a)=f'(b)=0\,,\]
with $b-a\in (0,D]$, which can also be identified as $\Lambda_{Poi}([a,b],|\cdot |, pdm)$, is at least $\frac{\pi^2}{D^2}$, 
 the lowest non-zero eigenvalue of the Sturm-Liouville problem
	\[ (f')'+\lambda f=0 \qquad f'(-\frac{D}{2})=f'(\frac{D}{2})=0\,, \]
	which is associated with $p\equiv \frac{1}{D}$ and $b=-a=\frac{D}{2}$. 
	This can be shown by simple ODE arguments, which we do not give here. 
 Clearly the normalized Lebesgue measure $d\xi_{*} := \frac{1}{D}  dm$ is in the class $\mathcal{P}^M_{n,D}$. 
 Therefore the lemma literally states that 
\[ \inf_{\xi\in \mathcal{P}_{n,D}}\Lambda_{Poi}(\xi)=\inf_{\xi\in \mathcal{P}^M_{n,D}}\Lambda_{Poi}(\xi)=\Lambda_{Poi}(\xi_{*})=\frac{\pi^2}{D^2}\,,\]

and we conclude that $\Lambda_{Poi}(K,|\cdot |, m)\geq \inf_{\xi\in \P_{n,D}}\Lambda_{Poi}(\xi)= \frac{\pi^2}{D^2}$.
\end{proof}
\bigskip

This proof provides the conceptual basis for our approach. Since we consider general weighted Riemannian manifolds, hyperplane bisections are no longer meaningful, and the localization is established by optimal transport methods using Klartag's `Localization Theorem' from which it follows that 
\eql{\label{KlartagsOptimization}\Lambda_{*}(M,\gfrak,\mu)\geq \inf_{\xi\in \Pknd}\Lambda_{*}(\xi)\,,} 
where the densities of $\xi\in \Pknd$ satisfy a concavity property analogous to the $\frac{1}{n-1}$ concavity of the densities of the measures $\xi\in \P_{n,D}$. 

%This would imply that the sharp bound can be expressed as:
%$$\Lambda_{*}(M,\gfrak,\mu)\geq \inf_{\xi\in \Pknd^M}\Lambda_{*}(\xi)\,.$$

%
%
%We obtain the main results \ref{thm:Poin_Intro}, \ref{thm:pPoin_Intro} and \ref{thm:LS_Intro} by solving this optimization problem using different methods. For the \Poinc constant we achieve this . For the $p$-\Poinc constant we use Pruffer-transformations (following Naber-Valtorta \cite{Val,NaVa}). For the log-Sobolev constant we could only perform the analysis up-to numeric constants, by employing a criterion for the log-Sobolev inequality due to Bobkov-G{\"o}tze. 
%
%
%
%
%
%Moreover this estimate is sharp. 
%
%
\subsection{The solution to the optimization problem \eqref{KlartagsOptimization}}

Our main results concentrate around the reduction of problem \eqref{KlartagsOptimization} into a simpler optimization problem, and eventually obtaining 
the sharp constants associated with the $3$ functional inequalities mentioned above (i.e. Theorems \ref{thm:Poin_Intro}, \ref{thm:pPoin_Intro} and \ref{thm:LS_Intro}).
\begin{enumerate}
	\item We show that \eql{\label{KlartagsOptimization2} \inf_{\xi\in \Pknd}\Lambda_{*}(\xi)=\inf_{\xi\in \Pknd^M}\Lambda_{*}(\xi)\,,}
	where the relation between  $\Pknd^M$, the subclass of `model' measures,  and $\Pknd$, is analogous to the relation between  $\P^M_{n,D}$ and $\P_{n,D}$. The proof of this step is within the lion-share of this work. We achieve it by establishing an abstract framework, which will make it feasible to apply functional analytic techniques.  Most importantly, we identify the extreme points of (a certain subset of) $\Pknd$ with (a certain subset of) $\Pknd^M$, and conclude from standard theorems (specifically the Krein-Milman/Bauer's theorems) the identity \eqref{KlartagsOptimization2}. This approach is inspired by the work of M. Fradelizi and O. Gu\'{e}don \cite{FG}, who treated the case $K=0$ on $\R^n$. Below we further elaborate on this step.
	\item We solve the following optimization problems:
	  \begin{itemize}
			\item $\inf_{\xi\in \Pknd^M}\Lambda_{Poi}(\xi)$ (using primarily results from Sturm-Liouville theory);
			\item $\inf_{\xi\in \Pknd^M}\Lambda_{Poi}^{(p)}(\xi)$ (using Pr\"{u}ffer transformation, specific ODE techniques, and some `$p$--Sturm-Liouville' theory as detailed in Chapter \ref{chp:pPoinc}; we remark that the validity of a specific result of the theory to our specific problem needs to be verified)\pinka{;}
			\item $\inf_{\xi\in \Pknd^M}\Lambda_{LS}(\xi)$ (using a result of S. Bobkov and F. G{\"o}tze about estimates of the log-Sobolev constant).
		\end{itemize} 
		As we mentioned above the solution to the log-Sobolev problem is only up to universal numeric constants. 
	\item It turns out that up to scaling and translations operations (i.e.  $J(x)dm\mapsto c\,J(x+r)dm$ with $r\in \R$ and $c>0$) a measure $\xi=J\cdot m\in\Fknd^M$ can be represented as $d\xi_{\hfrak,d}=J_{K,N,\hfrak}(x)1_{[-\frac{d}{2},\frac{d}{2}]}(x)dx$
where $J_{K,N,\hfrak}:\R\to\R_{+}\cup\{+\infty\}$, and $[-\frac{d}{2},\frac{d}{2}]\subset \isupp(J_{K,N,\hfrak})$, is such that $J_{K,N,\hfrak}(0)=0$ and $J_{K,N,\hfrak}'(0)=\hfrak\in\R$. 
	Therefore due to invariance of $\Lambda_{*}(\xi)$ under the stated scaling and translation operations, we can parametrize the image set $\Lambda_*(\Fknd^M)$ with only two parameters: $(\hfrak,d)\mapsto \Lambda_*(\xi_{(\hfrak,d)})$. While the dependence $d\mapsto \Lambda_*(\xi_{(\hfrak,d)})$ is rather clear, it is highly non-trivial for the parameter $\hfrak$. We complement the results of KBQ pertaining to $\Lambda_{Poi}(\xi)$ and of Naber-Valtorta pertaining to $\Lambda_{Poi}^{(p)}(\xi)$ by showing that $\hfrak \mapsto \Lambda_{Poi}(\xi_{(\hfrak,d)})$ and $\hfrak\mapsto \Lambda^{(p)}_{Poi}(\xi_{(\hfrak,d)})$ depend monotonically on $|\hfrak|$. From this monotonicity we conclude the explicit solutions to the optimization problems, which are essentially Theorems \ref{thm:Poin_Intro} and \ref{thm:pPoin_Intro}. The anomalous nature of the range $N\in [-1,0]$ manifests in a reversal of the direction of monotonicity, when compared to the range $N\in (-\infty,-1]\cup [n,\infty]$ (where $N=-1$ signifies a threshold point, at which  $\hfrak \mapsto \Lambda_{Poi}(\xi_{(\hfrak,d)})$ and $\hfrak\mapsto \Lambda^{(p)}_{Poi}(\xi_{(\hfrak,d)})$ are independent of $\hfrak$). 
\end{enumerate}

\subsubsection{Characterization of extreme points}
%The , which constitutes the basis for the derivation of results \ref{thm:Poin_Intro}, \ref{thm:pPoin_Intro} and \ref{thm:LS_Intro}
%is the methodic reduction from optimization over the class $\Pknd$ to the smaller class $\Pknd^M$. This reduction is not restricted to the 3 functional inequalities specified above; it can be applied to a myriad of functional inequalities of similar form. Specifically we used the following properties:
We stress that the identity \eqref{KlartagsOptimization2} is very general, it is not exclusive to the 3 inequalities which were mentioned above. It basically relies on the following properties:
\begin{enumerate}
	\item The constant $\Lambda_{*}(\xi)$ is defined by an optimization problem of the form
 $$\text{Find: }\qquad \inf_{f\in \F_{*}(\xi)}\Phi_*[f](\xi)\,,$$
where $\Phi_*[f](\xi)$ is the ratio between two non-negative bounded linear functionals in $\xi$ which depend on $f$: $\xi\mapsto u_f^*(\xi):=\int u_fd\xi$ and $\xi\mapsto v_f^*(\xi):=\int v_fd\xi$ (for example for the \Poinc constant $\Lambda_{Poi}(\xi)$ we set $u_f^*(\xi)=\int f'(x)^2d\xi$ and $v_f^*(\xi)=\int f(x)^2d\xi$). 
  \item The function space $\F_*(\xi)$ can be expressed as $\F_*(\xi)=\{f\in \F_*^a:\quad h_f^*(\xi)=0\,\}$, where $\F_*^a$ is an auxiliary function space independent of $\xi$, and $\xi\mapsto h_f^*(\xi):=\int h_fd\xi$, for some $h_f\in \Cinf_c(\R)$ associated with $f$ (for example, for the calculation of $\Lambda_{Poi}(\xi)$ we set $\F_*^a:=\{0\not\equiv f\in \Cinf_c(\R)\}$ and $h_f^*(\xi)=\int fd\xi$\,\,). 
\end{enumerate}
Conditioned that certain relations hold between the supports of $u_f, v_f$ and $h_f$, these properties allow us to make the following minimization reordering: 
\eql{\label{eqn:reordering}\inf_{\xi\in \Pknd}\Lambda_*(\xi)=\inf_{\xi\in \Pknd}\,\,\inf_{f\in \F_{*}(\xi)}\Phi_*[f](\xi)=\inf_{f\in \F_{*}^a}\,\,\inf_{\substack{\xi\in \Pknd(I_h)\\h_f^*(\xi)=0}}\Phi_*[f](\xi)\,,}
where $I_{h_f}:=conv(supp(h_f))$ and $\Pknd(I_{h_f}):=\{\xi\in\Pknd:\,\, supp(\xi)\subset conv(supp(h_f)) \}$. 

Given a (not necessarily convex) set $A$ in a linear space, we define $\Ext{A}$ to be the set of extreme points of $A$. The main result on which all estimates are based is the following characterization theorem, which is formulated in a simplified form, prioritizing clarity over preciseness:
\begin{thm}\label{thm:exPointsIntro} For $K\in \R$, $N\in (-\infty,0]\cup [2,\infty]$ and $h_f\in \Cinf_c(\R)$: 
{\footnotesize 
$$\Ext{\cprnt{\xi\in \Pknd(I_{h_f}):\quad h_f^*(\xi)=0}}=\cprnt{\xi\in \Pknd^M(I_{h_f}) \quad s.t.\,\, \int_{-\infty}^x h_fd\xi\neq 0 \,\,\text{for any } \,x\in int(supp(\xi))} \,.$$ 
}
%$\substack{\xi\in \Pknd^M\\ supp(\xi)\subset \\h_f^*(\xi)=0}
\end{thm}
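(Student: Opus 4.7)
The plan is to establish the set identity by proving the two inclusions separately, in the spirit of the extreme-point analysis developed by Fradelizi--Gu\'edon for the case $K=0$. The convex set on the left-hand side is cut out of $\Pknd(I_{h_f})$ by one normalization and one linear constraint, and the model subclass $\Pknd^M(I_{h_f})$ plays the role of the ``generalized-affine'' boundary of the generalized-concave cone $\Pknd(I_{h_f})$, exactly as $\P^M_{n,D}$ does in the Payne--Weinberger prototype.

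For the inclusion $\supseteq$, take $\xi=J\cdot m\in \Pknd^M(I_{h_f})$ with $h_f^*(\xi)=0$ and $F(x):=\int_{-\infty}^{x}h_f\,d\xi\neq 0$ for every $x\in int(\mathrm{supp}(\xi))$. Given any decomposition $\xi=t\xi_1+(1-t)\xi_2$ with $\xi_i$ in the constraint set, absolute continuity forces $\xi_i\leq \xi/\min(t,1-t)$, so each density $J_i$ has support in $\mathrm{supp}(\xi)$ and satisfies the generalized concavity condition defining $\Pknd$. Writing $J=tJ_1+(1-t)J_2$, where $J^{1/(N-1)}$ is generalized-affine (built out of $\si_\delta$ or $\co_\delta$) while each $J_i^{1/(N-1)}$ is generalized-concave, I would invoke the rigidity in the equality case of the corresponding Brunn--Minkowski / power-mean inequality to conclude that each $J_i$ must itself be a model density on its support. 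The non-vanishing of $F$ on $int(\mathrm{supp}(\xi))$ then precludes $\mathrm{supp}(\xi_i)$ from being a proper sub-interval of $\mathrm{supp}(\xi)$, since otherwise $F$ would have to vanish at the inner endpoint of such a sub-interval (because $\int h_f\,d\xi_i=0$). Finally, the two-parameter family of model measures on $\mathrm{supp}(\xi)$ is cut down by the two linear constraints $\int d\xi_i=1$ and $\int h_f\,d\xi_i=0$ to a single point, forcing $\xi_1=\xi_2=\xi$.

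For the opposite inclusion I argue by contrapositive. If $F(x_0)=0$ for some $x_0\in int(\mathrm{supp}(\xi))$, then also $\int_{x_0}^{\infty}h_f\,d\xi=0$; setting $\alpha:=\xi((-\infty,x_0])$, the normalized restrictions $\xi|_{(-\infty,x_0]}/\alpha$ and $\xi|_{[x_0,\infty)}/(1-\alpha)$ both lie in the constraint set (restriction to a sub-interval of the support preserves the generalized concavity of the density), and their convex combination with weights $(\alpha,1-\alpha)$ recovers $\xi$ non-trivially. If instead $\xi\notin \Pknd^M(I_{h_f})$, the generalized concavity of $J^{1/(N-1)}$ is strict on some open interval $U\subset int(\mathrm{supp}(\xi))$; one can then find a compactly supported smooth perturbation $\phi\not\equiv 0$ in $U$ such that $J\pm\epsilon\phi$ still belongs to $\Pknd(I_{h_f})$ for small $\epsilon>0$. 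The linear conditions $\int\phi\,dm=0$ and $\int h_f\phi\,dm=0$ can be arranged because the space of admissible perturbations on $U$ is infinite-dimensional, producing the non-trivial decomposition $\xi=\tfrac12(J+\epsilon\phi)m+\tfrac12(J-\epsilon\phi)m$ and contradicting extremality.

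\textbf{Expected main obstacle.} The principal technical difficulty is the rigidity analysis of the generalized Brunn--Minkowski / power-mean inequality across the parameter range $N\in(-\infty,0]\cup[2,\infty]$: the convexity direction of $x\mapsto x^{1/(N-1)}$ reverses on the two sub-ranges, and the threshold $N=-1$ requires separate treatment, so both the statement of the inequality and its equality case have to be adapted to each regime. A related subtlety is that the perturbation $\phi$ in the second direction must respect the $\si_\delta$- or $\co_\delta$-weighted concavity (not just plain affine behavior) while simultaneously satisfying two linear constraints; verifying that the strict-concavity region $U$ always carries enough functional degrees of freedom to accommodate this is the step that pins down precisely why the extreme points coincide with the model measures rather than merely containing them.
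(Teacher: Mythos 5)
Your overall framework — an extreme-point analysis in the Fradelizi--Gu\'edon spirit, with two inclusions proved separately, and the first step of the contrapositive (splitting at a zero of $F(x)=\int_{-\infty}^x h_f\,d\xi$) — matches what the paper does. However, both of the other two arguments have genuine gaps.

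\textbf{The rigidity step in $\supseteq$ is false.} You claim that if $J=tJ_1+(1-t)J_2$ with $J$ a model density and $J_i$ generalized-concave, then rigidity of the power-mean inequality forces each $J_i$ to be a model density on its support. This fails already for $K=0$, $N=\infty$: take $J_0\equiv 1$ on $[0,1]$, $J_1=e^{\epsilon(x-1/2)}$, $J_2=2-e^{\epsilon(x-1/2)}$ with $\epsilon>0$ small. Then $J_0=\tfrac12(J_1+J_2)$ and a direct computation gives $(\log J_2)''=-2\epsilon^2 e^{\epsilon(x-1/2)}/J_2^2<0$, so $J_2$ is log-concave, but $J_2$ is \emph{not} log-affine, hence not a model density. (After rescaling $J_1,J_2$ to probability densities this becomes a genuine non-trivial convex decomposition of $\nu=J_0\cdot m$ inside $\Pknd(I_h)$; it simply fails the additional constraint $h_f^*(\xi_i)=0$.) This shows the $h_f$-constraint is essential to deducing rigidity, not an afterthought. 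The paper's Part~2 never asserts the pieces are model densities; instead it introduces $\eta_i:=\tfrac12 J_i/J_0$, proves they are continuous, quasi-concave, and satisfy $\eta_1+\eta_2=1$, and then integrates against $F$ to force $\eta_i\equiv 1$. The monotonicity forced by quasi-concavity plus the constraint $\int h_f\,d\xi_i=0$ is what closes the argument.

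\textbf{The perturbation step in $\subseteq$ fails for singular deficit measures.} You propose to perturb $J$ by $\pm\epsilon\phi$ with $\phi$ supported in an open interval $U$ where the generalized concavity of $J^{1/(N-1)}$ is ``strict''. But a non-model $\xi\in\Pknd^{ac}$ can have $J^{1/(N-1)}$ piecewise-model with a single kink at $x_0$ — then the deficit $-(F''+\delta F)$ (with $F=J^{1/(N-1)}$) is a Dirac mass at $x_0$ and vanishes on an open dense set. For such $J$, any $\phi\in C_c^\infty$ with $J\pm\epsilon\phi$ both in $\Fknd$ would need $\pm\epsilon(\psi''+\delta\psi)\le c\,\delta_{x_0}$ (where $\psi$ is the induced perturbation of $F$), forcing $\psi''+\delta\psi=0$ away from $x_0$; combined with compact support this forces $\psi\equiv 0$, so no admissible non-zero $\phi$ exists. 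The paper's construction handles this robustly: it sets $J^{(\hfrak)}(x):=\tfrac12 J(x')\,J_{K,N,\hfrak}(x-x')$ for a fixed interior point $x'$, defines $\sigma_\hfrak:=\min\{J,J^{(\hfrak)}\}\cdot m$ and $\varsigma_\hfrak:=(J-J^{(\hfrak)})_\vee\cdot m$, shows both stay in $\Fknd$ (Lemmas~\ref{lem:FG1}--\ref{lem:FG2}, which is where the hypothesis $N\in(-\infty,0]\cup[2,\infty]$ is actually used, via the reverse Minkowski inequality requiring $\tfrac{1}{N-1}\le 1$), and finds $\hfrak_0$ with $\sigma_{\hfrak_0}(h_f)=0$ by the intermediate value theorem in $\hfrak$. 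This split-by-$\min$ idea requires no smoothness or strict concavity of $J$ at all. Your ``expected obstacle'' paragraph about $N=-1$ actually concerns a different theorem (the $\hfrak$-monotonicity of the Poincar\'e constant), not the present one.
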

As we previously mentioned, the case $K=0$ was proved by Fradelizi-Gu\'{e}don in \cite{FG} and we extend their approach. Using functional analysis arguments we conclude the following corollary  from Theorem \ref{thm:exPointsIntro}:
\begin{cor}\label{cor:from:toModel} For $K\in \R$, $N\in (-\infty,0]\cup [2,\infty]$ and $h_f\in \Cinf_c(\R)$:
$$\inf_{\substack{\xi\in \Pknd(I_h)\\h_f^*(\xi)=0}}\Lambda_{*}(\xi)=\inf_{\substack{\xi\in \Pknd^M(I_h)\\h_f^*(\xi)=0}}\Lambda_{*}(\xi)\,.$$
\end{cor}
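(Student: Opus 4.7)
The plan is to derive the corollary from Theorem~\ref{thm:exPointsIntro} via a Bauer-type minimum principle applied to the ratio of linear functionals that defines $\Lambda_*$. First I commute the double infimum hidden in $\Lambda_*(\xi) = \inf_{g \in \F_*(\xi)} \Phi_*[g](\xi)$:
$$\inf_{\xi \in C}\Lambda_*(\xi) \;=\; \inf_{g \in \F_*^a}\,\inf_{\xi \in C_g}\Phi_*[g](\xi),$$
where $C := \{\xi \in \Pknd(I_h) : h_f^*(\xi) = 0\}$ and $C_g := C \cap \{\xi : h_g^*(\xi) = 0\}$. This swap is valid because $\F_*^a$ is independent of $\xi$ and the $\xi$-dependence of $\F_*(\xi)$ is encoded entirely through the linear constraint $h_g^*(\xi) = 0$.

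For each fixed $g$, the map $\xi \mapsto \Phi_*[g](\xi) = u_g^*(\xi)/v_g^*(\xi)$ is linear-fractional; writing $c^* := \inf_{\xi \in C_g}\Phi_*[g](\xi)$, the linear functional $\xi \mapsto u_g^*(\xi) - c^* v_g^*(\xi)$ is non-negative on $C_g$ and achieves the value $0$. Since $h_f \in \Cinf_c(\R)$ forces $I_h$ to be a compact interval, Prokhorov's theorem together with weak-$\ast$ closedness of the $(K,N)$-concavity condition (obtained by testing the generalized concavity inequality against non-negative continuous functions) makes $C_g$ weak-$\ast$ compact and convex. Bauer's minimum principle then yields $\inf_{C_g}\Phi_*[g] = \inf_{\Ext{C_g}}\Phi_*[g]$, with the infimum attained at an extreme point. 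Invoking Theorem~\ref{thm:exPointsIntro} in the version that accommodates the codimension-two slice $\{h_f^* = h_g^* = 0\}$ identifies $\Ext{C_g} \subset \Pknd^M(I_h) \cap \{h_f^* = h_g^* = 0\}$, so
$$\inf_{\xi \in C_g}\Phi_*[g](\xi) \;\geq\; \inf_{\substack{\xi \in \Pknd^M(I_h) \\ h_f^*(\xi) = h_g^*(\xi) = 0}}\Phi_*[g](\xi).$$
Undoing the infima-swap gives $\inf_{\xi \in C}\Lambda_*(\xi) \geq \inf_{\xi \in C \cap \Pknd^M(I_h)}\Lambda_*(\xi)$, and the reverse inequality is immediate from $\Pknd^M \subset \Pknd$.

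The hard part is the second ingredient: Theorem~\ref{thm:exPointsIntro} is formally stated for a single linear equality $h_f^*(\xi) = 0$, whereas the Bauer step really requires the analogous characterization under the additional linear constraint $h_g^*(\xi) = 0$. I expect this extension to follow from the same structural analysis that presumably underlies Theorem~\ref{thm:exPointsIntro}---namely, the representation of every $(K,N)$-concave density as a superposition of model densities---because adjoining a further linear equality merely restricts the coefficients in this superposition rather than producing genuinely new extreme objects outside $\Pknd^M(I_h)$. Verifying this extension rigorously, together with the weak-$\ast$ closedness of $\Pknd(I_h)$ under the generalized concavity condition, constitutes the main technical burden.
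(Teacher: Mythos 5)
There is a genuine gap, and it traces back to reading the corollary as a statement about the full constant $\Lambda_*(\xi)$ rather than about the fixed-$f$ ratio. In the paper's chain of reductions, the constraint $h_f^*(\xi)=0$ and the objective are always tied to the \emph{same} test function: after the first reordering \eqref{eqn:reordering}, the inner infimum over $\xi$ is taken of $\Phi_*[f](\xi)=u_f^*(\xi)/v_f^*(\xi)$ with $f$ fixed, and the precise identity actually proved in the body of the paper (see the proof of Corollary~\ref{cor:CC}) is
\[
\inf_{\substack{\xi\in\Pknd(I_h)\\h_f^*(\xi)=0}}\Phi_{u_f^*,v_f^*}(\xi)\;=\;\inf_{\substack{\xi\in\Pknd^M(I_h)\\h_f^*(\xi)=0}}\Phi_{u_f^*,v_f^*}(\xi)\,.
\]
Because $\Phi_{u_f^*,v_f^*}$ is a ratio of two non-negative linear functionals, the Bauer-type minimum principle of Propositions~\ref{thm:infExtreme} and~\ref{prop:infExtreme} applies directly on $\overline{conv}\{\xi\in\Pknd(I_h):h_f^*(\xi)=0\}$, and the codimension-one extreme-point classification of Theorem~\ref{thm:ExtremePoints} finishes the argument. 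A second linear constraint never enters.

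Your proposal, by contrast, unwinds the inner infimum over $g\in\F_*(\xi)$ a second time, which injects the extra equality $h_g^*(\xi)=0$ and leaves you needing the extreme points of a codimension-two slice of $\Pknd(I_h)$. That characterization is not in the paper (the proof of Theorem~\ref{thm:ExtremePoints}, following Fradelizi--Gu\'{e}don, uses a single guiding function in an essential way), and the heuristic that adjoining another hyperplane merely restricts coefficients is not reliable: intersecting a convex set with an additional hyperplane generically creates new extreme points as convex combinations of old ones. For the slice $\{\xi\in\Pknd(I_h):h_f^*(\xi)=h_g^*(\xi)=0\}$ one should expect extreme points whose densities are continuous two-piece splices of model densities, and such measures lie outside $\Pknd^M(I_h)$. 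If so, your Bauer step would lower-bound $\inf_{\xi\in\Pknd(I_h),\,h_f^*(\xi)=0}\Lambda_*(\xi)$ by an infimum over this larger two-piece family rather than over $\Pknd^M(I_h)\cap\{h_f^*=0\}$, so the asserted equality would not follow even if the classification step were carried out.
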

This corollary followed by a second reordering of \eqref{eqn:reordering} yields the identity \eqref{KlartagsOptimization2}. 
\subsubsection{The structure of this work}
\begin{enumerate}
  \item[Chapter \ref{chp:Background}:] We provide background on consequences of lower bounds on the Ricci curvature. We discuss about the $3$ functional inequalities mentioned above, in particular implications and previous results;  lastly we give mathematical background to some of the main tools which underlie the derivation of our results, specifically localization via optimal transport and the Bobkov-G{\"o}tze estimates.

	\item[Chapter \ref{chp:ExPoints}:]  We develop the abstract framework which is necessary for the extreme points characterization theorem. To this end we provide background to $CDD(K,N,D)$ conditions, formulate Klartag's Localization theorem, then after a discussion about equivalent forms of the $CDD(K,N,D)$ conditions we formulate 'synthetic conditions' which are essentially  
 $CDD(K,N,D)$ conditions in the absence of smoothness; we define the synthetic classes, prove important properties, then we reformulate the problem in an abstract
manner; lastly we prove the extreme points characterization theorem.

\item[Chapter \ref{chp:GeneralSetting}:] This chapter can be considered as a prelude to the following 3 chapters.  In this chapter we recast the main problems into a convenient general setting, and also prove a general diameter monotonicity lemma which we will need for Chapters \ref{chp:Poinc}-\ref{chp:LS}.

\item[Chapter \ref{chp:Poinc}:] We derive sharp lower bounds for the \Poinc constant of a \wrm{}. We will mostly rely on classical results from the theory of Sturm-Liouville boundary value problems, which arise from the Euler-Lagrange equation associated with $\Lambda_{Poi}(\xi)$. 
\item[Chapter \ref{chp:pPoinc}:] We derive sharp lower bounds for the p-\Poinc constant of a \wrm{}. We will again approach the optimization problem via the Euler-Lagrange equation associated with $\Lambda^{(p)}_{Poi}(\xi)$; however since the equation is not linear but only half-linear, we will prove the claims using Pr\"{u}ffer-transformations into polar functions.
\item[Chapter \ref{chp:LS}:] We derive lower bounds for the log-Sobolev constant under $CDD(K,\infty,D)$ conditions, which are sharp up to universal numeric constants. To this end we use a method developed by Bobkov-G{\"o}tze \cite{BobG}, which is fundamentally different from the methods we used for $\Lambda_{Poi}(\xi)$ and $\Lambda^{(p)}_{Poi}(\xi)$. 
Except for establishing this lower bound we prove new results regarding the existence of minimizers (i.e. functions $f\in \F_{LS}(\xi)$) realizing $\Lambda_{LS}(\xi)$), as well as equivalences of the worst \Poinc and log-Sobolev constants under $CDD(K,\infty,D)$.
\item[Chapter \ref{chp:epilogue}:] We briefly discuss about  the main contributions of this work.

\end{enumerate}

 \chapter{Background}
\label{chp:Background}
\section{Bounded Ricci Curvature}

The goal of this section is to give a panoramic overview of  how measure and geometry are intertwined in certain inequalities, and how a single notion - `generalized Ricci curvature', embodies these two seemingly distinct notions. Our presentation is rather classical, and ignores the modern `synthetic' definitions of bounded Ricci curvature which applies not only to manifolds but also to general metric measure spaces without assuming any differential structure. This introduction is based mainly on \cite{Oht2}, and partially on \cite{Cha1} and \cite{Vil}. The interested reader is referred to these sources for a deeper exposition to the subject. Throughout this brief review basic knowledge of Riemannian geometry is assumed. 

\bigskip

The classical Brunn-Minkowski (BM) inequality in $\R^n$ states that for any two compact sets $A,B\subset \R^n$:
\eq{ m(\convar{A}{B})^{\on}\geq \convar{m(A)^{\on}}{m(B)^{\on}}\,. }
More generally it is possible to show that a weighted Euclidean space $(\R^n, \mu=e^{-V}m)$ (here $V\in \Cinf(\R^n)$) satisfies the following BM analogue for sets $A,B\subset\R^n$:
\eq{ \mu(\convar{A}{B})^{\frac{1}{N}}\geq \convar{\mu(A)^{\frac{1}{N}}}{\mu(B)^{\frac{1}{N}}}\,,}
with $N\in (n,\infty)$ iff for all $v\in T_x(\R^n)$ 
\eql{\label{HessCond} Hess[V](x)(v,v)-\frac{\Avg{\nabla V(x)}{v}^2}{N-n}\geq 0  \,, }
where $Hess[V]$ stands for the Hessian of $V$. We refer to $N$ as generalized (or effective) dimension since it turns-out that this is the right parameter which characterizes also similar inequalities, replacing the role of the dimension $n$.

Let us consider now Riemannian manifolds, or even more generally weighted Riemannian manifold $(M^n,\gfrak,\mu=e^{-V}\mu_{\gfrak})$, where $(M^n,\gfrak)$ is a $n$-dimensional Riemannian manifold, and $\mu$ is a finite measure on $M$ with a smooth strictly positive density $e^{-V}$. The information about its curvature is encapsulated in the Riemann tensor $R_{\gfrak}$. This is the (3,1) tensor defined by $$R_{\gfrak}(X,Y)Z=\nab_X\nab_YZ-\nab_Y\nab_XZ-\nab_{[X,Y]}Z\,,$$
for every three smooth vector fields $X,Y,Z$ on $M$ (in local  coordinates  $R_{\gfrak}(\partial_i,\partial_j)\partial_k=\nab_{\partial_i}\nab_{\partial_i}\partial_k-\nab_{\partial_j}\nab_{\partial_i}\partial_k:=R_{ijk}^l\partial_l$). Accordingly, the Ricci tensor is the $(2,0)$ tensor defined by
$$Ric_{\gfrak}(X,Y)=Trace\prnt{Z\mapsto R_{\gfrak}(Z,X)Y}$$
(in local coordinates $R_{ij}:=Ric_{\gfrak}(\partial_i,\partial_j)=\sum_{i=1}^nR_{ijk}^i$), or equivalently if $e_1,...,e_{n-1}$ is an orthonormal frame spanning $X^{\bot}$: 
$$Ric_{\gfrak}(X,X)=\sum_{i=1}^{n-1} K(X,e_i)$$
where $K(X,Y):=	\frac{\Avg{R_{\gfrak}(X,Y)Y}{X}}{|X|^2|Y|^2-\Avg{X}{Y}}$  are the sectional curvatures of the 2-plane $Span(X,Y)$.
Intuitively while sectional curvatures quantify distance convergence/divergence of neighboring geodesics, Ricci curvature quantifies volume distortion along a geodesic. Indeed for a Riemannian manifold $(M^n,\gfrak,\mu=\mu_{\gfrak})$ it emerges as the coefficient of the first non-constant term in the expansion of $\sqrt{det(\gfrak_{ij})}$ in geodesic polar coordinates. The simple condition 
\eql{\label{bddRiccicond} Ric_{\gfrak}\geq K\gfrak}
(in the sense of quadratic forms) is known to have many implications of either analytic, geometric or even topological nature. For a general weighted Riemannian manifold $(M^n,\gfrak,\mu=e^{-V}\mu_{\gfrak})$ it was realized that an object which encapsulates the interaction of curvature with measure
is the generalized Ricci tensor defined by 
\eql{\label{def:ric_gmu} Ric_{\gfrak,\mu,N}(v):=\begin{cases} Ric_{\gfrak}(v)+Hess_{\gfrak}[V](v,v)-\frac{\Avg{\nab_{\gfrak} V}{v}^2}{N-n}&\mbox{ if } N\in (n,\infty)\\
Ric_{\gfrak}(v)&\mbox{ if } N= n\end{cases}
\,,}
where the case $N=n$ is possible only for $V=const$ (i.e. a non-weighted Riemannian manifold). 
As it turns out, the two conditions \eqref{HessCond} and \eqref{bddRiccicond} can be unified into a single condition
\eq{ Ric_{\gfrak,\mu,N}\geq K\gfrak \,,}
which applies to general weighted Riemannian manifolds, and not just to the marginal cases of Riemannian manifolds (signifying pure geometric effects) or weighted Euclidean spaces (signifying pure measure effects).
Here as before we interpret $N$ as a generalized  dimension. This condition reduces to conditions \eqref{HessCond} and \eqref{bddRiccicond} in the respective mentioned marginal cases; however, when we consider the general setting of weighted Riemannian manifolds, this condition leads to   extensions of many previously known results which were known to be correct for the marginal cases. 

As an instructive example of such a mixed measure-geometry result, one can show that the following generalized form of BM inequality holds on $(M^n,\gfrak,\mu=e^{-V}\mu_{\gfrak})$ if $Ric_{\gfrak,\mu,N}\geq K\gfrak$ with $N\in (n,\infty)$:  if $A,B\subset M$ are such that $\mu(A)\mu(B)>0$, $t\in [0,1]$, and $Z_t(A,B)$ is the set of points $\g(t)$, where $\g:[0,1]\to M$ is a length minimizing geodesic with $\g(0)\in A$ and $\g(1)\in B$, then
\eq{ \mu(Z_t(A,B))^{\frac{1}{N}}\geq \tau_{K,N}^{(1-t)}(\theta_K(A,B))^{\frac{1}{N}}\mu(A)^{\frac{1}{N}}+\tau_{K,N}^{(t)}(\theta_K(A,B))^{\frac{1}{N}}\mu(B)^{\frac{1}{N}}\,. }

Here $\theta_K(A,B)\in [0,diam(M)]$ are functions which depend on $K$ and the geometry of the sets, and $\tau_{K,N}^{(t)}(d)$ are some functions which for fixed $K\in \R$ and $N\in (n,\infty)$ depend continuously on $t\in [0,1]$ and $d\in [0,diam(M)]$; in addition $\tau_{0,N}^{(t)}(d)^{\frac{1}{N}}=t$, $\tau_{K,N}^{(0)}(d)=1-\tau_{K,N}^{(1)}(d)=0$. This suggests that the tensor $Ric_{\gfrak,\mu, N}$ accounts for both the measure $\mu$ and the curvature volume distortion effects. 
Moreover, one should notice that definition \eqref{def:ric_gmu} actually makes perfect sense for any $N\in (-\infty, \infty]$ as long as we consider $N=n$ only for measures $\mu=e^{-V}\cdot \mu_{\gfrak}$ where $V$ is constant, and we identify $\frac{1}{\infty}$ as $0$. We remark that it is common to refer to the 2-tensor $Ric_{\gfrak,\mu,\infty}:=Ric_{\gfrak}+Hess_{\gfrak}[V]$ as the Bakry-\'{E}mery tensor \cite{BE2}. 

We provide one more example (of historical importance) where the role of $Ric_{\gfrak,\mu,N}$ is manifested. We define $L_{\gfrak,\mu}$ to be the linear operator for which the following integration by parts formula holds
\eql{\label{eq:intByParts} \int_M L_{\gfrak,\mu}(f)hd\mu=\int_{M}\gfrak(\nab_{\gfrak} f, \nab_{\gfrak} h)d\mu \qquad \forall f,h\in C_0^{\infty}(M)\,.}
If $\mu=e^{-V}\mu_{\gfrak}$ then \pink{$\gls{L_g_mu}=-(\Delta_{\gfrak}-\gfrak(\nabla_{\gfrak} V, \nabla_{\gfrak}))$} where \green{$\Delta_g$} is \green{(the non-positive)} second-order diffusion operator known as the Laplace-Beltrami operator. In the particular case of $\mu=\mu_{\gfrak}$ we get $L_{\gfrak,\mu_{\gfrak}}=-\gls{Delta_g}$  which is known to satisfy the Bochner identity
\eq{ \half \Delta_{\gfrak}(|\nab_{\gfrak} f|^2)={\gfrak}(\nab_{\gfrak} f, \nab_{\gfrak}(\Delta_{\gfrak} f))+||Hess_{\gfrak}( f)||_{HS}^2+Ric_{\gfrak}(\nab_{\gfrak} f, \nab_{\gfrak} f)\,. }
Using this identity Lichnerowicz \cite{Lic} proved that if $(M,{\gfrak})$ is a compact and connected \pink{manifold}, $Ric_{\gfrak}(\cdot, \cdot)\geq K {\gfrak}(\cdot,\cdot)$ with $K>0$, and $\Lambda_{Poi}(M, {\gfrak},\mu_{\gfrak})$ is the first non-zero eigenvalue of \pinka{(minus)} the Laplacian $-\Delta_{\gfrak}$, then $\Lambda_{Poi}(M, {\gfrak},\mu_{\gfrak})\geq \frac{Kn}{n-1}$, which amounts to the \Poinc inequality 
\eq{ \int_M \prnt{f-\dashint_{M}fd\mu_{\gfrak}}^2d\mu_{\gfrak}\leq \frac{n-1}{Kn}\int_{M}|\nab_{\gfrak} f|^2d\mu_{\gfrak}\qquad \forall f\in C^{\infty}(M)\,. }
His estimate generalizes naturally to the framework of compact weighted manifolds $(M,{\gfrak},\mu)$ satisfying $Ric_{\gfrak,\mu,N}(\cdot, \cdot)\geq K {\gfrak}(\cdot,\cdot)$. We will briefly show how the condition on $Ric_{\gfrak,\mu,N}$ emerges. 

%The Poincare constant $\Lambda_{Poi}(M, {\gfrak},\mu_{\gfrak})$ is then identified as the first non-zero eigenvalue of $L_{\gfrak,\mu}=\Delta_{\gfrak}-\gfrak(\nab_{\gfrak}, \nab_{\gfrak})$. 

With respect to $L_{\gfrak,\mu}$ the following form of the Bochner identity holds 
\green{
\eq{ -\half L_{\gfrak,\mu}(|\nab_{\gfrak} f|^2)=-\gfrak(\nab_{\gfrak} f, \nab_g(L_{\gfrak,\mu} f))+||Hess_{\gfrak}( f)||_{HS}^2+Ric_{\gfrak,\mu,\infty}(\nab_{\gfrak} f, \nab_{\gfrak} f) \,.} }
Since $H_{ij}(f):=[Hess_{\gfrak}(f)]_{ij}$ satisfies the inequality \green{$Tr(H(f))^2=\prnt{\slim_{i=1}^n H_{ii}(f)}^2\leq n||H(f)||_{HS}^2 $}, and \green{$ Tr(H(f))=\Delta_{\gfrak}(f)=-L_{\gfrak,\mu}(f)+\gfrak(\nab_{\gfrak}V,\nab_{\gfrak} f)$} satisfies $\frac{(\Delta_{\gfrak} f)^2}{n}\geq \frac{(L_{\gfrak,\mu}f)^2}{N}-\frac{\gfrak(\nab_{\gfrak}f,\nab_{\gfrak} V)^2}{N-n}$ 
due to the inequality

\[ \frac{(a+b)^2}{n}=\frac{a^2}{N}-\frac{b^2}{N-n}+\frac{N(N-n)}{n}\prnt{\frac{a}{N}+\frac{b}{N-n}}^2\geq \frac{a^2}{N}-\frac{b^2}{N-n}\,, \] 
we can conclude the inequality 
\green{
\eq{ -\half L_{\gfrak,\mu}(|\nab_{\gfrak} f|^2)\geq -\gfrak(\nab_{\gfrak} f, \nab_{\gfrak}(L_{\gfrak,\mu} f))+\frac{(L_{\gfrak,\mu}f)^2}{N}+Ric_{\gfrak,\mu,N}(\nab_{\gfrak} f, \nab_{\gfrak} f)  \,.} }
integration over $M$ followed by usage of identity \eqref{eq:intByParts} gives: 
\eql{\label{LicIneq} 0\geq -\prnt{1-\frac{1}{N} }\int_M (L_{\gfrak,\mu}f)^2d\mu+\int_M Ric_{\gfrak,\mu,N}(\nab_{\gfrak} f, \nab_{\gfrak} f)d\mu\,.}
Let $f_1$ be a normalized eigenfunction of $L_{\gfrak, \mu}$, corresponding to its first non-zero eigenvalue $\Lambda_{Poi}(M, \gfrak,\mu)$. For $f=f_1$ holds the identity $$\int_M (L_{\gfrak,\mu}f_1)^2d\mu=\Lambda_{Poi}(M, \gfrak,\mu)\int_M f_1\cdot  L_{\gfrak,\mu}f_1 \,d\mu=\Lambda_{Poi}(M, \gfrak,\mu)\int_M \gfrak(\nab_{\gfrak} f_1, \nab_{\gfrak}f_1)d\mu\,.$$
Therefore if $Ric_{\gfrak,\mu,N}\geq K\cdot \gfrak$ substitution of 
 $f=f_1$ into \eqref{LicIneq} gives 
\eq{\prnt{\frac{N-1}{N}}\Lambda_{Poi}(M, \gfrak,\mu)\int_M |\nab_{\gfrak} f_1|^2d\mu\geq K\int_M |\nab_{\gfrak} f_1|^2d\mu\,,}
from which follows the Lichnerowitz' estimate: $\Lambda_{Poi}(M, \gfrak,\mu)\geq  \frac{KN}{N-1}$.

%$(M,g)$ is compact with $Ric_{g,\mu_g}(\cdot, \cdot)\geq K g(\cdot,\cdot)$ with $K>0$ and  is the first-non-zero eigenvalue of $L$, then $\Lambda_{Poi}(M, g,\mu)\geq \frac{KN}{N-1}$, which amounts to the Poincare inequality 
%\eq{ \int_M \prnt{f-\int_{M}fd\mu}^2d\mu\leq \int_{M}|\nab_g f|^2d\mu\qquad \forall f\in C^{\infty}(M)\,. }

\bigskip
In order to get a better feeling of Ricci curvature it is worth to complement the previous presentation by a local one, which concentrates on infinitesimally small volumes. Let $p\in M$ and let $e_1,...,e_n$ be an orthonormal basis for $T_pM$; it determines a chart $\Ncal$, referred to as `Riemannian normal coordinates', given by $\Ncal^j(q)=\Avg{exp_p^{-1}(q)}{e_j}$. Denote by $\g(t)=exp_p(tv)$ the geodesic with $\g(0)=p$ and $\g'(0)=v=\slim_{j=1}^nv^je_j\in UT_pM$. In these coordinates $\g$'s $j$'th coordinate component is $\Ncal^j\circ\g(t)=tv^j$ and $\g'(t)=\slim_{j}v^j\partial_{j}|_{\g(t)}$. Assume $\g:[0,l]\to M$ where $l$ is taken so that $\g(t)$ is not conjugate to $p$ for $t\in (0,l]$. w.l.o.g. we may assume that $e_n=v=\dot{\g}(0)$. 
Along $\g$ there is a Jacobi field $X$  with $X(0)=0$ and $\nab_{\dot{\g}}X(0)=w$. A geodesic variation $\g_s(t)$ which generates $X$ can be chosen to be $\g_s(t)=exp_p(t(v+sw))$ with $s\in (-\epsilon,\epsilon)$ for some $\epsilon>0$, and $X(t)$ is explicitly given by $\pd{\g_s}{s}(0,t)=t(d\, exp_p)_{tv}w$ (see \cite{Cha1} p.88-89).  The $n-1$ vectors $e_1,...,e_{n-1}$ determine $n-1$ Jacobi fields $\{X_j(t)\}_{j=1..n-1}$ along $\g(t)$ determined by the condition $X_j(0)=0$ and $\nab_{\dot{\g}}X_j(0)=e_j$. These are given explicitly by the relation
\eq{\partial_{j}|_{exp_p(tv)}=(d\,exp_p)|_{tv}(e_j)=t^{-1}X_j(t)\,.}
Along $\g(t)$ it holds that $\Avg{X_j}{\dot{\g}}=0$ (due to Gauss' lemma) and $\Avg{\nab_{\dot{\g}}X_j}{\dot{\g}}=0$ (since $\nab_{\dot{\g}}\Avg{\nab_{\dot{\g}}X_i}{\dot{\g}}=\Avg{\nab_{\dot{\g}}^2X_i}{\dot{\g}}=-\Avg{R_{\gfrak}(X_i,\dot{\g})\dot{\g}}{\dot{\g}}$ by the Jacobi equation, and the latter term is identically zero).
%then by the foregoing $\partial_j|_{exp_p(tv)}=\frac{1}{t}X_j(t)$.
 %Assuming $p$ is not conjugate to $\g(t)$ for $t\in (0,1)$, 
For $t\in (0,l)$ the Jacobi fields $X_1,..,X_{n-1},X_n:=\dot{\g}(t)$ form a basis for $T_{\g(t)}M$. We get the following relation between the metric and the Jacobi fields correlation matrix $\tilde{\A}_{jk}:=\Avg{X_j}{X_k}_{1\leq j,k\leq n-1}$ :
\eq{ \gfrak_{jk}(exp(tv))=t^{-2}\tilde{\A}_{jk}(t)\qquad j,k=1..n\,. }
The following expansion is easily verified; it follows from standard Taylor expansion of Jacobi fields (see \cite{Cha1} p.90):
\eq{ \tilde{\A}_{ij}(t)=t^2\Avg{e_j}{e_k}-\frac{t^4}{3}\Avg{R_{\gfrak}(v,e_j)v}{e_k}+O(t^5)\,. }
Therefore $\gfrak_{jk}(exp_p(tv))=t^{-2}\tilde{\A}_{ij}(t)=\delta_{jk}-\frac{t^2}{3}\Avg{R(v,e_j)v}{e_k}+O(t^3)$. Thus using the matrix identity $det(\tilde{A}(t))=1+tTr(\tilde{A}'(0))+O(t^2)$ we conclude that as $t\to 0$:
\eq{det(\gfrak_{jk}(exp_p(tv)))=1-\frac{t^2}{3}Ric_{\gfrak}(v,v)+O(|t|^3)  \,.}
Thus in the specified coordinates we can express the volume form around 0 as $d\mu_g=\sqrt{\det(\gfrak_{ij})}d\mu_{Eucl}=\prnt{1-\frac{1}{6}R_{jk}v^jv^k+O(|v|^3)}d\mu_{Eucl}$, where $R_{jk}$ stand for the components of $Ric_{\gfrak}$. Therefore $Ric_{\gfrak}$ quantifies to leading order the distortion of the volume form from being Euclidean. However for applications this relation is not very useful, and it is preferable to study the $(n-1)\times (n-1)$ matrix valued function $\A_{jk}(t):=\Avg{X_i}{X_j}(t)$; in addition we introduce two more matrix valued functions: $ \U_{jk}:=\half (\A'\A^{-1})_{jk}$ and $\Rcal_{jk}:=\Avg{R_{\gfrak}(J_j,\dot{\g})\dot{\g}}{J_k} $. 

 Starting from the identity $(det \A)'=Tr(adj(\A)A')=(det \A) Tr\prnt{\A^{-1}A'}$  we conclude that 
\eq{ [(det \A)^{\frac{1}{2(n-1)}}]'=\frac{1}{n-1}(det(\A))^{\frac{1}{2(n-1)}}Tr(\U) \,.}
By taking a second derivative we get the identity: 
\eq{ [(det \A)^{\frac{1}{2(n-1)}}]''=\frac{1}{(n-1)^2}(det(\A))^{\frac{1}{2(n-1)}}Tr(\U)^2+\frac{1}{n-1}(det(\A))^{\frac{1}{2(n-1)}}Tr(\U') \,.}
One can show that $\U$ is symmetric \cite{Oht2}; then by the Cauchy-Schwarz inequality $Tr(\U^2)\geq \frac{Tr(\U)^2}{n-1}$ it follows that 
\eql{ \label{diffEqnA}[(det \A)^{\frac{1}{2(n-1)}}]''\leq \frac{1}{n-1}(det(\A))^{\frac{1}{2(n-1)}}\prnt{Tr(\U^2)+Tr(\U')}\,.}

To interpret the RHS, note that it can be verified \cite{Oht2} that $\U$ satisfies the Riccati type matrix equation  $\U'+\U^2+\Rcal \A^{-1}=0$.
 $\A$ is the matrix representation for the restriction $\bar{\gfrak}$ of $\gfrak$ to $\dot{\g}^{\bot}$, which is spanned by the basis $\{X_j\}_{j=1..n-1}$. Hence we can identify $Tr(\Rcal(t)\A^{-1}(t))=Tr(\bar{\gfrak}^{-1}\Rcal_{jk})=Ric_{\gfrak}(\dot{\g})$, whence $(tr\, \U)'+tr(\U^2)+Ric_{\gfrak}(\dot{\g})=0$. 

  %One can show that $\U$ is symmetric, $\U \A=\A\U^t$, and it satisfies  
%From the inequality $tr(\U^2)\geq \frac{(tr\, \U)^2}{n-1}$ it follows that
%\eq{ (tr\,\U)'+\frac{(tr\,\U)^2}{n-1}+Ric(\dot{\g})\leq 0}
If $Ric_{\gfrak}\geq K\cdot \gfrak$ then according to \eqref{diffEqnA} the function $J(t):=\sqrt{det{A}}(t)$ satisfies the following differential inequality 
\eql{\label{eq:Aineq} \dderiv{}{t}\Prnt{J^{\frac{1}{n-1}}}\leq -\frac{Ric_{\gfrak}(\dot{\g})}{n-1}J^{\frac{1}{n-1} }\leq -\frac{K}{n-1}J^{\frac{1}{n-1} }\qquad J(0)=0,\, J'(0)=1 \,.}
If $s_{K,n}$ is the solution to the equation
\eq{ \dderiv{}{t}\Prnt{s_{K,n}}= -\frac{K}{n-1}s_{K,n} \qquad s_{K,n}(0)=0,\, s_{K,n}'(0)=1\,, }
then we can conclude that $\frac{J^{\frac{1}{n-1}}}{s_{K,n}}$ is non-increasing, which is a way of expressing the change in $\sqrt{det(\gfrak_{ij})}$ along $\g$ (geometrically - expansions/contractions orthogonal to $\dot{\g}$).  Many other insights can be obtained from \eqref{eq:Aineq}, in particular one can derive the Brunn-Minkowski inequality. 

A more general analysis (see \cite[p.383]{Vil} or \cite{KTS3}) which incorporates a general measure $\mu=e^{-V}\mu_{\gfrak}$ and effective dimension $N\in (-\infty, \infty]$ (with the usual interpretation when $N=n$), shows that a variant \green{$J_{\mu}(t):=J(t)\cdot e^{-V(\g(t))+V(\g(0))}$}  of the previous determinant satisfies the following analogous inequality:
\eql{\label{DetermIneq} \dderiv{}{t}\Prnt{J_{\mu}^{\frac{1}{N-1}}}\leq -\frac{K}{N-1}J_{\mu}^{\frac{1}{N-1} } \,,}
where $J_{\mu}(0)=0,\,J_{\mu}'(0)=Id$. These determinants and their ODEs are paramount to this work, however in subsequent chapters they will appear in a rather axiomatic form, without referring to their origins and their geometric interpretation.

\subsection{Functional Inequalities}
Before embarking with the principle methods and results we provide some background regarding the functional inequalities which will be studied in this manuscript. Our objective is to survey past developments, interesting implications and finally our contributions.

Throughout this section we assume $(M^n,\gfrak)$ denotes a connected, smooth n-dimensional Riemannian manifold whose boundary $\partial M$ is either empty or locally-convex  (i.e., the second fundamental form \green{
$\mathrm{I\!I}(X,Y):=\gfrak(\nab_{X} \nu,Y)$, where $\nu$ is the exterior unit normal to $\partial M$,} is positive semi-definite on $\partial M$), and $\mu$ is a finite measure on $M$, with smooth positive density $e^{-V}$  w.r.t. the Riemannian measure $\mu_{\gfrak}$. As we stated in the introduction the main results of this work concern with three types of functional inequalities on weighted manifolds $(M,\gfrak,\mu)$ (and respectively their optimal constants):
\begin{enumerate}
	\item \Poinc inequality\pink{;}
	\item $p$-\Poinc inequality for $p\in (1,\infty)$\pink{;}
	\item Log-Sobolev inequality.
\end{enumerate}

The \Poinc inequality is a particular case of a $p$-\Poinc inequality, yet for general $p\in (1,\infty)\setminus \{2\}$ certain methods do not apply, hence we employ different directions, which are still productive but at the price of less generality.
As we previously mentioned to each of these inequalities we can associate constants $\Lambda_{Poi}(M,\gfrak,\mu), \Lambda_{Poi}^{(p)}(M,\gfrak,\mu)$ and $\Lambda_{LS}(M,\gfrak,\mu)$ (the \green{Poincar\'{e}, $p$-\Poinc and Log-Sobolev constants} respectively) which are maximal among the constants for which these inequalities hold. From a panoramic viewpoint the main goals can be simply stated: given $K$ and $N$ s.t. $Ric_{\gfrak,\mu,N}\geq K$ and diameter $Diam(M)\leq D$ where $0<D\leq \infty$, find the maximal lower bounds for the $\Lambda_{*}$ constants. These lower bounds will naturally be functions  of $K$, the effective dimension $N$ and the diameter bound $D$.  The discussion below will render this goal into a more concrete problem. The main sources for this presentation are \cite{Che1}, \cite{BGL} and also \cite{Val}. 

\subsubsection{\Poinc Inequalities}
%if \partial M non empty then convex boundary
We say that $(M,\gfrak,\mu)$ satisfies a \Poinc (or spectral-gap) inequality with constant $C$, and denote this by  $Poi(C)$, if
\eql{ \label{eqn:defPoiIneq} C\cdot Var_{\bar{\mu}}(f)\leq \int_M |\nab f(x)|^2d\bar{\mu}(x) \qquad \forall f\in C_c^{\infty}(M)\,, }
where $\bar{\mu}:=\frac{1}{\mu(M)}\mu$ and $Var_{\bar{\mu}}(f):=\int_Mf^2(x)d\bar{\mu}(x)-\prnt{\int_Mf(x)d\bar{\mu}(x)}^2$. Equivalently
\eql{\label{eqn:defPoiIneq2} C \int_Mf^2(x)d\mu(x)\leq \int_M |\nab f(x)|^2d\mu(x) \qquad \forall f\in \F_{Poi}(M,\mu)}
where 
$$\F_{Poi}(M,\mu):=\cprnt{0\not\equiv f\in \Cinf_c(M)\quad \text{s.t.}\quad \int_Mf(x)d\mu(x)=0}\,. $$

If $C_1>C_2$ then clearly $Poi(C_1)$ implies $Poi(C_2)$. We define $\Lambda_{Poi}(M,\gfrak,\mu)$ to be the least upper bound for the set of $C$ s.t. $Poi(C)$ is valid. It is characterized as the solution to the following variational problem:
\[ \Lambda_{Poi}(M,\gfrak,\mu)=\inf\cprnt{ \frac{\int_M|\nabla_{\gfrak} f(x)|^{2}d\mu(x)}{\int_M |f(x)|^{2} d\mu(x)}:\,\,\, f\in\F_{Poi}(M,\gfrak,\mu) }\,.\]

The name spectral-gap inequality stems from its relation to the bottom of the spectrum of the natural operator $L_{\gfrak,\mu}=-\gls{Delta_g_mu}:=-\prnt{\Delta_{\gfrak}-\Avg{\nab_{\gfrak} V}{\nab_{\gfrak}}}$ presented before; the Euler-Lagrange equation for \eqref{eqn:defPoiIneq} is $L_{\gfrak,\mu}f=\lambda f$ ($\lambda$ can be identified as a Lagrange multiplier). It is a positive semi-definite symmetric operator on the dense subset $\Cinf_c(M)\subset \D(L_{\gfrak,\mu})\subset L^2_{\mu}(M)$, where $\D(L_{\gfrak,\mu})$ is the maximal domain for $L_{\gfrak,\mu}$. It admits a self-adjoint extension, but this need not be unique.
%p.221!,223, 476
 When the extension is unique, namely the closure (w.r.t. the graph norm), we refer to it as essentially-self-adjoint (e.s.a). With $\D_0=\Cinf_c(M)$ e.s.a is guaranteed when $\partial M =\emptyset$ and $M$ is complete; the same is true if $M$ is compact with boundary $\partial M\neq \emptyset$ with $\D_0=\Cinf_{c,Neu}(M):=\{f\in C^{\infty}(M):\partial_{\nu}f=0\,\,\text{on}\,\,\partial M   \}$ (cf. \cite{Tay1} ch.8). Being e.s.a its self-adjoint extension is unique, and therefore we can unequivocally speak of its spectrum $\sigma$. However in the latter case the minimizer of \eqref{eqn:defPoiIneq}, due to the free boundary conditions, belongs to $\Cinf_{c,Neu}(M)$, hence the spectrum of the operator $\overline{L_{\gfrak,\mu}}$ encapsulates the best constant in the inequality \eqref{eqn:defPoiIneq} despite that the test functions are taken from $C_c^{\infty}(M)$.  
$\overline{L_{\gfrak,\mu}}$ is self-adjoint and non-negative hence $\sigma(\overline{L_{\gfrak,\mu}})\subset\R_{+}$. Under $Poi(C)$ it holds that $\sigma(\overline{L_{\gfrak,\mu}})\subset \{0\}\cup [C,\infty)$; this is why we interpret $Poi(C)$ as a gap in the spectrum of $\overline{L_{\gfrak,\mu}}$. The spectrum is divided into two disjoint sets $\sigma_{d}(\overline{L_{\gfrak,\mu}})$ and $\sigma_{ess}(\overline{L_{\gfrak,\mu}})$. The former is the discrete spectrum, constituted of isolated eigenvalues with corresponding finite-dimensional eigenspaces; the latter, which contains all other points of the spectrum, is the essential spectrum. The spectrum is said to be discrete if $\sigma_{ess}(\overline{L_{\gfrak,\mu}})=\emptyset$. This is the case for compact manifolds, where we can identify $C$ with $\lambda_1$, the first non-zero eigenvalue of $\overline{L_{\gfrak,\mu}}$. Due to our assumption on  finiteness of the measure $\mu$, the constant function $1$ is an eigenvector for $\overline{L_{\gfrak,\mu}}$ corresponding to the eigenvalue $0$, and therefore $Poi(C)$ indeed implies a gap in the spectrum.  

We mention a few consequences of $Poi(C)$ with $C>0$ (following \cite{BGL}):
\begin{enumerate}
	\item {\bf Exponential measure concentration} The following concentration inequality holds: for any Lipschitz function $f$ with Lipschitz constant $||f||_{Lip}$:
	\eq{ \bar{\mu}\prnt{|f-\int_M fd\bar{\mu}|\geq r}\leq 6e^{ -r\sqrt{C}||f||_{Lip} } \qquad \forall r\geq 0\,.}
	Therefore Lipschitz functions `mostly' don't deviate much from their expectation. 
\item {\bf Exponential integrability} For every 1-Lipschitz function $f$ and every $s<\sqrt{4C}$: $\int_Me^{sf}d\bar{\mu}<\infty$ .
\item {\bf Exponential variance decay} If $(P_t)_{t\geq 0}$, where \green{ $P_t:=e^{-tL_{\gfrak,\bar{\mu}}}$}, is the semi-group of operators  \green{ generated by}  $L_{\gfrak,\bar{\mu}}$ then for every $f\in L^2(\bar{\mu})$ and every $t\geq 0$: 
\[ Var_{\bar{\mu}}(P_tf)\leq e^{-2Ct}Var_{\bar{\mu}}(f)\,. \]
Therefore, $\Lambda_{Poi}(M,\gfrak,\bar{\mu})$ quantifies the convergence rate to `equilibrium'. 

\end{enumerate}

\red{We mention a few examples of known \Poinc inequalities (\cite{BGL}): }
\begin{enumerate}
  \item On $(\R_{+}, \mu(x)=e^{-x}m)$ holds $Poi(\frac{1}{4})$.
	\item On $(\R^n,\mu=e^{-V}m)$ with $\mu$ a log-concave probability measure, holds $Poi(C)$ for some $C>0$. In particular if $Ric_{\gfrak_{Euc},\infty}=Hess[V]\geq K\cdot  Id$ with $K>0$ then $Poi(K)$ holds. 
%	\item On the  Hyperbolic space $\Lambda_{Poi}(\Hcal^n)=\frac{(n-1)^2}{4}$ ($\mu_{\gfrak_{\Hbb^n}}$ is not a finite measure, yet formulation \eqref{eqn:defPoiIneq2} of the \Poinc inequality is still meaningful).
	\item \blue{On $S^n(r_{K,n})$ (the $n$-dimensional sphere in $\R^{n+1}$ of radius $r_{K,n}:=\sqrt{\frac{n-1}{K}}$, so that $Ric_{\gfrak}=K\cdot \gfrak$) holds $Poi(\frac{Kn}{n-1})$, and this is sharp. Since on $S^n(1)$ all sectional curvatures equal $1$, so that $Ric_{\gfrak}=(n-1)\cdot\gfrak$, the Lichnerowicz estimate implies $\Lambda_{Poi}(S^n(1), \gfrak_{S^n(1)})\geq n$. This shows that the Lichnerowicz estimate is sharp. Moreover a theorem of Obata states that up to isometries the sphere is the only compact manifold for which the \Poinc constant and the Lichnerowicz lower bound coincide.}
	\item In general if $Ric_{\gfrak,\mu,N}\geq K$ with $K>0$, $1\neq N\in (-\infty, 0)\cup [n,\infty]$,  then the generalized Lichnerowicz estimate $\Lambda_{Poi}(M, \gfrak,\mu)\geq \frac{KN}{N-1}$ holds \cite{Mil5, Oht1}; in \cite{Mil5} it is shown that sharpness of the constant holds also when $N\in (-\infty, -1]$. As we mentioned for $1<n\in \Nbb$ the number $\frac{Kn}{n-1}$ stands for $\Lambda_{Poi}(S^n(r_{K,n}))$. 	
	
\end{enumerate}

The parameters which determine the estimates above are essentially the Ricci lower bound $K$ and the effective dimension $N$. With additional data finer estimates can be derived. This motivates the inclusion of a third parameter, $D$, which stands for an upper bound on the diameter of $M$. This is the most natural parameter to add, considering our knowledge of the spectrum of a 1d string or a 2d membrane. More generally, a classical result of Payne and Weinberger from the 60's \cite{PaWe} states that if $A\subset \R^n$ is a convex domain whose diameter is bounded by $D>0$, then $\Lambda_{Poi}(A, \gfrak_{Euc}, \mu_{Leb})\geq \frac{\pi^2}{D^2}$; it is optimal since it is approached by a sequence of parallelepipeds $A_i$, which degenerate into an interval by the shrinkage along all their dimensions but one.

We remark that we could have concluded $Poi(C)$ on $A$ for some $C>0$ even without the theorem just by considering the log-concave measure $\mu=e^{-V}m$ with $V(x)=1$ if $x\in A$, and $V(x)=\infty$ otherwise, however this gives no information on how large can the spectral-gap be. 

For general compact manifolds $(M^n, \gfrak, \mu_{\gfrak})$ (i.e. $N=n$) such that $Ric_{\gfrak}\geq K \gfrak$ lower bounds for $\Lambda_{Poi}(M,\gfrak,\mu_{\gfrak})$ are abundant. Table \ref{table:2} should give \pink{quite} a comprehensive overview of the development of these estimates  under different assumptions on the pertinent parameters.

\begin{changemargin}{0.0cm}{0cm} 
\begin{table}[H]
\begin{tabular}{ |p{3cm}||p{1.2cm}||p{3.5cm}|p{2.6cm}|p{3.9cm}|  }
 %\hline
% \multicolumn{5}{|c|}{Noteable lower bounds for $\Lambda_{Poi}(M^n,g,\mu_g)$} \\
 \hline
 Author & Year & Lower Bound& Conditions& Parameters\\
 \hline
A. Lichnerowicz \cite{Lic}  & 1958   &$K_n:=\frac{K}{1-\frac{1}{n}}=\frac{nK}{n-1}=\Lambda_{Poi}(S^n(1))$&  $K> 0$& \\
\hline
P.H. Berard, G. Besson,S. Gallot &  1985  & $n\prnt{\frac{\int_0^{\pi/2}\cos^{n-1}(x)dx}{\int_0^{D/2}\cos^{n-1}(x)dx}}^{\frac{2}{n}}$ & $K=n-1>0$   &  \\
\hline
 P. Li,S.T.Yau \cite{LY}& 1980 & $\frac{\pi^2}{2D^2}$ &  $K\geq 0$ &\\
\hline
 J.Q. Zhong,H.C. Yang \cite{ZhY}  & 1984 & $\frac{\pi^2}{D^2}$&  $K\geq 0$ &\\
\hline
D.G. Yang&   1999  & $\frac{\pi^2}{D^2}+\frac{K}{4}$& $K\geq 0$ &\\
\hline\hline
 P. Li,S.T.Yau \cite{LY}& 1980  & $\frac{1}{D^2(n-1)exp[1+\sqrt{1+16\alpha^2}]}$   &$K\leq 0$&$\alpha=\half D\sqrt{|K|(n-1)}$\\
\hline
 K.R. Cai & 1991  & $\frac{\pi^2}{D^2}+K$ &$K\leq 0$ & \\
\hline
 D. Zhao& 1999  & $\frac{\pi^2}{D^2}+0.52K$ & $K\leq 0$ & \\
\hline
H.C. Yang,F. Jia & 1990/1 & $\frac{\pi^2}{D^2}e^{-\alpha}$ & $n\geq 5,\, K\leq 0$ & $\alpha=\half D\sqrt{|K|(n-1)}$\\
\hline
H.C. Yang,F. Jia &1990/1 & $\frac{\pi^2}{2D^2}e^{-\alpha'}$ & $2\leq n\leq 4,\,K\leq 0$&$\alpha'=\half D\sqrt{\min\{|K|(n-1),2\}}$\\
 \hline\hline
M. Chen,E. Scacciatelli,L. Yao \cite{Che3} &2002 & $\frac{\pi^2}{D^2}+\frac{K}{2}$ & $K\in\R$& \\
 \hline
\end{tabular}
\caption{Notable lower bounds for $\Lambda_{Poi}(M^n,\gfrak,\mu_{\gfrak})$.}\label{table:2}
\end{table}
\end{changemargin}
The Lichnerowicz estimate is sharp but is independent of $D$. The Zhong-Yang estimate is also sharp but is independent of $K\geq 0$ and the dimension. Clearly an estimate which incorporates all the available data is ideal.
The last estimate, which clearly improves some of the previous results, was derived (with the assistance of a computer) by  substitution of very complicated test functions  into the Chen-Wang formula (1997) \cite{ChWa} (for manifolds with convex boundary and Neumann boundary conditions):
\[\lambda_1\geq \sup_{f\in \F}\inf_{r\in (0,D)}\frac{4f(r)}{\int_0^r \co(s)^{-1}ds\int_s^D \co(x)f(x)dx}:=\beta_1\,, \]
where $\co(x)=\cosh^{n-1}\prnt{\frac{r}{2}\sqrt{\frac{-K}{n-1}}}$ ($r\in (0,D), K\in \R$) and $\F=\{f\in C([0,D]): \, f>0 \text{ on } (0,D)\}$. 
Surprisingly, the representative test function has a \pink{quite}  simple form \cite{Che1}: $f(r)=\prnt{\int \co(s)^{-1}ds}^{\gamma}$ ($\gamma\geq 0$). 
If $Ric_{\gfrak}\geq K$, with no further assumption on $K$ (i.e. $K\in \R$), the coefficient $\half$ in the last estimate corresponds to the optimal estimate we can get from estimates of the form $\lambda_1\geq \frac{\pi^2}{D^2}+t K$ with $t\in \R$ \cite{Che1} (a linear interpolation between the Lichnerowicz estimate to the Zhong-Yang estimate) . 

In addition Chen (2000) \cite{Che2} showed that $4A^{-1}\geq\beta_1\geq A^{-1}$ where
\[ A=\sup_{r\in (0,D)}\prnt{\int_0^r\co(x)^{-1}dx}\prnt{\int_r^D\co(x)dx}\,. \]
 The lower and upper bound correspond to $\gamma=\half$ and $\gamma=1$ respectively.

%The previously mentioned Payne-Weinberger estimate for convex bodies in $\R^n$ relied on comparison with an eigenvalue of an associated Sturm-Liouville problem. 
In 1992 a major progress in deriving estimates was established by P. Kr\"{o}ger \cite{Kro} for compact manifolds $(M,\gfrak,\mu_{\gfrak})$ satisfying $Ric_{\gfrak}\geq K$. To understand this approach we go back to the work of Payne and Weinberger (1960) \cite{PaWe}. Their estimate $\Lambda_{Poi}(K, m)\geq \frac{\pi^2}{D^2}$ for convex bodies $K$ in $\R^n$ emerged by comparison with an eigenvalue of a 1d vibrating string with fixed ends at two points a distance $D$ apart (the ends are fixed because they applied a Liouville transformation turning the problem from Neumann to Dirichlet). 
Using a gradient comparison approach as in \cite{LY} Kr\"{o}ger showed that (sharp) lower bounds for $\Lambda_{Poi}(M,\gfrak,\mu_{\gfrak})$ can be identified as eigenvalues of associated SL problems. In 2000 Kr\"{o}ger's results were extended by Bakry and Qian \cite{BaQi} to weighted manifolds $(M,\gfrak,\mu)$ with $N\in (n,\infty]$ satisfying $Ric_{\gfrak,\mu,N}\geq K$. These new estimates are sharp, i.e. best possible, and can all be expressed as
$\Lambda_{Poi}(M,\gfrak,\mu)\geq \lambda_{K,N,D}$ where $\lambda_{K,N,D}$ stands for the first positive eigenvalue of a Sturm-Liouville eigenvalue problem 
\[  f''(x)+\frac{J_{K,N}'(x)}{J_{K,N}(x)}f'(x) =-\lambda f(x)\qquad f'(-\frac{D_{K}}{2})=f'(\frac{D_{K}}{2})=0\,,\]
where $D_{K}=D$ if $K\leq 0$ and to $\min\{\frac{\pi}{\sqrt{\frac{K}{N-1}}}, D\}$ if $K>0$, and $J_{K,N}(x)$ is a smooth positive function on $(-\frac{D_{K}}{2},\frac{D_{K}}{2})$ which for $N\in [n,\infty)$   satisfies the equation (cf. \eqref{DetermIneq})
\eql{\label{eqn:J} \dderiv{}{t}\Prnt{J_{K,N}^{\frac{1}{N-1}}}= -\frac{K}{N-1}J_{K,N}^{\frac{1}{N-1} }  \qquad J_{K,N}(0)=1\,.}
The case $N=\infty$ also fits naturally to this framework, but for simplicity we don't discuss it in this introductory section. 
For each $K\in \R$ and $N\in [n,\infty)$ there are many solutions to \eqref{eqn:J}, distinguished by the specification of $J_{K,N}'(0)$; the `right' specification of  $J_{K,N}'(0)$ for the spectral-gap problem is the outcome of the Kr\"{o}ger and Bakry-\'{E}mery analysis, who showed that the `worst' case (i.e. a minimal gap) we can obtain from  the class of densities which satisfy \eqref{eqn:J}, is attained when $J_{K,N}'(0)=0$; a  case which is characterized by that $J_{K,N}(x)$ being symmetric around the origin ($J(x)=J(-x)$ for every $x\in (-\frac{D_{K}}{2},\frac{D_{K}}{2})$).

As we mentioned this viewpoint incorporates the previous sharp estimates of Lichnerowicz and Zhong-Yang according to the identifications specified in the introduction section (specifically the discussion after stating Theorems \ref{thm:ZhongYang} and \ref{thm:Lichnerowicz}). Notice that when $Ric_{\gfrak}\geq K$ the Lichnerowicz classical estimate emerges as the first non-zero eigenvalue of the SL problem above with $J_{K,n}(x)=\cos^{n-1}(\frac{x}{r_{K,n}})$ where $r_{K,n}=\sqrt{\frac{n-1}{K}}$ (as defined above), which solves \eqref{eqn:J} with initial conditions $J_{K,n}(0)=1$, $J_{K,n}'(0)=0$ and $D\geq \pi r_{K,n}=\pi\sqrt{\frac{n-1}{K}}$  (the maximal diameter according to the Bonnet-Myers theorem, whence $D_K=\pi \sqrt{\frac{n-1}{K}}$), while if $K\geq 0$ the Zhong-Yang estimate emerges with $J_{0,n}(x)=1$ which solves \eqref{eqn:J} with initial conditions $J_{0,n}(0)=1$, $J_{0,n}'(0)=0$, and $D_K=D$.
At present a generalization of these estimates to weighted manifolds with $Ric_{\gfrak,\mu,N}\geq K$ and $N\in (-\infty,0]$ is missing. As we mentioned in the introduction, we close this gap by introducing a general method which will also be applied to study the two other functional inequalities.

 %To clarify this point notice that
 %when $K>0$ a theorem of Bonnet and Myers states that $diam(M)\leq \frac{\pi}{\sqrt{\delta}}$ where $\delta:=\frac{K}{N-1}$; 

%We remark that one of the achievements of our work is complementing their results, by deriving new (sharp) estimates pertaining to measures for which $N\leq 0$. 

\subsubsection{p-\Poinc Inequalities}
The classical \Poinc inequality discussed before is a particular case of a general class of inequalities to which we refer as $p$-\Poinc inequalities, whose `optimal' constant $\Lambda_{Poi}^{(p)}(M,\gfrak,\mu)$ is related to the `eigenvalues' of a non-linear operator known as the $p$-Laplacian. Let $p\in (1,\infty)$, then 
we refer to the inequality 
\eql{\label{eqn:defPoiIneq2} C \int_M|f(x)|^pd\mu(x)\leq \int_M |\nab_g f(x)|^pd\mu(x) \qquad \forall f\in C_c^{\infty}(M)\quad \text{s.t.}\quad \int_M |f(x)|^{p-2}f(x)d\mu(x)=0\,,} 
as a $p$-\Poinc inequality with constant $C$, and denote it by $Poi^{(p)}(C)$. The $p$-\Poinc constant $\Lambda_{Poi}^{(p)}(M,\gfrak,\mu)$ is defined as 
\[ \Lambda_{Poi}^{(p)}(M,\gfrak,\mu)=\inf_{f\in\F_{Poi}^{(p)}(M,\mu) }\ \frac{\int_M|\nabla f(x)|^{p}d\mu(x)}{\int_M |f(x)|^{p} d\mu(x)}  \,,\]
where
\[ \F^{(p)}_{Poi}(M,\mu) :=\cprnt{0\not\equiv f\in\Cinf_c(M)\quad \text{ s.t. }\quad \int_M |f(x)|^{p-2}f(x)d\mu(x) =0}\,.\]
 When $p=2$ we will refer to the $p$-\Poinc constant and to the domain simply by $\Lambda_{Poi}(M,\gfrak,\mu)$ and $\F_{Poi}(M,\mu)$ respectively. In case $(M,\gfrak,\mu)=(\R,|\cdot |,\xi)$ (i.e. $\mu=\xi$ is a measure supported in $\R$) we abbreviate and write $\Lambda_{Poi}(\xi)$ and $\F_{Poi}(\xi)$.

By considering the Euler-Lagrange equation, $\Lambda_{Poi}^{(p)}(M,\gfrak,\mu)$ can be identified as the smallest positive number $\lambda$ for which there is a non-zero $f\in W^{1,p}$ which satisfies the following half-linear equation in the weak sense 
\eq{  &\Delta_{p,\mu} f(x) :=p(x)^{-1}\nabla \cdot (p(x) |\nabla f|^{p-2}\nabla f(x))= -\lambda|f(x)|^{p-2}f(x) \qquad \text{ on M}\\
       &\nabla f(x)\in \prnt{T_x(\partial M)}^{\bot} }
where $p(x)=e^{-V(x)}=\deriv{\mu}{\mu_g}$. 

Unlike the case $p=2$, explicit lower bounds for the $p$-\Poinc under $Ric_{\gfrak}\geq K$ are scarce. In table \ref{table:3} below \red{several important  known} results are listed.

\begin{changemargin}{0.0cm}{0cm} 
\begin{table}[H]
\centering
\begin{tabular}{ |p{3cm}||p{0.9cm}||p{3.5cm}|p{3.4cm}|p{2cm}|  }
 %\hline
 %\multicolumn{4}{|c|}{Noteable lower bounds for $\Lambda^{(p)}_{Poi}(M^n,\gfrak,\mu_{\gfrak})$ where $p>1$} \\
 \hline
 Author & Year & Lower Bound& Conditions\\
 \hline
A.M.Matei \cite{Mat}   & 2000  &$\Lambda_{Poi}^{(p)}(S^n(r_{K,n}))$&  $K>0$\\
\hline
S. Kawai, N. Nakauchi \cite{KN} & 2003 & $\frac{1}{p-1}\prnt{\frac{\pi_p}{4D}}^p$ & $K=0, p\geq 2$  \\
\hline
H. Zhang \cite{Zha} & 2007 & $(p-1)\prnt{\frac{\pi_p}{2D}}^p$ & $K= 0$\, \text{but} \, $\exists x\in M: \, Ric_{\gfrak}(x)>0$\\
\hline
Valtorta \cite{Val}& 2012 & $(p-1)\frac{\pi_p^p}{D^p}$ & $K=0$\\
 \hline
\end{tabular}
\caption{Notable lower bounds for $\Lambda^{(p)}_{Poi}(M^n,\gfrak,\mu_{\gfrak})$ where $p>1$.}\label{table:3}
\end{table}
\end{changemargin}
where $\pi_p=\frac{2\pi}{p\sin(\pi/p)}$ and $r_{K,n}:=\sqrt{\frac{n-1}{K}}$. We remark that in \cite{Mat} it is also proved that for $K>0$ a generalized Obata theorem holds, which states that \\$\Lambda_{Poi}^{(p)}(M,\gfrak,\mu_{\gfrak})= \Lambda_{Poi}^{(p)}(S^n(r_{K,n}),\gfrak_{S^n(r_{K,n})},\mu_{\gfrak})$  iff $M$ is isometric to $S^n(r_{K,n})$. In the same work it is also proved that  $\Lambda_{Poi}^{(p)}(M,\gfrak,\mu_{\gfrak})\geq \prnt{\frac{\hfrak_{Che}}{p}}^p$, where $\hfrak_{Che}$ stands for the Cheeger isoperimetric constant of $M$  defined by $\hfrak_{Che}:=\inf_{E\subset M:\, 0\leq \mu_{\gfrak}(E)\leq \half\mu_{\gfrak}(M)}\frac{\mu^{n-1}_{\gfrak}(\partial E)}{\mu_{\gfrak}(E)}$ (the classical Cheeger inequality corresponds to $p=2$ ). 

The last estimate by Valtorta is also sharp (i.e. best possible). 
More generally it was proved by Naber and Valtorta \cite{NaVa} that 
 for $K\leq 0$ and $D\in (0,\infty)$ the sharp estimate 
$\Lambda_{Poi}^{(p)}(M,\gfrak,\mu_{\gfrak})\geq \lambda_{K,n,D}$ holds, where $\lambda_{K,n,D}$ stands for the first positive Neumann eigenvalue on $[-\frac{D}{2},\frac{D}{2}]$ of the eigenvalue problem 
\[ \deriv{}{x}(|f|^{p-2}(x)f(x))+\frac{J_{K,n}'(x)}{J_{K,n}(x)}|f'|^{p-2}(x)f'(x)=-\lambda |f|^{p-2}(x)f(x)\,, \]
where $J_{K,n}(x)=\cosh\prnt{\sqrt{-\frac{K}{n-1}} x}^{n-1}$. For $K< 0$ this lower bound (despite being sharp) is not attained \cite{NaVa}.  The formulation of these estimates clearly resembles the Kr\"{o}ger-Bakry-Qian estimates.

Generalization of these estimates to weighted manifolds with $N\in (-\infty,0]\cup [n,\infty]$ is still missing. We present an approach which is useful to study the range $N\in (n,\infty]$ (with the potential of being generalized to $N\leq 0$). 
%Currently our result, specifically the theorem regarding monotonicity properties of the first eigenvalue $\lambda$ as we modify the boundary conditions defining the densities $J_{K,n}$, applies only to $N\in [n,\infty]$. We remark that this project was of least importance, therefore the proof is given in significant less detail than the ordinary Poincare inequality. 

\subsubsection{Log-Sobolev inequalities (LSI)}
These type of inequalities were firstly introduced by L.Gross \cite{Gro} in order to simplify the proof of previous  hypercontractive estimates derived by E. Nelson \cite{Nel}. 

Define by $\bar{\mu}:=\frac{1}{\mu(M)}\cdot \mu$ the normalization of $\mu$ to a probability measure. 
For all non-negative $\mu$-integrable functions $f$ on $M$ such that $\int_M |f\log f| d\mu<\infty$ we define the Entropy of $f$:
\eql{\label{defn:back:ent} Ent_{\mu}f:=\int_M f(x)\cdot \log f(x)d\mu(x)-\int_M f(x) d\mu(x)\cdot \log\prnt{\int_M f(x)d\bar{\mu}(x)}\qquad (f\geq 0)\,. }
We take the following as the definition of a Logarithmic Sobolev inequality: a triple $(M, \gfrak,\mu)$ satisfies a logarithmic Sobolev inequality $LS(C)$ with constant $C\geq 0$ if 
\eql{\label{eqn:LSI} \int_M|\nabla_{\gfrak} f(x)|^2d\mu(x) \geq \frac{C}{2}\cdot  Ent_{\mu}(f(x)^2)\,, }
for all $f\in \tlF_{LS}(M)$, where:
\[ \tlF_{LS}(M,\mu) :=\cprnt{f\in\Cinf(M):\,\, f^2=c+g\quad \text{where}\quad c>0,\,\, g\in \F_{Poi}(M,\mu) }\,.\]
Equivalently by 2-homogeneity of $f\to Ent_{\mu}(f^2)$:
\eql{\int_M|\nabla_{\gfrak} f(x)|^2d\mu(x) \geq \frac{C}{2}\cdot  \int f(x)^2\log(f(x)^2)d\mu(x) \,, }
for all $f\in \F_{LS}(M,\mu)$, where:
\[ \F_{LS}(M,\mu) :=\cprnt{ f\in \tlF_{LS}(M,\mu):\,\, \int f^2(x)d\bar{\mu}(x)=1}\,.\]
%\eql{\label{eqn:LSI} \int_M|\nabla_g f(x)|^2d\mu(x) \geq \frac{C}{2}\cdot  Ent_{\mu}(f(x)^2)\,. }
%
%
%for all $f\in C^{\infty}(M)$ such that $\int_M f^2\log(1+f^2)d\bar{\mu}<\infty$:

We denote by $\Lambda_{LS}(M,\gfrak,\mu) >0$ the maximal (`optimal') constant for which the inequality holds for all $f$, and refer to it as the LS constant (which makes the inequality sharp). When $(M,\gfrak,\mu)=(\R,|\cdot|,\xi)$, where $\mu=\xi$ is a measure supported in $\R$, then we abbreviate and refer to the log-Sobolev constant by $\Lambda_{LS}(\xi)$ and to the functions domain by $\tlF_{LS}(\xi)$ and $\F_{LS}(\xi)$. 
%We stress that in the definition of $\Lambda_{LS}(M,g,\mu)$ we always consider the normalized form of $\mu$ in \eqref{eqn:LSI}, i.e $\bar{\mu}$, and not $\mu$ itself. 

We remark that there are several other alternative, yet equivalent, ways to introduce LSIs. For conciseness we avoid discussing them here, and refer the interested reader to \cite{BGL}.

Proving LSI is more delicate than \Poinc inequalities, as the latter are spectral, and as such spectral methods (such as eignenfunction expansions ) can be applied in order to prove them. Therefore examples of LSI together with their LS constants are more scarce.

Always $LS(C)$ implies \Poinc inequality $Poi(C)$. This follows simply by applying the LSI to $f=1+\epsilon g$, where $g\in C_c^{\infty}(\R)$ such that $\int gd\mu=0$. As $\epsilon\to 0$ 
\eql{\label{Implication_LS_Poinc} Ent_{\mu}(f^2)=2\epsilon^2\int g^2d\mu+o(\epsilon^2)\Rightarrow \qquad C \int g^2d\mu\leq \int g'(x)^2d\mu\,. }

We mention a few consequences of $LS(C)$ with $C>0$ (following \cite{BGL}):
\begin{enumerate}
	\item \green{{\bf Gaussian concentration}}  For every Lipschitz function $f$ and every $r\geq 0$
	\[ \bar{\mu}(|f-\int_Mfd\bar{\mu}|\geq r)\leq 2e^{-\frac{Cr^2}{2||f||^2_{Lip}}}\,. \]
	Thus LSI implies stronger concentration than \Poinc inequalities. 
	\item \green{{\bf Gaussian integrability}} For every 1-Lipschitz function $f$ and every $0\leq \sigma<C$: \green{$\int_M e^{\half \sigma f^2}d\mu<\infty$}. This result gives a necessary condition for a LSI to hold. 
	\item {\bf Exponential decay in entropy} For every $f\in L^1(\mu)$ positive with finite entropy 
	\[ Ent_{\mu}(P_tf)\leq e^{-2Ct}Ent_{\mu}(f)\,. \] 
	\item {\bf Gross' hypercontractivity} For every $f\in L^p(\mu)$ ($1< p< \infty$) it holds that $||P_tf||_{q}\leq ||f||_p$ for every $t\geq 0$ whenever $1<p<q<\infty$ and \green{ $\sqrt{\frac{q-1}{p-1}}\leq e^{Ct}$}, so the operators $P_t$ `improve integrability'. \green{In fact this property is equivalent to LS(C).}
\end{enumerate}

The exponential measure is known to satisfy a \Poinc
inequality; according to (2) it cannot satisfy a logarithmic Sobolev inequality. We also remark that the standard Gaussian measure satisfies $LS(1)$ hence the integrability condition in the proposition is optimal (consider the function $f(x)=x$ to justify the statements).

The following result is reminiscent of the Lichnerowicz estimate for $\Lambda_{Poi}(M,\gfrak,\mu)$ under $CD(K,N)$.
\begin{thm}[Bakry-\'{E}mery \cite{BE2}]\label{thm:BakryEmeryLS} If $M$ is compact and $Ric_{\gfrak,\mu,N}\geq K$ with $K>0$ and $N>1$ then $\Lambda_{LS}(M,\gfrak,\mu)\geq \frac{KN}{N-1}$. 
\end{thm}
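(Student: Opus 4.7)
The strategy is the Bakry--\'{E}mery $\Gamma_2$-calculus argument, exploiting the dimensional refinement supplied by $CD(K,N)$. As in the computation leading to~\eqref{LicIneq} above, the hypothesis $Ric_{\gfrak,\mu,N}\geq K$ is equivalent to the pointwise Bochner inequality
\[
\Gamma_2(f)\;\geq\;K\,\Gamma(f)\;+\;\tfrac{1}{N}(L_{\gfrak,\mu}f)^2
\]
for every smooth $f$, where $\Gamma(f):=|\nabla_\gfrak f|^2$ and $\Gamma_2(f):=-\tfrac12 L_{\gfrak,\mu}\Gamma(f)+\gfrak(\nabla_\gfrak f,\nabla_\gfrak L_{\gfrak,\mu}f)$. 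Setting $u:=f^2$ (so $\int u\,d\bar\mu=1$ for $f\in\F_{LS}$, with $\bar\mu:=\mu/\mu(M)$), the target LSI is equivalent to $\mathrm{Ent}_{\bar\mu}(u)\leq\tfrac{N-1}{2KN}I(u)$, where $I(u):=\int|\nabla_\gfrak u|^2/u\,d\bar\mu$ is the Fisher information.

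The main step is a heat-flow analysis. Let $P_t:=e^{-tL_{\gfrak,\mu}}$ and set $u_t:=P_tu$; compactness of $M$ together with $K>0$ gives exponential convergence $u_t\to 1$. Standard semigroup calculus gives $\tfrac{d}{dt}\mathrm{Ent}_{\bar\mu}(u_t)=-I(u_t)$ together with the $\Gamma_2$-identity $\tfrac{d}{dt}I(u_t)=-2\!\int u_t\,\Gamma_2(\log u_t)\,d\bar\mu$. Applying the Bochner inequality to $\log u_t$ together with the Jensen estimate
\[
\int u_t(L_{\gfrak,\mu}\log u_t)^2\,d\bar\mu\;\geq\;\Bigl(\!\int u_t L_{\gfrak,\mu}\log u_t\,d\bar\mu\Bigr)^{2}=I(u_t)^2
\]
(the last equality following from $\int L_{\gfrak,\mu}u_t\,d\bar\mu=0$ and the chain rule $L_{\gfrak,\mu}\log u_t=L_{\gfrak,\mu}u_t/u_t+\Gamma(\log u_t)$) yields the Riccati-type differential inequality
\[
I'(t)\;\leq\;-2K\,I(t)\;-\;\tfrac{2}{N}I(t)^2.
\]
Solving this inequality explicitly and integrating the identity $\mathrm{Ent}_{\bar\mu}(u)=\int_0^\infty I(u_t)\,dt$ against it produces the \emph{dimensional} log-Sobolev inequality $\mathrm{Ent}_{\bar\mu}(u)\leq\tfrac{N}{2}\log\bigl(1+I(u)/(NK)\bigr)$.

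The main obstacle will be extracting the sharp linear constant $\tfrac{KN}{N-1}$ from the ingredients above: the crude bound $\log(1+x)\leq x$ applied to the dimensional LSI returns only the weaker $CD(K,\infty)$ estimate $\Lambda_{LS}\geq K$, while the Lichnerowicz-style plug-in of an LSI extremizer's Euler--Lagrange equation $L_{\gfrak,\mu}f=\Lambda_{LS}\,f\log f$ (obtained after testing against $f$ and using the extremal identity $\int|\nabla_\gfrak f|^2\,d\bar\mu=\tfrac{\Lambda_{LS}}{2}\mathrm{Ent}_{\bar\mu}(f^2)$) into the integrated Bochner inequality $(1-\tfrac{1}{N})\!\int(L_{\gfrak,\mu}f)^2\,d\bar\mu\geq K\!\int|\nabla_\gfrak f|^2\,d\bar\mu$ leaves an unresolved comparison between $\mathrm{Ent}_{\bar\mu}(f^2)$ and $\int f^2(\log f)^2\,d\bar\mu$. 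I expect the required refinement to come either from feeding the heat-flow inequality back into the extremizer analysis, balancing the $K\Gamma(f)$ and $\tfrac{1}{N}(L_{\gfrak,\mu}f)^2$ contributions of Bochner jointly in time, or from invoking a dimensional Sobolev inequality---known to be implied by $CD(K,N)$ when $K>0$ and $N>1$---from which the linear LSI with constant $\tfrac{KN}{N-1}$ can be recovered.
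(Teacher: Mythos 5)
The paper does not prove this theorem---it is cited to Bakry--\'{E}mery \cite{BE2}---so there is no proof in the source to compare against. Your proposal is an honest attempt via the $\Gamma_2$-calculus heat-flow route, and you carry out the computations correctly as far as they go: the Bochner reformulation of $CD(K,N)$, de Bruijn's identity $\frac{d}{dt}\mathrm{Ent}_{\bar\mu}(u_t)=-I(u_t)$, the Fisher-information decay identity, the Cauchy--Schwarz/Jensen step $\int u_t(L_{\gfrak,\mu}\log u_t)^2\,d\bar\mu\geq I(u_t)^2$, the resulting Riccati inequality $I'\leq -2KI-\frac{2}{N}I^2$, and its integration to the dimensional log-Sobolev inequality $\mathrm{Ent}_{\bar\mu}(u)\leq\frac{N}{2}\log\bigl(1+I(u)/(KN)\bigr)$ are all correct.

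However, the obstacle you flag at the end is not a technical hiccup but a structural dead end for this route. The dimensional LSI you obtain is \emph{strictly weaker} than the target linear bound $\mathrm{Ent}_{\bar\mu}(u)\leq\frac{N-1}{2KN}\,I(u)$ in the regime of small Fisher information: linearizing at $I(u)=0$, the dimensional bound reads $\mathrm{Ent}_{\bar\mu}(u)\leq\frac{1}{2K}I(u)+O(I^2)$, and since $\frac{1}{2K}>\frac{N-1}{2KN}$ for every $N>1$, the dimensional LSI can only deliver $\Lambda_{LS}\geq K$, never $\frac{KN}{N-1}$. The two inequalities are genuinely incomparable (the dimensional one is better for large $I$, the linear one near $I=0$), so no elementary manipulation of the heat-flow output recovers the sharp linear constant. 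The Riccati inequality you derived is tight for the $N$-sphere under the heat flow, which confirms it has already extracted all the information available in this differential scheme.

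The classical proof of the sharp constant $\frac{KN}{N-1}$ does require genuinely different input, as you suspect in your last sentence. For $N>2$ the standard path (see BGL, Sections 6.2 and 6.8) is to first establish the sharp dimensional Sobolev inequality $\|f\|_{2N/(N-2)}^2\leq \|f\|_2^2+\frac{4(N-1)}{KN(N-2)}\|\nabla_\gfrak f\|_2^2$ under $CD(K,N)$ with $K>0$, whose proof itself is nontrivial (nonlinear fast-diffusion flow or Bidaut-V\'{e}ron--V\'{e}ron type arguments), and then apply the sharp ``Sobolev implies log-Sobolev'' implication. For $1<N\leq 2$, where the $L^{2N/(N-2)}$-Sobolev embedding degenerates, one invokes Nash-type or modified interpolation estimates. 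None of this is contained in the $\Gamma_2$-heat-flow computation you set up, so the proof as proposed stops short of the stated constant; it establishes $\Lambda_{LS}\geq K$, not $\frac{KN}{N-1}$.
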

For example the standard sphere $S^n(1)\subset \R^{n+1}$ satisfies $CD(n-1, n)$ hence $LS(n)$; however since the optimal \Poinc constant is $\Lambda_{Poi}(S^n, \gfrak_{S^n}, \mu_{\gfrak})=n$, it follows that $\Lambda_{LS}(S^n, \gfrak_{S^n}, \mu_{\gfrak})=n$ as well. Mind that the case $n=1$ corresponds to the 1-dimensional torus (or equivalently to the circle $[0,2\pi]_{0\sim 2\pi}$) which satisfies sharp $Poi(1)$ (as can be verified by considering the corresponding Sturm-Liouville operator) as well as sharp $LS(1)$ \cite{Wei}; in this case $K=0$ by definition, and the convention $\frac{KN}{N-1}=1$ makes sense.  

Up to this point we can summarize that when $M$ is compact, $K>0$ and $N> 1$ then $\Lambda_{Poi}(M,\gfrak,\mu)\geq \Lambda_{LS}(M,\gfrak,\mu)\geq K_N>0$, where $K_N:=\frac{K}{1-\frac{1}{N}}$. Rothaus established in \cite{Rot4} an interesting interpolation, by showing that when $Ric_{\gfrak}\geq K$ ($N=n$) where $K\in \R$, then \cite{Rot4,Led2}
\[\Lambda_{LS}(M,\gfrak,\mu_{\gfrak})\geq t_n\Lambda_{Poi}(M,\gfrak,\mu_{\gfrak})+(1-t_n)K_n\,,\]
where $t_n=\frac{4n}{(n+1)^2}$. Thus in the case of non-negative curvature $\Lambda_{Poi}(M,\gfrak,\mu_{\gfrak})$ and $\Lambda_{LS}(M,\gfrak,\mu_{\gfrak})$ are of the same (dimension-dependent) order.

In \cite{SaC} Saloff-Coste proved an important rigidity result for Riemannian manifolds, showing that if $(M,\gfrak,\mu_{\gfrak})$ has a finite volume, $Ric_{\gfrak}\geq K$ for some $K\in \R$, and $\Lambda_{LS}(M,\gfrak,\mu_{\gfrak})>0$,  then 
$M$ must be compact. The analogue statement for $\Lambda_{Poi}(M,\gfrak,\mu_{\gfrak})>0$ is false.   

In this work we will study LSI on weighted manifolds with $Ric_{\gfrak,\mu,N}\geq K$, where $K\in \R$, $N=\infty$ and $D\in (0,\infty]$. The lower bounds which we prove will depend on $K$ and $D$ (but not on $N$) and will apply to $N\in (n,\infty]$. 

%Our results are sharp in the following sense: they represent the sharp lower bound up to universal numeric constants; hence except for improvement of the numeric constants, it is the 

The previous best known estimate was derived by E. Milman in \cite{Mil2}, where it was shown that under $CDD(K,\infty,D)$:
\[ \Lambda_{LS}(M,\gfrak,\mu)\geq \frac{c_1}{\prnt{\int_0^{c_2D}e^{-\frac{K}{2}t^2}dt}^2}  \,,\]
where $c_1,c_2$ are some positive constants. Dimension independent lower bounds of different functional form were also proved in \cite{Wan2} by F.Wang. 

At present no sharp estimates are known. Our results close the gap being sharp in the following sense: up to universal numeric constants they are equal to the sharp lower bound. 

 \section{Main Tools}

In this section we discuss two powerful methods which underlie the derivation of our results.
This presentation attempts to balance simplicity with completeness, and we refer the reader to the relevant references at the end of each topic for further information.

\subsection{The localization method}

This method dates back to the work of Payne and Weinberger from 1960 \cite{PaWe} where a sharp  lower bound for the \Poinc constant was derived by comparison with the first non-zero eigenvalue of a vibrating string. As explained in the introduction, the reduction to comparison with a 1d problem is achieved by iterated bisections of a convex body, such that the limit convex body is effectively a 1d interval equipped with a log-concave measure. Such a technique was also 
implemented and developed by Gromov and Milman (1987) \cite{GM} and Kannan, Lovasz, Simonovits (1993/5) \cite{LS,KLS}. We refer to this dimensional reduction and its application to the study of inequalities as `localization'. Recently there has been a major progress in the theory and implementation of optimal transport techniques. Inspired by these developments B.Klartag \cite{Kla} established in 2014 a general localization theorem which applies to weighted Riemannian manifolds. Essentially the theorem guarantees a partition of the manifold measure into marginal measures supported on geodesics (`needles'); moreover under the condition $Ric_{\gfrak,\mu, N}\geq K$ the densities of these measures verify inequalities of a familiar form \eqref{DetermIneq}, extending the notion of log-concavity. We remark that the localization theorem was also generalized to Finsler manifolds \cite{Oht3} and to general metric measure spaces \cite{CM1}. We will present a brief introduction to the method; a formal formulation of the theorem will be given in the subsequent chapter. Our introduction is based on \cite{Cav}, \cite{Oht3},\cite{CFM} and \cite{Eva2}.

\subsection{Localization via optimal transport}

In order to provide a simple overview of how optimal transport ideas fuse into a localization theorem we consider the most basic problem, so called the Monge problem, where the manifold is $\R^n$ and the cost function is the Euclidean distance.

{\bf Problem 1 (Monge)} 
Let $\mu_{+}=f_{+}d\mu_{Leb}$ and $\mu_{-}=f_{-}d\mu_{Leb}$ be two Borel probability measures. 

Denote by $\Tr$ the set of 1-1 Borel maps $T:\R^n\to\R^n$ such that $T_{\sharp}d\mu_{+}=d\mu_{-}$ (also known as the set of rearrangements or transport maps), i.e. $\int_{\R^n}\phi(T(x))d\mu_{+}(x)=\int_{\R^n}\phi(y)d\mu_{-}(y)$ for every $\phi\in C(\R^n)$. Consider the following problem: find $T_{*}\in \Tr$ which minimizes the distance-cost, that is 
\[ I[T_{*}]=\min_{T\in \Tr} I[T] \qquad \text{ where } I[T]=\int_{\R^n}||T(x)-x||d\mu_{+}(x)\,.\]
 $\Tr$ was defined by the condition $T_{\sharp}d\mu_{+}=d\mu_{-}$, which is equivalent to $f_{+}(x)=f_{-}(T(x))det DT(x)$, and this constraint is highly non-linear, and \green{therefore with the exception of a few particular cases}, $\Tr$ is a set which is very difficult to work with. In the 1940's Kantorovich proposed several pioneering ideas, which significantly relaxed the minimization problem. 
%Define $P(\R^n\times \R^n)$ to be the set of Radon probability measures on $\R^n\times \R^n$. 
Rather \green{than} considering the set $\Tr$ he decided to consider the set $\Pir$ of `transport plans' defined by:
\[ \Pi(\mu_0,\mu_1):=\cprnt{\pi\in P(\R^n\times \R^n):\, p_{1\sharp}\pi=\mu_{+},\, p_{2\sharp}\pi=\mu_{-}}\,,\]
where $p_i:\R^n\times \R^n\to \R^n$ is the projection onto the $i$-th component ($\pi(A\times \R^n)=\mu_{+}(A)$ and \pinka{$\pi(\R^n\times B)=\mu_{-}(B)$} for Borel sets $A,B\subset \R^n$). By definition $\pi_T:=(Id, T)_{\sharp}\mu_{+}\in \Pir$; indeed for a Borel set $E\subset \R^n\times \R^n$ : $\pi_T(E)=\int_{\cprnt{x:\,\, (x,T(x))\in E }}d\mu_{+}(x) $  and
\[ \int_{\R^n}||x-T(x)||d\mu_{+}(x)=\int_{\R^n\times\R^n}||x-y||d\pi_T(x,y)\,.\]

%Equivalently, $\pi \in \Pir$ iff it is a non-negative measure on $\R^n\times \R^n$ such that for all $(\phi, \psi)\in L^1(d\mu_0)\times L^1(d\mu_1)$ 
%\[ \int_{\R^n\times \R^n}[\phi(x)+\psi(y)]d\pi(x,y)=\int_X\phi d\mu_{+}(x)+\int_Y\psi d\mu_{-}(y)\,.\] 
Rather \green{than} considering \green{P}roblem 1 he considered the following (infinite dimensional) linear programming minimization problem over the weakly compact convex set $\Pir$ : 

{\bf Problem 2 (Kantorovich)} Find a measure $\pi_{*}\in \Pir$ solving 
\[F[\pi_{*}]=\min_{\pi\in \Pir} J[\pi] \qquad \text{ where } J[\pi]=\int_{\R^n}\int_{\R^n}||x-y||d\pi_{*}(x,y)\,.\]
The first important observation is that such a problem admits at least one solution. Furthermore he also observed that such a problem admits an associated dual maximization problem: 

{\bf Problem 3} Consider $f:=f_{+}-f_{-}$. Find a function $\phi_*\in \Lip$ such that 
\green{
\[ K[\phi_*]=\sup_{\phi\in \Lip }K[\phi]\qquad \text{ where } K[\phi]=\int_{\R^n}\phi(x)f(x)d\mu_{Leb}(x)\,,\]}
\green{where $Lip$ stands for the space of Lipschitz functions on $\R^n$, and $Lip_1$ stands for the} \pinka{subspace} \green{ of Lipschitz functions } \pinka{ having Lipschitz constant at most 1.} \green{Under appropriate conditions on $\mu_{+}$ and $\mu_{-}$, a maximizer is known to exist  \cite{CFM}}. 

The optimality condition translates into the identity
\[ \min_{\pi \in \Pir}J[\pi]=\max_{\phi\in \Lip}K[\phi]\,. \]
\begin{defn}
Any function $\phi\in \Lip$ which maximizes $K[\phi]$ will be referred to as a Kantorovich potential. 
\end{defn} 
Let $\phi$ be a Kantorovich potential. In addition Kantorovich concluded that :
\[ \pi\in \Pir \text{ is optimal } \iff \pi\prnt{ \Gamma  }=1 \text{ where } \Gamma:=\cprnt{(x,y)\in \R^{n}\times \R^n: \phi(x)-\phi(y)=|x-y|} \,.\]
This says something about the support of the transport plan, and \green{under appropriate conditions} one can conclude that it is actually concentrated on the graph of a Borel map T, which is equivalent to $\pi=(Id,T)_{\sharp}\mu_{+}$. This is the content of the following proposition (proved in \cite{CFM}) which applies to $f_{+}$ and $f_{-}$  being compactly supported: 
\begin{prop}
Fix $\phi\in Lip_1(\R^n, d)$ and let $T\in \Tr$. If
\eql{\label{eqn:duality} \phi(x)-\phi(T(x))=||x-T(x)|| \text{  for  } \mu_{+} \, \text{ a.e. } x\in X\,, }
then 
\begin{enumerate}
	\item $\phi$ is a Kantorovich potential maximizing $K$ (\green{P}roblem 3).
	\item $T$ is an optimal map in the Monge \green{P}roblem 1.
	\item $\inf_{T\in \Tr} I[T]=\sup_{u\in \Lip} K[u]$.
	\item Every other optimal map $\tilde{T}$ and Kantorovich potential $\tilde{\phi}$ will also satisfy \eqref{eqn:duality}.
\end{enumerate}
\end{prop}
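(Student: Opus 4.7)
The strategy is to first establish the weak duality inequality $\sup_{\phi\in\Lip}K[\phi]\leq \inf_{T\in\Tr}I[T]$ directly, and then observe that hypothesis \eqref{eqn:duality} forces equality along the entire chain, which gives (1)--(3) simultaneously; (4) will follow from the same chain being tight for any optimal pair.

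For weak duality, I would take arbitrary $\phi\in\Lip$ and $T\in\Tr$. Since $f=f_+-f_-$ and $T_\sharp\mu_+=\mu_-$, the change-of-variables formula gives
\[
K[\phi]=\int_{\R^n}\phi\,d\mu_+-\int_{\R^n}\phi\,d\mu_-=\int_{\R^n}\bigl(\phi(x)-\phi(T(x))\bigr)\,d\mu_+(x).
\]
The $1$-Lipschitz bound $\phi(x)-\phi(T(x))\leq\|x-T(x)\|$, which holds pointwise, integrates to $K[\phi]\leq I[T]$. Integrability is not an issue because $\mu_+,\mu_-$ are assumed compactly supported, so $\phi$ is bounded on their supports, and $\phi\circ T\in L^1(\mu_+)$ since $T_\sharp\mu_+=\mu_-$.

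Now the hypothesis asserts $\phi(x)-\phi(T(x))=\|x-T(x)\|$ for $\mu_+$-a.e.\ $x$. Integrating this \emph{equality} gives $K[\phi]=I[T]$ exactly, and combined with weak duality this produces the squeeze
\[
K[\phi]=I[T]\geq\inf_{\tilde T\in\Tr}I[\tilde T]\geq\sup_{\tilde\phi\in\Lip}K[\tilde\phi]\geq K[\phi],
\]
so every inequality is an equality. This is precisely the combined content of (1), (2) and (3). For (4), given any further optimal pair $(\tilde\phi,\tilde T)$, the equality of optima in (3) forces $K[\tilde\phi]=I[\tilde T]$. Applying the same rewriting yields
\[
\int_{\R^n}\bigl(\|x-\tilde T(x)\|-\tilde\phi(x)+\tilde\phi(\tilde T(x))\bigr)\,d\mu_+(x)=0,
\]
and the integrand is non-negative pointwise by the Lipschitz property of $\tilde\phi$; hence it vanishes $\mu_+$-a.e., which is exactly \eqref{eqn:duality} for the pair $(\tilde\phi,\tilde T)$.

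I do not expect any genuine obstacle here: the proof is a textbook application of the weak duality argument paired with the pointwise Lipschitz bound, and the whole content of the proposition lies in observing that \eqref{eqn:duality} is the precise pointwise equality condition forced by tightness of the weak duality inequality. The only mildly technical point is the integrability of $\phi$ and $\phi\circ T$ against $\mu_+$, which is settled at once by the compact-support assumption on $f_+,f_-$.
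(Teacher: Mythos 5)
Your proof is correct and is the standard weak-duality argument; the paper itself does not supply a proof of this proposition but simply cites \cite{CFM}, so there is nothing to compare against. Your chain $K[\phi]\leq\int(\phi(x)-\phi(T(x)))\,d\mu_+\leq\int\|x-T(x)\|\,d\mu_+=I[T]$ together with the squeeze forced by the pointwise equality hypothesis is exactly the right mechanism, and the integrability remark (compact supports of $f_\pm$, $\phi$ Lipschitz hence bounded on compacts, and the pushforward identity $\int\phi\,d\mu_-=\int\phi\circ T\,d\mu_+$) correctly dispatches the one technical point; item (4) follows cleanly from the vanishing of a non-negative integrand.
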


\begin{defn} A set $S\subset \R^n\times \R^n$ is $|| \cdot ||$ cyclically monotone iff for any finite subset $\cprnt{ (x_1,y_1),...,(x_N,y_N)}\subset S$ the following holds
\[ \sum_{1\leq i\leq N}||x_i-y_i||\leq \sum_{1\leq i\leq N}||x_i-y_{i+1}||\qquad (y_{N+1}=y_1)\,.\]
\end{defn} 
The set $\Gamma$ is cyclically monotone, and whenever $(x,y)\in \Gamma$, then considering $z_t=\conv{x}{y}$ it holds that $(z_s,z_t)\in \Gamma$ for any $s\leq t$. This suggests that $\Gamma$ partitions into equivalence classes which are lines $\cprnt{L_q}_{q\in Q}$ and points $\cprnt{z_a}_{a\in A}$ in $\R^n$. One calls the set $\Tc:=\bigcup_{q\in Q}L_q$ the transport set (the points which move under the optimal map) and $\Zc:=\bigcup_{a\in A}z_a$. 

Via measure disintegration theorem \cite{EvGa} we have a decomposition of the marginal measures $\mu_{+}=\int_{Q}\mu_{+q}d\zeta(q)$ and $\mu_{-}=\int_{Q}\mu_{-q}d\zeta(q)$. When $Q$ satisfies a certain measurablity condition \cite{Cav}, the conditional measures $\mu_{+q}$ and $\mu_{-q}$ are supported on the straight lines $L_q$.

The Monge problem can be stated and solved in a much broader framework of separable metric measure spaces $X$. One distinct difference between the general case and $\R^n$, is that the space is decomposed into sets $\Tc\cup \Zc$, where $\Zc$ are as before, the points stabilized by the transport map, and $\Tc$ is the transport set which is partitioned, up to a measure zero, into a family of geodesics $\cprnt{ L_q}_{q\in Q}$. 

In $\R$ there is a simple construction for the optimal transport map, which is the monotone rearrangement transport. By the measure decomposition the original Monge problem reduces to a family of 1-dimensional problems: for each $q\in Q$ find \\$\min_{\pi\in \Pi(\mu_{+q},\mu_{-q})}\int_{L_q\times L_q}d(x,y)d\pi(x,y)$, which is essentially \pink{equivalent to the Euclidean problem}, since the geodesics are isometric to intervals in $\R$, and since $\mu_{+q}$ contains no atoms, the monotone rearrangement $T_q$ exists, and one defines $T$ to be $T_q$ on $L_q$. 
This overview gives the conceptual basis for the optimal transport approach to the needle decomposition. With a conceptually similar approach follows the needle decomposition on weighted manifolds $(M,\gfrak,\mu)$ (or even general measure spaces); under the assumption that $M$ is complete or at least geodesically-convex we get in an analogous manner a partition of $M$ into length minimizing geodesics $\{\g_q\}_{q\in Q}$, and measures $\{\mu_{q}\}_{q\in Q}$ on $M$ supported on these geodesics. Moreover given that $Ric_{\gfrak,\mu}\geq K$ one can also conclude additional important properties of the marginals $d\mu_{q}$. 
For applications (e.g functional and isoperimetric inequalities), we are usually given a guiding function - a non-zero $f\in C_c(M)$ such that $\int_M f(x)d\mu(x)=0$. A Monge problem of finding an optimal rearrangement between the measures $d\mu_{+}:=f_{+}d\mu$ and $d\mu_{-}:=f_{-}d\mu$ is naturally associated with $f$. $f$ satisfies the condition $\int_M|f(x)|d(x,x_0)d\mu(x)<\infty$ which guarantees a Kantorovich potential $\phi$ (\cite{Kla,Oht3}). Following the concepts we have previously introduced, we obtain a decomposition of the measure into marginals $\cprnt{\mu_q}_{q\in Q}$ (the `needles'). A complete formulation of Klartag's needle decomposition theorem on weighted Riemannian manifolds will be given in the next chapter.  
\bigskip

For further reading: \cite{Kla,Oht3,CM1,Cav,Eva2}.

\subsection{Estimates of the \Poinc and the log-Sobolev constants on $\R$}

In subsequent chapters we will see that application of the localization theorem to functional inequalities, \green{reduces} the problem of characterizing the sharp constant of the functional  inequality, \green{ to a one-dimensional analogous problem for}  a class of measures supported \green{in} $\R$. The Muckenhoupt condition  \cite{Muck, BGL} provides a useful necessary and sufficient condition for \green{the validity of the} \Poinc \pinka{inequality} for such measures, as well as some important quantitative estimates of their  \Poinc constant. The condition as well as the estimates were extended to Log-Sobolev inequalities by Bobkov and G\"{o}tze \cite{BobG}. It will play a very important role in subsequent chapters, therefore we provide an elaborated discussion about its origins. Specific steps of its proof allow some flexibility; small modifications to these steps can yield equivalent formulations of the condition (or even further extensions) .

\subsection{\Poinc Inequalities on $\R$}
The classical Hardy inequality on the line
\eq{ \int_0^{\infty}\Abs{\frac{f(x)}{x}}^pdx\leq \prnt{\frac{p}{p-1}}^{p}\int_0^{\infty}\Abs{f'(x)}^pdx\qquad  f\in C_c^1([0,\infty)) \text{ s.t. } f(0)=0,\, p>1\,,}
was extended to the following form, called weighted Hardy inequality\green{,} by B.Muckenhoupt (1972) \cite{Muck}: 
\blue{
\eql{\label{eqn:HardyWeights} \int_0^{\infty}|f(x)|^pd\xi\leq A\int_0^{\infty}|f'(x)|^pd\nu , \qquad f\in C_c^1([0,\infty)) \text{ s.t. }f(0)=0\,, } }
where $\nu,\mu$ are non-negative Borel measures on $\R_+$ satisfying the Muckenhoupt condition. This is its most general form in dimension 1. 

\subsection{The Muckenhoupt Condition}
%\eq{ B_{+}:&=\sup_{x>0} r_{+}(x)^2 h_{+}(x) \quad \text{where}\quad  \quad\text{and}\quad h_{+}(x):=\int_{\eta}^x \frac{1}{p(t)}dt\quad\text{defined for } x\geq \eta \\
 %B_{-}:&=\sup_{x<0} r_{-}(x)^2h_{-}(x) \quad \text{where}\quad \quad\text{and}\quad h_{-}(x):=\int_x^{\eta} \frac{1}{p(t)}dt$\quad\text{defined for } x\leq \eta }
The following result on the optimal constant in the weighted Hardy inequality with $p=2$ is due to M. Artola, G. Talenti, and G. Tomaselli (cf. Muckenhoupt \cite{Muck}); it is valid for general non-negative Borel measures $\xi$ and $\nu$, which for our purposes we assume $d\xi(x)=p_{\xi}(x)dm(x)$ and $d\nu(x)=p_{\nu}(x)dm(x)$. We will write that $Har(A)$ is verified if \eqref{eqn:HardyWeights} is verified with constant $A=A(\xi,\nu)$. We will show the relation of this type of inequality to the \Poinc and LS inequalities. To this end it will be useful to introduce new definitions:
\begin{itemize}
	\item We denote by $\eta$ the median of $\xi$. 
	\item $I_{+}:=[0,\infty)$ and $I_{-}:=(-\infty,0]$.
	\item $h_{+}:=\int_0^x\frac{1}{p_{\nu}(t)}dt$ and $h_{-}:=\int_x^0\frac{1}{p_{\nu}(t)}dt$.
	\item $r_{+}(x):=\sqrt{\xi([x,+\infty))}$ and $r_{-}(x):=\sqrt{\xi((-\infty,x])}$. 
 \item Given $i\in\{-,+\}$ and a function $f$ we define the $\pm$ components of $f$ by: $f_{i}(x):=f(x)1_{I_i}(x)$. 
\end{itemize}

\begin{thm}\label{thm:Muck2} If $\{A_{i}(\xi,\nu)\}_{i\in \{+,-\}}$ are the optimal constants in the inequalities 
\eql{ \int_{I_{\pm}}f^2(x)d\xi(x)\leq A_i(\xi,\nu)\int_{I_{\pm}}f'(x)^2d\nu(x) , \qquad f\in C_c^1(I_i), f(0)=0\,, }
then \blue{$  B_i(\xi,\nu)\leq A_i(\xi,\nu)\leq 4B_i(\xi,\nu) $ }
where for $i\in \{+,-\}$: 
\blue{
$$B_+:=\sup_{x>0}\, r_{+}(x)^2 h_{+}(x)\,,\qquad B_{-}:=\sup_{x<0}\, r_{-}(x)^2 h_{-}(x)\,.$$
}
\end{thm}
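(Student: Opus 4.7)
The plan is to establish the two inequalities $B_+(\xi,\nu) \leq A_+(\xi,\nu)$ and $A_+(\xi,\nu) \leq 4 B_+(\xi,\nu)$ separately; the $I_-$ case is completely analogous by symmetry.

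For the lower bound, I would exhibit near-extremal test functions. For each $x_0 > 0$, consider the Lipschitz function $f_{x_0}(x) := \min\{h_+(x), h_+(x_0)\}$, which vanishes at $0$ and is constant on $[x_0, \infty)$; a standard cutoff-and-mollification argument (permitted by the implicit finiteness of $\xi$ in this setting) provides a compactly supported $C^1$ approximation without degrading the relevant integrals in the limit. By construction $f_{x_0}'(x) = p_\nu(x)^{-1} \cdot 1_{[0, x_0]}(x)$, so
\begin{align*}
\int_{I_+} (f_{x_0}')^2 \, d\nu = \int_0^{x_0} \frac{dt}{p_\nu(t)} = h_+(x_0), \qquad \int_{I_+} f_{x_0}^2 \, d\xi \geq h_+(x_0)^2 \cdot \xi([x_0, \infty)) = h_+(x_0)^2 r_+(x_0)^2.
\end{align*}
Taking the ratio and then the supremum over $x_0 > 0$ yields $A_+(\xi,\nu) \geq \sup_{x_0 > 0} r_+(x_0)^2 h_+(x_0) = B_+(\xi,\nu)$.

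For the upper bound, I would apply Cauchy-Schwarz to the representation $f(x) = \int_0^x f'(t)\, dt$ with a positive weight $\omega(t)$ to be optimized:
\begin{align*}
f(x)^2 \leq \left(\int_0^x (f'(t))^2 p_\nu(t) \omega(t)\, dt\right)\left(\int_0^x \frac{dt}{p_\nu(t) \omega(t)}\right).
\end{align*}
Integrating against $d\xi$ and invoking Fubini to exchange the order of integration reduces the entire problem to verifying a pointwise bound of the form $\omega(t)\, \Psi(t) \leq 4 B_+(\xi,\nu)$ for all $t > 0$, where
\begin{align*}
\Psi(t) := \int_t^\infty \left(\int_0^x \frac{ds}{p_\nu(s)\omega(s)}\right) d\xi(x).
\end{align*}
The task is then to choose $\omega$ so that, after an integration by parts in $\Psi$ using $d\xi = -d(r_+^2)$ (whose boundary term at $+\infty$ vanishes by finiteness of $\xi$), the quantity $\omega(t)\Psi(t)$ can be estimated pointwise via the defining bound $r_+(x)^2 h_+(x) \leq B_+(\xi,\nu)$; a standard Muckenhoupt-style choice, essentially $\omega(t) \propto 1/(r_+(t)\sqrt{h_+(t)})$, together with the AM-GM-type optimization $4uv \leq (u+v)^2$ built into the Cauchy-Schwarz step, produces exactly the sharp constant $4$.

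The principal difficulty is precisely this fine-tuning of the weight $\omega$ to recover the sharp constant $4$ rather than a larger absolute one (a naive choice such as $\omega \equiv 1$ produces a logarithmic divergence in the $\Psi$-integral), and the careful justification of the integration by parts, including the vanishing of the boundary contributions at $0$ and $+\infty$. Both issues are technical but standard in the weighted-Hardy literature, and the argument ultimately parallels Muckenhoupt's original treatment specialized to this one-sided setup.
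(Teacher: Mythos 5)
Your lower-bound argument is correct and is exactly the standard test-function construction: with $f_{x_0}=\min\{h_+,h_+(x_0)\}$ one gets $\int (f_{x_0}')^2\,d\nu=h_+(x_0)$ and $\int f_{x_0}^2\,d\xi\geq h_+(x_0)^2 r_+(x_0)^2$, so $A_+\geq B_+$. The paper does not spell this direction out (it cites \cite{BGL}); your version fills that in.

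For the upper bound, the framework you describe (Cauchy--Schwarz with an auxiliary weight, Fubini, pointwise estimate of $\omega(t)\Psi(t)$) matches the paper's proof. But the weight you propose, $\omega(t)\propto 1/\bigl(r_+(t)\sqrt{h_+(t)}\bigr)$, does not work, and in fact reproduces precisely the logarithmic divergence you are trying to avoid. To see this, write $G(x)=\int_0^x (p_\nu\omega)^{-1}$ and $\Psi(t)=G(t)r_+(t)^2+\int_t^\infty r_+^2(p_\nu\omega)^{-1}$ after your integration by parts. With your choice $(p_\nu\omega)^{-1}=r_+\sqrt{h_+}\,h_+'$, one estimates the second term using $r_+^3\sqrt{h_+}\leq B_+^{3/2}/h_+$ and finds $\int_t^\infty r_+^2(p_\nu\omega)^{-1}\lesssim B_+^{3/2}\int_t^\infty h_+'/h_+$, which diverges logarithmically whenever $h_+(\infty)=\infty$ (e.g.\ $\nu=m$). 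The correct choice is $\omega(t)=\sqrt{h_+(t)}$, i.e.\ $k(t):=p_\nu(t)\omega(t)=p_\nu(t)\sqrt{h_+(t)}$. Then $\int_0^x k^{-1}\,dt=\int_0^x h_+'/\sqrt{h_+}\,dt=2\sqrt{h_+(x)}$ closes exactly; after Fubini one uses $\sqrt{h_+(x)}\leq \sqrt{B_+}/r_+(x)$ and $p_\xi/r_+=-2r_+'$ to get $\Psi(t)\leq 4\sqrt{B_+}\,r_+(t)$, and finally $k(t)r_+(t)=p_\nu(t)\sqrt{h_+(t)}\,r_+(t)\leq \sqrt{B_+}\,p_\nu(t)$ yields the constant $4B_+$. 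The factor $4$ thus comes from the two explicit factors of $2$ (one from $\int_0^x k^{-1}=2\sqrt{h_+}$, one from $-2r_+'=p_\xi/r_+$), not from an AM--GM step. The missing idea in your proposal is this specific weight; everything else is structurally sound.
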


%\eqref{eqn:HardyWeights} with $p=2$ and $d\nu=d\xi=p(x)dm(x)$ then  where $B(\xi)=\sup_{x>0}r_{+}(x)^2\int_0^x\frac{dt}{p(t)}$. 
%\end{thm}

\begin{proof}[Proof of Theorem \ref{thm:Muck2}]
	We firstly show that \blue{$B_{+}<\infty$} implies \blue{$Har(4B_{+})$}. 
	Starting with the CS inequality :
\eq{ f(x)^2=\prnt{\int_{0}^{x}f'(t)dt}^2=\prnt{\int_{0}^{x}f'(t)k(t)^{\half}k(t)^{-\half} dt}^2\leq \int_{0}^{x}f'(t)^2k(t)dt\int_{0}^{x}\frac{1}{k(t)}dt\,,}
where $k(x)$ is a yet undetermined positive function on $[0, x]$. Write the RHS as \\$\prnt{\int_{0}^xf'(t)^2k(t)dt}2 g(x)$ where $g(x):=\half \int_{0}^{x}\frac{1}{k(t)}dt$; then by Fubini's theorem
\eq{ &\int_{0}^{\infty}f(x)^2d\xi=\int_{0}^{\infty}f(x)^2p_{\xi}(x)dx\leq 2\int_{0}^{\infty}p_{\xi}(x)g(x)\prnt{\int_{0}^{x}f'(t)^2k(t)dt}dx\\&\stackrel{Fubini}{=}
2\int_{0}^{\infty}f'(t)^2k(t)
\prnt{\int_{t}^{\infty}g(x)p_{\xi}(x)dx}dt \,.}
Notice that by definition $g'(x)=\frac{1}{2k(x)}$, so $k(x)$ is determined directly by $g'(x)$. Take $g(x)=\sqrt{h_{+}(x)}$ then $g(x)\leq \frac{\sqrt{B_{+}}}{r_{+}(x)}$ by definition of $B_{+}$, and since $-2r_{+}'(x)=\frac{p_{\xi}(x)}{r_{+}(x)}$ we get the inequality
{\small 
\eq{ &\int_{0}^{\infty}f(x)^2p_{\xi}(x)dx\leq -4\sqrt{B_{+}}\int_{0}^{\infty}f'(t)^2k(t)\prnt{\int_{t}^{\infty}r_{+}'(x)dx}dt=4\sqrt{B_{+}}\int_{0}^{\infty}f'(t)^2 k(t)r_{+}(t)dt\,.}}
Notice that $k(t)=\frac{1}{2g'(t)}=\sqrt{h_{+}(t)}p_{\nu}(t)$ implying that $k(t)r_{+}(t)=\sqrt{h_{+}(t)}p_{\nu}(t)r_+(t)\leq \sqrt{B_{+}}p_{\nu}(t)$. By definition $r_{+}(t)\sqrt{h_{+}(t)}\leq\sqrt{B_{+}}$ whence
\eql{\label{eqn:ComponentPoi} &\int_{I_{+}}f(x)^2p_{\xi}(x)dx\leq 4B_{+}\int_{I_{+}}f'(x)^2p_{\nu}(x)dx \,,}
implying that $A_{+}\leq 4B_{+}$. We apply the same argument to the term $\int_{I_{-}}f(x)^2d\xi$ using $B_{-}$ instead of $B_{+}$ to conclude $A_{-}\leq 4B_{-}$. 

The opposite direction, i.e. \blue{$Har(A_{+})$ implies $B_{+}\leq A_{+}<\infty$}, is proved by substitution of proper test functions (\cite[p. 197]{BGL}).

\end{proof}
The following theorem, known as the Muckenhoupt condition, is a corollary of the previous theorem:
\begin{thm}\label{thm:Muck1} Let $\xi$ be a Borel probability measure on $\R$ with $d\xi(x)=p_{\xi}(x)dm(x)$. Let $\eta$ be a median of $\xi$ (i.e. $\min\prnt{\xi([\eta,+\infty)),\xi([-\infty,\eta))}\geq \half$). Define 
\eq{ B_{+}:=\sup_{x>\eta} \xi([x,+\infty)) \int_{\eta}^x \frac{1}{p_{\xi}(t)}dt\qquad 
 B_{-}:=\sup_{x<\eta} \xi((-\infty,x])\int_x^{\eta} \frac{1}{p_{\xi}(t)}dt\,. }
Then $\Lambda_{Poi}(\xi)>0$ iff $B=B_{+}+B_{-}<\infty$. Moreover we can then estimate $\frac{1}{4}B\leq \frac{1}{\Lambda_{Poi}(\xi)}\leq 4B$. 
\end{thm}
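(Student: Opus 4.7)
The plan is to leverage Theorem \ref{thm:Muck2} (applied with $\nu = \xi$) to control the Poincar\'e constant via the two half-line weighted Hardy inequalities, using the median $\eta$ as the natural point at which test functions can be arranged to vanish. After translating so that $\eta = 0$, Theorem \ref{thm:Muck2} furnishes constants $A_\pm := A_\pm(\xi,\xi)$ satisfying $B_\pm \le A_\pm \le 4 B_\pm$ for the one-sided inequalities $\int_{I_\pm} f^2\, d\xi \le A_\pm \int_{I_\pm} (f')^2\, d\xi$ over $f \in C_c^1(I_\pm)$ with $f(0) = 0$. The definition of the median also yields $\xi((0,\infty)) \le \tfrac{1}{2}$ and $\xi((-\infty,0)) \le \tfrac{1}{2}$, a fact that will be essential for the lower bound.

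For the upper bound $1/\Lambda_{Poi}(\xi) \le 4B$, the starting point is the variational identity $Var_\xi(f) = \inf_c \int (f-c)^2\, d\xi \le \int (f - f(0))^2\, d\xi$. Setting $g := f - f(0)$ produces a function with $g(0) = 0$ and $g' = f'$, which I would split into its $I_\pm$ restrictions and feed into the two half-line Hardy inequalities separately. Summing yields $Var_\xi(f) \le \max(A_+, A_-) \int (f')^2\, d\xi \le 4 \max(B_+, B_-) \int (f')^2\, d\xi \le 4B \int (f')^2\, d\xi$, which is the required bound.

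For the lower bound $B/4 \le 1/\Lambda_{Poi}(\xi)$, the idea is to promote a near-maximizer $\phi \in C_c^1([0,\infty))$ with $\phi(0) = 0$ of the Hardy ratio on $I_+$ into a valid test function for $\F_{Poi}(\xi)$. I would extend $\phi$ by zero across $0$ to obtain a continuous function $f$ on $\R$, set $m := \int f\, d\xi$, and take $g := f - m$, so that $g \in \F_{Poi}(\xi)$ and $g' = f'$. The median bound together with Cauchy-Schwarz gives $m^2 \le \xi((0,\infty)) \int f^2\, d\xi \le \tfrac{1}{2} \int f^2\, d\xi$, whence $\int g^2\, d\xi = \int f^2\, d\xi - m^2 \ge \tfrac{1}{2} \int f^2\, d\xi$. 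Consequently the Rayleigh quotient of $g$ is at least $A_+/2$ up to vanishing error, giving $1/\Lambda_{Poi}(\xi) \ge A_+/2 \ge B_+/2$; a symmetric construction on $I_-$ yields $1/\Lambda_{Poi}(\xi) \ge B_-/2$, and together $1/\Lambda_{Poi}(\xi) \ge \max(B_+, B_-)/2 \ge B/4$.

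The main obstacle is one of care rather than substance: one must justify that the extension-by-zero $f$ is a legitimate test function for the Poincar\'e inequality, which is immediate from $\phi(0) = 0$ (ensuring continuity) and density of $C_c^1$ in the natural Sobolev class. The qualitative equivalence $\Lambda_{Poi}(\xi) > 0 \iff B < \infty$ then falls out of the two-sided estimate.
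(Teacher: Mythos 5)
Your proof is correct and follows essentially the same route as the paper's: translate so $\eta=0$, bound the variance by $\int(f-f(0))^2\,d\xi$, apply Theorem~\ref{thm:Muck2} on each half-line for the upper bound, and recenter a near-optimal half-line Hardy test function (using the median and Cauchy--Schwarz for the factor of two) for the converse. The only difference is presentational: where the paper cites \cite{BGL} both for the factor-of-two reduction to half-line inequalities and for the test-function substitution bounding $B_\pm$, you make the recentering argument explicit and route through $A_\pm\ge B_\pm$ from Theorem~\ref{thm:Muck2}.
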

\begin{proof}
\blue{
Notice that $Var_{\xi}(f)\leq \int[f(t)-f(\eta)]^2d\xi(t)$ since the variance minimizes distance to constants in $L^2(\xi)$, while the integral $\int f'(x)^2d\xi(x)$ is invariant under $f(t)\mapsto f(t)-f(\eta)$. By a change of coordinates we may further assume that $\eta=0$ and then $f(0)=0$ as well. Hence in order to show $\Lambda_{Poi}(\xi)>0$ it is sufficient to show that for some $A<\infty$: $\int f(t)^2d\xi(t)\leq A\int f'(t)^2d\xi(t)$ for any $f\in \Cinf_c(\R)$ s.t. $f(\eta)=0$.
According to Theorem \ref{thm:Muck2} any such $f$ satisfies $\int_{I_{\pm}}f(x)^2d\xi(x)\leq A_{\pm}(\xi)\int_{I_{\pm}}f'(t)^2 d\xi(x)$, where the constants $A_{+}(\xi):=A_+(\xi,\xi)$ and $A_{-}(\xi):=A_{-}(\xi,\xi)$ verify the following estimates $B_i(\xi)\leq A_i(\xi)\leq 4B_i(\xi)$. Thus in particular  $\Lambda_{Poi}(\xi)^{-1}\leq A:=A_{+}+A_{-}$. 
We conclude that $\xi$ verifies a \Poinc  inequality with constant $[4(B_{-}+B_{+})]^{-1}$ (i.e. $Poi(\frac{1}{4B})$). }

% 

%Notice that $Var_{\xi}(f)\leq \int[f(x)-f(\eta)]^2d\xi(x)$ since the variance minimizes distance to constants in $L^2(\xi)$, therefore if $A$ is a constant for which the inequality $\int[f(x)-f(\eta)]^2d\xi\leq A\int f'(x)^2d\xi$ holds, then $\Lambda_{Poi}(\xi)^{-1}\leq A$. Therefore if $A$ is the sharp constant for the latter  inequality then $\Lambda_{Poi}(\xi)^{-1}=A$. Therefore if $Poi(\Lambda_{Poi}(\xi))$ is valid, it is completely equivalent to prove that $A$ satisfies the stated estimates.
%% is equal to the optimal constant $A$ in the original \Poinc inequality $Var_{\xi}(f)\leq\int f'(x)^2d\xi$ . 
%
%By a change of coordinates we may assume w.l.o.g that $\eta=0$. 

For the converse, still under the assumption that $\eta=0$,  one uses the fact that under $Poi(\Lambda_{Poi}(\xi))$ for any function which vanishes outside $I_{+}$ (alternatively outside $I_{-}$): $\int f^2(x)d\mu(x) \leq \frac{2}{\Lambda_{Poi}(\xi)}\int f'(x)^2d\mu(x)$ (\cite[p.182]{BGL}). Substitution of proper test functions shows $B_{+},B_{-}\leq \frac{2}{\Lambda_{Poi}(\xi)}$, hence $B=B_{+}+B_{-}\leq \frac{4}{\Lambda_{Poi}(\xi)}$ (for the details see \cite[p.197]{BGL}). 
\end{proof}

\bigskip

One should notice that whenever $B<\infty$ then $\frac{1}{p_{\xi}}$ must be locally integrable  and $supp(\xi)$ must be a connected set. In addition the median is unique, since the measure of any open interval inside $supp(\xi)$ is strictly positive.

\subsection{Log-Sobolev inequalities on $\R$}

\subsubsection{LS inequalities as \Poinc inequalities in Orlicz spaces \cite{BobG}}

Let $(\Omega, \mu)$ be a probability space. Given a Young function (i.e. an even convex function $N:\R\to [0,+\infty)$ with $N(0)=0, N(x)>0$ for $x>0$), the Orlicz space $L_N(\Omega, \mu)$ consists of all measurable functions with 
\pinka{$||f||_N=\sup\cprnt{\lambda>0:\,\, \int N(f/\lambda)d\mu\geq 1 }<\infty$}. Since $N$ strictly increases on $[0,\infty)$ it admits an inverse $N^{-1}:[0,\infty)\to [0,\infty)$. When $N(x)=|x|^p$ $(1\leq p<\infty)$, $L_N(\Omega, \mu)$ is the usual Lebesgue \green{$L^p(\mu)$} space which corresponds to the norm $||f||_p$.  We will now discuss how LS inequalities can be interpreted as \Poinc-type inequalities. 
Recall \eqref{defn:back:ent} for the definition of the Entropy. 

The LS inequality 

\eq{ Ent_{\mu}(f^2)\leq \Lambda_{LS}^{-1}(\Omega, \mu)\int f'(x)^2d\mu(x)\,, }

can clearly be strengthened to
\[ L(f):=\sup_{a\in \R} Ent_{\mu}((f+a)^2)\leq \Lambda_{LS}^{-1}(\Omega, \mu)\int_{\Omega} f'(x)^2d\mu(x)\,, \]
since $(f+a)'=f'$ for all $a\in \R$. In \cite{BobG} Bobkov and G\"{o}tze proved that with respect to the Young function $N_1(x):=x^2\log(1+x^2)$, for any \green{$f\in L_{N_1}(\Omega, \mu)$}:

\[ ||f-\int_{\Omega} fd\mu||_{N_1}^{2}\eqsim L(f)\,, \]

where $A\eqsim B$ iff there are numeric constants $c_1,c_2>0$ s.t. $c_2B\leq A\leq c_1B$. 
This implies that the LS inequality can be written as a Poincar\'{e}-type inequality\footnote{We remark that one can also identify LSI as limits of Sobolev inequalities (or more specifically Beckner-type inequalities) with norm $|| \cdot ||_{2+\epsilon}$ where $\epsilon\to 0$; the reader is referred to \cite[p.312]{BGL}  for further details.} with constant $\Lambda_{LS}'$

\eql{\label{eqn:LSasPoin} ||f-\int_{\Omega} fd\mu||_{N_1}\leq \Lambda_{LS}^{'-1}\int_{\Omega} f'(x)^2d\mu\,.}
 
In addition they observed that when \green{$(\Omega,\mu)=(\R,\xi)$, where $\xi$ is a probability measure supported in $\R$},  \eqref{eqn:LSasPoin} can be reduced into a Hardy-type inequality for the Orlicz space norm $||\cdot ||_{N_2}$ where $N_2(x):=|x|\log(1+|x|)$. Specifically they showed that for any $f\in L_{N_1}(\R,\xi)$ : 
\begin{enumerate}
	\item \blue{$||f||_1\leq ||f||_2\leq C_{2,N_1} ||f||_{N_1}$} (with \green{ $0<C_{2,N_1}\leq \frac{\sqrt{5}}{2}$ some constant)}.
	\item $L(f)\eqsim ||f-\int fd\xi||_{N_1}\lesssim ||f||_{N_1}$. 
\end{enumerate}
However for the $\pm$ components of a function $f$, $(2)$ can be improved: \green{ $||f_{i}-\int f_{i}d\xi||_{N_1}\eqsim ||f_{i}||_{N_1}$}. Indeed by the Cauchy-\pinka{Schwarz} inequality \pink{$|\int f_id\xi|=|\int f_i1_{I_i}d\xi|\leq ||f_i||_2 \frac{1}{\sqrt{2}} \leq \frac{C_{2,N_1}}{\sqrt{2}} ||f_{i}||_{N_1}$}. Using the triangle inequality
\pinka{
\eq{
||f_i||_{N_1}&\leq ||f_i-\int f_id\xi||_{N_1}+|\int f_id\xi|=
||f_i-\int f_id\xi||_{N_1}+\frac{|\int f_id\xi|}{N_1^{-1}(1)}\\&\leq
||f_{i}-\int f_{i}d\xi||_{N_1}+\frac{\sqrt{5}}{2\sqrt{2}}||f_i||_{N_1}\,,}
}
\green{whence $||f_i||_{N_1}\lesssim ||f_i-\int f_id\xi||_{N_1}$.
}
In view of this inequality, if we apply \eqref{eqn:LSasPoin} to the $\pm$ components of a function $f$, and use the relation $||f||_{N_1}^2=||f^2||_{N_2}$, we get the inequalities
\eql{\label{IneqNormLS} ||f_{i}^2||_{N_2}=||f_{i}||^2_{N_1}\lesssim ||f_{i}-\int f_{i}d\xi||_{N_1}^2\eqsim L(f_i)\lesssim \Lambda_{LS}^{-1}\int_{I_i}f'(x)^2d\xi(x)\qquad i\in\{+,-\}\,.}

\subsubsection{The Bobkov-G\"{o}tze condition for LS inequalities}

Denote by $X_{i}$ ($i\in \{+,-\}$) the Banach spaces \green{$L_{N_2}(I_i, \xi)$}. 
Let $\A(X_i, \xi)$ be the `optimal' (smallest) constants for the inequalities
\[ ||f^2||_{N_2}\leq \A\int_{I_i}f'(x)^2d\xi(x),\qquad f\in C^{1}(I_i),\,f(0)=0,\,\,i\in\{+,-\}\,.\]

By \eqref{IneqNormLS} we can conclude that $\Lambda_{LS}^{-1}(\xi)\geq \A(X_{\pm},\xi)$. In particular\\ $\Lambda_{LS}^{-1}(\xi)\geq\half\prnt{ \A(X_{+},\xi)+\A(X_{-},\xi)}$. 
On the other hand 
\eq{ L(f)&\eqsim ||f-\int fd\xi ||_{N_1}^2\leq \prnt{\sum_{i=\pm}||f_i-\int f_i d\xi ||}^2\lesssim ||f^2_{+}||_{N_2}+||f^2_{-}||_{N_2}\\&\lesssim \prnt{\A(X_{+},\xi)+\A(X_{-},\xi)}\int f'(x)^2 d\xi(x)\,, }
implying that $\A(X_{+},\xi)+\A(X_{-},\xi)\gtrsim \Lambda_{LS}^{-1}$. Therefore we may write $\Lambda_{LS}^{-1}\eqsim \A(X_{+},\xi)+\A(X_{-},\xi)$. The constants $\A(X_{\pm},\xi)$ can be easily estimated using the following observation: the spaces $X_i$ 
 belong to a class of Banach spaces which are called `ideal' (see \cite{KaAk} for further details); the norms of these spaces admit a useful representation formula: 
\green{
\eql{\label{eqn:norm}  ||f||=\sup_{g\in \mathcal{G}_i}\int_{I_i}|f(x)|g(x)d\xi(x) \qquad i\in\{+,-\}\,,}
}
where \blue{$\mathcal{G}_i$} is some family of non-negative Borel measurable functions $g$ on $I_{i}$ with Borel measure \green{$\xi$}. 
Now, apply Theorem \ref{thm:Muck2} to the pairs \blue{$(\xi_g, \xi)$ where $d\xi_g(x):=g(x)d\xi(x)$} \pink{for $g\in \mathcal{G}_{\pm}$}. The theorem shows that  \pink{$A_{\pm}(\xi_g, \xi)$} is equivalent to \green{$B_{\pm}(\xi_g,\xi)$}; specifically: $B_{\pm}(\xi_g, \xi)\leq A_{\pm}(\xi_g, \xi)\leq 4B_{\pm}(\xi_g, \xi)$. 
%If $\A(X_i, \xi)$ ($i\in \{+,-\}$) is the optimal constant in the inequality
%\[ ||f^2||_{N_2}\leq \A\int_{I_i}f'(x)^2d\xi(x)\,, \]
%where $f\in C^{\infty}(\R)$ such that $f(0)=0$, 
$\A(X_i, \xi)$ ($i\in \{+,-\}$) (being optimal) can be expressed as
\pinka{$\A(X_{i},\xi)=\sup_{g\in \mathcal{G}_i} A_{i}(\xi_g, \xi)$} in view of the representation formula \eqref{eqn:norm}. If
\pinka{$\B(X_{i},\xi):=\sup_{g\in \mathcal{G}_i} B_{i}(\xi_g, \xi)$}, then
\[ \B(X_i,\xi)\leq A(X_i,\xi)\leq 4\B(X_i,\xi)\,. \]

\blue{Recall that  $X_{i}$ ($i\in \{+,-\}$) were defined as the Banach spaces} \green{ $L_{N_2}(I_i, \xi)$}; \blue{ their norm \\ $||\cdot||:=||\cdot||_{N_2}$ verifies  \eqref{eqn:norm}; by this representation formula we may write:}
\blue{
\eq{ \B(X_+,\xi)&=\sup_{g\in \mathcal{G}_{+}}\sup_{x>0}\prnt{\int 1_{[x, +\infty)}d\xi_g} h_{+}(x)=\sup_{x>0}||1_{[x, +\infty)}|| h_{+}(x)\,.} 
} 
Since \blue{$||1_{[x,\infty)}||_{N_2}=\frac{1}{N_2^{-1}(1/\xi([x,\infty))}$}, and since $N_2^{-1}(t)\eqsim \frac{t}{\log t}$ (see \cite[p.25]{BobG}) we conclude that 
\[ \B(X_{+},\xi) \eqsim \sup_{x>0}\frac{1}{N_2^{-1}(1/\xi([x,\infty))}h_{+}(x)\eqsim \sup_{x>0}(1-F(x))\log\prnt{\frac{1}{1-F(x)}}h_{+}(x)\,, \]
where $F(x):=\xi((-\infty,x])$ stands for the distribution function of $\xi$. Similar arguments show that 
\[ \B(X_{-},\xi) \eqsim \sup_{x<0}\frac{1}{N_2^{-1}(1/\xi((-\infty, x])}h_{-}(x)\eqsim \sup_{x<0}F(x)\log\prnt{\frac{1}{F(x)}}h_{-}(x) \,.\]
These estimates give the Bobkov-G\"{o}tze condition.
\begin{thm}\label{thm:Muck1} Let $\xi$ be a Borel probability measure on $\R$ with $d\xi(x)=p_{\xi}(x)dm(x)$. Let $\eta$ be a median of $\xi$ (i.e. $\min\prnt{\xi([\eta,+\infty)),{\xi([-\infty,\eta)}\geq \half}$. Define 
\eq{ \B_{+}:&=\sup_{x>\eta} \xi([x,+\infty))\log\prnt{\frac{1}{\xi([x,+\infty))}} \int_{\eta}^x \frac{1}{p_{\xi}(t)}dt\quad \text{and}\\
 \B_{-}:&=\sup_{x<\eta} \xi((-\infty,x])\log\prnt{\frac{1}{\xi((-\infty,x])}}\int_x^{\eta} \frac{1}{p_{\xi}(t)}dt \,.}
Then $\Lambda_{LS}(\xi)>0$ iff $\B=\B_{+}+\B_{-}<\infty$. Moreover $\frac{1}{\Lambda_{LS}(\xi)}\eqsim \B$. 
\end{thm}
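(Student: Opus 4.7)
The plan is to follow the line of reasoning that has already been laid out in the discussion preceding the theorem; essentially the theorem is a clean repackaging of those intermediate estimates, and the main task is to assemble them carefully and verify the constants.

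First, I would reduce the log-Sobolev inequality for $\xi$ to a Poincar\'e-type inequality in an Orlicz norm. Using the Bobkov--G\"otze identification $L(f):=\sup_{a\in\R}Ent_\xi((f+a)^2)\eqsim \|f-\int f\,d\xi\|_{N_1}^2$, where $N_1(x)=x^2\log(1+x^2)$, the inequality $\Lambda_{LS}(\xi)\cdot Ent_\xi(f^2)\leq 2\int (f')^2 d\xi$ is equivalent (up to universal constants) to an inequality of the form $\|f-\int f\,d\xi\|_{N_1}^2\lesssim \Lambda_{LS}^{-1}\int (f')^2\,d\xi$. After translating $\xi$ so that the median is at $0$, I would split $f$ into its $+$ and $-$ components and note (as in the excerpt) that for each component $\|f_i\|_{N_1}\eqsim \|f_i-\int f_i\,d\xi\|_{N_1}$, which follows from the fact that the mass of $f_i$ relative to $\xi$ is controlled by its $N_1$-norm via Cauchy--Schwarz on a half-line of mass $\leq 1/2$. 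Using the identity $\|f\|_{N_1}^2=\|f^2\|_{N_2}$ with $N_2(x)=|x|\log(1+|x|)$, the whole problem reduces to finding the optimal constants $\mathcal{A}(X_i,\xi)$ in the one-sided Hardy-type inequalities $\|f^2\|_{N_2}\leq \mathcal{A}\int_{I_i}(f')^2\,d\xi$ on $f\in C^1(I_i)$ with $f(0)=0$, and we have $\Lambda_{LS}^{-1}(\xi)\eqsim \mathcal{A}(X_+,\xi)+\mathcal{A}(X_-,\xi)$.

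Second, I would exploit the fact that the Orlicz space $X_i=L_{N_2}(I_i,\xi)$ is an \emph{ideal} Banach function space, hence its norm admits the representation $\|f\|_{N_2}=\sup_{g\in\mathcal{G}_i}\int_{I_i}|f|\,g\,d\xi$ for a suitable class $\mathcal{G}_i$ of non-negative densities. This lets me rewrite the Hardy-type inequality as a uniform family of classical weighted Hardy inequalities for the pairs of measures $(\xi_g,\xi)$, with $d\xi_g=g\,d\xi$. By Theorem~\ref{thm:Muck2} applied to each such pair, the optimal constant $A_i(\xi_g,\xi)$ is equivalent to the Muckenhoupt-type quantity $B_i(\xi_g,\xi)=\sup_{x\in I_i}\big(\int_{[x,\infty)\cap I_i}g\,d\xi\big)\,h_i(x)$ (and symmetrically for $i=-$), with $h_i(x):=\int_0^x p_\xi(t)^{-1}dt$ as in the theorem's setup (recall $d\nu=d\xi$ here). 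Taking the supremum over $g\in\mathcal{G}_i$ inside the expression for $B_i$ and swapping suprema gives
\[
\mathcal{B}(X_+,\xi):=\sup_{x>0}\|\mathbf{1}_{[x,\infty)}\|_{N_2}\,h_+(x),
\]
and the Muckenhoupt-type bound becomes $\mathcal{B}(X_i,\xi)\leq \mathcal{A}(X_i,\xi)\leq 4\mathcal{B}(X_i,\xi)$.

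Third, I would compute the Orlicz norm of an indicator. For a set of $\xi$-measure $m$ we have $\|\mathbf{1}_A\|_{N_2}=1/N_2^{-1}(1/m)$, and the elementary estimate $N_2^{-1}(t)\eqsim t/\log t$ for large $t$ yields $\|\mathbf{1}_{[x,\infty)}\|_{N_2}\eqsim \xi([x,\infty))\log(1/\xi([x,\infty)))$ whenever $\xi([x,\infty))$ is small (with a harmless adjustment near the median because $\log(1/m)$ is bounded there anyway). Substituting back I obtain
\[
\mathcal{B}(X_+,\xi)\eqsim \sup_{x>0}\xi([x,\infty))\log\!\left(\tfrac{1}{\xi([x,\infty))}\right)\int_0^x\frac{dt}{p_\xi(t)}=\mathcal{B}_+,
\]
and identically $\mathcal{B}(X_-,\xi)\eqsim \mathcal{B}_-$. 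Combining with the first step, $\Lambda_{LS}(\xi)^{-1}\eqsim \mathcal{B}_++\mathcal{B}_-=\mathcal{B}$, which proves both finiteness equivalence and the quantitative comparison.

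The only genuinely delicate point is the step from $L(f)\leq C\int(f')^2\,d\xi$ to its one-sided counterparts on $I_\pm$: the functional $L$ is sensitive to global cancellation, so I have to be careful that the ``recentering'' of each $f_i$ costs at most a universal constant. This is precisely where the Cauchy--Schwarz bound $|\int f_i\,d\xi|\leq \tfrac{1}{\sqrt 2}\|f_i\|_2\lesssim \|f_i\|_{N_1}$ is used, exploiting that $\eta$ is a \emph{median} so that $\xi(I_\pm)\leq 1/2$. Once this is in place the rest is a mechanical synthesis of the Muckenhoupt inequality, the ideal-norm representation, and the asymptotics of $N_2^{-1}$.
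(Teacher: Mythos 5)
Your proposal is correct and follows essentially the same route as the paper: the preceding discussion in the text (the Bobkov--G\"otze reduction to an Orlicz-norm Poincar\'e inequality, the $\pm$-component splitting using that $\eta$ is a median, the ideal-space representation formula, Theorem~\ref{thm:Muck2} applied to the pairs $(\xi_g,\xi)$, and the asymptotics $N_2^{-1}(t)\eqsim t/\log t$) \emph{is} the proof, and you have assembled it in the intended order. Your added remark about the harmless adjustment of the $N_2^{-1}$ estimate near the median is a minor but correct refinement that the paper leaves implicit.
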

For the exact multiplicative constants which give this equivalence the reader is referred to \cite{BobG}. As we previously mentioned, modifications/improvements to this condition are possible. For example from  \cite{BaRo, BMZ} it follows that $\Lambda_{LS}^{-1}(\xi)\leq 4\max(B_{-}^{\theta}, B_{+}^{\theta})$, where for $\theta\in\R$:
\eq{
\B_{+}^{\theta}:&=\sup_{x>\theta} \xi([x,+\infty))\log\prnt{1+\frac{e^2}{\xi([x,+\infty))}} \int_{\theta}^x \frac{1}{p_{\xi}(t)}dt\quad \text{and}\\
 \B_{-}^{\theta}:&=\sup_{x<\theta} \xi((-\infty,x])\log\prnt{1+\frac{e^2}{\xi((-\infty,x])}}\int_x^{\theta} \frac{1}{p_{\xi}(t)}dt \,,}
yet for $\theta=\eta$ we also have $\max(B_{-}^{\eta}, B_{+}^{\eta})\leq 4\Lambda_{LS}^{-1}(\xi)$. 
\bigskip

For further reading see \cite{BobG,Muck,BGL}.

 \chapter{The extreme points characterization theorem}
\label{chp:ExPoints}
\section{Definitions}
 
The following definitions will be utilized through the rest of this work. Mind that these definitions were adapted for this work, however the same terminology might be used differently by other authors, in other contexts or for different purposes.   

\begin{defn}[Convex Weighted Riemannian manifolds]\label{defn:WRM} A weighted Riemannian manifold (\wrm) is a triple $(M^n,\gfrak,\mu)$, where 
\begin{itemize}
	\item $(M^n,\gfrak)$ is a smooth, connected, complete $n$-dimensional Riemannian manifold (with or without boundary $\partial M$) where $n\geq 1$.
 \item $\mu$ is a measure on $M$ having density $U(x)=e^{-V(x)}$ with respect to $\mu_{\gfrak}$ (the standard Riemannian measure), which is smooth and positive on $M$.
\end{itemize}
We say $(M^n,\gfrak,\mu)$ is a convex weighted Riemannian manifold (\cwrm{}) if in addition 
\begin{itemize}
	\item $int(M)$ is geodesically-convex.
\end{itemize}
\end{defn}
\smallskip

We remind the reader that a set $A$ is said to be geodesically convex if any two points $x,y\in A$ are connected by at least one distance minimizing geodesic in $A$.	

Our results pertain to convex weighted Riemannian manifolds, therefore from this point on whenever we refer to a general triple $(M^n,\gfrak,\mu)$ with no further specification, we will assume it is a \cwrm{}. 
\bigskip

%; by locally-convex we mean that the second fundamental form evaluated at the boundary $\Pi_{p}(X,Y):=\gfrak(\nabla_X \nu,Y)$ ($X,Y\in T_p\partial M$) is positive semi-definite for all $p\in \partial M$. Similar convexity assumptions where also assumed in \cite{Mil4}, however here we leave no alternatives to geodesic convexity of the interior .

\begin{defn}[Curvature-Dimension-Diameter conditions] \label{dfn:CDD} Given $K\in \R$ and $N\in (-\infty, \infty]$, we say that $(M^n,\gfrak,\mu)$ satisfies 
\begin{itemize}
	\item $\pmb{ \gls{CD(K,N)}}$ (Curvature-Dimension conditions) if 
\[ Ric_{\gfrak,\mu,N}\geq \gls{K}\cdot \gfrak \qquad \text{ (as  symmetric 2-tensors on $M$)} \,,\]
where $Ric_{\gfrak,\mu, N}$ is the generalized Ricci tensor which is defined by
\eq{ Ric_{\gfrak,\mu,\gls{N}}:&= Ric_{\gfrak}+\nab_{\gfrak}^2V-\frac{1}{N-n}\nab_{\gfrak} V\otimes \nab_{\gfrak} V=Ric_{\gfrak}-(N-n)\frac{\nab_{\gfrak} ^2 U^{\frac{1}{N-n}}}{U^{\frac{1}{N-n}}}\,,}
with the conventions $\frac{1}{\infty}=\infty\cdot 0=0$.
Here $Ric_{\gfrak}$ denotes the ordinary Ricci tensor. Following the above conventions the case $N=n$ is possible only when $U$ is constant ($\nab_{\gfrak} V=0$).  Evidently $Ric_{\gfrak,\mu_{\gfrak},n}=Ric_{\gfrak}$ and $Ric_{\gfrak,\mu,\infty}=Ric_{\gfrak}+\nab_{\gfrak}^2 V$.
\item $\pmb{ \gls{CDD(K,N,D)}}$ (Curvature-Dimension-Diameter conditions) if in addition to $CD(K,N)$ also $diam(M):=\sup_{x,y\in M}d(x,y)\leq \gls{D}$ where $D\in (0,+\infty]$.
\item $\pmb{\gls{CD_b(K,N)}}$ (resp. $\pmb{\gls{CDD_b(K,N,D)}}$) if in addition to $CD(K,N)$ (resp. $CDD(K,N,D)$) also $\mu(M)<\infty$. 
\end{itemize}
\end{defn}
For early works which motivate this definition the reader is referred to \cite{BE1, BE2, Bus, Mil0}.
For most of this work the pertinent range of $N$ is $(-\infty, 0]\cup [n,\infty]$; in this range occurrences of $\frac{1}{N}$  (resp. $\frac{1}{N-n}$) when $N=0$ (resp. $N=n$) should be interpreted as $-\infty$ (resp. $+\infty$), as one would expect by considering limits. 
%It is then should be clear how to interpret the notation $N=0_{+}$ or $N=0_{-}$. 
For many purposes it could have been more natural to accept $\frac{1}{N}$ as the significant parameter; for example, one should observe that the $CD(K,N)$ condition is monotone in $\frac{1}{N}$ when $\frac{1}{N}\in [-\infty, \frac{1}{n}]$ in the following sense 
\eql{\label{monoton:CDKN} CD(K,N_1)\Rightarrow CD(K,N_2) \qquad\mbox{ if }\qquad \frac{1}{N_2}\leq \frac{1}{N_1}\,.} 
In particular this shows that $CD(K,0)$ is the weakest. Notwithstanding, considering other purposes, as well as tradition, we have accepted $N$ as the significant parameter for the formulation of statements and definitions.
The range $[n,\infty]$ has been extensively studied and is well understood, while at present much less is known about the range $(-\infty, 0]$. Nevertheless, this gap is quickly filled; first results on the subject appeared in  \cite{Oht1, OT1, Mil3, Mil5,Mil6}. 

\bigskip
% and $D_{\delta}$
\begin{defn}[The symbols $\gls{delta}$ and $\gls{l_{delta}}$] \label{dfn:deltaSymbols} Whenever $K\in\R,\, N\in(-\infty,\infty]$, we define

%{\small 

%\eq{ 
\[\delta=\delta(K,N):=\begin{cases}\frac{K}{N-1}& \mbox{ if } N\neq 1\\ 0 &\mbox{ otherwise } \end{cases}\,, \quad l_{\delta}=l_{\delta}(K,N):=\begin{cases} \frac{\pi}{\sqrt{\delta}} &\mbox{if} \qquad \delta>0\\
+\infty &\mbox{if} \qquad \delta\leq 0
\end{cases}\,.\]
 %\text{and}\,\, D_{\delta}=D_{\delta}(K,N,D):=\min\{D, l_{\delta}\}
%\,.
%} }
\end{defn}
\begin{remk}\label{remk:obDiameter}
The dependence of $l_{\delta}$ on $\frac{1}{N}$ will be important for certain statements. Consider the variable $\frac{1}{N}$ on the domain $\frac{1}{N}\in [-\infty, 1)$ (i.e. $N\in (-\infty,0]\cup (1,\infty]$). It is immediate to verify that on $[-\infty, 1)$, for $K>0$ the function $\frac{1}{N}\to l_{\delta}$ is non-increasing, while for $K<0$ it is non-decreasing. Notice that for $\frac{1}{N}=0$ it holds that $l_{\delta}=\infty$ no matter what is the sign of $K$. 
\end{remk}
Throughout this work we assume the following definition of the support: 
\begin{defn}[The supports $\gls{supp}$ and $\gls{ssupp}$]\,\qquad\\
\begin{itemize}
	\item Given a \blue{(signed)} measure $\xi$ on $\R$ we define its support $supp(\xi)$ to be the set defined by:
	\blue{
\[ supp(\xi)=\{x\in \R: |\xi|(I(x;r))>0,\qquad \forall r>0\}\,, \]
}
where $I(x; r):=(x-r,\,x+r)$.
  \item Given a $[m]$-measurable function \blue{$f:\R\to \R\cup \{\pm \infty\}$, we define $supp(f):=supp(f\,dm)$ and  $\isupp(f):=supp(f\,dm)\cap supp(f^{-1}\,dm)$. }
\end{itemize}

%$\isupp(f)$ to be the set
%\[ X\setminus \bigcup \cprnt{A\subset \Omega: \, A \text{ is open and } f(x)=0 \text{ or } f(x)=+\infty \text{ for } [m]-a.e \,\,x\in A } \]
\end{defn}

\begin{defn}[The class $\Fknd^{C^{\infty}}(I)$]\label{dfn:Z_KND} Given $K\in \R$, $N\in (-\infty,\infty]$, $D\in (0,\infty]$, and $I\subset \R$ a closed interval, we say that a measure $\xi$ on $\R$ is of class $\Fknd^{C^{\infty}}(I)$, if it satisfies the following conditions:
\begin{enumerate}
	\item $\xi$ is finite.
	\item $supp(\xi)\subset I$.
	\item $supp(\xi)$ is an interval $I_{\xi}$ s.t. $diam(I_{\xi})\leq D$.
	\item \label{cd_kn_property}$d\xi=Jdm$, where 
	\begin{enumerate}
		\item $J\in \Cinf(int(I_{\xi}))$ is strictly-positive on $int(I_{\xi})$.
		\item $J$ satisfies the following differential inequality on $int(I_{\xi})$
	\eql{\label{eqn:CD(k,N):1}  -LogHess_{N-1}J\geq K\,,}
	where
	{\footnotesize 
	\[ LogHess_{N-1}J:=(\log J)''+\frac{1}{N-1}((\log J)')^2 \qquad \prnt{\iff (N-1)\frac{(J^{\frac{1}{N-1}})''}{J^{\frac{1}{N-1}}}  \,\,,\text{ for } N\notin\{1,\infty\}}\,, \]
	}
	with the usual interpretation when $N=1$ or $N=\infty$. 
	\end{enumerate}
	When $I=\R$ we refer to this class simply by $\Fknd^{C^{\infty}}$.
\end{enumerate}
%We define $\Z_{(k,N,D)}(I)$ to be the subset of $\Z_{(k,N)}(I)$ whose members $\rho$ satisfy the diameter condition : $Diam(supp(\rho))\leq D$. 
\end{defn}

\begin{remk}
 The condition \eqref{eqn:CD(k,N):1} should be compared with Definition \ref{dfn:CDD}, as it can be interpreted as a
 $CDD_b(K,N,D)$ condition of the 1 dimensional space $(\R, |\cdot |,\mu=J\cdot m)$. We remark that this condition is closely related to the Heintze-Karcher theorem and its generalizations (Heintze-Karcher \cite{HeKa}, Bayle \cite{Bay}, Morgan \cite{Mor}, and E.Milman \cite{Mil3} for negative $N$) for smooth hypersurfaces in $S\subset M$.  For $N=n$ the condition can be conceptually interpreted as the equation satisfied by the square-root of a matrix determinant of Jacobi-fields orthogonal to a specified geodesic emanating from the hypersurface. For further details about generalizations of the Heintze-Karcher theorem to the setting of \wrm{} the reader is referred to \cite{Mil2, Mil3}.
\end{remk}

Following \cite{Mil2, Mil3} we introduce:
\begin{defn}[\blue{The $\gls{zeta_min}$ and $\gls{zeta_max}$ delimiters, $f_{+}$}]\label{dfn:zeta_min_max}
Given a continuous function $f\in C(\R)$ with $f(0)\geq 0$, we set $\zfrak_{-}(f):=\sup\{x\leq 0:\,f(x)=0\}$ and $\zfrak_{+}(f):=\inf\{x> 0:\,f(x)=0\}$ and we define $f_+:= f\cdot 1_{[\zfrak_{-}(f), \,\,\zfrak_{+}(f)]}$. 
\end{defn}
\begin{defn}[$\gls{f_vee}$]\label{defn:vee} \blue{Given a function $f:\R\to \R\cup \{\pm\infty\}$ we define $f_{\vee}:=\max\{ f, 0\}$.}
\end{defn}

\begin{defn}[Model-space densities]\label{defn:Jknh}
Given $K\in \R$, $N\in (-\infty,\infty]$ and 
 $\gls{hfrak}\in\R$, we define $J_{K,N,\hfrak}:\R\to \R_+\cup\{+\infty\}$ as the functions \green{
\eql{\label{eqn:JhkN} \gls{Jknh}=
\begin{cases}   
(\co_{\delta}(x)+\frac{\hfrak}{N-1} \si_{\delta}(x))_+^{N-1} &\mbox{ if } N \notin\{1,\infty\}\\
\exp(\hfrak x-\frac{K}{2}x^2) &\mbox{ if } N=\infty\,\\
1 & \mbox{ if }N=1\,,
\end{cases}
}
}
where 
\eql{\label{SinCos} \si_{\delta}(x):=\begin{cases} \sin(\sqrt{\delta}x)/\sqrt{\delta} & \delta>0 \\
x & \delta = 0 
\\
\sinh(\sqrt{-\delta}x)/\sqrt{-\delta} & \delta<0 \end{cases} \qquad
\co_{\delta}(x):=\begin{cases} \cos(\sqrt{\delta}x) & \delta>0 \\
1 & \delta = 0 
\\
\cosh(\sqrt{-\delta}x) & \delta<0\,, \end{cases}
}
where $\delta=\delta(K,N)$ is defined in Definition \ref{dfn:deltaSymbols}. 
Notice that $J_{K,N,\hfrak}(0)=1$ and for $N\neq 1$ $J_{K,N,\hfrak}'(0)=\hfrak$. 
\end{defn}
\begin{remk}\label{remk:ModelDensitiesProp} One should notice the following:
\begin{itemize}
	\item For $N\neq 1$ $J_{K,N,\hfrak}$ satisfies the equation $-LogHess_{N-1}J_{K,N,\hfrak}= K$ on $int(\isupp(J_{K,N,\hfrak}))$ (cf. \eqref{eqn:CD(k,N):1} with $J(0)=1$,\, $J'(0)=\hfrak$).
	\item $diam(\isupp(J_{K,N,\hfrak}))$ is exactly $l_{\delta}$ (see Definition \ref{dfn:deltaSymbols}). 
	\item When $N<0$ and $K<0$ then $J_{K,N,\hfrak}$ is not integrable at the boundary points of $\isupp(J_{K,N,\hfrak})$. 
	\item It is easy to check that for fixed $K, \, \hfrak\in\R$ it holds that $J_{K, N, \hfrak}(x)\stackrel{N\to\infty}{\longrightarrow} J_{K, \infty, \hfrak}(x)$.  

\end{itemize}
\end{remk}
\bigskip

\begin{defn}[The model class $\Fknd^M$]\label{defn:CD_Model_First}
We define $\Fknd^M$ as the set of finite absolutely continuous measures $d\xi=Jdm$, s.t. \blue{$J(x+r)=cJ_{K,N,\hfrak}(x) 1_{I}(x)$}, where $c>0$, \green{$r\in \R$} and $I\subset \isupp(J_{K,N,\hfrak})$ is an interval s.t. $0<diam(I)\leq D$.  
\end{defn}
Notice that $\Fknd^M\subset \Fknd^{\Cinf}$.

 \section{The localization theorem and functional inequalities}

As discussed in the introduction, the localization theorem \cite{Kla} of B. Klartag is the main tool we use to estimate the constants associated with functional inequalities on manifolds. We present it here in a convenient form. Notice that throughout we use the notion of Lebesgue measurability on a manifold $M$, meaning measurability with respect to the completion of the Borel $\sigma$-field (see \cite{Kla} for further details).

\begin{thm}[The localization theorem with a guiding function, B. Klartag \cite{Kla}; see also \cite{CM1,Oht3}] \label{thm:localization} Assume $(M^n, \gfrak, \mu=e^{-V}\cdot \mu_{\gfrak})$ (with $n\geq 2$) is a geodesically-convex \wrm{}  with $\mu(M)<\infty$ and s.t. $\partial M=\emptyset$. Assume $h:M\to \R$ is a $\mu$-integrable function such that $\int_Mh\,d\mu=0$ and $h(\cdot)d(x_0, \cdot)\in L^1(M; \mu)$ for some $x_0\in M$. Then there exists a decomposition $M=\Tc\bigcupdot\Zc$,  a measure space $(Q, \Sigma_Q, \zeta)$ and a set of Lebesgue measures $\{\mu_q\}_{q\in Q}$ on $M$ such that: %$M=\Omega^{\phi_u}_0\coprod \Omega^{\phi_u}_1 \coprod \Omega^{\phi_u}_2$, such that

\begin{enumerate}
		\item $h=0$ $[\mu]$-a.e. on $\Zc$. 
		\item $\Tc=\bigcup_{q\in Q} X_q$ where 
		\begin{itemize}
			\item $X_q$ is the image of a positive-length unit-speed length-minimizing geodesic $\g_q:I_q\to M$, where $I_q\subset \R$          is an open interval.
			\item Up to a $[\mu]$ null-set this is a partition of $\Tc$.
		\end{itemize}
		\item For any Lebesgue measurable set $A\subset M$: 
		\begin{itemize}
			\item The map $q\mapsto \mu_{q}(A)$ is well defined $[\zeta]$ a.e. and is $[\zeta]$ measurable.
			\item $\mu(A\cap \Tc)=\int_{q\in Q}\mu_{q}(A)d\zeta(q)$.
		\end{itemize}
		\item For $[\zeta]$ a.e. $q\in Q$:
			\begin{enumerate}
				\item $supp(\mu_{q})\subset \overline{X_q}$. 
				\item\label{int_cond} $\int_{X_q}h(x)d\mu_{q}(x)=0$ and $\int_{X_q}d\mu_{q}(x)=1$.
				\item\label{RicNeedle} If $(M,\gfrak,\mu)$ is of class $CDD_b(K,N,D)$ where $N\in (-\infty,1)\cup [n,\infty]$, then	$\mu_{q}=\g_{q\sharp}\prnt{J_q\cdot m}$ where $J_q\cdot m\in \Fknd^{C^{\infty}}$. 
		   \end{enumerate}
	\end{enumerate}
\end{thm}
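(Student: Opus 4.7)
The plan is to follow the optimal-transport approach developed by B. Klartag in \cite{Kla}. First I would set up the Monge-Kantorovich problem associated with $h$: decompose $h = h_+ - h_-$ with $h_\pm := \max(\pm h, 0)$, set $\mu_\pm := h_\pm \cdot \mu$, and observe that the hypothesis $\int_M h\,d\mu = 0$ ensures $\mu_+$ and $\mu_-$ have equal mass while $h(\cdot) d(x_0,\cdot)\in L^1(M;\mu)$ together with the triangle inequality guarantees finite transport cost for the Riemannian distance cost $c(x,y)=d(x,y)$. Since $M$ is geodesically-convex, Kantorovich duality yields a $1$-Lipschitz potential $\phi: M \to \R$ with $\phi(x)-\phi(y) = d(x,y)$ on the support of any optimal transport plan.

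Next I would build the needle partition from $\phi$. Call a transport ray the image of a maximal unit-speed minimizing geodesic along which $\phi$ is affine of slope $+1$; let $\Tc$ be the union of the relative interiors of all transport rays, and $\Zc := M \setminus \Tc$. Semiconcavity of $\phi$ (inherited from the Riemannian distance function) ensures that distinct rays do not meet in their relative interiors, so $\Tc$ is partitioned by a measurable family $\{X_q\}_{q\in Q}$ with $Q$ carrying a natural measurable quotient structure. At $\mu$-a.e.\ point of $\Zc$ no outgoing transport ray exists, and $c$-cyclical monotonicity of the optimal plan forces $h = 0$ there. Then I would apply Rokhlin's disintegration theorem to obtain conditional measures $\mu_q$ and a quotient measure $\zeta$ satisfying property $(3)$ and $supp(\mu_q)\subset \overline{X_q}$. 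The mean-zero condition $\int_{X_q} h\,d\mu_q = 0$ for $\zeta$-a.e.\ $q$ holds because otherwise mass could be moved between rays at lower cost, contradicting optimality of the one-dimensional reduction; a harmless renormalization yields $\mu_q(M)=1$.

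The heart of the matter is property $(\ref{RicNeedle})$: showing that under $CDD_b(K,N,D)$ the density $J_q$ of $\mu_q = \gamma_{q\sharp}(J_q\cdot m)$ lies in $\Fknd^{C^\infty}$. Locally one identifies $J_q$ by a Jacobian computation: slicing $\mu$ transversally to $\dot\gamma_q$, the density combines the square-root determinant $\sqrt{\det A(t)}$ of the orthogonal Jacobi-field matrix with the weight $e^{-V(\gamma_q(t))}$. The generalized Bochner-Riccati inequality under $Ric_{\gfrak,\mu,N}\ge K$, exactly as in the derivation of \eqref{DetermIneq}, then yields $-LogHess_{N-1} J_q \ge K$ on $int(I_q)$. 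The diameter bound $diam(I_q)\le D$ is inherited from $diam(M)\le D$, completing $J_q\cdot m \in \Fknd^{C^\infty}$.

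The hard part will be making this last step rigorous, since the Kantorovich potential $\phi$ is only semiconcave (not $C^2$), so the Bochner identity cannot be applied directly to $\phi$ or to its gradient flow. One must work on the set of regular points where second derivatives exist Lebesgue-a.e., invoke a Fubini-type argument to push the manifold-level curvature inequality down to a $\zeta$-a.e.\ ray-wise inequality, control the behaviour of $J_q$ near the endpoints of $I_q$, and verify smoothness of $J_q$ on $int(I_q)$. These regularity questions form the technical core of Klartag's original proof.
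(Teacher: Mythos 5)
The paper does not prove this theorem: it is stated as a cited external result attributed to Klartag \cite{Kla} (see also \cite{CM1,Oht3}), introduced with ``We present it here in a convenient form,'' and then immediately applied in Theorems \ref{thm:PoinInequality} and \ref{thm:LogSobolevInequality}. There is therefore no in-paper proof to compare your proposal against.

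For what it is worth, your outline is a reasonable summary of Klartag's argument: the Monge problem between $\mu_{\pm}=h_{\pm}\mu$, the $1$-Lipschitz Kantorovich potential, the transport-ray partition, Rokhlin disintegration, ray-wise mass balance, and the Bochner--Riccati inequality transferred to the needle densities. You also correctly identify the regularity of the Kantorovich potential (only local semiconcavity, hence only a.e.\ second-order structure) as the technical bottleneck --- that, together with establishing the measurable quotient structure on $Q$, is where the bulk of Klartag's proof is spent. One small inaccuracy in your sketch: the mean-zero condition $\int_{X_q}h\,d\mu_q=0$ is not obtained by a ``cheaper transport'' perturbation argument; rather, since the optimal map transports $\mu_+$ to $\mu_-$ within rays, disintegrating both $\mu_+$ and $\mu_-$ along the same ray family forces $\int_{X_q}h_+\,d\mu_q=\int_{X_q}h_-\,d\mu_q$ for $\zeta$-a.e.\ $q$, which is exactly the mean-zero condition. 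But since the paper simply cites the theorem, none of this affects the logical structure of the paper.
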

Here we denoted by $\g_{q\sharp}$ the pushforward defined by $\g_{q}$, so the condition $\mu_{q}=\g_{q\sharp}\prnt{J_q\cdot m}$ amounts to
$\int_{\R}\phi(\g_{q}(x))J_q(x)dm(x)=\int_M\phi(y)d\mu_{q}(y)$ for any $\phi\in C(M)$.

%$T:\R^n\to\R^n$ such that $T_{\sharp}d\mu_{+}=d\mu_{-}$ (also known as the set of rearrangements or transport maps), i.e $\int_{\R^n}\phi(T(x))d\mu_{+}(x)=\int_{\R^n}\phi(y)d\mu_{-}(y)$ for every $\phi\in C(\R^n)$.

Notice that by rescaling of the measure $\zeta$ we can assume any other convenient normalization of $\{\mu_q\}_{q\in Q}$ in $4.b$. 
We refer to the measures $\mu_q$ as '$CDD_b(K,N,D)$-{\bf needles}', and to the set $\{(X_q, \mu_q)\}_{q\in Q}$ as a needle decomposition of $(M,\gfrak,\mu)$ associated with the guiding function $h$. 
%Klartag shows that $\{X_q\}_{q\in Q}$ is a partition of $\Tc$ up to a $[\mu]$ null-set.

\subsection{Functional inequalities on geodesically convex domains}
\label{localization results}
Following Klartag \cite{Kla}, we now derive the main variational formulas for the estimates which we wish to study through the rest of this work. These formulas can also be found (in a possibly different form) in \cite{Kla} and \cite{CM1}. 
Recall \eqref{defn:PoincareIneq_intro} and \eqref{defn:LSIneq_intro} for the definitions of the  function spaces $\F_{Poi}(M,\mu)$ and $\F_{LS}(M,\mu)$, and to \eqref{dfn:Constants2} and \eqref{dfn:Constants3} for the definitions of $\Lambda_{Poi}^{(p)}(M,\gfrak,\mu)$ and $\Lambda_{LS}(M,\gfrak,\mu)$.

\begin{thm}[A $p$-\Poinc inequality]\label{thm:PoinInequality}[B.Klartag \cite{Kla}]
Assume $(M^n,\gfrak,\mu)$ is a \cwrm{}, which satisfies $CDD_b(K,N,D)$ with $K\in \R$, $N\in (-\infty,1)\cup [n,\infty]$ and $D\in (0,\infty]$. Then
\eql{\label{eqn:spectralGap} \Lambda_{Poi}^{(p)}(M,\gfrak,\mu)\geq \lamp_{K,N,D},}
where 
\eql{ \label{PoinConstant} \gls{lambda_p_knd}:=\inf_{0\neq \xi\in \Fknd^{C^{\infty}}}\inf \left\{ \frac{\int |f'(t)|^pd\xi(t)}{\int |f(t)|^pd\xi(t)}:\,\, f\in \F_{Poi}^{(p)}(\xi) \right\} \,.} 
%with $h_f(t):=f(t)|f(t)|^{p-2}$.
\end{thm}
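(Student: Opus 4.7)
The plan is to reduce the estimate to a one-dimensional problem via Klartag's needle decomposition, using an appropriately chosen guiding function. Given any $f \in \F_{Poi}^{(p)}(M,\mu)$, define
$$ h := |f|^{p-2} f. $$
Then $h \in C_c(M)$ with $\int_M h\, d\mu = 0$ (by the very definition of $\F_{Poi}^{(p)}(M,\mu)$), and since $h$ has compact support the integrability condition $h(\cdot)d(x_0,\cdot) \in L^1(\mu)$ is automatic. This makes $h$ an admissible guiding function for Theorem \ref{thm:localization}, so invoking it produces a partition $M = \Tc \,\dot\cup\, \Zc$ together with a disintegration $\mu|_{\Tc} = \int_Q \mu_q\, d\zeta(q)$ into $CDD_b(K,N,D)$-needles $\mu_q = (\gamma_q)_\sharp (\xi_q)$, where each $\xi_q = J_q\cdot m$ lies in $\Fknd^{C^\infty}$. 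Since $h \equiv 0$ $[\mu]$-a.e.\ on $\Zc$, so does $f$, and therefore
$$ \int_M |f|^p\, d\mu = \int_Q \!\!\int_{\R} |f_q|^p\, d\xi_q\, d\zeta(q), \qquad f_q := f \circ \gamma_q. $$

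Next, I would exploit two properties of the pulled-back functions $f_q$. First, property 4(\ref{int_cond}) guarantees $\int h\, d\mu_q = 0$, which translates to $\int |f_q|^{p-2} f_q\, d\xi_q = 0$, so $f_q$ is (for $[\zeta]$-a.e.\ $q$ with $f_q \not\equiv 0$) an admissible test function in $\F_{Poi}^{(p)}(\xi_q)$. Second, since $\gamma_q$ is a unit-speed geodesic, the chain rule yields the pointwise bound
$$ |f_q'(t)| = |\langle \nabla_\gfrak f(\gamma_q(t)),\, \dot\gamma_q(t)\rangle| \leq |\nabla_\gfrak f(\gamma_q(t))|, $$
and hence $\int |f_q'|^p\, d\xi_q \leq \int |\nabla_\gfrak f|^p\, d\mu_q$. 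Because $\xi_q \in \Fknd^{C^\infty}$, the definition of $\lamp_{K,N,D}$ supplies the one-dimensional inequality
$$ \int |f_q'|^p\, d\xi_q \geq \lamp_{K,N,D} \int |f_q|^p\, d\xi_q. $$

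Combining these, integrating against $d\zeta(q)$ and using that the contribution from $\Zc$ to $\int_M|\nabla_\gfrak f|^p d\mu$ is non-negative:
$$ \int_M |\nabla_\gfrak f|^p\, d\mu \geq \int_Q\!\! \int |f_q'|^p d\xi_q\, d\zeta \geq \lamp_{K,N,D} \int_Q\!\! \int |f_q|^p d\xi_q\, d\zeta = \lamp_{K,N,D} \int_M |f|^p\, d\mu. $$
Taking the infimum over $f \in \F_{Poi}^{(p)}(M,\mu)$ gives the claim. The main technical obstacles are peripheral: (i) Theorem \ref{thm:localization} is stated for $\partial M = \emptyset$, so the case of nonempty (locally convex) boundary requires a standard reduction (e.g.\ a doubling argument, or absorbing the boundary into the interior after noting $C_c^\infty(M)$ test functions allow free boundary conditions); (ii) the case $n = 1$ is immediate since $M$ is itself essentially a one-dimensional needle with density in $\Fknd^{C^\infty}$; (iii) the mismatch that $f_q$ is smooth on $I_q$ but may not extend to an element of $C_c^\infty(\R)$ is handled by a routine approximation/density argument inside $\F_{Poi}^{(p)}(\xi_q)$; (iv) needles on which $f_q \equiv 0$ contribute zero to both sides and may be discarded. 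None of these obstacles affect the core argument, which is simply localization plus a pointwise Cauchy--Schwarz-type estimate on each geodesic.
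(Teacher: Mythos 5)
Your proof is correct and follows essentially the same route as the paper: take $h_f = |f|^{p-2}f$ as the guiding function, apply Klartag's needle decomposition, use the pointwise bound $|f_q'| \le |\nabla_\gfrak f|$ from unit-speed geodesics, invoke the definition of $\lamp_{K,N,D}$ on each needle, and integrate in $q$, with the same treatments of the boundary and $n=1$ cases. If anything you are slightly more explicit than the paper about the extension/approximation issue in (iii) and the zero-needle issue in (iv), which the paper passes over without comment.
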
 
For $p=2$ we abbreviate and identify $\lam_{K,N,D}:=\lampc_{K,N,D}$. 

\begin{thm}[A Log-Sobolev inequality]\label{thm:LogSobolevInequality}
Assume $(M^n,\gfrak,\mu)$ is a \cwrm{}, which satisfies $CDD_b(K,N,D)$ with $K\in \R$, $N\in (-\infty,1)\cup [n,\infty]$ and $D\in (0,\infty]$. Then
\eql{\label{eqn:LSIneq} \Lambda_{LS}(M,\gfrak,\mu)\geq \rho_{K,N,D},}
where 
\eql{ \label{LogSobolevConstant} \gls{rho_knd}:=\inf_{0\neq \xi\in \Fknd^{C^{\infty}}}\inf\left\{ \frac{2\int f'(t)^2d\xi(t)}{\int f(t)^2\log\prnt{f(t)^2}d\xi(t)}:\,\, f\in \F_{LS}(\xi)\right\} \,.} 
%with $h_f(t):=(f(t)^2-1)$. 
\end{thm}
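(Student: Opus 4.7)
The plan is to follow the same Klartag-localization blueprint used for Theorem \ref{thm:PoinInequality}, with the guiding function chosen so that each needle inherits exactly the normalization required by the one-dimensional definition of $\rho_{K,N,D}$. First I would fix an arbitrary $f \in \F_{LS}(M,\mu)$ and set $g := f^2 - 1 \in \F_{Poi}(M,\mu)$; by definition $g \in C^{\infty}_c(M)$ with $\int_M g\, d\mu = 0$, so $g$ is a legitimate guiding function (compact support makes the integrability hypothesis $h\cdot d(x_0,\cdot) \in L^1(M;\mu)$ automatic). Applying Theorem \ref{thm:localization} with $h := g$ produces a partition $M = \Tc \bigcupdot \Zc$, a measure space $(Q, \Sigma_Q, \zeta)$, unit-speed length-minimizing geodesics $\gamma_q : I_q \to M$, and probability needles $\mu_q = \gamma_{q\sharp}(\xi_q)$ with $\xi_q := J_q\cdot m \in \Fknd^{C^{\infty}}$, such that $\int g\, d\mu_q = 0$ for $[\zeta]$-a.e.\ $q$.

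The exceptional set $\Zc$ is harmless: there $g = 0$ $[\mu]$-a.e., so $f^2 = 1$ and hence $f^2 \log f^2 = 0$ $[\mu]$-a.e.\ on $\Zc$, while the Dirichlet energy over $\Zc$ is non-negative and will be kept as a free buffer. On each needle $X_q$, the constraints $\int g\, d\mu_q = 0$ and $\mu_q(X_q) = 1$ give $\int_{X_q} f^2\, d\mu_q = 1$. Setting $\tilde f_q := f \circ \gamma_q$, the unit-speed condition yields $|\tilde f_q'(t)| \le |\nabla_\gfrak f(\gamma_q(t))|_\gfrak$, so the pulled-back problem on the needle is pointwise dominated by the ambient one. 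Once I confirm that $\tilde f_q$ is admissible in $\F_{LS}(\xi_q)$ (the inequality is trivial on needles where $\tilde f_q \equiv 1$), the defining infimum for $\rho_{K,N,D}$ gives
$$\rho_{K,N,D}\int \tilde f_q^2 \log(\tilde f_q^2)\, d\xi_q \;\le\; 2\int \tilde f_q'(t)^2\, d\xi_q(t).$$

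I would then integrate this needlewise inequality against $d\zeta(q)$ and use the disintegration formula $\mu(\cdot \cap \Tc) = \int_Q \mu_q(\cdot)\, d\zeta$ together with the pointwise gradient bound above, obtaining
\begin{align*}
\rho_{K,N,D} \int_M f^2 \log(f^2)\, d\mu
 &= \rho_{K,N,D} \int_Q \!\int_{X_q} f^2 \log(f^2)\, d\mu_q\, d\zeta(q) \\
 &\le 2\int_Q \!\int |\tilde f_q'|^2\, d\xi_q\, d\zeta(q)
 \;\le\; 2\int_M |\nabla_\gfrak f|^2\, d\mu.
\end{align*}
Since $f \in \F_{LS}(M,\mu)$ was arbitrary, this is the claimed bound $\Lambda_{LS}(M,\gfrak,\mu) \ge \rho_{K,N,D}$; the case $n=1$ is immediate, as then $\mu$ itself lies in $\Fknd^{C^{\infty}}$ after an isometric identification with an interval in $\R$.

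The hard part will be the admissibility check $\tilde f_q \in \F_{LS}(\xi_q)$. Because $D$ may be infinite, the interval $I_q$ can be unbounded and $\tilde f_q^2 - 1$ need not be compactly supported in $I_q$, even though $f^2 - 1$ is compactly supported on $M$ (the needle may escape the compact support of $g$ at infinity along the geodesic). I would therefore approximate $\tilde f_q$ by cutoffs of the form $\sqrt{1 + \chi_R(\tilde f_q^2 - 1) - c_R}$, with $\chi_R$ a smooth cutoff to $[-R,R]$ and $c_R$ a small correction restoring $\xi_q$-mean zero, and verify by dominated convergence that both the entropy and the Dirichlet energy pass to the limit. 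This is a routine but non-trivial density step; once it is in place everything else is a direct Fubini-type application of Theorem \ref{thm:localization}, exactly as in the Poincar\'e case.
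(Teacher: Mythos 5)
Your plan takes the same route as the paper: choose $\tilde h_f := f^2-1$ as the guiding function for Klartag's localization, observe the Dirichlet energy over $\Zc$ is nonnegative while $f^2\log f^2 = 0$ there (since $f^2=1$ $[\mu]$-a.e.\ on $\Zc$), apply the defining infimum of $\rho_{K,N,D}$ needle-by-needle, and reassemble via the disintegration formula. The paper's own proof simply refers back to the proof of Theorem~\ref{thm:PoinInequality} with $|\nabla f|^p\mapsto|\nabla f|^2$ and $|f|^p\mapsto f^2\log(f^2)$, which is exactly your argument.

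The one thing you misjudge is the admissibility ``hard part.'' When $D=\infty$ the needle $I_q$ may indeed be unbounded, but $\tilde f_q^2 - 1 = \tilde h_f\circ\gamma_q$ is nonetheless automatically compactly supported: $\gamma_q$ is a unit-speed \emph{length-minimizing} geodesic, so for $t_1<t_2$ with $\gamma_q(t_1),\gamma_q(t_2)\in supp(\tilde h_f)$ one has $t_2-t_1 = d_\gfrak(\gamma_q(t_1),\gamma_q(t_2)) \le diam(supp(\tilde h_f))<\infty$, so $\gamma_q^{-1}(supp(\tilde h_f))$ has finite diameter and is bounded. Extending $\tilde f_q$ off this set by its locally constant values $\pm 1$ produces a bona fide element of $\F_{LS}(\xi_q)$, and no cutoff-and-limit step is required. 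This is just as well, since the approximant you propose would not quite land in $\F_{LS}(\xi_q)$ as defined: $f_R^2-1 = \chi_R(\tilde f_q^2-1)-c_R$ is not compactly supported unless $c_R=0$, so one would have to switch to the $\tlF_{LS}$/entropy formulation. You also omit the case $\partial M\ne\emptyset$, which the paper settles by applying localization to $int(M)$ (geodesically convex by hypothesis) and noting that $\partial M$ is $\mu$-null.
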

The two theorems are proved using the same approach, via Klartag's localization theorem. 
%The proofs are essentially straightforward applications of the localization theorem. By our definitions of w.r.m, the conditions of the theorem are satisfied; we condition that by apriori consideration of $M\setminus \partial M$ instead of $M$ whenever $\partial M\neq \emptyset$ ($\mu_{n-1}(\partial M)=0$). 
\begin{proof}[Proof of Theorem \ref{thm:PoinInequality}]
For the case $n=1$ the statement follows directly from the definition of $\lamp_{K,N,D}$. We proceed under the assumption that $n\geq 2$. 

Let $f\in \F^{(p)}_{Poi}(M,\mu)$. Define $$\tilde{h}_f(x):=f(x)|f(x)|^{p-2} \,.$$
Then by definition of $\F^{(p)}_{Poi}(M,\mu)$:
\begin{itemize}
	\item $\tilde{h}_f\in C_c(M)$, hence $\tilde{h}_f\in L^1(M; \mu)$ and moreover $\tilde{h}_f(\cdot)d(x_0, \cdot)\in L^1(M; \mu)$ for any $x_0\in M$.
	\item $\int_M \tilde{h}_f\,d\mu=0$.
\end{itemize}

Let us firstly assume $\partial M=\emptyset$. According to Theorem \ref{thm:localization}  $M=\Tc\bigcupdot \Zc$ and 
\begin{enumerate}
	\item $\Tc=\bigcup_{q\in Q} X_q$ where $\{X_q\}_{q\in Q}$ being the images of unit-speed length-minimizing geodesics $\g_q:I_q\to M$ (notice that this implies that $diam(I_q)\leq D$).  
	\item $\mu|_{\Tc}=\int_{q\in Q}\mu_q d\zeta(q)$ where $\mu_q=(\g_{q\sharp})J_q\cdot m$ is a $CDD_b(K,N,D)$-needle.
	\item $\tilde{h}_f=0$ for $[\mu]$-a.e. $x\in \Z$.
\end{enumerate} 
We set $f_q:=f(\g_q(t))$ and define $h_{f_q}(t):=\tilde{h}_f(\g_q(t))=f_q(t)|f_q(t)|^{p-2}$; evidently \\$\int h_{f_q}(t)J_q(t)dm(t)=0$, and by definition of $\lamp_{K,N,D}$ it follows that for all $q\in Q$ :
\[ \lamp_{K,N,D}\int|f_q(t)|^pJ_qdm(t) \leq \int |f'_q(t)|^pJ_qdm(t)\,. \]
Hence using the inequality $|f(\g_q(t))'|\stackrel{C.S.}{\leq} |\nabla f(\g_q(t))||\g_q'(t)|\stackrel{|\g_q'(t)|=1}{=}|\nabla f(\g_q(t))|$ we conclude for $p\in (1,\infty)$:
{\small 
\eql{\label{eqn:SpectralGapNeedle}
&\int_M |\nabla f|^p(x)d\mu(x)\geq \int_{\Tc} |\nabla f|^p(x)d\mu(x)=\int_{q\in Q} d\zeta(q) \int_{\Tc}|\nabla f|^p(x)d[\g_{q\sharp}(J_q\cdot m)]\\ \nonumber&=
\int_{q\in Q} d\zeta(q) \int_{I_q}|\nabla f|^p(\g(t))J_q(t)dm(t)\geq 
\int_{q\in Q} d\zeta(q) \int_{I_q}|f_q'(t)|^p J_q(t)dm(t) \\ \nonumber&
\geq \lamp_{K,N,D}\int_{q\in Q} d\zeta(q) \int_{I_q}|f_q(t)|^p J_q(t)dm(t)=
\lamp_{K,N,D}\int_{\Tc}|f(x)|^p d\mu(x)=\lamp_{K,N,D}\int_{M}|f(x)|^p d\mu(x)\,,
}}
where the last inequality is a consequence of $\tilde{h}_f=0$ for $[\mu]$ a.e. $x\in \Zc$. Therefore $\Lambda_{Poi}^{(p)}(M,g,\mu)\geq \lamp_{K,N,D}$. 

If $\partial M\neq \emptyset$ we get the same conclusions by applying the localization Theorem \ref{thm:localization} to $int(M)=M\setminus \partial M$, which by assumption is geodesically-convex; indeed, since $\partial M$ is a $[\mu]$ null-set, all integrals over $M$ in \eqref{eqn:SpectralGapNeedle} coincide with integrals over $int(M)$. 

\qedhere
\end{proof}

\begin{proof}[Proof of Theorem  \ref{thm:LogSobolevInequality}]
%The proof follows identically to the proof of \ref{thm:PoinInequality} under
Considering the proof for $\Lambda_{Poi}$, it is sufficient to prove the statement under the assumption that $n\geq 2$. 
Let $f\in \F_{LS}(M,\mu)$. Define $$\tilde{h}_f(x):=f^2(x)-1\,.$$
Then by definition of $\F_{LS}(M,\mu)$:
\begin{itemize}
	\item $\tilde{h}_f\in \Cinf_c(M)$, hence $\tilde{h}_f\in L^1(M; \mu)$ and moreover $\tilde{h}_f(\cdot)d(x_0, \cdot)\in L^1(M; \mu)$ for any $x_0\in M$.
	\item $\int_M \tilde{h}_f\,d\mu=0$.
\end{itemize}

The rest of the proof is identical to the proof of Theorem  \ref{thm:PoinInequality} subject to the replacement of $\lamp_{K,N,D}$ with $\rho_{K,N,D}$, $|\nab f|^p$ with $|\nab f|^2$, and $|f|^p$ with $f^2\log\prnt{f^2}$.

\end{proof}

 \section{The `synthetic' $CDD_b(K,N,D)$ condition on $\R$}

%In this section we formulate and prove equivalences of the $CDD(k,N,D)$ needle condition; the conclusions are trivial in view of the works of Sturm \cite{KTS1, KTS2, KTS3,KTSR} et al., who employed optimal transport methods on general metric-measure spaces. However for the sake of completeness, we provide 
%alternative proofs of these equivalences, which are rather rudimentary (following the methods of Borel \cite{Bo1, Bo2}) due to the fact that the underlying space is $\R$. This is an intermediate step towards the definition of the class $\MM_{(k,N,D)}$ which we have mentioned before. From this point on, unless explicitly specified otherwise, we will assume $N\in (-\infty, 0]\cup [1, \infty]$; this is the pertinent range for the validness of the most significant results.

In order to discuss about the `synthetic' definition mentioned in the title, we introduce additional definitions. 

\subsection{Distorted means and their properties}

Let $K\in \R$, $N$ and $\NN$ such that $\NN+1, N \in  (-\infty, 0]\cup [1, \infty)$. Following \cite{KTS2, KTS3}  we define the distortion coefficients $\sigma^{(t)}_{K,\NN}(\theta), \tau^{(t)}_{K,N}(\theta)$ for every $\theta\in \R_{+}$ and $t\in [0,1]$ by:
\eql{\label{dfn:SigmaTau} \gls{sigma_knd}:=\begin{cases}
%\infty & \mbox{ if }   \frac{K}{\NN}\theta^2\geq \pi^2  \\
%0 & \mbox{ if }   \frac{K}{\NN}\theta^2\geq \pi^2\text{ and }\NN<0  \\
 \infty & \mbox{ if } \frac{K}{\NN}\theta^2\geq\pi^2\,, \\
	\frac{\sin(t\theta\sqrt{\frac{K}{\NN}})}{\sin(\theta \sqrt{\frac{K}{\NN}})} & \mbox{ if } 0<\frac{K}{\NN}\theta^2<\pi^2\,, \\
	t & \mbox{ if } \frac{K}{\NN}\theta^2=0 \text{ or } \NN=0\,,\\
	\frac{\sinh(t\theta\sqrt{-\frac{K}{\NN}})}{\sinh(\theta \sqrt{-\frac{K}{\NN}})} & \mbox{ if } \frac{K}{\NN}\theta^2<0\,,
	\end{cases}
\qquad
\text{and   }\qquad  \gls{tau_knd}:=t^{\frac{1}{N}}\sigma_{K,N-1}^{(t)}(\theta)^{1-\frac{1}{N}}\,\,\,.
}
In addition for $t\in [0,1]$ we define the {\bf `distorted-means'}
$M_{K,\NN}^{(t)}[\cdot ](\cdot,\cdot),\, \tl{M}_{K,N}^{(t)}[\cdot ](\cdot,\cdot): \R_{+}\times \R_+^2\to \R_+\cup\{+\infty\}$ as follows: whenever $a\cdot b>0$ 
{\footnotesize 
\eql{ \gls{M_knd}=\begin{cases}
 \mbox{if $\frac{K}{\NN}d^2<\pi^2,\, \NN\neq 0$:} & \quad 
\begin{cases} 
\prnt{\sigma_{K,\NN}^{(1-t)}(d)a^{\frac{1}{\NN}}+\sigma_{K,\NN}^{(t)}(d)b^{\frac{1}{\NN}} }^{\NN}  &\quad\mbox{ if } \NN\in (-\infty,-1]\cup (0,\infty) \\ 
a^{1-t}b^te^{\frac{Kt(1-t)d^2}{2}} &\quad\mbox{ if } \NN=\infty\,, \\
\end{cases}\\
 \mbox{if $\frac{K}{\NN}d^2\geq\pi^2,\NN\neq 0$:} & \quad  \begin{cases} +\infty &\mbox{ if } K>0,\NN> 0\\ 0 &\mbox{ if } K<0,\NN\leq -1 \end{cases}\\
\mbox{if $\NN=0$} & \quad \max\prnt{a,b}\,.
\end{cases}
%\\
%\mbox{if $\frac{K}{\NN}d^2\geq\pi^2$:} & \qquad \begin{cases}  +\infty & \mbox{ if } $K>0$ \text{ and } $\NN>0$ \\  +\infty & \mbox{ if } $K>0$ \text{ and } $\NN<0$ \end{cases}
%\end{cases}
}
\eql{
 %\mbox{and} \quad
\gls{tlM_knd}:=\begin{cases} 
 \mbox{if $\frac{K}{N-1}d^2<\pi^2,\, N\neq 0$:} & \quad \begin{cases}
\prnt{\tau_{K,N}^{(1-t)}(d)a^{\frac{1}{N}}+\tau_{K,N}^{(t)}(d)b^{\frac{1}{N}} }^{N}
&\quad\mbox{ if } N\in (-\infty, 0)\cup [1,\infty) \\
a^{1-t}b^te^{\frac{Kt(1-t)d^2}{2}}  &\quad\mbox{ if } N=\infty\,,\\
%\max((1-t)(\sigma_{k,-1}(d)^{(1-t)})^{-1}a,t(\sigma_{k,-1}(d)^{(t)})^{-1}b)   &\quad\mbox{ if } N=0_+\, (\frac{1}{N}=+\infty)\\
\end{cases}\\
\mbox{if $\frac{K}{N-1}d^2\geq\pi^2,N\neq 0$:} & \quad  \begin{cases} +\infty &\mbox{ if } K>0,N\geq 1\\ 0 &\mbox{ if } K<0,N< 0 \end{cases}\\
\mbox{if $N=0$} & \quad \min\prnt{(1-t)(\sigma_{K,-1}^{(1-t)}(d))^{-1}a,t(\sigma_{K,-1}^{(t)}(d))^{-1}b} \,.
\end{cases}
 }  
 }
We set $M_{K,\NN}^{(t)}[d](a,b)=0$ (resp. $\tl{M}_{K,N}^{(t)}[d](a,b)=0$) by definition if $a\cdot b=0$ (for every value of $d$). 
Whenever $K=0$ these functions are independent of the parameter $d$; in this case we abbreviate $M_{0,\NN}^{(t)}(a,b)$ and $\tl{M}_{0,N}^{(t)}(a,b)$ .
\smallskip

\begin{remk} The marginal values (at $\NN,N=\infty$ or $\NN,N=0$) correspond to the limits of these functions when $\NN,N\to\infty$ or $\NN\to 0+$ and $N\to 0-$; the identity (\cite[Prop. 5.5]{BS}):
\eql{\label{eqn:sigma_taylor}\sigma_{K,\NN}^{(t)}(\theta)=t+\frac{K}{6}t(1-t^2)\prnt{\frac{\theta}{\sqrt{\NN}}}^2+O((\frac{\theta}{\sqrt{\NN}})^4)\,,\qquad \mbox{ for }\,\,\NN>0}
%and 
%\eql{\label{eqn:tau_taylor}
%\tau_{K,N}^{(t)}(\theta)=t\prnt{1+\frac{K}{6}(1-t^2)\prnt{\frac{\theta}{\sqrt{N-1}}}^2+O\prnt{\prnt{\frac{\theta}{\sqrt{N-1}}}^4}}^{\frac{N-1}{N}}\,,
%}
is useful for  the evaluation of these limits when $N,\NN\to \infty$.  In addition the cases $\frac{K}{\NN}d^2\geq\pi^2$ or $\NN=0$ follow from the definition of the functions $\sigma^{(t)}_{K,\NN}(\theta)$ (Definition  \ref{dfn:SigmaTau}).
\end{remk}
\begin{remk}\label{remk:lDelta}
Notice that for $N\neq 1$ the condition $\frac{K}{N-1}d^2\geq \pi^2$  can be satisfied only if $\delta>0$ and 
 $d\geq l_{\delta}$, where $\delta$ and $l_{\delta}$ are the functions of $K$ and $N$ which were defined in Definition  \ref{dfn:deltaSymbols}.
\end{remk} 
\begin{remk}[Relation to the class $\Fknd^{M}$]\label{rmk:FkndM_Diff_ID} Recall that for $N\neq 1$ the densities of $\xi\in \Fknd^M$ are positive solutions to the equation $-LogHess_{N-1}J= K$ (see  \eqref{eqn:CD(k,N):1}), which for  $N\neq \infty$ can be written as: 
$$(J^{\frac{1}{N-1}})''(x)+\frac{K}{N-1}J^{\frac{1}{N-1}}(x)=0\,,\qquad x_0<x<x_1\,.$$ 
If $x_0<x<x_1$ we may write $x=\convar{x_0}{x_1}$ for some $t\in (0,1)$, thus defining $d:=|x_1-x_0|$ we may rewrite the equation in the $t$ parametrization as  
$$(\tl{J}^{\frac{1}{N-1}})''(t)+\frac{Kd^2}{N-1}\tl{J}^{\frac{1}{N-1}}(t)=0\,,\qquad 0<t<1\,,$$
where $\tl{J}(t):=J(\convar{x_0}{x_1})$. One can verify directly that the solution $\tl{J}$ to the equation which satisfies $\tl{J}(0)=J(x_0)$ and  $\tl{J}(1)=J(x_0)$ is given by:
$$\tl{J}(t)=M_{K,N-1}^{(t)}[d](J(x_0),J(x_1))\,.$$
\end{remk}
%Mind that if $a\cdot b>0$ and $\frac{K}{N-1}d^2\geq \pi^2$ (i.e $\delta>0$ and $d\geq l_{\delta}$) then according to our conventions:
%\eq{
 %M_{K,N-1}^{(t)}[d](a,b)=\tl{M}_{K,N}^{(t)}[d](a,b)=\begin{cases}
%+\infty & \mbox{ if } N>1 \,\,(\,K>0)\\
 %0 & \mbox{ if } N\leq 0 \,\,(\,K<0)
%\end{cases}\,.
%}

%We remark that the functions $\tl{M}_{k,N}^{(t)}[\cdot ](\cdot,\cdot)$ arise naturally in the generalization of the Brunn-Minkowski inequality to w.r.m of class $CDD(k,N,D)$ (\cite{Vil} p.509). 

% For the first of these limits, the identity $\sigma{k,N}^{(t)}(\theta)=t+\frac{k}{6N}t(1-t^2)\theta^2+o(\theta^2)$ is useful. 

%Notice that $M_{0,N_1}[d_1](a_1,b_1)$ and $M_{k_1,0}[d_1](a_1,b_1)$ are independent of $d$; hence in these cases we may simply use the abbreviated notation $M_{0,N_1}(a_1,b_1)$ and $M_{k_1,0}(a_1,b_1)$. 
%We should also notice notice that whenever $k=0$ or $N=\infty$ we have the equality $M_{k,N}^{(t)}=\tilde{M}_{k,N}^{(t)}$.  

%In addition we define
%{\scriptsize 
%\eq{ &M_{k_1,k_2; N_1,N_2}^{(t)}[d_1,d_2](a,b)=\\&
%\begin{cases} \prnt{ \prnt{\sigma_{k_1,N_1}^{(1-t)}(d_1)^{N_1}\sigma_{k_2,N_2}^{(1-t)}(d_2)^{N_2}}^{\frac{1}{N_1+N_2}}a^{\frac{1}{N_1+N_2}} +\prnt{\sigma_{k_1,N_1}^{(t)}(d_1)^{N_1}\sigma_{k_2,N_2}^{(t)}(d_2)^{N_2}}^{\frac{1}{N_1+N_2}}b^{\frac{1}{N_1+N_2}}}^{N_1+N_2} & \mbox{ if }N_1,N_2\neq \infty\\
%a & N_1=\infty,\, N_2\neq \infty
%\end{cases}\,.
 %} }

\subsubsection{Important properties of the distorted means}

The following proposition is essential to the results presented in the next section. 
\begin{prop}[Elementary properties of the distorted means]\label{prop:properties} Assume $a_1,a_2,b_1,b_2>0$, $K\in \R$, $N\in (-\infty, 0]\cup (1,\infty]$, $t\in (0,1)$ and $d\in (0,l_{\delta})$. Then 
\begin{enumerate}
%	\item If $N_1\geq N_2$ then $M_{k, N_1}^{(t)}[d](a_1,b_1)\leq M_{k, N_2}^{(t)}[d](a_1,b_1)$. 
 \item \label{id:1} $M_{K, N-1}^{(t)}[d](a_1,b_1)M_{0, 1}^{(t)}(a_2,b_2)\geq  \tl{M}_{K,N}^{(t)}[d](a_1a_2,b_1b_2)$ with equality if and only if  
 
 $\prnt{\prnt{\frac{\sigma_{K,N-1}^{(1-t)}(d)}{1-t}}^{N}a_1^{\frac{N}{N-1}}, \prnt{\frac{\sigma_{K,N-1}^{(t)}(d)}{t}}^{N}b_1^{\frac{N}{N-1}}}$ is proportional to $(a_2^N,b_2^N)$.
 
 %$(a_1^{\frac{1}{N-1}}, b_1^{\frac{1}{N-1}})$ is proportional to $(a_2,b_2)$.
%\item \label{id:2} $M_{k_1,0; N_1-1,1}^{(t)}[d_1,\cdot ](a_1,b_1)=\tl{M}_{k_1,N_1}^{(t)}[d_1](a_1,b_1)$.
\item \label{id:2} For $K\geq 0$ (resp. $K\leq 0$) the function $\theta\mapsto \tl{M}^{(t)}_{K,N}[\theta](a_1,b_1)$ is non-decreasing (resp. non-increasing) on $[0, l_{\delta})$.
\item \label{id:3} The functions $r\mapsto \tl{M}^{(t)}_{K,N}[d](r,b_1)$ and $r\mapsto \tl{M}^{(t)}_{K,N}[d](a_1,r)$
are non-decreasing on $\R_+$. 
\item \label{id:4} \begin{enumerate}
  \item The function $\frac{1}{N}\mapsto M^{(t)}_{K,N-1}[d](a_1,b_1)$ is non-decreasing on $[-\infty,1)$.
	\item The function $\frac{1}{N}\mapsto \tl{M}^{(t)}_{K,N}[d](a_1,b_1)$ is non-decreasing on $[-\infty,1)$.
\end{enumerate}

\item \label{id:5} When $\delta=\frac{K}{N-1}>0$ and $d_0\in (0,+\infty)$:
\begin{enumerate}
	\item if $K>0, N\in(1,\infty)$: $\lim_{d\uparrow d_0}\tl{M}_{K,N}^{(t)}[d](a,b)=\tl{M}_{K,N}^{(t)}[d_0](a,b)$ ($\infty$ if $d_0\geq l_{\delta}$).
	\item if $K<0, N\in (-\infty,0]$: $\lim_{d\downarrow d_0}\tl{M}_{K,N}^{(t)}[d](a,b)=\tl{M}_{K,N}^{(t)}[d_0](a,b)$ ($0$ if $d_0\geq l_{\delta}$). 
\end{enumerate}

\end{enumerate}
\end{prop}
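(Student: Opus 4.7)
My plan is to verify the five items essentially by direct computation from the definitions in \eqref{dfn:SigmaTau}, with Hölder's inequality (standard or reverse) as the main analytic tool.

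For item \ref{id:1}, I will set $\tilde a=(1-t)a_2$, $\tilde b=tb_2$, $\hat a=\sigma_{K,N-1}^{(1-t)}(d)a_1^{1/(N-1)}$, $\hat b=\sigma_{K,N-1}^{(t)}(d)b_1^{1/(N-1)}$; then $M_{K,N-1}^{(t)}[d](a_1,b_1)=(\hat a+\hat b)^{N-1}$, $M_{0,1}^{(t)}(a_2,b_2)=\tilde a+\tilde b$, and using $\tau_{K,N}^{(s)}(d)=s^{1/N}\sigma_{K,N-1}^{(s)}(d)^{1-1/N}$ a direct rewriting yields
\[ \tl{M}_{K,N}^{(t)}[d](a_1a_2,b_1b_2)=\bigl(\tilde a^{1/N}\hat a^{(N-1)/N}+\tilde b^{1/N}\hat b^{(N-1)/N}\bigr)^N. \]
Normalising by $(\tilde a+\tilde b)(\hat a+\hat b)^{N-1}$ and setting $\alpha=\tilde a/(\tilde a+\tilde b)$, $\beta=\hat a/(\hat a+\hat b)\in(0,1)$, the claim reduces to
\[ \bigl[\alpha^{1/N}\beta^{(N-1)/N}+(1-\alpha)^{1/N}(1-\beta)^{(N-1)/N}\bigr]^N\le 1. \]
For $N\in(1,\infty)$ this follows from Hölder with conjugate exponents $(N,N/(N-1))$, since $\sum\alpha_i^{1/N}\beta_i^{(N-1)/N}\le(\sum\alpha_i)^{1/N}(\sum\beta_i)^{(N-1)/N}=1$; for $N\in(-\infty,0)$ it follows from reverse Hölder with $(p,q)=(N/(N-1),N)\in(0,1)\times(-\infty,0)$, where the reversal in Hölder and the reversal under $(\cdot)^N$ cancel. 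The case $N=\infty$ is AM-GM. The equality clause in Hölder gives $(\alpha,1-\alpha)\propto(\beta,1-\beta)$, which rearranges to the stated proportionality.

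For items \ref{id:2}, \ref{id:3} and \ref{id:5} the key analytic input is the elementary identity $\frac{d}{du}[s\cos(su)\sin u-\sin(su)\cos u]=(1-s^2)\sin(su)\sin u$ (and its $\sinh$-analogue), which shows that $d\mapsto\sigma_{K,N-1}^{(s)}(d)$ is monotone on $[0,l_\delta)$ with direction determined by the sign of $K/(N-1)$. Composing with $\tau=s^{1/N}\sigma^{1-1/N}$ (where $1-1/N$ has a definite sign on the allowed range of $N$) and with the outer $N$-th power in $\tl M$, which flips orientation for $N<0$, the four sign cases in \ref{id:2} come out consistently; the $N=\infty$ case is immediate from the Gaussian formula. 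For \ref{id:3} one has
\[ \partial_a\tl{M}_{K,N}^{(t)}[d](a,b)=\tau_{K,N}^{(1-t)}(d)\,a^{1/N-1}\bigl(\tau_{K,N}^{(1-t)}(d)a^{1/N}+\tau_{K,N}^{(t)}(d)b^{1/N}\bigr)^{N-1}>0 \]
irrespective of the sign of $N$, and the $N=0$ case is read off from the $\min$-formula. For \ref{id:5}, continuity on the interior of $[0,l_\delta)$ is immediate from continuity and non-vanishing of the $\sin,\sinh$ denominators; at $d=l_\delta$ the denominator tends to $0^+$, forcing $\sigma\to+\infty$, which yields the claimed $+\infty$ for $K>0,N\in(1,\infty)$ and, after flipping by the negative outer exponent, the claimed $0$ for $K<0,N\in(-\infty,0]$.

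For item \ref{id:4}, when $K=0$ both $\sigma$ and $\tau$ reduce to the weights $(1-t,t)$, and the claim collapses to the classical monotonicity of weighted $p$-means in the exponent $p=1/(N-1)$ (Hardy-Littlewood-Pólya). I expect the main obstacle to be the case $K\ne 0$, where $1/N$ enters both the distortion weights and the two exponents. My plan is to exploit the ODE characterisation of Remark \ref{rmk:FkndM_Diff_ID}: $\tl J(t):=M_{K,N-1}^{(t)}[d](a,b)$ is the unique positive solution on $[0,1]$ of $(\tl J^{1/(N-1)})''+\frac{Kd^2}{N-1}\tl J^{1/(N-1)}=0$ with prescribed endpoint values. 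Monotonicity in $1/N$ then follows by a maximum-principle comparison consistent with the chain \eqref{monoton:CDKN}: for $1/N_2\le 1/N_1$, the power $\tl J_{N_1}^{1/(N_2-1)}$ still satisfies a $CD(K,N_2)$-type concavity inequality with the correct endpoint data, and a comparison against $\tl J_{N_2}$ yields the claim pointwise in $t$. The corresponding statement for $\tl M$ in \ref{id:4}(b) then follows by combining the result for $M$ with item \ref{id:1}.
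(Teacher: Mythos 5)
Your proposal is correct in substance and would yield all five items, but it diverges from the paper's route in two places worth noting. For item \ref{id:1} you do essentially what the paper does: the paper quotes Lemma \ref{lem:Gardiner} (Gardner/Hardy--Littlewood--P\'{o}lya on products of weighted means with $\frac{1}{p}+\frac{1}{q}=\frac{1}{s}$), whereas you normalise and invoke H\"{o}lder/reverse H\"{o}lder directly --- mathematically the same ingredient, yours more explicit and self-contained, though you should carve out the $N=0$ case separately, where your exponents $1/N$, $(N-1)/N$ blow up and the inequality collapses to the elementary mediant bound $\min(p/u,q/v)\le(p+q)/(u+v)$. For item \ref{id:2} the routes genuinely differ: the paper passes through the Riccati/ODE characterisation of $\log\tl J$ and a maximum-principle comparison, whereas you differentiate $\sigma_{K,N-1}^{(s)}$ in $d$ via the trigonometric/hyperbolic identity and then track the sign through $\tau=s^{1/N}\sigma^{1-1/N}$ and the outer $N$-th power. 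Your argument has the virtue of being a finite sign-chasing argument landing directly on $\tl M$ (which is what the statement is actually about); you should just flag that the $\sinh$-analogue of your identity comes out with the opposite overall sign, $(s^2-1)\sinh(su)\sinh u$, so that $\sigma$ is \emph{decreasing} in $d$ when $K/(N-1)<0$. For item \ref{id:4}(a), both you and the paper rely on ODE comparison, but in different guises: the paper compares $u_i=\log\tl J_i$ via a maximum principle, while you raise $F=\tl J_{N_1}^{1/(N_1-1)}$ to the power $\alpha=(N_1-1)/(N_2-1)$ and observe that $H=F^\alpha$ satisfies $H''+\frac{Kd^2}{N_2-1}H=\alpha(\alpha-1)F^{\alpha-2}(F')^2$, whose sign is governed by $\alpha(\alpha-1)$. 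That works, but you will need a short case analysis: $\alpha(\alpha-1)$ changes sign depending on whether $N_1,N_2$ lie on the same or opposite sides of $1$, and the final step $H\lessgtr G\Rightarrow\tl J_{N_1}\gtrless\tl J_{N_2}$ involves raising to the power $N_2-1$, which also flips direction when $N_2<1$; the two flips must be tracked together (they do cancel, but it should be spelled out), and you must work on the diameter range where $d<\min(l_{\delta(K,N_1)},l_{\delta(K,N_2)})$ so that Remark \ref{rmk:ODEfacts} applies, handling the remainder from the monotonicity of $l_\delta$ in $1/N$ as the paper does. Your reduction of \ref{id:4}(b) to \ref{id:4}(a) together with the equality case of \ref{id:1} matches the paper. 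Items \ref{id:3} and \ref{id:5} are handled the same way in both.
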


\begin{proof} 
\begin{enumerate}
\item  Let $p\in [-\infty,\infty]$. The classical weighted-means $\bM_p^{(t)}(a,b):\R_+^2\to\R_+$ are defined as follows: whenever $a,b>0$
\eql{\label{classicalWeightedMeans}
\bM_p^{(t)}(a,b)=\begin{cases} (\convar{a^p}{b^p})^{\frac{1}{p}}&\mbox{ if }\,\,p\in (-\infty, \infty]\setminus \{0\}\,,\\
																									 a^{1-t}b^t&\mbox{ if }\,\,p=0\,,\\
																									 \max\{a,b\}&\mbox{ if }\,\,p=+\infty\,,\\
																								  	\min\{a,b\}&\mbox{ if }\,\,p=-\infty\,,
									\end{cases}
}
and $\bM_p^{(t)}(a,b)=0$ by definition if $a\cdot b=0$.				

The theorem will follow as a direct consequence of the following lemma which we quote without proof (see \cite{Gar} p.20 Lemma 10.1 and also \cite{HaLi} p.24):
\begin{lem}\label{lem:Gardiner}  Assume $t\in (0,1)$ and $a_1,a_2,b_1,b_2,d\in \R_{+}$. If $p+q\geq 0$ 
$$\bM_p^{(t)}(a_1,b_1)\bM_q^{(t)}(a_2,b_2) \geq \bM_s^{(t)}(a_1a_2,b_1b_2) \,,\qquad \text{where}\qquad s=\begin{cases}\frac{pq}{p+q} & \mbox{ if }  |p|+|q|>0\\ 0 & \mbox{otherwise}\,.\end{cases} \,.$$
The inequality is strict unless $(a_1^{\frac{p}{s}},b_1^{\frac{p}{s}})$ and $(a_2^{\frac{q}{s}},b_2^{\frac{q}{s}})$ are proportional. 
\end{lem}
 
We will use this lemma in order to prove the assertion on the pertinent range $N\in (-\infty, 0]\cup (1,\infty]$. 
Define $p=\frac{1}{N-1}$ and $q=1$ (here we set $p=0$ if $N=\infty$). 
Notice that $p+q=\frac{N}{N-1}\geq 0$ since $N\in (-\infty, 0]\cup (1,\infty]$. With $s=\frac{1}{N}$ ($s=-\infty$ if $N=0$) we have by Lemma \ref{lem:Gardiner} the following inequality:
{\small 
\eq{ &M_{K,N-1}^{(t)}[d_1](a_1,b_1)M_{0,1}^{(t)}(a_2,b_2)\\&=\bM_{\frac{1}{N-1}}^{(t)}\prnt{((1-t)^{-1}\sigma_{K,N-1}^{(1-t)}(d_1))^{N-1}a_1, (t^{-1}\sigma_{K,N-1}^{(t)}(d_1))^{N-1}b_1}
\bM_{1}^{(t)}(a_2,b_2)\\&\geq 
\bM_{\frac{1}{N}}^{(t)}\prnt{((1-t)^{-1}\sigma_{K,N-1}^{(1-t)}(d_1))^{N-1}a_1a_2, (t^{-1}\sigma_{K,N-1}^{(t)}(d_1))^{N-1}b_1b_2}=\tl{M}_{K,N}^{(t)}[d_1](a_1a_2,b_1b_2) \,.}}
The equality case corresponds to 
$
\prnt{\prnt{\frac{\sigma_{K,N-1}^{(1-t)}(d_1)}{1-t}}^{N}a_1^{\frac{N}{N-1}}, \prnt{\frac{\sigma_{K,N-1}^{(t)}(d_1)}{t}}^{N}b_1^{\frac{N}{N-1}}}$ being proportional to $(a_2^N,b_2^N)$.

%When $N=\infty$ the statement follows directly from the AM-GM inequality.
%, and when $N=0$ it follows by quasi-concavity of the function $f(a, b)=\frac{a}{b}$ on $\R_+^{*2}$. 

%We introduce the functions 
%\[ s_{\delta}(x):=\begin{cases} \sin(\sqrt{\delta}t)/\sqrt{\delta} & \delta>0 \\
%x & \delta = 0 
%\\
%\sinh(\sqrt{-\delta}x)/\sqrt{-\delta} & \delta<0 \end{cases} \qquad
%c_{\delta}(x):=\begin{cases} \cos(\sqrt{\delta}x) & \delta>0 \\
%1 & \delta = 0 
%\\
%\cosh(\sqrt{-\delta}x) & \delta>0 \end{cases}
%\]

  \item[2.]	
	Throughout we assume $N<\infty$, since $M_{K,\infty}^{(t)}[d](a_1,b_1)=\lim_{N\to\infty}M_{K,N-1}^{(t)}[d](a_1,b_1)$, hence by continuity arguments, showing the statement for $N<\infty$ will imply it is valid for $\frac{1}{N}\in (-\infty,1)$.

	It was noted in Remark \ref{rmk:FkndM_Diff_ID} that the functions $[0,1]\ni t\mapsto M_{K,N-1}^{(t)}[d](a,b)$ 	can be identified as solutions $\tl{J}(t)$ to the equation:
	$$(\tl{J}^{\frac{1}{N-1}})''+\frac{Kd^2}{N-1}\tl{J}^{\frac{1}{N-1}}=0\qquad \tl{J}(0)=a,\quad \tl{J}(1)=b\,,$$ 
	which  can also be rephrased as 
	\eql{\label{tlJ:ODE} (\log \tl{J})''+\frac{1}{N-1}((\log \tl{J})')^2=-Kd^2\qquad \tl{J}(0)=a,\quad \tl{J}(1)=b\,. }

We will give a simple proof of (2) using \eqref{tlJ:ODE}. Assume $d_1>d_2$. We will also assume $d_1<l_{\delta}$, since otherwise the claim follows directly from the definition of the distorted means.
	Define $$\tl{J}_1(t)=M_{K,N-1}^{(t)}[d_1](a_1,b_1)\qquad \text{and}\qquad \tl{J}_2(t)=M_{K,N-1}^{(t)}[d_2](a_1,b_1)\,,$$ 
	and set $u_1(t):=\log\tl{J}_1(t)$ and $u_2(t):=\log\tl{J}_2(t)$.
	
	If $K>0$ then:
	\eq{ -Kd_1^2=u_1''+\frac{1}{N-1}(u_1')^2&\qquad u_1(0)=\log a_1,\quad u_1(1)=\log b_1\\
-Kd_1^2< -Kd_2^2=u_2''+\frac{1}{N-1}(u_2')^2&\qquad u_2(0)=\log a_1,\quad u_2(1)=\log b_1\,.}
Define $u(t):=u_2-u_1$, notice that it satisfies the equation
	\eql{\label{u:ODE} u''+\frac{1}{N-1}u'\prnt{u_2'+u_1'}> 0\qquad u(0)=u(1)=0\,.}
	Assume by contradiction that $u$ is positive at some point in $(0,1)$. Since $u(0)=u(1)$ there must be $t_*\in(0,1)$ which is a local maximum. However according to \eqref{u:ODE} since $t_*$ is a critical point: $u''(t_*)> 0$, hence there can be no such $t_*$, and we conclude that $u\leq 0$ on $[0,1]$, implying that $\tl{J}_1\geq \tl{J}_{2}$ on $[0,1]$. 
For $K<0$ it holds that $-Kd_1^2> -Kd_2^2$ and the implications are reversed, hence the same argument gives $\tl{J}_1\leq \tl{J}_{2}$ on $[0,1]$.

	%\eql{\label{eqn:CD(k,N):1}  -LogHess_{N-1}J\geq K\,,}
	%where
	%\[ LogHess_{N-1}J:=(\log J)''+\frac{1}{N-1}((\log J)')^2 \qquad \prnt{= (N-1)\frac{(J^{\frac{1}{N-1}})''}{J^{\frac{1}{N-1}}}  \text{ For } N\notin \{1, \infty\} } \]
	%

	\item[3,\,5.]  The statements follow directly from the definition of the function $\tl{M}_{K,N}^{(t)}[d](a,b)$. 

	\item[4.] Notice that $\frac{1}{N-1}=\frac{\frac{1}{N}}{1-\frac{1}{N}}$, hence for $\frac{1}{N}\in (-\infty, 1)\setminus\{0\}$: $\frac{1}{N_1-1}\geq \frac{1}{N_2-1}\Leftrightarrow 	\frac{1}{N_1}\geq \frac{1}{N_2}$. 
	\begin{enumerate}
	\item We prove the claim using arguments similar to the proof of (2).
	Define $$\tl{J}_1(t)=M_{K,N_1-1}^{(t)}[d](a_1,b_1)\qquad \text{and}\qquad \tl{J}_2(t)=M_{K,N_2-1}^{(t)}[d](a_1,b_1)\,.$$ 
	
	We will also assume $d<\min\{l_{\delta}(K,N_1),l_{\delta}(K,N_2)\}$, since otherwise, considering Remark  \ref{remk:obDiameter} (about the dependence of $l_{\delta}$ on the parameters) and the definition of the distortion means, the statement would  trivially hold. 
	
	Set $u_1(t):=\log\tl{J}_1(t)$ and $u_2(t):=\log\tl{J}_2(t)$.
		Assume $\frac{1}{N_1-1}>\frac{1}{N_2-1}$. Furthermore we will assume $N_1,N_2\neq \infty$ and with the same sign, considering that $M_{K,\infty}^{(t)}[d](a_1,b_1)=\lim_{N\to\infty}M_{K,N-1}^{(t)}[d](a_1,b_1)$, by continuity arguments it will follow that the statement holds for $\frac{1}{N}\in (-\infty,1)$. 
	
$u_1$ and $u_2$ satisfy the following ODEs on $[0,1]$:
\eql{\label{Equations_u_1} u_1''+\frac{1}{N_1-1}(u_1')^2=-Kd^2&\qquad u_1(0)=\log a_1,\quad u_1(1)=\log b_1\\ \nonumber
u_2''+\frac{1}{N_2-1}(u_2')^2=-Kd^2&\qquad u_2(0)=\log a_1,\quad u_2(1)=\log b_1\,.}
Define $u(t):=u_2-u_1$.  Assume by contradiction that $u(t)>0$ at some point of $(0,1)$. Hence there is a maximum point $t_*\in(0,1)$ at which $u(t_*)>0$.  By subtracting the equations we get the identities:
\eql{\label{Equations_u_2} 0&=u_2''-u_1''+\frac{1}{N_2-1}\prnt{(u_2')^2-(u_1')^2}-\prnt{\frac{1}{N_1-1}-\frac{1}{N_2-1}}(u_1')^2\\ \nonumber
0&=u_2''-u_1''+\frac{1}{N_1-1}\prnt{(u_2')^2-(u_1')^2}+\prnt{\frac{1}{N_2-1}-\frac{1}{N_1-1}}(u_2')^2\,.}
From the first equation we conclude that at $t_*$ (since $u_2'(t_*)^2-u_1'(t_*)^2=u'(t_*)(u_2'(t_*)+u_1'(t_*))=0$)
$$
u_2''(t_*)-u_1''(t_*)=\prnt{\frac{1}{N_1-1}-\frac{1}{N_2-1}}(u_1'(t_*))^2\geq 0\,,$$
where we used that $\frac{1}{N_1-1}> \frac{1}{N_2-1}$. 
Since $t_*$ is a local maximum by assumption, we conclude that  $u_2''(t_*)-u_1''(t_*)=0$, whence, considering equations \eqref{Equations_u_1} and \eqref{Equations_u_2}, $u_1'(t_*)=u_2'(t_*)=0$ and $u''_1(t_*)=u''_2(t_*)$ (=$-Kd^2$). Then if $\frac{1}{N_1-1}>\frac{1}{N_2-1}>0$ the functions $y_1:=u_1'$ and $y_2:=u_2'$ satisfy
\eq{y_1'&=F_1(t,y_1)  \qquad \text{where } F_1(t,w)=-\frac{1}{N_1-1}w^2-Kd^2
\\ y_2'&\geq F_1(t,y_2)\,, }
and if $\frac{1}{N_2-1}<\frac{1}{N_1-1}<0$ they satisfy
\eq{y_1'&\leq F_2(t,y_1)  \qquad \text{where } F_2(t,w)=-\frac{1}{N_2-1}w^2-Kd^2
\\ y_2'&= F_2(t,y_2)\,. }
By standard comparison of first order ODEs \cite[p.29]{BiRo} we conclude that $y_2\geq y_1$ on $(t_*,1]$, which amounts to $u'_2(t)\geq u'_1(t)$ on $(t_*,1]$. However since $u_2(t_*)>u_1(t_*)$, then $u_2(t)>u_1(t)$ on $(t_*,1]$, in contradiction to that $u_2(1)=u_1(1)$. 

		\item We will show that if $\frac{1}{N_2}\leq \frac{1}{N_1}$ then
$$ \tl{M}_{K, N_1}^{(t)}[d](a,b)\geq \tl{M}_{K, N_2}^{(t)}[d](a,b)\,.$$
\smallskip
%We refer the reader to \ref{dfn:deltaSymbols} for the definition of $\delta=\delta(K,N)$ and $l_{\delta}=l_{\delta}(K,N)$.
  
	Let us firstly assume that   $\max\{\frac{K}{N_1-1}d^2,\frac{K}{N_2-1}\}d^2<\pi^2$. Let $t\in (0,1)$, $a,b>0$,  and 
	 $\frac{1}{N_1}\geq \frac{1}{N_2}$ with $\frac{1}{N_1}\in (-\infty,1)$. 
	%assume that $\tl{M}_{K, N_1}^{(t)}[d](a,b)< \tl{M}_{K, N_2}^{(t)}[d](a,b)$ but $\frac{1}{N_2}< \frac{1}{N_1}$. 
	According to property \ref{id:1} of Proposition \ref{prop:properties} for $a_1,a_2,b_1,b_2\in \R_+$ it holds that
$$M_{K,N-1}^{(t)}[d](a_1,b_1)M_{0,1}^{(t)}(a_2,b_2)\geq  \tl{M}_{K,N}^{(t)}[d](a_1a_2,b_1b_2)\,,$$
 with equality if and only if
 \eql{\label{proportionality_condition} 
\prnt{\prnt{\frac{\sigma_{K,N-1}^{(1-t)}(d)}{1-t}}^{N}a_1^{\frac{N}{N-1}}, \prnt{\frac{\sigma_{K,N-1}^{(t)}(d)}{t}}^{N}b_1^{\frac{N}{N-1}}}\, \text{ is proportional to } \, (a_2^N,b_2^N)\,.}
 
 Given $a,b>0$ and fixed $t\in (0,1)$, we set:
\eq{ &a_1:=\Prnt{a\prnt{\frac{1-t}{\sigma_{K,N_1-1}^{(1-t)}(d)}  }  }  ^{\frac{N_1-1}{N_1}}\,,\qquad b_1:=\Prnt{a\prnt{\frac{t}{\sigma_{K,N_1-1}^{(t)}(d)}  }  }  ^{\frac{N_1-1}{N_1}}\,,\\&  a_2:=a_1^{\frac{1}{N_1-1}}\prnt{\frac{\sigma_{K,N_1-1}^{(1-t)}(d)}{1-t}}\,,\qquad 
b_2:=b_1^{\frac{1}{N_1-1}}\prnt{\frac{\sigma_{K,N_1-1}^{(t)}(d)}{t}} \,.}
 Notice that $a_1a_2=a$ and $b_1b_2=b$ and these numbers satisfy the proportionality condition \eqref{proportionality_condition} with $N=N_1$. Hence:
$$M_{K,N_1-1}^{(t)}[d](a_1,b_1)M_{0,1}^{(t)}(a_2,b_2)= \tl{M}_{K,N_1}^{(t)}[d](a,b)\,,$$
while 
$$M_{K,N_2-1}^{(t)}[d](a_1,b_1)M_{0,1}^{(t)}(a_2,b_2)> \tl{M}_{K,N_2}^{(t)}[d](a,b)\,.$$
Since $M_{K,N_1-1}^{(t)}[d](a_1,b_1)\geq M_{K,N_2-1}^{(t)}[d](a_1,b_1)$ whenever $\frac{1}{N_1}\geq \frac{1}{N_2}$ (as follows from part (a)) this implies that $\tl{M}_{K, N_1}^{(t)}[d](a,b)\geq  \tl{M}_{K, N_2}^{(t)}[d](a,b)$ as asserted.

We consider now the case   $\max\{\frac{K}{N_1-1},\frac{K}{N_2-1}\}d^2\geq \pi^2$, or equivalently (see Remark \ref{remk:lDelta}) $d\geq \min\{l_{\delta}(K,N_1),l_{\delta}(K,N_2)\}$. 
By Remark \ref{remk:obDiameter} for any fixed $K\in \R$  the function $\frac{1}{N}\to l_{\delta}(K,N)$ is non-increasing in $\frac{1}{N}$ if $K>0$, and  non-decreasing if $K<0$.
Since by assumption $\frac{1}{N_1}\geq \frac{1}{N_2}$: 
{\small 
\begin{itemize}
	\item If $K>0$ then $d\geq \min\{l_{\delta}(K,N_1),l_{\delta}(K,N_2)\}=l_{\delta}(K,N_1)$, whence $\tl{M}_{K, N_1}^{(t)}[d](a,b)=+\infty$.
	\item If $K<0$ then $d\geq\min\{l_{\delta}(K,N_1),l_{\delta}(K,N_2)\}=l_{\delta}(K,N_2)$, whence $\tl{M}_{K, N_2}^{(t)}[d](a,b)=0$.
\end{itemize} }
Thus in either case $\tl{M}_{K, N_1}^{(t)}[d](a,b)\geq \tl{M}_{K, N_2}^{(t)}[d](a,b)$.
	\end{enumerate}

\end{enumerate}
\end{proof}

\subsection{Equivalences of the $CDD_b(K,N,D)$ condition on $\R$ }

The following theorem can be considered as a `distorted' extension of the Pr\'{e}kopa-Leindler \cite{Pre1,Pre2, Lei}, Borell \cite{Bo1, Bo3} and Brascamp-Lieb \cite{BL} equivalences pertaining to $\frac{1}{N}$ (resp. $\frac{1}{N-1}$) concave measures (resp. densities) on $\R$, i.e. measures of class \red{$CD_b(0,N)$}. The reader is also referred to \cite{Bac} for a similar extension.  In the works of Sturm \cite{KTS1, KTS2, KTS3} and Lott-Villani \cite{LV}, an alternative, `synthetic', definition of the \red{$CD_b(K,N)$} condition was formulated. This `synthetic' definition requires no smoothness assumption, as it relies on concavity of certain entropy functional. The crucial observation is that while for weighted Riemannian manifolds this new definition turns out to be equivalent to Definition \ref{dfn:CDD} (which assumes a smooth structure), the synthetic definition applies also to general metric measure spaces. Similarly, for absolutely continuous measures on $\R$ with sufficiently smooth densities, the following theorem will show that the $CD_b(K,N)$ condition can be defined in several equivalent ways.  However one definition can be more general than another, if it applies to a larger class of measures. Under such circumstances we refer to the alternative definitions as `synthetic'. The next theorem encapsulates these equivalences. Proving it could have been omitted since most statements and their proofs can be found elsewhere (e.g \cite[Appendix A]{CaMil} or \cite{Bac,KTS3,KTSR}) in a wider generality; yet, since some of the equivalences were proved using the sophisticated optimal transport machinery which was developed by Sturm, Lott-Villani and others, it seems to be worth presenting a complete proof, based on very elementary arguments. 
\bigskip

%Throughout we denote by $\xi$ measures supported in $\R$. We denote by $\xi_J$ an absolutely continuous measure supported in $\R$ if $d\xi_J=Jdm$ for $J\in L^1_{loc}(\R)$. 

\begin{thm}\label{thm:Equivalence} Assume $K\in \R$ and $N\in (-\infty,0]\cup (1,\infty]$, and let $J\in L^1_{loc}(\R)$ be a non-negative function supported on an interval $I$  ($int(I)\neq \emptyset$). Define a measure $d\xi_J=J\,dm$. Then the following conditions are equivalent:
\begin{enumerate}
\item\label{property:III} For any two compact subsets $A_0,A_1\subset \R$ and $t\in [0,1]$
	\eql{\label{CDKN_3}
		\xi_J(A_t)\geq \tl{M}_{K, N}^{(t)}[\theta_{K}(A_0,A_1)](\xi_J(A_0),\xi_J(A_1))\,,
		}
		where 
		\eq{ \gls{theta_K}=\begin{cases}
			\inf_{x\in A_0, \, y\in A_1}|y-x| &\mbox{ if  } K\geq 0 \\
			\sup_{x\in A_0, \, y\in A_1}|y-x| &\mbox{ if  } K< 0\,.
			\end{cases}
		}
		\item\label{property:II} $\xi_{J}=\xi_{\tl{J}}$ where $\tl{J}\in L^1_{loc}(\R)$ and satisfies: 
	\eql{ \label{CDKN_2} \tl{J}(x_t)\geq M_{K, N-1}^{(t)}[|x_1-x_0|](\tl{J}(x_0),\tl{J}(x_1))\qquad \forall x_0,x_1\in \R,\quad \forall t\in [0,1]\,. }
\end{enumerate}
Moreover, if $J\in C^2(int(I))$, then any of the previous two conditions is also equivalent to the following condition:
		\\
	\smallskip   
\begin{enumerate}
\setcounter{enumi}{2}
	\item\label{property:I} The following differential inequality holds on $int(I)$: 
	\eql{\label{eqn:Diff:CD(k,N)} (-LogHess_{N-1}J:=)\qquad   &-(\log J)''-\frac{1}{N-1}((\log J)')^2\geq K\,,}
	with the usual interpretation when $N=\infty$.
	Equivalently for $N\neq \infty$:
	\eql{\label{eqn:Diff:CD(k,N)2}	(N-1)(J^{\frac{1}{N-1}})''\leq -K\,J^{\frac{1}{N-1}}\,. }
\end{enumerate}
\end{thm}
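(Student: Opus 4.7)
The plan is to prove the two equivalences \eqref{CDKN_2} $\Leftrightarrow$ \eqref{eqn:Diff:CD(k,N)} (under the $C^2$ regularity) and \eqref{CDKN_2} $\Leftrightarrow$ \eqref{CDKN_3} (no regularity assumption). For the $C^2$ equivalence, I handle \eqref{eqn:Diff:CD(k,N)} $\Rightarrow$ \eqref{CDKN_2} by fixing $x_0,x_1\in int(I)$, setting $d:=|x_1-x_0|$, and reparametrizing along the segment via $u(t):=\log J((1-t)x_0+tx_1)$. The chain rule turns \eqref{eqn:Diff:CD(k,N)} into
\[ u''(t)+\tfrac{1}{N-1}(u'(t))^2\leq -Kd^2,\qquad u(0)=\log J(x_0),\,\,u(1)=\log J(x_1). \]
By Remark \ref{rmk:FkndM_Diff_ID}, the function $v(t):=\log M_{K,N-1}^{(t)}[d](J(x_0),J(x_1))$ solves the same ODE with equality and identical boundary data, so a maximum-principle comparison for $u-v$ — essentially the argument already carried out inside Proposition \ref{prop:properties}(\ref{id:2}) — gives $u\geq v$ on $[0,1]$, which is precisely \eqref{CDKN_2}. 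For the converse \eqref{CDKN_2} $\Rightarrow$ \eqref{eqn:Diff:CD(k,N)} under $C^2$, I apply \eqref{CDKN_2} at $x_0=x-\epsilon$, $x_1=x+\epsilon$, $t=\tfrac{1}{2}$; taking logarithms and expanding both sides to second order in $\epsilon$ using the Taylor series \eqref{eqn:sigma_taylor} of $\sigma_{K,N-1}^{(t)}$, and identifying the $\epsilon^2$-coefficients, recovers the pointwise inequality \eqref{eqn:Diff:CD(k,N)}.

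The direction \eqref{CDKN_3} $\Rightarrow$ \eqref{CDKN_2} will be obtained by localization at a joint Lebesgue point $(x_0,x_1)$ of $\tl{J}$: apply \eqref{CDKN_3} to the symmetric neighborhoods $A_i:=[x_i-\epsilon,x_i+\epsilon]$, so that $A_t=[x_t-\epsilon,x_t+\epsilon]$ and $\theta_K(A_0,A_1)\to|x_1-x_0|$. Dividing by $2\epsilon$, letting $\epsilon\to 0^+$, and invoking continuity of $\tl{M}_{K,N}^{(t)}[d](\cdot,\cdot)$ in all of its parameters (Proposition \ref{prop:properties}(\ref{id:3}),(\ref{id:5})) yields \eqref{CDKN_2} at Lebesgue points; a null-set modification of $J$, extended by $0$ outside $I$, produces the required everywhere-defined representative $\tl{J}$ satisfying \eqref{CDKN_2} on all of $\R$.

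The main step is \eqref{CDKN_2} $\Rightarrow$ \eqref{CDKN_3}, a one-dimensional Borell--Brascamp--Lieb type argument in which the distortion enters through Proposition \ref{prop:properties}(\ref{id:1}). Given compact $A_0,A_1\subset I$ with $a_i:=\xi_J(A_i)>0$ (the degenerate case being trivial), let $G_i:[0,1]\to A_i$ be the quantile maps of $\tl{J}\,dm|_{A_i}$: non-decreasing, absolutely continuous, with $\tl{J}(G_i(s))G_i'(s)=a_i$ almost everywhere. Setting $T_t(s):=(1-t)G_0(s)+tG_1(s)$, the image $T_t([0,1])$ lies in $A_t$, and the change-of-variables formula for monotone AC maps yields
\[ \xi_J(A_t)\geq \int_0^1 \tl{J}(T_t(s))\,T_t'(s)\,ds. \]
Noting that $T_t'(s)=M_{0,1}^{(t)}(G_0'(s),G_1'(s))$ is the arithmetic mean, I apply \eqref{CDKN_2} to the factor $\tl{J}(T_t(s))$ and then Proposition \ref{prop:properties}(\ref{id:1}) to combine the two factors into
\[ \tl{J}(T_t(s))\,T_t'(s)\geq \tl{M}_{K,N}^{(t)}[|G_1(s)-G_0(s)|](a_0,a_1). \]
The monotonicity in $d$ of $\tl{M}_{K,N}^{(t)}[d](a_0,a_1)$ from Proposition \ref{prop:properties}(\ref{id:2}) — non-decreasing for $K\geq 0$, non-increasing for $K<0$ — paired with the defining choice of $\theta_K(A_0,A_1)$ as $\inf|y-x|$ or $\sup|y-x|$ accordingly, then allows me to replace $|G_1(s)-G_0(s)|$ by $\theta_K(A_0,A_1)$ on the right-hand side uniformly in $s$; integration over $s\in[0,1]$ delivers \eqref{CDKN_3}.

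The hardest technical point will be the quantile parametrization combined with uniform handling of the degenerate regime $\theta_K\geq l_\delta$, where one of the distorted means collapses to $0$ or $+\infty$, and of the possible vanishing or blow-up of $\tl{J}$ near endpoints of $I$ (which affects whether $G_i$ is strictly increasing and where $\tl{J}(G_i(s))G_i'(s)=a_i$ actually holds). The continuity and limit properties recorded in Proposition \ref{prop:properties}(\ref{id:5}) are tailored exactly to reconcile both sides of \eqref{CDKN_3} in these borderline cases, and I expect them to suffice to close all the limiting arguments without further ad hoc input.
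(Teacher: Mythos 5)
Your plans for $(2)\Rightarrow(3)$ and $(3)\Rightarrow(2)$ under $C^2$ regularity, and for $(2)\Rightarrow(1)$ via quantile maps combined with Proposition~\ref{prop:properties}(\ref{id:1}) and the $d$-monotonicity, are sound and essentially parallel the paper's route (the paper channels $(2)\Rightarrow(1)$ through the weighted intermediate Lemma~\ref{lem:Borell}, while you run the two steps together, but the content is the same).

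The gap is in $(1)\Rightarrow(2)$. Lebesgue differentiation with \emph{symmetric} boxes $A_i=[x_i-\epsilon,x_i+\epsilon]$ is $1$-homogeneous on both sides, so after dividing by $2\epsilon$ and letting $\epsilon\to 0^+$ you obtain
\[ J(x_t)\;\geq\; \tl{M}_{K,N}^{(t)}[|x_1-x_0|]\bigl(J(x_0),J(x_1)\bigr), \]
with the $\tau$-mean $\tl{M}_{K,N}^{(t)}$ on the right, \emph{not} the $\sigma$-mean $M_{K,N-1}^{(t)}$ that \eqref{CDKN_2} asserts. Setting $a_2=b_2=1$ in Proposition~\ref{prop:properties}(\ref{id:1}) gives $M_{K,N-1}^{(t)}[d](a,b)\geq \tl{M}_{K,N}^{(t)}[d](a,b)$ (usually strictly), so your localization produces a strictly weaker pointwise inequality and does not deliver \eqref{CDKN_2}. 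For a concrete illustration with $K=0$, $N=3$: the density $J(x)=x^3$ on $(0,1)$ satisfies $J(x_t)=\tl{M}_{0,3}^{(t)}\bigl(J(x_0),J(x_1)\bigr)$ with equality, yet $J^{1/2}=x^{3/2}$ is convex, so $J$ violates the $M_{0,2}^{(t)}$ inequality of \eqref{CDKN_2} (e.g.\ $x_0=1$, $x_1=4$, $t=1/2$). Thus the class your localization cuts out is genuinely larger than the class in Condition~(2).

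To close the gap you must use \emph{asymmetric} boxes $A_i^{\epsilon}=\bigl(x_i-\tfrac{1}{2}\epsilon a_i,\,x_i+\tfrac{1}{2}\epsilon a_i\bigr)$ with free scale parameters $a_0,a_1>0$, so that $m(A_t^{\epsilon})=\epsilon a_t$. Dividing by $m(A_t^{\epsilon})$ and letting $\epsilon\to 0^+$ at Lebesgue points then gives the \emph{weighted} inequality
\[ \tl{J}(x_t)\,a_t\;\geq\;\tl{M}_{K,N}^{(t)}[|x_1-x_0|]\bigl(\tl{J}(x_0)a_0,\,\tl{J}(x_1)a_1\bigr)\qquad\forall a_0,a_1>0, \]
and the supremum over $(a_0,a_1)$ — attained at the proportionality values singled out in the equality case of Proposition~\ref{prop:properties}(\ref{id:1}), as the paper does — recovers the $M_{K,N-1}^{(t)}$ bound of \eqref{CDKN_2}. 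Without that optimization step, your argument proves only a strictly weaker version of Condition~(2) and the claimed equivalence does not follow.
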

\bigskip

\begin{remk} Notice that by continuity of the addition operation $+$, compactness of $A_0\times A_1$ implies that $A_t$ is compact, in particular it is $[m]$ measurable. In addition one should notice that \ref{property:II}  (resp. \ref{property:III}) trivially holds if $J(x_0)J(x_1)=0$ (resp. $\xi_J(A_0)\xi_J(A_1)=0$) by definition of the distorted means. 
\end{remk}

\begin{remk}\label{rmk:ODEfacts}

The fundamental fact which underlies the equivalence of conditions 2 and 3 is that the functions $\bar{f}_0(t):=M_{K,N-1}^{(t)}[|x_1-x_0|](J(x_0),J(x_1))^{\frac{1}{N-1}}$ ($N\in (-\infty, 0]\cup (1,\infty)$) and $\bar{g}_0(t):=\log M_{K,\infty}^{(t)}[|x_1-x_0|](J(x_0),J(x_1))$ satsify the following ODEs on $(0,1)$: 
   \[ \bar{f}_0''(t)+\frac{K}{N-1}(x_1-x_0)^2 \tilde{f}_0(t)=0 \qquad \bar{f}_0(0)=J(x_0)^{\frac{1}{N-1}},\,\bar{f}_0(1)=J(x_1)^{\frac{1}{N-1}} \,,\]
	and 
	\[ \bar{g}_0''+ K(x_1-x_0)^2= 0 \qquad \bar{g}_0(0)=J(x_0),\,\bar{g}_0(1)=J(x_1) \,. \]
	These are fundamentally related to the model densities presented in Definition \ref{defn:Jknh}. This was briefly discussed in Remark \ref{rmk:FkndM_Diff_ID}, but for completeness we elaborate further about 
	
	these types of ODEs from a general standpoint.  
	
	%respectively . 
	%&\Leftrightarrow  J(x_t)\geq M_{K,N-1}^{(t)}[|x_1-x_0|](J(x_0),J(x_1)) \qquad \forall x_0,x_1\in I \text{ and }\forall t\in (0,1)\, }
	%(notice that by definition the direction $(1)\rightarrow (2)$ holds whenever $\delta>0$, $K<0$ and $|x_1-x_0|\geq l_{\delta}$ ). 
	%%If $\delta>0$ and $K<0$  then $(1)\Leftrightarrow (2)$ holds trivially, while if $K>0$ 
	%
	%while if $N=\infty$
%\eq{
   %&
	%(\log J)(x_t)\geq \convar{(\log J)(x_0)}{(\log J)(x_1)}+\frac{K t(1-t)}{2}(x_0-x_1)^2\qquad \forall x_0,x_1\in I \text{ and }\forall t\in (0,1)\,,}

Assume $x_0,x_1\in \R$, $\beta\in \R$ are s.t. $(x_1-x_0)^2\beta\leq\pi^2$, and let $f_0,g_0\in C^2([x_0,x_1],\R_+)$ be non-negative solutions to the equations $f_0''(x)+\beta f_0(x)=0$ and $g_0''(x)+\beta=0$. Assume $y_0,y_1\in [x_0,x_1]$, and write $y_t=\convar{y_0}{y_1}$. The $t$-parametrized solutions $\bar{f}_0(t):=f_0(y_t)$ and $\bar{g}_0(t):=g_0(y_t)$ solve the boundary value problems 
\eql{ \label{ModelEquationsODEs}
\bar{f}_0(t)''+(y_1-y_0)^2\beta \bar{f}_0(t)=0& \qquad \bar{f}_0(0)=f_0(y_0),\,\,\bar{f}_0(1)=f_0(y_1)\\ \nonumber
\bar{g}_0(t)''+(y_1-y_0)^2\beta =0& \qquad \bar{g}_0(0)=g_0(y_0),\,\,\bar{g}_0(1)=g_0(y_1)\,, }
and vice-versa, any of these boundary value problems determine solutions $\bar{f}_0(t)=\sigma^{(1-t)}_{\beta,1}(|y_1-y_0|)f(y_0) +\sigma^{(t)}_{\beta,1}(|y_1-y_0|)f(y_1)$ and $\bar{g}_0(t)=\convar{g(y_0)}{g(y_1)}+\frac{\beta t(1-t)}{2}(y_1-y_0)^2$; these after changing the parametrization from $t$ to $x$, correspond to the same solutions $f_0(x)$ and $g_0(x)$. 

\bigskip

Under the same assumptions, when one turns from equations to inequalities, if $f,g\in C^2([0,1], \R_{+})$ then the following equivalence holds: if $(x_1-x_0)^2\beta<\pi^2$ then for all $y_0,y_1\in [x_0,x_1]$:
	 \eq{&f''(t)+(y_1-y_0)^2\beta f(t)\begin{cases} \leq 0 & \\ \geq 0 & \end{cases} \text{ on } [0, 1] \quad \Leftrightarrow \quad f(y_t)\begin{cases} \geq \bar{f}_0(t) & \\ \leq \bar{f}_0(t) & \end{cases} \forall t\in [0,1]
	\\ & \quad \text{where} \quad \bar{f}_0(t)=\sigma^{(1-t)}_{\beta,1}(|y_0-y_1|)f(y_0)+\sigma_{\beta,1}^{(t)}(|y_0-y_1|)f(y_1)\,.}
	Similarly
	 \eq{&g''(x)+(y_1-y_0)^2\beta \begin{cases} \leq 0 & \\ \geq 0 & \end{cases} 0 \text{ on } (x_0, x_1)\quad \Leftrightarrow \quad g(y_t)\begin{cases} \geq \bar{g}_0(t) & \\ \leq \bar{g}_0(t) & \end{cases} \qquad \forall t\in [0,1]\\& \quad \text{where} \quad \bar{g}_0(t)=\convar{g(y_0)}{g(y_1)}+\frac{\beta t(1-t)}{2}(y_0-y_1)^2 \,.}
	For a proof of the $''\leq''\Leftrightarrow ''\geq ''$ inequalities the reader is referred to \cite[p. 409,398]{Vil}. The second implication $''\geq''\Leftrightarrow ''\leq ''$ can be proved by similar arguments; we refer the reader to \cite{Oht1}.
	
	Furthermore in the first equivalence ($''\leq''\Leftrightarrow ''\geq ''$) $f=0$ if $(x_1-x_0)^2\beta>\pi^2$ and $f(t)=c\sin(\pi t)$ (for some constant $c\geq 0$) if $(x_1-x_0)^2\beta=\pi^2$ (see \cite[p. 409]{Vil}). 
 The proofs of these inequalities rely on comparison with the solutions to \eqref{ModelEquationsODEs}. These elementary ODE facts can be useful in understanding the local geometry of densities which satisfy \eqref{eqn:Diff:CD(k,N)2} and will also be essential for the proof of Theorem \ref{thm:Equivalence} (\ref{property:II}$\Leftrightarrow$\ref{property:I}). 

\end{remk}

%\begin{prop}[Properties of the distorted means II]\label{prop:properties2} Assume $a_1,a_2,b_1,b_2$, $k\in \R$, $N\in (-\infty, 0]\cup [1,\infty]$, $t\in (0,1)$ and $d\in (0,l_{\delta})$. Then 
%\begin{enumerate}
%\item \label{id:4} The function $r\mapsto \tl{M}^{(t)}_{k,r}[d](a_1,b_1)^r$ is non-increasing on $[N,\infty)$ for $N\in [1,\infty)$; it is non-decreasing on $[N,0]$ for $N\in (-\infty, 0]$.
%\end{enumerate}
%\end{prop}
%

\bigskip
 %are solved respectively by
%\[M_{\beta,1}^{(t)}[|x_1-x_0|](f(x_0),f(x_1))=\sigma^{(1-t)}_{\beta,1}(|x_0-x_1|)f(x_0)+\sigma_{\beta,1}^{(t)}(|x_0-x_1|)f(x_1)\]
%and 
%\[M_{\beta,\infty}^{(t)}[|x_1-x_0|](f(x_0),f(x_1))=\convar{f(x_0)}{f(x_1)}+\frac{\beta t(1-t)}{2}(x_0-x_1)^2\]
%
%
%Given  , 

%\begin{remk}\label{Model_Smooth} It follows from the equivalence of $(1)$ and $(2)$ that if $J(x)\in C^2(I)$ satisfies $(2)$ such that \eqref{CDKN_2} is an equality, then $d\rho=Jdm \in CDD_M(k,N,D)$, i.e it is $C^{\infty}(I)$, since the solutions of \eqref{eqn:Diff:CD(k,N)} stated as an equality, are smooth; explicit expressions for these solution will be given 
%in the sequel.
%\end{remk}
%\begin{remk}\label{remk:equivalences}
%From the proof it will follow the equivalence of $(2)$ and $(3)$ even without the continuity assumption; more precisely, $(2)$ holds $[m]$ a.e on its support if and only if $(3)$ holds on its support. This observation motivates the synthetic definition which will be presented in \ref{subsec:CDDKN}. 
%\end{remk}
%\begin{remk} If a function $J$ satisfies $(2)$ on its support, then $J$ must be continuous; continuity is a a local property, and within infinitesimally small neighborhoods we can consider $J^{\frac{1}{N-1}}$ as (almost) concave (if $N-1>0$) or convex (if $N-1<0$). 
%\end{remk}

%The theorem will follow as a direct consequence of the following lemma (c.f C.Borell \cite{Bo1, Bo2}):
%############
A useful and important consequence is the following lemma:
\begin{lem}\label{lem:CondImplic} Under the same assumptions of Theorem \ref{thm:Equivalence}, if $I:=supp(\xi_J)$ then
%, let  be an interval.
any of the three conditions  \eqref{CDKN_3}, \eqref{CDKN_2} or \eqref{eqn:Diff:CD(k,N)} (henceforth referred to as conditions \ref{property:III},\ref{property:II} and \ref{property:I} respectively) of Theorem  \ref{thm:Equivalence}  implies independently  that when $\delta>0$ and $K>0$ then
 $$diam(I)\leq\frac{\pi}{\sqrt{\delta}}=l_{\delta}\,.$$ 
Moreover if $\{x_0',x_1'\}=\partial I$, then from condition \ref{property:II} it follows that $\tl{J}(x_0')=\tl{J}(x_1')=0$, and from condition 
 \ref{property:I} it follows that  $J$ can be extended to $J\in C(I)$ s.t. $J(x_0')=J(x_1')=0$.
\end{lem}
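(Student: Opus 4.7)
The plan is to establish the diameter bound by contradiction under each of the three conditions, exploiting the fact that when $\delta>0$ and $K>0$ we are forced into $N\in(1,\infty)$, so by the definition \eqref{dfn:SigmaTau} both distorted means $M_{K,N-1}^{(t)}[d](a,b)$ and $\tl{M}_{K,N}^{(t)}[d](a,b)$ equal $+\infty$ as soon as $d\geq l_{\delta}$ and $a,b>0$; the boundary-value statements are then read off from a refinement of the third argument.

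Under condition \ref{property:III}, I would pick $x_0,x_1\in I=\text{supp}(\xi_J)$ with $|x_1-x_0|>l_{\delta}$ and compact neighborhoods $A_i=\overline{B(x_i,\epsilon)}$ with $\epsilon$ small enough that $\theta_K(A_0,A_1)=\inf_{A_0\times A_1}|y_1-y_0|>l_{\delta}$ (using $K>0$); the support property gives $\xi_J(A_i)>0$, so the right-hand side of \eqref{CDKN_3} is $+\infty$, contradicting $\xi_J(A_t)<\infty$ (which holds since $A_t$ is compact and $J\in L^1_{loc}$). Under condition \ref{property:II}, the same idea applies: $\tl{J}=J$ a.e.\ together with $\int_{B(x_i,\epsilon)}J\,dm>0$ permits choosing $y_i\in B(x_i,\epsilon)$ with $\tl{J}(y_i)>0$ and $|y_1-y_0|>l_{\delta}$; then \eqref{CDKN_2} forces $\tl{J}(y_t)=+\infty$ for every $t\in(0,1)$, so $\tl{J}\equiv +\infty$ on the open segment $(y_0,y_1)$, contradicting $\tl{J}\in L^1_{loc}(\R)$.

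Under condition \ref{property:I}, I set $u:=J^{1/(N-1)}$, so that \eqref{eqn:Diff:CD(k,N)} becomes $u''+\delta u\leq 0$ on $int(I)$. The subtle preliminary step is to show $u>0$ throughout $int(I)$: at an interior zero $x_*$ the non-negative $C^2$ function $u$ attains a minimum, forcing $u'(x_*)=0$; taking a connected component $(a,b)$ of $\{u>0\}\cap int(I)$ bordering $x_*$ (say $a=x_*$), one has $u(a)=u'(a)=0$, while the concavity of $u$ on $int(I)$ (since $u''\leq-\delta u\leq 0$) yields $u(x)\leq u(a)+u'(a)(x-a)=0$ on $(a,b)$, contradicting $u>0$ there. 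Once $u>0$ on $int(I)$, the standard Sturm-type comparison---multiplying $u''+\delta u\leq 0$ by $v(x):=\sin(\sqrt{\delta}(x-a))$ on a sub-interval $[a,a+l_{\delta}]\subset int(I)$, integrating by parts twice, and using $v(a)=v(a+l_{\delta})=0$, $v'(a)=\sqrt{\delta}$, $v'(a+l_{\delta})=-\sqrt{\delta}$---produces $\sqrt{\delta}(u(a)+u(a+l_{\delta}))\leq 0$, contradicting positivity. Hence $\text{diam}(int(I))\leq l_{\delta}$ and therefore $\text{diam}(I)\leq l_{\delta}$.

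For the boundary-value statements, under condition \ref{property:II} the two-point set $\partial I=\{x_0',x_1'\}$ is a null set for $\xi_J$, so the values $\tl{J}(x_0'),\tl{J}(x_1')$ are not pinned down by $\xi_J=\xi_{\tl{J}}$; assigning them to be zero preserves the pointwise inequality \eqref{CDKN_2}, since any triple $(x_0,x_1,t)$ in which $x_0\in\{x_0',x_1'\}$ or $x_1\in\{x_0',x_1'\}$ makes the right-hand side zero by definition of $M$. Under condition \ref{property:I}, the concavity of $u$ on $int(I)$ (as $u''\leq -\delta u\leq 0$) together with non-negativity lets $u$ extend continuously to $\overline{int(I)}=I$; letting $a\downarrow x_0'$ and $a+l_{\delta}\uparrow x_1'$ in the integration-by-parts identity in the limiting case $\text{diam}(I)=l_{\delta}$ gives $u(x_0')+u(x_1')\leq 0$, forcing the continuous extension of $J$ to satisfy $J(x_0')=J(x_1')=0$. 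The main obstacle I anticipate is the positivity of $u$ on $int(I)$ in the third step, which requires carefully combining the $C^2$ differential inequality with the support condition $int(I)\subset\text{supp}(\xi_J)$ as indicated above.
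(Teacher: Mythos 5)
Your arguments for the diameter bound under all three conditions are essentially correct and parallel the paper's: contradict $L^1_{loc}$ by making the distorted mean infinite (conditions~\ref{property:III} and~\ref{property:II}), and obtain a contradiction from the differential inequality over an interval of length $> l_\delta$ (condition~\ref{property:I}). Your direct Sturm comparison---integrating $u''+\delta u\leq 0$ against $\sin(\sqrt{\delta}(x-a))$ on $[a,a+l_\delta]\subset int(I)$---is a clean, self-contained alternative to the paper's appeal to the ODE facts of Remark~\ref{rmk:ODEfacts}; both arrive at the same contradiction via the support condition.

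However, your treatment of the boundary-value statement under condition~\ref{property:II} contains a genuine gap. You argue that the two boundary points form a $\xi_J$-null set, so the values $\tl{J}(x_0'),\tl{J}(x_1')$ are ``not pinned down'' and can simply be \emph{assigned} the value zero. This proves only that \emph{some} $\tl{J}$ satisfying condition~\ref{property:II} has zero boundary values, whereas the lemma asserts these boundary values are \emph{forced} for whatever $\tl{J}$ condition~\ref{property:II} provides (and the lemma is later applied to the specific $\tl{J}$ produced in Lemma~\ref{lem:Borell}, so the universal form is what is needed). The paper derives this by contradiction: if, say, $\tl{J}(x_1')>0$, fix any interior $x\in(x_0',x_1')$, approach the far boundary with $x_0^{(n)}\downarrow x_0'$, write $x=(1-t_n)x_0^{(n)}+t_n x_1'$, and apply \eqref{CDKN_2} with the pair $(x_0^{(n)},x_1')$ to get $\tl{J}(x)\geq \bigl(\sigma_{K,N-1}^{(t_n)}(|x_1'-x_0^{(n)}|)\bigr)^{N-1}\tl{J}(x_1')$, whose right-hand side diverges as $|x_1'-x_0^{(n)}|\to l_\delta$ (since $\sigma^{(t)}_{K,N-1}(\theta)\to\infty$ as $\theta\uparrow l_\delta$ for fixed $t\in(0,1)$ and $N>1$). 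Since $x$ was arbitrary in $(x_0',x_1')$, this contradicts $\tl{J}\in L^1_{loc}$. Your free-assignment argument misses the mechanism that \emph{derives} the zero boundary value from the pointwise inequality at nearby interior points. Separately, your condition~\ref{property:I} boundary argument has a parameterization slip: letting $a\downarrow x_0'$ makes $a+l_\delta$ approach $x_1'$ from \emph{above}, so $[a,a+l_\delta]\not\subset I$; you need instead intervals $[a_n,b_n]$ with $a_n\downarrow x_0'$, $b_n\uparrow x_1'$, a test function $\sin\bigl(\pi(x-a_n)/(b_n-a_n)\bigr)$, and a limiting passage as $b_n-a_n\uparrow l_\delta$. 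The essence of the argument survives, but this detail matters.
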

\begin{proof} We discuss separately how this follows from   the conditions of Theorem \ref{thm:Equivalence}:
\begin{itemize}
	\item {\bf Condition \ref{property:III}}: If we had  $diam(I)>l_{\delta}$, then we can pick two compact sets $I_0,I_1\subset I$ s.t. $\xi_J(I_0)\xi_J(I_1)>0$ and 
	$\theta_K(I_0,I_1)>l_{\delta}$, then by assumption
	\[\xi_J(\convar{I_0}{I_1})\geq \tl{M}_{K, N}^{(t)}[\theta_{K}(I_0,I_1)](\xi_J(I_0),\xi_J(I_1))=\infty\,,\]
		and this contradicts the assumption that $J\in L^1_{loc}(\R)$.
		
		\item {\bf Condition \ref{property:II}}: Since $\xi_J=\xi_{\tl{J}}$ we justify the claim for $\tl{J}$.   If $diam(I)>l_{\delta}$ then there are points 
		 $x_0,x_1\in I$ s.t. $|x_1-x_0|\geq l_{\delta}$ and $\tl{J}(x_0)\tl{J}(x_1)>0$,  whence $\tl{J}(x_t)\geq M_{K,N-1}^{(t)}[|x_1-x_0|](\tl{J}(x_0),\tl{J}(x_1))=\infty$, which contradicts our assumption that $\tl{J}\in  L^1_{loc}(\R)$. We conclude that $diam(I)\leq l_{\delta}$. The same argument shows that if $diam(I)=|x_1'-x_0'|= l_{\delta}$ then $\tl{J}(x_0')\tl{J}(x_1')=0$. However we can further conclude that $\tl{J}(x_0')=\tl{J}(x_1')=0$. Indeed, if, say, $\tl{J}(x_1')>0$, let $x\in (x_0',x_1')$, then $x=\convar{x_0'}{x_1'}$ for some $t\in (0,1)$. 
		 Let $x_0^{(n)}\to x_0'$ be a sequence, and assume w.l.o.g. $x\in (x_0^{(n)},x_1')$ for all $n\in\Nbb$. Let $t_n\in (0,1)$ be defined so that $x=(1-t_n)x_0^{(n)}+t_nx_1'$. Then by assumption
		 {\small 
		 $$\tl{J}(x)=\tl{J}((1-t_n)x_0^{(n)}+t_nx_1')\geq M_{K,N-1}^{(t_n)}[|x_1'-x_0^{(n)}|](\tl{J}(x_0^{(n)}),\tl{J}(x_1'))\geq \prnt{\sigma_{K,N-1}^{(t_n)}(|x_1'-x_0^{(n)}|)}^{N-1}\tl{J}(x_1') \,.$$}
		 Since $t_n\to t$, and $|x_1'-x_0^{(n)}|\to \frac{\pi}{\sqrt{\delta}}$ and $N>1$ by definition $\sigma_{K,N-1}^{(t_n)}(|x_1'-x_0^{(n)}|)^{N-1}\to\infty$, hence the RHS diverges; since $x$ was an arbitrary point this contradicts $\tl{J}\in L^1_{loc}(I)$. 

	\item {\bf Condition \ref{property:I}}: Assume by contradiction that
	 $diam(I)>l_{\delta}=\frac{\pi}{\sqrt{\delta}}$. 

	The function $F(x):=J^{\frac{1}{N-1}}(x)$ satisfies the differential inequality
	$$F''+\delta \,F\leq 0\,,\quad \text{ on } int(I)\,.$$
	We can then pick $x_0,x_1\in int(I)$  s.t. $x_1-x_0>l_{\delta}$ and $F\in C^2([x_0,x_1])$. Any $x\in(x_0,x_1)$ can be expressed as $x_t=\convar{x_0}{x_1}$ with $t\in (0,1)$; then in this parametrization  $f(t):=F(x_t)$ satisfies the inequality
	$$f''+(x_1-x_0)^2\delta \,f\leq 0\,,\qquad \text{ on } (0,1)\,.$$
	Since $(x_1-x_0)^2\delta>\pi^2$, according to  Remark \ref{rmk:ODEfacts}, this implies that $f\equiv 0$ on $[0,1]$, or equivalently that $F\equiv 0$ on $[x_0,x_1]$; this contradicts the assumption $x_0,x_1\in int(I)$ (which is an interval by assumption). \smallskip
	
	We consider now the case $diam(I)=|x_1'-x_0'|=\frac{\pi}{\sqrt{\delta}}$.
	For $J\in C(I)\cap C^2(int(I))$ which extends $J$ we have $J(x_0')=J(x_1')=0$  due to Remark \ref{rmk:ODEfacts}. In order to define such a continuous extension, we notice that by the above $F(x):=J^{\frac{1}{N-1}}(x)$ satisfies
	$F''\leq -\delta \,F\leq 0\,\text{ on } int(I)\,,$
	hence $F$ is concave on $int(I)$. 
	Any concave function $F:int(I)\to \R$ can be extended to a continuous concave function $F:I\to \R\cup\{-\infty\}$; taking into account that $F$ is non-negative on $int(I)$ and hence on $I$, it follows that the same holds for $J=F^{N-1}$. Therefore $J$ can be continuously extended to the entire $I$ concluding the proof. 
\end{itemize}	
\end{proof}

We will now begin to prove several important results which will eventually lead to the proof of Theorem \ref{thm:Equivalence}. 
\bigskip

The following lemma can be considered as a 1-dimensional distorted version of Borell's lemma \cite{Bo1, Bo2} (see also \cite{Gar}):
\begin{lem}\label{lem:Borell} Let $K\in \R$, $N\in (-\infty, 0]\cup (1,\infty]$, and assume $0\leq f\in L^1(\R)$. Define $d\xi_f = fdm$ and assume $\xi_f$ is supported on an interval $I$. Then the following two statements are equivalent
\begin{enumerate}
	\item $\xi_f=\xi_{\tl{f}}$ where $\tl{f}\in L^1_{loc}(\R)$ satisfies
	$$\tl{f}(x_t)a_t\geq \tl{M}_{K,N}^{(t)}[|x_1-x_0|](\tl{f}(x_0)a_0,\tl{f}(x_1)a_1)\qquad \forall x_0,x_1\in \R,\,\forall t\in [0,1],\, \forall a_0,a_1\in \R_+\,.$$
	\item
	$\xi_f(A_t)\geq \tl{M}_{K,N}^{(t)}[\theta_{K}(A_0, A_1)] \prnt{\xi_f(A_0), \xi_f(A_1) } $ for all compact sets $A_0,A_1\subset \R$ and every  $t\in [0,1]$.
\end{enumerate}
\end{lem}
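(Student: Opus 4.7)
The plan is to establish both implications by purely one-dimensional techniques: for (1) $\Rightarrow$ (2) I would use a monotone rearrangement coupling of the restrictions of $\xi_f$ to $A_0$ and $A_1$, while for (2) $\Rightarrow$ (1) I would shrink $A_0, A_1$ to infinitesimal intervals centered at Lebesgue points of $f$ and pass to the limit via Lebesgue differentiation. Both directions ultimately rest on the $1$-homogeneity $\tl{M}^{(t)}_{K,N}[d](c\alpha, c\beta) = c\,\tl{M}^{(t)}_{K,N}[d](\alpha, \beta)$ (immediate from Definition \ref{dfn:SigmaTau}) together with the monotonicity and continuity properties of $\tl{M}^{(t)}_{K,N}[\cdot](\cdot,\cdot)$ collected in Proposition \ref{prop:properties}.

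For the forward direction I would first reduce to compact $A_0, A_1$ with $\xi_f(A_0)\xi_f(A_1)>0$, the remaining cases being trivial from the convention $\tl M[d](0,\cdot)=0$. Set $g_i := f \cdot 1_{A_i}$ and introduce the nondecreasing quantile maps $u_i : [0,1] \to A_i$ defined by $u_i(s) := \inf\{x : \int_{-\infty}^x g_i\,dm \geq s\cdot \xi_f(A_i)\}$; these push $\xi_f(A_i)\cdot m|_{[0,1]}$ onto $g_i\,dm$, so that $\tl{f}(u_i(s))\,u_i'(s) = \xi_f(A_i)$ for $m$-a.e.\ $s$. Put $v(s) := (1-t) u_0(s) + t u_1(s)$; then $v(s) \in A_t$, $v$ is nondecreasing with $v'(s) = (1-t)u_0'(s)+t u_1'(s) = a_t$ in the notation of (1) applied with $a_i=u_i'(s)$. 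Invoking condition (1) pointwise gives, for a.e.\ $s$,
\[
\tl{f}(v(s))\,v'(s) \ge \tl{M}^{(t)}_{K,N}\bigl[\,|u_1(s)-u_0(s)|\,\bigr]\bigl(\xi_f(A_0),\xi_f(A_1)\bigr),
\]
and Proposition \ref{prop:properties}(2) (applied with the sign of $K$ determining whether $|u_1(s)-u_0(s)|$ is above or below $\theta_K(A_0,A_1)$) allows us to replace the distortion parameter by $\theta_K(A_0,A_1)$. Integrating over $s \in [0,1]$ and bounding the left-hand side by $\int_{A_t} f\,dm = \xi_f(A_t)$ via the area/change-of-variables formula for monotone maps yields (2).

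For the converse, define $\tl{f}(x) := \limsup_{r\downarrow 0}\tfrac{1}{2r}\int_{x-r}^{x+r} f\,dm$; by Lebesgue differentiation $\tl{f}=f$ $m$-a.e., so $\xi_{\tl f}=\xi_f$. Given $x_0,x_1$ Lebesgue points with $\tl{f}(x_0),\tl{f}(x_1)>0$, positive $a_0,a_1$ and $t\in(0,1)$, I would test (2) against the intervals $A_i^{\varepsilon} := [x_i - \varepsilon a_i,\, x_i + \varepsilon a_i]$. One checks that $A_t^\varepsilon = [x_t-\varepsilon a_t,\, x_t+\varepsilon a_t]$, $\theta_K(A_0^\varepsilon,A_1^\varepsilon) \to |x_1-x_0|$ as $\varepsilon\downarrow 0$, and $\xi_f(A_i^\varepsilon) = 2\varepsilon a_i\,\tl f(x_i) + o(\varepsilon)$. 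Combining $1$-homogeneity with continuity of $d\mapsto \tl M^{(t)}_{K,N}[d](\cdot,\cdot)$ on the regularity interval (Proposition \ref{prop:properties}(5)), dividing the set inequality by $2\varepsilon$ and sending $\varepsilon\downarrow 0$ produces (1) at every pair of Lebesgue points; the degenerate case $\tl f(x_0)\tl f(x_1)=0$ is vacuous, and the boundary case $|x_1-x_0|\ge l_\delta$ with $K>0,\,N>1$ is ruled out by Lemma \ref{lem:CondImplic} applied to condition (2).

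The principal technical obstacle lies in the change-of-variables step of the forward direction: the rearrangement functions $u_i$ are merely continuous and of bounded variation, not necessarily absolutely continuous, so the bound $\int_0^1 \tl f(v(s))\,v'(s)\,ds \le \xi_f(A_t)$ must either be justified via the area formula for BV monotone maps (which is saturated precisely when $v$ is AC, with the singular part of the Stieltjes measure dropped) or by approximating $f$ with bounded truncations $f_k := \min(f,k)\cdot 1_{[-k,k]}$, proving the inequality for $\xi_{f_k}$ in which case the $u_i$ become Lipschitz, and concluding by monotone convergence. A minor subtlety in the converse is that the limit argument produces (1) for $\tl f$ (the precise representative), so one fixes $\tl f$ once and for all as the upper density, which is upper semicontinuous on the set of Lebesgue points and thus extends the pointwise estimate to every $x_0, x_1$.
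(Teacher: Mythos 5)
Your forward direction $(1\Rightarrow 2)$ matches the paper's argument essentially step for step: normalize to probability densities, couple via the monotone quantile maps $u_i$, substitute $a_i = u_i'(s)$ into the pointwise inequality, integrate, and replace $|u_1(s)-u_0(s)|$ by $\theta_K(A_0,A_1)$ via the $\theta$-monotonicity of $\tl{M}$. You also correctly flag the one genuine subtlety — the quantile maps are monotone but not a priori absolutely continuous — and either of your proposed fixes (the $\int_0^1 G'\,ds \le G(1)-G(0)$ inequality for monotone $G$, which is what the paper cites from Royden, or truncation to Lipschitz) is sound.

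The gap is in $(2\Rightarrow 1)$, in the extension from Lebesgue points of $f$ to \emph{all} points. You propose setting $\tl f(x) := \limsup_{r\downarrow 0}\frac{1}{2r}\int_{x-r}^{x+r} f\,dm$ (the upper density) and appeal to ``upper semicontinuity,'' calling the step minor. This is the wrong semicontinuity in the wrong direction, and it is not a minor step. When you try to pass the inequality $\tl f(x_t)a_t \ge \tl M(\tl f(x_0)a_0, \tl f(x_1)a_1)$ from Lebesgue points to an arbitrary $x_0$, you approach $x_0$ by Lebesgue points $y_0^{(n)}$, invoke the known inequality at $y_0^{(n)}$, and try to take a limit. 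Since $\tl M$ is nondecreasing in its arguments, what survives the limit on the right-hand side is $\tl M(\liminf_n f(y_0^{(n)})\,a_0,\dots)$. So you need $\tl f(x_0) \le \liminf_{I' \ni y\to x_0} f(y)$, i.e.\ the value at $x_0$ must be controlled \emph{from above}. The upper density can strictly exceed this $\liminf$ at a non-Lebesgue point — consider $f = 1_{(-2,2)} - 1_{[0,1]}$ near $x_0=0$: the upper density is $\tfrac12$ while the Lebesgue points approaching from the right carry value $0$. (This particular $f$ is not in the relevant class, but it shows the semicontinuity claim is false in general, and you have not established the extra regularity needed to salvage it.) The paper instead defines $\tl f(x) := \liminf_{I'\ni y\to x} f(y)$ off the Lebesgue set, exactly because the $\liminf$ is what the monotonicity of $\tl M$ calls for, and then runs a four-case analysis (depending on which of $x_0, x_t, x_1$ are Lebesgue points) in a dedicated inner lemma. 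That case analysis is a nontrivial part of the proof and cannot be skipped.
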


\begin{proof}[Proof]
$(1\Rightarrow 2)$ Assume $A_0, A_1\subset \R$ are two compact sets; we will assume $int(A_0),int(A_1)\neq \emptyset$ because otherwise $(2)$ trivially holds. 
Assume first that $\theta_K(A_0,A_1)< l_{\delta}$. Fix $t\in (0,1)$. For $i\in \{0,1,t\}$ we define the normalized densities $\tl{f}_i:=\xi_{\tl{f}}(A_i)^{-1}\tl{f} \,1_{A_i}$. Then it is enough to show
{\small 
\eq{\tl{f}_t(x_t)a_t&\geq \tl{N}_{K,N}^{(t)}[|x_1-x_0|](\tl{f}_0(x_0)a_0,\tl{f}_1(x_1)a_1)\,,\quad \forall\,\,x_0,x_1\in\R,\,
	\text{ and } \forall a_0,a_1> 0 \\&\Rightarrow 1\geq \tl{N}_{K,N}^{(t)}[\theta_{K}(A_0,A_1)](1, 1)\,,
} }
where
\[ \tl{N}_{K,N}^{(t)}[r](\xi_0, \xi_1):=\xi_{\tl{f}}(A_t)^{-1} \tl{M}_{K,N}^{(t)}[r]\prnt{\xi_{\tl{f}}(A_0)\xi_0,\xi_{\tl{f}}(A_1)\xi_1  }\,.\]

For $i\in \{0,1\}$ we define $u_i:[0,1]\to A_i$ by $u_i(s):=\inf\{w\in A_i:\, F_i(w)=s\}$ where $F_i(w):=\int_{-\infty}^w\tl{f}_i(x)dx$. The functions $u_i(s)$ are strictly increasing (possibly discontinuous)  hence differentiable a.e. Thus 
for a.e. $s\in [0,1]$ it holds that $(\tl{f}_i\circ u_i)(s)u_i'(s)=1$.  Set $w(s)=\convar{u_0(s)}{u_1(s)}$. 
Then by hypothesis for a.e. $s\in [0,1]$:
\eql{\label{ineq:f_tw_s} \tl{f}_t(w(s))w'(s)&\geq \tl{N}_{K,N}^{(t)}[|u_1(s)-u_0(s)|](\tl{f}_0(u_0(s))u_0'(s),\tl{f}_1(u_1(s))u_1'(s))\\ \nonumber &=\tl{N}_{K,N}^{(t)}[|u_1(s)-u_0(s)|](1,1)\,.}

Being a composition of the two monotonically increasing functions $F_t$ and $w$, the function $G=(F_t\circ w)(s)$ is monotonically increasing in $s$ and differentiable a.e., with derivative $G'(s)=\tl{f}_t(w(s))w'(s)$; moreover (see \cite[p.96]{Roy}) $G'$ is a measurable function which satisfies the inequality:
$$ \prnt{\int_{0}^{1}\tl{f}_t(w(s))w'(s)ds=\,}\quad \int_{0}^{1}G'(s)ds\leq G(1)-G(0)\quad \prnt{ \,=\int_{w(0)}^{w(1)}\tl{f}_t(y)dy}\,.$$

Then by integrating over the inequality \eqref{ineq:f_tw_s} we conclude that
\eq{1&=\int \tl{f}_t(y)dy\geq \int_{w(0)}^{w(1)} \tl{f}_t(y)dy\geq \int_0^1\tl{f}_t(w(s))w'(s)ds\geq \int_0^1 \tl{N}_{K,N}^{(t)}[|u_1(s)-u_0(s)|](1,1)ds
\\&\geq \tl{N}_{K,N}^{(t)}[\theta_K(A_0, A_1)](1,1)\,.}

We now verify the statement when $\theta_K(A_0,A_1)\geq l_{\delta}$ (which is possible only when $\delta>0$). If $K<0$ there is nothing to prove since $\tl{M}_{K,N}^{(t)}[\theta_K(A_0,A_1)](\cdot\,,\, \cdot)$ vanishes by definition; if however $K>0$ and if $\xi_{\tl{f}}(A_0)\xi_{\tl{f}}(A_1)>0$ 
then, since $\theta_K(A_0,A_1)=d(A_0,A_1)$, there are points $x_0, x_1\in \{\tl{f}>0\}$ s.t. $|x_1-x_0|> l_{\delta}$. 
By hypothesis for every $t\in [0,1]$ and $a_0=a_1=1$:
\eql{\label{eqn:infinity_delta_pos}\tl{f}(x_t)\geq\tl{M}_{K,N}^{(t)}[|x_1-x_0|](\tl{f}(x_0),\tl{f}(x_1))=+\infty\,,} 
contradicting that $f\in L^1(\R)$. 
Therefore when $K>0$ for any compact sets $A_0,A_1\subset \R$ s.t. $\theta_K(A_0,A_1)\geq l_{\delta}$ it holds that $\xi_{\tl{f}}(A_0)\xi_{\tl{f}}(A_1)=0$ and then $(2)$ holds by definition.
\smallskip
%there are points $x_0,x_1\in supp(\xi_f)$ such that $|x_1-x_0|>l_{\delta}$; but then by condition $(1)$: $f(x_t)\geq\tl{M}_{K,N}^{(t)}[|x_1-x_0|](f(x_0)a_0,f(x_1)a_1)=\infty$, in contradiction to the continuity of $f$.    
%

$(2\Rightarrow 1)$ The claim will follow from the following Lemma:
\begin{lem}\label{lem:Borell} Let $I$ be a closed interval and assume $0\leq f\in L^1_{loc}(I)$. Assume $I'\subset int(I)$ is s.t. $m(I\setminus I')=0$. Assume $f$ satisfies the inequality 
\eql{\label{LebesguePointsCond} f(x_t)a_t\geq \tl{M}_{K,N}^{(t)}(f(x_0)a_0,f(x_1)a_1)\qquad \forall a_0,a_1\geq 0 \,\text{ and }\forall x_0,x_1\in I', \,\,\forall t\in[0,1] \mbox{ s.t. } x_t\in I' \,.}
Then there is a function $\tl{f}\in L^1_{loc}(I)$ s.t.
\begin{itemize}
	\item $\xi_f=\xi_{\tl{f}}$\,.
	\item $\tl{f}$ satisfies the inequality 
	\eql{\label{LebesguePointsCond2} \tl{f}(x_t)a_t\geq \tl{M}_{K,N}^{(t)}(\tl{f}(x_0)a_0,\tl{f}(x_1)a_1)\qquad \forall a_0,a_1\geq 0 \,\text{ and }\forall x_0,x_1\in I, \,\,t\in[0,1]  \,.}
\end{itemize} 
\end{lem}
\begin{proof}[Proof of Lemma \ref{lem:Borell}]
%Throughout the proof we use the convention that points with a " '  ", e.g $y'$, belong to $I'$.

Define $\tl{f}$ as follows:
$$ \tl{f}(x):=\begin{cases} f(x) &\mbox{if } x\in I'\\
												 \liminf_{I'\ni y'\to x}f(y') &\mbox{otherwise } \,. \end{cases} $$
Notice that $\tl{f}$ is well defined on $I$, as $I'$ is dense in $I$, and $\tl{f}\in L^1_{loc}(I)$.  

Let $x_0, x_1\in I$ s.t. $\tl{f}(x_0)\tl{f}(x_1)>0$ (if $\tl{f}(x_0)\tl{f}(x_1)=0$ then 
$\tl{M}_{K,N}^{(t)}(\tl{f}(x_0)a_0,\tl{f}(x_0)a_1)= 0$ and there is nothing to prove) and let $a_0,a_1>0$.
Define $I_0:=[x_0,x_1]$ and set $I_0':=I_0\cap I'$. 
Let us firstly assume that $|x_1-x_0|<l_{\delta}$.
\smallskip

Given fixed $t\in (0,1)$ we define $x:=x_t=\convar{x_0}{x_1}\in int(I_0)$. 
%If $x_0,x_1,x\in I'$, then there is nothing to prove, since \eqref{LebesguePointsCond2} amounts to  \eqref{LebesguePointsCond}. 
%Therefore we may assume $(x_0,x_1)\in (I_0\times  I_0)\setminus (I_0'\times I_0')$. 

Denote by $\phi:[0,1]\times I_0'\times I_0'\to I_0$ the function
$\phi(s,y^{\prime}_0,y^{\prime}_1)=\convars{y^{\prime}_0}{y^{\prime}_1}$. 
%We also define $\phi_{x_0}:=\phi(t,x_0, y_1)$, and $\phi_{x_0}:=\phi(t,y_0, x_1)$. 
Given $x\in int(I_0)$ define
$$E_{x_0,x,x_1}:=\begin{cases}\phi^{-1}(x) &\mbox{(i) if } (x_0,x_1)
\in (I_0\setminus I_0')\times (I_0\setminus I_0')\\
\phi^{-1}(x)\cap \prnt{[0,1]\times\{x_0\}\times I_0'} &\mbox{(ii) if } (x_0,x_1)\in  I_0' \times (I_0\setminus I_0') \\
\phi^{-1}(x)\cap \prnt{[0,1]\times I_0'\times\{x_1\}} &\mbox{(iii) if } (x_0,x_1)\in (I_0\setminus I_0')\times I_0'\\
\phi^{-1}(x)\cap \prnt{[0,1]\times\{x_0\}\times \{x_1\}}
&\mbox{(iv) if } (x_0,x_1)\in  I_0' \times I_0'\,.
\end{cases}$$

By definition of $E_{x_0,x,x_1}$ for every  $(s,y^{\prime}_{0},y^{\prime}_{1})\in E_{x_0,x,x_1}$: $\convars{y^{\prime}_{0}}{y^{\prime}_{1}}=x$. 

Let us firstly assume that $x\in I_0'$. Then in case (iv) inequality \eqref{LebesguePointsCond2} follows directly from \eqref{LebesguePointsCond} and the definition of $\tl{f}$. It remains to verify \eqref{LebesguePointsCond2} for cases (i)-(iii). By definition of $E_x$:
\eql{\label{eq:E_x_seq}
\tl{f}(x)=f(x)=f((1-t_n)y^{\prime(n)}_{0}+t_ny^{\prime(n)}_{1}) \qquad \text{for all } (t_n,y^{\prime(n)}_{0},y^{\prime(n)}_{1})\in E_x\,.}
Therefore, considering continuity of $\tl{M}_{K,N}^{(t)}[d](a,b)$ in the variables on the domain $t\in [0,1]$, $d\in (0,l_{\delta})$ and $a,b>0$, on which it is also monotone in $a,b$ separately (see Proposition \ref{prop:properties} property \ref{id:3}): {\small 
\eq{\tl{f}(x)
&\stackrel{\substack{\eqref{eq:E_x_seq}\\\eqref{LebesguePointsCond}}}{\geq} \limsup_{E_x\ni (t_n,y^{\prime(n)}_{0},y^{\prime(n)}_{1})\to (t, x_0, x_1)}\tl{M}_{K,N}^{(t_n)}[|y_1^{\prime(n)}-y_0^{\prime (n)}|](f(y_0^{\prime(n)})\frac{a_0}{a_{t_n}},f(y_1^{\prime (n)})\frac{a_1}{a_{t_n}})\\&\geq 
\begin{cases} 
\tl{M}_{K,N}^{(t)}[|x_1-x_0|](\frac{a_0}{a_{t}}\liminf_{I'\ni y_0^{\prime(n)}\to x_0}f(y_0^{\prime(n)}),\frac{a_1}{a_{t}}\liminf_{I'\ni y_1^{\prime(n)}\to x_1}f(y_1^{\prime (n)}))&\mbox{in case (i)}\\
 \tl{M}_{K,N}^{(t)}[|x_1-x_0|](\frac{a_0}{a_{t}}f(x_0),\frac{a_1}{a_{t}}\liminf_{I'\ni y_1^{\prime(n)}\to x_1}f(y_1^{\prime (n)}))&\mbox{in case (ii)}\\
 \tl{M}_{K,N}^{(t)}[|x_1-x_0|](\frac{a_0}{a_{t}}\liminf_{I'\ni y_0^{\prime(n)}\to x_0}f(y_0^{\prime(n)}),\frac{a_1}{a_{t}}f(x_1))&\mbox{in case (iii)}
%\\
%=\tl{M}_{K,N}^{(t)}[|x_1-x_0|](\frac{a_0}{a_{t}}f(x_0),\frac{a_1}{a_{t}}f(x_1))&\mbox{in case (iv)}
\end{cases}
\\&=
\tl{M}_{K,N}^{(t)}[|x_1-x_0|](\tl{f}(x_0)\frac{a_0}{a_{t}},\tl{f}(x_1)\frac{a_1}{a_{t}})\,.
}}

Assume now $x\in I_0\setminus I_0'$, then there is a sequence $I'\ni z'_k\stackrel{k\to\infty}{\to} x$ s.t. $\tl{f}(x)=\liminf_{I'\ni z'\to x} f(z)=\lim_{k\to\infty}\tl{f}(z'_k)$; w.l.o.g. we assume $(z'_k)_{k\in \Nbb}\subset int(I_0)\cap I'$. According to what we showed, since $z'_k=(1-t_k)x_0+t_k x_1$  for some $t_k\in [0,1]$, at each $z'_k$ the following inequality is satisfied:
\eq{
\tl{f}(z'_k)&=f(z'_k)\geq \tl{M}_{K,N}^{(t_k)}[|x_1-x_0|](\tl{f}(x_0)\frac{a_0}{a_{t_k}},\tl{f}(x_1)\frac{a_1}{a_{t_k}})\,,
}
hence 
{\small 
\eq{\tl{f}(x)=\lim_{k\to\infty}f(z_k)
\geq \lim_{k\to\infty}\tl{M}_{K,N}^{(t_k)}[|x_1-x_0|](\tl{f}(x_0)\frac{a_0}{a_{t_k}},\tl{f}(x_1)\frac{a_1}{a_{t_k}})=\tl{M}_{K,N}^{(t)}[|x_1-x_0|](\tl{f}(x_0)\frac{a_0}{a_{t}},\tl{f}(x_1)\frac{a_1}{a_{t}})\,.
}}
This completes the proof of  \eqref{LebesguePointsCond2} for $|x_1-x_0|<l_{\delta}$. 
\bigskip

Let us consider now the case $|x_1-x_0|\geq l_{\delta}$; this is only possible if $\delta>0$ (because otherwise $l_{\delta}=\infty$). Then under the assumption that $\delta>0$:
\begin{itemize}
	\item  If $K<0$ then $\tl{M}_{K,N}^{(t)}[d](\cdot\,,\,\cdot)\equiv 0$ for $d\geq l_{\delta}$ and the stated inequality clearly holds. 
	\item If  $K>0$ then  $\tl{f}(x_0)\tl{f}(x_1)=0$, since for a.e. $t\in (0,1)$ it holds that $x_t=\convar{x_0}{x_1}\in int(I_0)\cap I'$, by what we previously showed it follows that:
	$$f(x_t)\geq \tl{M}_{K,N}^{(t)}[|x_1-x_0|](\tl{f}(x_0)\frac{a_0}{a_{t}},\tl{f}(x_1)\frac{a_1}{a_{t}})=\infty\qquad \mbox{for a.e. } t\in (0,1)\,,$$
	which contradicts $f\in L_{loc}^{1}(I)$. 
\end{itemize}

\bigskip
We conclude that $\tl{f}$ satisfies the inequality on $I$ and by definition $\xi_{\tl{f}}=\xi_f$. 
\end{proof}

\bigskip
%, and $\tl{f}=f$ by definition on $I'$, whence $\xi_f=\xi_{\tl{f}}$. 

	%$\lim_{n\to\infty}|y^{\prime(n)}_{1}-y^{\prime(n)}_{0}\geq l_{\delta}$ 
	%whence for almost every $t\in (0,1)$: 
	%f(x_t)\geq \tl{M}_{K,N}^{(t)}(f(x_0),f(y^{\prime(n)}_{1}))\qquad \forall a_0,a_1\geq 0 

We proceed with the proof of the direction $(2\Rightarrow 1)$. Define $I'$ to be the set of Lebesgue points of $f$ in $I:=\R$. For any $x'\in I'$ it holds that $\lim_{\epsilon\to 0}\frac{\xi_f((x'-\frac{\epsilon}{2},x'+\frac{\epsilon}{2}))}{m((x'-\frac{\epsilon}{2},x'+\frac{\epsilon}{2}))}=f(x')$. 
We will show that $f(x)$ satisfies
\eql{ \label{LebPtsClaim} f(x_t)a_t\geq \tl{M}_{K,N}^{(t)}(f(x_0)a_0,f(x_1)a_1)\qquad \forall a_0,a_1\geq 0 \,\text{ and }\forall x_0,x_1\in I', \,\,\forall t\in[0,1] \mbox{ s.t. } x_t\in I' \,.}
Assume $x_0,x_1\in I'$  are such that $f(x_0)f(x_1)>0$. Let $t\in[0,1]$ be s.t. $x_t\in I'$. For $i\in \{0,1\}$ and $\epsilon>0$ we set $A_i^{\epsilon}:=x_i+(-\half\epsilon a_i, \half\epsilon a_i)$ where $a_i>0$ are fixed. 

Let us assume first that $|x_1-x_0|<l_{\delta}$; then we can assume that for some $\epsilon_0>0$ it holds that $\theta_{K}(A^{\epsilon}_0, A^{\epsilon}_1)<l_{\delta}$ for every $0<\epsilon<\epsilon_0$.

Then by hypothesis
\[ \xi_f(A_t^{\epsilon})\geq \tl{M}_{K,N}^{(t)}[\theta_{K}(A^{\epsilon}_0, A^{\epsilon}_1)]\prnt{\xi_f(A^{\epsilon}_0), \xi_f(A_1^{\epsilon})}\,. \]
By assumption $f(x_0)f(x_1)>0$ hence we may further assume that $\xi_f(A^{\epsilon}_0)\xi_f(A^{\epsilon}_1)>0$ for every $0<\epsilon<\epsilon_0$. We may thus divide the inequality by $m(A^{\epsilon}_t)$:
\[ \frac{\xi_f(A_t^{\epsilon})}{m(A^{\epsilon}_t)}\geq \tl{M}_{K,N}^{(t)}[\theta_{K}(A^{\epsilon}_0, A^{\epsilon}_1)]\prnt{\frac{\xi(A^{\epsilon}_0)}{m(A^{\epsilon}_t)},\frac{\xi_f(A^{\epsilon}_1)}{m(A^{\epsilon}_t)}}\,. \]
%If $|x_1-x_0|< l_{\delta}$, we may assume $\epsilon'$ sufficiently small so that $\theta_{K}(A^{\epsilon}_0, A^{\epsilon}_1)<l_{\delta}$ for all $\epsilon<\epsilon'$; 

Then by Lebesgue differentiation theorem, by taking $\epsilon\to 0$ it follows that 
\eql{\label{LebPtsClaim2} f(x_t)\geq \tl{M}_{K,N}^{(t)}[|x_0-x_1|]\prnt{\frac{a_0}{a_t}f(x_0),\frac{a_1}{a_t}f(x_1)}  \,. }
%\qquad \forall x_0,x_1\in I', t\in[0,1] \mbox{ s.t. } x_t\in I'
Assume now $|x_1-x_0|\geq l_{\delta}$. This is only possible when $\delta>0$. If $K<0$  the $RHS$ is $0$ and the inequality trivially holds at $x_t$; if $K>0$ then since as $\epsilon\to 0$ it holds that $\lim_{\epsilon\to 0}\theta_{K}(A^{\epsilon}_0, A^{\epsilon}_1)\geq l_{\delta}$ while $\frac{\xi_f(A^{\epsilon}_i)}{m(A^{\epsilon}_t)}\to f(x_i)>0$, the $RHS$ approaches $\infty$, implying that $f(x_t)=\infty$. Since for a.e. $t\in [0,1]$ the point $x_t\in I'$, this contradicts $f\in L^1_{loc}(\R)$. Therefore whenever $|x_1-x_0|\geq l_{\delta}$ then $f(x_0)f(x_1)=0$ and  inequality \eqref{LebPtsClaim2} is still valid. 
\bigskip 
%the condition \eqref{LebesguePointsCond}
We can thus conclude that inequality \eqref{LebPtsClaim} is satisfied. 
By Lemma \ref{lem:Borell} it holds that $\xi_f=\xi_{\tl{f}}$ where $\tl{f}$ satisfies: 
\eq{
\tl{f}(x_t)a_t\geq \tl{M}_{K,N}^{(t)}(\tl{f}(x_0)a_0,\tl{f}(x_1)a_1)\qquad \forall a_0,a_1\geq 0 \,\text{ and }\forall x_0,x_1\in I, \,\,t\in[0,1]  \,.}

%
%
%
%, contradicting $f\in L^1_{loc}(\R)$, unless $\xi(A^{\epsilon}_0)\xi(A^{\epsilon}_1)=0$ for all $0<\epsilon<\epsilon_0$, whence $f(x_0)f(x_1)=0$, implying that $diam(supp(f))\leq l_{\delta}$. 
%
%According to Proposition \ref{prop:Borell} $\xi_f=\xi_{\tl{f}}$, where $\tl{f}$ satisfies \eqref{eqToVerify}
%for every $x_0,x_1\in I$, $t\in [0,1]$ and $a_0,a_1>0$.

\end{proof}
\bigskip

\subsubsection{Proof of the equivalences Theorem \ref{thm:Equivalence}}
%We proceed to the proof of \ref{thm:Equivalence}.
\begin{proof}
 %\ref{rmk:CondImplic}
%In view of Lemma \ref{lem:CondImplic}  we may assume throughout that whenever $\delta>0$ and $K>0$ then $diam(I)<l_{\delta}$.

	%We proved  the statement under the assumption that  $\theta_K(A_0,A_1)< l_{\delta}$, since as we showed in the proof of \ref{lem:Borell}, if $\theta_K(A_0,A_1)\geq l_{\delta}$ then when $K>0$ this implies that $\xi_f(A_0)\xi_f(A_1)=0$, while if $K<0$ then $\tl{M}_{K,N}^{(t)}[\theta_{K}(A_0, A_1)]\equiv 0$, hence the statement holds in either case.
%

%Sufficient to prove the statement under the assumption that $Diam(I)\leq l_{\delta}$ when $\delta>0$ and $K>0$; indeed, if $|x_1-x_0|\geq l_{\delta}$
%(which is possible only when $\delta>0$), then for $N\in(-\infty,0]$ (i.e $K<0$) it holds that $\tl{M}_{K,N}^{(t)}[|x_1-x_0|](\cdot, \cdot)=0$ by definition, 
%and for $K>0$,

$(2 \Rightarrow 1)$  Assume $\xi_J(A_0)\xi_J(A_1)>0$ (otherwise there is nothing to prove).  
Throughout we will assume $\theta_{K}(A_0,A_1)<l_{\delta}$, since $l_{\delta}=\infty$ when $\delta<0$, and for $\delta>0$:  if $K<0$ then $\tl{M}_{K,N}^{(t)}[\theta_{K}(A_0,A_1)](\cdot, \cdot)\equiv 0$ whenever $\theta_{K}(A_0,A_1)\geq l_{\delta}$, while for $K>0$ by Lemma \ref{lem:CondImplic} $diam(supp(\xi_J))\leq l_{\delta}$, and considering that $\xi_J$ is a.c. and $\xi_J(A_0)\xi_J(A_1)>0$, we conclude that $\theta_{K}(A_0,A_1)=d(A_0,A_1)<l_{\delta}$. 

%Sufficient to prove the claim under the assumption that $\theta_{K}(A_0,A_1)<l_{\delta}$;  while if $K>0$, as we previously mentioned, if $\theta_{K}(A_0,A_1)=d(A_0,A_1)\geq l_{\delta}$ then $\xi_J(A_0)\xi_J(A_1)=0$, implying that $\tl{M}_{K,N}^{(t)}[\theta_{K}(A_0,A_1)](\xi_J(A_0), \xi_J(A_1))= 0$ and there is again nothing to prove. 
\bigskip 

From Proposition \ref{prop:properties} we conclude that for every $x_0, x_1\in I$, $a_0,a_1>0$ and $t\in [0,1]$:
\eq{
\tl{J}(x_t)a_t&\geq M_{K,N-1}^{(t)}[|x_1-x_0|](\tl{J}(x_0),\tl{J}(x_1))
M_{0,1}^{(t)}(a_0,a_1)
\geq \\& \tl{M}_{K,N}^{(t)}[|x_1-x_0|](\tl{J}(x_0)a_0, \tl{J}(x_1)a_1)\,. }

By Lemma \ref{lem:Borell} it follows that for any $t\in [0,1]$ and $A_0,A_1\subset \R$ compact the following inequality holds:
\[ \xi_J(A_t)\geq \tilde{M}_{K,N}^{(t)}[\theta_{K}(A_0,A_1)](\xi_J(A_0),\xi_J(A_1))\,. \]

$(1 \Rightarrow 2)$  
 By Lemma \ref{lem:Borell}  there exists $\tl{J}$ s.t. $\xi_{J}=\xi_{\tl{J}}$ where 
\eql{  \tl{J}(x_t)a_t\geq \tilde{M}_{K,N}^{(t)}[|x_1-x_0|](\tl{J}(x_0)a_0, \tl{J}(x_1)a_1)\qquad \forall x_0,x_1\in I,\,t\in [0,1]\,\,\mbox{and}\,\,\,a_0,a_1\geq 0\,\,  \,. \label{BorelIneq}    }

The statement trivially holds if $|x_1-x_0|\geq l_{\delta}$, since then when $\delta>0$ and $K<0$ it implies that $M_{K,N-1}^{(t)}[|x_1-x_0|](\cdot\,, \,\cdot)=0$, while when $\delta>0$ and $K>0$ we know  (see Lemma \ref{lem:CondImplic}) that $\tl{J}(x_0)\tl{J}(x_1)=0$ whence $M_{K,N-1}^{(t)}[|x_1-x_0|](\tl{J}(x_0),\tl{J}(x_1))=0$.
\bigskip

Therefore we assume throughout that $|x_1-x_0|< l_{\delta}$. 
Fix $t\in (0,1)$. If $N\in (-\infty,0]\cup (1,\infty)$ we pick $a_0:=\tl{J}(x_0)^{\frac{1}{N-1}}\frac{\sigma_{K,N-1}^{(1-t)}(|x_1-x_0|)}{1-t}$ and  $a_1:=\tl{J}(x_1)^{\frac{1}{N-1}}\frac{\sigma_{K,N-1}^{(t)}(|x_1-x_0|)}{t}$ then by \eqref{BorelIneq} it follows that

{\tiny
\eq{ \tl{J}(x_t)&\geq \frac{\prnt{\tau^{(1-t)}_{K,N}(|x_1-x_0|)\tl{J}(x_0)^{\frac{1}{N}+\frac{1}{N(N-1)}}\prnt{\frac{\sigma_{K,N-1}^{(1-t)}(|x_1-x_0|)}{1-t}}^{\frac{1}{N}} +
\tau^{(t)}_{K,N}(|x_1-x_0|)\tl{J}(x_1)^{\frac{1}{N}+\frac{1}{N(N-1)}}\prnt{\frac{\sigma_{K,N-1}^{(t)}(|x_1-x_0|)}{t}}^{\frac{1}{N}}  }^{N}}{\convar{\tl{J}(x_0)^{\frac{1}{N-1}}\frac{\sigma_{K,N-1}^{(1-t)}(|x_1-x_0|)}{1-t}}{\tl{J}(x_1)^{\frac{1}{N-1}}\frac{\sigma_{K,N-1}^{(t)}(|x_1-x_0|)}{t}}}\\&= M_{K,N-1}^{(t)}[|x_1-x_0|]\prnt{\tl{J}(x_0),\tl{J}(x_1)}\,, }    }
where we used the identity $\tau^{(t)}_{K,N}(\theta):=t^{\frac{1}{N}}\sigma_{K,N-1}^{(t)}(\theta)^{1-\frac{1}{N}}$. If $N=\infty$ we set $a_0=a_1=1$, and we conclude from \eqref{BorelIneq} that $\tl{J}(x_t)\geq \tl{M}_{K,\infty}^{(t)}[|x_1-x_0|]\prnt{\tl{J}(x_0),\tl{J}(x_1)}=M_{K,\infty}^{(t)}[|x_1-x_0|]\prnt{\tl{J}(x_0),\tl{J}(x_1)}$. 

%Lastly if $N=1$, we assume w.l.o.g that $\tl{J}(x_0)\geq \tl{J}(x_1)$, and we set $a_0^{\epsilon}=1+\epsilon$ and $a_1^{\epsilon}=\epsilon$, with $\epsilon$ some positive number. By assumption $\tl{J}(x_t)\geq \frac{1}{\convar{a_0^{\epsilon}}{a_1^{\epsilon}}}\tl{M}_{K,1}^{(t)}[|x_1-x_0|]\prnt{\tl{J}(x_0)a_0^{\epsilon},\tl{J}(x_1)a_1^{\epsilon}}$, hence by letting $\epsilon\to 0$ we conclude that $\tl{J}(x_t)\geq \tl{J}(x_0)=M_{K,0}^{(t)}[|x_1-x_0|]\prnt{\tl{J}(x_0), \tl{J}(x_1)}$. 
% Since $t$ is arbitrary the foregoing proves that \eqref{CDKN_2}.

($2\Leftrightarrow 3$) If $ N\in (-\infty, 0]\cup (1,\infty)$ then according to Remark \ref{rmk:ODEfacts} the following equivalence holds 
\eq{
   &(N-1)(J^{\frac{1}{N-1}})''+KJ^{\frac{1}{N-1}}\leq 0 \text{ on } I\\&\Leftrightarrow  J(x_t)\geq M_{K,N-1}^{(t)}[|x_1-x_0|](J(x_0),J(x_1)) \qquad \forall x_0,x_1\in I \text{ and }\forall t\in (0,1)\, }
	(notice that by definition the direction $(3)\rightarrow (2)$ holds whenever $\delta>0$, $K<0$ and $|x_1-x_0|\geq l_{\delta}$ ). 
	%If $\delta>0$ and $K<0$  then $(1)\Leftrightarrow (2)$ holds trivially, while if $K>0$ 
	
	while if $N=\infty$
	{\small
\eq{
   &(\log J)''+ K\leq 0 \text{ on } I\\&\Leftrightarrow
	(\log J)(x_t)\geq \convar{(\log J)(x_0)}{(\log J)(x_1)}+\frac{K t(1-t)}{2}(x_0-x_1)^2\qquad \forall x_0,x_1\in I \text{ and }\forall t\in (0,1)\,,} }
	whence 
	\eq{
	J(x_t)\geq J^{1-t}(x_0)J^{t}(x_1)e^{\frac{K t(1-t)}{2}(x_0-x_1)^2}=M_{K,\infty}^{(t)}[|x_1-x_0|](J(x_0),J(x_1))\,.
}

\bigskip 

%\end{enumerate}
\end{proof}
\bigskip

We conclude our discussion about the equivalences with a useful proposition. It shows that for a measure $\xi_J=J\cdot m$ ($J\in L^1_{loc}(\R)$) which satisfies  either of the hypotheses (1) or (2) of Theorem \ref{thm:Equivalence}, we can further assume that $J$ is locally-Lipschitz on $int(supp(\xi_J))$.  

\begin{prop}\label{prop:contRepresentative} Let $K\in \R$ and $N\in (-\infty,0]\cup (1,\infty]$. Assume that  $0\leq f\in L^1_{loc}(\R)$ and set $I:=\isupp(f)$. If 
%	For all $x_0,x_1\in \R$ such that $|x_1-x_0|<l_{\delta}$ and 
	\eql{\label{assumed:ineq}  f(x_t)\geq M_{K, N-1}^{(t)}[|x_1-x_0|](f(x_0),f(x_1)) \qquad \forall x_0,x_1\in int(I),\, \forall t\in[0,1]\,.}
	Then
	\begin{enumerate}
		\item $f$ is locally bounded on $int(I)$ away from $0$ and $\infty$. 
		\item $f$ is locally Lipschitz continuous on $int(I)$.
	\end{enumerate}
\end{prop}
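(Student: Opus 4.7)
The plan is to translate the hypothesis into a standard concavity or convexity statement via a power change of variables, and then to apply classical regularity results for convex/concave functions of one variable, using $f \in L^1_{loc}(\R)$ and $\isupp(f)=I$ in an essential way to rule out the values $0$ and $+\infty$ in $int(I)$.

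For $N \in (1, \infty)$ set $F := f^{1/(N-1)}$; raising the hypothesis $f(x_t) \geq M^{(t)}_{K,N-1}[|x_1-x_0|](f(x_0), f(x_1))$ to the positive power $1/(N-1)$ turns it directly into the distorted concavity
\[
F(x_t) \;\geq\; \sigma^{(1-t)}_{K, N-1}(d)\,F(x_0) + \sigma^{(t)}_{K, N-1}(d)\,F(x_1), \qquad d=|x_1-x_0|,
\]
for all $x_0, x_1 \in int(I)$ and $t \in [0,1]$; the $N = \infty$ case is similar with $F := \log f$ and an additive correction $\tfrac{K}{2}t(1-t)(x_1-x_0)^2$. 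For $N \in (-\infty, 0]$ the exponent $1/(N-1)$ is negative and the same transformation reverses the inequality, so $F$ becomes distortedly \emph{convex}. Because $\sigma^{(t)}_{K,N-1}(d) = t + O(d^2)$ uniformly in $t \in [0,1]$ by \eqref{eqn:sigma_taylor}, on any compact $[a,b] \subset int(I)$ the distortion factors can be pinched between positive constants by working inside a slightly larger compact subinterval of small diameter, so the distorted concavity (resp.\ convexity) is, up to bounded multiplicative corrections, ordinary concavity (resp.\ convexity) of $F$.

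I would first show that $F$ (and hence $f$) is finite and strictly positive at every point of $int(I)$. For $N \in (1, \infty]$, a point $x^*$ with $F(x^*) = +\infty$ (i.e.\ $f(x^*) = +\infty$) would, via concavity applied with some $x_1 \in int(I)$ at which $f(x_1) < \infty$ (such $x_1$ is dense in $int(I)$ because $int(I) \subset supp(f^{-1}\cdot m)$), force $F = +\infty$ on the interior of the segment from $x^*$ to $x_1$, contradicting $f \in L^1_{loc}$; symmetrically, $F(x^*) = 0$ propagates by concavity to a nondegenerate subinterval, contradicting $int(I) \subset supp(f \cdot m)$. For $N \in (-\infty, 0]$ the same two arguments apply to the distortedly convex $F$ with the roles of $0$ and $+\infty$ swapped, again using $f \in L^1_{loc}$ together with both support conditions packaged in $\isupp(f) = I$.

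Once $F \in (0, \infty)$ pointwise on $int(I)$, local boundedness of $F$ on any compact $[a,b] \subset int(I)$ follows from standard secant-line arguments for concave/convex functions, valid here modulo the bounded distortion correction: picking $\tilde a < a \le b < \tilde b$ in $int(I)$ with $\tilde b - \tilde a < l_\delta$, the distorted (con)cavity yields one of the two bounds on $F$ on $[a,b]$ directly from $F(\tilde a)$ and $F(\tilde b)$, while the chord-slope monotonicity (again modulo the $\sigma$-correction) gives the other by extrapolation from reference points outside $[a,b]$. Local Lipschitz continuity of $F$ then follows from the same chord-slope monotonicity, and because $F$ is pinched between two positive constants on $[a,b]$, the $C^1$ map $F \mapsto F^{N-1}$ transfers local Lipschitz continuity from $F$ to $f$ on $int(I)$. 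The main obstacle I anticipate is cleanly handling the case $N \le 0$, where both the direction of the inequality and the sign of $N-1$ are reversed, so that the distortedly convex $F$ can \emph{a priori} take the value $+\infty$ at interior points where $f = 0$; ruling this out and obtaining two-sided bounds on $F$ requires a careful joint use of $f \in L^1_{loc}$ with both support conditions built into $\isupp(f) = I$.
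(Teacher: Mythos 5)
Your power change of variables $F := f^{1/(N-1)}$ (resp. $F := \log f$ for $N=\infty$) and the resulting distorted concavity/convexity of $F$ is a genuinely different route from what the paper does. The paper instead uses the monotonicity $\frac{1}{N} \mapsto M^{(t)}_{K,N-1}$ (Proposition \ref{prop:properties}(\ref{id:4}a)) to observe that the hypothesis implies the \emph{weakest} case $N=0$, i.e.\ $\tfrac{1}{f}$ satisfies $\tfrac{1}{f(x_t)} \leq \sigma^{(1-t)}_{K,-1}(\cdot)\tfrac{1}{f(x_0)} + \sigma^{(t)}_{K,-1}(\cdot)\tfrac{1}{f(x_1)}$, and then works directly with this single inequality for all $N$. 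This sidesteps the concave/convex dichotomy entirely, and in particular the case split you wrestle with; your approach treats each sign of $N-1$ separately, which is harder, not easier.

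There is a genuine gap at the step you yourself flag as "the main obstacle," and it is in fact the crux of part (1). For $N \leq 0$ you have a \emph{non-negative distortedly convex} $F$, and you need $F$ bounded away from $0$ on compacta, i.e.\ $f$ bounded above. But a non-negative convex function on an interval has no lower bound on compact subsets away from the endpoints: it can have an interior minimum at which it vanishes (think of $|x|$). So the chord-slope/secant machinery you invoke only delivers an \emph{upper} bound on $F$ (hence $f$ bounded below), and convexity alone cannot rule out $f$ blowing up at an interior point (note that, e.g., $|x|^{-1/2}$ is $L^1_{loc}$, so integrability is not violated by unboundedness \emph{per se}). What the paper does here is a two-step contradiction: (i) if $f$ is unbounded near $x$, a sequential argument with the $\sigma_{K,-1}$-coefficients (whose $t$-weight is driven to $0$ along the sequence) forces $f(x) = \infty$; (ii) from $f(x) = \infty$, the same inequality gives the pointwise lower bound $f(z_t) \geq \sigma^{(t)}_{K,-1}(|x_0 - x|)^{-1} f(x_0)$ along the segment from $x_0$ to $x$, and since $\sigma^{(t)}_{K,-1}(\cdot)^{-1} \sim 1/t$ as $t \to 0$, this lower bound is non-integrable, contradicting $f \in L^1_{loc}$. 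Your plan says "careful joint use of $L^1_{loc}$ with both support conditions," but the support conditions in $\isupp(f)=I$ only say $f$ and $f^{-1}$ are positive on sets of positive measure; the quantitative divergence-rate argument via the $\sigma$-coefficients is what's actually needed, and it is absent from your sketch.

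A secondary issue: your reduction from "distorted" to "ordinary" concavity/convexity via $\sigma^{(t)}_{K,\mathcal{N}}(d) = t + O(d^2)$ is not rigorous as stated. Even on a short interval the inequality
\[
F(x_t) \;\gtrless\; \sigma^{(1-t)}_{K,N-1}(d)\,F(x_0) + \sigma^{(t)}_{K,N-1}(d)\,F(x_1)
\]
does not yield a genuine concavity/convexity statement for $F$ (or for $F/g$ with some fixed $g$) just because the coefficients are multiplicatively close to $(1-t, t)$; the chord-slope monotonicity you then appeal to is a property of exact concavity/convexity. The paper avoids this entirely by performing a direct Taylor-with-remainder estimate on the $\sigma_{K,-1}$-coefficients (using $\sigma^{(t)}_{K,-1}(\theta) = t[1 - \tfrac{K}{6}\theta^2(1-t^2) + O(\theta^4)]$) to produce an explicit Lipschitz modulus for $\tfrac{1}{f}$, and then transfers to $f$ using the two-sided bound from part (1). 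You would need to either replace the reduction-to-concavity heuristic with such explicit estimates, or explain why the distortion can be literally absorbed.
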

\begin{proof} 
\begin{enumerate}
	\item Fix a point $x\in int(I)$, we will show that $f$ is bounded from below and from above by positive constants in some neighborhood of $x$. Fix two points $x_0,x_1\in int(I)$ s.t.
	\begin{itemize}
		\item $x_0<x<x_1$, 
		\item $f(x_0),f(x_1)\in (0,\infty)$.
	\end{itemize}
	Since for every $t\in [0,1]$ it holds that $f(x_t)\geq M_{K,N-1}^{(t)}[|x_1-x_0|](f(x_0),f(x_1))>0$ we conclude that $f(x)>0$ on $[x_0,x_1]$. Furthermore, since $M_{K,N-1}(a,b)\geq M_{K,-1}(a,b)$ for every $N\in (-\infty,0]\cup (1,\infty]$ and $a,b>0$ (see Proposition  \ref{prop:properties} (\ref{id:4}a)), it is sufficient to prove the statement under the assumption that $N=0$ and $0<|x_0-x_1|<l_{\delta_0}$ where 
	$$l_{\delta_0}:=\begin{cases}  \frac{\pi}{\sqrt{-K}} &\mbox{if } K<0\\ +\infty &\mbox{if } K\geq 0\,.\end{cases}$$

To see that it is actually bounded away from zero by a positive constant notice that for
 %any point $z\in [x_0,x_1]$ there is $t\in [0,1]$ s.t. $z=\convar{x_0}{x_1}$ hence
any $t\in [0,1]$:
\eq{&\frac{1}{f(x_t)}\leq \sigma_{K,-1}^{(1-t)}(|x_0-x_1|)f(x_0)^{-1}+\sigma_{K,-1}^{(t)}(|x_0-x_1|)f(x_1)^{-1}\\&\leq \max\cprnt{\sigma_{K,-1}^{(1-t)}(|x_0-x_1|)f(x_0)^{-1},\sigma_{K,-1}^{(t)}(|x_0-x_1|)f(x_1)^{-1} }\\&
\leq \max_{t'\in[0,1]}\sigma_{K,-1}^{(t')}(|x_0-x_1|)\max\cprnt{f(x_0)^{-1},f(x_1)^{-1}}:=m_{K,x_0,x_1}<\infty\,. }
	
We will now show that $f(x)$ is bounded from above in some neighborhood of $x$. Assume by contradiction that this is not the case. Define $r=\min(|x_1-x|,|x-x_0|)$ and let $(r_n)_{n\in \Nbb}$ be a sequence such that $0<r_{n+1}<r_n<r$ and $r_n\to 0$ as $n\to \infty$, and for any $n\in \Nbb$ let $y_n\in B_{x}(r_n)$ such that $f(y_n)>n$. 

Notice that for some $t_n\in [0,1]$ we may write $x=(1-t_n)y_n+t_nx_0$ or $x=(1-t_n)y_n+t_nx_1$ depending on whether $y_n$ is to the right or left of $x$ respectively.  As $n\to \infty$ (which implies $y_n\to x$) $t_n\to 0$. For simplicity of notation we will assume $y_n>x$. 
 Considering that $\sigma_{K,-1}^{(t')}(\epsilon)\to 0$ as $t'\to 0$ and $\epsilon\to 0$, we may pass to a sub-sequence $\{n_k\}_{k\in \Nbb}$ s.t. for every $k\in \Nbb$:
\begin{enumerate}
	\item $f(y_{n_k})\geq n_k$.
	\item $\sigma_{K,-1}^{(t_{n_k})}(|x_0-y_{n_k}|)\leq \frac{1}{k}$.
\end{enumerate}
Then
\eq{& f(x)^{-1}\leq \sigma_{K,-1}^{(1-t_{n_k})}(|x_0-y_{n_k}|)f(y_{n_k})^{-1}+\sigma_{K,-1}^{(t_{n_k})}(|x_0-y_{n_k}|)f(x_0)^{-1}\\ &\leq 
 \sigma_{K,-1}^{(1-t_{n_k})}(|x_0-y_{n_k}|)\frac{1}{n_k}+\frac{1}{k}\cdot f(x_0)^{-1}\stackrel{k\to\infty}{\longrightarrow } 0\,.
 }
Therefore $f(x)=\infty$. 
In such a case if $z_t=\conv{x_0}{x}\in [x_0,x]$ ($t\in (0,1)$) then:
\[ f(z_t)^{-1}\leq \sigma_{K,-1}^{(t)}(|x_0-x|)f(x_0)^{-1}+0\Rightarrow f(z_t)\geq \sigma_{K,-1}^{(t)}(|x_0-x|)^{-1}f(x_0)\,.\]
Therefore,
 $$\int_{x_0}^{x_1}f(z)dz\geq \int_{0}^{1}\sigma_{K,-1}^{(t)}(|x_0-x|)^{-1}f(x_0)(x_1-x_0)dt\to \infty\,,$$
	since in a neighborhood of $0$ it holds that $\sigma_{K,-1}^{(t)}(|x_0-x|)^{-1}\sim \frac{1}{t}$. 
This contradicts our assumption that $J\in L^1_{loc}(\R)$.

\item

%Let $x_0\in Int(I)$.  Take $x\in B_{x_0}(r)$ and set $x_r:=x-r\cdot sgn(x-x_0)=x+\alpha (x_0-x)$, where $\alpha:=\frac{r}{|x-x_0|}$. 
%Notice that $x_0=\frac{1}{\alpha+1}\bar{x}+\frac{\alpha}{\alpha+1}x$
As before it is sufficient to prove the statement for the case $N=0$. 
Let $x\in int(I)$. As we showed we may assume $0<M_{K,x,r}^{-1}\leq f(y)\leq M_{K,x,r}<\infty$ for every $y$ in the open ball $B_{x}(r)$ for some $0<r<l_{\delta}$. Let $x_1,x_2\in B_{x}(\frac{r}{2})$ and set $\bar{x}_1=x_2+s(x_2-x_1)$ where $s=\frac{r}{2|x_2-x_1|}$ (notice that $\bar{x}_1\in B_{x}(r)$). Then
$x_2=\frac{1}{s+1}\bar{x}_1+\frac{s}{s+1}x_1$. 

%---------------------------------
Consider the identity (see \eqref{eqn:sigma_taylor}): 
$$
\sigma^{(t)}_{K,-1}(\theta)=\frac{\si_{-K}(t\theta)}{\si_{-K}(\theta)}=t\Prnt{1-\frac{K}{3!}\theta^2(1-t^2)+O(\theta^4) }\,.$$
Using Taylor's theorem with remainder, for some $\theta_1,\theta_2\in (0,|\bar{x}_1-x_1|)\subset (0,2r)$ holds the identity:
{\small 
\eq{&f(x_2)^{-1}-f(x_1)^{-1}\leq \sigma_{K,-1}^{(\frac{1}{s+1})}(|\bar{x}_1-x_1|)f(\bar{x}_1)^{-1}+\prnt{\sigma_{K,-1}^{(\frac{s}{s+1})}(|\bar{x}_1-x_1|)-1}f(x_1)^{-1}\\&=
\prnt{\frac{1}{s+1}}\Prnt{1-\frac{K}{6}\prnt{1-\prnt{\frac{1}{s+1}}^2}\theta_1^2}f(\bar{x}_1)^{-1}+
\prnt{\frac{s}{s+1}}\Prnt{1-\frac{K}{6}\prnt{1-\prnt{\frac{s}{s+1}}^2}\theta_2^2}f(x_1)^{-1} -f(x_1)^{-1}\,.}
}
Hence
{\small
\eq{&f(x_2)^{-1}-f(x_1)^{-1}\\&\leq \prnt{\frac{1}{s+1}}f(\bar{x}_1)^{-1}+\prnt{\prnt{\frac{s}{s+1}}-1}f(x_1)^{-1}\\&-\frac{K}{6(s+1)}\cprnt{\prnt{1-\prnt{\frac{1}{s+1}}^2}\theta_1^2f(\bar{x}_1)^{-1}+s\prnt{1-\prnt{\frac{s}{s+1}}^2}\theta_2^2f(x_1)^{-1} }\\&\leq 
\prnt{\frac{1}{s+1}}\Abs{f(\bar{x}_1)^{-1}-f(x_1)^{-1}}+\frac{|K|}{6(s+1)}\Abs{\prnt{1-\prnt{\frac{1}{s+1}}^2}\theta_1^2f(\bar{x}_1)^{-1}+s\prnt{1-\prnt{\frac{s}{s+1}}^2}\theta_2^2f(x_1)^{-1} }\\&\leq 
\prnt{\frac{1}{s+1}}\prnt{f(\bar{x}_1)^{-1}+f(x_1)^{-1}}+\frac{|K|(2r)^2}{6(s+1)}\max(f(\bar{x}_1)^{-1},f(x_1)^{-1})\Prnt{\prnt{\frac{s^2+2s}{(s+1)^2}}+\prnt{\frac{2s^2+s}{(s+1)^2}}} \\&\leq 
\frac{1}{s+1}\Prnt{2M_{K,x,r}+\frac{|K|4r^2}{6}M_{K,x,r}\cdot 3}\stackrel{s=\frac{r}{2|x_2-x_1|}}{\leq }C_{K,x,r}|x_2-x_1|
%
%+\frac{|K|r^2}{6(s+1)}2M_{r,x}\\&\stackrel{s=\frac{r}{2|x_2-x_1|}}{\leq}\prnt{4m_{K,x,r}+\frac{|K|r^2}{6}2m_{K,x,r}}\prnt{\frac{|x_2-x_1|}{r}}
}      }
for some constant $C_{K,x,r}>0$, 
where we used the estimate $\Prnt{\prnt{\frac{s^2+2s}{(s+1)^2}}+\prnt{\frac{2s^2+s}{(s+1)^2}}}\leq 3$ for every $s>0$. 

Repeating the argument, with the roles of $x_2$ and $x_1$ exchanged, we conclude that $f^{-1}(y)$ is Lipschitz continuous in a neighborhood $B_{x}(\frac{r}{2})$ of $x$ for some $r>0$. Considering that 
$$|f(x_2)^{-1}-f(x_1)^{-1}|=\frac{|f(x_1)-f(x_2)|}{f(x_1)f(x_2)}\geq \frac{|f(x_1)-f(x_2)|}{M_{K,x,r}^{2}}\,,$$ we conclude that $f(y)$ is Lipschitz on $B_{x}(\frac{r}{2})$. 

\end{enumerate}

\end{proof}
 
\section{Measures satisfying `synthetic' $CDD_b(K,N,D)$ conditions and their properties}

%The eqivalence of (3) to (2) and (1) in \ref{thm:Equivalence} required a smoothness assumption of the density $J$. 
%The equivalences in ,  were proved under the assumption that the density $J$ is twice continuously differentiable. However, one should observe the following: The differentiability assumption was not essential for the equivalence of $(2)\Leftrightarrow (3)$ in \ref{thm:Equivalence}; sufficient to assume $J\in L^1_{loc}(\R)$

  For applications it is beneficial to work with classes of measures which are closed under weak convergence. The class $\Fknd^{\Cinf}$ was defined using the differential condition \ref{eqn:Diff:CD(k,N)} appearing in $(3)$ of Theorem \ref{thm:Equivalence}, which is not preserved under weak convergence (the limit might not have a $\Cinf$ nor $C^2$ density); a class of measures which satisfies condition $(2)$ (which is more general than $(3)$) is also not weakly closed (since it can't accommodate singular measures). Fortunately condition $(1)$, which is the most general, can incorporate singular measures as well. We will accept it as the `synthetic' curvature-dimension condition; it will define a new class, which can be considered as an extension of $\Fknd^{\Cinf}$. Worthy of note, this new class is closed under weak convergence; this property makes it more natural for applications of convex optimization methods.  We will now make a digression into defining this new extended class, and prove some of its essential properties.

%Merkle 4.3 measures with finite support are dense in M^+

%p.188 Bauer  Measures and integraion, conv. Radon
\subsection{The classes $\Fknd(I)$ and $\Fknd^{M}(I)$}\label{dfnClasses}

We denote by $\M^{\pm}$ the set of all Radon signed  measures on $\R$, and by $\M$ the subset of all non-negative Radon measures on $\R$. By the Riesz representation theorem we can identify $\M$ as the space of positive bounded linear functionals on $C_c(\R)$. Since for any $\xi_1, \xi_2\in \M$ and any two numbers $\alpha_1,\alpha_2\geq 0$ it holds that $\alpha_1\xi_1+\alpha_2\xi_2\in \M$, it is a convex cone. We also define
$$ \gls{finite_Radon}=\{ \xi\in \M:\, \xi(\R)<\infty \} \qquad \text{and}\qquad \gls{probabilities}=\{ \xi\in \M:\, \xi(\R)=1 \}\,.$$
When we consider sets of measures supported in a subset $I\subset \R$ we denote by $\M(I),\M_b(I),\P(I)$ the respective sets of measures.
%p.188
%\textcolor{red}{do we need to mention finite additivity for it to be dual of $M^b$?}

We now present the classes of measures which will replace the former class $\Fknd^{\Cinf}
(I)$. 
\begin{defn}[The class $\gls{M_cl_knd}$] \label{dfn:CDKNSynt} Let $K\in\R$, $N\in(-\infty,0]\cup (1,\infty]$,  $D\in(0,+\infty]$, and let $I\subset \R$ be a closed interval. We set $\Fknd(I)$ to be the  cone of measures $\xi\in  \M_b(\R)$ such that 
\begin{enumerate}
	\item $supp(\xi)\subset I$.
	\item For any two compact sets $A_0,A_1\subset \R$ and every $t\in [0,1]$:
	\eql{\label{eqn:CD(k,N)}
		\xi(A_t)\geq \tl{M}_{K, N}^{(t)}[\theta_{k}(A_0,A_1)](\xi(A_0),\xi(A_1))\,,
		}
				where 
		\eq{ \theta_{K}(A_0,A_1)=\begin{cases}
			\inf_{x\in A_0,\, \, y\in A_1}|x-y| &\mbox{ if  } K\geq 0 \\
			\sup_{x\in A_0,\, \, y\in A_1}|x-y| &\mbox{ if  } K< 0\,.
			\end{cases} }
	\item $diam(supp(\xi))\leq D $.
\end{enumerate}
We refer to the class $\Fkn(I)$ as the members of the class $\Fkninf(I)$.
We denote the subclass of probability measures in $\Fknd(I)$ (resp. $\Fkn(I)$) by $\gls{Pknd}$ (resp. $\Pkn(I)$).
%If $f\in C_c^{\infty}$, then we define $\M_{(k,N,D),f}:=\M_{(k,N,D)}(supp(f))\cap \{\xi: f(\xi)=0\}$.
\end{defn}
\bigskip
\begin{remk} If $\xi\in \Fknd(I)$ then subject to the proviso that $D< l_{\delta}$ if   $\delta>0$ and $K<0$, it holds that $diam(supp(\xi))\leq l_{\delta}$. Indeed, $l_{\delta}=\infty$ if $\delta\leq 0$, and if $\delta>0$ and $K>0$ then according to Lemma \ref{lem:CondImplic}  $diam(supp(\xi))\leq \frac{\pi}{\sqrt{\delta}}=l_{\delta}$ (a constraint which is expected in view of the generalized Bonnet-Myers theorem \cite{KTS3}, which states that if \blue{a CWRM } $(M,\gfrak,\mu)$, \blue{with $\mu$ locally finite,  satisfies $CD(K,N)$} with $K>0, N\geq n$,  then  $diam(M)\leq \frac{\pi}{\sqrt{\delta}}$).

%
%
 %, if $\delta>0$ and $K>0$; otherwise, when $\delta\leq 0$ or $N=1$ then $l_{\delta}=\infty$ by definition.
\end{remk}
\bigskip
An important sub-class of measures is the set of `model-space measures'. We define it in analogy to   Definition \ref{dfn:CDKNSynt} of the class $\Fknd(I)$. 
\begin{defn}[The model class $\gls{M_cl_M_knd}$]\label{dfn:ModelMeasures}  Let $K\in\R$, $N\in(-\infty,0]\cup (1,\infty]$,  $D\in(0,+\infty]$, and let $I\subset \R$ be a closed interval.  We define the class $\Fknd^M(I)$ to be the set of a.c. measures $0\nequiv \xi=J\cdot m\in  M_b(\R)$ s.t. 
\begin{enumerate}
	\item $I_J:=supp(J)\subset I$.
  \item $J$ satisfies: for all $x_0,x_1\in int(I_J)$ and every $t\in [0,1]$:
	\eql{\label{ModelEqnDefn}  J(x_t)= M_{K, N-1}^{(t)}[|x_1-x_0|](J(x_0),J(x_1))\,.  }
	\item $diam(I_{J})\leq D$.
	\end{enumerate}
\end{defn} 
 Evidently from the equivalences of Theorem \ref{thm:Equivalence} it follows that  $\Fknd^M(I)\subset \Fknd(I)$.
\begin{remk}\label{rmk:Model_equivalent_def}
According to this definition $J\in \Cinf(int(I_J))$, and according to Remark \ref{rmk:ODEfacts} $J$ satisfies the differential equation $-(\log J)''-\frac{1}{N-1}((\log J)')^2= K$ on $int(I_J)$; the solutions $J$ to this ODE are indeed  smooth on $int(I_J)$, and can be identified as restrictions of scaling and translations of the functions $J_{K,N,\hfrak}$ (for some $\hfrak\in \R$) which were defined in \eqref{defn:Jknh}. In view of this, for $N\in(-\infty,0]\cup (1,\infty]$ there is no ambiguity between this definition and Definition   \ref{defn:CD_Model_First}. Yet, one should note that the ODE condition in Definition \ref{defn:CD_Model_First} applies to  $N\in(-\infty,\infty]\setminus \{1\}$.
\end{remk}

The following theorem is a natural analogue of the statement that for $N\in(-\infty,0]\cup (1,\infty]$ it holds that \blue{$CDD(K,N_1,D)\Rightarrow CDD(K,N_2,D)$} whenever $\frac{1}{N_1}\geq \frac{1}{N_2}$.
\begin{thm}\label{CDKN_1CDKN_2_mono} Let $I\subset \R$ be a fixed closed interval, and let $K\in\R$, $N_1,N_2\in(-\infty,0]\cup (1,\infty]$ and $D\in (0,\infty]$. If $\frac{1}{N_1}\geq \frac{1}{N_2}$ then $\Fkndg{K}{N_1}{D}(I)\subset \Fkndg{K}{N_2}{D}(I)$.
\end{thm}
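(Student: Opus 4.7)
The plan is to unwind the definition of $\Fknd(I)$ (Definition \ref{dfn:CDKNSynt}) and observe that only one of the three defining properties actually depends on $N$. Conditions (1) ($supp(\xi)\subset I$) and (3) ($diam(supp(\xi))\leq D$) are purely geometric conditions on $\xi$ and $I$ and make no reference to the dimension parameter, so if they hold for a member of $\Fkndg{K}{N_1}{D}(I)$ they hold verbatim for membership in $\Fkndg{K}{N_2}{D}(I)$.

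Thus the entire content of the theorem reduces to verifying that the distorted concavity inequality (2), namely
\[
\xi(A_t)\geq \tl{M}_{K,N_1}^{(t)}[\theta_K(A_0,A_1)]\prnt{\xi(A_0),\xi(A_1)}\qquad \forall A_0,A_1\subset\R\text{ compact},\,\forall t\in[0,1]\,,
\]
implies the corresponding inequality with $N_1$ replaced by $N_2$. The key step is then a direct invocation of Proposition \ref{prop:properties}, specifically the monotonicity property (\ref{id:4}b): for fixed $K\in\R$, $d\in(0,\infty)$, $t\in(0,1)$ and $a,b>0$, the map $\frac{1}{N}\mapsto \tl{M}^{(t)}_{K,N}[d](a,b)$ is non-decreasing on $[-\infty,1)$. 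Since by hypothesis $\frac{1}{N_1}\geq \frac{1}{N_2}$ and both lie in $[-\infty,1)$ (as $N_1,N_2\in (-\infty,0]\cup(1,\infty]$), this yields
\[
\tl{M}_{K,N_1}^{(t)}[\theta_K(A_0,A_1)]\prnt{\xi(A_0),\xi(A_1)}\geq \tl{M}_{K,N_2}^{(t)}[\theta_K(A_0,A_1)]\prnt{\xi(A_0),\xi(A_1)}\,,
\]
for every choice of compact $A_0,A_1$ and every $t\in[0,1]$ (with the convention that both sides vanish if $\xi(A_0)\xi(A_1)=0$, trivializing the bound).

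Chaining the two inequalities then gives property (2) for $\xi$ with parameter $N_2$, and combining with properties (1) and (3) concludes $\xi\in \Fkndg{K}{N_2}{D}(I)$. There is no real obstacle here; the entire argument is a one-line corollary of the monotonicity clause (\ref{id:4}b) of Proposition \ref{prop:properties}, whose proof was the technical core. The only small subtlety is handling the degenerate cases $\xi(A_0)\xi(A_1)=0$ and $\theta_K(A_0,A_1)\geq l_{\delta}$, both of which are absorbed into the conventions built into the definition of the distorted means $\tl{M}_{K,N}^{(t)}[d](\cdot,\cdot)$.
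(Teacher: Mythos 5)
Your proof is correct and takes exactly the same route as the paper, which simply says the theorem is a straightforward consequence of Proposition \ref{prop:properties}(\ref{id:4}b). Your version just spells out the (straightforward) unwinding of Definition \ref{dfn:CDKNSynt} and the treatment of the degenerate cases, which the paper leaves implicit.
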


\begin{proof}
 The statement is a straightforward consequence of Proposition \ref{prop:properties} (\ref{id:4}b).
\end{proof}

In general $\xi\in \Fknd(\R)$ admits a Lebesgue decomposition $\xi=\xi_{ac}+\xi_s$ into absolute continuous and singular parts. Since $\xi$ is a finite Radon measure, $d\xi_{ac}=J\,dm$ for some function $J\in L^1(\R)$. In \cite{Bo1} Borell showed that there are restrictions on the Lebesgue decomposition of `$\frac{1}{N}$-concave' (i.e. $\FF_{0,N}(\R)$) measures. Following his arguments we show that the same restrictions apply also to general $\Fknd(\R)$ measures. 
\noindent \begin{thm}\label{thm:NoSingularContinuous} Assume $\xi\in \Fknd(\R)$ where $K\in\R$,  $N\in (-\infty,0]\cup (1,\infty]$ and $D\in (0,\infty]$. Then, subject to the proviso $D< l_{\delta}$ if $\delta>0$ and $K<0$, it holds that: 
\begin{enumerate}
	\item $supp(\xi)$ is connected.
	\item Either
	\begin{enumerate}
		\item $supp(\xi_s)$ contains a single point and $\xi_{ac}\equiv 0$. 
		\item $supp(\xi_s)=\emptyset$, i.e. $d\xi=Jdm$ where $J\in L^1(\R)$. 
	\end{enumerate} 
	\end{enumerate}
\end{thm}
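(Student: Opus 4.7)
The plan is to establish conclusions (1) and (2) separately, each by testing the defining $\Fknd$ inequality on carefully chosen compact sets $A_0, A_1$ and leveraging the strict positivity of $\tl{M}^{(t)}_{K,N}[\theta](\cdot,\cdot)$ that the proviso ensures. For (1), if $supp(\xi)$ were disconnected there would exist $x_0 < z_0 < x_1$ in $\R$ with $x_0, x_1 \in supp(\xi)$ and $r > 0$ satisfying $\xi(I(z_0; r)) = 0$. For small $\epsilon \in (0, r)$, set $A_i = [x_i - \epsilon, x_i + \epsilon]$, so $\xi(A_0)\xi(A_1) > 0$. The proviso together with Lemma \ref{lem:CondImplic} (for $K > 0$) yields $\theta_K(A_0, A_1) < l_\delta$, hence $\tl{M}^{(t)}_{K,N}[\theta_K(A_0, A_1)](\xi(A_0), \xi(A_1)) > 0$. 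Picking the $t \in (0,1)$ with $(1-t)x_0 + tx_1 = z_0$ produces an $A_t$ of length $2\epsilon$ contained in $I(z_0; r)$, forcing $\xi(A_t) = 0$ and contradicting the lower bound.

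For (2), decompose $\xi = \xi_{ac} + \xi_s$ and prove three sub-claims. \emph{At most one atom}: if distinct points $x_0 \ne x_1$ carried masses $\alpha_0, \alpha_1 > 0$, testing with $A_0 = \{x_0\}, A_1 = \{x_1\}$ would give $\xi(\{(1-t)x_0 + tx_1\}) \ge \tl{M}^{(t)}_{K,N}[|x_1 - x_0|](\alpha_0, \alpha_1) > 0$ for every $t \in (0,1)$ (the proviso guarantees positivity when $K \le 0$; for $K > 0$ a diameter past $l_\delta$ already yields $\tl{M} = +\infty$, a direct contradiction), forcing uncountably many atoms into a finite measure. \emph{An atom admits no other mass}: if $\xi(\{x_0\}) = \alpha > 0$ and some $x_1 \in supp(\xi)$ lies to the right of $x_0$ (WLOG), take $\epsilon \in (0, x_1 - x_0)$ and set $A_0 = \{x_0\}, A_1 = [x_1 - \epsilon, x_1 + \epsilon]$; then $A_t \subset (x_0, x_0 + t(x_1 - x_0 + \epsilon)]$ avoids $\{x_0\}$. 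Inspection of \eqref{dfn:SigmaTau} shows $\tl{M}^{(t)}_{K,N}[\theta_K(A_0, A_1)](\alpha, \xi(A_1))$ converges to $\alpha > 0$ (or to $\min(\alpha, \xi(A_1)) > 0$ when $N = 0$) as $t \to 0^+$, whereas continuity from above of the finite measure $\xi$ on $(x_0, x_0 + \eta] \downarrow \emptyset$ gives $\xi(A_t) \to 0$, a contradiction.

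\emph{Atomless implies absolutely continuous}: setting $F(x) = \xi((-\infty, x])$ (continuous by absence of atoms) and $supp(\xi) = [a, b]$ by (1), apply the inequality to $A_0 = [a, x], A_1 = [a, y]$ so that $A_t = [a, (1-t)x + ty]$. For $K \ge 0$ the supports overlap, so $\theta_K = 0$ and one obtains an exact concavity-type inequality for $F^{1/N}$ (convexity when $N < 0$; log-concavity when $N = \infty$), which makes $F^{1/N}$ locally Lipschitz on $(a, b)$, hence absolutely continuous; this transfers to $F$ and $\xi$. For $K < 0$ with $N \in (-\infty, 0]$ the proviso forces $diam(supp(\xi)) < l_\delta$, and the same $[a,x], [a,y]$ construction yields a distorted concavity whose coefficients $\tau^{(\cdot)}_{K,N}(\theta)$ remain uniformly positive and bounded for $\theta \in [0, b-a]$, still producing local Lipschitz regularity. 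For $K < 0$ with $N \in (1, \infty]$ and possibly unbounded support, use instead the small balls $A_0 = [x - s, x + s], A_1 = [y - s, y + s]$ and send $s \to 0^+$: if some $x_s$ had $\xi(B_{x_s}(s))/(2s) \to \infty$ (as would hold at every point carrying singular-continuous mass) while $y$ were a Lebesgue point of $\xi_{ac}$, dividing the $\Fknd$ inequality by $2s$ and using $(\xi(A_0)/(2s))^{1/N} \to \infty$ would force $\xi(B_{z_t}(s))/(2s) \to \infty$ for every $z_t = (1-t)x_s + ty$ with $t \in (0,1)$; but this $z_t$ traces out a segment of positive Lebesgue measure, contradicting that upper-density-infinite points form an $[m]$-null set by Lebesgue differentiation.

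The hardest step is the final case $K < 0, N \in (1, \infty]$, where genuine concavity of $F^{1/N}$ is unavailable and the support may be unbounded; one must localize via small balls, invoke Lebesgue differentiation, and exploit strict positivity of $\tau^{(t)}_{K,N}(\theta)$ for all $\theta > 0$ (guaranteed here by $\delta \le 0$), rather than the clean concavity argument available in the other regimes.
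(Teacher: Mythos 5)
Your route is genuinely different from the paper's. The paper handles the singular-part claim with a single uniform mechanism: the Lebesgue differentiation theorem. It picks $y_0, y_1$ near two points of $supp(\xi_s)$ where $D\xi_s = +\infty$, applies the measure inequality to shrinking balls, and concludes $D\xi = +\infty$ on the entire segment between them, contradicting finiteness of $\xi$ (since $D\xi < \infty$ $[m]$-a.e.). This works uniformly because $\tl{M}^{(t)}_{K,N}[d](a,b) \to +\infty$ as $a,b\to+\infty$ for every $N$ in the allowed range. Your "hard case" $K<0$, $N\in(1,\infty]$ is exactly this argument, and your treatment of "an atom admits no other mass" (choosing $A_1$ a small interval strictly separated from $x_0$, so $A_t$ misses $x_0$) is in fact more careful than the paper's corresponding sentence, which takes $A_1 = supp(\xi_{ac})$ and would fail when $x_0$ lies interior to $supp(\xi_{ac})$.

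However, there is a genuine gap at $N=0$, which belongs to the theorem's stated range. Your "atomless implies absolutely continuous" step relies, in every branch except the $K<0, N\in(1,\infty]$ case, on converting the $\Fknd$ inequality with $A_0=[a,x],\,A_1=[a,y]$ into a concavity/convexity statement for $F^{1/N}$ (or $\log F$). But for $N=0$ the distorted mean $\tl{M}^{(t)}_{K,0}[d](a,b)$ is a \emph{minimum}, not a power mean. When $K\ge 0$ (so $\theta_K=0$) it collapses to $\min(a,b)$, so the inequality reads $F((1-t)x+ty)\ge \min(F(x),F(y))$, which is automatic from monotonicity of $F$ and yields zero regularity. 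When $K<0$ it gives $\min\bigl(c_{1-t}F(x),\, c_t F(y)\bigr)$ with the coefficients $c_s := s/\sigma^{(s)}_{K,-1}(\theta)$ bounded by $1$, again no better than monotonicity. So the concavity route produces nothing at $N=0$, and you have not argued that case. A secondary concern: for $K<0,\,N<0$ the "distorted concavity" of $F^{1/N}$ is not genuine concavity — the coefficients $\tau^{(1-t)}, \tau^{(t)}$ do not sum to $1$ — and the leap to local Lipschitz regularity would require a Taylor-expansion argument along the lines of the paper's Proposition~\ref{prop:contRepresentative}; as written it is a hand-wave. Both issues vanish if you run the Lebesgue-differentiation argument uniformly across all $N$, since $\tl{M}^{(t)}_{K,N}[d](a,b)\to\infty$ as $a,b\to\infty$ holds for $N=0$ and $N<0$ just as for $N>1$; that uniform approach is precisely the paper's proof and subsumes all your concavity branches.
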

\begin{remk} The proviso `$D< l_{\delta}$ if $\delta>0$ and $K<0$' is necessary for the validity of this simple characterization of 
$supp(\xi)$ under any Curvature-Dimension conditions  within the pertinent range of the parameters $K$ and $N$. The necessity of this proviso will also emerge later on, when we discuss about the identification of the model-space measures as extreme points of a subset of $\Pknd(\R)$. 
Recall that the model-space measures have densities which are up to translations and rescalings are just restrictions of the function $J_{K,N,\hfrak}(x)$. When $\delta>0$ and $K<0$, i.e. $K<0$ and $N\in (-\infty,0]$, then $J_{K,N,\hfrak}(x)$ is of the form  $\cos(\sqrt{\delta}x)^{N-1}$; since $N-1\leq -1$ the function $J_{K,N,\hfrak}(x)$ is not  integrable around  the points $-\frac{\pi}{2\sqrt{\delta}}=-\frac{l_{\delta}}{2}$ and $\frac{\pi}{2\sqrt{\delta}}=\frac{l_{\delta}}{2}$; hence if $\xi_0\in \Fknd^M(\R)$  is supported on an interval $I_0$ then $diam(I_0)<l_{\delta}$. However, without this proviso a member $\xi$ of $\Fknd(\R)$ ($\delta>0$ and $K<0$) might have the following undesirable properties:
\begin{itemize}
	\item $diam(supp(\xi))> l_{\delta}$. 
	
	For example, assume $N_1\in (-\infty,0]$. Let $d\xi_0:=e^{\frac{|K|x^2}{2}}1_{I_0}(x)\,dm\in \FkndgM{K}{\infty}{\infty}(\R)$ be a measure supported on an interval $I_0$. Measures from  $\FkndgM{K}{\infty}{\infty}(\R)$ can be supported on intervals of arbitrarily large finite diameter, so we may assume $diam(I_0)>l_{\delta_1}:=\sqrt{\frac{N_1-1}{K}}\pi$.
	By Theorem 	\ref{CDKN_1CDKN_2_mono} $\xi_0\in \Fkndg{K}{N_1}{\infty}(\R)$
    (since $\frac{1}{N_1}<0$).  
	\item $supp(\xi)$ might not even be connected; 
it might contain several (or even infinitely many)  components $I_k$ ($k=1,2...$), s.t. $diam(I_k)<l_{\delta}$. 

For example, assume $\xi_1,\xi_2\in \Fknd^M(\R)$ are supported on intervals $I_1,I_2$ s.t.
\begin{itemize}
    \item $\max(diam(I_1),diam(I_2))<l_{\delta}$
    \item $d(I_1,I_2)>l_{\delta}$.
\end{itemize}
Define a measure $\xi\in \M_b(\R)$  by $\xi_1+\xi_2$. This measure has a density $J$ which verifies condition \ref{property:II} of the equivalences of  Theorem \ref{thm:Equivalence}, which is equivalent to condition \ref{property:III} of the Theorem, therefore  
$\xi\in \Fknd(\R)$. 
\end{itemize}

For the proof of the extreme points characterization Theorem \ref{thm:ExtremePoints}, which is one of the main goal of this chapter, we want to exclude these possibilities. 
\end{remk}
Before proving Theorem \ref{thm:NoSingularContinuous} we mention several facts about differentiation of measures; for the proof of these statements the reader is referred to \cite[p.143]{Rud1}.
 \begin{thm}\label{lem:measDeriv} Assume $d\xi=Jdm+\xi_s$ is the Lebesgue decomposition of a measure $\xi\in\M$ w.r.t. $m$ ($J\in L^1_{loc}(\R)$). We define the derivative of $\xi$ with respect to $m$ at $x$ (when it exists) 
$$ (D\xi)(x)=\lim_{r\to 0} \frac{\xi(I(x; r))}{m(I(x; r))}\,.$$
Then
\begin{enumerate}
	\item $(D\xi)(x)=J(x)$ a.e. $[m]$. 
	\item $\xi\bot m$ if and only if $(D\xi)(x)=0$ for $[m]$-a.e. $x$. 
	\item If $\xi_s\neq 0$ then $(D\xi_s)(x)=\infty$ for $[\xi_s]$-a.e. $x$. 
\end{enumerate}
\end{thm}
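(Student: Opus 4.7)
The plan is to establish all three items from a single analytic workhorse: a Vitali-type covering argument that produces a weak-type (1,1) inequality for the maximal function $M\xi(x):=\sup_{r>0}\xi(I(x;r))/m(I(x;r))$ attached to any locally finite $\xi\in \M$. The standard Vitali covering lemma in $\R$ gives that, for every $\alpha>0$ and every $\xi\in \M_b$,
\[ m(\{x:M\xi(x)>\alpha\})\leq \frac{3\,\xi(\R)}{\alpha}\,. \]
This weak-type bound is the one ingredient that is genuinely needed; from it everything else will follow by soft arguments.

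For item (1), write $d\xi = J\,dm + d\xi_s$ and treat the two parts separately. The equality $D(J\cdot m)(x)=J(x)$ a.e. $[m]$ is the classical Lebesgue differentiation theorem: approximate $J\in L^1_{loc}$ in $L^1$ by continuous functions $J_n$, note that $D(J_n\cdot m)(x)=J_n(x)$ everywhere by continuity, and use the weak-type inequality applied to $|J-J_n|\cdot m$ to conclude the error set has arbitrarily small $m$-measure as $n\to\infty$. For the singular part I would show $D\xi_s(x)=0$ for $[m]$-a.e. $x$: fix a Borel set $E$ with $m(E)=0$ and $\xi_s(\R\setminus E)=0$, and for each $\alpha>0$ set $A_\alpha:=\{x\in \R\setminus E: \limsup_{r\to 0}\xi_s(I(x;r))/m(I(x;r))>\alpha\}$. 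By outer regularity pick an open set $U\supset E$ with $\xi_s(U)<\varepsilon$; cover $A_\alpha$ by the balls where the ratio exceeds $\alpha$ and lie inside $U$, and apply Vitali to obtain $m(A_\alpha)\leq 3\varepsilon/\alpha$. Letting $\varepsilon\to 0$ then $\alpha\to 0$ gives $D\xi_s=0$ a.e.~$[m]$, which combines with the absolutely continuous computation to yield (1).

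Item (2) is now purely bookkeeping. If $\xi\perp m$ then $J\equiv 0$ in the Lebesgue decomposition and (1) gives $D\xi=0$ a.e.~$[m]$; conversely if $D\xi=0$ a.e.~$[m]$, then (1) forces $J=0$ a.e.~$[m]$, so the absolutely continuous part vanishes and $\xi=\xi_s\perp m$. Item (3) is the dual statement and requires another Vitali pass, this time with the roles of $\xi_s$ and $m$ exchanged. Fix $\alpha>0$ and define $B_\alpha:=\{x\in E: \limsup_{r\to 0}\xi_s(I(x;r))/m(I(x;r))<\alpha\}$; at each such point we can pick arbitrarily small balls $I(x;r_x)$ with $\xi_s(I(x;r_x))<\alpha\,m(I(x;r_x))$. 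Covering a bounded portion of $B_\alpha$ by a Vitali-disjoint subfamily whose $3$-enlargement still covers it, and using $m(E)=0$ with outer regularity to make the covering sit inside an open set of arbitrarily small $m$-measure, yields $\xi_s(B_\alpha)=0$. Letting $\alpha\to\infty$ concludes (3).

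The main obstacle is the singular-part estimate inside (1) and the symmetric statement (3): both require the Vitali covering machinery together with outer regularity, and one has to be careful that the covering produced actually has controlled $m$-measure (for the first) or controlled $\xi_s$-measure (for the second). Once the weak-type maximal inequality is in hand, however, the three conclusions are quick consequences and everything is one-dimensional, so no Besicovitch-type refinement is needed.
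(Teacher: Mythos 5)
The paper does not actually prove this theorem; it refers the reader to Rudin's \emph{Real and Complex Analysis} (p.\ 143), where the first two items are established by essentially the Vitali/maximal-function route you take. So your overall plan is the standard one, and item (2) as bookkeeping from (1) is fine. The question is whether the details close, and there are two places where they do not.

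The singular-part step inside item (1) has a sign error in the outer-regularity argument. You take $E$ with $m(E)=0$ and $\xi_s(\R\setminus E)=0$, so $\xi_s$ is concentrated on $E$; you then ask for an open $U\supset E$ with $\xi_s(U)<\varepsilon$. But $\xi_s(U)\geq\xi_s(E)=\xi_s(\R)$, so $\xi_s(U)$ can never be made small unless $\xi_s=0$. The fix is to apply outer regularity of $\xi_s$ to the $\xi_s$-null set $\R\setminus E$, obtaining an open $V\supset(\R\setminus E)$ with $\xi_s(V)<\varepsilon$. Since $A_\alpha\subset\R\setminus E\subset V$ and $V$ is open, every $x\in A_\alpha$ admits arbitrarily small intervals $I(x;r)\subset V$ with $\xi_s(I(x;r))>\alpha\,m(I(x;r))$; Vitali then gives $m(A_\alpha)\leq 3\xi_s(V)/\alpha<3\varepsilon/\alpha$, and the argument finishes.

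Item (3) has a more substantive gap. You define $B_\alpha$ via $\limsup_{r\to0}\xi_s(I(x;r))/m(I(x;r))<\alpha$, which is well adapted to the Vitali $3$-enlargement: since the $\limsup$ is $<\alpha$, all sufficiently small intervals --- in particular the dilates $3I_j$ --- satisfy $\xi_s(3I_j)<\alpha\,m(3I_j)=3\alpha\,m(I_j)$, so $\xi_s(B_\alpha)\leq\sum\xi_s(3I_j)<3\alpha\sum m(I_j)\leq 3\alpha\,m(U)$ goes through. But $\bigcup_\alpha B_\alpha=\{\limsup<\infty\}$, so letting $\alpha\to\infty$ proves only that $\limsup D\xi_s=\infty$ a.e.\ $[\xi_s]$, \emph{not} that the limit exists and equals $\infty$, which is what the statement asserts under the paper's definition of $D\xi$. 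The set $\{x:\liminf<\infty=\limsup\}$ is never touched by your argument. To get the actual claim you must instead run the covering argument on $C_\alpha:=\{x\in E:\liminf_{r\to0}\xi_s(I(x;r))/m(I(x;r))<\alpha\}$; there the defining condition only gives control on a sequence of radii, not on their $3$-dilates, and the basic Vitali lemma no longer bounds $\xi_s(3I_j)$. What one needs is a subcover with bounded overlap: in $\R$, from any cover of a set by open intervals one can extract a subcover of multiplicity at most $2$, which then gives $\xi_s(C_\alpha)\leq\sum\xi_s(I_j)<\alpha\sum m(I_j)\leq 2\alpha\,m(U)\to0$. Your closing remark that ``no Besicovitch-type refinement is needed'' is therefore not quite right --- the bounded-overlap extraction, trivial though it is in dimension one, is precisely the ingredient that your single weak-type $3$-enlargement lemma does not supply.
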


\begin{proof}[Proof of Theorem \ref{thm:NoSingularContinuous}]

Let $\xi=\xi_{ac}+\xi_{s}$ be the Lebesgue decomposition of $\xi$; here $\xi_{ac}=Jdm$ where $0\leq J \in L^1(\R)$ and $\xi_s\bot m$.
By Lemma \ref{lem:CondImplic} we have $diam(supp(\xi))\leq l_{\delta}$ subject to our proviso. We verify the statements:

%Notice that since $\Fkng{K}{N}\subset \Fkng{K}{0}$, it is enough to prove the statement for $\xi\in \Fkng{K}{0}$ . We firstly show that the $CD(K,0)$ condition implies connectedness of $supp(\xi)$.
\begin{enumerate}
	\item Assume $supp(\xi)$ contains two distinct points $x_0$ and $x_1$. Let $t\in (0,1)$ and set $x_t=\convar{x_0}{x_1}$. Then for every $r\in (0,\min\{\half|x_1-x_0|, \half(l_{\delta}-|x_1-x_0|))$ holds the inequality:
%, \half\prnt{l_{\delta}-|x_1-x_0|}\})$. Then for all $0<r<r_0$:
\eq{&\xi(I(x_t; r)\geq \tl{M}_{K, N}^{(t)}[\theta_{K}(I(x_0; r),I(x_1; r)](\xi(I(x_0; r),\xi(I(x_1; r))\,,
}

and the RHS is positive for every such $r$, hence $x_t\in supp(\xi)$. Since $t$ was arbitrary we can conclude that $supp(\xi)$ must be path-connected. 
%min\left\{(1-t)\prnt{\sigma_{K,-1}^{(1-t)}(d_r)}^{-1}\xi(I(x_0; r)),t%\prnt{\sigma_{K,-1}^{(t)}(d_r)}^{-1}\xi(I(x_1; r))\right\}>0\,,

%where $d_r:=\theta_K(I(x_0; r), I(x_1; r))>0$ (here we used property \ref{id:2} of \ref{prop:properties}). 
\item  
Assume $supp(\xi_s)$ contains two distinct points $x_0$ and $x_1$. Let us firstly assume that $|x_1-x_0|<l_{\delta}$. By definition for any $r>0$ it holds that $\xi_s(I(x_0; r))>0$ and $\xi_s(I(x_1; r))>0$. It follows from Theorem \ref{lem:measDeriv} that there are points $y_0\in I(x_0; r)$ and $y_1\in I(x_1; r)$  such that $(D\xi_s )(y_0)$ and $(D\xi_s)(y_1)$ both exist and equal $\infty$. By choosing $r$ sufficiently small we may further assume that $|y_1-y_0|<l_{\delta}$. 
Any point $y\in (y_0,y_1)$ can be expressed as $y=y_t=\convar{y_0}{y_1}\in (y_0,y_1)$, for some $t\in (0,1)$. We will show that $(D\xi_s)(y_t)$ exists and is infinite. 

Assume $r\in (0,\min\{\half|y_1-y_0|, \half\prnt{l_{\delta}-|y_1-y_0|}\})$; evidently $I(y_t; r)=\convar{I(y_0; r)}{I(y_1; r)}$. Since $\xi\in \Fkn(\R)$: 
%Since $\Fkn\subset \Fkng{K}{0}$ (property \ref{id:4} in \ref{prop:properties}) it follows that
%\label{eqn:RadonNikodym}
%\eq{\lim_{i\to\infty}\frac{\xi(I(y_t; r_i))}{m(I(y_t; r_i))}&\geq \lim_{i\to\infty}
%\min\left\{(1-t)(\sigma_{K,-1}^{(1-t)}(d_i))^{-1}\frac{\xi(I(y_0; r_i))}{m(I(y_0; r_i))},t(\sigma_{K,-1}^{(t)}(d_i))^{-1}\frac{\xi(I(y_1; r_i))}{m(I(y_1; r_i))}\right\}\\ \nonumber&\geq 
%\min\left\{(1-t)\lim_{i\to\infty}(\sigma_{K,-1}^{(1-t)}(d_i))^{-1}\frac{\xi_s(I(y_0; r_i))}{m(I(y_0; r_i))},t\lim_{i\to\infty}(\sigma_{K,-1}^{(t)}(d_i))^{-1}\frac{\xi_s(I(y_1; r_i))}{m(I(y_1; r_i))}\right\}
%=\infty\,,
%}
{\footnotesize
\eq{
&\frac{\xi(I(y_t; r))}{m(I(y_t; r))}\geq \tl{M}_{K, N}^{(t)}[\theta_{K}(I(x_0; r),I(x_1; r)]\prnt{\frac{\xi(I(y_0; r))}{m(I(y_0; r))},\frac{\xi(I(y_1; r))}{m(I(y_1; r))})}\\&\geq \tl{M}_{K, N}^{(t)}[\theta_{K}(I(x_0; r),I(x_1; r)]\prnt{\frac{\xi_s(I(y_0; r))}{m(I(y_0; r))},\frac{\xi_s(I(y_1; r))}{m(I(y_1; r))})}\to
\tl{M}_{K, N}^{(t)}[|y_1-y_0|]\prnt{(D\xi)(y_0),(D\xi)(y_1)}=\infty\,.
} 
}

Hence the only possibility of having more than one point in $\xi_s$ is when $\delta>0,K>0$ and  $|x_1-x_0|=l_{\delta}$; however 
in this case $\xi_s(\{x_0\})\xi_s(\{x_1\})$ must be zero, since otherwise (for the compact sets $A_0:=\{x_0\}, A_1:=\{x_1\}$) it holds that:
\[  \xi(\{x_t\})\geq \tl{M}_{k, N}^{(t)}[l_{\delta}]\prnt{\xi_s(\{x_0\}),\xi_s(\{x_1\})}\equiv \infty\,,\qquad \forall t\in (0,1)\,. \]
contradicting our assumption $\xi\in L_{loc}^1(\R)$. We conclude that $supp(\xi_s)$ can contain no more than a single point for every $\xi\in \Fknd(\R)$. 

We will now prove that if $supp(\xi_s)\neq \emptyset$ then it must hold that $supp(\xi_{ac})=\emptyset$. Indeed, if $supp(\xi_s)=\{x_0\}$ and $supp(\xi_{ac})=I\neq \emptyset$ notice that for $t\in (0,1)$
$$\xi(\convar{\{x_0\}}{I})\geq \tl{M}_{K, N}^{(t)}[\theta_K(\{x_0\},I)](\xi(\{x_0\}, \xi(I)) $$
As $t\to 0$ the LHS approaches $0$, while the RHS (considering the definitions of $\sigma^{(t)}_{K,N-1}(\theta)$ and $\tau^{(t)}_{K,N}(\theta)$ in \eqref{dfn:SigmaTau}) approaches $\xi_{s}(\{x_0\})>0$. This is a contradiction, hence if $\xi$ is not the zero-measure, either $supp(\xi_{ac})\neq \emptyset$ or $supp(\xi_{s})\neq \emptyset$, but not both.    
\end{enumerate}
\end{proof}

%The proof of the proposition follows almost verbatim from the proof for $s$-concave measures which were studied by C.Borell \cite{Bo3}. 
\subsection{Topological properties}

Recall that a sequence $(\xi_n)_{n\in \Nbb}\subset \M_b$  converges to $\xi$  in the weak topology (resp. weak-* topology) if for all $f\in C_b(\R)$  (resp. for all $f\in C_c(\R)$) $$\lim_{n\to\infty}\int fd\xi_n=\int fd\xi \,.$$
We denote these modes of convergence by $\xi_n\stackrel{w}{\longrightarrow}\xi$ and $\xi_n\stackrel{w*}{\longrightarrow}\xi$ respectively. 
\smallskip

\red{Note that the above definition of weak topology}, sometimes referred to as the `narrow topology', should not be confused with the weak topology of functional analysis, and the former is weaker than the latter. However, the above weak* topology is precisely the weak* topology of functional analysis, since by the Riesz Theorem $\M^{\pm}$ is the functional analytic dual of $C_c(\R)$.

\bigskip
 Evidently when $I$ is compact $C_c(I)=C_b(I)$ and there is no distinction between $w$ and $w*$  convergence. We remind the reader about the Portmanteau theorem which gives equivalent characterizations of weak convergence of probability measures: 

\begin{thm}[Portmanteau Theorem \cite{Kle}]\label{thm:Portmenteau} Assume $\xi$ and $(\xi_n)_{n\in \Nbb}$ are measures in $\P$. Then the following assertions are equivalent :
\begin{enumerate}
\item $\xi_n\stackrel{w}{\longrightarrow} \xi$.
\item $\limsup_{n\to\infty} \xi_n(F)\leq \xi(F)$ for every closed set $F\subset \R$.
\item $\liminf_{n\to\infty} \xi_n(F)\geq \xi(F)$ for every open set $G\subset \R$.
\item $\lim_{n\to\infty}\xi_n(Q)=\xi(Q)$ for every Borel set $Q\subset \R$ such that $\xi(\partial Q)=0$. 
\end{enumerate}
\end{thm}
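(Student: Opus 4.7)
My plan is to establish the four equivalences via the cycle of implications $(1) \Rightarrow (2) \Leftrightarrow (3) \Rightarrow (4) \Rightarrow (1)$.

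For $(1) \Rightarrow (2)$, given a closed set $F$, I would approximate $1_F$ from above by the decreasing sequence of bounded continuous functions $f_k(x) := \max\{1 - k\cdot d(x, F),\, 0\}$; these satisfy $1_F \leq f_k \leq 1$ with $f_k \downarrow 1_F$ pointwise as $k \to \infty$. Applying hypothesis $(1)$ to each $f_k$ yields $\limsup_{n\to\infty} \xi_n(F) \leq \int f_k \, d\xi$, and then letting $k \to \infty$ with dominated convergence produces $\limsup_n \xi_n(F) \leq \xi(F)$. The equivalence $(2) \Leftrightarrow (3)$ is immediate from the identity $\xi_n(G) = 1 - \xi_n(G^c)$ (valid since the $\xi_n$ are probability measures), applied with $G$ open and $G^c$ closed.

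For $(2),(3) \Rightarrow (4)$, if $Q$ is a Borel set with $\xi(\partial Q) = 0$, then $\xi(\mathrm{int}(Q)) = \xi(Q) = \xi(\overline{Q})$, and the sandwich
\[
\xi_n(\mathrm{int}(Q)) \leq \xi_n(Q) \leq \xi_n(\overline{Q})
\]
together with $(3)$ applied to the open set $\mathrm{int}(Q)$ and $(2)$ applied to the closed set $\overline{Q}$ yields $\lim_n \xi_n(Q) = \xi(Q)$.

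The main obstacle will be $(4) \Rightarrow (1)$, which I would handle as follows. For $f \in C_b(\R)$, after splitting $f = f_+ - f_-$ and adding a constant it suffices to treat $0 \leq f \leq M$, and I would invoke the layer-cake representation
\[
\int f \, d\xi_n \;=\; \int_0^M \xi_n\bigl(\{f > t\}\bigr)\, dt.
\]
The key observation is that the set $D := \{t \in (0,M) : \xi(\partial\{f > t\}) > 0\}$ is at most countable: since $f$ is continuous, the open set $\{f > t\}$ has topological boundary contained in the level set $\{f = t\}$, and the level sets $\{f = t\}$ for distinct $t$ are pairwise disjoint Borel sets, so their $\xi$-measures form a summable family with total mass at most $1$, and hence only countably many can be strictly positive. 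For every $t \in (0,M) \setminus D$, hypothesis $(4)$ applied to $Q = \{f > t\}$ gives $\xi_n(\{f > t\}) \to \xi(\{f > t\})$; since these quantities are uniformly bounded by $1$ and $D$ has Lebesgue measure zero, dominated convergence in the layer-cake formula yields $\int f \, d\xi_n \to \int f \, d\xi$, closing the cycle.
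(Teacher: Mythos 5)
Your proof is correct and is the standard textbook argument; the paper simply cites this theorem to Klenke without supplying a proof, so there is nothing internal to compare against. All four implications in your cycle are sound: the Lipschitz approximation $f_k(x) = \max\{1 - k\,d(x,F),\,0\}$ for $(1)\Rightarrow(2)$, complementation (using that the $\xi_n$ are probability measures) for $(2)\Leftrightarrow(3)$, the sandwich $\mathrm{int}(Q) \subset Q \subset \overline{Q}$ for $(2),(3)\Rightarrow(4)$, and the layer-cake representation together with the countability of $\{t : \xi(\{f = t\}) > 0\}$ for $(4)\Rightarrow(1)$ all work exactly as you describe. (The $f = f_+ - f_-$ split in the last step is superfluous — adding a constant to $f$ already suffices since all measures have unit mass — but this is harmless.)
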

\bigskip
%https://www.ma.utexas.edu/users/gordanz/notes/weak.pdf

We now prove that the class $\Pknd(I)$ is closed under weak convergence, i.e. closed in the weak topology. The reader should keep in mind that for later applications (specifically, the Banach-Alaoglu theorem) we will need to assume that the class is closed under weak* convergence, that is w.r.t. to the coarser weak* topology. However, for these applications it will turn out that the underlying interval $I$ is compact; as a result the weak and weak* topologies coincide, and therefore any result where weak convergence is involved, can be equivalently rephrased for weak* convergence.

\begin{thm} \label{thm:CDKNclosed} Let $I$ be a fixed interval. For $K\in \R$, $N\in (-\infty,0]\cup (1,\infty]$, and $D\in (0,\infty]$, s.t. $D< l_{\delta}$ if $\delta>0$ and $K<0$, the class $\Pknd(I)$ is weakly closed.
\end{thm}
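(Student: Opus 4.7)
The plan is to verify, for a narrow (weak) limit $\xi$ of a sequence $(\xi_n) \subset \Pknd(I)$, each of the three defining conditions of $\Pknd(I)$ from Definition \ref{dfn:CDKNSynt}. Since $1 \in C_b(\R)$, $\xi(\R) = \lim_n \xi_n(\R) = 1$, so $\xi \in \P$; and since $I^c$ is open, Portmanteau (Theorem \ref{thm:Portmenteau}) gives $\xi(I^c) \leq \liminf_n \xi_n(I^c) = 0$, hence $supp(\xi) \subset I$. For the diameter bound $diam(supp(\xi)) \leq D$ I would argue by contradiction: two points $y_1 < y_2 \in supp(\xi)$ with $y_2 - y_1 > D$ and $\epsilon := (y_2-y_1-D)/3$ yield, via Portmanteau on the open intervals $I(y_i; \epsilon)$, points $x_n \in supp(\xi_n) \cap I(y_1; \epsilon)$ and $z_n \in supp(\xi_n) \cap I(y_2; \epsilon)$ for all large $n$, whence $D \geq diam(supp(\xi_n)) \geq |z_n - x_n| > y_2 - y_1 - 2\epsilon > D$, a contradiction.

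The substantive step is preservation of the distorted concavity inequality \eqref{eqn:CD(k,N)}. Fix compact $A_0, A_1 \subset \R$ and $t \in (0,1)$; I restrict attention to the non-trivial case $\xi(A_0)\xi(A_1) > 0$ and $\theta_K(A_0, A_1) < l_\delta$, where $(d,a,b) \mapsto \tl{M}^{(t)}_{K,N}[d](a,b)$ is jointly continuous on $[0, l_\delta)\times \R_+^2$ (degenerate cases being either trivial or ruled out by passing to open enlargements and Portmanteau, using the proviso on $D$). For $\epsilon > 0$ let $B_i^\epsilon := \{x : d(x, A_i) \leq \epsilon\}$ be the compact closed $\epsilon$-neighborhood; finiteness of $\xi$ ensures that for all but countably many $\epsilon$ one has $\xi(\partial B_0^\epsilon) = \xi(\partial B_1^\epsilon) = 0$, and hence $\xi_n(B_i^\epsilon) \to \xi(B_i^\epsilon)$ by Portmanteau. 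The Minkowski interpolate $C_t^\epsilon := (1-t) B_0^\epsilon + t B_1^\epsilon$ is closed and satisfies the inclusions $A_t \subset C_t^\epsilon \subset A_t^\epsilon$. Applying the CD inequality for $\xi_n$ to $B_0^\epsilon, B_1^\epsilon$ and letting $n \to \infty$, Portmanteau on the closed set $C_t^\epsilon$ gives $\xi(C_t^\epsilon) \geq \limsup_n \xi_n(C_t^\epsilon)$, and joint continuity of $\tl{M}$ resolves the right-hand limit, yielding
$$\xi(C_t^\epsilon) \,\geq\, \tl{M}^{(t)}_{K,N}\bigl[\theta_K(B_0^\epsilon, B_1^\epsilon)\bigr]\bigl(\xi(B_0^\epsilon), \xi(B_1^\epsilon)\bigr).$$
Finally, sending $\epsilon \downarrow 0$ along admissible values, $\xi(B_i^\epsilon) \downarrow \xi(A_i)$, the sandwich $\xi(A_t) \leq \xi(C_t^\epsilon) \leq \xi(A_t^\epsilon) \downarrow \xi(A_t)$ forces $\xi(C_t^\epsilon) \to \xi(A_t)$, $\theta_K(B_0^\epsilon, B_1^\epsilon) \to \theta_K(A_0, A_1)$ (with discrepancy at most $2\epsilon$), and continuity of $\tl{M}$ delivers the CD inequality for $\xi$.

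The main difficulty is the geometric mismatch between the Minkowski interpolate $C_t^\epsilon$ (arising from enlarging $A_0, A_1$) and the genuine $\epsilon$-neighborhood $A_t^\epsilon$ of $A_t$: the $n$-th CD inequality naturally lower-bounds only $\xi_n(C_t^\epsilon)$, not $\xi_n(A_t)$, so it is the inclusion $C_t^\epsilon \subset A_t^\epsilon$ which ultimately allows the $\epsilon \downarrow 0$ limit to close the argument. The proviso $D < l_\delta$ when $\delta > 0$ and $K < 0$ is precisely what keeps the whole argument within the continuous regime of $\tl{M}$, safely away from the singular transition at $d = l_\delta$ in that case.
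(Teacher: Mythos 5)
Your proof is correct and follows the same overall blueprint as the paper's: verify the support condition via Portmanteau, preserve the distorted-concavity inequality by thickening the compacts, applying the $CD$ condition for each $\xi_n$, passing to the weak limit, and then unthickening, and obtain the diameter bound by locating two nearly extremal points in $supp(\xi_n)$. The one place the mechanism genuinely differs is the passage $n\to\infty$ in the $CD$ inequality: you arrange exact convergence $\xi_n(B_i^\epsilon)\to\xi(B_i^\epsilon)$ by discarding the at-most-countably-many $\epsilon$ for which $\xi(\partial B_i^\epsilon)>0$, whereas the paper uses closed thickenings for a $\limsup$, open thickenings for a $\liminf$, and the monotonicity of $\tl{M}^{(t)}_{K,N}[d](\cdot,\cdot)$ in its last two arguments (Proposition \ref{prop:properties}\,(\ref{id:3})) to compare the two; your route buys a single clean limit but requires the countable-exceptions observation, while the paper's avoids choosing a special sequence of radii at the cost of an extra monotonicity step. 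A further small simplification on your side is that your diameter argument by contradiction uses only $diam(supp(\xi_n))\leq D$ and never needs the connectedness of $supp(\xi_n)$, which the paper invokes via Theorem \ref{thm:NoSingularContinuous} at this point. Both you and the paper are equally terse about reducing to the regime $\theta_K(A_0,A_1)<l_\delta$ when $\delta>0$; that reduction is worth a sentence, but it is a shared ellipsis rather than a gap unique to your write-up.
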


\begin{proof}
 If $\xi_n \stackrel{w}{\longrightarrow} \xi$ where $\xi_n\in\Pknd(I)$ for $n\in \Nbb$, then by testing the function $1\in C_b(\R)$ we conclude that  $\xi\in \P$. We verify it satisfies the conditions in Definition \ref{dfn:CDKNSynt}. The arguments are  analogous to the arguments used in \cite{Bo3, BGVV} to show  that the class of $\frac{1}{N}$-concave measures is weakly closed.

\begin{myitemize}
	\item[Condition 1]: By the Portmanteau equivalences $0=\lim_{n\to\infty}\xi_n(I^c)=\xi(I^c)$,
	
	therefore $supp(\xi)\subset I$.

	\item[Condition 2]: \red{The case $K=0$ is proved in \cite{Bo3, BGVV}, therefore throughout we assume $K\neq 0$. }
	%Firstly notice that for any two compact sets $A^{(0)},A^{(1)}\subst \R$ it holds that $\lim_{n\to\infty}\theta^{\xi_n}_k(\hat{A}^{(0)},\hat{A}^{(1))=\theta^{\xi_n}_k(A^{(0)},A^{(1)})$. Indeed if 
	%
	Let $A^{(0)},A^{(1)}\subset \R$ be two compact sets such that $diam(A^{(0)}\cup A^{(1)})\leq l_{\delta}$ and $\xi(A^{(0)})\xi(A^{(1)})>0$. For $t\in (0,1)$ we set $A^{(t)}=\convar{A^{(0)}}{A^{(1)}}$. 	Given $r>0$, we denote the closed and open $r$-extensions of $A^{(i)}$ by $\hat{A}^{(i)}_r$ and $\check{A}^{(i)}_r$ respectively (the latter can be identified as the interior of the former). 	
	
	By the Portmanteau equivalences and \red{Proposition} \ref{prop:properties}:
\eql{\label{ineq:limMeasures} 
\xi(\hat{A}^{(t)}_{\frac{1}{m}})&\geq \limsup_{n\to\infty} \xi_{n}(\hat{A}^{(t)}_{\frac{1}{m}}  )\\ \nonumber&\geq \limsup_{n\to\infty}\tl{M}_{K,N}^{(t)}[\theta_K(\hat{A}^{(0)}_{\frac{1}{m}},\hat{A}^{(1)}_{\frac{1}{m}})](\xi_{n}(\hat{A}^{(0)}_{\frac{1}{m}}),\xi_{n}(\hat{A}^{(1)}_{\frac{1}{m}}))\\ \nonumber&\geq \liminf_{n\to\infty} \tl{M}_{K,N}^{(t)}[\theta_K(\hat{A}^{(0)}_{\frac{1}{m}},\hat{A}^{(1)}_{\frac{1}{m}})](\xi_{n}(\hat{A}^{(0)}_{\frac{1}{m}}),\xi_{n}(\hat{A}^{(1)}_{\frac{1}{m}}))\\ \nonumber&\geq \liminf_{n\to\infty} \tl{M}_{K,N}^{(t)}[\theta_K(\hat{A}^{(0)}_{\frac{1}{m}},\hat{A}^{(1)}_{\frac{1}{m}})](\xi_{n}(\check{A}^{(0)}_{\frac{1}{m}}),\xi_{n}(\check{A}^{(1)}_{\frac{1}{m}}))\\ \nonumber&\geq 
 \tl{M}_{K,N}^{(t)}[\theta_K(\hat{A}^{(0)}_{\frac{1}{m}},\hat{A}^{(1)}_{\frac{1}{m}})](\liminf_{n\to\infty}\xi_{n}(\check{A}^{(0)}_{\frac{1}{m}}),\liminf_{n\to\infty}\xi_{n}(\check{A}^{(1)}_{\frac{1}{m}}))\\ \nonumber&\geq 
\tl{M}_{K,N}^{(t)}[\theta_K(\hat{A}^{(0)}_{\frac{1}{m}},\hat{A}^{(1)}_{\frac{1}{m}})](\xi(\check{A}^{(0)}_{\frac{1}{m}}),\xi(\check{A}^{(1)}_{\frac{1}{m}}))\\
&\geq\tl{M}_{K,N}^{(t)}[\theta_K(\hat{A}^{(0)}_{\frac{1}{m}},\hat{A}^{(1)}_{\frac{1}{m}})](\xi(\A^{(0)}),\xi(A^{(1)}))\,.
}
Notice that 
\begin{enumerate}
	\item $\theta_K(\hat{A}^{(0)}_{\frac{1}{m}},\hat{A}^{(1)}_{\frac{1}{m}})\leq \theta_K(A^{(0)},A^{(1)})$ if $K>0$.
	\item $\theta_K(\hat{A}^{(0)}_{\frac{1}{m}},\hat{A}^{(1)}_{\frac{1}{m}})\geq \theta_K(A^{(0)},A^{(1)})$ if $K<0$.
	\item  $\lim_{m\to\infty} \theta_K(\hat{A}^{(0)}_{\frac{1}{m}},\hat{A}^{(1)}_{\frac{1}{m}})=\theta_K(A^{(0)},A^{(1)})$.
\end{enumerate}
Recall that if $a\cdot b>0$ and $d_0>0$, then by identity \ref{id:5} of Proposition \ref{prop:properties}, if $K>0$ then 

$\lim_{d\uparrow d_0}\tl{M}_{K,N}^{(t)}[d](a,b)=\tl{M}_{K,N}^{(t)}[d_0](a,b)$ ($+\infty$ if $d=l_{\delta}$), and if $K<0$ then $\lim_{d\downarrow d_0+}\tl{M}_{K,N}^{(t)}[d](a,b)=\tl{M}_{K,N}^{(t)}[d_0](a,b)$ ($0$ if $d=l_{\delta}$).  Since $A^{(t)}=\bigcap_{m=1}^{\infty}\hat{A}^{(t)}_{\frac{1}{m}}$   by measure continuity from above it follows from inequality \eqref{ineq:limMeasures} and these observations  that 
$$\xi(A^{(t)})=\lim_{m\to\infty} \xi(\hat{A}^{(t)}_{\frac{1}{m}}) \geq 
\tl{M}_{K,N}^{(t)}[\theta_K(A^{(0)},A^{(1)})](\xi(A^{(0)}),\xi(A^{(1)}))\,.$$
	\item[Condition 3]: Since $\xi\in \Pkn(\R)$ by \red{Theorem} \ref{thm:NoSingularContinuous} it must be supported on either an interval or a point. In the latter case there is nothing to prove. Assume $supp(\xi)$ contains at least two points; let $x_0, x_1\in supp(\xi)$ s.t. $x_0<x_1$.
	For any $0<\epsilon<\half(x_1-x_0)$ and $i\in\{0,1\}$ it holds that $\xi(I(x_0; \epsilon)),\xi(I(x_1; \epsilon))>0$; by the Portmanteau's equivalences $\lim_{n\to\infty}\xi_n((I(x_i; \epsilon))=\xi(I(x_i; \epsilon))>0$.  Therefore there is $n_\epsilon\in \Nbb$ such that for all $n\geq n_{\epsilon}$ it holds that $\xi_n(I(x_0; \epsilon))\xi_n(I(x_1; \epsilon))>0$; since by \red{Theorem} \ref{thm:NoSingularContinuous} the measures $\xi_n$ and $\xi$ have connected supports, this means that for all $n\geq n_{\epsilon}$: $[x_0+\epsilon,x_1-\epsilon]\subset supp(\xi_n)$, whence $(x_1-x_0)\leq diam(supp(\xi_n))+2\epsilon$. Since this can be shown for any $\epsilon>0$ we conclude that $x_1-x_0\leq D$. But this applies also to  any two such points $x_0,x_1\in supp(\xi)$, hence $diam(supp(\xi))\leq D$.

\end{myitemize}

\end{proof}

%If $x_0, x_1\in supp(\xi)$ such that $|x_1-x_0|=l_{\delta}$ then this condition prevents $x_0,x_1$ being both atoms of the measure, while if $d\xi=Jdm$ this condition, in view of the definition of the distortion coefficients 
%\eqref{dfn:SigmaTau} (where we identify $0\cdot \infty$ as $0$) enforces $J$ to vanish on $x_0$ or $x_1$ (cf. remark \ref{rmk:ODEfacts}, specifically the inequality case). 

%Theorem \ref{thm:NoSingularContinuous} motivates the following definitions
The theorem motivates the introduction of the following definitions:

\begin{defn}[The sub-classes  $\gls{Mknd_ac}$,$\gls{Mknd_Ck}$,$\gls{Mknd_s}$]\label{defn:newFkndSubclasses} Let $I\subset \R$ be a closed interval. We denote by
\begin{enumerate}
	 \item $\Fknd^{ac}(I)$:  The class of absolutely-continuous  measures $\xi=J\cdot m\in \Fknd(I)$. Throughout we will always assume that the density $J$ refers to the representative which is continuous on $int(supp(\xi))$ (as justified by Proposition  \ref{prop:contRepresentative}). 

	\item $\Fknd^{C^{k}}(I)$ ($k\in\N\cup\{\infty\}$):  The class of measures $\xi=J\cdot m\in \Fknd^{ac}(I)$  s.t. $J\in C^k(int(supp(\xi)))$.
 \item $\Fknd^s(I)$: The class of delta-measures   $\xi=\delta_{x_0}\in \Fknd(I)$  where $x_0\in I$. 
\end{enumerate}

\end{defn}
\begin{remk}\label{CDKN_hierarchy}
According to Theorem \ref{thm:NoSingularContinuous}, subject to the proviso that $D<l_{\delta}$ when $\delta>0$ and $K<0$, every measure $\xi\in \Fknd(I)$ belongs to one, and only one, of the classes $\Fknd^{ac}(I)$ and $\Fknd^{s}(I)$. 
Clearly we have the following hierarchy
$$\Fknd^{C^{2}}(I)\subset \Fknd^{ac}(I)\subset \Fknd(I)\,.$$
However, while all members of $\Fknd(I)$ satisfy condition \ref{property:III} of Theorem \ref{thm:Equivalence} by definition, the theorem shows that the subset of measures $\xi=J\cdot m\in \Fknd^{ac}(I)$  satisfy also condition \ref{property:II} of the theorem (and in view of Proposition  \ref{prop:contRepresentative}, as a subset of $\M_b(\R)$ we can identify $\Fknd^{ac}(I)$ with $\Fknd^{C^0}(I)$), while the members of  $\Fknd^{C^2}(I)\subset\Fknd^{ac}(I)$ satisfy in addition condition \ref{property:I}. 
\end{remk}

We identify \eqref{eqn:CD(k,N):1} from Definition \ref{dfn:Z_KND} as the  fundamental property satisfied by the members of $\Fknd^{C^{\infty}}(I)$. This class is fundamental for the needle decomposition applications which were presented in \red{Theorem} \ref{localization results}. Due to Theorem \ref{thm:Equivalence}  the members of this class satisfy also condition \ref{property:II} of Theorem \ref{thm:Equivalence}, hence the former  definition of $\Fknd^{\Cinf}(I)$ in \red{Definition} \ref{dfn:Z_KND} causes  no notational ambiguity with the present definition of the subclass $\Fknd^{C^{k}}(I)$ (Definition  \ref{defn:newFkndSubclasses}) when $k=\infty$. 
\bigskip

Condition \ref{property:I} of Theorem \ref{thm:Equivalence} corresponded to $CD_b(K,N)$ conditions in dimension 1; in view of Theorem \ref{thm:Equivalence} it is reasonable to 
refer to the two other conditions \ref{property:III} and \ref{property:II} of Theorem \ref{thm:Equivalence} (being more general than condition \ref{property:I}) as  '{\bf synthetic}' $CD_b(K,N)$ (resp. $CDD_b(K,N,D)$) conditions \red{on $\R$}.

%We will refer to condition \eqref{eqn:CD(k,N)} as well as \eqref{SyntCDKN} as 'synthetic' $CD(K,N)$ conditions.
%
%
%In addition the class $\Fknd^{C^{\infty}}$, which corresponded to the classical $CDD(K,N,D)$ conditions \ref{property:III} and \ref{property:III} of  matches the class we have previously defined in \ref{dfn:Z_KND}, and therefore there is no notational ambiguity. 

%
%
%
%Throughout the rest of this work we will always assume $J$ to be the continuous-representative. 

%According to . The class $\Fkn(I)$ whose members satisfy \eqref{eqn:CD(k,N)} contains the subclass of absolutely-continuous measures, denoted by $\Fkn^{ac}$, whose densities $J\in L^1(\R)$ satisfy :
%\eql{\label{SyntCDKN} J(x_t)\geq M_{K, N}^{(t)}[|x_1-x_0|](J(x_0),J(x_1))\qquad \forall t\in [0,1]\,, }
%for $[m]$ a.e $x_0,x_1\in \R$ such that $|x_1-x_0|<l_{\delta}$.
%
%
%If $I$ is an interval s.t $Diam(I)\leq l_{\delta}$, 
%the class $\Fkn^{ac}(I)$ contains the subclass $\Fkn^{\Cinf}(I)$ of non-negative measures with densities $J$ such that 
%$(i)$ J is supported inside $I$, $(ii)$ $J$ is smooth on its support and satisfies the classical $CD(K,N)$ condition given by the differential inequality: $-(\log J)''-\frac{1}{N-1}((\log J)')^2\geq K$. 

\bigskip

\subsection{Abstract formulation of the problem}\label{subsec:abstract_form}

Let $\M_b(I)$ denote the space of bounded non-negative Radon measures on $\R$  supported inside a closed interval $I$, and let $\P(I)\subset \M_b(I)$ be the subset of probability measures (when $I=\R$ we will sometimes abbreviate and write $\M_b$ and $\P$ respectively). Given a function $u\in C(\R)$ we define $u^*$ to be the linear functional on $\M(I)$ defined by $u^*(\xi)=\int ud\xi$; this is a well defined bounded functional whenever either $u$ is compactly supported or $I$ is compact. 

Given $A\subset \M_b(I)$ and two such functionals  $u^*, v^*$ which are non-negative on $A$, we define their associated quotient $\Phi_{u^*,v^*}:A\to \R_+\cup \{+\infty\}$ as the non-negative function:
\eq{\gls{Phi_Ray}:=\begin{cases} \frac{u^*(\xi)}{v^*(\xi)} &\mbox{ if } v^*(\xi)\neq 0 \\ 
																			  +\infty &\mbox{ if } v^*(\xi)= 0\,\,. \end{cases}}
 
%$f\in C_c^{\infty}$ we denote by $f^*$ the linear functional on $\M$ defined by $f^*(\xi)= \int fd\xi$. Assume $u,v\in C_c^{\infty}(\R, \R_+)$,

By application of the localization theorem we have shown (Theorems \ref{thm:PoinInequality} and \ref{thm:LogSobolevInequality}) that lower bounds for $\Lambda_{Poi}$ and $\Lambda_{LS}$ are given as solutions $\lamp_{K,N,D}$ and $\rho_{K,N,D}$ (equations \eqref{PoinConstant} and \eqref{LogSobolevConstant}) of an optimization problem of the following general form:
%\eql{\label{const:CC1} \CC:=\inf_{\rho\in \Z_{(k,N,D)}}\inf_{\stackrel{f\in C_c^{\infty}(\R)}{f\bot_{\rho} 1}}  \Ry{f}{\rho} =\inf_{\rho\in \Z_{(k,N,D)}}Poin(\rho)\,, }
\eql{\label{const_alpha_def} \gls{alpha_knd}:=\inf_{0\not\equiv\xi\in \Fknd(\R)}\quad \inf_{\substack{f\in \F_*(\xi)}}  \Phi_{u_f^*,v_f^*}(\xi)\,, }
where $u_f, v_f$ and the function space $\F_{*}$ ($*$=`$Poi$', `$Poi^{(p)}$' or `$LS$') are defined according to the following assignments:

{\small 
\begin{align}\label{dfn:specifications} \qquad\\ \nonumber
  &u_f(x):=f'(x)^{p},\, & v_f(x):&=|f(x)|^p,\,  & h_f(x):&=f(x)|f(x)|^{p-2} &\quad\mbox{   in problem } \eqref{PoinConstant}\,,	\\ \nonumber
	%  \\ \nonumbe
	\qquad\\ \nonumber
  &u_f(x):=2f'(x)^{2},\, & v_f(x):&=f(x)^2\log f(x)^2,\, & h_f(x):&=f(x)^2-1 &\quad\mbox{   in problem} \eqref{LogSobolevConstant}\,,
	%:= & h_f:= & c=1\,.
\end{align}

\[ \gls{F_fun}=\{  0\not\equiv f\in \Cinf(\R):\, \, h_f\in C_c(\R) \text{ and } h_f^*(\xi)=0\}\,. \]
}
\smallskip

%where \phi_{[-\frac{D}{2},\frac{D}{2}]}, $\phi_{[-\frac{D}{2},\frac{D}{2}]}\in \Cinf(\R)$ is a compactly supported extension of the index function $1_{[-\frac{D}{2},\frac{D}{2}]}$.

%If for any $f\in \Cinf_c(\R)$ the following condition holds:
%$$\prnt{supp(u_f)\cup supp(v_f)}\subset conv(supp(\bar{h}_f)) \qquad\mbox{(as in  \eqref{dfn:specifications})}\,,$$
 %then $conv\prnt{ supp(u_f)\cup supp(v_f)\cup supp(\bar{h}_f) }=conv(supp(\bar{h}_f))$. 
\begin{remk}\label{remk:uvh_prop} The following crucial properties are satisfied:
\begin{itemize}
	\item $u_f, v_f, h_f\in C_c(\R)$ by definition of our function space.
	\item We can identify 
	$$\F_*(\xi)=\{f\in \F_*^a: \,\, h_f^*(\xi)=0\,\}\,,$$
	where $\F^a_*$ is an auxiliary function space  (independent of $\xi$) defined by
\[ \gls{F_fun_aux}:=\{  0\not\equiv f\in \Cinf(\R):\, \, h_f\in C_c(\R)\}\,. \]

	%Considering the definition of the function spaces $\F_*(\xi)$, we can identify them with:
%\[ \{\text{const}\not\equiv  f\in \Cinf(\R):\, \, h_f\in C_c(\R) \text{ and } h_f^*(\xi)=0\}\,. \]
	\item $supp(u_f)\subset supp(v_f)$.
	\item $supp(v_f)\subset supp(h_f)$. 
	\item If $h_f^*(\xi)=0$ then $v_f^*(\xi)\geq 0$. Indeed, in the log-Sobolev problem, although $v_f(x)$ is not a non-negative function,  by Jensen's inequality applied to $\bar{\xi}:=\frac{1}{\xi(1)}\xi$ (considering that $h_f^*(\bar{\xi})=0$): $$v_f^*(\bar{\xi})=\bar{\xi}(f^2\log f^2)\geq \bar{\xi}(f^2)\log\prnt{\bar{\xi}(f^2)}\stackrel{h_f^*(\xi)=0}{=} 0\,,$$
	hence $v_f^*(\xi)\geq 0$. 
	%
	%apply Jensen's inequality to the normalized measure $\frac{1}{\xi(1)}\xi$)
	%. Justification for that is necessary only for the log-Sobolev problem, if $\xi\in \Fknd(\R)$ is s.t $h_f^*(\xi)=0$, set , then since $\bar{\xi}(f^2)=1$ \,.$$ 
\end{itemize}
\end{remk}
\bigskip

%We define an auxiliary function space $\F^a_*$ defined as follows:
%\[ \F^a_*:=\{\text{const}\not\equiv  f\in \Cinf(\R):\, \, h_f\in C_c(\R)\}\,. \]
 %Notice that $\F_*(\xi)=\F_*^a\cap \{h_f^*(\xi)=0\}$.

By Remark \ref{remk:uvh_prop} it follows that $$supp(u_f)\cup supp(v_f)\subset conv(supp(h_f))\,,$$ hence 
by exchanging the minimization order we may equivalently reformulate the problem \eqref{const_alpha_def} as follows:

\eql{\label{const_alpha_def_equiv} \CC=\inf_{f\in \F^a_*}\,\,\inf_{\substack{\xi\in \Fknd\prnt{conv(supp(h_f))}\\ \text{s.t. }h_f^*(\xi)=0}}\Phi_{u_f^*,v_f^*}(\xi)\,.}
By 0-homogeneity   this is equivalent to 
\eql{\label{const_alpha_def_equiv2} \CC=\inf_{f\in \F^a_*}\,\,\inf_{\substack{\xi\in \Pknd\prnt{conv(supp(h_f))}\\ \text{s.t. }h_f^*(\xi)=0}}\Phi_{u_f^*,v_f^*}(\xi)\,.}

Now in this equivalent form we have two consecutive minimization problems, the inner is on measures and the outer on functions.
Our first important result is a reduction of the class of measures in the inner problem from $\Fknd$ to the class $\Fknd^M$. We approach the problem via functional analytic methods, in particular identification of extreme points and application of the Krein-Milman theorem (or an equivalence of the theorem). 
%We will conclude that rather optimizing over the class $\Fknd(conv\prnt{supp(f)})$, it is sufficient to consider the sub-class of model densities $\Fknd^M(conv\prnt{supp(f)})$.
 Then, after exchanging the order of minimization again we face the following simpler problem: 
\eql{\label{Find_alpha}\text{ Find : } \quad \inf_{\xi\in \Fknd^M(\R)}  \inf_{f\in \F_*(\xi)}\Phi_{u_f^*,v_f^*}(\xi)\,. } 

A solution to this problem, by a finer refinement over the model class, will be approached by other methods, which will depend on the specific functional inequality that we wish to establish.

 \section{Extreme points - classification and applications}
\subsection{The role of extreme points} \label{subsec:role_extreme}

% We assume the topology on $X$ is the $\tau_{\F_w}$ topology induced by a set of linear functional $\F_w$ which separates points of $X$ (i.e $\forall x_1\neq x_2\in X$ there is $f\in \F_w$ s.t $f(x_1)\neq f(x_2)$). The basis for this topology is formed by sets of the form $B_{x_0, A_{\F_w}, \epsilon}:=\{x\in X:\, |f(x)-f(x_0)|<\epsilon,\, \forall f\in A_{w}\}$, where $x\in X$, $A_w $ is a finite subset of $\F_w$, and $\epsilon>0$.  It is known that $(X,\tau_{\F_w})$ is a locally convex vector space, whose topology is Hausdorff.
%One important example is $X$ with the 'weak' topology induced by $\F_w=X^*$, where $X^*$ is the dual space of a normed space $X$; another important example is    $X=L^*$, for some linear space $L$, with the 'weak*' topology induced by $\F_w=L$.

When we consider optimization problems over subsets $A\subset X$, convexity of $A$ is an extra piece of information which can significantly  facilitate the solution due to many important results which rely on this property. In particular, the problem is greatly simplified once we know what are the extreme points of the convex set.

To this end we firstly recall the definition of extreme points; we provide a definition for general sets which are not necessarily convex. 
\begin{defn}[Extreme Points] We say that a point $x$ in a set $A$ of a real vector space is an extreme point of $A$, denoted by $x\in \gls{Ext_poi}$ if $x$ does not belong to the relative interior of any non-degenerate line segment $[y_0,y_1]\subset A$, where 
$$ [y_0,y_1]=\{\convar{y_0}{y_1}:\,\, t\in [0,1]\}\,.$$
Equivalently $x\in \Ext{A}$ if and only if the following implication holds: \\
$$\mbox{If $[y_0,y_1]\subset A$ and $x=\half y_0+\half y_1$ then $x=y_0=y_1$\,. }$$
 %for some $t\in (0,1)$
%under the same assumptions
%$x\in \Ext{A}$ iff whenever  for $y_0,y_1\in A$ then $x=y_0=y_1$. 
\end{defn}

\bigskip

We will need the following known facts \cite[p.74]{Geo} (see also \cite{AK}): 
%locally convex implies separates points
\begin{thm}\label{thm:KM} Assume $A$ is a non-empty compact convex subset of a locally convex Hausdorff t.v.s. $X$. Denote by $\Ext{A}$ the set of extreme points of $A$. Then:
\begin{enumerate}
	\item(Krein-Milman) $A=\overline{conv}(\Ext{A})$.
	\item(Milman) If $B\subset A$, then the following conditions are equivalent:
	\begin{itemize}
		\item $\overline{conv}(B)=A$.
		\item $\Ext{A}\subset \bar{B}$. 	
	\end{itemize}
	%Hitchiker's in analysisp.175
	\item(Bauer)\label{Bauer}   $\min\{\phi(x):\,x\in \Ext{A}\}=\min\{\phi(x):\,x\in A\}$ for any $\phi\in X^*$.
\end{enumerate}
\end{thm}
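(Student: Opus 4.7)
The plan is to establish (1) first, as both (2) and (3) follow from it in a straightforward manner. The Krein-Milman theorem is the main obstacle, requiring the construction of extreme points via a Zorn's lemma argument combined with Hahn-Banach separation; once it is in hand, Milman's theorem is a transparent combination of (1) with a standalone lemma on extreme points of closed convex hulls, and Bauer's principle is an immediate corollary.

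For (1), I would introduce the auxiliary notion of an \emph{extreme face}: a non-empty closed subset $F\subset A$ such that whenever $\tfrac{1}{2}y_0+\tfrac{1}{2}y_1\in F$ with $y_0,y_1\in A$, both endpoints $y_0,y_1$ lie in $F$. Two stability properties would first be verified: (a) arbitrary non-empty intersections of extreme faces of $A$ are extreme faces; (b) for any $\phi\in X^\ast$ and any extreme face $F$, the sub-face $\{x\in F:\phi(x)=\min_F \phi\}$ is again an extreme face. Applying Zorn's lemma to the collection of extreme faces ordered by reverse inclusion, where compactness guarantees that every decreasing chain has non-empty intersection, produces a minimal extreme face $F_\ast$. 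Local convexity enters here decisively: if $F_\ast$ contained two distinct points $x_1\neq x_2$, Hahn-Banach would furnish $\phi\in X^\ast$ with $\phi(x_1)\neq \phi(x_2)$, and then $\{x\in F_\ast : \phi(x)=\min_{F_\ast}\phi\}$ would be a strictly smaller extreme face, contradicting minimality. Hence $F_\ast=\{x_\ast\}$ for some $x_\ast\in \Ext{A}$. To conclude $A=\overline{conv}(\Ext{A})$, set $C:=\overline{conv}(\Ext{A})\subset A$ and argue by contradiction: if some $x_0\in A\setminus C$ existed, Hahn-Banach would yield $\phi\in X^\ast$ with $\phi(x_0)<\inf_{x\in C}\phi(x)$, but then the face on which $\phi$ attains its minimum over $A$ contains, by the Zorn construction, an extreme point of $A$, giving $\inf_A \phi \leq \phi(x_0)<\inf_{\Ext{A}}\phi(x)\le\inf_A\phi$, a contradiction.

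Both (2) and (3) then follow quickly. For Milman's theorem (2), the direction $\overline{conv}(B)=A\Rightarrow\Ext{A}\subset\bar{B}$ rests on the sublemma that every extreme point $x$ of $\overline{conv}(K)$, with $K$ compact, lies in $\bar{K}$. Supposing otherwise, a closed convex neighborhood $U$ of $x$ avoids $\bar{K}$; one then covers $\bar{K}$ by finitely many closed convex sets $K_1,\ldots,K_m$ of small diameter missing $U$, writes $A=\overline{conv}(\bigcup_i K_i)$, and expresses $x$ as a convex combination of points drawn from $\overline{conv}(K_i)$ for these $i$, producing a non-trivial convex representation that contradicts extremality. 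Applied to $K=\bar{B}$, this gives $\Ext{A}=\Ext{\overline{conv}(B)}\subset\bar{B}$. The converse direction is immediate from (1): $\Ext{A}\subset\bar{B}$ implies $A=\overline{conv}(\Ext{A})\subset\overline{conv}(\bar{B})=\overline{conv}(B)\subset A$, so equality holds. Finally, Bauer's principle (3) is obtained by applying (1) to the non-empty compact convex set $F_\phi:=\{x\in A:\phi(x)=\min_A\phi\}$, which is plainly an extreme face of $A$; any extreme point of $F_\phi$ is therefore also an extreme point of $A$, so $\min_{\Ext{A}}\phi\leq \min_{\Ext{F_\phi}}\phi=\min_{F_\phi}\phi=\min_A\phi$, and the reverse inequality is trivial.
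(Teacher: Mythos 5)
The paper does not prove Theorem~\ref{thm:KM}: it simply quotes it as a known package of results, citing Holmes (and Aliprantis--Border) for Krein--Milman, Milman's partial converse, and Bauer's minimum principle. So there is no ``paper's route'' to compare against; what you have written is the standard textbook proof found in those references, and it is essentially correct. Your Krein--Milman argument via extreme faces, Zorn's lemma under reverse inclusion with compactness supplying the finite intersection property, and Hahn--Banach separation to first collapse a minimal face to a singleton and then to rule out $A\setminus \overline{conv}(\Ext{A})\neq\emptyset$, is exactly the classical proof. Your deduction of Bauer from Krein--Milman applied to the minimizing face $F_\phi$, together with the observation that extreme points of an extreme face are extreme in $A$, is also standard and correct.

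One place you are being slightly loose is in the sublemma behind Milman's partial converse. Saying the covering ``produces a non-trivial convex representation that contradicts extremality'' is not quite the right mechanism: the representation $x=\sum_i\lambda_i x_i$ with $x_i\in\overline{conv}(K_i)$ does not itself contradict extremality; rather, extremality forces $x=x_i$ for some $i$ with $\lambda_i>0$, hence $x\in\overline{conv}(K_i)$, and one then needs the $K_i$ small enough (take $K_i=\bar K\cap(y_i+V)$ for a convex balanced neighborhood $V$ of $0$) so that $\overline{conv}(K_i)\subset y_i+V\subset \bar K+2V$, and finally shrink $V$ to conclude $x\in\bar K$, contradicting the assumption $x\notin\bar K$. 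This is standard but worth stating carefully, and it also silently uses that $conv(C_1\cup\cdots\cup C_m)$ is already compact (hence closed) when each $C_i$ is compact and convex. Your converse direction of Milman, using $\overline{conv}(\bar B)=\overline{conv}(B)$ together with part (1), is fine.
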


The following proposition can be considered as an extension of \ref{Bauer} above to non-linear functions on $\M_b$ of the form $\Phi_{u^*,v^*}(\xi)$ .
\begin{prop}\label{thm:infExtreme} Assume $A\neq \emptyset$ is a $w*$-compact convex set in $\M_b$, and that $u^*(\xi),v^*(\xi)\geq 0$ for all $\xi\in A$. Define
$\lambda:=\inf_{A} \Phi_{u^*,v^*}(\xi)$ and $\lambda_{\mathfrak{E}}:=\inf_{\Ext{A}} \Phi_{u^*,v^*}(\xi) $. Then 
\[ \lambda=\lambda_{\mathfrak{E}}\,. \]
\end{prop}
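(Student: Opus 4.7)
The direction $\lambda\le \lambda_{\mathfrak{E}}$ is immediate since $\Ext{A}\subset A$. For the reverse, the plan is to linearize the quotient: for any $\mu\ge 0$ and any $\xi\in A$, the pointwise inequality $\Phi_{u^*,v^*}(\xi)\ge \mu$ is equivalent to the linear inequality $(u^*-\mu v^*)(\xi)\ge 0$. Indeed, when $v^*(\xi)>0$ this is just division; when $v^*(\xi)=0$ one has $\Phi_{u^*,v^*}(\xi)=+\infty$ while the linear side reduces to $u^*(\xi)\ge 0$, which holds by hypothesis. This equivalence is the key move that turns a non-linear optimization into one amenable to Bauer's theorem.

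Next I would dispose of the trivial boundary cases. If $\lambda_{\mathfrak{E}}=0$ there is nothing to prove, since $\lambda\ge 0$ by non-negativity of $\Phi_{u^*,v^*}$ on $A$. If $\lambda_{\mathfrak{E}}=+\infty$, i.e. $v^*(\eta)=0$ for every $\eta\in \Ext{A}$, then because $v^*$ is $w^*$-continuous and linear and $A=\overline{\mathrm{conv}}(\Ext{A})$ by Krein--Milman (Theorem \ref{thm:KM}(1)), we obtain $v^*\equiv 0$ on $A$, hence $\Phi_{u^*,v^*}\equiv +\infty$ on $A$ and $\lambda=+\infty=\lambda_{\mathfrak{E}}$ as well.

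Assume henceforth $\lambda_{\mathfrak{E}}\in(0,+\infty)$ and consider the linear functional $\psi:= u^*-\lambda_{\mathfrak{E}}\,v^*$ on $\M_b$. Because $u,v\in C_c(\R)$ (cf. the specifications \eqref{dfn:specifications} and Remark \ref{remk:uvh_prop}), $\psi$ is continuous in the $w^*$-topology, i.e. $\psi\in X^*$ where $X=\M_b$ is equipped with the $w^*$-topology inherited from duality with $C_c(\R)$. The definition of $\lambda_{\mathfrak{E}}$, combined with the equivalence of the first paragraph, yields $\psi(\eta)\ge 0$ for every $\eta\in \Ext{A}$. Bauer's theorem (Theorem \ref{thm:KM}(\ref{Bauer})), applied to the $w^*$-compact convex set $A$ and to $\psi\in X^*$, then delivers
\[
\min_{\xi\in A}\psi(\xi)=\min_{\eta\in \Ext{A}}\psi(\eta)\ge 0\,.
\]
Reading this back through the linearization equivalence of the first paragraph gives $\Phi_{u^*,v^*}(\xi)\ge \lambda_{\mathfrak{E}}$ for every $\xi\in A$, whence $\lambda\ge\lambda_{\mathfrak{E}}$.

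The only delicate point is the careful bookkeeping for the degenerate case $v^*(\xi)=0$, where the non-linear quotient jumps to $+\infty$ and must be reconciled with the linear formulation; once this is handled and the edge case $\lambda_{\mathfrak{E}}\in\{0,+\infty\}$ is split off as above, the argument is the standard linearization trick $\Phi\ge\mu \iff u^*-\mu v^*\ge 0$ followed by Bauer's theorem. The structural ingredient enabling the application of Bauer is the $w^*$-continuity of $\psi$, which is where the assumption that $u$ and $v$ lie in $C_c(\R)$ enters essentially.
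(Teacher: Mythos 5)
Your proof is correct and uses the same central ingredient as the paper — the linear functional $u^*-\lambda_{\mathfrak{E}}v^*$ together with Bauer's theorem on the $w^*$-compact convex set $A$; the only difference is that you argue directly (linearize the inequality $\Phi\ge\lambda_{\mathfrak{E}}$, push it to extreme points via Bauer, pull it back), whereas the paper runs the same comparison as a proof by contradiction. Your explicit "$\Phi\ge\mu\iff u^*-\mu v^*\ge 0$ when $u^*,v^*\ge 0$" equivalence and your separate treatment of the edge cases $\lambda_{\mathfrak{E}}\in\{0,+\infty\}$ are a slightly cleaner bookkeeping of the same argument.
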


\begin{proof}
Clearly $\lambda \leq \lambda_{\mathfrak{E}}$, therefore it is enough to prove $\lambda_{\mathfrak{E}}\leq \lambda$. If $\lambda =\infty$ there is nothing to prove. If $\lambda_{\mathfrak{E}}=\infty$ then by Bauer's theorem (see Theorem \ref{thm:KM}) $0=\max_{\xi\in \Ext{A}}v^*(\xi)=\max_{\xi\in A}v^*(\xi)$, whence $\lambda=\infty$. We will thus assume $\lambda_{\mathfrak{E}}<\infty$ . Assume by contradiction that $\lambda<\lambda_{\mathfrak{E}}  $. We define $\phi_{\lambda_{\mathfrak{E}}}(\xi)$ to be the continuous linear functional defined by
\[ \phi_{\lambda_{\mathfrak{E}}}(\xi)=u^*(\xi)-\lambda_{\mathfrak{E}}v^*(\xi) \,.\]
By definition of $\lambda$, for any $\epsilon>0$ there exists $\xi_{\epsilon}\in A$ such that $\Phi_{u^*,v^*}(\xi_{\epsilon})<\lambda +\epsilon$.  For sufficiently small $\epsilon$ we can assume $\Phi_{u^*,v^*}(\xi_{\epsilon})<\lambda_{\mathfrak{E}}$ whence $\min_{\xi\in A}\phi_{\lambda_{\mathfrak{E}}}(\xi)\leq \phi_{\lambda_{\mathfrak{E}}}(\xi_{\epsilon})<0$. By Bauer's theorem $\min_{A}\phi_{\lambda_{\mathfrak{E}}}(\xi)=\min_{\Ext{A}}\phi_{\lambda_{\mathfrak{E}}}(\xi)$, hence there exists $\xi_*\in \Ext{A}$ such that $u^*(\xi_*)-\lambda_{\mathfrak{E}} v^*(\xi_*)<0$; since $u^*$  is non negative on $A$ and $\lambda_{\mathfrak{E}}> 0$ (since by assumption $\lambda_{\mathfrak{E}}>\lambda \geq 0$), it follows that $v^*(\xi_*)>0$. This implies that $\Phi_{u^*,v^*}(\xi_*)<\lambda_{\mathfrak{E}}$, which is a contradiction to the definition of $\lambda_{\mathfrak{E}}$. 
\qedhere
\end{proof}

\begin{prop}\label{prop:infExtreme} Let $A\neq \emptyset$ be a $w*$-compact convex set in $\M_b$, and let $B\subset A$ be a closed subset of $A$ such that $A=\overline{conv}B$. Then 
\begin{enumerate}
	\item $\Ext{A}\subset \Ext{B}$.
	\item $\inf_{B} \Phi_{u^*,v^*}(\xi)=\inf_{\Ext{B}} \Phi_{u^*,v^*}(\xi)=\inf_{\Ext{A}} \Phi_{u^*,v^*}(\xi)$.
\end{enumerate}
\end{prop}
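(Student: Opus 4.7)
The plan for (1) is to exploit Milman's theorem (part 2 of Theorem \ref{thm:KM}) together with the hypothesis that $B$ is closed. Since $\overline{conv}(B) = A$, Milman's theorem gives $\Ext{A} \subset \bar{B}$, and closedness of $B$ upgrades this to $\Ext{A} \subset B$. From here the inclusion $\Ext{A} \subset \Ext{B}$ is immediate: if $x \in \Ext{A}$ (so in particular $x \in B$) and $x = \tfrac{1}{2} y_0 + \tfrac{1}{2} y_1$ with $y_0, y_1 \in B$, then since $B \subset A$ the extremality of $x$ in $A$ forces $x = y_0 = y_1$, hence $x \in \Ext{B}$.

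For (2), the plan is to sandwich the three infima into an equality chain by combining (1), the trivial inclusions, and Proposition \ref{thm:infExtreme}. First, since $\Ext{B} \subset B$ one has $\inf_B \Phi_{u^*,v^*} \le \inf_{\Ext{B}} \Phi_{u^*,v^*}$. Next, by part (1) $\Ext{A} \subset \Ext{B}$, hence $\inf_{\Ext{B}} \Phi_{u^*,v^*} \le \inf_{\Ext{A}} \Phi_{u^*,v^*}$. Now I would invoke Proposition \ref{thm:infExtreme}, applied to the $w*$-compact convex set $A$, to get $\inf_{\Ext{A}} \Phi_{u^*,v^*} = \inf_{A} \Phi_{u^*,v^*}$. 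Finally, the inclusion $B \subset A$ yields $\inf_A \Phi_{u^*,v^*} \le \inf_B \Phi_{u^*,v^*}$, closing the loop and forcing all four infima to coincide; in particular the required two equalities in (2) follow.

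Two small points to watch. First, Proposition \ref{thm:infExtreme} requires $u^*, v^* \geq 0$ on the set over which we infimize, and this is inherited from $A$ by $B$ and by the subsets of extreme points, so its application is legitimate. Second, one should verify that (1) does not implicitly need convexity of $B$: the argument given above uses only that $B$ is a subset of $A$ and that $\Ext{A} \subset B$, so no convexity of $B$ is required. There is no real obstacle here; the content of the proposition is essentially a bookkeeping consequence of Milman's theorem and Proposition \ref{thm:infExtreme}, and the only subtle step is recognizing that closedness of $B$ is exactly what is needed to promote Milman's $\Ext{A}\subset \bar{B}$ to $\Ext{A}\subset B$.
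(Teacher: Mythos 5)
Your proof is correct and follows essentially the same route as the paper: Milman's theorem gives $\Ext{A}\subset\bar B=B$, which combined with $B\subset A$ yields part (1), and part (2) closes the same cycle of inequalities $\inf_B\le\inf_{\Ext{B}}\le\inf_{\Ext{A}}=\inf_A\le\inf_B$ using Proposition~\ref{thm:infExtreme} for the middle equality. The remarks about non-negativity of $u^*,v^*$ and about convexity of $B$ not being needed are apt and do not change the argument.
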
   

\begin{proof}
 By Milman's theorem (2 of Theorem \ref{thm:KM}) $\Ext{A}\subset \bar{B}=B$, therefore $\Ext{A}\cap B=\Ext{A}$. Since $B\subset A$ it holds that $\Ext{A}\cap B \subset \Ext{B}$. We can thus conclude that $\Ext{A}\subset \Ext{B}$.
	The second statement is a straightforward consequence of the following inequalities
\eql{\label{ext_pts_ineq} &\inf_{A} \Phi_{u^*,v^*}(\xi)\stackrel{Prop. \ref{thm:infExtreme}}{=}
\inf_{\Ext{A}} \Phi_{u^*,v^*}(\xi)
\stackrel{\Ext{A}\subset \Ext{B}}{\geq} \inf_{\Ext{B}} \Phi_{u^*,v^*}(\xi)\\ \nonumber&\geq 
\inf_{B} \Phi_{u^*,v^*}(\xi)\geq \inf_{A} \Phi_{u^*,v^*}(\xi)\,.}
\qedhere
\end{proof}

 \subsubsection{A digression into the theory of cones}

We make a brief digression into the general theory of cones. The goal of this digression is to present another interpretation for our results; yet, we stress that we will not use it for the derivation of the main results, but rather consider it merely as an alternative form of stating results. 
We refer the reader to \cite{Phe,Bar, Bou} for further details about the theory of cones and their extreme rays.  The cone viewpoint is motivated by the simple fact that the function $\Phi_{u^*,v^*}(\xi)$ is $0$-homogeneous in $\xi$, hence its value is the same for all members of the class $\{ s \xi:\,s>0\}$. 

Assume $X$ is a locally convex Hausdorff t.v.s. A set $K\subset X$ is called a cone if $0 \in K$ and $\lambda x\in K$ for every $\lambda\geq 0$ and every $x\in K$. $K$ is a convex cone if in addition for any two points $x,y\in K$, and any two numbers $\alpha,\beta\geq 0$, it holds that $z=\alpha x+\beta y\in K$. We will only consider proper cones, i.e. $K\cap (-K)=\{0\}$. 

Given points $x_i\in X$ and $\alpha_i\in \R_+$, where $i=1,...,m$, the point $x=\sum_{i=1}^m\alpha_ix_i$ is called a conic combination of the points $x_1,..,x_m$. The set $co(A)$ of all conic combinations of points from a set $A\subset X$ is called the conical-hull of the set $A$. It can be identified as the smallest cone which contains $A$. A ray is defined as the conic-hull of a single non-zero point, i.e. it is a set of the form $\gls{conic_hull}=\{\lambda x\,:\, \lambda\geq 0\}$ where $x\in K\setminus \{0\}$.

%
%We can define on $X$ an equivalence relation $\sim$ as follows : $x\sim y$ if there is $\lambda>0$ such that $x=\lambda y$. We denote by $X/ \sim$ the quotient space, and by $\pi:X\to X/ \sim$ the quotient mapping ; it is equipped with the strongest topology such that the quotient mapping $\pi$ is continuous. 
%\begin{defn} A subset $A$ of $X$ is said to be $\pi$-convex if $\pi(A)$ is of the form $\pi(conv(A_1))$ for some subset $A_1$ of $X$. 
%\end{defn}
%
%\begin{defn} A subset $A$ of $X$ is said to be $\pi$-compact if the image $\pi(A)$ under the quotient map $\pi$ is compact in the quotient space $X/ \sim$ . 
%\end{defn}
%\noindent This terminology is picked from \ref{thm:ConesClosed}, where the following theorem is also proved.
%\begin{thm}[\cite{Kom}]\label{thm:ConesClosed} If $A$ is a $\pi$-convex subset of $X$, such that the set $X\setminus \{0\}$ is $\pi$-compact, then the conical hull of $X$ is closed. 
%\end{thm}

We denote by $\text{Rays}(K)$ the set of rays of $K$. A ray $R$ of $K$ is said to be an extreme ray of $K$ if whenever $u\in R$ and $u=\half(x+y)$, where $x,y\in K$, then $x,y\in R$. We define $\gls{Ext_ray}$ to be the set of extreme rays of $K$. 

\begin{defn}[Bases]
A set $B\subset K$ is called a base of $K$ if $0\notin B$ and for every point $u\in K$, $u\neq 0$, there is a unique representation $u=\lambda v$, with $v\in B$ and $\lambda>0$. 
\end{defn}
\noindent We refer the reader to \cite{Bar} (lemma 2.10 on p.116) for the proof of the following theorem :
\begin{thm}\label{thm:ClosedCone}  If $A\subset X$ is a compact convex set such that $0\notin A$, then $K=co(A)$ is a closed convex cone.
\end{thm}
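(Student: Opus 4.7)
The plan is to verify the three properties of $K=co(A)$ in turn. That $0\in K$ holds by definition (take any $a\in A$ with coefficient $0$). The cone property $\mu K\subset K$ for every $\mu\geq 0$ is immediate from the description of elements of $K$ as $\lambda a$ with $\lambda\geq 0$, $a\in A$. For convexity, given $x=\lambda_1 a_1$ and $y=\lambda_2 a_2$ in $K$ and $t\in[0,1]$, set $\mu:=t\lambda_1+(1-t)\lambda_2$; if $\mu=0$ then $tx+(1-t)y=0\in K$, and otherwise $tx+(1-t)y=\mu\bigl(\tfrac{t\lambda_1}{\mu}a_1+\tfrac{(1-t)\lambda_2}{\mu}a_2\bigr)\in \mu A\subset K$ by convexity of $A$.

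The main content is closedness, which is where the hypothesis $0\notin A$ plays its decisive role. Since $X$ is locally convex Hausdorff, $A$ is compact and convex, and $\{0\}$ is compact convex and disjoint from $A$, the Hahn--Banach separation theorem produces a continuous linear functional $\phi\in X^*$ and a constant $c>0$ such that $\phi(a)\geq c$ for every $a\in A$. This bound on $\phi|_A$ is the key quantitative input.

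Now let $(x_n)\subset K$ converge to some $x\in X$, and write $x_n=\lambda_n a_n$ with $\lambda_n\geq 0$, $a_n\in A$. Applying $\phi$ gives $\phi(x_n)=\lambda_n\phi(a_n)\geq c\lambda_n$, and since $\phi(x_n)\to \phi(x)$ the sequence $(\phi(x_n))$ is bounded, which forces $(\lambda_n)$ to be bounded in $[0,M]$ for some $M$. By compactness of $A$ we may pass to a subsequence along which $a_{n_k}\to a\in A$, and by boundedness of $(\lambda_{n_k})$ we may pass to a further subsequence along which $\lambda_{n_k}\to \lambda\in[0,M]$. Continuity of scalar multiplication in the t.v.s.\ $X$ then yields $x_{n_k}=\lambda_{n_k}a_{n_k}\to \lambda a$, so $x=\lambda a\in K$, proving closedness.

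The only subtle step is the boundedness of $(\lambda_n)$, and this is precisely the step that uses $0\notin A$ via the separating functional. Everything else is routine manipulation inside a t.v.s.\ together with the compactness of $A$, so I would expect the write-up to be short, with the Hahn--Banach separation as the one non-trivial ingredient.
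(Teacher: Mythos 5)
The paper does not supply its own proof of this lemma; it is cited directly to Barvinok (Lemma 2.10 on p.\ 116 of \cite{Bar}), so there is no in-paper argument to compare against. That said, your proof is essentially correct and captures the standard argument: the separating functional obtained from Hahn--Banach (applied to the disjoint compact convex sets $\{0\}$ and $A$) is exactly what controls the scaling coefficients and makes the compactness of $A$ usable. The verification that $co(A)$ coincides with $\{\lambda a : \lambda \geq 0,\, a \in A\}$ when $A$ is convex, and the convexity check via $\mu = t\lambda_1 + (1-t)\lambda_2$, are both fine.

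One technical point should be fixed before this could stand as a proof in the generality stated: in an arbitrary locally convex Hausdorff t.v.s.\ $X$, closedness cannot in general be tested with sequences, since such spaces need not be sequential (the weak topology on an infinite-dimensional Banach space is a standard example). The argument should be run with nets (or filters): take a net $x_\alpha = \lambda_\alpha a_\alpha$ converging to $x$; your bound $\phi(x_\alpha) \geq c\lambda_\alpha$ shows $(\lambda_\alpha)$ is eventually bounded; compactness of $A$ and of a closed bounded interval in $\mathbb{R}$ give a convergent subnet of $(a_\alpha)$ and then a further subnet of $(\lambda_\alpha)$; continuity of scalar multiplication then identifies $x = \lambda a \in K$. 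This is a word-for-word translation of your sequential argument, so the idea is right — only the framing needs tightening to match the hypotheses.
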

\begin{defn}[\cite{Phe,Bou}] A non-empty set $C$ of a closed convex cone $K$ is called a cap of $K$ provided $C$ is compact, convex and $K\setminus C$ is convex. 
\end{defn}
We relate this to our previous discussion. We will be interested in a cone $K\subset \M_b(I)$ defined by $K=co(A)$, where $A=\overline{conv(B_1)}$ for a subset of probability measures $B_1\subset \P$, and the closure is w.r.t. the $w*$-topology. 
We take $A$ as a base, and the sets $C_n=[0,n]\cdot A$ $\,(n\in \Nbb)$ are caps (since $K\setminus C$ is convex due to that $K$ is defined as $co(A)$, and for $\xi\in K\setminus C_n$ the total mass is greater than n). Evidently $K=\bigcup_{n\in \Nbb}C_n$; in such a situation the following cone version of the Krein-Milman theorem holds \cite{Phe}:
\begin{thm} Suppose that $K$ is a closed convex cone of $X$, and $K$ is the union of its caps, then $K$ is the closed convex hull of $\cup \Exr{K}$.
\end{thm}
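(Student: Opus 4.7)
The plan is to unite the Krein-Milman theorem, applied cap-by-cap, with the fact that non-zero extreme points of caps generate extreme rays of $K$. Since by hypothesis $K=\bigcup_{\alpha} C_\alpha$ with each $C_\alpha$ a cap (so $C_\alpha$ is compact and convex and $K\setminus C_\alpha$ is convex), the Krein-Milman theorem (Theorem \ref{thm:KM}, item 1) applied to each $C_\alpha$ gives $C_\alpha=\overline{\mathrm{conv}}(\Ext{C_\alpha})$. Hence it will suffice to establish, for every cap, the inclusion $\Ext{C_\alpha}\subset \overline{\mathrm{conv}}(\cup\,\Exr{K})$, since taking the union over $\alpha$ and combining with the trivial reverse inclusion (as $K$ is closed, convex, and contains every extreme ray) yields the theorem.

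The crux is the following lemma: \emph{if $x\in\Ext{C_\alpha}$ with $x\neq 0$, then the ray $co(x)$ is an extreme ray of $K$.} I would prove this by contradiction. Assume $co(x)$ is not extreme in $K$; then there exist $u,v\in K$ with $u+v=2x$ and $u\notin co(x)$. Since $K$ is proper, $u\neq 0$ and $u$ is linearly independent of $x$, and consequently $v=2x-u\notin co(x)$ as well. Restrict the analysis to the two-dimensional plane $P=\mathrm{span}(u,x)$ and consider the segment $[u,v]\subset K\cap P$ passing through $x$ at its midpoint. By the cap hypothesis, $(K\setminus C_\alpha)\cap P$ is convex, so $u$ and $v$ cannot both lie outside $C_\alpha$ (else the midpoint $x$ would lie outside $C_\alpha$); and if both lie inside $C_\alpha$, extremality of $x$ in $C_\alpha$ forces $u=v=x$, contradicting $u\notin co(x)$. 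The remaining and delicate case is when exactly one of $u,v$ lies outside $C_\alpha$. Here I would use the closedness of $C_\alpha$ to locate the exit parameter of the segment from $C_\alpha$ and thereby produce a non-degenerate sub-segment of $C_\alpha$ containing $x$ in its relative interior, contradicting $x\in\Ext{C_\alpha}$.

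Once this lemma is in place, the conclusion is immediate: every $x\in\Ext{C_\alpha}$ is either the origin (which lies on every ray, hence inside $\overline{\mathrm{conv}}(\cup\,\Exr{K})$ as soon as that set is non-empty, a case that can be handled separately if $K=\{0\}$) or lies on an extreme ray of $K$, so $\Ext{C_\alpha}\subset\overline{\mathrm{conv}}(\cup\,\Exr{K})$. Passing through Krein-Milman then gives $C_\alpha\subset\overline{\mathrm{conv}}(\cup\,\Exr{K})$, and taking the union over $\alpha$ finishes the proof.

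The main obstacle I anticipate is the borderline configuration in the lemma in which the segment $[u,v]$ exits the cap exactly at $x$—that is, when $x$ is a boundary point of $C_\alpha$ in the direction of the element among $u,v$ that is outside the cap, so that no non-degenerate enlargement of a sub-segment through $x$ within $[u,v]$ is available. In this scenario the contradiction with extremality cannot be extracted from the line $[u,v]$ alone; instead I expect to pass genuinely to the two-dimensional section $P$ and combine the convexity of both $K\cap P$ and $(K\setminus C_\alpha)\cap P$ (inherited from the cap axiom) to perturb the segment transversally within $P$, obtaining a proper line segment in $C_\alpha\cap P$ through $x$ in its relative interior. The cap axiom, through the convexity of its complement in $K$, is precisely what forces the local geometry around $x$ to be rigid enough for this transversal perturbation to remain inside $C_\alpha$.
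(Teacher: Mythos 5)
The paper quotes this theorem from Phelps without proof, so there is no ``paper route'' to compare against; your overall plan---Krein--Milman applied cap-by-cap, together with the lemma that non-zero extreme points of a cap span extreme rays of $K$---is the standard argument and would close the theorem if the lemma were established. Your first two case-analysis branches are correct, and you are right that the remaining sub-case, where $[u,v]$ exits the cap exactly at $x$, is where the difficulty lies. The gap is in how you propose to finish it: you aim to produce a non-degenerate segment in $C\cap P$ with $x$ in its relative interior, contradicting $x\in\Ext{C}$. But $x\in\Ext{C}$ is a standing \emph{hypothesis}, not a claim you are refuting inside the reductio; any segment in $C\cap P$ is a segment in $C$, so $x$ is automatically an extreme point of $C\cap P$ as well, and no such segment exists. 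The transversal perturbation you describe is aimed at a false conclusion, so the contradiction must land somewhere else.

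It should land on $u\notin co(x)$, via the structure of planar caps. The observation you are missing is that $C\cap P$ is itself a cap of the two-dimensional cone $W:=K\cap P$ (compactness, convexity, and convexity of $W\setminus(C\cap P)=(K\setminus C)\cap P$ all pass to the section), and that a cap of a proper planar wedge $W$ is a triangle with one vertex at the origin and the other two, $p_1$ and $p_2$, on the boundary rays $R_1,R_2$ of $W$. (This follows by separating the disjoint convex sets $C\cap P$ and $(K\setminus C)\cap P$ by a line $\ell$: the separation gives $W\cap\mathring{\ell^-}\subset C\cap P$, closedness and two-dimensionality of $C\cap P$ then force $C\cap P=W\cap\ell^-$, and compactness forces $0\in\mathring{\ell^-}$, so $\ell$ is a chord cutting both edges.) Consequently $\Ext{C\cap P}=\{0,p_1,p_2\}$, so $x$ lies on some $R_i$; but $R_i$ is an extreme ray of $W$, and $x=\tfrac12(u+v)$ with $u,v\in W$ then forces $u,v\in R_i=co(x)$, contradicting $u\notin co(x)$. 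This is exactly where the convexity of the cap's complement earns its keep.
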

It is natural to expect that extreme rays will be closely related to the extreme points of the generating set. Indeed
such a relation is given by the following theorem \cite{Phe}:
\begin{prop}\label{prop:exRaysPoint}
 Let $K$ be a convex cone with a base $A$, and let $u\in K\setminus \{0\}$. Then $u$ spans an extreme ray of $K$ if and only if $u=\lambda v$ where $\lambda>0$ and $v$ is an extreme point of $A$. Thus $\Ext{A}=A\cap \prnt{\cup \Exr{K}}$. 
\end{prop}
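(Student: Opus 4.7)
The plan is to exploit the base property: by definition, every $u\in K\setminus\{0\}$ admits a unique representation $u=\lambda v$ with $\lambda>0$ and $v\in A$, so that $co(u)=co(v)$ and the rays of $K$ are in bijection with points of $A$. The key observation is that when $A$ is convex (which is the case in our applications, where $A=\overline{conv}(B_1)$), any convex combination $\alpha_0 v_0+\alpha_1 v_1$ of elements of $A$ itself belongs to $A$, so the base-uniqueness forces such combinations to coincide with the unique base-representative of the resulting point in $K$. Both implications are then driven by this single rigidity mechanism.

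For the implication $(\Leftarrow)$, I would start with $v\in \Ext(A)$ and $u=\lambda v$, and take an arbitrary midpoint decomposition $w=\tfrac12(x+y)$ with $w\in co(v)$ and $x,y\in K$; the goal is to show $x,y\in co(v)$. Writing $w=\mu v$ and, assuming both $x,y\neq 0$, expressing them in base form $x=\lambda_x v_x$, $y=\lambda_y v_y$ with $v_x,v_y\in A$, I would then combine to get
\[
\mu v=\tfrac{\lambda_x}{2}v_x+\tfrac{\lambda_y}{2}v_y=\tfrac{\lambda_x+\lambda_y}{2}\Bigl(\tfrac{\lambda_x}{\lambda_x+\lambda_y}v_x+\tfrac{\lambda_y}{\lambda_x+\lambda_y}v_y\Bigr),
\]
where the inner bracket lies in $A$ by convexity. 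Uniqueness of the base representation of $\mu v\in K$ then forces both $\mu=\tfrac{\lambda_x+\lambda_y}{2}$ and $v$ itself to equal this convex combination of $v_x,v_y$. Extremality of $v$ in $A$ immediately yields $v_x=v_y=v$, whence $x,y\in co(v)$, as required. The degenerate case where $x=0$ or $y=0$ is handled directly by properness of the cone.

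For the implication $(\Rightarrow)$, I would assume $u=\lambda v$ spans an extreme ray and pick any convex decomposition $v=\tfrac12(v_0+v_1)$ with $v_0,v_1\in A$. Interpreting both sides inside $K$ and applying the extreme-ray property gives $v_0,v_1\in co(v)$, so $v_0=\mu_0 v$ and $v_1=\mu_1 v$ for some $\mu_0,\mu_1\geq 0$. Since $v_0,v_1,v\in A$, the base-uniqueness forces $\mu_0=\mu_1=1$, hence $v_0=v_1=v$ and $v\in \Ext(A)$. The concluding identity $\Ext(A)=A\cap(\cup\,\Exr(K))$ follows at once by combining the two directions: a point $v\in A$ lies on an extreme ray iff it spans one (since any extreme ray containing a nonzero base element must coincide with the ray generated by that element), which by the equivalence just proved is equivalent to $v\in \Ext(A)$.

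The only subtle point — and the one I would be most careful about — is the convexity of $A$, which is tacitly used to re-express $\tfrac{\lambda_x}{2}v_x+\tfrac{\lambda_y}{2}v_y$ as a positive multiple of an element of $A$ so that base-uniqueness can be invoked. In the abstract theory of cones with a (not necessarily convex) base one must either take $A$ convex or reprove this step via the structure of the generating set; in our context, where $A=\overline{conv}(B_1)$ is manifestly convex and $w^*$-compact, no such difficulty arises and the argument is self-contained.
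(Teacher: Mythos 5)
Your proof is correct, and it is essentially the standard argument; the paper itself simply cites \cite{Phe} for this proposition without reproducing a proof, so there is no alternate route in the text to compare against. Both directions are driven exactly as you describe by the uniqueness of the base representation, and you correctly isolate the one hypothesis that the paper's bare definition of a base does not make explicit but which the argument needs, namely convexity of $A$ (used to place $\frac{\lambda_x}{\lambda_x+\lambda_y}v_x+\frac{\lambda_y}{\lambda_x+\lambda_y}v_y$ back in $A$ so that base-uniqueness applies); in the paper's intended application $A=\overline{conv}(B_1)$ is convex, and in Phelps a base is convex by definition, so the statement is fine as used. Your handling of the degenerate cases via properness of the cone is also the right move.
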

\begin{remk}\label{con:app}
The cone we have previously defined was constructed with a base $A=\overline{conv(B_1)}$. In Proposition  \ref{prop:infExtreme} we showed $\Ext{A}\subset \Ext{B_1}$, but then by Proposition  \ref{prop:exRaysPoint} we conclude that $\Ext{A}=A\cap \prnt{\cup \Exr{K}} \subset \Ext{B_1}$; A is a base hence $A\cap \prnt{\cup \Exr{K}}$ contains at most one point from each ray; we can thus conclude that the points $\Ext{B_1}$ belong to distinct rays. We can conclude by the cone version of the Krein-Milman theorem that $K=\bigcup_n C_n=\overline{co}(\cup \Exr{K})=\overline{co}(\Ext{A})\subset \overline{co}(\Ext{B_1})\subset K$, whence $K=\overline{co}(\Ext{B_1})$. 
\end{remk}

\
 
\subsection{Classification of extreme points}\label{subsec:Classification}

The definitions given in this sub-section are motivated by the identity \eqref{const_alpha_def_equiv2} which we previously obtained for $\CC$. 
\bigskip

Assume $h\in C_c(\R)$ and define 
$$\gls{Interval_h}:= conv(supp(h))\,.$$
%Being the convex-hull of a compact set, $I_{\bar{h}}$ is a compact convex set. Given a fixed $c\in \R$ we also define $h\in \Cinf(I_{\bar{h}})$ by 
%$$h(x):=\bar{h}|_{I_{\bar{h}}}(x)-c\,.$$ 
 Recall that the set $\Pknd(I_{h})$ stands for the set of probability measures $\xi\in \Pknd(\R)$ which are supported inside $I_{h}$, and $\Pknd^M(I_{h})\subset \Pknd(I_{h})$ stands for the subset of model-space probability measures as defined in Definition \ref{dfn:ModelMeasures}. According to Theorem \ref{thm:NoSingularContinuous} (subject to the proviso that $D<l_{\delta}$ when $\delta>0$ and $K<0$)  the class $\Pknd(I_{h})$ admits a partition into 
\begin{itemize}
	\item $\Pknd^{ac}(I_h)$ the absolutely continuous measures $\xi\in \Pknd(I_h)$.
	\item $\Pknd^{s}(I_h)$ the singular measures $\xi\in \Pknd(I_h)$, which must be of the form $\delta_{x_0}$ for some $x_0\in I_h$.
\end{itemize}
We may thus write $\Pknd(I_h)=\Pknd^{ac}(I_h)\coprod\Pknd^{s}(I_h)$.

\begin{defn}[The classes $\Pkndfs(I_h)$ and $\Pkndffs(I_h)$]\label{defn:Pkndf_class} We define 
\begin{itemize}
	\item $\gls{Pknd_h}:=\cprnt{ \xi\in \Pknds(I_h):\,\,h^*(\xi)=0}$;
	\item $\gls{hatPknd_h}:=\cprnt{ \xi\in \Pkndfs(I_h):\,\,\int_{-\infty}^x hd\xi\neq 0 \,\,\forall x\in int(supp(\xi))}$;
\end{itemize}
for $*=$ `ac', `s',`M', or `\,\,\,\,' (the latter stands for no superscript).  
\end{defn}

\begin{prop}\label{prop:PkndCpct} $\Pkndf(I_{h})$ and $\overline{conv}(\Pkndf(I_{h}))$  are $w*$-compact. \end{prop}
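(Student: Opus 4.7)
The plan is to reduce everything to Banach--Alaoglu applied on the compact interval $I_h$, and then to use Theorem \ref{thm:CDKNclosed} together with $w*$-continuity of the functional $\xi\mapsto h^*(\xi)=\int h\,d\xi$.

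First I would observe that $I_h=conv(supp(h))$ is a compact subinterval of $\R$, since $h\in C_c(\R)$. Consequently $\M_b^{\pm}(I_h)$ is identified via the Riesz representation theorem with the topological dual of $C(I_h)=C_b(I_h)=C_c(I_h)$; on this space the weak and $w*$ topologies coincide, and the closed unit ball of $\M_b^{\pm}(I_h)$ is $w*$-compact by the Banach--Alaoglu theorem. The set $\P(I_h)$ is a $w*$-closed convex subset of this unit ball (it is the intersection of the $w*$-closed cone of non-negative measures with the $w*$-closed affine hyperplane $\{\xi:\xi(\mathbf{1})=1\}$, using that $\mathbf{1}\in C(I_h)$). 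Hence $\P(I_h)$ is $w*$-compact and convex.

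Next I would show that $\Pknd(I_h)$ is $w*$-closed in $\P(I_h)$. By Theorem \ref{thm:CDKNclosed} (applied subject to the standing proviso $D<l_\delta$ if $\delta>0$ and $K<0$), $\Pknd(I_h)$ is closed in the weak topology on $\P(I_h)$; since $I_h$ is compact, weak and $w*$ convergence on $\P(I_h)$ agree, so $\Pknd(I_h)$ is $w*$-closed, hence $w*$-compact as a closed subset of $\P(I_h)$.

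Then I would handle the constraint $h^*(\xi)=0$. Since $h\in C(I_h)$, the linear functional $\xi\mapsto h^*(\xi)=\int h\,d\xi$ is $w*$-continuous on $\M_b(I_h)$, so its zero-set is $w*$-closed. Intersecting with $\Pknd(I_h)$ gives
\[
\Pkndf(I_h)=\Pknd(I_h)\cap\{\xi\in\P(I_h): h^*(\xi)=0\},
\]
which is therefore $w*$-closed, and, being contained in the $w*$-compact $\P(I_h)$, is itself $w*$-compact. Finally, since $\P(I_h)$ is convex and $w*$-closed, it contains $\overline{conv}(\Pkndf(I_h))$; as the latter is $w*$-closed by definition and lies inside a $w*$-compact set, it is $w*$-compact as well. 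There is no real obstacle here---the whole argument is a routine assembly of Banach--Alaoglu, the weak closedness result of Theorem \ref{thm:CDKNclosed}, and $w*$-continuity of $h^*$; the only point requiring attention is ensuring that the proviso on $D$ and $l_\delta$ from Theorem \ref{thm:CDKNclosed} is in force whenever this proposition is invoked.
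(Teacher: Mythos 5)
Your proof is correct and follows essentially the same route as the paper: both identify $\P(I_h)$ as a $w^*$-closed subset of the unit ball of $C(I_h)^*$ (via Riesz representation) and apply Banach--Alaoglu, then use Theorem~\ref{thm:CDKNclosed} together with the $w^*$-continuity of $\xi\mapsto h^*(\xi)$ to conclude that $\Pkndf(I_h)$ is a $w^*$-closed subset of the $w^*$-compact $\P(I_h)$, and likewise for $\overline{conv}\,\Pkndf(I_h)$. Your explicit remark that the proviso $D<l_\delta$ (when $\delta>0$, $K<0$) must be in force for Theorem~\ref{thm:CDKNclosed} to apply is a correct and useful precision.
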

Here the closure is taken in the weak (or equivalently, as $I_h$ is compact, weak*) topology.

\begin{proof}[Proof of Proposition \ref{prop:PkndCpct}] In Theorem  \ref{thm:CDKNclosed} we showed that the class $\Pknd(I_{h})$ is $w$-closed. Since $h\in C_b(I_{h})$ the condition $h^*(\xi)=0$ is closed under weak convergence of measures in $\Pknd(I_{h})$. Therefore the class $\Pkndf(I_{h})$ is also $w$-closed.
Since $I_{h}$ is compact it holds that $$C(I_{h})=C_c(I_{h})=C_b(I_{h})\,,$$ hence the $w$ and $w*$ convergence modes define the same topology (and there is no ambiguity in using the notation $\overline{A}$ for the closure of a set $A$); in particular this implies that $\Pkndf(I_{h})\subset\P(I_{h})$ and $\overline{conv}\Pkndf(I_{h})\subset \P(I_{h})$ are both $w$, and hence $w*$, closed. 

It is known that the set $\P(I_{h})$ is $w*$ compact \cite{Zim} (considering that $I_h$ is compact). For completeness of the argument we justify that. 
Indeed by the Riesz representation theorem we can identify  $\P(I_{h})$  with $\{\xi\in C(I_{h})^*: \,\,\xi(f)\geq 0\,\,\forall 0\leq f\in C(I_{h}) \text{ and } \xi(1)=1\}$;  for each $f\in C(I_{h})$ the sets $A_f:=\{\xi:\,\xi(f)\geq 0\}$ and $A_1:=\{\xi:\, \xi(1)=1\}$ are $w*$ closed (by testing the condition for $w*$  convergence w.r.t. functions $\tl{f},\tl{1}\in C_c(\R)$ which extend the functions $f$ and $1|_{I_h}$). Hence 
\[ \P(I_{h})=\bigcap_{0\leq f\in C(I_{h})}A_f\cap \{\xi:\, \xi(1)=1\}\]
is a $w*$-closed subset of the unit ball of $C(I_{h})^*$. 
It now follows by the Banach-Alaoglu theorem that $\P(I_{h})$ is $w^*$ compact, and hence $w$ compact. Since $\Pkndf(I_{h})$ and $\overline{conv}\Pkndf(I_{h}) $ are $w*$-closed subsets of $\P(I_{h})$, it follows that they are both $w*$-compact.
\end{proof}

 The main result of this chapter is the following theorem, which characterizes  the extreme points of the set $\Pkndf(I_{h})$. Notice that in general it is not a convex set. 

%We have seen that $\Pknd(I_h)$ is $w$ closed; the additional condition $\xi(h)=0$ is closed under $w$ convergence, whence $\Pkndf(I_h)$ is $w$ closed as well. Since $C^*(I_h)=C_b^*(I_h)$ we can identify the $w$-topology with the $w*$-topology and conclude that $\Pknd(I_h)$ (being a $w$ closed subset of the $w$ compact set $\P$) is $w$-compact. 
%%The weak* topology on $\M_b$ is defined by the condition $\xi_n\stackrel{n\to\infty}{\longrightarrow}\xi\Leftrightarrow \int fd\xi_n\to \int fd\xi$ for any $f\in \C_c(\R)$.
%\end{remk}

%Notice that in our problems either $h$ compactly supported or $I_h$ is compact, hence $\Pkndf$ remains a weakly closed subset of $\Pkndf(I_*)$. 
%
%The main result of this chapter is the following theorem:
\bigskip
\begin{thm}[Extreme Points Characterization]\label{thm:ExtremePoints} Assume $h\in C(I_{h})$. Let $K\in\R$, $N\in (-\infty,0]\cup(1,\infty]$, and $D\in (0,\infty]$ subject to the proviso that $D<l_{\delta}$ if $\delta>0$ and $K<0$. Define $A:=\Pkndf(I_{h})$ and $B:=\Pkndff^M(I_{h})\coprod \Pkndf^s(I_{h})$.  The following correspondence holds between $\Ext{A}$ and $B$: 
\begin{enumerate}
	\item When $N\in (-\infty,0]\cup[2,\infty]$,  $\Ext{A}\subset B$.
	\item When $N\in (-\infty,0]\cup(1,\infty]$,  $B\subset\Ext{A}$.
\end{enumerate}
\end{thm}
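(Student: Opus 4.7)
The plan is to treat the two inclusions separately, splitting each into the singular and absolutely continuous cases. For the inclusion $B \subset \Ext{A}$ the singular sub-case is immediate: if $\xi = \delta_{x_0} \in \Pkndf^s(I_h)$ is written as $\xi = (1-\lambda)\xi_0 + \lambda \xi_1$ with $\xi_i \in A$ and $\lambda \in (0,1)$, both $\xi_i$ must be supported in $\{x_0\}$, so $\xi_0 = \xi_1 = \delta_{x_0}$. For the model sub-case, I would take $\xi = J \cdot m \in \Pkndff^M(I_h)$ and suppose $\xi = (1-\lambda)\xi_0 + \lambda \xi_1$ with $\xi_i \in A$. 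Absolute continuity of $\xi$ transfers to each $\xi_i$; write $d\xi_i = J_i\,dm$, so $J = (1-\lambda)J_0 + \lambda J_1$. Each $J_i$ satisfies the $CD(K,N)$ inequality of Theorem \ref{thm:Equivalence}(\ref{property:II}) while $J$ satisfies equality, and I would exploit the equality case in the concavity-type relation of Proposition \ref{prop:properties}(\ref{id:1}) to conclude that $J_i \propto J$ on each connected component of $\text{supp}(J_i)$. The non-vanishing condition $H(x) := \int_{-\infty}^x h\,d\xi \neq 0$ on $\text{int}(\text{supp}(\xi))$ would then exclude the possibility that $\text{supp}(J_i)$ is a proper sub-interval of $\text{supp}(J)$: otherwise there would be an interior endpoint $b$ of $\text{supp}(J_i)$ where $h^*(\xi_i) = 0$ combined with $J_i \propto J$ on $\text{supp}(J_i)$ would force $H(b) = 0$.

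For the inclusion $\Ext{A} \subset B$, take $\xi \in \Ext{A}$. By Theorem \ref{thm:NoSingularContinuous} (invoking the proviso), $\xi$ is either a delta measure (immediately in $\Pknd^s \subset B$) or absolutely continuous. In the a.c.\ case two things must be shown. First, the non-vanishing of $H$ on $\text{int}(\text{supp}(\xi))$: if $H(x_*) = 0$ for some interior point $x_*$, then $\xi_0 := \xi|_{(-\infty,x_*]}/\xi((-\infty,x_*])$ and $\xi_1 := \xi|_{[x_*,\infty)}/\xi([x_*,\infty))$ are distinct probability measures in $A$ (condition (\ref{property:III}) of Theorem \ref{thm:Equivalence} is preserved under restriction to sub-intervals and renormalization, and $h^*(\xi_i) = 0$ holds by the choice of $x_*$), exhibiting $\xi$ as a nontrivial convex combination, against extremality. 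Second, $\xi$ must be a model measure; this constitutes the main substance of direction (1).

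The principal obstacle is this second claim. Suppose $\xi$ is a.c.\ but not a model measure; by Theorem \ref{thm:Equivalence} there exist $x_0 < x_1$ in $\text{int}(\text{supp}(\xi))$ and $t_0 \in (0,1)$ for which $J(x_{t_0}) > M^{(t_0)}_{K,N-1}[x_1-x_0](J(x_0),J(x_1))$, and by continuity this strictness persists in an open neighborhood. The plan is to construct a smooth signed perturbation $\phi$ with compact support near $x_{t_0}$, satisfying the two linear constraints $\int \phi\,dm = 0$ (mass preservation) and $\int h\phi\,dm = 0$ (preservation of $h^*$); this can be arranged by taking $\phi$ as a linear combination of three or more disjointly supported bumps, providing enough free parameters. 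For sufficiently small $\epsilon > 0$, the functions $J_\pm := J \pm \epsilon \phi$ remain nonnegative and satisfy the mass/constraint conditions. The hard part is verifying that $J_\pm$ satisfies the \emph{full} $CD(K,N)$ condition for all pairs of points in $\text{supp}(\xi)$, not merely those near the support of $\phi$. This is precisely where the restriction $N \in (-\infty,0] \cup [2,\infty]$ enters: it corresponds to the range of $N$ in which the distorted mean $(a,b) \mapsto M^{(t)}_{K,N-1}[d](a,b)$ enjoys the joint concavity required (Proposition \ref{prop:properties}(\ref{id:1}) with $p = 1/(N-1)$ and $q = 1$, so that $p+q = N/(N-1) \geq 0$ and the outer exponent $N-1$ has the appropriate sign), ensuring that the local preservation of the synthetic inequality by the perturbation, combined with the ``room'' provided by strictness on a neighborhood, propagates to the global inequality. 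Once this is established, $J_\pm\cdot m \in A$ with $J_+ \neq J_-$, contradicting extremality via $\xi = \tfrac12 J_+\cdot m + \tfrac12 J_-\cdot m$.
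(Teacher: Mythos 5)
Your proposal reproduces several correct structural observations (the singular sub-case is trivial, the non-vanishing-of-$H$ criterion is shown by splitting $\xi$ at a zero of $H$, the $N\in(-\infty,0]\cup[2,\infty]$ restriction is tied to a concavity-type property), but both of the two main steps take a route that is different from the paper's, and both contain genuine gaps.

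For $\Ext{A}\subset B$, your plan is a local perturbation $J\pm\epsilon\phi$ supported near a point where the CD equality is strict. The gap is precisely the one you flag and then wave away: the synthetic $CD(K,N)$ condition for $J\pm\epsilon\phi$ must hold for \emph{every} triple $(x_0',x_t',x_1')$, including those where $x_0',x_1'$ are far from $\mathrm{supp}(\phi)$, $x_t'\in\mathrm{supp}(\phi)$, and $J$ achieves equality $J(x_t')=M^{(t)}_{K,N-1}[\,|x_1'-x_0'|\,](J(x_0'),J(x_1'))$; at any such triple one of $J\pm\epsilon\phi$ breaks the inequality for all $\epsilon>0$. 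Strictness at a single triple $(x_0,x_{t_0},x_1)$ does not by itself rule this out, and the joint concavity of $M^{(t)}_{K,N-1}$ for $N\geq 2$ is not a mechanism that propagates local room to arbitrary far-apart pairs. The paper avoids this entirely: it produces an explicit non-local decomposition $\nu=\sigma_{\hfrak}+\varsigma_{\hfrak}$ with $\sigma_{\hfrak}=\min\{J,J^{(\hfrak)}\}\cdot m$ and $\varsigma_{\hfrak}=(J-J^{(\hfrak)})_\vee\cdot m$, where $J^{(\hfrak)}(x)=\tfrac12 J(x')J_{K,N,\hfrak}(x-x')$ is a model density; membership of $\sigma_{\hfrak}$ in $\Fknd$ follows from $M$ being order-preserving (Proposition \ref{prop:properties}(\ref{id:3})), membership of $\varsigma_{\hfrak}$ uses Lemmas \ref{lem:FG1}--\ref{lem:FG2} (this is where the reverse Minkowski inequality forces $1/(N-1)\leq 1$, i.e., $N\in(-\infty,0]\cup[2,\infty]$), and $\hfrak_0$ with $\sigma_{\hfrak_0}(h)=0$ is found by the intermediate value theorem via the limits $\hfrak\to\pm\infty$. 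Your $J\pm\epsilon\phi$ argument, even if patched, would not isolate this role for $N\geq 2$.

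For $B\subset\Ext{A}$, the appeal to ``the equality case in Proposition \ref{prop:properties}(\ref{id:1})'' does not yield $J_i\propto J$. That identity compares the product $M_{K,N-1}^{(t)}M_{0,1}^{(t)}$ with $\tilde M_{K,N}^{(t)}$, not convex combinations of the arguments of $M$. Even if you meant joint concavity of $(a,b)\mapsto M^{(t)}_{K,N-1}[d](a,b)$: from $J_0=\tfrac12 J_1+\tfrac12 J_2$ with $J_0$ satisfying equality and $J_i\geq M$, both chains $J_0(x_t)=\tfrac12 J_1(x_t)+\tfrac12 J_2(x_t)\geq\tfrac12 M(J_1)+\tfrac12 M(J_2)$ and $J_0(x_t)=M(J_0)\geq\tfrac12 M(J_1)+\tfrac12 M(J_2)$ terminate at the same lower bound, so nothing forces the individual inequalities $J_i(x_t)\geq M(J_i(x_0),J_i(x_1))$ to be equalities. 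Moreover, joint concavity of $M^{(t)}_{K,N-1}$ fails for $N\in(1,2)$, whereas the theorem asserts $B\subset\Ext{A}$ on the full range $N\in(-\infty,0]\cup(1,\infty]$; the paper's argument works there because it only uses quasi-concavity of $\eta_i:=\tfrac12 J_i/J_0$ (a ratio of generalized means, valid for all $N$ in the stated range), combined with $\eta_1+\eta_2=1$, monotonicity of each $\eta_i$ on the overlap, and the integration-by-parts identity $0=\xi_i(h)=\int\eta_i\,dF=F\eta_i\big|_a^b-\int F\,d\eta_i$ with $F>0$ on $(a,b)$ (from the $\Pkndff^M$ non-vanishing condition) to force $\eta_i\equiv\mathrm{const}$. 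You will need this, or an equivalent mechanism exploiting the sign of $F$, to close your argument.
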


\begin{remk} This shows that for $N\in (-\infty,0]\cup[2,\infty]$ it holds that $\Ext{A}=B$; we suspect that this statement is valid for $N\in (-\infty,0]\cup(1,\infty]$. However, the current proof of the theorem relies on the arguments of Fradelizi and Gu\'{e}don \cite{FG}. 
As such, it shares the same limitations, which hinder us from showing that. For the main applications this is not a problem, since by using the localization theorem we apriori assumed that the manifold dimension is at least 2; thus if $N$ is positive it is already bounded below by 2. 
\end{remk} 

%\Chk
%In view of proposition \ref{prop:exRaysPoint} we can straightforwardly conclude the following corollary from the previous extreme points characterization:
%\begin{cor}[Extreme Rays Characterization]\label{thm:ExtremeRays} Assume $h\in C_c^{\infty}(\R)$, $K\in\R$, $N\in (-\infty,0]\cup [2,\infty]$ and $D\in (0,\infty]$ subject to the proviso that $D<l_{\delta}$ if $\delta>0$ and $K<0$. Then a ray $R$ is an extreme ray of $\Fkndf$ if and only if $R=co(x)$, where $x\in \Fkndff^M\coprod \Fkndf^{sin}$ . 
%\end{cor}
%\Chk
%\bigskip 

Before we prove Theorem \ref{thm:ExtremePoints}, we show how the reduction of finding $\CC$ to problem \eqref{Find_alpha} follows from Theorem \ref{thm:ExtremePoints}. The following is the main application of interest in this chapter.
 %or \ref{thm:ExtremeRays}.

%---------------------------------
\begin{cor}\label{cor:CC} Let $K\in \R$, $N\in (-\infty, 0]\cup [2,\infty]$ and $D\in (0,\infty]$. Subject to the proviso: $D<l_{\delta}$ if $\delta>0$ and $K<0$, 
the constant $\CC$ defined in \eqref{const_alpha_def} is given by
\eq{\CC=\inf_{\xi\in \Fknd^M(\R)}\inf_{f\in \F_*(\xi)}\Phi_{u_f^*,v_f^*}(\xi)\,. }

%if for all $f\in \F_*^a$ it holds that $f\mapsto h_f\in C_c(\R)$ and $f\mapsto v_f\in C_c(\R)$ are s.t
%\begin{enumerate}
	%\item[(a)] $v_f^*(\xi)\geq 0$ for any $f\in \Cinf_c(\R)$ and $\xi\in\Fkndf(I_{h_f})$, and
	%\item[(b)] $v_f^*(\xi)=0$ if $\xi\in \Fkndf^s(I_{h_f})$.
%\end{enumerate}
%and if in addition
%\begin{enumerate}
%\item[(c)] The functions space $\F_*(\xi)$ can be identified with $\F^a_*\cap \{h_f^*(\xi)=0\}$.
%\end{enumerate}
%Then  

%Let $K\in \R, N\in (-\infty, 0]\cup [2,\infty]$ and $D\in (0,\infty]$ subject to the proviso that $D<l_{\delta}$ if $\delta>0$ and $K<0$. Let $u_{(\cdot)}, v_{(\cdot)}, h_f_{(\cdot)}: \Cinf_c(\R)\to \Cinf_c(\R)$ and $h_{(\cdot)}:=h_f_{(\cdot)}-c$, where $c\in \R$ is some fixed constant, be s.t for any $f\in \Cinf_c(\R)$:
%
%%f\mapsto h_f$ and $f\mapsto v_f(\xi)$ are 
%\begin{itemize}
  %\item $supp(u_f)\cup supp(v_f)\subset conv(supp(h_f))$. 
	%\item $u_f^*(\xi),v_f^*(\xi)\geq 0$  for all $\xi\in \Pkndhf(I_{h_f})$.
	%\item $\xi\in \Pkndhf^s\,\, \text{(a delta measure)}\Rightarrow v^*_f(\xi)=0$.
%\end{itemize} 

 %$$\CC= \inf_{\rho\in \Z^M_{(k,N,D)}}Poin(\rho)\,.$$
%\qquad \text{where } \qquad Poin(\rho)\triangleq \inf_{\stackrel{f\in C_c^{\infty}(\R)}{f\bot_{\rho} 1}}\Ry{f}{\rho}\,.$$ 
\end{cor}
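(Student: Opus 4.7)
The plan is to derive the corollary by combining the equivalent reformulation \eqref{const_alpha_def_equiv2}, the extreme-points classification Theorem \ref{thm:ExtremePoints}, and the abstract minimization principle of Proposition \ref{prop:infExtreme}.

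First I would fix $f\in \F_*^a$, set $h:=h_f$, and consider $B:=\Pkndf(I_h)$ together with its closed convex hull $A:=\overline{conv}(B)$. Both sets are $w^*$-compact by Proposition \ref{prop:PkndCpct}, and Remark \ref{remk:uvh_prop} guarantees that $u_f^*, v_f^*\geq 0$ on $B$ (the log-Sobolev case uses Jensen under the constraint $h_f^*(\xi)=0$); by linearity this extends to all of $A$, so $\Phi_{u_f^*,v_f^*}$ is well-defined there. Proposition \ref{prop:infExtreme} then gives
\[ \inf_{\xi\in B}\Phi_{u_f^*,v_f^*}(\xi)\;=\;\inf_{\xi\in \Ext{A}}\Phi_{u_f^*,v_f^*}(\xi). \]

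Next I would invoke Theorem \ref{thm:ExtremePoints} (this is precisely where the hypothesis $N\in(-\infty,0]\cup[2,\infty]$ and the proviso on $D$ are used) to obtain $\Ext{A}\subset \Pkndff^M(I_h)\coprod \Pkndf^s(I_h)$. The singular piece can be eliminated: any $\delta_{x_0}\in \Pkndf^s(I_h)$ satisfies $h_f^*(\delta_{x_0})=h_f(x_0)=0$, which by \eqref{dfn:specifications} forces $f(x_0)=0$ in the $p$-Poincar\'{e} case and $|f(x_0)|=1$ in the log-Sobolev case; in either case $v_f(x_0)=0$ while $u_f(x_0)\geq 0$, so $\Phi_{u_f^*,v_f^*}(\delta_{x_0})=+\infty$ and such Dirac masses may be discarded from the infimum. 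Hence
\[ \inf_{\xi\in \Pkndf(I_h)}\Phi_{u_f^*,v_f^*}(\xi)\;=\;\inf_{\xi\in \Pkndff^M(I_h)}\Phi_{u_f^*,v_f^*}(\xi), \]
and because $\Pkndff^M(I_h)\subset \Pkndf^M(I_h)\subset \Pkndf(I_h)$, these infima sandwich $\inf_{\Pkndf^M(I_h)}\Phi_{u_f^*,v_f^*}$, forcing equality throughout.

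To finish I would take $\inf_{f\in \F_*^a}$ on both sides of the preceding equality and apply \eqref{const_alpha_def_equiv2} to get
\[ \CC\;=\;\inf_{f\in \F_*^a}\;\inf_{\xi\in \Pkndf^M(I_{h_f})}\Phi_{u_f^*,v_f^*}(\xi), \]
then reverse the order of minimization (using that $f\in \F_*(\xi)$ iff $f\in \F_*^a$ and $h_f^*(\xi)=0$, and that $\Pknd^M(\R)\cap\{\xi:\,supp(\xi)\subset I_{h_f}\}$ parametrizes the inner class) to obtain $\CC=\inf_{\xi\in \Pknd^M(\R)}\inf_{f\in \F_*(\xi)}\Phi_{u_f^*,v_f^*}(\xi)$. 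A final appeal to the $0$-homogeneity of $\Phi_{u_f^*,v_f^*}$ in $\xi$ allows replacing $\Pknd^M(\R)$ by the full cone $\Fknd^M(\R)$, yielding the stated formula. The substantive content is packaged entirely in Theorem \ref{thm:ExtremePoints}; the principal obstacle in this argument is the careful bookkeeping needed to verify that the singular extreme points contribute $+\infty$ and can be safely discarded, so that the order-swap survives the passage through the extreme-point set.
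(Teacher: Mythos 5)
Your proposal reproduces the paper's own proof: it applies Proposition \ref{prop:PkndCpct} and Proposition \ref{prop:infExtreme} to pass to the extreme points of $\Pkndhf(I_{h_f})$, uses Theorem \ref{thm:ExtremePoints} to identify them, discards the singular Dirac masses because $v_f^*(\delta_{x_0})=0$ forces $\Phi=+\infty$, then sandwiches with $\Pkndhff^M\subset\Pkndhf^M\subset\Pkndhf$ and reorders the two infima. The only addition over the paper's text is that you spell out explicitly why $v_f(x_0)=0$ for admissible Dirac masses in both the $p$-Poincar\'{e} and log-Sobolev cases, which the paper leaves implicit.
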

\begin{proof} 
Recall that according to \eqref{const_alpha_def_equiv2}:
\eq{\CC=\inf_{f\in \F^a_*}\,\,\inf_{\xi\in \Pkndhf(I_h)}\Phi_{u_f^*,v_f^*}(\xi)\,.}
Notice that the following hold:
\begin{enumerate}
	\item By Proposition \ref{prop:PkndCpct} the sets $\Pkndhf(I_{h_f})$ and $\overline{conv}\Pkndhf(I_{h_f})$ are $w*$-compact.
	\item\label{step:2} By Proposition \ref{prop:infExtreme} (applied with $B=\Pkndhf(I_{h_f})$):
	$$\inf_{\Pkndhf(I_{h_f})} \Phi_{u^*,v^*}(\xi)=\inf_{\Ext{\Pkndhf(I_{h_f})}} \Phi_{u^*,v^*}(\xi)\,.$$
	\item\label{step:3} From Theorem \ref{thm:ExtremePoints} it follows that $$\Ext{\Pkndhf(I_{h_f})}=\Pkndhff^M(I_{h_f})\coprod \Pkndhf^s(I_{h_f})\,.$$
  \item \label{step:4} By definition $\Phi_{u_f^*,v_f^*}(\xi)=\infty$ whenever $v_f^*(\xi)=0$, therefore  by \ref{step:2} and \ref{step:3}
\eql{\label{eql:Model} \inf_{\xi\in \Ext{\Pkndhf(I_{h_f})}}\Phi_{u_f^*,v_f^*}(\xi) = \inf_{\xi\in \Pkndhff^M(I_{h_f})}\Phi_{u_f^*,v_f^*}(\xi)
\,. }
\end{enumerate}
%, . However if $\xi\in\Pkndhf^s(I_{h_f})$ then by assumption  whence . Therefore in view of theorem \ref{thm:ExtremePoints} 

In view of these observations we consider the following series of inequalities
{\small 
\eq{&\inf_{\xi\in \Pkndhf^M(I_{h_f})}\Phi_{u_f^*,v_f^*}{(\xi)}\stackrel{\Pkndhf^M(I_{h_f})\subset \Pkndhf(I_{h_f})}{\geq}\inf_{\xi\in \Pkndhf(I_{h_f})}\Phi_{u_f^*,v_f^*}{(\xi)}\\
&\stackrel{by\,\,\ref{step:2}}{=}\inf_{\xi\in \Ext{\Pkndhf(I_{h_f})}}\Phi_{u_f^*,v_f^*}{(\xi)}\nonumber
\stackrel{by\,\,\ref{step:4}}{=}\inf_{ \xi\in\Pkndhff^M(I_{h_f})}\Phi_{u_f^*,v_f^*}{(\xi)}\\& \stackrel{\Pkndhff^M(I_{h_f})\subset \Pkndhf^M(I_{h_f})}{\geq} \inf_{\xi\in \Pkndhf^M(I_{h_f})}\Phi_{u_f^*,v_f^*}{(\xi)}\,. } }
%In view of \ref{prop:exRaysPoint} we can identify the points of $\Pkndff^M$ as points lying on extreme rays  

%\stackrel{\Pkndhf^M(I_{h_f})\subset \Pkndhf}{\geq}
We conclude that 
$$\inf_{\xi\in \Pkndhf(I_{h_f})}\Phi_{u_f^*,v_f^*}{(\xi)}=\inf_{\xi\in \Pkndhff^M(I_{h_f})}\Phi_{u_f^*,v_f^*}{(\xi)}=\inf_{\xi\in \Pkndhf^M(I_{h_f})}\Phi_{u_f^*,v_f^*}{(\xi)}\,.$$ 

Now the expression \eqref{const_alpha_def} reduces to
%\eqref{const:CC1} 
\eql{\label{CC_expression}\CC&=\inf_{f\in \F^a_*}\,\inf_{\xi\in \Pkndhf(I_{h_f})} \Phi_{u_f^*,v_f^*}(\xi)=
\inf_{f\in \F^a_*}\, \inf_{\xi\in \Pkndhf^M(I_{h_f})}\Phi_{u_f^*,v_f^*}{(\xi)}\\ \nonumber&=\inf_{f\in \F^a_*}\, \inf_{\substack{\xi\in \Pknd^M(I_{h_f})\\ \nonumber h_f^*(\xi)=0}}\Phi_{u_f^*,v_f^*}{(\xi)}=
 \inf_{\xi\in \Pknd^M(\R)}\,\inf_{\substack{f\in \F^a_*\\ \nonumber h_f^*(\xi)=0}}\Phi_{u_f^*,v_f^*}{(\xi)}
\\ \nonumber&
=\inf_{\xi\in \Pknd^M(\R)}\,\inf_{f\in \F_*(\xi)}\Phi_{u_f^*,v_f^*}{(\xi)}=\inf_{\xi\in \Fknd^M(\R)}\,\inf_{f\in \F_*(\xi)}\Phi_{u_f^*,v_f^*}{(\xi)} \,.
}
\end{proof}

\subsection{Proving Theorem \ref{thm:ExtremePoints}}
We precede the proof with two lemmas.  Throughout we assume $D<l_{\delta}$ if $\delta>0$ and $K<0$. 
We remind the reader that whenever $\xi=J\cdot m\in \Fknd^{ac}(\R)$, then by Proposition \ref{prop:contRepresentative} we may assume
 $J$ is a continuous representative of $\xi$ (i.e. $J$ is continuous on $int(supp(\xi))$) which satisfies the following condition:  
%for all $x_0,x_1\in int(supp(\xi))$ and $t\in (0,1)$
	%	For all $x_0,x_1\in \R$ such that $|x_1-x_0|<l_{\delta}$ and 
	\eql{ \label{AgainSyntCDKN} J(x_t)\geq M_{K, N-1}^{(t)}[|x_1-x_0|](J(x_0),J(x_1))\,, \qquad \forall x_0,x_1\in \R,\quad t\in [0,1]\,. }
	Recall (Definition \ref{defn:vee}) that given a function $f(x)$, we define \blue{$f_{\vee}:=\max\{f, 0\}$}.

%, an assumption which is justified by Lemma \ref{lem:contRepresentative}.  
%\begin{cor}[Extreme Rays Characterization]\label{thm:ExtremeRays} Assume $f\in C_c^{\infty}(\R)$, $k\in\R$, $N\in (-\infty,0]\cup [2,\infty]$ and $0<D_{\delta}$, then a ray $R$ is an extreme ray of $\M_{(k,N,D);f}$ if and only if $R=co(x)$, where $x\in \Pkndff^M\bigcupdot \Pkndf^s$ . 
%\end{cor}

\begin{lem}\label{lem:FG1} Assume $\nu_1=J\cdot m\in \Fknd^{ac}(\R)$ and $\nu_0=J_0\cdot m\in \Fknd^M(\R)$ are both supported on an interval $I$. If $N\in (-\infty,0]\cup [2,\infty]$ then
\blue{$\nu:=(J-J_0)_{\vee}\cdot m\in \Fknd(\R)$}. 
\end{lem}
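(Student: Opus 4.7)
The plan is to verify the hypotheses of Theorem~\ref{thm:Equivalence} for $\nu=(J-J_0)_\vee\cdot m$, using condition~\eqref{CDKN_2} as the criterion. Since $(J-J_0)_\vee\le J$, the measure $\nu$ is finite and $supp(\nu)\subset supp(\nu_1)\subset I$, so the support and diameter conditions in Definition~\ref{dfn:CDKNSynt} are inherited automatically. Appealing to Proposition~\ref{prop:contRepresentative} I will work with the continuous representative of $J$ on $int(supp(\nu_1))$, and recall that $J_0$ is smooth on $int(supp(\nu_0))$ by Remark~\ref{rmk:Model_equivalent_def}. Thus the only substantive task is to prove
\[
(J-J_0)_\vee(x_t)\;\geq\; M^{(t)}_{K,N-1}\bigl[|x_1-x_0|\bigr]\bigl((J-J_0)_\vee(x_0),\,(J-J_0)_\vee(x_1)\bigr)
\]
for every $x_0,x_1\in\R$ and $t\in[0,1]$, which together with Theorem~\ref{thm:Equivalence} will yield $\nu\in\Fknd(\R)$.

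The first step is a case split. If either $(J-J_0)(x_0)\le 0$ or $(J-J_0)(x_1)\le 0$, the right-hand side vanishes by the convention $M^{(t)}_{K,N-1}[d](a,b)=0$ when $ab=0$, and the inequality is immediate. In the remaining case $J(x_i)>J_0(x_i)\ge 0$ for $i=0,1$; here I combine the distorted-concavity of $J$ (which is condition~\eqref{CDKN_2} for $\nu_1\in\Fknd^{ac}(\R)$, supplied by Theorem~\ref{thm:Equivalence}) with the fact that $J_0$ satisfies~\eqref{CDKN_2} with equality (because $\nu_0\in\Fknd^M(\R)$; see Definition~\ref{dfn:ModelMeasures} and Remark~\ref{rmk:Model_equivalent_def}) to obtain, writing $d:=|x_1-x_0|$,
\[
J(x_t)-J_0(x_t)\;\geq\; M^{(t)}_{K,N-1}[d]\bigl(J(x_0),J(x_1)\bigr)\;-\;M^{(t)}_{K,N-1}[d]\bigl(J_0(x_0),J_0(x_1)\bigr).
\]
The whole lemma then reduces to the super-additivity property
\[
M^{(t)}_{K,N-1}[d](a+c,\,b+e)\;\geq\; M^{(t)}_{K,N-1}[d](a,b)\;+\;M^{(t)}_{K,N-1}[d](c,e),\qquad a,b,c,e\ge 0,
\]
applied with $(a,b)=(J_0(x_0),J_0(x_1))$ and $(c,e)=((J-J_0)(x_0),(J-J_0)(x_1))$: inserting this gives $(J-J_0)(x_t)\ge M^{(t)}_{K,N-1}[d]((J-J_0)(x_0),(J-J_0)(x_1))>0$, hence the left-hand side equals $(J-J_0)_\vee(x_t)$ and the desired inequality follows.

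The hard part, and the reason the hypothesis $N\in(-\infty,0]\cup[2,\infty]$ appears, is the super-additivity of the distorted mean. Writing $M^{(t)}_{K,N-1}[d](a,b)=(\alpha\,a^{q}+\beta\,b^{q})^{1/q}$ with $\alpha=\sigma_{K,N-1}^{(1-t)}(d)$, $\beta=\sigma_{K,N-1}^{(t)}(d)$ and $q=1/(N-1)$ (with the $N=\infty$ case interpreted as $M^{(t)}_{K,\infty}[d](a,b)=a^{1-t}b^{t}e^{Kt(1-t)d^{2}/2}$, corresponding to $q=0$), the admissible range splits as $q\in(0,1]$ for $N\in[2,\infty)$, $q=0$ for $N=\infty$, and $q\in[-1,0)$ for $N\in(-\infty,0]$. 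In every one of these cases $q\le 1$, which is exactly the regime where the weighted power mean is a positively $1$-homogeneous concave function on $\R_{+}^{2}$ (a classical fact from Hardy--Littlewood--P\'olya, valid including for $q<0$; for $q=0$ it reduces to the weighted AM--GM/H\"older inequality $(a+c)^{1-t}(b+e)^{t}\ge a^{1-t}b^{t}+c^{1-t}e^{t}$). Any positively $1$-homogeneous concave function $M$ automatically satisfies $M(u+v)=2M(\tfrac{u+v}{2})\ge M(u)+M(v)$, and the degenerate cases where some coordinate vanishes are handled by the monotonicity of $M^{(t)}_{K,N-1}[d]$ in each argument established in Proposition~\ref{prop:properties}~(\ref{id:3}). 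By contrast, in the excluded range $N\in(1,2)$ one has $q>1$, so $M^{(t)}_{K,N-1}[d]$ becomes a norm and only the opposite (sub-additive) inequality holds; this is the structural obstruction that prevents the Fradelizi--Gu\'edon style argument that we are adapting (see~\cite{FG}) from reaching $N\in(1,2)$ by this method.
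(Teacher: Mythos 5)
Your argument is correct and follows essentially the same route as the paper's: both proofs reduce the claim to the super-additivity of the distorted power mean $M_{K,N-1}^{(t)}[d](\cdot,\cdot)$, which the paper invokes directly as the reverse Minkowski inequality with exponent $p=\tfrac{1}{N-1}\le 1$, and which you repackage (equivalently) as super-additivity of a positively $1$-homogeneous concave function. The only minor stylistic difference is that you handle $N=\infty$ directly via the weighted geometric mean, whereas the paper obtains it as a limit of the $N<\infty$ cases.
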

\begin{remk}\label{rmk:J_1_minus_J_0}
Notice that the set $I:=\{(J_1-J_0)> 0\}$ must be an interval, since if $x_0<x_1$ are two points in $I$, then 
$$J_1(x_t)\geq M_{K,N-1}[|x_1-x_0|](J_1(x_0),J_1(x_1)) > M_{K,N-1}[|x_1-x_0|](J_0(x_0),J_0(x_1))=J_0(x_t)\,,$$
hence $x_t\in I$ as well. 
\end{remk}
\begin{proof}[Proof of Lemma \ref{lem:FG1}]
Except for the case $\delta>0$ and $K>0$ it clearly holds that $|x_1-x_0|<l_{\delta}$; however when $\delta>0$ and $K>0$, whenever $|x_1-x_0|\geq l_{\delta}$, according to Lemma \ref{lem:CondImplic}, it holds that $J(x_0)J(x_1)=0$ and $J_0(x_0)J_0(x_1)=0$, and the statement clearly holds. Thus throughout we assume $|x_1-x_0|<l_{\delta}$. 

Recall the reverse Minkowski inequality \cite[p. 31]{HaLi} : if $0\neq p\leq 1$ and $a_0,b_0,a_1,b_1>0$ then 
%\[ \prnt{ t_0(a_0+b_0)^p+t_1(a_1+b_1)^p}^{\frac{1}{p}}\geq 
\[  \prnt{(a_0+a_1)^{p}+(b_0+b_1)^{p}}^{\frac{1}{p}}\geq \prnt{ a_0^{p}+ b_0^{p}}^{\frac{1}{p}}+\prnt{ a_1^{p}+ b_1^{p}}^{\frac{1}{p}}\,.\]
\red{Let us firstly assume that} $N\neq \infty$, then since $\frac{1}{N-1}\leq 1$ this inequality implies that 
 for all $x_0,x_1\in \{(J-J_0)>0\}$ :
\eq{(J-J_0)(x_t)&\geq  M^{(t)}_{K,N-1}[|x_1-x_0|](J(x_0),J(x_1))-M^{(t)}_{K,N-1}[|x_1-x_0|](J_0(x_0),J_0(x_1))\\&
\geq  M^{(t)}_{K,N-1}[|x_1-x_0|]((J-J_0)(x_0),(J-J_0)(x_1))\,. }

Since this holds for all $N\geq 2$, this conclusion extends to the case $N=\infty$ by considering the limit $N\to \infty$.  
\end{proof}

\begin{lem}\label{lem:FG2} Assume $I_1=[x_0,z],I_2=[z, x_1]$ (or $I_2=[x_0,z]$ and $I_1=[z, x_1]$) are two adjacent intervals which share a single point $\{z\}$, and let $I=I_1\cup I_2$. Let $\nu_1=J_1\cdot m \in \Fknd(I)$ and $\nu_2=J_2\cdot m\in \Fknd^M(I_2)$ where $K\in \R$ and $N\in (-\infty,0]\cup (1,\infty]$. Assume $supp(\nu_1)=I$ and $supp(\nu_2)=I_2$.  If
\begin{enumerate}
	\item $J_1>0$ on $\mathring{I}$, and 
	\item $0<J_2\leq J_1$ on $\mathring{I}_2$, and
	\item at the common point $\{z\}=I_1\cap I_2$:  $J_2(z)=J_1(z)$\,.
\end{enumerate}
Then the measure $\nu:=\tilde{J}\cdot m$, where $\tilde{J}|_{I_1}=J_1$ and $\tilde{J}|_{I_2}=J_2$, belongs to the class $\Fknd(I)$. 
\end{lem}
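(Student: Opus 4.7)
\textbf{Proof plan for Lemma \ref{lem:FG2}.}
Note first that $\tilde J$ is continuous, non-negative, bounded and supported on $I$: continuity at the gluing point $z$ follows from $J_2(z)=J_1(z)$, while $\operatorname{diam}(\operatorname{supp}(\tilde J))=\operatorname{diam}(I)\leq D$ is inherited from $\nu_1\in \Fknd(I)$. By the $(2)\Leftrightarrow(3)$ part of Theorem \ref{thm:Equivalence} (and Proposition \ref{prop:contRepresentative}) it is therefore sufficient to verify the pointwise concavity condition
\[
\tilde J(x_t)\;\ge\; M_{K,N-1}^{(t)}[|x_1-x_0|]\!\bigl(\tilde J(x_0),\tilde J(x_1)\bigr)\qquad \forall x_0,x_1\in\R,\;t\in[0,1].
\]
If $\tilde J(x_0)\tilde J(x_1)=0$ the right-hand side vanishes and there is nothing to prove, so I may assume $x_0,x_1\in I$. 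The case $\{x_0,x_1\}\subset I_1$ follows from $\nu_1\in\Fknd(I)$ applied to $J_1$, and the case $\{x_0,x_1\}\subset I_2$ is the defining \emph{equality} for the model density $J_2$ (Definition \ref{dfn:ModelMeasures}). The only non-trivial case is therefore $x_0\in I_1\setminus\{z\}$, $x_1\in I_2\setminus\{z\}$ (the reversed arrangement is identical after swapping $t\leftrightarrow 1-t$).

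In this mixed case I will write $s\in(0,1)$ for the unique parameter with $x_s=z$, and introduce the reference model interpolant
\[
\bar\psi(t):=M_{K,N-1}^{(t)}[|x_1-x_0|]\!\bigl(J_1(x_0),J_2(x_1)\bigr),\qquad t\in[0,1],
\]
which is the model density on $[x_0,x_1]$ with the prescribed boundary values $\tilde J(x_0)$ and $\tilde J(x_1)$. For $t\in[0,s]$ the point $x_t$ lies in $I_1$, so $\tilde J(x_t)=J_1(x_t)$, and I will combine the $\Fknd$-inequality for $J_1$ on the segment $[x_0,x_1]\subset I$ with monotonicity of $M_{K,N-1}^{(t)}[d](\cdot,\cdot)$ in each argument separately (the analogue of Proposition \ref{prop:properties}(\ref{id:3}) for $M$, which follows directly from the explicit formula \eqref{dfn:SigmaTau}) and with the standing hypothesis $J_2(x_1)\le J_1(x_1)$, obtaining
\[
\tilde J(x_t)=J_1(x_t)\;\ge\; M_{K,N-1}^{(t)}[|x_1-x_0|]\!\bigl(J_1(x_0),J_1(x_1)\bigr)\;\ge\;\bar\psi(t).
\]

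For $t\in[s,1]$ the point $x_t$ lies in $I_2$, and $\tilde J(x_t)=J_2(x_t)$. The decisive observation is obtained by specialising the preceding display at $t=s$: since $J_2(z)=J_1(z)$ one gets $J_2(z)\ge \bar\psi(s)$. Now I will use that both $J_2|_{[z,x_1]}$ (by Definition \ref{dfn:ModelMeasures}) and $\bar\psi|_{[z,x_1]}$ (because the restriction of a model is a model, by the semigroup/uniqueness property encoded in Remark \ref{rmk:FkndM_Diff_ID}) are model densities on the common interval $[z,x_1]$ with identical right endpoint value $J_2(x_1)$. Parametrising $[z,x_1]$ by $t'=(t-s)/(1-s)$ and invoking monotonicity of $M_{K,N-1}^{(t')}[|x_1-z|](\cdot,J_2(x_1))$ in its first argument yields $J_2(x_t)\ge \bar\psi(x_t)$ throughout $[z,x_1]$, completing the verification.

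The main difficulty is the mixed case above; the proper organisational idea is to reduce it, via the shared interface value $J_1(z)=J_2(z)$, to a single pointwise lower bound $J_2(z)\ge \bar\psi(s)$ supplied by $\nu_1\in\Fknd(I)$, and then to propagate this bound across $[z,x_1]$ using nothing more than monotonicity of the distorted means applied to two genuine model densities living on the same subinterval.
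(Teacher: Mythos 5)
Your proof is correct and takes essentially the same route as the paper. You reduce to the mixed case $x_0\in I_1$, $x_1\in I_2$, introduce the reference model interpolant $\bar\psi(t)=M_{K,N-1}^{(t)}[|x_1-x_0|](J_1(x_0),J_2(x_1))$, obtain $J_1\ge\bar\psi$ on the $I_1$ side from the $\Fknd$-inequality for $J_1$ plus monotonicity of $M_{K,N-1}^{(t)}[\cdot]$ in each argument, read off the interface bound $J_2(z)=J_1(z)\ge\bar\psi(s)$, and propagate it over $[z,x_1]$ via the reparametrization (semigroup) property of model densities and monotonicity in the first argument. That is precisely the content of the paper's computation with $F_1$, $F_2$; you merely package it as a two-model comparison on $[z,x_1]$ rather than as an explicit chain of inequalities, which is arguably cleaner.

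Two small points. First, the equivalence you want at the outset is $(1)\Leftrightarrow(2)$ of Theorem \ref{thm:Equivalence}: $\Fknd(I)$ is defined by the set-wise condition \eqref{CDKN_3}, and you are verifying the pointwise density condition \eqref{CDKN_2}; the $(2)\Leftrightarrow(3)$ equivalence needs $J\in C^2$, which $\tilde J$ need not be across $z$. Second, when invoking monotonicity of $M_{K,N-1}^{(t)}[d](\cdot,\cdot)$ in each argument, the paper justifies this via Proposition \ref{prop:properties}\,(\ref{id:3}), stated there for $\tl M$; it is worth saying explicitly (as you gesture at) that the analogous monotonicity for $M_{K,N-1}^{(t)}$ is immediate from the definition \eqref{dfn:SigmaTau}. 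Neither remark affects the correctness of your argument.
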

\begin{proof} We prove the lemma for the case $I_2=[z,x_1]$, where the case $I_2=[x_0,z]$ follows mutatis-mutandis. 
Let $x\in \mathring{I}$. We fix $x_0', x_1' \in \mathring{I}$ s.t. $x_0'<z,x<x_1'$. For some $t\in (0,1)$ it holds that $x=x'_t=\convar{x'_0}{x'_1}$. If $x\leq z$ then since $\nu_1\in\Fknd(I)$: 
\eq{\tilde{J}(x)=\tilde{J}(x'_t)&=J_1(x_t')\geq M_{K, N-1}^{(t)}[|x_1'-x_0'|](J_1(x_0'),J_1(x_1'))\\&\geq M_{K, N-1}^{(t)}[|x_1'-x_0'|](J_1(x_0'),J_2(x_1'))=M_{K, N-1}^{(t)}[|x_1'-x_0'|](\tilde{J}(x_0'),\tilde{J}(x_1'))\,,}
where the second inequality is due to \ref{id:3} of Proposition \ref{prop:properties}. 

If $x=x_t'>z$ then for some $r\in (0,1)$ it holds that $z=x_r'=(1-r)x_0'+r x_1' $, and there is a unique $s_t\in (0,1)$ such that $x_t'=(1-s_t)z+s_t x_1'$. We can thus conclude again from property \ref{id:3} of Proposition \ref{prop:properties}:
{\footnotesize
\eq{
\tilde{J}(x_t')&=J_2((1-s_t)z+s_t x_1')\\&\geq M_{K, N-1}^{(s_t)}[|x_1'-z|](J_2(z),J_2(x_1'))\qquad \prnt{\text{since  } \nu_2\in\Fknd(I_2)}\\&
=M_{K, N-1}^{(s_t)}[|x_1'-z|](J_1(z),J_2(x_1'))\qquad \prnt{\text{since  } J_1(z)=J_2(z)}\\ &\geq M_{K, N-1}^{(s_t)}[|x_1'-z|]\prnt{M_{K, N-1}^{(r)}[|x_1'-x_0'|](J_1(x_0'),J_1(x_1')),J_2(x_1')}\qquad \prnt{\text{since  } \nu_1\in\Fknd(I)}
\\&\geq M_{K, N-1}^{(s_t)}[|x_1'-z|]\prnt{M_{K, N-1}^{(r)}[|x_1'-x_0'|](J_1(x_0'),J_2(x_1')),J_2(x_1')}\qquad \prnt{\text{property \ref{id:3} of Proposition \ref{prop:properties}}}\,.} }

Define
\eq{ F_1(t)&:=\prnt{M_{K, N-1}^{(t)}[|x_1'-x_0'|](J_1(x_0'),J_2(x_1'))}^{\frac{1}{N-1}}\,,\\
     F_2(s)&:=\prnt{ M_{K, N-1}^{(s)}[|x_1'-z|]\prnt{F_1(r)^{N-1},J_2(x_1')}}^{\frac{1}{N-1}}\,. }

		 Notice that $F_1$ and $F_2$ are respectively solutions to the following boundary value problems: 
		 {\footnotesize
\eq{ &F_1''(t)+\frac{K}{N-1}|x_1'-x_0'|^2F_1(t)=0 \qquad F_1(r)=\prnt{M_{K, N-1}^{(r)}[|x_1'-x_0'|](J_1(x_0'),J_2(x_1'))}^{\frac{1}{N-1}}\,,\quad F_1(1)=J_2(x_1')^{\frac{1}{N-1}}\,,\\
 &F_2''(s)+\frac{K}{N-1}|x_1'-z|^2F_2(s)=0 \qquad F_2(0)=F_1(r)\,\quad F_2(1)=J_2(x_1')^{\frac{1}{N-1}} \,,} }
on $(r,1)$ and $(0,1)$ respectively. 
Hence (see also Remark \ref{rmk:ODEfacts}) $F_2$ is a reparametrization of $F_1$ (they represent the same function $F(x)$ which satisfies on $(z,x_1')$ the ODE $F''+\frac{K}{N-1}F_1=0$); thus for all $s_t\in (0,1)$ such that $(1-s_t)z+s_tx_1'=\convar{x_0'}{x_1'}$ it holds that 
\[M_{K, N-1}^{(s_t)}[|x_1'-z|]\prnt{M_{K, N-1}^{(r)}[|x_1'-x_0'|](J_1(x_0'),J_2(x_1')),J_2(x_1')}=
M_{K, N-1}^{(t)}[|x_1'-x_0'|](J_1(x_0'),J_2(x_1'))\,.\]
We can thus conclude that 
\[ \tilde{J}(x_t')\geq M_{K, N-1}^{(t)}[|x_1'-x_0'|](J_1(x_0'),J_2(x_1'))=M_{K, N-1}^{(t)}[|x_1'-x_0'|](\tilde{J}(x_0'),\tilde{J}(x_1'))\,. \]
Since $x_0', x_1'$ and $x$ are arbitrary, the inequality in \eqref{AgainSyntCDKN}  is satisfied on $\mathring{I}$. By Theorem \ref{thm:Equivalence} it follows that $\nu\in \Fknd(I)$. 
The proof of the case $N=\infty$ follows mutatis-mutandis; considering Remark \ref{rmk:ODEfacts}, one only needs to modify the ODEs satisfied by $F_1$ and $F_2$. 
\qedhere
\end{proof}

\begin{proof}[Proof of Theorem \ref{thm:ExtremePoints}] 

Recall from Theorem \ref{lem:measDeriv} that $$A=\Pkndf(I_{h})=\Pkndf^{ac}(I_{h})\coprod \Pkndf^{s}(I_{h})\,,$$ where $\Pkndf^{s}(I_{h})$ consists exclusively of delta measures supported on a single point $\{x_0\}\in I_h$. Clearly any singular measure $\nu\in \Ext{\Pkndf(I_{h})}$ is in $\Pkndf^{s}(I_{h})$, and any delta measure $\nu\in \Pkndf^{s}(I_{h})$ must be in $\Ext{\Pkndf(I_{h})}$. Hence throughout we consider only a.c. measures. 
\smallskip

Assume $\nu\in \Pkndf^{ac}(I_{h})$ where $\deriv{\nu}{m}=J\in L^1(\R; m)$ and $J$ is continuous and strictly positive on $int(supp(\nu))$ (both are justified by Proposition  \ref{prop:contRepresentative}). 
\begin{enumerate}
	\item
	Assume $\nu=Jdm\in \Ext{\Pkndf(I_{h})}$ is supported on $[a,b]\subset I_h$ ($J>0$ on $(a,b)$). If for some $x\in (a,b)$ it holds that 
$\nu|_{[a, x]}(h)= 0$, then since 
$$\nu|_{[a,x]}(h)=\nu|_{[x,b]}(h)=\nu(h)=0\,,$$
$\nu$ admits the following non-trivial convex decomposition: 
{\small 
\[ \nu=\convar{\nu_1}{\nu_2}\qquad \text{where } \qquad \nu_1=\frac{1}{\nu([a,x])}\nu|_{[a,x]}, \quad \nu_2=\frac{1}{\nu([x,b])}\nu|_{[x,b]} \,\,\text{ and } t=\nu([x,b])\,\, \]}
with distinct $\nu_1, \nu_2\in \Pkndf(I_{h})$; this is clearly inconsistent with the assumption $\nu$ being an extreme point \red{of} $\Pkndf(I_{h})$. Hence $\nu\in \Pkndff^{ac}(I_{h})$ and we may assume w.l.o.g. that $\nu|_{[a,x]}(h)> 0$ for any $x\in (a,b)$. In \eqref{eqn:JhkN} we defined the densities $J_{K,N,\hfrak}(x)$ by
 $$J_{K, N, \hfrak}(x)= 
\begin{cases}   
(\co_{\delta}(x)+\frac{\hfrak}{N-1} \si_{\delta}(x))_+^{N-1} &\mbox{ if } N \in (-\infty,0]\cup (1,\infty) \\
\exp(\hfrak x-\frac{K}{2}x^2) &\mbox{ if } N=\infty\,,
\end{cases}$$ 
where $\hfrak$ is some parameter which accounts for the derivative at $x=0$ of $J_{K,N,\hfrak}$. Let $x'\in (a,b)$ be some fixed point.
We define a family of densities
$$ J^{(\hfrak)}(x):=\frac{J(x')}{2} J_{K,N,\hfrak}(x-x')\,, $$
and respectively a family of measures:
%We set $c_{\infty}:=\lim_{h\to\infty}g(h)$ and $c_{-\infty}:=\lim_{h\to-\infty}g(h)$. We define 
\blue{
$$d\sigma_{\hfrak}:=\min\{J,J^{(\hfrak)}\}dm \qquad \text{and} \qquad d\varsigma_{\hfrak}:=(J-J^{(\hfrak)})_{\vee}dm\,.$$}
    
Clearly for any $\hfrak\in \R$ it holds that $\nu=\varsigma_{\hfrak}+\sigma_{\hfrak}$. Since $J^{(\hfrak)}(x')=\frac{J(x')}{2}$ in a sufficiently small neighborhood of $x'$ it holds that $J^{(\hfrak)}(x)< J(x)$, therefore this decomposition is non-trivial. We claim that $\sigma_{\hfrak},\,\varsigma_{\hfrak}\in \Fknd$. Clearly $supp(\sigma_{\hfrak})$ and $supp(\varsigma_{\hfrak})$ are subsets of $supp(\nu)$, hence the diameter condition of Definition \ref{dfn:CDKNSynt} is satisfied. 
\smallskip
If $x_0, x_1\in \R$ and $|x_1-x_0|<l_{\delta}$ then  \eql{\label{eqn:MinIneq}
&\min\{J,J^{(\hfrak)}\}(x_t)\\ \nonumber&\geq \min\cprnt{M_{K,N-1}^{(t)}[|x_1-x_0|](J(x_0),J(x_1)),M_{K,N-1}^{(t)}[|x_1-x_0|](J^{(\hfrak)}(x_0),J^{(\hfrak)}(x_1))}\\ \nonumber& \geq M_{K,N-1}^{(t)}[|x_1-x_0|](\min\{J,J^{(\hfrak)}(x_0)\}, \min\{J,J^{(\hfrak)}(x_1)\})\,, }
by property \ref{id:3} of Proposition \ref{prop:properties}. Under our assumptions the case $|x_1-x_0|\geq l_{\delta}$ is possible only if $\delta>0$ and $K>0$; in this case by Lemma \ref{lem:CondImplic} it follows $J(x_0)J(x_1)=0$ and $J^{(\hfrak)}(x_0)J^{(\hfrak)}(x_1)=0$ and \eqref{eqn:MinIneq} is satisfied. Therefore $\sigma_{\hfrak}\in \Fknd(I_h)$. 
\smallskip

We claim that $\varsigma_{\hfrak}\in \Fknd(I_h)$ as well; indeed, if $J^{(\hfrak)}>0$ on the whole of $(a,b)$ then $\varsigma_{\hfrak}\in \Fknd(I_h)$ due to Lemma \ref{lem:FG1}; \blue{otherwise
$\cprnt{J^{(\hfrak)}>0}\cap (a,b)$
is an interval, since $\isupp \prnt{J^{(\hfrak)}}=x'+\isupp(J_{K,N,\hfrak})$.}
% {\small $$\prnt{\,\zfrak_{-}(T_{x'}[J^{(\hfrak)}])+x',\,\,\zfrak_{+}(T_{x'}[J^{(\hfrak)}])+x'}\cap (a,b) \quad\text{is an interval,\, where } 
% T_{x'}[J^{(\hfrak)}](x):=J^{(\hfrak)}(x+x')\,.$$}
 Then 
$\varsigma_{\hfrak}\in \Fknd(I_h)$ due to Lemma \ref{lem:FG2}: in  view of Remark \ref{rmk:J_1_minus_J_0} there will be one interval $(a',b')\subset (a,b)$ where the function \blue{$\tilde{J}:=(J-J^{(\hfrak)})_{\vee}$} satisfies:  $\tilde{J}|_{(a,a')}=J$, $\tilde{J}|_{(b',b)}=J$ and $\tilde{J}|_{(a',b')}=(J-J^{(\hfrak)})_{+}$, and $J^{(\hfrak)}>0$ on $(a',b')$ (mind that we don't exclude the case $a'=a$ or $b'=b$). 
\bigskip

%We also notice that for fixed $h\in \R$ one of the following holds: either $(J-J_0)(x)>0$ on $(z,b)$ or else $(J-J_0)(x)>0$ on $(a,z)$. we consider only the first case, as the second is justified by similar arguments mutatis mutandis. Define $J_1:=J 1_{[a,z]}$ and $J_2:=(J-J_0)_{+} 1_{[z,b]}$. Clearly $\nu_1:=J_1 \cdot m\in \Fknd$, and according to \ref{lem:FG1}, $\nu_2:=J_2 \cdot m\in \Fknd$ as well. It follows from lemma \ref{lem:FG2} and the equivalences \ref{thm:Equivalence}
%that $\varsigma_h=\nu_1+\nu_2\in \Fknd$. By \ref{id:3} of \ref{prop:properties} regarding monotonicity of $M^{(t)}_{k,N-1}[d](a,b)$ in $a$ and $b$ we conclude that $\sigma_h\in \Fknd$ as well. 

We will now show that for some $\hfrak_0\in \R$ it holds that $\sigma_{\hfrak_0},\varsigma_{\hfrak_0}\in \Fkndf(I_h)$.
By assumption $h\in C_c(\R)$, hence by Lebesgue dominated convergence the map $\hfrak\mapsto \int hd\sigma_{\hfrak}$ is continuous.
By definition $diam(\isupp(J_{\hfrak}))=l_{\delta}$, while $diam(\isupp(J))\leq l_{\delta}$ (considering that $\nu\in \Pkndf(I_{h})$ and Lemma \ref{lem:CondImplic}); considering that
$$ \lim_{\hfrak\to +\infty}J_{\hfrak}=+\infty\qquad \lim_{\hfrak\to -\infty}J_{\hfrak}=0 \qquad \forall x\in (x',x'+l_{\delta})\,$$
and
$$ \lim_{\hfrak\to +\infty}J_{\hfrak}=0\qquad \lim_{\hfrak\to -\infty}J_{\hfrak}=+\infty \qquad \forall x\in (x'-l_{\delta},x')\,,$$
we conclude that for all $x\in (a,b)$ 
$$\lim_{\hfrak\to +\infty}\min\{J,J_{\hfrak}\}(x)=J(x)1_{x>x'}+\frac{J(x')}{2}\delta_{x=x'}\,,$$
and
$$\lim_{\hfrak\to -\infty}\min\{J,J_{\hfrak}\}(x)=J(x)1_{x<x'}+\frac{J(x')}{2}\delta_{x=x'}\,$$
(notice that this applies also to the case $N<0$).  

%Notice that when $\delta>0$ the functions $J_h$ are positive on an interval of diameter $l_{\delta}\geq diam(supp(J))$, hence either $[a,z]\subset supp(J_h)$ or $[z,b]\subset supp(J_h)$. 
%Furthermore, if $J(x)$ and $J_h(x)$ intersect at two points $y_0, y_1$ then for any $y_t=(1-t)y_0+ty_2\in (y_1,y_2)$ it holds that $J(y_t)\leq M_{k, N-1}^{(t)}[|x_1-x_0|](J_1(x_0),J_2(x_1))=J_h(y_t)$ thus we can assume that for any fixed $h\in\R$ one of the following holds: 
%\begin{enumerate}
	%\item $J_h\leq J$ on $[a,b]$.
	%\item There is $z'>z$ such that $J_0\geq J$ on $[z',b]$.
	%\item There is $z'<z$ such that $J_0\geq J$ on $[a,z']$ .
%\end{enumerate}
Therefore the following holds :
\[ \lim_{\hfrak\to+\infty}\sigma_{\hfrak}(h)=\int_{x'}^{b}hd\nu<0
\qquad \text{and} \]
\[ \lim_{\hfrak\to-\infty}\sigma_{\hfrak}(h)=\int_{a}^{x'}hd\nu>0\,. \]
By the intermediate value theorem for some $\hfrak_0\in \R$ it holds that
$\int hd\sigma_{\hfrak_0}=0$. By assumption $\int hd\nu=0$ hence $\int hd\varsigma_{\hfrak_0}=0$ as well. 
We may thus write $\nu=\sigma_{\hfrak_0}(1)\prnt{\frac{1}{\sigma_{\hfrak_0}(1)}}\sigma_{\hfrak_0}+\varsigma_{\hfrak_0}(1)\prnt{\frac{1}{\varsigma_{\hfrak_0}(1)}}\varsigma_{\hfrak_0}$.
 However, by assumption $\nu$ is an extreme point, whence $\nu=\frac{1}{\sigma_{\hfrak_0}(1)}\sigma_{\hfrak_0}=\frac{1}{\varsigma_{\hfrak_0}(1)}\varsigma_{\hfrak_0}$. Recalling that $\nu \in \Pkndff^{ac}(I_h)$, these equalities imply that $\nu\in \Pkndff^M(I_h)$. \\

\item

  Assume $\nu=J_0\cdot m\in \Pkndff^{M}(I_h)$ is supported on $I_0:=[a,b]\subset I_h$. w.l.o.g. $\int_a^xhd\nu>0$ for all $x\in (a,b)$. Being a model density $J_0\in C(int(I_0))$ and it satisfies condition \eqref{AgainSyntCDKN} as equality on $I_0$.    
	If $\nu\notin \Ext{\Pkndf(I_h)}$, then it admits a non-trivial decomposition 
	\eql{\label{ExtDecomp} \nu=\half \xi_1+\half \xi_2 \qquad \text{where} \qquad \xi_1,\xi_2\in \Pkndf^{ac}(I_h)\setminus \{\nu\}\,. }

For $i\in\{1,2\}$ the following hold:
\begin{enumerate}
  \item[(a)] $\xi_i$ is supported on an interval $I_i:=[a_i,b_i]\subset I_0$. We define
	$$\tl{I}_i:=int(I_0)\cap I_i\,.$$
	\item[(b)] $I_0=I_1\cup I_2$ and $int(I_1)\cap int(I_2)\neq \emptyset$; indeed, if $int(I_1)\cap int(I_2)= \emptyset$ for $(a,b)\ni z=I_1\cap I_2$ we get $\int_a^zhd\nu=0$ which contradicts our assumption.  
	\item[(c)] $\frac{d\xi_i}{dm}$ exists and equals $J_i$, where
	\begin{itemize}
		\item $J_i$ is continuous and strictly positive on $int(I_i)$.
	  \item $J_i$ satisfies condition \eqref{AgainSyntCDKN} on $I_i$.
	\end{itemize}
	Then we observe that 
	$J_0=\half J_1+ \half J_2$ on $I_0\setminus S_{bd}$, where $S_{bd}:=\partial I_1\cup \partial I_2$ (notice that $\partial I_0\subset S_{bd}$).
	
	\begin{itemize}
		\item We can moreover assume $J_i\in C(\tl{I}_i)$. Indeed $J_i\in C(int(I_i))$ and at the boundary points $z_2\in int(I_0)\cap \partial I_1$ and $z_1\in int(I_0)\cap \partial I_2$ we can define $J_1(z_2)=2J_0(z_2)- J_2(z_2)$ and $J_2(z_1)=2J_0(z_1)- J_1(z_1)$. 
	By (b) $int(I_1)\cap int(I_2)\neq \emptyset$, hence this gives us a continuous extensions of $J_i$ from $int(I_i)$ to $\tl{I}_i$, considering that $2J_0- J_1$ and $2J_0- J_2$ are continuous in some one sided neighborhoods of $z_1$ and $z_2$, and coincide with $J_2$ and $J_1$ respectively on  these neighborhoods. 
	\end{itemize}

	\item[(d)] In view of (c) we get the identity:
	\[ J_0(x)=\half J_1(x)+ \half J_2(x) \qquad \text{ on } int(I_0)\,.\]
\end{enumerate}

Subject to these assumptions we conclude the following:
\begin{enumerate}
%\item[(e)]  $  
\item[(e)] For $i\in\{1,2\}$ we may define on $\tl{I}_i$ the functions: 
$$\eta_i:=\half\frac{J_i(x)}{J_0(x)}\,.$$
Notice that by (d) it holds that $\eta_i(x)\in [0,1]$. 
On $\tl{I}_i$ ($i\in\{1,2\})$:
\begin{enumerate}
	\item The functions $\eta_i$ are continuous.
	\item The functions $\eta_i$ are quasi-concave (this follows from the relations \eqref{AgainSyntCDKN} and \eqref{ModelEqnDefn} and the inequality $\prnt{\frac{a^{\g}+b^{\g}}{c^{\g}+d^{\g}}}^{\frac{1}{\g}}\geq \min\prnt{\frac{a}{c},\frac{b}{d}}$ whenever $a,b,c,d>0$ and $\g\in \R\setminus \{0\}$.
\end{enumerate}
\item[(f)] On $int(I_0)$
 $$ 1= \eta_1(x)+ \eta_2(x)\,.$$
\end{enumerate}

%By $(g)$ the functions $\eta_i$ are bounded by 1 on $I\setminus S_{bd}$, we can thus define continuous extensions $\eta_i:I\to [0,1]$ in the following sense:
%\begin{itemize}
	%\item $\eta_i|_{I\setminus \partial I_i}=\tilde{\eta}|_{I\setminus \partial I_i}$. 
	%\item $\eta_i|_{I_i}$ is continuous on $I_i$.
%\end{itemize}

 %Then $\eta_1$ and $\eta_2$  are bounded and satisfy
%\begin{enumerate}
%\item[(g)] $1= \eta_1(x)+ \eta_2(x)$ for any $x\in I$.
%\end{enumerate}

	%\item $J_i$ is continuous and satisfies the condition \eqref{AgainSyntCDKN} on $\mathring{I}_i$. 	
%\end{enumerate}
%Furthermore we will assume $\nu=J_0\cdot m$ where
%\begin{enumerate}
	%\item $J_0$ is continuous and satisfies the condition \eqref{AgainSyntCDKN} on $\mathring{I}$. 

	%\begin{itemize}
	%Considering that we may now define $J=J_1+J_2$. Then 
	%\begin{enumerate}
		%\item $J$ is continuous on $I\setminus S_{\partial}$ where $S_{\partial}:=(\partial I_1\cup \partial I_2$. 
	%\end{enumerate}

	%\end{itemize}

\end{enumerate}

We will show that $\eta_1$ and $\eta_2$ must be constant; this will follow as a consequence of the following observations:

%The possibility $I_1\cap I_2=\emptyset $ must be excluded since this would imply $\nu|_{[a,x]}(f)=0
\begin{itemize}
	\item We saw in (b) that $\tl{I}_1\cap \tl{I}_2$ is a (non-empty) interval.
	%Since $\xi_1(h)=\xi_2(h)=0$, and $\int_a^x hd\nu>0$ for all $x\in (a,b)$, necessarily 
	
	%(moreover, since $\half J_1+\half J_2=J_0>0$ on $(a,b)$ there can be no point $z\in (a,b)$ where $J_1(z)J_2(z)=0$); hence $I_1\cup I_2 = I$. W.l.o.g we may assume $a\in I_1$ and $b\in I_2$. 
	%
	\item By property (f) $\eta_1(x)=1$ (resp. $\eta_2(x)=1$) on $\tl{I}_1\setminus I_2$ (resp on $\tl{I}_2\setminus I_1$).
	\item By $(e)$ $\eta_1$ and $\eta_2$ (being quasi-concave on $\tl{I}_1\cap \tl{I}_2$) are either monotone or first non-decreasing and then non-increasing on  $\tl{I}_1\cap \tl{I}_2$. However then by property (f) it follows that they both must be  monotone on $\tl{I}_1\cap \tl{I}_2$. By the foregoing we conclude they are monotone on the whole of $\tl{I}_1$ and $\tl{I}_2$ respectively. 
\item This implies that $I_i\cap \partial I_0\neq \emptyset$; for example if $I_1\cap \partial I_0=\emptyset$ then $\eta_2=1$ on $\tl{I}_2\setminus I_1$, while $1=\eta_1+\eta_2$ in $\tl{I_1}\cap \tl{I_2}$; however, unless $\eta_1=0$ on $\tl{I}_1\cap \tl{I}_2$, this is inconsistent for $\eta_2$ which is monotone. Therefore w.l.o.g. we may assume $a\in I_1$.

	Now there are two possibilities:
\end{itemize}
	\begin{myitemize2}
	\item[$I_1\setminus I_2=\emptyset$ :] Then $I_1\subset I_2$ and $a\in I_2$. If $I_1\subsetneqq I_2$ then by property $(f)$ we know that 
	 $\eta_2=1$ on $I_2\setminus I_1$ and it is therefore non-decreasing on $I_2$. 	 Let $F(x):=\int_a^{x}hd\nu$ (notice that $F(a)=F(b)=0$ and $F(x)>0$ on $(a,b)$). Consider the identity
	$$\xi_2(h)=0=\int_{a}^{b}\eta_2dF=F\eta_2|_{a}^{b}-\int_{a}^{b}Fd\eta_2=0-\int_{a}^{b}Fd\eta_2\,.$$
	Since $\eta_2$ is non-decreasing  on $I_2$ this is only possible if $\eta_2= 1$ on $\tl{I}_2=int(I_0)$. But then by property $(f)$ we conclude $\eta_1\equiv 0$ on $I_1$, contradicting the assumption $\xi_1\in \Pkndf(I_h)$. 	If $I_1= I_2=I$ by the foregoing $\eta_2$ is either non-decreasing or non-increasing; however the last identity excludes any possibility that $\eta_2$ is non-constant. Hence $\eta_2=const$, but then by property $(f)$ $\eta_1=const$ as well. Since $J_1,J_2$ and $J_0$ are probability densities we conclude that $\eta_1\equiv 1$ and $\eta_2\equiv 1$ on $int(I)$, which amounts to $\nu=\xi_1=\xi_2$. This clearly contradicts the assumption $\xi_1,\xi_2\in \Pkndf^{ac}(I_h)\setminus \{\nu\}$.
 	\item[$I_1\setminus I_2\neq \emptyset$ :] This implies that $\eta_1$ must be non-increasing on $\tl{I}_1$; indeed, $\eta_1$ and $\eta_2$ assume values in $[0,1]$; 	since $\eta_1=1$ on $\tl{I}_1\setminus I_2$, and monotone on $\tl{I}_1$ it must be non-increasing. We conclude that in this case $\eta_1$ ought to be the constant $1$ on $\tl{I}_1$; indeed, by the identity
	\eq{\xi_1(h)=0=\int_{a}^{b_{1}}\eta_1dF=F\eta_1|_{a}^{b_{1}}-\int_{a}^{b_1}Fd\eta_1=F(b_1)\eta_1(b_1)-\int_{a}^{b_1}Fd\eta_1 \,,}
	where $F(b_1)\eta_1(b_1)\geq 0$. Since $\eta_1$ is non-increasing and $F(x)>0$ on $(a,b_1)$ this identity can hold only if $\eta_1=const$. 
	 Since on $\tl{I}_1\setminus I_2$ (which is non-empty by assumption) it holds that $\eta_1=1$, we conclude that $\eta_1\equiv 1$ on $\tl{I}_1$. 
	By property $(f)$ it then follows that $\tl{I}_1\cap \tl{I}_2=\emptyset$; as we observed before, this situation contradicts the assumption $\nu\in \Pkndff(I_h)$.
	\end{myitemize2}

\qedhere
\end{proof}

\chapter{The general setting}\label{chp:GeneralSetting}
For $K\in\R$, $N\in (-\infty,0]\cup[2,\infty]$ and $D\in (0,\infty]$, subject to the proviso that $D<l_{\delta}$ if $\delta>0$ and $K<0$,  Corollary \ref{cor:CC} gave the following simplified characterization of the constant $\CC$ associated with the respective problem:
\eql{\label{alpha_Characterization}\CC= \inf_{\xi\in \Fknd^M(\R)}\Lambda_{*}(\xi) \qquad \text{where  } \quad \Lambda_*(\xi):=\inf_{f\in\F_{*}(\xi)}\Phi_{u_f^*,v_f^*}(\xi)\,.}
The restriction to the class $\Fknd^M(\R)$ simplifies the optimization problem significantly, yet a finer optimization over $\Fknd^M(\R)$ is required. 

\bigskip
Recall that  the expression $\inf_{f\in\F_{*}(\xi)}\Phi_{u_f^*,v_f^*}(\xi)$ for $\Lambda_*(\xi)$ is 0-homogeneous in $\xi$. In addition, whenever $f\in \F_*(\xi)$ and $r\in \R$ then also $T_{r}[f](x)\in \F_*(\xi)$, where $T_{r}[f](x):=f(x+r)$, and so we conclude that $\Lambda_*(\xi)$ is invariant under scaling and translations of $\xi$.

 %(in the case of $\Lambda_{LS}(\xi)$ this is possible if we set $v_f^*(\xi)=\int f^2\log (f^2)d\xi$ and we consider the function space $\tlF_{LS}(\xi):=\{ f\in \Cinf(M)\quad \text{s.t}\quad f^2=c+g \quad \text{with}\quad c>0, g\in \F_{Poi}(\xi)\}$). 

\bigskip

  Up to a translation and scaling, each $\xi\in \Fknd^M(\R)$ is given by $d\xi=J\,dm$ with $J(x):=J_{K,N,\hfrak}(x)1_{[a,b]}(x)\in L^1(\R)$, where
	\begin{enumerate}
	  \item $J_{K,N,\hfrak}$ are the densities which were introduced in \eqref{eqn:JhkN}\,,
		\item $a,b\in \R\cup \{-\infty,\infty\}$ are s.t.
		\begin{itemize}
		\item $0<b-a\leq \min(D, l_{\delta})$, and
		\item $[a,b]\subset \isupp(J_{K,N,\hfrak})$.
		\end{itemize}
	\end{enumerate}

%\begin{itemize}
%
	%\item $0<d\leq D_{\delta}$ where 
	%$$D_{\delta}=\min\{D,l_{\delta}\}\,\qquad \mbox{as defined in \ref{dfn:deltaSymbols}}\,.$$
	%\item $\hfrak\in \R$ is such that $[-d/2+r,d/2+r]\subset \left[\zeta_{-}(J_{K,N,\hfrak}),\zeta_{+}(J_{K,N,\hfrak})\right]$.
%\end{itemize} 

\blue{Note that} we may explicitly write   $J_{K,N,\hfrak}(x)$ on $\isupp(J_{K,N,\hfrak})$ as
\eql{\label{DensityExpressions_0} 
&J_{K, N, \hfrak}(x)=(\co_{\delta}(x)+\frac{\hfrak}{N-1} \si_{\delta}(x))_+^{N-1} =g_{s_{K,N,\hfrak}}\cdot Y_{K,N,\hfrak}(x+s_{K,N,\hfrak})}
where
%=
%
%Y_{K,N}(x) =
\eql{\label{DensityExpressions_1}
\gls{Yknh}
=\begin{cases}
         \cos(\sqrt{\delta} x)_{+}^{N-1} & \mbox{ (a) \,\,if } \delta >0, N<\infty\,,\\
         x^{N-1}_+ & \red{\mbox{ (b1)\,if } \delta =0, N<\infty\text{ and } \hfrak\neq 0}\,, \\
         1 & \red{\mbox{ (b2)\,if } \delta =0, N<\infty\text{ and } \hfrak= 0}\,, \\
 \cosh(\sqrt{-\delta} x)^{N-1}\,\,\,\,  &\mbox{ (c1) if } \delta <0, N<\infty \text{ and } |\frac{\hfrak}{N-1}|<1\,,\\
 \sinh(\sqrt{-\delta} x)_+^{N-1}  &\mbox{ (c2) if } \delta <0, N<\infty \text{ and } |\frac{\hfrak}{N-1}|> 1\,, \\
 \red{ e^{\pm \sqrt{-\delta}(N-1)x} }& \red{\mbox{ (c3) if } \delta <0, N<\infty \text{ and } |\frac{\hfrak}{N-1}|= 1 }\,,\\
  \red{ e^{-\frac{Kx^2}{2}} }& \red{\mbox{ (d1) if } N=\infty \text{ and } K\neq 0 }\,,\\
   \red{ e^{\hfrak x} }& \red{\mbox{ (d2) if } N=\infty \text{ and } K=0 }\,,
\end{cases}
}
%  & \quad & \quad
and
    % {\footnotesize
    % \eql{\label{DensityExpressions}  g_{s_{K,N,\hfrak}}= & \quad & \quad  & \quad \\ \nonumber
    % \quad & &\quad\text{with}\quad   \,\,&\qquad\mbox{  \,\,\, if }& \\ \nonumber
    %     \quad &     &\quad\text{with}\quad \qquad\qquad\qquad \,\,\,\,\,&\qquad\mbox{ \,\, if }&  \delta =0, N<\infty\text{ and } \hfrak\neq 0\,,\\ \nonumber
        
    %         \quad &    1 &\qquad\qquad \,\,\,\,\,&\qquad\mbox{\,\, if }&  \delta =0, N<\infty\text{ and } \hfrak= 0\,,\\ \nonumber
    % \quad & &\quad\text{with}\quad  &\qquad\mbox{ \,\, if }& \delta <0, N<\infty \text{ and } |\frac{\hfrak}{N-1}|<1\,, \\ \nonumber
    % \quad &  &\quad\text{with}\quad   &\qquad\mbox{ \,\, if }& \delta <0, N<\infty \text{ and } |\frac{\hfrak}{N-1}|\geq 1\,,\\
    % \nonumber
    % \quad &  &\quad\quad   &\qquad\mbox{ \,\, if }& \red{\delta <0, N<\infty \text{ and } |\frac{\hfrak}{N-1}|= 1}\,,\\
    % \nonumber
    % \quad & \red{} &\quad\quad\text{with}\quad  \qquad\qquad\qquad \,\,\,\,\,   &\qquad\mbox{ \,\, if }&  \red{N=\infty \text{ and } K\neq 0}\,,\\
    % \nonumber
    % \quad & \red{1} &\quad\quad   &\qquad\mbox{ \,\, if }& \red{N=\infty \text{ and } K=0}\,.
    %  } }

\eql{\label{DensityExpressions}}
\begin{table}[H]
\centering
\blue{
\begin{tabular}{ c|c|c  }
 \hline
 $s_{K,N,\hfrak}$ & $g_{s_{K,N,\hfrak}}$ &  In case\\
 \hline
$-\tan^{-1}\prnt{\frac{\hfrak}{(N-1)\sqrt{\delta}}}$ & $\frac{1}{\cos(s_{K,N,\hfrak})_{+}^{N-1}}$  &    (a)\, \\
 $\frac{N-1}{\hfrak}$ & $\frac{1}{s_{K,N,\hfrak}^{N-1}}$ &    (b1)\\
 $0$ &$1$ &  (b2)\\
 $\tanh^{-1}\prnt{\frac{\hfrak}{(N-1)\sqrt{-\delta}}}$ & $\frac{1}{\cosh(s_{K,N,\hfrak})^{N-1}}$ &   (c1)\\
 $\coth^{-1}\prnt{\frac{\hfrak}{(N-1)\sqrt{-\delta}}}$ & $\frac{1}{\sinh(s_{K,N,\hfrak})^{N-1}}$ &  (c2)\\
$0$ & $1$& (c3)\\
$-\frac{\hfrak}{K}$ & $ e^{\frac{\hfrak^2}{2K}}$ &  (d1)\\
 $0$ & $1$ & (d2)\\
\hline
\end{tabular}
\caption{$s_{K,N,\hfrak}$ and $g_{s_{K,N,\hfrak}}$  in each of the cases}\label{table:2}
}
\end{table}

\red{In cases $(a)$ and $(d1)$} we may abbreviate and write $Y_{K,N}(x)$ instead of  $Y_{K,N,\hfrak}(x)$, since in these cases $Y_{K,N,\hfrak}(x)$ is independent of $\hfrak$.

\section{The pertinent parameters}\label{sec:per_parameters}
\blue{Now let} $\xi=J_{K,N,\hfrak}1_{[a,b]}\cdot m\in \Fknd^M(\R)$ \blue{as above, and assume in addition it has a compact support 
(recall $[a,b]\subset \isupp\prnt{J_{K,N,\hfrak}}$ and $J_{K,N,\hfrak}1_{[a,b]}\in L^1(\R)$)}. The identities \eqref{DensityExpressions_0} - \eqref{DensityExpressions} give us a dictionary for a translation from $\hfrak$ to  $s_{K,N,\hfrak}$. A multiplicative factor $g_{s_{K,N,\hfrak}}$ is involved with this translation, however since $
\Lambda_{*}(\xi)$ is 0-homogeneous in $\xi$, this factor is immaterial for the outcomes of $\Lambda_{*}(\xi)$.  \blue{Considering invariance also under translations, if  $d=b-a<\infty$ and denoting $r_{a,b}=\half(a+b)$},
\red{ we may identify}: 
 \red{
 {\footnotesize
 \eq{
 \Lambda_*(J_{K,N,\hfrak}(x)1_{[a,b]}(x) \cdot m)=\Lambda_*(Y_{K,N,\hfrak}(x+s_{K,N,\hfrak})1_{[a,b]}(x) \cdot m)=\Lambda_*(Y_{K,N,\hfrak}(x+ r_{a,b}+s_{K,N,\hfrak})1_{[-\frac{d}{2},\frac{d}{2}]}(x) \cdot m)\,.
  %\eq{&\Lambda_*(J_{K,N,\hfrak}(x)1_{[a,b]}(x) \cdot m)\\&=\begin{cases}\Lambda_*(Y_{K,N,0}(x+s_{K,N,\hfrak})1_{[a,b]}(x) \cdot m)\\  &  \\
  %\Lambda_*(Y_{K,N,1}(x+s_{K,N,\hfrak})1_{[a,b]}(x)\cdot m)\end{cases}=\,\,
  %\begin{cases}\Lambda_*(Y_{K,N,0}(x+r_{a,b}+s_{K,N,\hfrak})1_{[-\frac{d}{2},\frac{d}{2}]}(x) \cdot m)& \substack{\mbox{ if }\delta>0\\ \mbox{or } \delta<0 \mbox{ and } |\frac{\hfrak}{N-1}|<1}\\
 %  & \\
 % \Lambda_*(Y_{K,N,1}(x+r_{a,b}+s_{K,N,\hfrak})1_{[-\frac{d}{2},\frac{d}{2}]}(x) \cdot m)& \substack{\mbox{ if }\delta= 0\\ \mbox{or } \delta<0 \mbox{ and } |\frac{\hfrak}{N-1}|\geq 1}\,.\end{cases}
  %}
}
  }
  }

  \begin{remk} \red{Immediate observations:}
  
 \begin{enumerate}
     \item Note that the above representation implies in particular that 
     $$\Lambda_*(J_{K,N,\hfrak}(x)1_{[a,b]}(x) \cdot m) = \Lambda_*(J_{K,N,\tl{\hfrak}}(x)1_{[-\frac{d}{2},\frac{d}{2}]}(x) \cdot m)$$
     for a different $\tl{\hfrak}\in \R$ s.t. \blue{$[-\frac{d}{2},\frac{d}{2}]\subset  \isupp\prnt{J_{K,N,\tl{\hfrak}}}$  and $J_{K,N,\tl{\hfrak}}1_{[-\frac{d}{2},\frac{d}{2}]}\in L^1(\R)$}. Hence w.l.o.g. we may always assume  that $[a,b]= [-\frac{d}{2},\frac{d}{2}]$.
     
\item In addition, we can exhaust the values of $\Lambda_*(J_{K,N,\hfrak}(x)1_{[-\frac{d}{2},\frac{d}{2}]}(x) \cdot m)$ by considering \blue{$\Lambda_{*}(Y_{K,N,\hfrak}(x+s)1_{[-d/2,d/2]}(x) \cdot m)$} for all translations $s\in \R$ s.t. \\
\blue{$[s-\frac{d}{2}, s+\frac{d}{2}]\subset\isupp\prnt{Y_{K,N,\hfrak}}$ and 
$Y_{K,N,\hfrak}(x+s)1_{[-\frac{d}{2},\frac{d}{2}]}(x)\in L^1(\R)$}
as follows:
\begin{itemize}
    \item  In case (a), with a single fixed density $Y_{K,N}(x)=\cos(\sqrt{\delta}x)_+^{N-1}$; 
    \item In case (b), considering the fixed density $Y_{K,N,\infty}(x):=x_+^{N-1}$, corresponding to the case (b1), as well as the density $Y_{K,N,0}\equiv 1$ with $s=0$, corresponding to the exceptional case (b2); 
    \item In case (c), with the two fixed densities  $Y_{K,N,0}(x)=\cosh(\sqrt{-\delta}x)^{N-1}$ corresponding to the case (c1), and $Y_{K,N,\infty}(x):=\sinh(\sqrt{-\delta}x)_+^{N-1}$ corresponding to the case (c2), as well as the density $Y_{K,N, N-1}=e^{(N-1)\sqrt{-\delta}x}$ with $s=0$, corresponding to the exceptional case (c3) (there is no need to consider the case $\hfrak=-(N-1)$ as $\Lambda_{*}(\xi)$ is also invariant \blue{w.r.t.}  reflections);
    \item In case (d1), with a single fixed density $Y_{K,\infty}(x)=e^{-\frac{Kx^2}{2}}$\pink{;}
    \item In case (d2), $Y_{0,\infty,\hfrak}(x)$  does depend on $\hfrak$, and we do not interpret the variations over $\hfrak$ as translations of a fixed density. 
   
\end{itemize}
\item 
\red{We may thus summarize:} with the exception of case (d2), for the exhaustion of the outcomes of $\Lambda_{*}(\xi)$ for $\xi\in \Fknd^M(\R)$ with bounded support, we may consider either 
\begin{enumerate}
	\item a parameter space $(\hfrak,d)$ and the outcomes of the map:\\
	\pinka{$(\hfrak,d)\mapsto \Lambda_{*}(J_{K,N,\hfrak}(x)1_{[-\frac{d}{2},\frac{d}{2}]}(x) \cdot m)$} (assuming \blue{$ [-\frac{d}{2}, \frac{d}{2}]\subset \isupp\prnt{J_{K,N,\hfrak}}$ \\and 
$J_{K,N,\hfrak}1_{[-\frac{d}{2},\frac{d}{2}]}\in L^1(\R)$}), or
 \item a parameter space $(s,d)$ and the outcomes of a map of the form $(s,d)\mapsto \Lambda_*(Y_{K,N,\hfrak}(x+s)1_{[-\frac{d}{2},\frac{d}{2}]}(x)m)$ where $\hfrak$ takes one, two or three values, depending on whether we consider case (a),(b),(c) or (d1) as described above (assuming  \blue{$ [s-\frac{d}{2}, s+\frac{d}{2}]\subset \isupp\prnt{Y_{K,N,\hfrak}}$ and 
$Y_{K,N,\hfrak}(x+s)1_{[-\frac{d}{2},\frac{d}{2}]}(x)\in L^1(\R)$}). 
\end{enumerate}
 \end{enumerate}
\end{remk}

\begin{remk}
We do not restrict ourselves to a single set of parameters. For certain purposes, especially conciseness of definitions and  theorem statements, the parameter $\hfrak$ is preferable.
However, since it is easier to interpret the parameter $s$, for the proofs of certain statements, as well as for reasons of clarity, the parameter $s$ is sometimes preferable. 
For different purposes we will use a different parameter while keeping in mind \eqref{DensityExpressions}, which constitutes a dictionary for translating from one parameter to another. 
\end{remk}

\bigskip

\section{The regularity domain}
The observations of the previous section motivate the following definitions.
\begin{defn}[$\gls{D_knd}$,\,$\gls{xi_knd_rc}$]\label{defn:Dreg}
 Assume $K\in \R$, $N\in(-\infty,0]\cup (1,\infty]$ and $D\in (0,\infty]$. We define the `parametric domain of regularity':
 \blue{
$$\D^{reg}_{(K,N,D)}:=\cprnt{ (\hfrak, d)\in \R\times (0,D]: \,\,d<\infty \,\,\,\text{and}\,\,\, [-\frac{d}{2}, \frac{d}{2}]\subset int\prnt{
\isupp\prnt{ J_{K,N,\hfrak}  }}} \,.$$
%\prnt{ \zfrak_{-}(J_{K,N,\hfrak}),\zfrak_{+}(J_{K,N,\hfrak})} \}\,. 
}
We denote by $\pi_{\hfrak},\pi_d:\D^{reg}_{(K,N,D)}\to \R$ the natural projections defined by:
\eq{ &\pi_{\hfrak}(\hfrak_0, d_0)=\hfrak_0 \qquad \text{and}\qquad
\pi_{d}(\hfrak_0, d_0)=d_0 \,.}
%so that
%\eq{ &\pi_{\hfrak}\D^{reg}_{(K,N,D)}=\{d:\,(\hfrak_0, d)\in \D^{reg}_{(K,N,D)}\}\\&
%\pi_{d}\D^{reg}_{(K,N,D)}=\{d\in (0,D]:\exists\hfra\in\R: (\hfrak, d_0)\in \D^{reg}_{(K,N,D)}\} }
For $r\in \R$ and $c>0$ we define $$\xi_{(K,N),r,c}: \D^{reg}_{(K,N,D)}\to \Fknd^M(\R)$$  by 
$$\xi_{(K,N),r,c}(\hfrak, d)(x):=cJ_{K,N,\hfrak}(x+r)1_{[-\frac{d}{2}, \frac{d}{2}]}(x+r) \cdot m\,.$$
\end{defn}
\begin{defn}[$\gls{M_knd_reg}$,  $\overline{\Fkndreg}$]\label{def:Mreg}
We define
$$\Fkndreg:= \bigcup_{\substack{r\in\R\\c\in\R_+^*}}\xi_{(K,N),r,c}\prnt{\D^{reg}_{(K,N,D)}}\,,$$ 
where $\R_+^*:=\{c\in \R:\, c>0\}$. 
This is the subset of $\Fknd^M(\R)$ of compactly supported measures whose densities do not vanish or become infinite at the end-points of their support. We will refer to it as the `regularity domain'. In addition we define $\overline{\Fkndreg}$ to be the closure of $\Fkndreg$ with respect to the weak topology. 
\end{defn}
\bigskip

\begin{remk} We use the term `regularity' since, as will be discussed in the next chapter, calculation of $\Lambda_{Poi}(\xi)$ for measures $\xi\in \Fkndreg$ corresponds to a calculation of an eigenvalue of a `regular Sturm-Liouville problem', a term which will be defined there. 
\end{remk}
\begin{remk}\label{SetsEqualityLambda} 
By invariance to scalings and translations we may identify
$$\Lambda_{*}\prnt{\Fkndreg}=\Lambda_{*}\prnt{\xi_{(K,N)}\prnt{\D^{reg}_{(K,N,D)}}}\,,$$
where 
$$\gls{xi_knd_01}:=\xi_{(K,N),0,1}\,.$$
\end{remk}
\bigskip
%\begin{defn}[ $\partial\Fkndreg$ and $\overline{\Fkndreg}$]\label{defn:clDreg}
%We define  $\partial\Fkndreg:=\Fknd^M(\R)\setminus \Fkndreg$. We also define 
%$\overline{\Fkndreg}:=\Fknd^M(\R)$.
%\end{defn}

Our goal is to solve the optimization problem:
{\large \eql{\label{optimizationProblem_Model}
 \text{Find: } \inf_{\xi\in \Fknd^M(\R)} \Lambda_{*}(\xi)\,. }
}

For technical reasons which will be clarified in the next chapter, it is preferable to study the optimization problem over the regularity domain
\eql{\label{optimizationProblem_Model2}
 \text{Find: } \inf_{\xi\in \Fkndreg} \Lambda_{*}(\xi)\,. 
}
In general it is not justified to switch from problem  \eqref{optimizationProblem_Model} to problem \eqref{optimizationProblem_Model2}, since $\Fkndreg\subsetneq \Fknd^M(\R)$. The following two examples demonstrate which situations might occur:
\begin{exmp}\label{bdry_type_I}
Assume $N\leq 0$ and $K=-(N-1)$. Consider a measure $\xi\in \Fknd^M(\R)$ defined by: $d\xi(x)=\cosh(x)^{N-1}1_{[0,\infty)}(x)dm$.  It is supported on an interval of infinite diameter, hence $\xi\notin \Fkndreg$.
%Notice that $\xi\in\Fknd^M(\R)$, but since $\xi$ is supported on an unbounded interval, the SL problem associated with $\Lambda_{Poi}(\xi)$ is not regular.
\end{exmp}
\begin{exmp}\label{bdry_type_II}
Assume $N>2$ and $K=N-1$. Consider a measure $\xi\in \Fknd^M(\R)$ defined by: $d\xi(x)=\cos(x)^{N-1}1_{[0,\frac{\pi}{2}]}(x)dm$. Notice that $\cos(x)^{N-1}$ vanishes at $\frac{\pi}{2}$ and hence $\xi\notin \Fkndreg$.
\end{exmp}

It turns out that although $\Fkndreg\subsetneq \Fknd^M(\R)$,
the `difference' is not substantial as asserted by the following theorem:
\begin{thm}\label{thm:identify_weak_lim} $\Fknd^M(\R)\subset \overline{\Fkndreg}$. 
\end{thm}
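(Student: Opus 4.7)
The plan is to approximate an arbitrary $\xi \in \Fknd^M(\R)$ from inside by bounded-support representatives whose support sits strictly inside $\isupp(J_{K,N,\hfrak})$, placing them in $\Fkndreg$. Examples \ref{bdry_type_I} and \ref{bdry_type_II} pinpoint the only ways a model measure can fail to lie in $\Fkndreg$: either its support is unbounded, or the density vanishes / blows up at a support endpoint. A single exhaustion sequence will remedy both issues.

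First, using Definition \ref{defn:CD_Model_First} and the translation invariance of the model class, I would write the density of $\xi$ in the form $J(y) = c\, J_{K,N,\hfrak}(y)\, 1_{[a,b]}(y)$ for suitable $c>0$, $\hfrak \in \R$, and $[a,b] \subset \isupp(J_{K,N,\hfrak})$ with $b-a \leq D$ and $J \in L^1(\R)$. I would then choose sequences $a_n \to a$, $b_n \to b$ with $a_n < b_n$ both finite and $[a_n,b_n] \subset \mathring{\isupp}(J_{K,N,\hfrak})$; concretely, set $a_n := \max\{a+1/n,\,-n\}$ if $a$ coincides with the left endpoint of $\isupp(J_{K,N,\hfrak})$, $a_n := a$ otherwise, and $a_n := -n$ if $a=-\infty$, with analogous choices for $b_n$. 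Define the candidate approximants $\xi_n := c\, J_{K,N,\hfrak}(y)\, 1_{[a_n,b_n]}(y) \cdot m$.

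Next I would verify the two required properties. For membership $\xi_n \in \Fkndreg$, set $r_n := -(a_n+b_n)/2$ and $d_n := b_n - a_n$, and re-express the translated density $J_{K,N,\hfrak}(\hat{y} + (a_n+b_n)/2)$ (with $\hat{y} := y + r_n$) as a positive constant times some $J_{K,N,\hfrak_n'}(\hat{y})$. For $N<\infty$ this follows from the addition identities for $\co_\delta, \si_\delta$: one rewrites $\co_\delta(\hat{y}+\sigma) + \tfrac{\hfrak}{N-1}\si_\delta(\hat{y}+\sigma)$ as $A\,\co_\delta(\hat{y}) + B\,\si_\delta(\hat{y})$ with $A>0$ (because $\sigma := (a_n+b_n)/2 \in \mathring{\isupp}(J_{K,N,\hfrak})$), producing $\hfrak_n' := (N-1)B/A$ and scalar $A^{N-1}$; for $N=\infty$ the same conclusion is a one-line manipulation of $e^{\hfrak x - Kx^2/2}$. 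This exhibits $\xi_n = \xi_{(K,N),r_n,c_n}(\hfrak_n',d_n)$ with $(\hfrak_n',d_n) \in \D^{reg}_{(K,N,D)}$, since $0 < d_n \leq b-a \leq D$ and $[-d_n/2, d_n/2] \subset \mathring{\isupp}(J_{K,N,\hfrak_n'})$ by construction. For the weak convergence $\xi_n \to \xi$, given $f \in C_b(\R)$ the integrand $f \cdot c\, J_{K,N,\hfrak} \cdot 1_{[a_n,b_n]}$ converges pointwise almost everywhere to $f \cdot c\, J_{K,N,\hfrak} \cdot 1_{[a,b]}$ and is dominated by $\|f\|_\infty\, c\, J_{K,N,\hfrak}\, 1_{[a,b]} \in L^1(\R)$, so dominated convergence yields $\int f\, d\xi_n \to \int f\, d\xi$.

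The real content of the argument is the translation-to-reparametrization step: one must check that a shift of $J_{K,N,\hfrak}$ by a point of $\mathring{\isupp}(J_{K,N,\hfrak})$ is, up to a positive scalar, again in the family $\{J_{K,N,\hfrak'}\}$. This is a short algebraic verification, proceeding case by case through the list \eqref{DensityExpressions_1} and relying on the fact that $J_{K,N,\hfrak}^{1/(N-1)}$ (respectively $\log J_{K,\infty,\hfrak}$) solves a linear second-order ODE whose solution space is translation-invariant. I do not anticipate any substantive obstacle, only bookkeeping across the cases.
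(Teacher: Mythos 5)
Your proposal is correct and matches the paper's approach: exhaust the open interior $\mathring{I}_{\xi}$ by compact subintervals $[a_n,b_n]$, set $\xi_n := \xi|_{[a_n,b_n]}$, and pass to the limit by a dominated/monotone convergence argument. The translation-to-reparametrization check you carry out (shifting $J_{K,N,\hfrak}$ by an interior point of $\isupp(J_{K,N,\hfrak})$ gives, up to a positive scalar, some $J_{K,N,\hfrak'}$) is indeed what guarantees $\xi_n \in \Fkndreg$; the paper leaves it implicit, relying on the identities \eqref{DensityExpressions_0}--\eqref{DensityExpressions} already established in Section \ref{sec:per_parameters}.
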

 The assertion is equivalent to the following statement:  for every $\xi_0\in\overline{\Fkndreg}$ there is a sequence $(\xi_n)_{n\in\Nbb}\subset \Fkndreg$ s.t. $\xi_n\xrightarrow{w}  \xi_0$.  
\begin{proof}
Every $\xi=J\cdot m\in\Fknd^M(\R)$ is supported on an interval $I_{\xi}\subset \R$, where $J>0$ on $int(I_{\xi}):=(a,b)$. Let $a_n\searrow a$ and $b_n\nearrow b$ be two sequences of points, so that  $[a_n,b_n]\subset (a,b)$ for every $n\in\Nbb$. Accordingly we define a sequence of measures $\xi_n:=J1_{[a_n,b_n]}\cdot m=\xi|_{[a_n,b_n]}$. Then for every $f\in C_b(\R)$ it holds that $\int fd\xi_n=\int f1_{[a_n,b_n]}d\xi \xrightarrow{n\to\infty} \int fd\xi$; indeed, considering that $\xi(\R)<\infty$ and $f$ is bounded:
\eq{&\Abs{\int f1_{[a_n,b_n]}d\xi- \int fd\xi}=\Abs{\int f1_{\R\setminus [a_n,b_n]}d\xi}\leq \int \Abs{f}1_{\R\setminus [a_n,b_n]}d\xi\\&\leq ||f||_{\infty}\int 1_{\R\setminus [a_n,b_n]}d\xi\to 0\,,}
by monotonic convergence.
\end{proof}

\begin{prop}\label{prop:usc_min}
If $\M_b\ni \xi\mapsto \Lambda_{*}(\xi)$ is upper semi-continuous with respect to the weak topology then 
$$\inf_{\xi\in \Fkndreg}\Lambda_{*}(\xi)=\inf_{\xi\in \Fknd^M(\R)}\,\Lambda_{*}(\xi)\,.$$
\end{prop}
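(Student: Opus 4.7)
The proof proposal is short: it is essentially a two-line argument once Theorem \ref{thm:identify_weak_lim} and the assumed upper semi-continuity are in hand.

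First I would establish the trivial direction. Since $\Fkndreg \subset \Fknd^M(\R)$ (this is immediate from Definitions \ref{defn:Dreg} and \ref{def:Mreg}, whose image lies in $\Fknd^M(\R)$ by construction), taking infimum over the smaller class gives
\[
\inf_{\xi\in \Fkndreg}\Lambda_{*}(\xi)\;\geq\; \inf_{\xi\in \Fknd^M(\R)}\Lambda_{*}(\xi).
\]

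For the reverse inequality, I would fix an arbitrary $\xi_0\in \Fknd^M(\R)$ and invoke Theorem \ref{thm:identify_weak_lim}, which supplies a sequence $(\xi_n)_{n\in\Nbb}\subset \Fkndreg$ with $\xi_n \xrightarrow{w} \xi_0$. By definition of the infimum, $\Lambda_{*}(\xi_n)\geq \inf_{\Fkndreg}\Lambda_{*}$ for every $n$, hence
\[
\limsup_{n\to\infty}\Lambda_{*}(\xi_n)\;\geq\;\liminf_{n\to\infty}\Lambda_{*}(\xi_n)\;\geq\;\inf_{\xi\in\Fkndreg}\Lambda_{*}(\xi).
\]
On the other hand, the upper semi-continuity hypothesis on $\Lambda_{*}$ with respect to the weak topology on $\M_b$ gives
\[
\Lambda_{*}(\xi_0)\;\geq\;\limsup_{n\to\infty}\Lambda_{*}(\xi_n).
\]
Chaining the two displays yields $\Lambda_{*}(\xi_0)\geq \inf_{\Fkndreg}\Lambda_{*}$. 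Taking the infimum over $\xi_0\in \Fknd^M(\R)$ produces
\[
\inf_{\xi\in\Fknd^M(\R)}\Lambda_{*}(\xi)\;\geq\;\inf_{\xi\in \Fkndreg}\Lambda_{*}(\xi),
\]
which together with the first step gives the desired equality.

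The proof is essentially mechanical; there is no genuine obstacle, because the two real inputs — the weak-density statement of Theorem \ref{thm:identify_weak_lim} and the upper semi-continuity — have already been supplied (the latter as a hypothesis). The only subtlety to double-check is that the approximating sequence produced in the proof of Theorem \ref{thm:identify_weak_lim} (restrictions $\xi|_{[a_n,b_n]}$ with $a_n\searrow a$, $b_n\nearrow b$) genuinely lies in $\Fkndreg$ rather than merely in $\Fknd^M(\R)$ — but this is clear, since each such restriction has compact support strictly inside $\mathrm{int}(I_\xi)\subset \mathrm{int}(\isupp(J_{K,N,\hfrak}))$, so the pair $(\hfrak,d_n)$ with $d_n:=b_n-a_n$ lies in $\D^{reg}_{(K,N,D)}$ (up to an admissible translation).
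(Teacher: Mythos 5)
Your proof is correct and follows essentially the same route as the paper's: you unpack, by explicit sequences from Theorem \ref{thm:identify_weak_lim}, the abstract fact (used in the paper via the chain $\Fkndreg\subset\Fknd^M(\R)\subset\overline{\Fkndreg}$) that an upper semi-continuous function has the same infimum over a set and over its closure. Your final remark about the approximating restrictions genuinely lying in $\Fkndreg$ is a worthwhile sanity check and is correct.
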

\begin{proof}
According to Theorem \ref{thm:identify_weak_lim}:
$$\Fkndreg\subset \Fknd^M(\R)\subset \overline{\Fkndreg}\,,$$
hence
$$\inf_{\xi\in \Fkndreg}\,\Lambda_{*}(\xi)\geq \inf_{\xi\in \Fknd^M(\R)}\,\Lambda_{*}(\xi) \geq \inf_{\xi \in \overline{\Fkndreg}}\,\Lambda_{*}(\xi)\,.$$
However if $\xi\mapsto \Lambda_{*}(\xi)$ is \red{u.s.c.} with respect to the weak topology then $$\inf_{\xi\in \Fkndreg}\,\Lambda_{*}(\xi)= \inf_{\xi\in \overline{\Fkndreg}}\,\Lambda_{*}(\xi)\,,$$
and the asserted equality follows.
\end{proof}

\section{A general 1-dimensional diameter monotonicity lemma}

Assume $\bar{\xi}\in\P(\R)$ (a probability measure)  which is compactly supported on an interval $I$. 
For many problems which we might consider, we may identify $\Lambda_*(\bar{\xi})=\inf_{f\in\F_{*}(\bar{\xi})}\Phi_{u_f^*,v_f^*}(\bar{\xi})$ with 
\[ \inf_{f\in\F_{*}^a}\Phi_{u_f^*,v^{(g)*}_f}(\bar{\xi})\qquad \text{where}\quad v^{(g)*}_f(\bar{\xi}):=\inf_{r\in \mathcal{I}} \int g(f(x),r)d\bar{\xi}\,,\]
where for the definition of $v^{(g)*}_f(\bar{\xi})$:
\begin{itemize}
    \item $\F_{*}^a$ is the auxiliary function space which was  previously defined in Remark \ref{remk:uvh_prop} as
\[ \F^a_*:=\{0\not\equiv f\in \Cinf(\R):\, \, h_f\in C_c(\R)\}\,, \]
\item $\mathcal{I}$ is some subset of $\R$\,,
\item $\F\subset C^1(\R)$ is a given    function space\,, and
\item $g$ is some fixed non-negative function on $\R\times \mathcal{I}$. 
\end{itemize}

%where $\F^a_*:=\{\text{const}\not\equiv  f\in \Cinf(\R):\, \, h_f\in C_c(\R)\}$ is the auxiliary function space we have previously introduced. 
\begin{exmp}\label{exmp:LS} By the so-called Holley-Stroock Lemma \cite{HoSt}, in the log-Sobolev problem we may identify (see also \cite[p.240]{BGL}):
	$$Ent_{\bar{\xi}}(f^2)=v^{(g)*}_f(\bar{\xi})=\inf_{r>0}\int g(f(x),r)d\bar{\xi}(x)\,,$$ 
	where $g(y,r)=\phi(y^2)-\phi(r)-\phi'(r)(y^2-r)$ with $\phi(r)=r\log r$. Note that by convexity of $\phi$ it follows that $g(x,r)\geq 0$. Then with $u_f=f^{\prime 2}$, $v_f=f^2\log f^2$ and $h_f=f^2-1$ (from Subsection \ref{subsec:abstract_form}  \eqref{dfn:specifications}), we have

 \eq{\Lambda_{LS}(\xi)&=\inf_{f\in\F_{LS}(\bar{\xi})}\Phi_{u_f^*,v_f^*}(\bar{\xi})=\inf_{f\in\F_{LS}^a}\frac{\int f'(x)^2d\bar{\xi}(x)}{Ent_{\bar{\xi}}(f^2)}\\&=
\inf_{f\in\F_{LS}^a}\frac{\int f'(x)^2d\bar{\xi}(x)}{\inf_{r>0}\int g(f(x),r)d\bar{\xi}(x)}=\inf_{f\in\F_{LS}^a}\Phi_{u_f^*,v^{(g)*}_f}(\bar{\xi})\,.}

 %
 %\Phi_{u_f^*,v_f^*}(\bar{\xi})
\end{exmp}
\begin{exmp}\label{exmp:p_Poinc}
 In the p-\Poinc problem define	$g(y,r):=|y+r|^p$. 
  A minimum $r_*$ of $\int g(x,r)d\bar{\xi}(x)$ exists and it is  global, since $r\mapsto \prnt{\int |f(x)+r|^p d\bar{\xi}(x)}^{\frac{1}{p}}$ is coercive and strictly convex for $p\in (1,\infty)$. The following identity holds  at $r_*$:
$$\int |f(x)+r_*|^{p-1}sgn(f(x)+r_*) d\bar{\xi}(x)=\int |f(x)+r_*|^{p-2}(f(x)+r_*) d\bar{\xi}(x)=0\,.$$

 Then with $u_f=f^{\prime p}$, $v_f=f^p$ and $h_f=|f|^{p-2}f$ (from Subsection \ref{subsec:abstract_form} \eqref{dfn:specifications}) we have
\eq{		\Lambda_{Poi}^{(p)}(\bar{\xi})&=\inf_{f\in\F_{Poi}^{(p)}(\bar{\xi})}\Phi_{u_f^*,v_f^*}(\bar{\xi})=\inf_{f\in \F_{Poi}^{(p)}(\bar{\xi})}  \frac{\int |f'(x)|^p d\bar{\xi}(x)}{\int |f(x)|^pd\bar{\xi}(x)}=
\inf_{f\in \F_{Poi}^{a\, (p)}} \left\{ \frac{\int |f'(x)|^p d\bar{\xi}(x)}{\inf_{r\in\R}\int |f(x)+r|^pd\bar{\xi}(x)} \right\}\\&=
\inf_{f\in \F_{Poi}^{a\, (p)}}  \frac{\int |f'(x)|^p d\bar{\xi}(x)}{\inf_{r\in\R}\int g(f(x),r)d\bar{\xi}(x)}=\inf_{f\in\F_{Poi}^{a\, (p)}}\Phi_{u_f^*,v^{(g)*}_f}(\bar{\xi}) \,.  }
 
Notice that the third equality is justified by that $f'$ is invariant under translations $f\mapsto f+r$, and that $\bar{\xi}$ is compactly supported, so by replacing $f\in\F_{Poi}^{a\, (p)}$ with $f+r_*$, we may w.l.o.g. assume that $r_*=0$ is the critical point of $r\mapsto \int |f(x)+r|^p d\bar{\xi}(x)$.

\end{exmp}

These examples motivate the following general result, which gives sufficient conditions for the validity of the inequality $\Lambda_*(\xi_1)\geq \Lambda_*(\xi_0)$ whenever $\xi_1$ is a restriction of $\xi_0$. Throughout given $0\neq \xi\in\M_b$, we denote by $\bar{\xi}$ the probability measure $\frac{\xi}{\xi(1)}$ associated with the measure $\xi$. 

\begin{lem}\label{D_monotonicity} 
Assume for every a.c. measure $0\neq \xi\in \M_b$ we may express  $\Lambda_*(\bar{\xi})$ as 
{\small 
\eq{ \Lambda_*(\bar{\xi})=\inf_{f\in \F}\Phi_{u_f^*,v^{(g)*}_f}(\bar{\xi})  \qquad\text{with}\quad u^*_f(\bar{\xi}):=\int |f'(x)|^p d\bar{\xi}(x)\quad\quad v^{(g)*}_f(\bar{\xi}):=\inf_{r\in \mathcal{I}} \int g(f(x),r)d\bar{\xi}(x)\,,}}
where $p\in (1,\infty)$ and  $\F\subset C^1(\R)$ is a given function space. 

Given a measure $0\neq \xi_0=J\cdot m\in \M_b$, supported on a compact interval $I_0$, and a measure $\xi_1$ which is a restriction of $\xi_0$ to an interval $I_1\subset I_0$, then if

\begin{enumerate}
	\item $J\in L^{\infty}(I_0)$, and 
	\item\label{cond:norm_cont} $\F_*^a\ni f\mapsto v^{(g)*}_f(\bar{\xi}_0)$ is  continuous w.r.t. convergence in the norm $||\cdot ||_{L^{\infty}(I_0)}$,
\end{enumerate}
then $\Lambda_*(\xi_1)\geq \Lambda_*(\xi_0)$.
 \end{lem}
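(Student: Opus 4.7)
My plan is to show, for every $f\in \F$ yielding a finite ratio $\Phi_{u_f^*,v^{(g)*}_f}(\bar{\xi}_1)$, that one can construct a sequence $(\tilde{f}_n)_{n\in \Nbb}\subset \F$ of competitors for $\Lambda_*(\bar{\xi}_0)$ whose ratios converge to at most $\Phi_{u_f^*,v^{(g)*}_f}(\bar{\xi}_1)$; taking infimum over $f\in\F$ will then yield $\Lambda_*(\bar{\xi}_0)\leq \Lambda_*(\bar{\xi}_1)$. Writing $I_1=[a_1,b_1]\subset I_0=[a_0,b_0]$, I will take $\tilde f_n$ to coincide with $f$ on $[a_1+\tfrac{1}{n},b_1-\tfrac{1}{n}]$, to interpolate smoothly via a fixed rescaled cut-off over two transition intervals of length $\tfrac{1}{n}$, and to be the constants $c^L_n:=f(a_1+\tfrac{1}{n})$, $c^R_n:=f(b_1-\tfrac{1}{n})$ respectively on $(-\infty,a_1]$ and $[b_1,\infty)$. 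Concretely, on $[a_1,a_1+\tfrac{1}{n}]$ one sets $\tilde f_n(x)=\chi_n(x)\big(f(x)-c^L_n\big)+c^L_n$ with $\chi_n$ a smooth cut-off vanishing to all orders at $a_1$ and equal to $1$ at $a_1+\tfrac{1}{n}$ (rescaled from a fixed bump $\chi_1$), and symmetrically on the right. The uniform convergence $\tilde f_n\to \tilde f$ on $I_0$, where $\tilde f$ extends $f|_{I_1}$ by the boundary values $f(a_1),f(b_1)$, then follows from continuity of $f$.

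The key monotonicity used in the denominator is the combination of $g\geq 0$ with the identity $\xi_0|_{I_1}=\xi_1$: for every $r\in \mathcal{I}$,
\begin{equation*}
 \int g(\tilde f(x),r)\,d\xi_0 \;=\; \int_{I_1}g(f,r)\,d\xi_1 \;+\; \int_{I_0\setminus I_1}g(\tilde f,r)\,d\xi_0 \;\geq\; \int g(f,r)\,d\xi_1,
\end{equation*}
so that taking infimum over $r$ yields $v^{(g)*}_{\tilde f}(\xi_0)\geq v^{(g)*}_f(\xi_1)$. Hypothesis~\eqref{cond:norm_cont} then transfers this to the approximating sequence: the uniform convergence $\tilde f_n\to \tilde f$ in $L^\infty(I_0)$ gives $v^{(g)*}_{\tilde f_n}(\bar\xi_0)\to v^{(g)*}_{\tilde f}(\bar\xi_0)$, and the normalization factor $\xi_0(1)$ drops out when passing to the ratio by $0$-homogeneity.

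For the numerator, since $\tilde f_n'\equiv 0$ outside $I_1$ and equals $f'$ on $[a_1+\tfrac{1}{n},b_1-\tfrac{1}{n}]$, split
\begin{equation*}
 u^*_{\tilde f_n}(\xi_0) \;=\; \int_{[a_1+1/n,\,b_1-1/n]}|f'|^p\,d\xi_1\;+\;\int_{\text{two transitions}}|\tilde f_n'|^p\,d\xi_0.
\end{equation*}
The first summand tends to $u^*_f(\xi_1)$ by monotone convergence. Differentiating the cut-off and using $|f(x)-c^L_n|\leq \tfrac{1}{n}\|f'\|_{L^\infty(I_1)}$ on $[a_1,a_1+\tfrac{1}{n}]$, together with $\|\chi_n'\|_\infty=n\|\chi_1'\|_\infty$, produces the uniform-in-$n$ bound $|\tilde f_n'|\leq (\|\chi_1'\|_\infty+1)\|f'\|_{L^\infty(I_1)}$ on the transitions; combined with the hypothesis $J\in L^\infty(I_0)$ this bounds the transition contribution by $2(\|\chi_1'\|_\infty+1)^p\|f'\|_{L^\infty(I_1)}^p\,\|J\|_{L^\infty(I_0)}\cdot\tfrac{1}{n}\to 0$. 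Combining,
\begin{equation*}
 \Lambda_*(\xi_0)\;\leq\; \liminf_n \Phi_{u^*_{\tilde f_n},\,v^{(g)*}_{\tilde f_n}}(\xi_0)\;\leq\;\frac{u^*_f(\xi_1)}{v^{(g)*}_f(\xi_1)}\;=\;\Phi_{u_f^*,\,v^{(g)*}_f}(\xi_1),
\end{equation*}
and taking infimum over $f\in\F$ closes the argument. The main technical hurdle I anticipate is ensuring $\tilde f_n\in \F$ and not merely in $C^1$: in the concrete cases $\F=\F^a_*$ one further needs $\tilde f_n\in C^\infty$ and $h_{\tilde f_n}\in C_c(\R)$, so the far-field constants $c^L_n,c^R_n$ must be replaced by the admissible value ($1$ for log-Sobolev, $0$ for $p$-Poincar\'e) via a second, larger cut-off; this adjustment has to be arranged so that the numerator and denominator estimates above are unaffected, which I expect is routine but must be verified case by case.
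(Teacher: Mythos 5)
Your argument is correct and follows the same overall strategy as the paper's proof: take a (near-)minimizer $f$ for $\Lambda_*(\bar\xi_1)$, extend by constants to a competitor on all of $I_0$ with vanishing derivative outside $I_1$, produce a smooth approximating sequence, bound the extra numerator contribution using $J\in L^\infty(I_0)$, and invoke hypothesis~\ref{cond:norm_cont} together with $g\geq 0$ to control the denominator from below by $v^{(g)*}_f(\xi_1)$. The technical divergence is confined to the smoothing step: the paper forms the Lipschitz extension $\tilde f$ (with kinks at $\partial I_1$) and then mollifies, relying on Lemma~\ref{lem:MollifierConvergence} for the $L^\infty$ and $L^p$ convergences; you instead build the smooth approximants directly via rescaled cut-offs on shrinking transition intervals $[a_1,a_1+\tfrac1n]$, $[b_1-\tfrac1n,b_1]$ strictly inside $I_1$, which yields an explicit $O(1/n)$ bound on the transition contribution by elementary calculus, with no external mollifier lemma needed. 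Both routes are valid: yours is more self-contained and makes the denominator comparison (the chain $v^{(g)*}_{\tilde f}(\xi_0)\geq v^{(g)*}_f(\xi_1)$) particularly transparent; the paper's is marginally shorter once Lemma~\ref{lem:MollifierConvergence} is available. On the issue you flag last — that $\tilde f_n$ must lie in the concrete $\F$ rather than merely $C^1(\R)$ — note that the paper's own proof carries the identical implicit dependence (it simply multiplies by a bump equal to $1$ on $I_0$ after mollifying), and in both cases the fix is exactly the one you propose: the far-field constants are tapered to the admissible value strictly outside $I_0$, which leaves every integral against $\xi_0$ untouched since $\xi_0$ is supported in $I_0$.
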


\begin{remk}\label{remk:D_monotonicity} Notice that given $\xi_0$ as above, condition \ref{cond:norm_cont} is satisfied for the functions $f\mapsto v^{(g)*}_f(\xi_0)$ associated with $\Lambda_{Poi}^{(p)}$ ($p\in (1,\infty)$) and $\Lambda_{LS}$. Indeed, 
\begin{itemize}
    \item  In the p-\Poinc problem problem, considering Example \ref{exmp:p_Poinc}, we need to show continuity of $f\mapsto v^{(g)*}_{f}(\bar{\xi}_0):=\inf_{r\in\mathcal{I}}\int g(f(x),r)d\bar{\xi}_0$ w.r.t. convergence in $||\cdot||_{L^{\infty}(I_0)}$, where  $g(f(x),r)=|f(x)+r|^p$.  Notice that if $f_n\to f$ uniformly on $I_0$, then by Minkowski's inequality for every fixed $r\in \mathcal{I}$
    {\small 
    $$\Abs{\prnt{\int g(f_n(x),r)d\xi_0}^{\frac{1}{p}}-\prnt{\int g(f(x),r)d\xi_0}^{\frac{1}{p}}}=\Abs{\,||f_n+r||_{L^p(\xi_0)}-||f+r||_{L^p(\xi_0)}\,}\leq ||f-f_n||_{L^p(\xi_0)}\to 0\,.$$}
    This implies that $\int g(f_n(x),r)d\xi_0\xrightarrow{n\to\infty} \int g(f(x),r)d\xi_0$ uniformly on $\mathcal{I}$, in particular  
    $$ v^{(g)*}_{f_n}(\bar{\xi}_0):=\inf_{r\in\mathcal{I}}\int g(f_n(x),r)d\bar{\xi}_0\xrightarrow{n\to\infty} \inf_{r\in\mathcal{I}}\int g(f(x),r)d\bar{\xi}_0=v^{(g)*}_{f}(\bar{\xi}_0)\,. $$
 \item In the LS problem, considering Example \ref{exmp:LS}, we need to show that the map $f\mapsto v^{(g)*}_{f}(\bar{\xi}_0)=Ent_{\bar{\xi}_0}(f^2)$ is continuous w.r.t. convergence in  $||\cdot ||_{L^{\infty}( I_0)}$. Assume $f_n\xrightarrow{n\to\infty} f$ uniformly;  the conclusion is due to the following observations:
  \begin{itemize}
     \item $f\in \F_*^a\subset C(I_0)$ hence $f$ is bounded, and since $f_n\xrightarrow{n\to\infty} f$ uniformly we may assume w.l.o.g. that the functions $f_n$ are bounded; 
     \item  $Ent_{\bar{\xi}_0}(f^2)=\int \phi_1(f)d\bar{\xi}_0-\int\phi_2(f)d\bar{\xi}_0\log\int\phi_2(f)d\bar{\xi}_0$, where $\phi_{1}(s):=s^2\log s^2$ and $\phi_2(s):=s^2$; these functions are continuous on any compact interval, and hence uniformly continuous;
     \item we conclude $\phi_i(f_n), \phi_i(f)$ are bounded, and that $\phi_i(f_n)\to \phi_i(f)$ uniformly on $I_0$.
     \item therefore by Lebesgue dominated convergence  $\int\phi_i(f_n)d\bar{\xi}_0\to \int \phi_i(f)d\bar{\xi}_0$, implying that $Ent_{\bar{\xi}_0}(f_n^2)\xrightarrow{n\to\infty} Ent_{\bar{\xi}_0}(f^2)$.
 \end{itemize}
 In fact the asserted continuity holds w.r.t. convergence in $||\cdot ||_{L^{p'}(\bar{\xi}_0)}$ for any $p'>2$, but we do not justify it here.

 %, the function $f\mapsto  is continuous w.r.t  $||\cdo
 %Indeed, we can write  . 
 %We notice that if $\F_*^a\ni f_n\stackrel{n\to\infty}{\longrightarrow} f$ uniformly

 %Since the functions , they are uniformly continuous on $I_0$; hence if $f_n\to f$ uniformly on $I_0$, given  by Lebesgue dominated convergence, using $\phi_i(f)$ we conclude that $\int \phi_i(f_n)\to \int \phi(f)$ 
 
 %  for every $p'>2$ (refer to \cite[prop. 4.1, 4.4]{BobG} for the relation between the Entropy and the Orlicz norm associated with the function $N(x)=x^2\log(1+x^2)$), hence it is continuous w.r.t $||\cdot ||_{L^{\infty}(I_0)}$.
\end{itemize}

\end{remk}

Before we prove Lemma \ref{D_monotonicity}  we briefly discuss about continuous extensions of functions by constants and their mollifications.

\begin{defn}[The continuous extension by constants $\tilde{f}$]\label{defn:cont_ext} Assume $f\in C^1(\R)$ and let $I=[a,b]\subset \R$ be an interval. We define $\tl{f}$ to be the continuous extension of $f$ defined by:
$$ \tl{f}(x):=\begin{cases} f(x) & \mbox{ if } x\in I\\
														f(a)& \mbox{ if } x< a\\
													  f(b)& \mbox{ if } x> b \,.
							\end{cases} $$
\end{defn}

Define $$g(x)=\begin{cases} f'(x) &\mbox{ if } x\in (a,b)\\0 &\mbox{ otherwise }\,. \end{cases}$$
Notice that $g\in L^1(\R)$, and $\tl{f}\in AC(\R)$ since $\tl{f}(x)=\int_a^x g(y)dy+\tl{f}(a)$ and $\tl{f}'=g$ a.e. 				
We remind the reader some basic facts. A function $\tl{f}\in L^1_{loc}(\R)$ is weakly differentiable if and only if $\tl{f}\in AC_{loc}(\R)$.					
Since $\tl{f}\in AC(\R)$ we conclude $\tl{f}(x)$ is weakly differentiable on $\R$; for the moment we refer to this weak derivative by $v\in L^1_{loc}(\R)$. By definition for any $\phi\in C_c^{\infty}(\R)$ it holds that $\int\phi'(y) \tl{f}(y)dy=-\int\phi(y) v(y)dy$. 
On any sub-interval $(x_0,x_1)\subset \R$ where $\tl{f}$ is (strongly) differentiable it holds that $\int (v-g)\phi=0$ (by definition of $v$); since $\phi\in C_c^{\infty}(\R)$ is arbitrary we conclude $v=g$ as functions in $L^1_{loc}(\R)$.
Since there is no fear of ambiguity from this point on we refer to the weak derivative of  $\tl{f}$ by $\tl{f}'$. 
 
\bigskip
Denote by  $\eta$  the compactly supported mollifier
$$  \eta(x)=\begin{cases} c\exp(\frac{1}{|x|^2-1}) & \mbox{if  } |x|<1\\ 0 & \mbox{otherwise}\,, \end{cases} $$
where $c$ is a normalization constant s.t. $\int \eta=1$. For each $s>0$ we define the smoothing of $\tilde{f}$: 
\eql{\label{eqn:mollified} \tl{f}_s(x):=\frac{1}{s}\int \eta(\frac{x-y}{s})\tl{f}(y)dm(y)\,.}
It is known \cite{Eva} that $\tl{f}_s\in\Cinf(\R)$. 
\begin{lem}\label{lem:MollifierConvergence} Assume $d\xi=Jdm$ is an a.c. measure. Then for any $\tl{f}\in AC(\R)$,  $\tl{f}_s\xrightarrow{s\to 0} \tl{f}$ uniformly on compact sets, and $\tl{f}_s'\xrightarrow{s\to 0} \tl{f}'$
 in $L^p(K; \xi)$ for all $p\in [1,\infty)$ and compact sets $K\subset \R$ s.t. $J\in L^{\infty}(K)$. 
%s.t $J\in L^{\infty}_{loc}(int(supp(\xi))$. Then $\tl{f}_s\to \tl{f}$ and $\tl{f}'_s\to \tl{f}'$  in $L^p_{loc}(\R; \xi)$ for any $p\in [1,\infty)$. 
\end{lem}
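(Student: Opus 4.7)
The plan is to reduce both convergences to classical facts about mollifiers. The key inputs are: (i) continuity (in fact, uniform continuity on compact sets) of $\tl{f}\in AC(\R)$; (ii) the commutation identity $\tl{f}_s' = (\tl{f}')_s$, valid because $\tl{f}\in AC(\R)$ has weak derivative $\tl{f}'\in L^1_{loc}(\R)$; and (iii) the observation that in our setting (namely $\tl{f}$ arising from Definition \ref{defn:cont_ext} applied to $f\in C^1(\R)$), $\tl{f}'$ agrees a.e.\ with $f'\cdot 1_{(a,b)}$, hence is bounded with compact support, and in particular belongs to $L^p(\R; dm)$ for every $p\in [1,\infty)$.

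For the uniform convergence on compact sets, fix compact $K\subset \R$ and let $K_1:=K+[-1,1]$, also compact. Since $\tl{f}\in AC(\R)\subset C(\R)$, $\tl{f}$ is uniformly continuous on $K_1$ with some modulus $\omega$. For $0<s<1$ and $x\in K$, using $\int \eta=1$ and a change of variables,
\[
\tl{f}_s(x)-\tl{f}(x) = \int \eta(z)\bigl(\tl{f}(x-sz)-\tl{f}(x)\bigr)\,dm(z)\,.
\]
Since $\eta$ is supported in $[-1,1]$, $|x-sz-x|\leq s$, and $x-sz\in K_1$ for $x\in K$, we deduce $\|\tl{f}_s-\tl{f}\|_{L^\infty(K)}\leq \omega(s)\to 0$ as $s\to 0$.

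For the $L^p$ convergence of derivatives, I would first verify the commutation $\tl{f}_s'=(\tl{f}')_s$ by differentiating under the integral in \eqref{eqn:mollified} and then integrating by parts: since $\eta_s(x-\cdot)\in C_c^\infty(\R)$ and $\tl{f}\in AC(\R)$ with weak derivative $\tl{f}'\in L^1_{loc}(\R)$, one obtains
\[
\tl{f}_s'(x)=\frac{1}{s}\int \eta\!\left(\tfrac{x-y}{s}\right)\tl{f}'(y)\,dm(y)=(\tl{f}')_s(x)\,.
\]
In the setting of Definition \ref{defn:cont_ext}, $\tl{f}'$ equals $f'$ on the open interval $(a,b)$ and $0$ elsewhere, so $\tl{f}'\in L^p(\R;dm)$ for every $p\in[1,\infty)$. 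By the classical $L^p$-approximation theorem for mollifiers, $(\tl{f}')_s\to \tl{f}'$ in $L^p(\R;dm)$ as $s\to 0$. For any compact $K$ with $J\in L^\infty(K)$, this gives
\[
\int_K |\tl{f}_s'-\tl{f}'|^p\,d\xi \leq \|J\|_{L^\infty(K)}\int_K |\tl{f}_s'-\tl{f}'|^p\,dm \to 0\,,
\]
establishing convergence in $L^p(K;\xi)$.

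The proof invokes only classical mollifier theory, so there is no serious obstacle. The only point requiring care is the commutation step $\tl{f}_s' = (\tl{f}')_s$, which must be justified via the absolute continuity of $\tl{f}$ (so that integration by parts is legal against the smooth compactly supported kernel $\eta_s(x-\cdot)$); once this identity is in hand, the $L^p$ statement follows from the standard approximation result and the pointwise bound $d\xi\leq \|J\|_{L^\infty(K)}\,dm$ on $K$.
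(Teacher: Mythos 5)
Your proof is correct and takes essentially the same route as the paper, which simply cites Evans for the classical facts that $\tl{f}_s\to\tl{f}$ uniformly on compacts and $\tl{f}_s'=(\tl{f}')_s\to\tl{f}'$ in $L^p_{loc}(\R;m)$, then transfers to $L^p(K;\xi)$ via $d\xi\leq\|J\|_{L^\infty(K)}\,dm$. You are slightly more careful than the paper in observing that the $L^p_{loc}$ convergence of the mollified derivative for $p>1$ actually requires $\tl{f}'\in L^p_{loc}$ — which does not follow from $\tl{f}\in AC(\R)$ alone, but does hold for the specific extension of Definition \ref{defn:cont_ext} (bounded, compactly supported derivative), the only setting in which the lemma is applied.
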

\begin{proof}
It is known \cite{Eva} that  $\tl{f}_s\to f$ $[m]$ a.e., and uniformly on compact sets, as $s\to 0$.
Furthermore $\tl{f}'_s\to \tl{f}'$  in $L_{loc}^p(\R; m)$ for every $p\in [1,\infty)$. 
Therefore for every $K\Subset \R$ s.t. $J\in L^{\infty}(K)$:
\eq{ 
||\tl{f}'_s- \tl{f}'||_{L^p(K; \xi)}\leq ||J||_{L^{\infty}(K)} ||\tl{f}'_s- \tl{f}'||_{L^p(K; m)} \stackrel{s\to 0}{\longrightarrow} 0 \,.}
\end{proof}

%such that $\xi(A)\leq M_K \cdot m(A)$ for any Lebesgue measurable set $A\subset K$. Hence for any non-negative simple function $f$: $\int_K f d\xi\leq M_K\int_K fdm$, therefore the same applies to general non-negative measurable function. This implies that 

%We can thus conclude that 
% $\tl{f}_s\to \tl{f}$ and $\tl{f}'_s\to \tl{f}'$  in $L_{loc}^p(K; \xi)$ for any $p\in [1,\infty)$. 

We now turn to the proof of Lemma \ref{D_monotonicity}.
\begin{proof} [Proof of Lemma  \ref{D_monotonicity}]
%Let $\epsilon>0$ be given, and assume $0\neq f_1\in \F$ is such that 
%$\Phi_{u_{f_1}^*,v^{(g)*}_{f_1}}(\xi_1)<\Lambda(\xi_1)+\epsilon$ .
By definition of $\Lambda_*(\bar{\xi}_1)$ for any $\epsilon>0$ there exists $f_{\epsilon}\in \F$ s.t. $\Phi_{u_{f_{
\epsilon}}^*,v^{(g)*}_{f_{
\epsilon}}}(\bar{\xi}_1)<\Lambda(\bar{\xi}_1)+\epsilon$ (in particular $v^{(g)*}_{f_{
\epsilon}}(\xi_1)>0$). Denote by $\tilde{f}_{\epsilon}$ the continuous extension of $f_{\epsilon}$ to $I_0$ by constants. Clearly $\tilde{f}_{\epsilon}'(x)=0$ for any $x\in I_0\setminus I_1$; in general it is not a smooth function. We define $\tilde{f}_{\epsilon,s}\in \Cinf(\R)$ to be the smoothing of $\tilde{f}_{\epsilon}$ as defined in \eqref{eqn:mollified}. $\tilde{f}_{\epsilon,s}$ is not compactly supported, however since $I_0$ is compact, and we only consider integration inside $I_0$ we may assume throughout that it is compactly supported (by multiplying it with a smooth bump function which equals 1 on $I_0$). 

By Lemma \ref{lem:MollifierConvergence}, considering that $I_0$ is compact, for any $p\in [1,\infty)$:
 $$||\tl{f}_{\epsilon,s}- \tl{f}_{\epsilon}||_{L^p(I_0; \xi_0)}\to 0,\quad  ||\tl{f}'_{\epsilon,s}- \tl{f}'_{\epsilon}||_{L^p(I_1; \xi_0)}\to 0\quad \text{and} \quad  ||\tl{f}'_{\epsilon,s}- \tl{f}'_{\epsilon}||_{L^p(I_0\setminus I_1; \xi_0)}\to 0 \quad \text{ as } s\to 0\,,$$
 and 
 $$||\tl{f}_{\epsilon,s}- \tl{f}_{\epsilon}||_{L^\infty(I_0)}\to 0\quad \text{ as } s\to 0\,.$$
Due to assumption \ref{cond:norm_cont}  we conclude that as $s\to 0$
 \eql{\label{moll_converg} \inf_{r\in \mathcal{I}}\prnt{\int_{I_0}g(\tilde{f}_{\epsilon,s}(x),r)d\bar{\xi}_0(x)}=v^{(g)*}_{\tilde{f}_{\epsilon,s}}(\bar{\xi})\to v^{(g)*}_{\tilde{f}_{\epsilon}}(\bar{\xi})=
\inf_{r\in \mathcal{I}}\prnt{\int_{I_0}g(\tilde{f}_{\epsilon}(x),r)d\bar{\xi}_0(x)}\,.}
 
We conclude that
{\small
\eq{ &\Lambda(\bar{\xi}_1)+\epsilon>\frac{\int_{I_1}f_{\epsilon}'(x)^pd\bar{\xi}_1(x)}{\inf_{r\in \mathcal{I}}\int_{I_1}g(f_{\epsilon}(x),r)d\bar{\xi}_1(x)} \geq 
\frac{\int_{I_1}f_{\epsilon}'(x)^pd\bar{\xi}_1(x)}{\inf_{r\in \mathcal{I}}\int_{I_1}g(f_{\epsilon}(x),r)d\bar{\xi}_1(x)+\inf_{r\in \mathcal{I}}\int_{I_0\setminus I_1}g(\tilde{f}_{\epsilon}(x),r)d\bar{\xi}_1(x)}\\ & \geq 
\frac{\int_{I_1}f_{\epsilon}'(x)^pd\bar{\xi}_1(x)}{\inf_{r\in \mathcal{I}}\prnt{\int_{I_1}g(f_{\epsilon}(x),r)d\bar{\xi}_1(x)+\int_{I_0\setminus I_1}g(\tilde{f}_{\epsilon}(x),r)d\bar{\xi}_1(x)}}
=\frac{\int_{I_1}\tilde{f}_{\epsilon}'(x)^pd\bar{\xi}_0(x)+\int_{I_0\setminus I_1}\tilde{f}_{\epsilon}'(x)^2d\bar{\xi}_0(x)}{\inf_{r\in \mathcal{I}}(\int_{I_0}g(\tilde{f}_{\epsilon}(x),r)d\bar{\xi}_0(x)}\\&
\stackrel{\text{by } \eqref{moll_converg}}{=} \frac{\int_{I_0}\tilde{f}_{\epsilon,s}'(x)^pd\bar{\xi}_0(x)+o(1)}{\inf_{r\in \mathcal{I}}(\int_{I_0}g(\tilde{f}_{\epsilon,s}(x),r)d\bar{\xi}_0(x)+o(1)}=\frac{\int_{I_0}\tilde{f}_{\epsilon,s}'(x)^pd\bar{\xi}_0(x)}{\inf_{r\in \mathcal{I}}(\int_{I_0}g(\tilde{f}_{\epsilon,s}(x),r)d\bar{\xi}_0(x)}+o(1)\geq \Lambda(\bar{\xi}_0)+o(1)
\,.
}
}

For $s$ sufficiently small it holds that $RHS \geq \Lambda(\bar{\xi}_0)-\epsilon$. Therefore we conclude that for any $\epsilon>0$ :
$$\Lambda(\bar{\xi}_1)\geq \Lambda(\bar{\xi}_0)-2\epsilon\,.$$
Since $\epsilon>0$ can be chosen to be arbitrarily small, we conclude that $\Lambda(\bar{\xi}_1)\geq \Lambda(\bar{\xi}_0)$. 
\end{proof}

\newpage
 \chapter{Functional Inequalities: Explicit Lower Bounds\\ \bigskip The \Poinc Inequality} 
\label{chp:Poinc}

In this chapter we solve the optimization problem associated with $\Lambda_{Poi}(M,\gfrak,\mu)$. The general tools have already been developed, and it remains to solve the simpler optimization problem over the class $\Fknd^M(\R)$.

\section{Refinement of the optimization problem}

The abstract formulation introduced in Subsection \ref{subsec:abstract_form} was implemented in order to derive general results, in particular the reduction to the model class, which amounts to solving the general problem \eqref{alpha_Characterization}. In this chapter we solve the  particular optimization problem defined in Theorem \ref{thm:PoinInequality} for sharp lower bounds for the \Poinc constant.
\bigskip

Considering the conventions of Subsection \ref{subsec:abstract_form}, with the functionals\\ $u_f^*(\xi):=\int f'(t)^2d\xi,\,v_f^*(\xi):=\int f(t)^2d\xi$ and $h_f^*:=\int fd\xi$, we can identify 

\eql{\label{prel_defn_Poincare}  \Phi_{u_f^*,v_f^*}(\xi)&\longrightarrow \text{The classical Rayleigh quotient }\gls{Ray_quo}:=\begin{cases}\frac{\int f'(x)^{2}d\xi(x)}{\int f(x)^2d\xi(x)}& \mbox{ if } \int f^2d\xi>0\\ \nonumber
+\infty & \mbox{ otherwise}\,.   \end{cases}\\ \nonumber
			\Lambda_*(\xi) &\longrightarrow \text{The \Poinc constant }\Lambda_{Poi}(\xi) \quad(\text{defined in } \eqref{dfn:Constants1})\,. \nonumber\\
      \CC &\longrightarrow \text{The \Poinc constant  lower bound }\,\, \lam_{K,N,D} \,. } 
 Therefore we literally seek after the `worst' (i.e. minimal) \Poinc constant we can obtain amongst all $\xi\in \Fknd^M(\R)$. Set $p(x):=\deriv{\xi}{m}(x)$; if $p(x)$ is smooth and strictly positive on $[a,b]$, then the Euler-Lagrange equation associated with $\Lambda_{Poi}(\xi)$ (defined in \eqref{dfn:Constants1}) is the Sturm-Liouville problem (SLP) with Neumann boundary conditions:
$$  (p(x)f'(x))'=-\lambda p(x)f(x)\qquad f'(a)=f'(b)=0\,.$$
This is a particular example of a  regular SLP, whose theory is rather simple and has been thoroughly studied. Below we make a brief excursion in order to provide the general definition of the space of regular SLPs with Neumann boundary conditions and state important relevant results. The reader should be aware that mere consideration of such SLPs is not sufficient for a complete solution of the optimization problem \eqref{optimizationProblem_Model}, however it is an essential preliminary step before we consider the exceptional situations (e.g when $p(x)$ vanishes at one of the endpoints or $(a,b)$ is an unbounded interval).

\subsubsection{A brief overview of the space of regular Sturm-Liouville boundary value problems  with Neumann boundary conditions}
%We briefly discuss the general theory of regular Sturm-Liouville boundary value problems (SL-BVPs) with Neumann B.C following \cite{Zet}. This discussion is more general than 
Assume $E=(a',b')$ where $-\infty\leq a'< b'\leq \infty$. Following \cite{Zet} we define the metric space of `regular Neumann SL-BVPs' as the set of tuples
$$\Omega_E=\{(a,b,p):\,\, a'<a<b<b',\,\,\, \frac{1}{p},p\in L_{loc}^1(E)\cap C^{1}(E; \R_+^*)\}\,.$$
 We say that the points $\omega\in \Omega_E$ correspond to `regular Neumann SLP':
\[ (p(x)f(x)')'=-\lambda p(x)f(x) \qquad \mbox{ on }  E\,, \]
 together with the boundary conditions $f'(a)=f'(b)=0$; notice that $f$ is a solution defined on $E$, while $[a,b]\subset E$ (as in \cite{KoZet}). For the comparison of problems defined on different intervals, or with different weight functions $p$, we equip $\Omega_E$ with a metric \cite{Zet}:
\eql{\label{SL_BVP_Metric} &d\prnt{ (a_1,b_1,p_1), (a_2,b_2,p_2)}=\\&|a_2-a_1|+|b_2-b_1|+\int_{a'}^{b'}|\frac{1}{p_2}1_{[a_2,b_2]}-\frac{1}{p_1}1_{[a_1,b_1]}| dm+\int_{a'}^{b'}|p_2 1_{[a_2,b_2]}-p_1 1_{[a_1,b_1]}| dm \,.}
This definition is motivated by that the spectrum of the problem determined by $\omega=(a,b,p)$,  does not depend on the values of $p$ on $E\setminus [a,b]$. 
 The metric $d$ provides a precise definition for the closeness of such SLPs.
%The equation should be understood in the weak sense when we consider general $p,\frac{1}{p}\in L^1_{loc}(E)$ functions. However throughout our discussion we will only consider smooth 
%
%only problems of $\Omega'_E\subset \Omega_E$, 
%where 
%$\Omega'_E=\{(a,b,\frac{1}{p}\in \Omega_E:\,\,\frac{1}{p} \text{ is smooth and finite on } (a,b)\}$. 
%We can then interpret the equation in the strong sense. 

We recall the following known result regarding the eigenvalues and eigenfunctions of these regular Neumann SLPs (see \cite[p.84-87]{Zet} and \cite[p.133]{PiRu}):
\begin{thm}\label{SLResults1} There are infinitely many eigenvalues $\{\lambda_k\}_{k\in \N}$  associated with a problem $(a,b,p)=\omega\in\Omega_E$; they are all real, non-negative and simple; they form an unbounded discrete strictly monotone sequence: $$0=\lambda_0< \lambda_1<...\,.$$
Furthermore the $k'th$ eigenfunction $u_k$ (which can be assumed to be real) has exactly $k$ zeros in $(a,b)$. 
\end{thm}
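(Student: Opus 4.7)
The plan is to reformulate the problem as a spectral problem for a compact self-adjoint operator and then invoke the classical Sturm oscillation theory. First I would set the stage by restricting attention to the compact interval $[a,b]\subset E$, on which $p,\frac{1}{p}\in C^1([a,b];\R_+^*)$ are bounded above and below by positive constants. Introduce the Hilbert space $H=L^2([a,b], p\,dm)$ with its natural inner product and the closed symmetric sesquilinear form
\[
\mathfrak{a}[u,v]:=\int_a^b p(x)u'(x)v'(x)\,dm(x),\qquad u,v\in H^1([a,b]),
\]
where Neumann boundary conditions are encoded implicitly in the form domain. Integration by parts shows $\mathfrak{a}[u,v]=\langle -\tfrac{1}{p}(pv')',u\rangle_H$ whenever the boundary terms vanish (which they do on the natural operator domain $D(L):=\{u\in H^2([a,b]):\, u'(a)=u'(b)=0\}$), so the associated Friedrichs self-adjoint extension $L$ realizes the formal operator $Lf=-\frac{1}{p}(pf')'$.

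Next I would verify discreteness of the spectrum. Since $[a,b]$ is bounded, the form domain $H^1([a,b])$ embeds compactly into $H$ via Rellich--Kondrachov (the equivalence of norms being clear from the uniform positivity of $p$), so the resolvent $(L+I)^{-1}$ is a compact self-adjoint operator on $H$. The spectral theorem for compact self-adjoint operators thus yields a sequence $0\leq\lambda_0\leq\lambda_1\leq\dots\to\infty$ of real eigenvalues with an orthonormal eigenbasis. Non-negativity follows from $\langle Lf,f\rangle_H=\mathfrak{a}[f,f]\geq 0$, and $\lambda_0=0$ is realized by the constant eigenfunction $u_0\equiv 1\in D(L)$.

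Simplicity of every $\lambda_k$ would then follow from ODE uniqueness. If $u$ is an eigenfunction for $\lambda$, the equation $(pu')'+\lambda pu=0$ together with $u'(a)=0$ and any prescribed value of $u(a)$ determines $u$ uniquely on $[a,b]$ by the standard existence/uniqueness theorem for linear second-order ODEs with $C^1$ coefficients and $p>0$ (so the equation can be written in normal form). Hence the $\lambda$-eigenspace is one-dimensional, giving in particular $\lambda_0=0<\lambda_1$ strictly, and more generally $\lambda_k<\lambda_{k+1}$ for every $k\in\N$.

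The hardest ingredient will be the oscillation count: for the proof that $u_k$ has exactly $k$ zeros in $(a,b)$, I would appeal to Sturm's oscillation theorem combined with the Sturm comparison theorem, applied to the family $(pu')'+\lambda pu=0$ parametrized by $\lambda\geq 0$. The comparison theorem shows that as $\lambda$ increases the zeros of the solution with initial data $(u(a),u'(a))=(1,0)$ move monotonically leftward and new zeros enter from the right endpoint, passing through $b$ precisely when $\lambda=\lambda_k$ (interpreted as a condition on $u'(b)=0$). Counting the zeros inside $(a,b)$ at $\lambda=\lambda_k$ yields exactly $k$. The main technical obstacle is that the usual Sturm statements are formulated for smooth coefficients on compact intervals, so care is required to ensure the hypotheses apply under our running assumption $p,\frac{1}{p}\in C^1(E)\cap L^1_{loc}(E)$; however, since we work on the compact sub-interval $[a,b]\Subset E$, the weight $p$ is automatically $C^1$ and uniformly positive there, and the classical Sturm theory applies verbatim.
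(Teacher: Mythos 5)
The paper does not prove Theorem~\ref{SLResults1}; it is stated as classical background and cited to Zettl's book and Pinchover--Rubinstein. Your argument is the standard textbook proof of this result, and it is essentially correct: the compact-resolvent/Friedrichs-extension route gives the real, non-negative, discrete spectrum tending to $+\infty$, the ODE uniqueness argument gives simplicity (hence strict monotonicity of the sequence), and the constant function gives $\lambda_0=0$.

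The one place worth tightening is the oscillation count. Your description --- zeros moving leftward and ``passing through $b$'' as $\lambda$ increases, with $\lambda_k$ characterized by $u'(b)=0$ --- conflates the zero-counting event (a zero of $u$ reaching $x=b$) with the eigenvalue event (a zero of $u'$ reaching $x=b$); these happen at different $\lambda$'s, and without separating them the claim ``exactly $k$ zeros'' is not actually established. The clean way to finish is via the Pr\"{u}fer angle: write $u=r\sin\theta$, $pu'=r\cos\theta$, normalize $\theta(a,\lambda)=\pi/2$ from $u'(a)=0$, show $\theta'(x)>0$ wherever $\theta\equiv 0 \pmod\pi$ (so the angle never decreases through multiples of $\pi$), and show $\lambda\mapsto\theta(b,\lambda)$ is continuous and strictly increasing with range $(\pi/2,\infty)$. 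The Neumann condition $u'(b)=0$ becomes $\theta(b,\lambda)=\pi/2+k\pi$ for some $k\in\N$, which pins down $\lambda_k$, and the number of multiples of $\pi$ crossed by $\theta(\cdot,\lambda_k)$ on $(a,b)$ is then exactly $k$, which is the number of interior zeros of $u_k$. With that correction your proof is a complete and more elementary (and more self-contained) account of what the paper merely cites.
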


The following theorem can be found in \cite[p.55-56]{Zet}  (where it is stated in greater generality); for the last part regarding continuous differentiability we refer the reader to \cite[Th. 4.2]{KoZet}.
\begin{thm}\label{SLResults2} Assume $\lambda(\omega_0)$ is a (simple) eigenvalue associated with $\omega_0=(a_0,b_0,p_0)\in \Omega_E$, and let $u(\cdot,\omega_0)$ denote an eigenfunction of $\lambda(\omega_0)$. Then there is a neighborhood $W\subset \Omega_E$ of $\omega_0$ such that $\lambda(\omega)$ is simple for every $\omega=(a,b,p)\in W$, and there exist normalized eigenfunctions $u(\cdot, \omega)$ of $\lambda(\omega)$ for $\omega\in W$ (i.e. $\int_a^b|u(x,\omega)^2|p(x)dx=1$) such that 
$$ u(\cdot,\omega)\to u(\cdot,\omega_0),\qquad (pu')(\cdot,\omega)\to  (pu')(\cdot,\omega_0)\,,\qquad as\,\, \omega\to \omega_0\,\, \text{ in } \Omega_E\,,$$
where both convergences are uniform on any compact sub-interval $K$ of $E$. 
Furthermore the map $\lambda:W\to \R$ is continuously differentiable w.r.t. the parameters $a$ and $b$, and Fr\'{e}chet differentiable w.r.t. $p$. 
\end{thm}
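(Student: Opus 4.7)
The plan is to encode the eigenvalue as a zero of a characteristic function, prove joint continuity of that function in all parameters, and then apply the implicit function theorem. Concretely, I rewrite $(pu')'=-\lambda pu$ as the first-order system $u'=v/p$, $v'=-\lambda pu$ with $v:=pu'$, and for each $\omega=(a,b,p)\in\Omega_E$ and $\lambda\in\R$ I denote by $(u(\cdot,\lambda,\omega),v(\cdot,\lambda,\omega))$ the unique absolutely continuous solution on $E$ with left-endpoint Neumann IC $u(a)=1$, $v(a)=0$. Then $\lambda$ is a Neumann eigenvalue iff $D(\lambda,\omega):=v(b,\lambda,\omega)=0$, and simplicity of each eigenvalue is already given by Theorem \ref{SLResults1}, so the only real content is smoothness of the zero set of $D$.

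The first step is continuous dependence of $(u,v)$ on $(\omega,\lambda)$, uniformly on compact subsets of $E$. Fix a compact $K\Subset E$ containing $[a_0,b_0]$ in its interior, so that for $\omega$ sufficiently close to $\omega_0$ in the metric \eqref{SL_BVP_Metric} we have $[a,b]\Subset K$. Rewriting the system in integral form and applying a Grönwall/Picard iteration, the $L^1_{loc}(E)$ convergence of $p$ and of $1/p$ on $K$ (which is exactly what the metric \eqref{SL_BVP_Metric} controls), combined with $a\to a_0$ and $b\to b_0$, yields uniform convergence of $(u,v)$ on $K$, hence continuity of $D$ on $\R\times\Omega_E$. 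This step is essentially standard ODE perturbation theory for systems with $L^1$ coefficients but is the main technical input for everything that follows.

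For the second step, smoothness in $\lambda$, I differentiate the IVP formally in $\lambda$ to obtain a linear inhomogeneous system for $(\partial_\lambda u,\partial_\lambda v)$; the classical integration-by-parts trick (multiply the original equation by $\partial_\lambda u$, the differentiated equation by $u$, subtract, and integrate on $[a,b]$) yields the Sturm--Liouville identity
\[
\partial_\lambda D(\lambda,\omega) \;=\; -\int_a^b u(x,\lambda,\omega)^2\, p(x)\, dm(x)
\]
whenever $v(b,\lambda,\omega)=0$. Since at a simple eigenvalue the eigenfunction $u(\cdot,\lambda_0,\omega_0)$ does not vanish identically, the right-hand side is strictly negative, so $\partial_\lambda D(\lambda_0,\omega_0)\neq 0$. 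The implicit function theorem applied to $D(\lambda,\omega)=0$ then produces a neighborhood $W$ of $\omega_0$ and a continuous map $\omega\mapsto\lambda(\omega)$ solving $D(\lambda(\omega),\omega)=0$ with $\lambda(\omega_0)=\lambda_0$; shrinking $W$ and using the discreteness of the spectrum from Theorem \ref{SLResults1} guarantees that $\lambda(\omega)$ stays the same indexed eigenvalue $\lambda_k$ for every $\omega\in W$.

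The final step upgrades continuity to differentiability. The $C^1$ dependence on the endpoints $(a,b)$ is immediate from the fundamental theorem of calculus applied to the integral form of the IVP, combined with the implicit function theorem and the formula for $\partial_\lambda D$ above. Fréchet differentiability in $p$ is the main technical hurdle: I would linearize the system in $p$ around $p_0$, solve the linearized system by variation of parameters to obtain an explicit candidate derivative $(\delta u,\delta v)[\delta p]$, and then estimate the remainder to be $o(\|\delta p\|)$ in the metric \eqref{SL_BVP_Metric}, using crucially the $L^1$ convergence of both $p$ and $1/p$ (the latter produces the linearization $\delta(1/p)=-\delta p/p^2$). A chain-rule argument based on the implicit function theorem then transfers Fréchet differentiability from $D$ to $\lambda(\omega)$. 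The uniform convergence on compact subintervals of $E$ of $u(\cdot,\omega)$ and $(pu')(\cdot,\omega)=v(\cdot,\omega)$ is a direct consequence of Step~1 applied along $\lambda=\lambda(\omega)\to\lambda_0$, after dividing by the normalization factor $\bigl(\int_a^b u(\cdot,\lambda(\omega),\omega)^2\, p\, dm\bigr)^{1/2}$, whose continuity in $\omega$ follows from the same Grönwall estimates.
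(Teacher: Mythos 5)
The paper does not prove this statement at all: it is presented as a known result, citing Zettl's book (pp.~55--56) for the first part and Kong--Zettl, Theorem~4.2, for the differentiability claims. So there is no paper proof to compare against; the useful question is whether your sketch stands on its own, and it essentially does, following the same standard route those references use (characteristic function plus implicit function theorem).

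Two small corrections. First, with the Neumann IVP normalization $u(a)=1$, $v(a)=0$, the Lagrange identity gives
\[
\partial_\lambda D(\lambda_0,\omega_0) \;=\; -\,\frac{1}{u(b,\lambda_0,\omega_0)}\int_a^b u(x,\lambda_0,\omega_0)^2\, p(x)\,dm(x),
\]
not $-\int u^2 p$ as you wrote; your displayed formula would only be right if $u(b)=1$. The extra factor is harmless here because $u(b)\neq 0$ at a Neumann eigenvalue (otherwise $u(b)=u'(b)=0$ forces $u\equiv 0$), so the nonvanishing of $\partial_\lambda D$ that feeds the implicit function theorem is still correct, but the formula as stated is wrong. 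Second, when you normalize by $\bigl(\int_a^b u^2 p\,dm\bigr)^{1/2}$ you should also fix a sign convention (e.g.\ demand $u(a,\omega)>0$, which is automatic under your IVP normalization, and then scale the given $u(\cdot,\omega_0)$ to match) so that the convergence $u(\cdot,\omega)\to u(\cdot,\omega_0)$ is really to the prescribed eigenfunction rather than to $\pm u(\cdot,\omega_0)$. Beyond that, the Carathéodory/Grönwall step is precisely what the $L^1$ control of both $p$ and $1/p$ built into the metric \eqref{SL_BVP_Metric} was designed for, and the passage from differentiability of $D$ to differentiability of $\lambda(\omega)$ via the implicit function theorem is sound; the Fréchet-differentiability-in-$p$ estimate is indeed the technical core of Kong--Zettl's Theorem~4.2, and your variation-of-parameters plus $o(\|\delta p\|)$ remainder plan is the right shape for it, though it would of course need to be written out to count as a full proof.
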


\bigskip

\subsection{Extraction of the minimal $\Lambda_{Poi}(\xi)$ from  $\Fknd^M(\R)$}
\label{ExtractionMinimal}

Following \eqref{optimizationProblem_Model} our goal is to solve the problem  of finding $\inf_{\xi\in \Fknd^M(\R)}\Lambda_{Poi}(\xi) $. 
As a preliminary step we characterize the solution to the simpler optimization problem:
$$\inf_{\xi\in \Fkndreg}\Lambda_{Poi}(\xi)\,,  $$
where $\Fkndreg$ is the set defined in the previous chapter (Definition \ref{def:Mreg}).

Recall Definition \ref{defn:Dreg}     for the definition of $\xi_{(K,N)}:=\xi_{(K,N),0,1}: \D^{reg}_{(K,N,D)}\to \Fkndreg$.
We define $\lam:\D^{reg}_{(K,N,D)}\to \R_+$   by 
\eql{\label{defn:Lambda_h_d}\lam(\hfrak, d):=\Lambda_{Poi}(\xi_{(K,N)}(\hfrak, d))\,.}
From Remark \ref{SetsEqualityLambda} it follows that 
\eql{\label{SimplerOptimizDreg}\inf_{(\hfrak, d)\in \D^{reg}_{(K,N,D)}}\lam(\hfrak, d)=\inf_{\xi\in \Fkndreg}\Lambda_{Poi}(\xi)\,.}

The reader is referred to Section \ref{sec:per_parameters} to recall   the correspondence between the parameters $\hfrak$ and $s$, and the possible representations of $\Lambda_{Poi}(\xi)$ for $\xi\in \Fkndreg$ in terms of the parameters.  

The following theorem characterizes the dependence of $\lam(\hfrak, d)$ on the parameters $\hfrak$ and $d$ for $(\hfrak, d)\in \D^{reg}_{(K,N,D)}$. As it turns out, $\lam(\hfrak, d)$ depends monotonically on $d$ and on $|\hfrak|$. This characterization amounts to a solution to the problem \eqref{SimplerOptimizDreg}. In the next section we will provide a solution to the original optimization problem \eqref{optimizationProblem_Model}, after we prove additional results regarding the upper semi-continuity of $\xi\mapsto \Lambda_{Poi}(\xi)$.

%, by $\xi_{(K,N),r}(\hfrak, d)=J_{K,N,\hfrak}1_{[-\frac{d}{2}, \frac{d}{2}]} \cdot m$. 

%
%
%By invariance under scaling and translations, i.e
%$$\Lambda_{Poi}(J(x)1_{[a,b]}(x)dm)=\Lambda_{Poi}(cJ(x+r)1_{[a+r,b+r]}(x)dm)\qquad \forall r\in \R,\, c>0\,,$$ we may identify
%

%$\D^{reg}_{(K,N,D)}$,\,$\xi_{(K,N)}$,\,$\FF^{reg}_{(K,N,D)}$

%As we explained before our problem amounts to finding the infimum of $\lam(\hfrak, d)$ in $\overline{\D^{reg}_{(K,N,D)}}$. A partial answer, 
%by considering minimization only over $\D^{reg}_{(K,N,D)}$, is encapsulated in the following theorem. 
%Later on we will also handle the missing points $(\hfrak, d)\in \overline{\D^{reg}_{(K,N,D)}}\setminus \D^{reg}_{(K,N,D)}$.

\begin{thm} \label{thm:SpectralMonotonicity} Let $K\in \R$, $N\in (-\infty, 0]\cup (1,\infty]$ and $D\in (0,\infty)$. 
\begin{enumerate}
   \item For any fixed $\hfrak_0\in \pi_{\hfrak}\D^{reg}_{(K,N,D)}$, the function $d\mapsto \lam(\hfrak_0, d)\,\,\,\,(=\Lambda_{Poi}(\xi_{(K,N)}(\hfrak_0, d)))$ on $\pi_{\hfrak}^{-1}(\hfrak_0)$ monotonically decreases as $d$ increases.
    \item
        For any fixed $d_0\in \pi_{d}\D^{reg}_{(K,N,D)}$ the function $\hfrak\mapsto \lam(\hfrak, d_0)\,\,\,\,(=\Lambda_{Poi}(\xi_{(K,N)}(\hfrak, d_0)) )$ on  $\pi_{d}^{-1}(d_0)$ 
        \begin{enumerate}
	    \item monotonically increases as $|\hfrak|$ increases if $N\in (-\infty,-1)\cup (1,\infty]$
	    \item monotonically decreases as $|\hfrak|$ increases if $N\in (-1,0]$.
	    \item is independent of $\hfrak$ if $N=-1$.
        \end{enumerate}
\end{enumerate}

\end{thm}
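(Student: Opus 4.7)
Part (1) is a direct application of the one-dimensional diameter monotonicity Lemma~\ref{D_monotonicity}. For fixed $\hfrak_0 \in \pi_{\hfrak}\D^{reg}_{(K,N,D)}$ and $d_0 < d_1$ in $\pi_{\hfrak}^{-1}(\hfrak_0)$, the measure $\xi_{(K,N)}(\hfrak_0, d_0)$ is literally the restriction of $\xi_{(K,N)}(\hfrak_0, d_1)$ to $[-d_0/2, d_0/2] \subset [-d_1/2, d_1/2]$, with the two densities agreeing on the common support. The hypotheses of the lemma for the Poincar\'{e} functional (the case $p=2$, $g(y,r) = (y+r)^2$ of Example~\ref{exmp:p_Poinc}) are verified in Remark~\ref{remk:D_monotonicity}, and the required $L^\infty$-boundedness of $J_{K,N,\hfrak_0}$ on $[-d_1/2, d_1/2]$ follows from the explicit form \eqref{eqn:JhkN} together with the defining inclusion $[-d_1/2, d_1/2] \subset int(\isupp J_{K,N,\hfrak_0})$ for $(\hfrak_0,d_1) \in \D^{reg}_{(K,N,D)}$. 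The lemma then yields $\lam(\hfrak_0, d_0) \geq \lam(\hfrak_0, d_1)$.

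For part (2), I first reduce to $\hfrak \geq 0$ by the reflection symmetry $\lam(\hfrak, d_0) = \lam(-\hfrak, d_0)$, which is a consequence of the invariance of $\Lambda_{Poi}$ under $x \mapsto -x$ and the identity $J_{K,N,-\hfrak}(x) = J_{K,N,\hfrak}(-x)$. Next, using the dictionary \eqref{DensityExpressions_0}--\eqref{DensityExpressions}, the zero-homogeneity and translation invariance of $\Lambda_{Poi}$ allow one to recast $\lam(\hfrak, d_0)$ as the first positive eigenvalue $\lambda_1(s - d_0/2, \,s + d_0/2, \,Y_{K,N,\tl{\hfrak}})$ of the regular Neumann Sturm-Liouville problem $(Y f')' = -\lambda Y f$ on $[s-d_0/2, s+d_0/2]$, for the fixed canonical density $Y = Y_{K,N,\tl{\hfrak}}$ of the relevant branch (a)--(d) and where $s = s_{K,N,\hfrak}$. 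Inspection of the table within \eqref{DensityExpressions} shows that $|\hfrak| \mapsto s_{K,N,\hfrak}$ is strictly monotone in each branch, so the claim reduces to determining the sign of $\tfrac{d}{ds}\lambda_1(s-d_0/2, \,s+d_0/2, \,Y)$ for $s$ in a connected subset of $\pi_d^{-1}(d_0)$, with case (d2) treated separately by a direct $\hfrak$-perturbation since no translation interpretation is available there.

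The core analytic step is thus the computation and sign-analysis of $\tfrac{d}{ds}\lambda_1$. Theorem~\ref{SLResults2} provides $C^1$ dependence on the endpoints and Fr\'{e}chet differentiability on the weight, so a standard Hadamard-type first-variation argument yields an integral expression for $\tfrac{d}{ds}\lambda_1$ involving the first normalized eigenfunction $u_s$ (subject to $\int u_s^2 Y(\cdot+s)\,dm = 1$ and $\int u_s Y(\cdot+s)\,dm = 0$), its derivative, and the logarithmic derivative $Y'/Y$. This logarithmic derivative is the crucial object carrying the curvature-dimension data, because on $int(\isupp Y_{K,N,\tl{\hfrak}})$ the density satisfies the ODE
\eq{-(\log Y)'' - \tfrac{1}{N-1}((\log Y)')^2 = K,}
cf.\ \eqref{eqn:Diff:CD(k,N)}. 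I expect the threshold $N = -1$ to emerge precisely because the nonlinear coefficient $\tfrac{1}{N-1} = -\tfrac{1}{2}$ there produces an exact cancellation in the Hadamard integrand that forces $\tfrac{d}{ds}\lambda_1 \equiv 0$, while for $N \in (-\infty,-1) \cup (1,\infty]$ versus $N \in (-1,0]$ the surviving terms carry opposite signs, reflecting the sign of $N-1$ together with eigenfunction-dependent positive factors.

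The main obstacle is translating the Hadamard integral into a uniform sign statement without explicit eigenfunctions, which are unavailable in general. I expect this to be handled by a Pr\"uffer-type substitution $u_s = \rho \sin\phi$, $Y(\cdot+s) u_s'/\sqrt{\lambda_1} = \rho \cos\phi$, which replaces the eigenvalue problem by a first-order phase equation whose right-hand side is governed by $(\log Y)'$; a phase-tracking argument across $[-d_0/2, d_0/2]$ combined with the $N$-dependent ODE above should then isolate the threshold at $N=-1$ and produce the claimed opposite signs on either side. Once the sign of $\tfrac{d}{ds}\lambda_1$ is established throughout a connected component of $\pi_d^{-1}(d_0)$, integration in $s$ yields the monotonicity assertions in (2a)--(2c).
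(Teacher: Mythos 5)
Part (1) of your proposal is correct and matches the paper's argument precisely: Lemma~\ref{D_monotonicity} with $g(y,r)=(y+r)^2$, justified by Example~\ref{exmp:p_Poinc} and Remark~\ref{remk:D_monotonicity}.

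For part (2) your high-level strategy — reduce to the translation parameter $s$, determine the sign of $\partial_s\lambda_1$, isolate a threshold at $N=-1$ via a cancellation — is the right one, and your observation that the cancellation occurs when $\tfrac{1}{N-1}=-\tfrac12$ is genuinely the correct mechanism: after the change of variables, the $x$-dependent part of the effective potential acquires the coefficient $\tfrac{1}{N-1}+\tfrac12=\tfrac{N+1}{2(N-1)}$, which vanishes precisely at $N=-1$. However, your technical route at the core step is only a guess ("I expect ...") and differs from what the paper actually does. The paper does \emph{not} compute $\partial_s\lambda_1$ from a Hadamard variational formula and then analyze its sign. Instead it works by diameter comparison: fix $\lambda$, translate $s\mapsto s'$, and determine the sign of the resulting displacement $\epsilon_{s'-s}$ of the right endpoint; the monotonicity in $\hfrak$ is then deduced from part (1). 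To determine $\operatorname{sgn}(\epsilon_{s'-s})$ it uses the Liouville transformation $\Psi_s(x)=Y_0(x)^{1/2}\psi'(x)$, which converts the Neumann Sturm-Liouville problem into a Dirichlet Schr\"odinger problem $\Psi''+H_0\Psi=0$ with $H_0=\lambda_{1,s}-\tfrac{\delta(N-1)}{2}-\tfrac{N^2-1}{4}\tan_\delta^2(x)$ (or $\cot_\delta^2$), so that $\operatorname{sgn} H_0'$ is controlled by $\operatorname{sgn}(N^2-1)$ and the branch; Picone's identity then yields a contradiction if $\epsilon_{s'-s}$ had the wrong sign. The Pr\"uffer phase substitution you propose is a legitimate alternative tool — it is in fact what the paper uses in the $p$-Laplacian chapter — but note that there the argument is only carried out for $N\in(1,\infty]$, precisely because the $N\leq 0$ range (and hence the $N=-1$ threshold) is much harder to see through the nonlinear phase equation; your sketch does not address how the Pr\"uffer phase-tracking would expose $N=-1$ as a threshold.

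The most concrete gap is that your proposal ignores the case where the interval $(s-d_0/2,\, s+d_0/2)$ straddles the origin. There $(\log Y_0)'$ changes sign, so the quantity whose sign you need is the integral of a sign-changing integrand, and the integral is not obviously single-signed. The paper handles this by comparing $\Psi_s$ against its reflection $\sigma\Psi_s(x):=\Psi_s(-x)$, showing via Sturm's separation theorem that (for $s>0$) $|\Psi_s|\geq |\sigma\Psi_s|$ on $[0,\tfrac{d}{2}-s]$, and then exploiting the oddness of $H_0'$ to conclude the integral has the required sign. Without some such mechanism your phase-tracking argument has no way to conclude when $0$ lies in the interior of the interval. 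Finally, the $N=\infty$ case requires a separate continuity-in-the-weight argument (Proposition~\ref{prop:case:infty}); your remark about "case (d2)" flags that a translation interpretation is unavailable there, but does not resolve it — the paper resolves it by taking limits $N\to\infty$ in the SL-BVP metric of Theorem~\ref{SLResults2}.
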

\begin{remk} Here we identified $\pi_{d}^{-1}(d_0)$ (resp. $\pi_{\hfrak}^{-1}(\hfrak_0)$) with the set of points \\$\pi_{\hfrak}\prnt{\pi_{d}^{-1}(d_0)}=\{\hfrak: \, (\hfrak, d_0)\in D^{reg}_{(K,N,D)}\}$ (resp. $\pi_{d}\prnt{\pi_{\hfrak}^{-1}(\hfrak_0)}=\{d: \, (\hfrak_0, d)\in D^{reg}_{(K,N,D)}\}$).
\end{remk}
\begin{remk}\label{remk:indep_s} It is \blue{quite} remarkable that from $2(c)$ (in view of \eqref{DensityExpressions_0} -  \eqref{DensityExpressions} regarding the translation from the parameter $\hfrak$ to translations $s$) it follows, for example, that $\Lambda_{Poi}(\cosh^{-2}(x)1_{[s-\frac{d}{2},s+\frac{d}{2}]}\cdot m)$ is independent of $s$.
\end{remk}

\begin{remk}
In Definition \ref{defn:Dreg} $\D^{reg}_{(K,N,D)}$ was defined for  $N\in (-\infty, 0]\cup (1,\infty]$.

As we noted in Remark \ref{rmk:Model_equivalent_def}, for $N$ in this range Definitions \ref{dfn:ModelMeasures} and \ref{defn:CD_Model_First} of the class $\Fknd^M(\R)$ are equivalent; however, in Remark \ref{rmk:Model_equivalent_def} we also noted that Definition  \ref{defn:CD_Model_First} 
is more general, being meaningful for 
$N\in(-\infty,\infty]\setminus \{1\}$. Accordingly it is meaningful to define
$\D^{reg}_{(K,N,D)}$ for $N\in(-\infty,\infty]\setminus \{1\}$. 
 In this sense, the proof below shows that actually on the whole range of $N\in (-1,1)$ the function $\hfrak\mapsto \lam(\hfrak, d_0)$ monotonically decreases as $|\hfrak|$ increases. 
\end{remk}

The proof of these monotonicity statements relies on the spectral theory of regular SL-BVPs, in particular Theorems \ref{SLResults1} and \ref{SLResults2}. For any $(\hfrak,d)\in \D^{reg}_{(K,N,D)}$ it holds that $J(x):=\deriv{\xi_{(K,N)}(\hfrak,d)}{m}$ is smooth and strictly positive on $[-\frac{d}{2},\frac{d}{2}]$. We may identify $\lam(\hfrak, d)=\Lambda_{Poi}(\xi_{(K,N)}(\hfrak, d))$ with the first non-zero eigenvalue of the regular SLP:
$$ (J(x)f(x)')'=-\lambda J(x) f(x) : \qquad f'(-\frac{d}{2})=f'(\frac{d}{2})=0\,. $$
Therefore as long as we consider $(\hfrak,d)\in \D^{reg}_{(K,N,D)}$ we can freely study the dependence of $\lam(\hfrak, d)$ on $\hfrak$ and $d$ via properties of regular SL-BVPs. 
%is regular  and we can identify its first non-zero eigenvalue with $\lam(\hfrak, d)$.

\begin{proof}[Proof of (1) ]
The statement is a straightforward consequence of classical results from the theory of regular Sturm-Liouville problems (SLPs), in particular explicit identities expressing the derivatives of $\lambda$ with respect to the endpoints; the identities can be found in \cite{Zet} and \cite{DHe} (and also in the proof of \red{Theorem} \ref{cont:SL_Eval} in the appendix section). However for a complete and concise argument one can just apply Lemma \ref{D_monotonicity} with the function $g(y,r):=(y+r)^2$ using the characterization of the variance as $Var_{\xi}(f)=\inf_{r\in \R} \int(f(x)+r)^2d\xi(x)$ whenever $\xi$ is normalized to be a probability measure.
The justification for this function and for that it verifies the conditions of the lemma, follows as a consequence of Example \ref{exmp:p_Poinc} and Remark \ref{remk:D_monotonicity}.
\end{proof}

Statement $(2)$ will be proved by an extension of a method of Kr{\"o}ger \cite{Kro}. 
We study the dependence of $\lam(\hfrak, d)$ on $\hfrak$ via the principle of diameter comparison; in principle this essentially means that given $\hfrak_1, \hfrak_2\in \R$ which are infinitesimally close, rather than comparing $\lam(\hfrak_1, d)$ and $\lam(\hfrak_2, d)$ directly, we compare $d$ and $d'$ for which  $\lam(\hfrak_1, d)=\lam(\hfrak_2, d')$; if $d\leq d'$ (resp. $d\geq d'$)  then by (1) we conclude that $\lam(\hfrak_1, d)\leq\lam(\hfrak_2, d)$ (resp. $\lam(\hfrak_1, d)\geq\lam(\hfrak_2, d)$). 
We remark that our implementation of the principle will be slightly different, nevertheless  what we have just described is in essence what this principle stands for. 

\begin{proof}[Proof of (2) ]
%
%We will assume $N\neq 1$, since this case is trivial . Furthermore the conclusions applying to cases $(d1)$ and $(d2)$ can be inferred from the conclusions which apply to cases $(a),(b),(c1)$, by the continuous dependence of the eigenvalues of SL BVP on the problem parameters, hence we may restrict only to $N<\infty$.

%For $N\neq \infty$  it suffices to inspect the cases $\delta\in \{-1,0,+1\}$. 
We begin the proof under the assumption that $N\neq \infty$. The case $N=\infty$ will follow by a continuity argument. Without loss of generality (by scaling) we assume $\delta\in \{-1,0,+1\}$.

Let $(\hfrak, d)\in \D^{reg}_{(K,N,D)}$; we will study the sign of $\partial_{\hfrak}\lam(\hfrak,d)$. To this end we resort to \eqref{DensityExpressions_0} -  \eqref{DensityExpressions}, where we get the dictionary to translating variations over $h$ as translations; \red{the reader is advised to recall the definitions of cases (a),(b1-2),(c1-3) in 
\eqref{DensityExpressions_0} (due to the assumption $N\neq \infty$ we do not consider the cases (d1-2), which correspond to $N=\infty$). }

Specifically we have a representation  $J_{K,N,\hfrak}(x)=g_sY(x+s)$ where $s=s(K,N,\hfrak)$ and \red{$Y=Y_{K,N,\hfrak}$}. \red{Recall that in cases (b) and (c), $Y$ is determined as one of two or three fixed densities, depending on the value of $\hfrak$. Note that there is no need to treat the exceptional densities (b2) and (c3), as these correspond to a single value or two values of exceptional $\hfrak$'s; according to Theorem \ref{SLResults2} $\lambda(\hfrak,d)$ is continuous in $\hfrak$, therefore discarding these finite number of $\hfrak$'s is immaterial for establishing the monotoniciy of $\lambda(\hfrak, d)$ in $\hfrak$. }
\bigskip

Define $Y_s:=Y(x+s)$.  The monotonicity statements can be interpreted as monotonicity of the map $s\mapsto \Lambda_{Poi}(Y_s(x)1_{[-\frac{d}{2}, \frac{d}{2}]} dm)$ on the set of $s$ determined by the condition \blue{$[-\frac{d}{2}, \frac{d}{2}]\subset int\prnt{\isupp\prnt{ Y_s  }}$} \red{(in case (a) this means that $[s-\frac{d}{2},s+\frac{d}{2}]\subset (-\frac{\pi}{2\sqrt{\delta}},\frac{\pi}{2\sqrt{\delta}})$, in cases (b1) and (c2) this means that $[s-\frac{d}{2},s+\frac{d}{2}]\subset (0,\infty)$, and in case $(c1)$ this condition poses no restriction}). 

Given $s$ we can identify $\Lambda_{Poi}(Y_s(x)1_{[-\frac{d}{2}, \frac{d}{2}]}(x)\cdot m)$ as the first eigenvalue $\lambda_{1,s}$ of the SLP:

\eql{  \label{SL1} (Y(x)\psi'(x))'=-\lambda_{1,s} Y(x) \psi(x) \qquad \qquad \psi'(-\frac{d}{2}+s)=\psi'(\frac{d}{2}+s)=0 \,.}
 For all $s'$ in a sufficiently small right neighborhood of $s$ there is a SLP

\eql{ \label{SL1a} (Y(x)\bar{\psi}'(x))'=-\lambda_{1,s} Y(x) \bar{\psi}(x) \qquad \qquad \bar{\psi}'(-\frac{d}{2}+s')=\bar{\psi}'(\frac{d}{2}+s'+\epsilon_{s'-s})=0\,, }

and $\epsilon_{s'-s}$ is $o(1)$ as $|s'-s|\to 0$ (see Theorem \ref{cont:SL_Eval} in the appendix to this section). The conclusions of statement $(2)$ of the theorem will be inferred from diameter comparison; specifically if  $\epsilon_{s'-s}>0$, then $\Lambda_{Poi}(Y_{s'}(x)1_{[-\frac{d}{2}, \frac{d}{2}]}(x) \cdot m)\geq \Lambda_{Poi}(Y_s(x)1_{[-\frac{d}{2}, \frac{d}{2}]}(x) \cdot m)$ as a consequence of statement $(1)$. 

%In terms of $ \psi_s(x):=\psi(x-s)$ and $ \bar{\psi}_{s'}(x):=\bar{\psi}(x-s')$ we can write the previous SL problems as follows 
%\eq{ (Y_0(x) \psi_s'(x))'=-\lambda_{1,s} Y_0(x) \psi_s(x)\,,\qquad \psi_s'(x_0(s))=\psi_s'(x_0(s)+\Delta_s)=0\qquad &(SL_s)\quad \mbox{and}\\
%(Y_0(x) \bar{\psi}_{s'}'(x))'=-\lambda_{1,s} Y_0(x) \bar{\psi}_{s'}(x)\,,\qquad \bar{\psi}_{s'}'(x_0(s'))=\bar{\psi}_{s'}'(x_0(s')+\Delta_{s'})=0\qquad &(SL_{s'})}
%where $x_0(s):=x_0+s$. 

To this end we use the (Liouville) transformations (similar to \cite{Kro,PaWe})
\eql{\label{defn:Psi_s}
\Psi_s(x):=Y_0(x)^{\half}\psi'(x)\qquad \text{and} \qquad \Psi_{s'}(x):=Y_0(x)^{\half}\bar{\psi}'(x)\,,} 
which turn the previous equations into 
\eql{\label{ODE1} \Psi_s''(x)+H_0(x)\Psi_s(x)=0\qquad\qquad \text{with \,\, BC } \quad \Psi_s(-\frac{d}{2}+s)=\Psi_s(\frac{d}{2}+s)=0 \,,}
and
\eql{ \label{ODE2} \Psi_{s'}''(x)+H_0(x)\Psi_{s'}(x)=0\qquad\qquad \text{with \,\, BC } \quad \Psi_{s'}(-\frac{d}{2}+s')=\Psi_{s'}(\frac{d}{2}+s'+\epsilon_{s'-s} )=0 \,,}
where
\eql{\label{exp:H0}
&H_0(x):=
\lambda_{1,s}+\half \prnt{ (\log (Y_0)''(x)-\frac{1}{2}\prnt{\frac{Y_0'(x)}{Y_0(x)}}^2 }\,. }
One can verify that $H_0$ is given explicitly by
\eql{\label{exp:H0_2} H_0(x)=
\begin{cases}
 \lambda_{1,s}-\frac{\delta}{2} (N-1)- \frac{1}{4}(N^2-1)\tan_{\delta}(x)^2 &\mbox{ in cases } (a),(c1)\\
\lambda_{1,s}-\frac{\delta}{2} (N-1)-\frac{1}{4}(N^2-1)\cot_{\delta}(x)^2   &\red{\mbox{ in cases } (b1),(c2)}\,,
\end{cases}
 }
where 
$$\tan_{\delta}(x):=\begin{cases}
\frac{\si_{\delta}(x)}{\co_{\delta}(x)}& \mbox{ if } \delta\neq 0\\
0 &\mbox{otherwise} \end{cases}
\qquad \text{and}\qquad \cot_{\delta}(x):=\begin{cases}\tan_{\delta}(x)^{-1}& \mbox{ if } \delta\neq 0\\1 &\mbox{otherwise} \end{cases}
\,,$$ 
($\si_{\delta}(x)$ and $\co_{\delta}(x)$ are the functions defined in \eqref{SinCos}). 

Notice that \eqref{exp:H0_2} implies that if $Y_0(x)$ is a \red{$CD_b(K,-1)$} model density then $\lambda_{1,s}$ is independent of $s$ (cf. Remark \ref{remk:indep_s}). We will therefore further assume that $N\neq -1$ below. 
%\\ \mu_s-\half k -\frac{1}{4}k^2x^2   &\mbox{ in case } (d1)

By a change of coordinates  $\bar{\Psi}_s(x):=\Psi_{s'}(x+s'-s)$ equation \eqref{ODE2} turns into
\eql{ \label{ODE2b}\bar{\Psi}_s''(x)+H_{s'-s}(x)\bar{\Psi}_s(x)=0\,,\qquad \bar{\Psi}_s(-\frac{d}{2}+s)=\bar{\Psi}_s(\frac{d}{2}+s+\epsilon_{s'-s})=0\,,} 
with $H_{s'-s}(x)=H_0(x+(s'-s))$. 
%Observe that \eqref{exp:H0} implies that if $Y_0(x)$ is a $CD(K,-1)$ model density then $\lambda_{1,s}$ is independent of $s$. 

This motivates consideration of the following assumptions which will facilitate the proof:
\begin{enumerate}
	\item Without loss of generality we assume $s>0$; indeed, since $H(x)$ is even the analysis is completely symmetric. 
	\item In cases $(a)$ and $(c1)$ we assume $0< s<s'$ if we are given that $N\in (-1,0]$, and that $0< s'<s$ if $N\in (-\infty, -1)\cup (1,\infty)$ (equivalently $-sgn(N^2-1)=sgn(s'-s)$). \\
	\red{In cases $(b1)$ and $(c2)$} we assume $0< s'<s$ if we are given that $N\in (-1,0]$, and that $0< s<s'$ if $N\in (-\infty, -1)\cup (1,\infty)$ (equivalently $sgn(N^2-1)=sgn(s'-s)$). \\
	Notice that under these assumptions $sgn\prnt{H_0'|_{(0, l_{\delta}/2)}(x)}=sgn(s'-s)$ on $(0,l_{\delta}/2)$.
	  \item $s$ and $s'$ are sufficiently close, so that $\int_{-\frac{d}{2}+s}^{\frac{d}{2}+s}\prnt{H_{s'-s}(x)-H_0(x)}\Psi_{s}^2(x)dx$ has the same sign as $\int_{-\frac{d}{2}+s}^{\frac{d}{2}+s}(s'-s)H_0'(x)\Psi_{s}^2(x)dx$. Indeed, $H_t(x)$  has continuous partial derivatives; setting  $g(t):=\int_{-\frac{d}{2}+s}^{\frac{d}{2}+s} H_{t}(x)\Psi^2_s(x)dx$, then according to Leibniz' rule $g(t)$ is differentiable at $0$, and we may thus write $g(t)=g(0)+g'(0)t+o(t)$ where $g'(0)=\int_{-\frac{d}{2}+s}^{\frac{d}{2}+s} H'_{0}(x)\Psi^2_s(x)dx\neq 0$ since $H'_{0}(x)\neq 0$ on $(-\frac{d}{2}+s,\frac{d}{2}+s)$ for $N^2\neq 1$ (see \eqref{exp:H0_2}), in particular it maintains the same sign on the interval; therefore for $t$ sufficiently small it holds that $g(t)-g(0)$ has the same sign as $g'(0)t$. 

		%(in particular $0\in (x_0(s), x_0(s)+\Delta_s)\Leftrightarrow 0\in (x_0(s), x_0(s)+\Delta_{s'})$). 
	
\end{enumerate}
%The proof essentially relies on comparison of the distance between the zeroes of the 2nd order ODEs \eqref{ODE1} and \eqref{ODE2}. Since $(SL_s)$ and $(SL_{s'})$ both correspond to the same eigenvalue $\lam_s$, proving $\Delta_{s'}< \Delta_{s}$ will imply that $\lam_{s'}< \lam_s$ (in view of part $(1)$ of the theorem).\\

%Furthermore it will be , which in particular implies that $x_0(x)+\Delta_{s'}>-x_0(s)$.\\
%Set $s_m:=\min\{s,s'\}$ and $s_M=\max\{s,s'\}$.\\
%$x_L(s,s')=\min\{x_0(s), x_0(s')\}$ and $x_R(s,s')=\max\{x_0(s)\delta_, x_0(s')\}$
 
Recall that by assumption $\Psi_{s}(-\frac{d}{2}+s)=\Psi_{s}(\frac{d}{2}+s)=0$. 

Notice that since $\lambda_{1,s}$ corresponds to the first eigenvalue of a Neumann SLP, the eigenfunctions $\psi$ and $\bar{\psi}$ in \eqref{SL1} and \eqref{SL1a} are monotonic and we may thus assume w.l.o.g. that the function $\Psi_s$ is positive inside $(-\frac{d}{2}+s, \frac{d}{2}+s)$ and that $\bar{\Psi}_s$ is positive inside $(-\frac{d}{2}+s, \frac{d}{2}+s+\epsilon_{s'-s})$.

Assume by contradiction that $\bar{\Psi}_s(x)$ has no zeros in $(-\frac{d}{2}+s, \frac{d}{2}+s)$ (or equivalently that $\epsilon_{s'-s}>0$).  According to Picone's identity \cite{Pic}:
$$\deriv{}{x}\prnt{\Psi_{s}\Psi_{s}'-\Psi_{s}^2\frac{\bar{\Psi}_s'}{\bar{\Psi}_s}}=  \prnt{H_{s'-s}-H_0}\Psi_{s}^2+\prnt{\Psi_{s}'-\Psi_{s}\frac{\bar{\Psi}_s'}{\bar{\Psi}_s}}^2  \,.$$

We integrate over the left and right terms from $-\frac{d}{2}+s$ to $\frac{d}{2}+s$, considering that $\Psi_s(-\frac{d}{2}+s)=\Psi_s(\frac{d}{2}+s)=0$, we conclude that
\eql{\label{eqn:integralPicone} 0=  
 \int_{-\frac{d}{2}+s}^{\frac{d}{2}+s}    \prnt{H_{s'-s}(x)-H_0(x)}\Psi_{s}^2(x)dx+\int_{-\frac{d}{2}+s}^{\frac{d}{2}+s} \prnt{\Psi_{s}'(x)-\Psi_{s}(x)\frac{\bar{\Psi}_s'(x)}{\bar{\Psi}_s(x)}}^2 dx\,.}
The rightmost term of this identity is non-negative. Since by assumption  $s$ and $s'$ are sufficiently close (in the sense of assumption (3)\,), a contradiction will follow if we prove that 
\eql{\label{SturmPicone2} 
(s'-s) \int_{-\frac{d}{2}+s}^{\frac{d}{2}+s} H_0'(x)\Psi_s^2(x)dx>0\,. }
To establish \eqref{SturmPicone2}  note that one of the following possibilities must hold (recall that $s>0$ by assumption): 
\begin{enumerate}
	\item $(-\frac{d}{2}+s, \frac{d}{2}+s)\subset (0,l_{\delta}/2)$, or
	\item $0\in (-\frac{d}{2}+s, \frac{d}{2}+s)$ . 
\end{enumerate}

%We define $\sigma \Psi_s(x)=\Psi_s(-x)$. 
\noindent {\em Possibility 1}
\smallskip\\
On $(0,l_{\delta}/2)$ evidently $sgn(H_0'(x))$ equals $-sgn(N^2-1)$ in cases $(a)$ and $(c1)$ and  to $sgn(N^2-1)$ \red{in cases $(b1)$ and $(c2)$}. According to our assumptions this coincides with $sgn(s'-s)$ in all four cases. 
Hence $(s'-s)H_0'(x)>0$ on $(0,l_{\delta}/2)$, and it follows that \eqref{SturmPicone2} holds.
\bigskip\\
\noindent {\em Possibility 2}
\smallskip\\
This possibility applies only to cases $(a)$ and $(c1)$.  Recall that $H_0'$ has a constant sign on $(0,\frac{l_{\delta}}{2})$. Since $H_0(x)$ is even, $\sigma \Psi_s(x):=\Psi_s(-x)$ is also a solution to ODE \eqref{ODE1} which satisfies $\sigma \Psi_s(0)=\Psi_s(0)$ and $\sigma \Psi_s(\frac{d}{2}-s)=0$. Unless $\Psi_s$ and $\sigma\Psi_s$ coincide they have no additional intersection point inside $(0,\frac{d}{2}+s)$; indeed, assume by contradiction that there is a second intersection point $x_1\in (0,\frac{d}{2}+s)$, then the function $g_{\Psi_s}(x):= \Psi_s(x)-\sigma \Psi_s(x)$ satisfies $$g_{\Psi_s}''(x)+H_0(x)g_{\Psi_s}(x)=0\qquad g_{\Psi_s}(0)=g_{\Psi_s}(x_1)=0\,,$$
then by Sturm's separation  theorem (e.g \cite[p.314]{BiRo}), since $\Psi_s(x)$ and $g_{\Psi_s}(x)$ are not proportional on $[0,x_1]$ (considering that $g_{\Psi_s}(0)=0$),  $\Psi_s(x)$ must have a zero inside $(0,x_1)$. However $\Psi_s(x):=Y_0(x)^{\half}\psi'(x)$ according to our definition in \eqref{defn:Psi_s}, hence it must be non-zero on $(-\frac{d}{2}+s, \frac{d}{2}+s)$, and we arrive a contradiction.
By assumption $s>0$ therefore $|\Psi_s(x)|\geq |\sigma \Psi_s(x)|$ on $[0, \frac{d}{2}-s]$ (see figure \ref{fig:Psi_s}).
\begin{figure}[H]\label{fig:Psi_s}
	\centering
	\caption{Illustration of the functions $|\Psi_s(x)|$ and $|\sigma\Psi_s(x)|$.}
		\includegraphics[width=14cm, height=2.5cm]{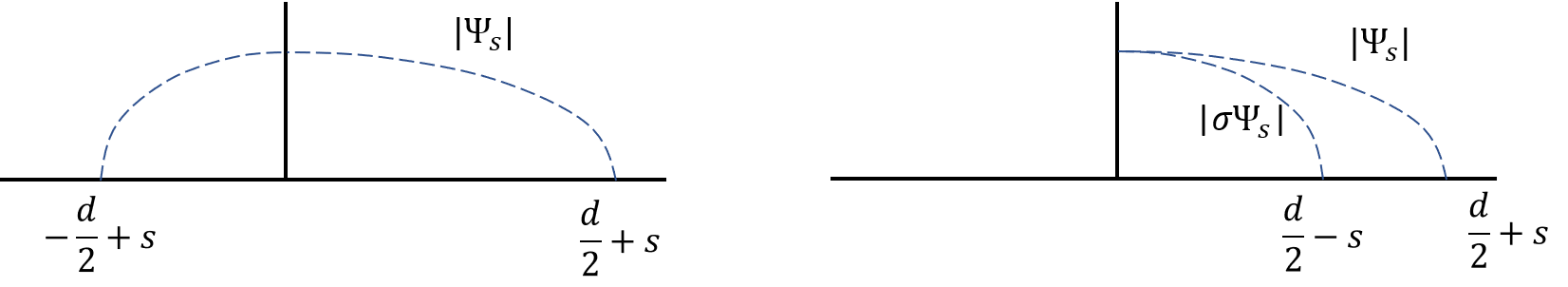}
	\label{fig:Psi_s}
\end{figure}

Considering that $H_0'(x)=-H_0'(-x)$, we conclude the following inequality
\eql{ \label{eqn:signatureIntegral} (\star):&=\int_{-\frac{d}{2}+s}^{\frac{d}{2}+s}H_0'(x)\Psi_s^2(x)dx
\begin{cases} 
>\int_{-\frac{d}{2}+s}^{\frac{d}{2}-s}H_0'(x)\Psi_s^2(x)dx \\
<\int_{-\frac{d}{2}+s}^{\frac{d}{2}-s}H_0'(x)\Psi_s^2(x)dx 
\end{cases}\\&=\int_{0}^{\frac{d}{2}-s}H_0'(x)(\Psi_s^2(x)-(\sigma \Psi_s)^2(x))dx
\begin{cases} 
\geq 0 &\mbox{if} \,\, H_0'>0 \quad\text{on} \quad (0,\frac{l_{\delta}}{2})\\ \nonumber
 \leq 0 &\mbox{if} \,\, H_0'<0 \quad\text{on} \quad (0,\frac{l_{\delta}}{2})\,.
%\int_{-(x_0(s)+\Delta_{s'})}^{x_0(s)+\Delta_{s'}}H_0'\Psi_s^2 &\mbox{if} \,\, H_0'<0
\end{cases}
  }
	
According to our assumptions $sgn\prnt{H_0'|_{(0, \frac{d}{2}-s)}(x)}=sgn(s'-s)$, thus \eqref{eqn:signatureIntegral} states that $sgn(\star)=sgn(s'-s)$. This implies that \eqref{SturmPicone2} holds, which translates into a contradiction. 
\bigskip

Thus $\bar{\Psi}_s(x)$ must have a zero inside $(-\frac{d}{2}+s, \frac{d}{2}+s)$,  whence $\epsilon_{s'-s}<0$; this implies that $\lambda_{1,s'}\leq \lambda_{1,s}$. Recall that the sign of $s'-s$ was assumed to be $-sgn(N^2-1)$ in cases $(a)$ and $(c1)$, and $sgn(N^2-1)$ \red{in cases $(b1)$ and $(c2)$}. The conclusion $\lambda_{1,s'}\leq \lambda_{1,s}$ holds for $s'$ arbitrarily close to $s$ satisfying these premises, thus given $s=s_0>0$ we can conclude that $\deriv{}{s}|_{s=s_0} \lambda_{1,s}$ is non-positive in cases $(a)$ and $(c1)$ when $N^2-1<0$ and non-negative when $N^2-1>0$; similarly $\deriv{}{s}|_{s=s_0} \lambda_{1,s}$ is non-positive \red{in cases $(b1)$ and $(c2)$} when $N^2-1>0$ and non-negative when $N^2-1<0$. Translating from the parameter $s$ to $\hfrak$ (following \eqref{DensityExpressions_0} -  \eqref{DensityExpressions}) we conclude the claims of the theorem in the case $N\in (-\infty,0]\cup (1,\infty)$. 

\bigskip

Lastly we consider the case $N=\infty$. Suppose $(\hfrak_1,d),(\hfrak_2,d)\in \D^{reg}_{(K,\infty,D)}$, then there exists $N_0>1$ such that for all $N\geq N_0$: $(\hfrak_1,d),(\hfrak_2,d)\in \D^{reg}_{(K,N,D)}$.
We claim that: 
\begin{prop}\label{prop:case:infty} If for all $N\geq N_0$ it holds that $\Lambda_{Poi}(\xi_{(K,N)}(\hfrak_1, d))\leq \Lambda_{Poi}(\xi_{(K,N)}(\hfrak_2, d))$ then $\Lambda_{Poi}(\xi_{(K,\infty)}(\hfrak_1, d))\leq \Lambda_{Poi}(\xi_{(K,\infty)}(\hfrak_2, d))$ as well. \end{prop}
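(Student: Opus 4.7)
The plan is to prove Proposition \ref{prop:case:infty} by a continuity-in-$N$ argument, passing the inequality through the limit $N\to\infty$. Concretely, I would fix $(\hfrak,d)\in \D^{reg}_{(K,\infty,D)}$ and show that
\eql{\label{eqn:plan:limitconv} \Lambda_{Poi}(\xi_{(K,N)}(\hfrak, d))\xrightarrow{N\to\infty} \Lambda_{Poi}(\xi_{(K,\infty)}(\hfrak, d))\,. }
Once this is established, applying it to both $\hfrak_1$ and $\hfrak_2$ and passing to the limit in the hypothesised inequality (valid for all $N\geq N_0$) yields the conclusion.

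To justify \eqref{eqn:plan:limitconv}, first I would choose $N_0$ so large that for all $N\geq N_0$ the point $(\hfrak,d)\in \D^{reg}_{(K,N,D)}$ as well; this is possible since for fixed $\hfrak, d$, the condition $[-\frac{d}{2},\frac{d}{2}]\subset int(\isupp(J_{K,N,\hfrak}))$ holds for $N$ large because $\isupp(J_{K,N,\hfrak})$ expands to all of $\R$ as $N\to\infty$ (recall that $J_{K,\infty,\hfrak}(x)=\exp(\hfrak x-\tfrac{K}{2}x^2)$ is everywhere positive). For such $N$, $\Lambda_{Poi}(\xi_{(K,N)}(\hfrak,d))$ is the first non-zero eigenvalue of the regular Neumann Sturm--Liouville problem determined by the triple $\omega_N=(-\tfrac{d}{2},\tfrac{d}{2},J_{K,N,\hfrak})\in \Omega_E$ (with $E$ any fixed open interval strictly containing $[-\tfrac{d}{2},\tfrac{d}{2}]$ and contained in $int(\isupp(J_{K,N,\hfrak}))$ for all large $N$). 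The same holds for $\omega_\infty=(-\tfrac{d}{2},\tfrac{d}{2},J_{K,\infty,\hfrak})$.

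Next I would show that $\omega_N\to \omega_\infty$ in the metric \eqref{SL_BVP_Metric}. The endpoints $a,b$ coincide for all $N$, so this reduces to showing $J_{K,N,\hfrak}\to J_{K,\infty,\hfrak}$ and $J_{K,N,\hfrak}^{-1}\to J_{K,\infty,\hfrak}^{-1}$ in $L^1([-\tfrac{d}{2},\tfrac{d}{2}])$. For this, use the Taylor expansion $\co_\delta(x)+\tfrac{\hfrak}{N-1}\si_\delta(x)=1+\tfrac{1}{N-1}(\hfrak x-\tfrac{K}{2}x^2)+O(N^{-2})$ on the fixed compact interval (with $\delta=K/(N-1)$); raising to the power $N-1$ gives uniform convergence $J_{K,N,\hfrak}\to \exp(\hfrak x-\tfrac{K}{2}x^2)=J_{K,\infty,\hfrak}$ on $[-\tfrac{d}{2},\tfrac{d}{2}]$. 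Since $J_{K,\infty,\hfrak}$ is continuous and strictly positive on this compact interval, the uniform convergence of the densities yields the uniform convergence of their reciprocals, hence convergence in both $L^1$ norms of \eqref{SL_BVP_Metric}.

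Finally, I would invoke Theorem \ref{SLResults2}, which ensures that simple eigenvalues depend continuously on $\omega\in \Omega_E$, to conclude that $\lambda_1(\omega_N)\to \lambda_1(\omega_\infty)$, i.e.\ \eqref{eqn:plan:limitconv} holds. The inequality then passes to the limit, finishing the proof. The only non-routine technical point is verifying uniform convergence $J_{K,N,\hfrak}\to J_{K,\infty,\hfrak}$ on $[-\tfrac{d}{2},\tfrac{d}{2}]$, which I expect to be the main (though still quite straightforward) obstacle; it must handle all three sign cases of $\delta$ uniformly and requires care to control the error term in the expansion of $\co_\delta+\tfrac{\hfrak}{N-1}\si_\delta$ uniformly for $x$ in the fixed compact interval as $N\to\infty$.
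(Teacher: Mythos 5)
Your overall strategy is the same as the paper's: fix $(\hfrak,d)$, identify each $\Lambda_{Poi}(\xi_{(K,N)}(\hfrak,d))$ as the first nonzero eigenvalue of a regular Neumann SLP $\omega_N\in\Omega_E$ with fixed endpoints, show $\omega_N\to\omega_\infty$ in the SL-BVP metric \eqref{SL_BVP_Metric}, invoke Theorem \ref{SLResults2}, and pass to the limit in the inequality. Where you diverge is in justifying the two $L^1$-convergences $J_{K,N,\hfrak}\to J_{K,\infty,\hfrak}$ and $J_{K,N,\hfrak}^{-1}\to J_{K,\infty,\hfrak}^{-1}$ on $[-\tfrac{d}{2},\tfrac{d}{2}]$. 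You argue directly: expand $\co_\delta(x)+\tfrac{\hfrak}{N-1}\si_\delta(x)=1+\tfrac{1}{N-1}(\hfrak x-\tfrac{K}{2}x^2)+O(N^{-2})$ uniformly on the compact interval, raise to the power $N-1$, and get uniform convergence to $e^{\hfrak x-\tfrac{K}{2}x^2}$; since the limit is continuous and strictly positive on a compact set, the reciprocals converge uniformly as well, and uniform convergence on a compact interval trivially gives $L^1$. The paper instead uses ODE comparison twice: the Riccati-type inequality $(\log J_{K,N,\hfrak})''\le -K$ against the equality $(\log J_{K,\infty,\hfrak})''=-K$ (with matching Cauchy data at $0$ and a reflection argument for $x<0$) yields $J_{K,N,\hfrak}\le J_{K,\infty,\hfrak}$, and a second comparison gives monotonicity of $N\mapsto J_{K,N,\hfrak}(x)$; with these uniform dominating bounds in hand the $L^1$-limits follow from Lebesgue dominated convergence. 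Your route is more elementary and self-contained (a bare-hands Taylor computation in place of two ODE comparisons), and gives the stronger conclusion of uniform rather than merely $L^1$ convergence; the paper's route is shorter on the page and its intermediate facts ($J_{K,N,\hfrak}\le J_{K,\infty,\hfrak}$ and monotonicity in $N$) are structurally interesting in their own right, echoing the monotonicity $CD(K,N_1)\Rightarrow CD(K,N_2)$ for $\tfrac{1}{N_1}\ge\tfrac{1}{N_2}$. Both are correct. One small point worth making explicit in your write-up: the statement of the proposition already presupposes $(\hfrak_i,d)\in\D^{reg}_{(K,N,D)}$ for all $N\ge N_0$, so your paragraph on choosing $N_0$ is really just consistency-checking the hypotheses; the substantive remaining claim is exactly the uniform error control you flagged, and your estimate $(N-1)\log\big(1+\tfrac{a_N(x)}{N-1}\big)=a_N(x)+O\big(\tfrac{\sup|a_N|^2}{N-1}\big)$ with $a_N(x)\to\hfrak x-\tfrac{K}{2}x^2$ uniformly does handle all three sign cases of $\delta$ at once via the power series for $\co_\delta,\si_\delta$.
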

\begin{proof}
Denote by $\omega_{(\hfrak, d),N}$ and $\omega_{(\hfrak, d),\infty}$ the points in $\Omega_E$ which correspond to $\Lambda_{Poi}(\xi_{(K,N)}(\hfrak, d))$ and $\Lambda_{Poi}(\xi_{(K,\infty)}(\hfrak, d))$. If $\omega_{(\hfrak, d),N}\stackrel{N\to\infty}{\longrightarrow} \omega_{(\hfrak, d),\infty}$ in the SL-BVP metric \eqref{SL_BVP_Metric} then  it follows from Theorem \ref{SLResults2} that
{\small 
$$ \Lambda_{Poi}(\xi_{(K,\infty)}(\hfrak_1, d))=\lim_{N\to\infty}\Lambda_{Poi}(\xi_{(K,N)}(\hfrak_1, d))\leq \lim_{N\to\infty}\Lambda_{Poi}(\xi_{(K,N)}(\hfrak_2, d))= \Lambda_{Poi}(\xi_{(K,\infty)}(\hfrak_2, d))\,.$$}

%In order to justify convergence in the SL-BVP metric we recall \ref{rmk:FkndM_Diff_ID} for the %identification of the functions $J_{k,N,\hfrak}$, in a propert parametrization, as the distorted means 
%{prop:properties} {id:4}

In order to justify convergence in the SL-BVP metric we recall Definition \ref{defn:Jknh} and the following Remark \ref{remk:ModelDensitiesProp}, according to which for $N\in (1,\infty]$ the functions  
$J_{K,N,\hfrak}$ (resp. $J_{K,\infty,\hfrak}$) satisfy the following differential inequality (resp. equality) on  $int(\isupp(J_{K,N,\hfrak}))$ (resp. $int(\isupp(J_{K,\infty,\hfrak}))=\R$):
{\footnotesize 
	\eq{&-K=(\log J_{K,N,\hfrak})''+\frac{1}{N-1}((\log J_{K,N,\hfrak})')^2&\geq \quad (\log J_{K,N,\hfrak})''&\,,\qquad  J_{K,N,\hfrak}(0)=1\,,\,\, J_{K,N,\hfrak}'(0)=\hfrak\,,\\
	& &-K\quad = \quad (\log J_{K,\infty,\hfrak})''&\,,\qquad J_{K,\infty,\hfrak}(0)=1\,,\,\, J_{K,\infty,\hfrak}'(0)=\hfrak \,.}}
	Notice that the functions $\tl{J}_{K,N,\hfrak}(x):=J_{K,N,\hfrak}(-x)$  and $\tl{J}_{K,\infty,\hfrak}:=J_{K,\infty,\hfrak}(-x)$ satisfy the same differential equality/inequality with $\hfrak$ replaced with $-\hfrak$. We can thus conclude by standard comparison of 2nd order initial value problems that $J_{K,N,\hfrak}\leq J_{K,\infty,\hfrak}$ on $\R$. 

	%and $J_{K,\infty,\hfrak}$ satisfies the ODE
%	\[ LogHess_{\infty}J:=(\log J)''=-K\qquad J(0)=1\,,\,\, J'(0)=\hfrak\,.\]
Hence by Lebesgue dominated convergence, considering that for every $N>1$ it holds that $0\leq J_{K,\infty,\hfrak}-J_{K,N,\hfrak}\leq J_{K,\infty,\hfrak}$,  we conclude that 
$$\lim_{N\to\infty}\int_{-\frac{d}{2}}^{\frac{d}{2}}\Abs{J_{K,\infty,\hfrak}(x)-J_{K,N,\hfrak}(x)}dx=\lim_{N\to\infty}\int_{-\frac{d}{2}}^{\frac{d}{2}}\prnt{J_{K,\infty,\hfrak}(x)-J_{K,N,\hfrak}(x)}dx=0\,.$$
It remains to show that 
$\lim_{N\to\infty}\int_{-\frac{d}{2}}^{\frac{d}{2}}\Abs{J_{K,N,\hfrak}^{-1}(x)-J_{K,\infty,\hfrak}^{-1}(x)}dx=0\,.$
Notice that for $N_d>1$ sufficiently large it holds that $[-\frac{d}{2},\frac{d}{2}]\subset int(\isupp(J_{K,N,\hfrak}))$ for all $N\geq N_d$. In addition, an ODE comparison argument similar to the above shows that $(1,\infty]\ni N\mapsto J_{K,N,\hfrak}(x)$ is monotonic increasing, therefore for $N\geq N_d$: $$0\leq J_{K,N,\hfrak}(x)^{-1}-J_{K,\infty,\hfrak}(x)^{-1}\leq J_{K,N_d,\hfrak}(x)^{-1} \qquad \quad \forall x\in [-\frac{d}{2},\frac{d}{2}]\,. $$
Then by Lebesgue dominated convergence it follows that
$$
\lim_{N\to\infty}\int_{-\frac{d}{2}}^{\frac{d}{2}}\Abs{J_{K,\infty,\hfrak}^{-1}(x)-J_{K,N,\hfrak}^{-1}(x)}dx=\lim_{N\to\infty}\int_{-\frac{d}{2}}^{\frac{d}{2}}\prnt{J_{K,N,\hfrak}^{-1}(x)-J_{K,\infty,\hfrak}^{-1}(x)}dx=0\,.$$
We can therefore conclude that $\lim_{N\to\infty}d(\omega_{(\hfrak, d),N},\omega_{(\hfrak, d),\infty})=0$. 
\end{proof}

This completes the proof of (2) of Theorem \ref{thm:SpectralMonotonicity}.

\end{proof}

%$\co_{\delta}(x)=1-\frac{\delta}{2}x^2+o(\delta}$ $\si_{\delta}$
 %$J_{K,N,\hfrak}(x)\to J_{K,\infty,\hfrak}(x)$ pointwise monotonically for any fixed $x\in [-\frac{d}{2},\frac{d}{2}]$ (assuming $N\geq N_0$), therefore  (by Dini's theorem) $J_{K,N,\hfrak}\to J_{K,\infty,\hfrak}$ uniformly on $[-\frac{d}{2},\frac{d}{2}]$, therefore for sufficiently large $N$ we may assume $|J_{K,N,\hfrak}-J_{K,\infty,\hfrak}|\leq M$ on $[-\frac{d}{2},\frac{d}{2}]$, therefore by Lebesgue dominated convergence we conclude that
%$$\int_{-\frac{d}{2}}^{\frac{d}{2}}|J_{K,N,\hfrak}-J_{K,\infty,\hfrak}|dx\to 0\,,$$
%which amounts to the convergence $\omega_{d,N}\to \omega_{d,\infty}$ in the SL-BVP metric. 
%

\section{Sharp Estimates}

In this section we prove the main result of this chapter - Theorem \ref{Cor:results}.  For notational convenience throughout whenever $\xi=J1_I\cdot m$ is a measure supported on an interval $I$, where $J$ is positive and continuous on $int(I)$,  we write $\Lambda_{Poi}(J, I)$ instead of $\Lambda_{Poi}(J1_I\cdot m)$. 

\begin{thm}\label{Cor:results} Assume $(M^n, \gfrak, d\mu=e^{-V}d\mu_{\gfrak})$ is a \cwrm{} of class $CDD_b(K,N,D)$, where $K\in \R$, $N\in (-\infty, 0]\cup [\max(n,2),\infty]$ and $0<D\leq \infty$. 
\begin{enumerate}
	\item Then for any $f\in C_c^{\infty}(M)$ such that $\mu(f)=0$ the following \Poinc inequality holds 
\eql{\label{eqn:Poincareineq} \lamM_{K,N,D}\int_Mf^2d\mu\leq \int_M|\nabla f|^2d\mu\,, }
where $\gls{lam_star}=\begin{cases} \inf_{ \xi\in \Fknd}\Lambda_{Poi}(\xi) &\mbox{ if } K<0,\, N\leq 0\,\,\text{and } D\geq l_{\delta} \\
\inf_{ \xi\in \Fknd^M}\Lambda_{Poi}(\xi) &  \mbox{ otherwise }\,.
\end{cases}$
%($\Fknd^M$ the model class defined in \ref{dfn:ModelMeasures}). 
\item The lower bound $\lamM_{K,N,D}$ is sharp (i.e. best possible).
\item $\lamM_{K,N,D}$ is explicitly given by the following:
{\small 
\eql{\label{cor:results}
\begin{cases}
 1)\,\,  N\in [\max(n,2),\infty) & \\
	a\triangleright\begin{cases} 
	\Lambda_{Poi}(\cos(\sqrt{\delta}x)^{N-1}, [-D/2, D/2] ) & \quad\mbox{if  } D<\frac{\pi}{\sqrt{\delta}}\\
	\frac{KN}{N-1} & \quad\mbox{otherwise} 
	\end{cases}
	& \mbox{  if  } K>0\\
	b\triangleright\begin{cases} 
	\Lambda_{Poi}(\cosh(\sqrt{-\delta}x)^{N-1}, [-D/2, D/2] ) & \mbox{if  } D<\infty\\
	0 & \mbox{otherwise} 
	\end{cases}
	& \mbox{  if  } K<0\\
	c\triangleright\begin{cases}
	\prnt{\frac{\pi}{D}}^2 & \quad\qquad\qquad\qquad\qquad\qquad\qquad\,\mbox{if  } D<\infty\\
	0 & \quad\qquad\qquad\qquad\qquad\qquad\qquad\,\mbox{otherwise} 
	\end{cases}
	& \mbox{  if  } K=0\\
%----------------------------------------------
	2)\,\, N=\infty & \\	
    a\triangleright\begin{cases} 
	\Lambda_{Poi}(e^{-\frac{Kx^2}{2}}, [-D/2, D/2] ) & \quad\mbox{if  } D<\infty\\
	K & \quad\mbox{otherwise} 
	\end{cases}
	& \mbox{  if  } K>0\\
	b\triangleright\begin{cases} 
	\Lambda_{Poi}(e^{\frac{|K|x^2}{2}}, [-D/2, D/2] ) & \quad\mbox{if  } D<\infty\\
	0 & \quad\mbox{otherwise} 
	\end{cases}
	& \mbox{  if  } K<0\\
	c\triangleright\begin{cases}
	\prnt{\frac{\pi}{D}}^2 & \quad\qquad\qquad\qquad\quad\quad\,\,\,\,\mbox{if  } D<\infty\\
	0 & \quad\qquad\qquad\qquad\quad\quad\,\,\,\,\mbox{otherwise} 
	\end{cases}
	& \mbox{  if  } K=0\\	
%----------------------------------------------
 3)\,\, N\in (-\infty,-1] & \\
    a\triangleright\begin{cases} 
	\Lambda_{Poi}(\cos(\sqrt{\delta}x)^{N-1}, [-D/2, D/2] ) & \quad\text{if  }D<\frac{\pi}{\sqrt{\delta}}\\
	0 & \quad\mbox{otherwise  } 
	\end{cases}
	& \mbox{  if  } K<0\\
	b\triangleright\begin{cases} 
	\Lambda_{Poi}(\cosh(\sqrt{-\delta}x)^{N-1}, [-D/2, D/2] ) & \mbox{if  } D<\infty\\
	\frac{KN}{N-1} & \mbox{otherwise}
	\end{cases}
	& \mbox{  if  } K>0\\
	c\triangleright\begin{cases}
	\prnt{\frac{\pi}{D}}^2 & \quad\qquad\qquad\qquad\qquad\qquad\qquad\mbox{if  } D<\infty\\
	0 & \quad\qquad\qquad\qquad\qquad\qquad\qquad\mbox{otherwise} 
	\end{cases}
	& \mbox{  if  } K=0\\
%----------------------------------------------
 4)\,\, N\in [-1,0] & \\
    a\triangleright\begin{cases} 
    	\lim_{\epsilon\to 0+} \Lambda_{Poi}(\sin(\sqrt{\delta}x)^{N-1}, [\epsilon, \epsilon+D] ) & \qquad\,\,\,\mbox{if  } D<\frac{\pi}{\sqrt{\delta}}\\ 0 & \qquad\,\,\,\mbox{otherwise  } 
	\end{cases}
	& \mbox{  if  } K<0\\
	b\triangleright\begin{cases} 
	\lim_{\epsilon\to 0+} \Lambda_{Poi}(\sinh(\sqrt{-\delta}x)^{N-1}, [\epsilon, \epsilon+D] )>0 & \mbox{if  } D<\infty\\
	\lim_{\epsilon\to 0+} \Lambda_{Poi}(\sinh(\sqrt{-\delta}x)^{N-1}, [\epsilon, \infty) )>0 & \mbox{otherwise  } 
	\end{cases}
	& \mbox{  if  } K>0\\
	c\triangleright\begin{cases}
	\lim_{\epsilon\to 0+} \Lambda_{Poi}(x^{N-1}, [\epsilon, \epsilon+D] ) & \quad\qquad\qquad\quad\mbox{if  } D<\infty\\
	0 & \quad\qquad\qquad\quad\mbox{otherwise  } \\
	\end{cases}
	& \mbox{  if  } K=0\,.\\
\end{cases}
}
}

\end{enumerate}
\end{thm}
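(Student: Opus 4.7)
The plan is to combine the three main reduction steps that have been established: Klartag's localization reducing to a one–dimensional optimization over $\Fknd^{C^\infty}$ (Theorem \ref{thm:PoinInequality}), the extreme–points characterization reducing further to the model class $\Fknd^M$ (Corollary \ref{cor:CC}), and the monotonicity of $\lam(\hfrak,d)$ in $\hfrak$ and $d$ (Theorem \ref{thm:SpectralMonotonicity}). More precisely, for $K\in\R$, $N\in(-\infty,0]\cup[\max(n,2),\infty]$, and $D\in(0,\infty]$, subject to the provisos of Corollary \ref{cor:CC} (namely $D< l_\delta$ when $\delta>0$ and $K<0$), we obtain $\Lambda_{Poi}(M,\gfrak,\mu)\geq \lam_{K,N,D}=\inf_{\xi\in\Fknd^M(\R)}\Lambda_{Poi}(\xi)$. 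The plan is then to identify this infimum explicitly using Theorem \ref{thm:SpectralMonotonicity}: parametrize $\Fkndreg$ by $(\hfrak,d)\in\D^{reg}_{(K,N,D)}$ via $\xi_{(K,N)}(\hfrak,d)$, apply the $d$–monotonicity to push $d$ to its maximum ($\min(D,l_\delta)$), and then apply the $\hfrak$–monotonicity. For the regular range $N\in[\max(n,2),\infty]\cup(-\infty,-1]$ the minimizing $\hfrak$ is $\hfrak=0$, yielding the symmetric densities $\co_\delta^{N-1}$ or $e^{-Kx^2/2}$. In the anomalous range $N\in[-1,0]$ the direction of monotonicity in $|\hfrak|$ is reversed, so the infimum is attained as $|\hfrak|\to\infty$, which corresponds (via the dictionary \eqref{DensityExpressions_0}–\eqref{DensityExpressions}) to translating $s\to 0$ of the density $\si_\delta^{N-1}(x+s)$, i.e.\ pushing the support down to a boundary point of $\isupp(\si_\delta^{N-1})$.

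To pass from $\Fkndreg$ to all of $\Fknd^M(\R)$ (needed because the extremal measures in cases like 1a, 2a, 3b or anywhere the support touches $\partial\isupp(J_{K,N,\hfrak})$ lie outside $\Fkndreg$) I will invoke Proposition \ref{prop:usc_min}: it suffices to show upper semi–continuity of $\xi\mapsto\Lambda_{Poi}(\xi)$ in the weak topology on $\M_b$. I would prove this by the standard test–function argument: for each $\epsilon>0$ one picks $f\in\F_{Poi}(\xi_0)$ with $Ray[f](\xi_0)\leq \Lambda_{Poi}(\xi_0)+\epsilon$, smoothly truncates $f$ and adjusts by an additive constant to enforce the integral constraint $\int f\,d\xi_n=0$, and uses $\xi_n\xrightarrow{w}\xi_0$ plus compact support to conclude $\limsup_n\Lambda_{Poi}(\xi_n)\leq\Lambda_{Poi}(\xi_0)+\epsilon$. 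Then by Theorem \ref{thm:identify_weak_lim} every model measure is a weak limit of elements of $\Fkndreg$, so $\inf_{\Fknd^M}\Lambda_{Poi}=\inf_{\Fkndreg}\Lambda_{Poi}$, justifying the formulas involving the limits $\lim_{\epsilon\to 0+}$ and the cases where $d$ reaches $l_\delta$.

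In the exceptional regime $K<0$, $N\leq 0$, $D\geq l_\delta$ (where the proviso of Corollary \ref{cor:CC} fails), the model class $\Fknd^M$ no longer captures the worst behaviour: as discussed after Theorem \ref{thm:NoSingularContinuous}, one may then construct $\xi\in\Fknd(\R)$ as a sum of two model pieces supported on components at mutual distance $>l_\delta$, making $\Lambda_{Poi}(\xi)$ arbitrarily small. Hence the infimum must be taken over all of $\Fknd$, and produces the value $0$ recorded in case 4a (and the $K<0$ sub–cases of 1b, 3a, 4a with $D=\infty$). This is why the definition of $\lamM_{K,N,D}$ splits according to the proviso. Sharpness (part 2) is then immediate: each formula is realised (possibly in a limiting sense) by an explicit one–dimensional CWRM of the form $(\R,|\cdot|,\xi)$ with $\xi$ the model measure appearing in the formula, which by construction satisfies $CDD_b(K,N,D)$ and has $\Lambda_{Poi}$ equal to the displayed quantity.

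The main obstacle I expect is twofold. First, establishing the upper semi–continuity of $\Lambda_{Poi}$ under weak convergence for members of $\Fknd^M$ whose support meets the boundary of $\isupp(J_{K,N,\hfrak})$: here the density may blow up (when $N<0$) or vanish (when $N>1$) at endpoints, so the truncation argument requires care to ensure the cut–off test functions still satisfy the zero–mean condition in the limit and the integrals are controlled. Second, evaluating and verifying the positivity of the limits $\lim_{\epsilon\to 0+}\Lambda_{Poi}(\si_\delta^{N-1},[\epsilon,\epsilon+D])$ in the anomalous range $N\in[-1,0]$, in particular showing the limits are strictly positive when $K\geq 0$, $D<\infty$ (cases 4b, 4c). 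This requires an ODE/Muckenhoupt–style analysis of the singular 1D eigenvalue problem $(\si_\delta^{N-1}f')'=-\lambda\si_\delta^{N-1}f$ near the degenerate endpoint; I would verify positivity by bounding the associated Muckenhoupt constant $B_{\pm}$ of Theorem \ref{thm:Muck1} uniformly as $\epsilon\to 0+$, using the explicit integrability of $\si_\delta^{N-1}$ and $\si_\delta^{-(N-1)}$ for $N\in(-1,0]$ near $0$.
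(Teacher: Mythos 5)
Your plan traces the same three‐layer reduction as the paper — localization (Theorem~\ref{thm:PoinInequality}), extreme‐points characterization (Corollary~\ref{cor:CC}), upper semi‐continuity plus monotonicity of $\lam(\hfrak,d)$ on $\D^{reg}_{(K,N,D)}$ (Theorem~\ref{thm:SpectralMonotonicity} and Theorem~\ref{thm:PoiUSC} via Proposition~\ref{prop:usc_min}) — and the u.s.c.\ argument you sketch (shift $f$ by a constant to re‐center it against $\xi_n$) is the same variance reformulation the paper packages as $Ray_b$ in Lemma~\ref{lem:RayRayb}. So the architecture is sound.

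Two substantive issues. First, your identification of the ``exceptional regime'' is partially wrong. The proviso of Corollary~\ref{cor:CC} fails exactly when $\delta>0$ and $K<0$, i.e.\ $K<0$, $N\leq 0$, $D\geq l_\delta$ (with $l_\delta=\pi/\sqrt\delta$ finite). Case~1b has $N\geq\max(n,2)>1$ and $K<0$, giving $\delta<0$ and $l_\delta=\infty$, so the proviso is never triggered there — you should drop 1b from your list. Similarly, writing ``$D=\infty$'' misses the range $l_\delta\leq D<\infty$, which is also exceptional.

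Second, in that regime you assert that $\Fknd^M$ ``no longer captures the worst behaviour'' and produce a disconnected $\xi\in\Fknd$ with $\Lambda_{Poi}(\xi)=0$ to conclude $\lamM_{K,N,D}=0$. This does compute the infimum over $\Fknd$, but it is insufficient for part (2), sharpness: a disconnected measure does not arise as a needle, and hence does not yield a \cwrm{} whose Poincar\'{e} constant you control. The paper instead shows $\inf_{\Fknd^M}\Lambda_{Poi}=0$ in this regime via a degenerating sequence in $\Fknd^M$: take $d_n\uparrow l_\delta$ and apply Proposition~\ref{prop:RayZero} to $\cos(\sqrt\delta x)^{N-1}1_{[-d_n/2,d_n/2]}$ (combined with the $\hfrak$‐monotonicity to cover case~4a). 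So $\Fknd^M$ \emph{does} capture the worst behaviour; the split in the definition of $\lamM_{K,N,D}$ is a logical bookkeeping device, not a sign that the model class is inadequate. You need this degenerating‐interval construction in any case — your disconnected example shows $\inf_{\Fknd}=0\leq\inf_{\Fknd^{C^\infty}}=\lam_{K,N,D}$, but the lower bound delivered by localization is $\lam_{K,N,D}$, so without showing the latter is also $0$ your argument does not establish the explicit value. The remaining difference — your Muckenhoupt strategy for the positivity of case~4b vs.\ the paper's citation of the Lichnerowicz‐type bound $\frac{KN}{N-1}>0$ from \cite{Mil5,Oht1} — is a legitimate alternative and the more self‐contained of the two.
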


\subsubsection{Remarks about Theorem \ref{Cor:results}}
\begin{enumerate}
	\item {\bf Results which preceded the theorem:} Cases 1 and 2 were proved before for $D<\infty$ by Kr\"{o}ger \cite{Kro} (when $N=n$) and Bakry-Qian \cite{BaQi} (when $1\neq N\geq n$). Their results unified the previous estimates of Lichnerowicz \cite{Lic} (for $K>0,N=n$ and $D\geq \frac{\pi}{\sqrt{\delta}}$) and Zhong-Yang \cite{ZhY} (for $K=0$, $N=n$ and $D<\infty$). The estimates in 1 and 2 are characterized as the \Poinc constant $\Lambda_{Poi}(\xi)$ of a measure $\xi$ supported on a symmetric interval $I=[-\min(D,\frac{l_{\delta}}{2}),\min(D,\frac{l_{\delta}}{2})]$ with density $J(x):=\co(\sqrt{\delta}x)^{N-1}$ (`symmetric profile'). The theorem shows that this characterization of the sharp lower bound is maintained when we consider $N\in (-\infty,-1]$. However, \pink{quite} surprisingly the range $N\in (-1,0]$ is anomalous; the characterization of the lower bound turns out to be of completely different nature (`non-symmetric profile'). Hints for this new phenomena could be found in the work \cite{Mil5}  of A. Kolesnikov and E. Milman. In this work (and also in \cite{Oht1} when $\partial M=\emptyset$)  the authors showed that the Lichnerowicz bound $\Lambda(M^n,\gfrak,\mu)\geq \frac{KN}{N-1}$ holds whenever $\delta>0$, $N\in (-\infty,0)\cup [n,\infty]$ (assuming $N>1$) and $D\geq l_{\delta}$; however, this estimate was no longer sharp in the range $N\in (-1,0]$. The estimates of case (4) fill the gap regarding the understanding of this lack of sharpness.  
	\item {\bf For positive $N$ the theorem is valid only for $\pmb{N\geq 2}$:}  The origin of this constraint is within the proof of Theorem \ref{thm:ExtremePoints} regarding the extreme points of the class $\Pkndf(I_h)$ defined in \ref{defn:Pkndf_class}. Although the estimates of case (1) are known be valid for $N>1$ due to the work of Bakry-Qian \cite{BaQi}, there is an inherent difficulty in extending our method to $N\in (1,2)$ which we did not manage to overcome. 
	\item {\bf Positivity of the estimates in case (4b):} As we mentioned in the previous remark, according to \cite{Mil5} and \cite{Oht1}, when  $K>0$, $N\in (-\infty,0)$ and $D=\infty$ holds the estimate $\Lambda_{Poi}(M^n,\gfrak,\mu)\geq \frac{KN}{N-1}>0$ (although it is not necessarily sharp). Being positive for $D=\infty$ it is clearly positive for $D<\infty$ 
	%(for example Lemma \ref{D_monotonicity} and Theorem \ref{thm:specCont})
	, hence the estimates of case (4b) are strictly positive. This is also supported by the following estimate, which is valid for $CD_b(K,N)$ condition with $K>0$ and $N\in (-\infty,1)$ (\cite[Thm 6.1]{Mil3}):
	$$ \Lambda_{Poi}\geq \frac{K}{4(1-N)\prnt{\int_0^{\infty}\cosh^{N-1}(t)dt}^2}>0\,.$$

	\item {\bf The case (2b):} The expression in case (2b), i.e. $\Lambda_{Poi}(e^{\frac{|K|x^2}{2}}, [-D/2, D/2] )$, will be evaluated explicitly up to universal numerical constants in Chapter \ref{chp:LS}, where we discuss about lower bounds for the log-Sobolev constant. In Theorem \ref{equiv:PoiLs_xi_K} we show that 
	$\Lambda_{Poi}(e^{\frac{|K|x^2}{2}}, [-D/2, D/2] )\eqsim \Lambda_{LS}(e^{\frac{|K|x^2}{2}}, [-D/2, D/2] )\,,$
	and in Proposition \ref{prop:Xi_0} we show that
	{\small 
	$$\Lambda_{LS}(e^{\frac{|K|x^2}{2}}, [-D/2, D/2] )\eqsim \max\{ \sqrt{|K|}, \frac{1}{D}\}\frac{|K|D}{e^{|K|\frac{D^2}{8}}-1} \eqsim \begin{cases}
|K|^{\frac{3}{2}}De^{-\frac{|K| D^2}{8}}& \qquad \sqrt{|K|}D>1\\ \frac{1}{D^2}& \qquad \sqrt{|K|}D\leq 1\,.
\end{cases}$$ }

	\item {\bf Distinct expressions in the case $\pmb{N=-1}$:} Notice that when $N=-1$ we actually have 2 seemingly distinct expressions for $\Lambda_{Poi}(M,g,\mu)$, however due to the theorem they must be equal. 
\end{enumerate}

\subsection{Preliminaries to the proof of main Theorem \ref{Cor:results}}\label{subsec:prelim_results}

The results stated in the theorem correspond to solutions of the main optimization problem \ref{optimizationProblem_Model} over the model class :
\eq{
 \text{Find: } \inf_{\Fknd^M(\R)} \Lambda_{Poi}(\xi)\,. }

From Theorem \ref{thm:SpectralMonotonicity} we have a characterization of the minimizing sequences of the problem \ref{optimizationProblem_Model2}:
\eq{
 \text{Find: } \inf_{\Fkndreg} \Lambda_{Poi}(\xi)\,. }

We defined $\overline{\Fkndreg}$ (see Definition \ref{def:Mreg}) as the closure of $\Fkndreg$ w.r.t. the weak topology.  
According to Theorem \ref{thm:identify_weak_lim} $\Fknd^M(\R)\subset \overline{\Fkndreg}$. In Examples \ref{bdry_type_I} and \ref{bdry_type_II} we saw what sort of weak limits $\xi\in \Fknd^M(\R)\setminus \Fkndreg$ we may possibly have.  
However, in view of Proposition  \ref{prop:usc_min}, if we show that $\xi\mapsto \Lambda_{Poi}(\xi)$ is upper semi continuous on $\overline{\Fkndreg}$, then the two problems \ref{optimizationProblem_Model} and \ref{optimizationProblem_Model2} have the same solution. It turns out that this is indeed the case, as asserted by the following theorem.

%------------------------------------------------------------------------------------------
\begin{thm}\label{thm:PoiUSC}
The map $\xi\mapsto \Lambda_{Poi}(\xi)$ on $\M_b(\R)$ is weakly upper semi-continuous (u.s.c.), i.e.
$$ \xi_n\xrightarrow{w} \xi_0 \quad\Rightarrow \quad\Lambda(\xi_0)\geq \limsup_{n\to\infty}\Lambda_{Poi}(\xi_n)\,. $$
%\stackrel{w}{n \rightarrow\infty}
\end{thm}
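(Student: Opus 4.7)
The plan is the following. Since $\Lambda_{Poi}(\xi_0)\in[0,+\infty]$, the claim is trivial when $\Lambda_{Poi}(\xi_0)=+\infty$, which in particular covers the degenerate cases $\xi_0=0$ and $\text{supp}(\xi_0)$ consisting of a single point (for then every $f\in\F_{Poi}(\xi_0)$ satisfies $\int f^2\,d\xi_0=0$, so $\Ray[f](\xi_0)=+\infty$ by convention). I would therefore assume $\Lambda_{Poi}(\xi_0)<\infty$, fix $\epsilon>0$, and pick $f_0\in \F_{Poi}(\xi_0)$ with $\Ray[f_0](\xi_0)<\Lambda_{Poi}(\xi_0)+\epsilon$; in particular $\int f_0\,d\xi_0=0$ and $\int f_0^2\,d\xi_0>0$.

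The underlying heuristic is that an infimum of the weakly continuous functionals $\xi\mapsto \Ray[f](\xi)$ (for $f$ fixed and $f,f'\in C_c^{\infty}(\R)\subset C_b(\R)$) should be upper semi-continuous. The real obstacle is that $f_0$ typically fails the mean-zero constraint $\int f_0\,d\xi_n=0$, and simply shifting $f_0$ by a constant destroys compact support, so membership in $\F_{Poi}(\xi_n)$ would be lost. I would reconcile this by introducing an auxiliary bump: choose $g\in C_c^{\infty}(\R)$ with $g\geq 0$ and $\int g\,d\xi_0>0$ (which exists as $\xi_0\neq 0$ is a Radon measure on $\R$), and set
\[ f_n:=f_0+c_n g,\qquad c_n:=-\frac{\int f_0\,d\xi_n}{\int g\,d\xi_n}. \]
Since $f_0,g\in C_b(\R)$, weak convergence $\xi_n\xrightarrow{w}\xi_0$ gives $\int f_0\,d\xi_n\to \int f_0\,d\xi_0=0$ and $\int g\,d\xi_n\to \int g\,d\xi_0>0$, so $c_n$ is well defined for $n$ large and $c_n\to 0$. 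By construction $f_n\in C_c^{\infty}(\R)$ and $\int f_n\,d\xi_n=0$.

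Next, applying weak convergence to the bounded continuous test functions $f_0^2,\,g^2,\,f_0 g,\,(f_0')^2,\,(g')^2,\,f_0'g'$, together with $c_n\to 0$, yields
\[ \int (f_n')^2\,d\xi_n = \int (f_0')^2\,d\xi_n + 2c_n\int f_0'g'\,d\xi_n + c_n^2\int (g')^2\,d\xi_n \;\longrightarrow\; \int (f_0')^2\,d\xi_0, \]
\[ \int f_n^2\,d\xi_n = \int f_0^2\,d\xi_n + 2c_n\int f_0 g\,d\xi_n + c_n^2\int g^2\,d\xi_n \;\longrightarrow\; \int f_0^2\,d\xi_0>0. \]
In particular $\int f_n^2\,d\xi_n>0$ for all $n$ large, so $f_n\in \F_{Poi}(\xi_n)$ and $\Ray[f_n](\xi_n)\to \Ray[f_0](\xi_0)$. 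Hence $\Lambda_{Poi}(\xi_n)\leq \Ray[f_n](\xi_n)$ eventually, and taking $\limsup_{n\to\infty}$ followed by $\epsilon\downarrow 0$ yields the desired inequality.

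The main difficulty, as highlighted above, is that the admissible class $\F_{Poi}(\xi)$ varies with $\xi$ through its mean-zero constraint, which prevents a direct appeal to ``infimum of continuous functionals is u.s.c.''; everything else is either a routine weak-convergence computation or a trivial arithmetic expansion. The auxiliary bump $g$ is the decisive device that restores the mean-zero condition against the measure $\xi_n$ without destroying the compact support of the test function.
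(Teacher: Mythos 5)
Your proof is correct, and it takes a genuinely different route from the paper. The paper proceeds abstractly: it first proves Lemma~\ref{lem:RayRayb}, rewriting $\Lambda_{Poi}(\xi)$ as $\inf_{f\in\F_b}\Ryb{f}{\xi}$, where the denominator is the variance $Var_{b\,\xi}(f)=\xi(f^2)-\xi(f)^2/\xi(1)$. This removes the $\xi$-dependent mean-zero constraint from the admissible class, making the infimum range over the fixed space $\F_b$; each $\xi\mapsto\Ryb{f}{\xi}$ is then weakly u.s.c. (it is weakly continuous where finite and declared $+\infty$ where $Var_{b\,\xi}(f)=0$), and a pointwise infimum of u.s.c. functions is u.s.c. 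Your argument instead fixes a nearly-optimal $f_0\in\F_{Poi}(\xi_0)$, and recenters it against each $\xi_n$ by adding the correction $c_n g$, where $g$ is a compactly supported nonnegative bump; the coefficients $c_n\to 0$ because the numerator $\int f_0\,d\xi_n\to\int f_0\,d\xi_0=0$ and the denominator $\int g\,d\xi_n\to\int g\,d\xi_0>0$. What the paper's route buys is a reusable identity (Lemma~\ref{lem:RayRayb}) that cleanly packages $\Lambda_{Poi}$ as an infimum over a $\xi$-independent function class, which is conceptually aligned with the Krein--Milman/Bauer machinery used elsewhere; in exchange it must pay the cost of Lemmas~\ref{lem:lemma1}--\ref{lem:lemma2} to pass between $\F_b$ and $\F_{Poi}(\xi)$. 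Your bump construction avoids that intermediate reformulation entirely and is more elementary and self-contained; it also has the small virtue that every test function appearing ($f_0$, $g$, their squares and products, and the analogues with derivatives) lies in $C_c(\R)$, so the argument only uses compactly supported test functions, whereas the variance form invokes the constant function $1$ and thus genuinely needs the narrow (rather than weak*) topology.
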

\bigskip
We precede the proof of Theorem \ref{thm:PoiUSC} with several lemmas. 
Recall \eqref{prel_defn_Poincare} for the definition of $Ray[f](\xi)$ given $f\in \Cinf_c(\R)$.

For certain weak limits we can improve the statement on upper semi-continuity, into continuity. 
We will show that  whenever $\xi\in \Fknd^M(\R)$ is supported on an interval $I$ (possibly of infinite diameter), we can identify $\Lambda_{Poi}(\xi)$ with limits of sequences $\{\Lambda_{Poi}(\xi|_{I_n})\}$, where $(I_n)_{n\in \N}$ is a nested sequence of compact intervals $I_n\subset \mathring{I}$ which exhausts $\mathring{I}$; for each $n$ it holds that $0<\deriv{\xi}{m}|_{I_n}<\infty$, whence $\Lambda_{Poi}(\xi|_{I_n})$ can be identified as an eigenvalue of a regular SLP. To this end we give a precise definition for  exhausting sequences.
\begin{defn}
We say that $(I_n)_{n\in \N}$ is a monotonic exhausting sequence of intervals (\mesi{}) of an interval $I_{\infty}\subset \R$ with respect to a measure $\xi$, if  
\begin{enumerate}
		\item $\xi(I_0)>0$.
		\item $\xi(I_{\infty}\setminus \bigcup_n I_n)=0$.
    \item For each $n\in \N$: $I_n$ is a compact interval and $I_n \subset \mathring{I}_{n+1}\subset I_{\infty}$.
\end{enumerate}
%The case described in Example \ref{bdry_type_I} is treated by the following theorem. 
\end{defn}
 \begin{thm} \label{thm:specCont}
 Let $\xi=J\cdot m$ be an absolutely continuous probability measure supported on an interval $I_{\infty}\subset \R$, s.t. $J\in L^{\infty}_{loc}(int(I_{\infty}))$. Assume $(I_n)_{n\in \N}$ is a MESI of $I_{\infty}$ with respect to $\xi$, then 
\[ \lim_{n\to \infty}\Lambda_{Poi}(\xi_n)=\Lambda_{Poi}(\xi)\,,\]
where $\xi_{n}=\frac{1}{\xi(I_n)}\xi|_{I_n}$.
\end{thm}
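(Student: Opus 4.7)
The plan is to sandwich $\Lambda_{Poi}(\xi)$ between $\limsup$ and $\liminf$ of $\Lambda_{Poi}(\xi_n)$. First I would observe the weak convergence $\xi_n \xrightarrow{w} \xi$: for any $\phi \in C_b(\R)$, the MESI property $\xi(I_\infty \setminus \bigcup_n I_n) = 0$ together with $\xi$ being a probability gives $\xi(I_n) \to 1$, and by dominated convergence $\int \phi \,1_{I_n} d\xi \to \int \phi\, d\xi$, so $\int \phi\, d\xi_n = \xi(I_n)^{-1} \int \phi \,1_{I_n} d\xi \to \int \phi\, d\xi$. Second, I would apply Lemma \ref{D_monotonicity} iteratively: for each pair $n \leq m$, the measure $\xi_m$ is supported on the compact interval $I_m \subset \mathring{I}_{m+1} \subset int(I_\infty)$ with density in $L^\infty(I_m)$ by hypothesis; since $\xi_n$ is proportional to the restriction of $\xi_m$ to $I_n \subset I_m$, the lemma (applicable to $\Lambda_{Poi}$ by Remark \ref{remk:D_monotonicity}) gives $\Lambda_{Poi}(\xi_n) \geq \Lambda_{Poi}(\xi_m)$. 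Hence $(\Lambda_{Poi}(\xi_n))_{n\in \N}$ is non-increasing; denote $L := \lim_{n\to\infty} \Lambda_{Poi}(\xi_n) \in [0,\infty]$. The weak upper semi-continuity from Theorem \ref{thm:PoiUSC} then immediately yields $L = \limsup \Lambda_{Poi}(\xi_n) \leq \Lambda_{Poi}(\xi)$.

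For the matching lower bound $L \geq \Lambda_{Poi}(\xi)$, the idea is to use a cut-off construction that converts an arbitrary $f \in \F_{Poi}(\xi)$ into an admissible test function for each $\xi_n$ without disturbing the Rayleigh quotient on $supp(\xi_n) = I_n$. Fix $f \in \F_{Poi}(\xi)$, set $c_n := \int f\, d\xi_n$, and choose $\chi_n \in C_c^\infty(\R)$ with $\chi_n \equiv 1$ on $I_n$. Define
\[ g_n := (f - c_n)\chi_n\,. \]
Then $g_n \in C_c^\infty(\R)$; since $\chi_n \equiv 1$ on $I_n = supp(\xi_n)$, we have $\int g_n\, d\xi_n = \int (f - c_n)\, d\xi_n = 0$; and since additionally $\chi_n' \equiv 0$ on $I_n$, $g_n' = f'$ on $I_n$. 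Thus $g_n \in \F_{Poi}(\xi_n)$ and applying the \Poinc inequality for $\xi_n$ (with the sharp constant $\Lambda_{Poi}(\xi_n) \geq L$) to $g_n$ yields
\[ L \cdot Var_{\xi_n}(f) = L \int g_n^2\, d\xi_n \leq \int g_n'^2\, d\xi_n = \int f'^2\, d\xi_n\,. \]
Since $f, f^2, f'^2 \in C_b(\R)$, the weak convergence established in the first paragraph forces $Var_{\xi_n}(f) \to Var_\xi(f) = \int f^2\, d\xi$ (using $\int f\, d\xi = 0$) and $\int f'^2\, d\xi_n \to \int f'^2\, d\xi$. Passing to the limit gives $Ray[f](\xi) \geq L$, and infing over $f \in \F_{Poi}(\xi)$ yields $\Lambda_{Poi}(\xi) \geq L$, completing the proof.

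The argument is largely routine once the weak convergence and monotonicity are in place; the main technical point is the verification of the hypotheses of Lemma \ref{D_monotonicity} at each stage, and the crucial observation that on $I_n$ -- the one place where $\xi_n$ concentrates its mass -- the modified function $g_n$ and its derivative coincide with $f - c_n$ and $f'$, so that cutting off by $\chi_n$ is invisible to $\xi_n$-integration.
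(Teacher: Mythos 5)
There is a genuine gap: your second paragraph does not prove the direction it claims to. You announce the goal $L \geq \Lambda_{Poi}(\xi)$, but the chain of inequalities you derive actually yields $L \leq \Lambda_{Poi}(\xi)$ — the same inequality your first paragraph already established via upper semi-continuity. Trace it through: applying the \Poinc inequality for $\xi_n$ to $g_n$ and using $\Lambda_{Poi}(\xi_n)\geq L$ gives $L\cdot Var_{\xi_n}(f)\leq \int f'^2\,d\xi_n$; passing to the limit gives $L\cdot \int f^2\,d\xi \leq \int f'^2\,d\xi$, i.e. $L\leq Ray[f](\xi)$ for every $f\in\F_{Poi}(\xi)$; taking the infimum gives $L\leq\Lambda_{Poi}(\xi)$. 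Your last sentence even reads ``infing over $f\in\F_{Poi}(\xi)$ yields $\Lambda_{Poi}(\xi)\geq L$,'' which is $L\leq\Lambda_{Poi}(\xi)$ once more. So the inequality $L\geq\Lambda_{Poi}(\xi)$ — equivalently $\Lambda_{Poi}(\xi)\leq\liminf_n\Lambda_{Poi}(\xi_n)$ — is never established.

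The reason your cut-off construction cannot produce that direction is structural: it converts a test function for $\xi$ into a test function for $\xi_n$, and then the \Poinc inequality for $\xi_n$ is \emph{applied to} that test function, giving an upper bound on $L$ in terms of the Rayleigh quotient of $f$ with respect to $\xi$. To bound $L$ from below by $\Lambda_{Poi}(\xi)$ you must instead go the other way: start from a near-minimizer $f_n$ for $\Lambda_{Poi}(\xi_n)$ (so $Ray[f_n](\xi_n)\approx L$), and manufacture from it a function admissible for $\xi$ whose Rayleigh quotient with respect to $\xi$ is nearly as small. The difficulty is that $f_n$ is only constrained on $I_n$; on $I_\infty\setminus I_n$ one must control both the Dirichlet energy (achievable by extending by constants, so that the extension contributes no energy) and the normalization $\int f\,d\xi=0$ together with a lower bound on $\int f^2\,d\xi$ (where Jensen's inequality and a mollification argument are needed to keep the perturbation small and obtain a genuine element of $C_c^\infty(\R)$). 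This is exactly the role of the paper's Lemma~\ref{lem:lemma1} (extend by constants, mollify, subtract the mean) and Lemma~\ref{lem:lemma2} (truncate by a bump with small gradient), and your argument has no substitute for that machinery. Note also that one cannot simply invoke Lemma~\ref{D_monotonicity} to conclude $\Lambda_{Poi}(\xi_n)\geq\Lambda_{Poi}(\xi)$ directly, because that lemma requires $\xi$ to be supported on a compact interval with bounded density, while $I_\infty$ is allowed to be noncompact (this is precisely the case where the theorem is non-trivial, e.g.\ $\xi$ Gaussian on $\R$).
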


We precede the proof of Theorems \ref{thm:PoiUSC} and \ref{thm:specCont} with several useful lemmas. 

Throughout we denote by $\F_b$ the function space:
$$\F_b:=\cprnt{f:\,\, f,\, f'\in \Cinf_b(\R) }\,.$$
\begin{lem} \label{lem:lemma1}
Assume $\xi$ is an a.c. probability measure on $\R$, s.t. $\deriv{\xi}{m}\in L^{\infty}_{loc}(int(supp(\xi)))$. Let $I=[a,b]\subset int(supp(\xi))$, and assume $f\in C_c^{\infty}(\R)$ is such that $\xi|_{I}(f)=0$ and $\xi|_I(f^2)>0$. Then for any $\epsilon>0$ there exists $g\in \F_b$ such that
\begin{enumerate}
	\item $\xi(g)=0$.
	\item $Ray[g](\xi)\leq Ray[f](\xi|_{I})+\epsilon \,.$
\end{enumerate}  
\end{lem}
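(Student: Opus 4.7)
The natural construction is to extend $f$ from $I$ to all of $\R$ so that the numerator of the Rayleigh quotient picks up no extra mass, then smooth and recenter to obtain a competitor in $\F_b$ with $\xi$-mean zero. Specifically, following Definition~\ref{defn:cont_ext}, let $\tilde{f}$ be the continuous extension of $f|_{I}$ by constants, let $\tilde{f}_s$ be its mollification as in \eqref{eqn:mollified}, and set
\begin{equation*}
  c_s := \xi(\tilde{f}_s), \qquad g_s := \tilde{f}_s - c_s.
\end{equation*}
By construction $\xi(g_s)=0$; since $\tilde{f}$ is bounded with bounded weak derivative $\tilde{f}'=f'\cdot 1_{(a,b)}$, both $\tilde{f}_s$ and $\tilde{f}_s'$ are uniformly bounded smooth functions, so $g_s\in\F_b$. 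It then suffices to show that $\limsup_{s\to 0}\Ry{g_s}{\xi}\leq \Ry{f}{\xi|_I}$, after which the lemma follows by taking $s$ small enough and setting $g:=g_s$.

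For the numerator, $\mathrm{supp}(\tilde{f}_s')\subset [a-s,b+s]$, which for small $s$ lies inside $int(supp(\xi))$ where $d\xi/dm\in L^\infty_{loc}$. Applying Lemma~\ref{lem:MollifierConvergence} with $p=2$ gives $\tilde{f}_s'\to\tilde{f}'$ in $L^2(\xi)$, and hence
\begin{equation*}
  \int (g_s')^2\,d\xi = \int (\tilde{f}_s')^2\,d\xi \;\xrightarrow{s\to 0}\; \int (\tilde{f}')^2\,d\xi = \int_I (f')^2\,d\xi.
\end{equation*}
For the denominator, uniform convergence of $\tilde{f}_s$ to $\tilde{f}$ on compact sets together with the uniform bound $\|\tilde{f}_s\|_\infty\leq\|f\|_\infty$ and $\xi(\R)=1$ yield, via dominated convergence, $c_s\to c:=\xi(\tilde{f})$ and $\int g_s^2\,d\xi \to \int(\tilde{f}-c)^2\,d\xi = \int\tilde{f}^2\,d\xi - c^2$.

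The crux of the argument, and the only step that is not bookkeeping, is bounding this limit denominator from below by $\int_I f^2\,d\xi$. Set $A:=(-\infty,a)$ and $B:=(b,\infty)$, so that $\xi(A)+\xi(B)=1-\xi(I)\leq 1$. Using $\xi|_I(f)=0$ one computes $c = f(a)\xi(A)+f(b)\xi(B)$ and
\begin{equation*}
  \int \tilde{f}^2\,d\xi = f(a)^2\xi(A) + \int_I f^2\,d\xi + f(b)^2\xi(B).
\end{equation*}
By the Cauchy--Schwarz inequality combined with the probability constraint $\xi(A)+\xi(B)\leq 1$,
\begin{equation*}
  c^2 \leq \bigl(f(a)^2\xi(A)+f(b)^2\xi(B)\bigr)\bigl(\xi(A)+\xi(B)\bigr) \leq f(a)^2\xi(A)+f(b)^2\xi(B),
\end{equation*}
so $\int(\tilde{f}-c)^2\,d\xi \geq \int_I f^2\,d\xi > 0$. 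Combining this with the numerator computation yields $\limsup_{s\to 0}\Ry{g_s}{\xi}\leq \Ry{f}{\xi|_I}$, as required. The subtlety is the Cauchy--Schwarz step: it is precisely the hypothesis that $\xi$ is a probability measure (so $\xi(A)+\xi(B)\leq 1$) that prevents the recentering from inflating the denominator loss, and without it the argument would collapse.
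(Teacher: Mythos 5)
Your proof is correct and follows essentially the same route as the paper: same extension-by-constants $\tilde{f}$, same mollified competitor $g_s = \tilde{f}_s - \xi(\tilde{f}_s)$, same control of the numerator via Lemma~\ref{lem:MollifierConvergence}, and the same key lower bound on the denominator — your Cauchy--Schwarz step $c^2 \leq \bigl(f(a)^2\xi(A)+f(b)^2\xi(B)\bigr)\bigl(\xi(A)+\xi(B)\bigr)$ is exactly the Jensen inequality $\int_{I^c}\tilde{f}^2\,d\xi \geq (\int_{I^c}\tilde{f}\,d\xi)^2$ that the paper invokes, spelled out in terms of the two tail constants $f(a),f(b)$. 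The only presentational difference is that you pass to the $s\to 0$ limits directly while the paper works with a fixed small $s_0$ and an explicit tolerance $\epsilon'$; the content is the same.
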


\begin{lem} \label{lem:lemma2} Assume $\xi\in\M_b(\R)$, and let $g\in \F_b$ be s.t. $\xi(g)=0$. Then for any $\epsilon>0$ there exists $h\in \F_{Poi}(\xi)$ such that 
\[ \Ry{h}{\xi}\leq \Ry{g}{\xi}+\epsilon \,.\]
\end{lem}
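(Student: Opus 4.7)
\textbf{Proof plan for Lemma \ref{lem:lemma2}.} The plan is to truncate $g$ by a smooth cutoff to obtain a compactly supported approximation, and then correct the mean by subtracting a small multiple of a fixed compactly supported function, all while controlling the Rayleigh quotient.

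First I would dispose of the trivial cases. If $\xi \equiv 0$ there is nothing to prove, so assume $\xi(\R) > 0$. Moreover, if $\int g^2\, d\xi = 0$, then by the convention in \eqref{prel_defn_Poincare} we have $Ray[g](\xi) = +\infty$ and any element of $\F_{Poi}(\xi)$ works (such elements exist: pick two disjoint bumps in $\supp(\xi)$ and take a suitable linear combination with mean zero). Thus I may assume $\int g^2\, d\xi > 0$, and fix once and for all a function $\psi \in C_c^\infty(\R)$ with $\int \psi\, d\xi \neq 0$ (this exists since $\xi$ is not the zero measure).

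Next, choose a family of smooth cutoffs $\phi_R \in C_c^\infty(\R)$ with $0 \leq \phi_R \leq 1$, $\phi_R \equiv 1$ on $[-R,R]$, $\supp(\phi_R) \subset [-R-1, R+1]$, and $\sup_R \|\phi_R'\|_\infty \leq C$. Set
\[
g_R := g\,\phi_R \in C_c^\infty(\R), \qquad h_R := g_R - c_R\,\psi, \qquad c_R := \frac{\int g_R\, d\xi}{\int \psi\, d\xi}.
\]
By construction $h_R \in C_c^\infty(\R)$ and $\int h_R\, d\xi = 0$.

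The key step is to pass to the limit $R \to \infty$. Since $g, g' \in C_b(\R)$ and $\xi$ is finite, the integrands $g_R$, $g_R^2$, and $(g_R')^2 = (g'\phi_R + g\phi_R')^2$ are all dominated by constants depending only on $\|g\|_\infty$, $\|g'\|_\infty$, and $C$; they converge pointwise to $g$, $g^2$, and $(g')^2$ respectively (noting $\phi_R \to 1$ and $\phi_R' \to 0$ pointwise). Dominated convergence then yields
\[
\int g_R\, d\xi \to \int g\, d\xi = 0, \qquad \int g_R^2\, d\xi \to \int g^2\, d\xi, \qquad \int (g_R')^2\, d\xi \to \int (g')^2\, d\xi.
\]
Consequently $c_R \to 0$, and since $\psi, \psi'$ are bounded and compactly supported the correction term contributes vanishingly to both the numerator and denominator of $Ray[h_R](\xi)$; therefore $\int h_R^2\, d\xi \to \int g^2\, d\xi > 0$ and $\int (h_R')^2\, d\xi \to \int (g')^2\, d\xi$, giving $Ray[h_R](\xi) \to Ray[g](\xi)$.

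Finally, for $R$ large enough $\int h_R^2\, d\xi > 0$, hence $h_R \not\equiv 0$, so $h_R \in \F_{Poi}(\xi)$, and $Ray[h_R](\xi) \leq Ray[g](\xi) + \epsilon$. Taking $h := h_R$ completes the proof. The only place requiring a little care is the dominated convergence estimate for $(g_R')^2$, which uses essentially that both $g$ and $g'$ are in $C_b(\R)$ — precisely the definition of $\F_b$; everything else is a routine truncation-plus-mean-correction argument.
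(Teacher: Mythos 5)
Your proof is correct, and it reaches the same conclusion as the paper's via the same overall template (truncate $g$ by a compactly supported cutoff, restore $\xi$-mean zero, pass to the limit), but the technical execution differs in two ways. The paper uses cutoffs $\phi_m$ whose derivative decays, $\|\phi_m'\|_\infty\le 1/m$, forces the ramp of the cutoff to lengthen with $m$, and bakes the mean correction into the truncation by setting $h_m=(g-d_m)\phi_m$; the $\phi_m'$ terms are then controlled by Cauchy--Schwarz together with the smallness of $\|\phi_m'\|_\infty$, and the bound $\phi_m\le 1$ gives the one-sided estimate $\int h_m'^2\,d\xi\le\int g'^2\,d\xi+o(1)$ directly. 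You instead take cutoffs with a merely uniform derivative bound, correct the mean additively by a multiple of a fixed $\psi\in C_c^\infty$ with $\xi(\psi)\ne 0$, and invoke dominated convergence, using only that for each fixed $x$ one eventually has $\phi_R\equiv 1$ near $x$ so $\phi_R'(x)=0$; this yields the full convergence $Ray[h_R](\xi)\to Ray[g](\xi)$, slightly more than the required inequality. Both constructions are sound. Yours is a little cleaner in that the truncation and the mean-zero correction are decoupled (the cutoffs need no special property beyond bounded derivative), at the cost of introducing an auxiliary $\psi$; the paper keeps everything in one formula $h_m=(g-d_m)\phi_m$ and avoids the separate reference function but must build cutoffs with vanishing $L^\infty$-derivative.
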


The proofs of these two lemmas are rather technical, and therefore will be deferred to the appendix section of this chapter. 

The following lemma is essential specifically for the proof of Theorem \ref{thm:PoiUSC}. 
\begin{lem}\label{lem:RayRayb}
For all $\xi\in M_b(\R)$:
$$ \prnt{\Lambda_{Poi}(\xi)=\,} \inf_{f\in \F_{Poi}(\xi)} Ray[f](\xi)\quad =\quad\inf_{f\in \F_b }\Ryb{f}{\xi}$$
where 
$$\Ryb{f}{\xi}=\begin{cases}
\frac{\int f'(x)^2d\xi(x)}{Var_{b\,\xi}(f)}&\mbox{ if } Var_{b\,\xi}(f)>0\\
+\infty & \mbox{ otherwise }\,, \end{cases}$$
with $$Var_{b\,\xi}(f):=\begin{cases}\xi(f^2)-\frac{\xi(f)^2}{\xi(1)}& \mbox{if } \xi\neq 0\\
0 & \mbox{otherwise}\,.
\end{cases}$$
\end{lem}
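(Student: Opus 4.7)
The plan is to establish the two inequalities separately, using only the two preceding lemmas and elementary manipulations. Throughout we may assume $\xi\neq 0$, since otherwise both infima are $+\infty$ by definition.

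\textbf{First inequality} ($\inf_{f\in\F_b}Ray_b[f](\xi)\le\inf_{f\in\F_{Poi}(\xi)}Ray[f](\xi)$). This direction is immediate from the definitions. Indeed, $\F_{Poi}(\xi)\subset C_c^\infty(\R)\subset \F_b$ (compactly supported smooth functions are bounded with bounded derivative). For any $f\in\F_{Poi}(\xi)$ one has $\xi(f)=0$, so
\[
Var_{b,\xi}(f)=\xi(f^2)-\frac{\xi(f)^2}{\xi(1)}=\xi(f^2)\,,
\]
and therefore $Ray_b[f](\xi)=Ray[f](\xi)$ (with the convention that both equal $+\infty$ when $\xi(f^2)=0$). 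Passing to the infimum over $\F_{Poi}(\xi)\subset \F_b$ yields the inequality.

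\textbf{Second inequality} ($\inf_{f\in\F_{Poi}(\xi)}Ray[f](\xi)\le\inf_{f\in\F_b}Ray_b[f](\xi)$). Let $f\in\F_b$ and assume $Ray_b[f](\xi)<\infty$ (otherwise there is nothing to prove). In particular $Var_{b,\xi}(f)>0$. The plan is to first center $f$ and then approximate it by a function in $\F_{Poi}(\xi)$.

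Set $c:=\xi(f)/\xi(1)$ and $\tilde f:=f-c$. Since $f\in\F_b$ and a constant is bounded, we have $\tilde f\in\F_b$, with $\tilde f'=f'$ and $\xi(\tilde f)=0$. A direct computation gives
\[
\xi(\tilde f^2)=\xi(f^2)-2c\,\xi(f)+c^2\,\xi(1)=\xi(f^2)-\frac{\xi(f)^2}{\xi(1)}=Var_{b,\xi}(f)>0\,,
\]
so $\tilde f\not\equiv 0$ in $L^2(\xi)$ and
\[
Ray[\tilde f](\xi)=\frac{\int(\tilde f')^2 d\xi}{\xi(\tilde f^2)}=\frac{\int (f')^2 d\xi}{Var_{b,\xi}(f)}=Ray_b[f](\xi)\,.
\]
Now Lemma~\ref{lem:lemma2} applies to $\tilde f\in\F_b$ with $\xi(\tilde f)=0$: for every $\varepsilon>0$ there exists $h\in\F_{Poi}(\xi)$ with $Ray[h](\xi)\le Ray[\tilde f](\xi)+\varepsilon=Ray_b[f](\xi)+\varepsilon$. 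Letting $\varepsilon\downarrow 0$ and then taking the infimum over $f\in\F_b$ gives the desired inequality.

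The only nontrivial input is Lemma~\ref{lem:lemma2}, whose proof (the genuine obstacle, deferred to the appendix) requires truncating a bounded smooth function to a compactly supported one while preserving the zero-mean condition and controlling the Rayleigh quotient. Once that approximation is available, the equivalence of the two variational formulations reduces to the centering and rescaling performed above.
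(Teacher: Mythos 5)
Your proof is correct and follows essentially the same route as the paper's: the easy direction via the inclusion $\F_{Poi}(\xi)\subset\F_b$ and the identity $Var_{b\,\xi}(f)=\xi(f^2)$ when $\xi(f)=0$, and the harder direction by centering $f\mapsto f-\xi(f)/\xi(1)$ to reduce $Ray_b$ to $Ray$ and then invoking Lemma~\ref{lem:lemma2}. The only cosmetic difference is that you fix an arbitrary $f\in\F_b$ and pass to the infimum at the end, whereas the paper starts from a near-minimizer of the infimum; these are the same argument.
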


\begin{proof}[Proof of Lemma \ref{lem:RayRayb}]\quad \\
$\pmb{LHS\geq RHS:\,}$     Since $\F_{Poi}(\xi)\subset \F_b$, and for $f\in \F_{Poi}(\xi)$ it holds that 
$\Ryb{f}{\xi}=\Ry{f}{\xi}$ (since $\xi(f)=0\Rightarrow Var_{b\,\xi}(f)=\int f^2d\xi$), we conclude 
    $$\inf_{f\in \F_{Poi}(\xi)} \Ry{f}{\xi}\geq \inf_{f\in \F_b }\Ryb{f}{\xi}\,.$$

   $\pmb{LHS\leq RHS:\,}$ Set 
   $l:=\inf_{f\in \F_b }\Ryb{f}{\xi}$. Given $\epsilon>0$ there is a function $f_b\in\F_b$ s.t. $\Ryb{f_b}{\xi}<l+\frac{\epsilon}{2}$. 
   Set $g_b:=f_b-\frac{\xi(f_b)}{\xi(1)}$, then $g_b\in \F_b$, $\xi(g_b)=0$ and
   $\Ry{g_b}{\xi}=\Ryb{f_b}{\xi}<l+\frac{\epsilon}{2}$. From Lemma
  \ref{lem:lemma2} we conclude that there is $h_0\in \F_{Poi}(\xi)$ s.t. $\Ry{h_0}{\xi}\leq Ray[g_b](\xi)+\frac{\epsilon}{2}<l+\epsilon$. Since we can take $\epsilon>0$ arbitrarily small, it follows that
   $$\inf_{f\in \F_{Poi}(\xi) }Ray[f](\xi)\leq \inf_{f\in \F_b }Ray_b[f](\xi)\,.$$

%$\bm{RHS\geq LHS:\,}$      Given $g\in \F_b$ and $\epsilon>0$ we construct $h\in \F_{Poi}(\x)$ s.t $Ray[h](\xi)\leq Ray_b[g](\xi)+\epsilon\,$. 

\end{proof}
\bigskip

We can now prove  Theorems \ref{thm:PoiUSC} and \ref{thm:specCont}.

\begin{proof}[Proof of Theorem \ref{thm:PoiUSC}] 
If $\xi_0=0$ (the 0-measure) then $\Lambda_{Poi}(\xi_0)=\infty$ and there is nothing to prove. Assume henceforth that $\xi_0\neq 0$. According to Lemma \ref{lem:RayRayb}:
$$\Lambda_{Poi}(\xi)=\inf_{f\in\F_{Poi}(\xi)} \Ry{f}{\xi}=\inf_{f\in\F_b} \Ryb{f}{\xi}\,.$$
In order to prove the Theorem it is sufficient to show that $\M_b(\R)\setminus\{0\}\ni \xi\mapsto\Ryb{f}{\xi}$ is \red{u.s.c.} for each individual $f\in\F_b$, since the pointwise infimum in $\F_b$ over the class of u.s.c. functions $\{G_f\}_{f\in\F_b}$, where $G_f(\xi):=\Ryb{f}{\xi}$, is a u.s.c. function $G(\xi):=\inf_{f\in \F_b}G_f(\xi)$ as well.

For a fixed $f_0\in \F_b$ notice that $\xi\mapsto \xi(1)$, $\xi\mapsto \xi(f_0)$, $\xi\mapsto \xi(f_0^2)$ and $\xi\mapsto \xi(f_0^{\prime 2})$ 
are weakly continuous on $\M_b(\R)$ (since $1,f,f^2,f'^2\in C_b(\R)$), thus  
  $\xi\mapsto Var_{b\,\xi}(f_0)$ is weakly continuous on $\M_b(\R)\setminus\{0\}$; in particular we can conclude that  the conditions $Var_{b\,\xi}(f_0)=0$ is weakly closed on $\M_b(\R)$.

It may still happen that $\xi_0\in \M_b(\R)\setminus\{0\}$ is s.t. $Var_{b\,\xi_0}(f_0)=0$ but in any weak neighborhood of it there is $\xi$ s.t. $Var_{b\,\xi}(f_0)>0$; in this case the following inequality trivially holds 
$$\limsup_{\xi\to\xi_0}G_{f_0}(\xi)=\limsup_{\xi\to\xi_0}\Ryb{f_0}{\xi}\leq +\infty\equiv \Ryb{f_0}{\xi_0}=G_{f_0}(\xi_0)\,. $$
This shows that $G_f(\xi)$ is u.s.c. for every $f\in \F_b$, hence $G(\xi)$ is u.s.c. as well.

\end{proof}

\begin{proof}[Proof of Theorem \ref{thm:specCont}] The upper semi continuity $$\Lambda_{Poi}(\xi)\geq \limsup_{n\to \infty}\Lambda_{Poi}(\xi_n)\,,$$
follows from Theorem \ref{thm:PoiUSC}. We now prove the direction $$\Lambda_{Poi}(\xi)\leq \liminf_{n\to \infty}\Lambda_{Poi}(\xi_n)\,.$$ 
Define $l:=\liminf_{n\to\infty} \Lambda_{Poi}(\xi_n)$, and assume $(\xi_{n_k})_{k\in \Nbb}$ is s.t. $\lim_{k\to\infty}\Lambda_{Poi}(\xi_{n_k})=l$. Then for any $\epsilon>0$ we have $k\in \Nbb$ such that 
$\Lambda_{Poi}(\xi_{n_k})<l+\frac{\epsilon}{2}$, and respectively a function $f_k\in C_c^{\infty}(\R)$ such that $\xi_{n_k}(f_k)=0$ and $\Ry{f_k}{\xi_{n_k}}<\Lambda_{Poi}(\xi_{n_k})+\frac{\epsilon}{2}$,  whence $ \Ry{f_k}{\xi_{n_k}}<l+\epsilon$. By Lemmas \ref{lem:lemma1} and \ref{lem:lemma2}, there exists a function $h_k\in C_c^{\infty}(\R)$ such that $\Ry{h_k}{\xi}\leq \Ry{f_k}{\xi_{n_k}}+\epsilon $. This implies that $$\Lambda_{Poi}(\xi)\leq \liminf_{n\to \infty}\Lambda_{Poi}(\xi_n)\,.$$ 
\qedhere

\end{proof}

The following two propositions are essential to the proof of the main theorem; their proofs are also deferred to the appendix section. 
\begin{prop}\label{prop:RayZero} Define $I:=(-\frac{d}{2}, \frac{d}{2})$ where  $d\in (0,\infty]$. Assume $J$ is a positive even continuous function on $I$. Set $\xi=J1_I\cdot m$, and assume $\xi(I) =\infty$. If $(I_n)_{n\in \N}$ is a MESI of $I$ w.r.t. $\xi$ then $\lim_{n\to\infty} \Lambda_{Poi}(\xi|_{I_n})=0$ .

%
%Let $(E_n)_{n\in \N}$ is a monotonic exhausting sequence of $I$ with respect to $m$, and $\int_{E_n} Jdm<\infty$ for each $n\in \N$,   while $\int_IJdm=\infty$.  is such that  be an open interval, and assume $J\in L^1_{loc}(I)$ is a non-negative function on $I$ which is symmetric with respect to , and such that $\int_I J(x)dm=\infty$. Then
%$$\lim_{n\to \infty}Poin(J\, 1_{(a+\frac{1}{n}, b-\frac{1}{n})}\cdot m)=0\,.$$. 
\end{prop}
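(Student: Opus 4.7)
\textbf{Proof plan for Proposition \ref{prop:RayZero}.} The strategy is to exhibit, for each large $n$, an explicit cheap test function whose Rayleigh quotient against $\xi|_{I_n}$ tends to $0$. It will be most convenient to work with the alternative characterization $\Lambda_{Poi}(\xi) = \inf_{f\in\F_b} Ray_b[f](\xi)$ furnished by Lemma \ref{lem:RayRayb}, since this lets me test against a single fixed function $\phi$ (not required to have $\xi|_{I_n}$-mean zero), with the $Var_b$ denominator automatically absorbing the mean.

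First I would record a mass-splitting statement: since $J$ is positive, even, and continuous on $I=(-d/2,d/2)$ with $\xi(I)=\infty$, the evenness gives $\xi((-d/2,0))=\xi((0,d/2))=\infty$. For any fixed $\epsilon\in(0,d/2)$, continuity of $J$ on $[-\epsilon,\epsilon]$ yields $C_\epsilon:=\xi([-\epsilon,\epsilon])<\infty$, whence $\xi((-d/2,-\epsilon))=\xi((\epsilon,d/2))=\infty$. Writing $I_n=[a_n,b_n]$, the MESI property forces $a_n\downarrow -d/2$ and $b_n\uparrow d/2$, so the quantities
\[
A_n := \xi([a_n,-\epsilon]), \qquad B_n := \xi([\epsilon,b_n])
\]
both tend to $\infty$ as $n\to\infty$; in particular, eventually $[-\epsilon,\epsilon]\subset I_n$.

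Next I would fix once and for all a smooth \emph{odd} function $\phi\in\F_b$ satisfying $\phi(x)=\mathrm{sgn}(x)$ for $|x|\geq\epsilon$ and $|\phi'|\leq 2/\epsilon$, so that $\phi'$ is supported in $[-\epsilon,\epsilon]$. Then $\int(\phi')^2 d\xi|_{I_n}=\int_{-\epsilon}^{\epsilon}(\phi')^2 J\,dm =: M_\epsilon$ is a finite constant independent of $n$. Using that $\phi$ is odd and $J$ is even, I compute
\[
\int\phi\,d\xi|_{I_n} = B_n - A_n,\qquad \int\phi^2\,d\xi|_{I_n} = A_n+B_n+O(1),\qquad \xi(I_n)=A_n+B_n+C_\epsilon.
\]
Plugging these into $Var_{b,\xi|_{I_n}}(\phi) = \int\phi^2\,d\xi|_{I_n}-\frac{(\int\phi\,d\xi|_{I_n})^2}{\xi(I_n)}$ and using the algebraic identity $(A_n+B_n)^2-(B_n-A_n)^2=4A_nB_n$, one obtains
\[
Var_{b,\xi|_{I_n}}(\phi) \geq \frac{4A_nB_n}{A_n+B_n+C_\epsilon} + O(1) \geq \frac{2A_nB_n}{A_n+B_n}\quad\text{for $n$ large}.
\]
The right-hand side is (up to a factor) the harmonic mean of $A_n$ and $B_n$, i.e.\ $(A_n^{-1}+B_n^{-1})^{-1}$, which diverges to $\infty$ because both $A_n$ and $B_n$ do. Hence
\[
\Lambda_{Poi}(\xi|_{I_n}) \;\leq\; Ray_b[\phi](\xi|_{I_n}) \;=\; \frac{M_\epsilon}{Var_{b,\xi|_{I_n}}(\phi)} \;\longrightarrow\; 0.
\]

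The only delicate point is the step $A_n,B_n\to\infty$ separately, which is precisely where evenness of $J$ is used: if $J$ were not symmetric, an asymmetric distribution of mass could make one of $A_n,B_n$ stay bounded while the other carries all the mass, and then the harmonic-mean lower bound would collapse. Everything else is a bookkeeping exercise with the $Var_b$-formula and the identity $(A+B)^2-(B-A)^2=4AB$; no compactness or PDE input is needed, which is the reason for bypassing $\F_{Poi}(\xi|_{I_n})$ in favor of the larger space $\F_b$ provided by Lemma \ref{lem:RayRayb}.
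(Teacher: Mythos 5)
Your proof is correct, and it follows a genuinely different route from the paper's. The paper first proves the statement for the specific symmetric exhaustion $E_n=[-d_n/2,d_n/2]$, using a \emph{sequence} of odd, compactly supported test functions $f_n\in\Cinf_c(\R)$ (tailored so that oddness of $f_n$ against the even density on the symmetric interval $E_n$ makes $\int f_n\,d\xi_n=0$ automatically), and then extends to a general MESI $(I_n)$ by passing to a subsequence dominated by $(E_k)$ and invoking the diameter-monotonicity Lemma \ref{D_monotonicity}. You instead work with the arbitrary MESI directly and use a \emph{single} fixed odd function $\phi\in\F_b$, bypassing the mean-zero constraint through $Ray_b$ and $Var_b$ of Lemma \ref{lem:RayRayb}; evenness of $J$ enters through the mass-splitting $A_n,B_n\to\infty$ rather than through a symmetric test-function cancellation. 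The two approaches lean on different parts of the surrounding machinery (you need Lemma \ref{lem:RayRayb} but not Lemma \ref{D_monotonicity}; the paper needs the converse), and your computation $(A+B)^2-(B-A)^2=4AB$ makes the divergence of the denominator transparent via the harmonic-mean lower bound. Both handle $d=\infty$ without change. One small bookkeeping point you may want to make explicit when writing this up: $A_n,B_n$ are only defined once $[-\epsilon,\epsilon]\subset I_n$, i.e.\ for $n$ large enough that $a_n<-\epsilon<\epsilon<b_n$; this is exactly the ``eventually'' you already note, and it is harmless since one only cares about the limit.
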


\begin{prop}\label{prop:RayZero2} Let $(I_n)_{n\in \N}$ be a MESI of $[0,\infty)$ w.r.t. $\xi$. 
Assume $N\in (-1, 0]$ and set $\xi_n(x)=x^{N-1}1_{I_n}\cdot m$. Then $\lim_{n\to\infty}\Lambda_{Poi}(\xi_n)=0$.
\end{prop}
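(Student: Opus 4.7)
The plan is to deduce the result from the Muckenhoupt--Artola--Talenti--Tomaselli criterion (Theorem~\ref{thm:Muck1}) applied to the normalised probability measure. Writing $I_n=[a_n,b_n]$, the MESI axioms together with the non-integrability of $x^{N-1}$ at $0^+$ (since $N-1\le -1$) and the constraint that $\xi_n$ must be finite force $a_n\downarrow 0^+$ and $b_n\uparrow+\infty$. I would set $M_n:=\xi_n(\R)$, $\bar{\xi}_n:=M_n^{-1}\xi_n$, and observe that $\Lambda_{Poi}(\xi_n)=\Lambda_{Poi}(\bar{\xi}_n)$ by scale-invariance of the Rayleigh quotient. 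Theorem~\ref{thm:Muck1} then gives
\[
\Lambda_{Poi}(\xi_n)\;\le\;\frac{4}{B_+(\bar{\xi}_n)},\qquad B_+(\bar{\xi}_n)\;=\;\sup_{x>\eta_n}\bar{\xi}_n([x,b_n])\int_{\eta_n}^{x}\frac{dt}{p_{\bar{\xi}_n}(t)},
\]
with $\eta_n$ the median of $\bar{\xi}_n$, so it suffices to prove $B_+(\bar{\xi}_n)\to\infty$.

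Next I would compute the quantity inside the supremum in closed form. The factor $M_n$ cancels, leaving, in the regime $N\in(-1,0)$, the expression $\tfrac{1}{|N|(2-N)}(x^N-b_n^N)(x^{2-N}-\eta_n^{2-N})$, and in the regime $N=0$ the expression $\tfrac12\log(b_n/x)(x^2-\eta_n^2)$. A short computation with the cumulative distribution identifies $\eta_n$ explicitly: for $N<0$ one finds $\eta_n^N=(a_n^N+b_n^N)/2$, so $\eta_n$ is of order $a_n\to 0$; for $N=0$ one has $\eta_n=\sqrt{a_nb_n}$. In either case $\eta_n$ is dominated by any fixed positive power of $b_n$, in particular by $b_n^{1-\epsilon}$ for small $\epsilon$ and large $n$.

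The decisive step is to insert the test point $x_n:=b_n^{1-\epsilon}$ with a fixed $\epsilon\in(0,\tfrac14)$, which satisfies $\eta_n<x_n<b_n$ eventually. In the regime $N<0$ the first factor is asymptotic to $b_n^{N(1-\epsilon)}$ (since $b_n^{N\epsilon}\to 0$) and the second to $b_n^{(1-\epsilon)(2-N)}$ (since $\eta_n^{2-N}$ is bounded), so the product is comparable to $b_n^{2(1-\epsilon)}\to\infty$. In the regime $N=0$ the same choice yields a lower bound comparable to $\tfrac{\epsilon}{2}\,b_n^{2(1-\epsilon)}\log b_n$, which is again divergent. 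Hence $B_+(\bar{\xi}_n)\ge \mathcal{F}_+(x_n)\to\infty$ and the Muckenhoupt estimate delivers the conclusion $\Lambda_{Poi}(\xi_n)\to 0$. The only technical nuance is the clean separation into the cases $N<0$ and $N=0$, needed because the normalisation $M_n$ and the median $\eta_n$ depend on $(a_n,b_n)$ in qualitatively different (power versus logarithmic) ways; apart from this, each step is a direct one-variable estimate.
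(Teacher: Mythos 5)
Your argument is correct, and it takes a genuinely different route from the paper's. The paper (following the pattern of the preceding proposition) first reduces to the particular MESI $E_n=[\tfrac1n,n]$ via the diameter-monotonicity lemma, and then constructs an explicit almost-affine test function $f_n(x)=x-c_n$ (with $c_n$ the $\xi_n$-mean) whose Rayleigh quotient is computed in closed form and shown to vanish; the case $N=0$ is handled separately because the integrals produce logarithms. You instead work with an arbitrary MESI $I_n=[a_n,b_n]$, normalize, and apply the Muckenhoupt criterion (Theorem~\ref{thm:Muck1}), showing $B_+(\bar\xi_n)\to\infty$ by evaluating the one-sided quantity at $x_n=b_n^{1-\epsilon}$. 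I verified all the computations: the $M_n$ cancellation, the median formulas $\eta_n^N=\tfrac12(a_n^N+b_n^N)$ (for $N<0$) and $\eta_n=\sqrt{a_nb_n}$ (for $N=0$), the domination of $\eta_n$ by $b_n^{1-\epsilon}$, and the asymptotics of the two factors; they all check out, and the $N<0$ versus $N=0$ split mirrors the paper's own split for the same reason (power versus logarithmic behavior). One small remark: your justification that $a_n\downarrow 0^+$ via ``finiteness of $\xi_n$'' is unnecessary and slightly off-target — finiteness is not in the MESI axioms; what actually forces $a_n>0$ is the strict nesting $I_n\subset\mathring I_{n+1}\subset[0,\infty)$, since $0\in I_n$ would require $0\in\mathring I_{n+1}$, impossible for an open interval inside $[0,\infty)$. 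That $a_n\to 0$ and $b_n\to\infty$ then follows from the exhaustion condition $\xi\big([0,\infty)\setminus\bigcup I_n\big)=0$ and positivity of the density. Overall, the Muckenhoupt route buys you a proof that handles arbitrary MESIs directly (no detour through the monotonicity lemma), at the cost of invoking a heavier hammer than the paper's elementary test-function computation.
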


\subsection{Proof of the main theorem}

\begin{proof}[Proof of Theorem \ref{Cor:results}]

\begin{enumerate}

\item The statement follows straightforwardly from our previous results. Recall
$$\lam_{K,N,D}:=\inf_{0\neq \xi\in \Fknd^{C^{\infty}}(\R)}\,\,\Lambda_{Poi}(\xi)\,.$$
\begin{enumerate}
\item
By Theorem \ref{thm:PoinInequality} we got the estimate  $$\Lambda_{Poi}(M^n,\gfrak,\mu)\geq \lam_{K,N,D}\,,$$ for any \cwrm{} which satisfies $CDD_b(K,N,D)$ with $K\in\R$, $N\in (-\infty,1)\cup [n,\infty]$ and $D\in (0,\infty]$.
%\begin{itemize}
	%\item If $n=1$ then $M$ (being connected) is topologically an interval or a circle. On the circle $([a,b]/\sim,\xi)$ we impose periodic B.C therefore the \Poinc constant on an interval $\Lambda_{Poi}\geq ([a,b]/\sim,\xi)\Lambda_{Poi}\geq([a,b],\xi)$. Therefore by definition $\lam_{K,N,D}$ holds the inequality $\Lambda(M^n,|\cdot|,\mu)\geq \lam_{K,N,D}$.
	%\item If $n\geq 2$ then by Theorem \ref{thm:PoinInequality} (which is a consequence of the localization Theorem), for $N\in (-\infty,0]\cup [n,\infty]$ s.t $n\geq 2$:
%$$\Lambda_{Poi}(M,\gfrak,\mu)\geq \lam_{K,N,D}\,.$$ 
%\end{itemize}

\item By Corollary \ref{cor:CC} for $N\in (-\infty,0]\cup [2,\infty]$, subject to the proviso that $D<l_{\delta}=\frac{\pi}{\sqrt{\delta}}$ if $K<0$ and $N\leq 0$,  the solution to the optimization problem which defines $\lam_{K,N,D}$ (which is defined as the bound $\CC$ associated with the \Poinc inequality problem) is equal to the solution of the simpler optimization problem
\eq{\lam_{K,N,D}=\inf_{\xi\in \Fknd^M(\R)}\,\Lambda_{Poi}(\xi)\,. } 

\end{enumerate}
Hence we can summarize that for $N\in (-\infty,0]\cup [\max(n,2),\infty]$ and $K\in \R$ it holds that  
$$\Lambda_{Poi}(M^n,\gfrak,\mu)\geq \lamM_{K,N,D}=\begin{cases} \inf_{ \xi\in \Fknd(\R)}\Lambda_{Poi}(\xi) &\mbox{ if } K<0,\, N\leq 0 \,\,\text{and}\,\, D\geq l_{\delta} \\
\inf_{ \xi\in \Fknd^M(\R)}\Lambda_{Poi}(\xi) &  \mbox{ otherwise }\,.
\end{cases}$$
%^^^^^^^^^^^^^^^^^^^^^^

	\item Our estimate for the \Poinc inequality is sharp since the infimum is realized by intervals in $\R$ equipped with the corresponding measures $\xi\in \Fknd(\R)$ (1-dimensional manifolds) where $N\in (-\infty,0]\cup [2,\infty]$. 
We remark that it is also possible to establish the sharpness on a \cwrm{} of arbitrary topological dimension $n$ for $N\geq 2$; see the construction in \cite{Mil2}, which presumably may be extended to $N\leq 0$ as well, but we did not verify the details.	

\item According to Theorem \ref{thm:PoiUSC} the measure map $\M_b\ni \xi\mapsto \Lambda_{Poi}(\xi)$ is weakly upper semi-continuous, hence it follows from Proposition \ref{prop:usc_min} that
$$\inf_{\xi\in \Fknd^M(\R)}\,\Lambda_{Poi}(\xi)= \inf_{\xi\in \Fkndreg}\,\Lambda_{Poi}(\xi)\,.$$
%
%
%By  we concluded that 
%
 %therefore 
 
We can thus conclude that 
\eql{\label{identity:lam_M_reg}\lamM_{K,N,D}=\begin{cases}\inf_{\xi\in \Fknd(\R)}\,\Lambda_{Poi}(\xi)&\mbox{if } K<0,\, N\leq 0,\,\text{ and } D\geq l_{\delta}\\
\inf_{\xi\in \Fkndreg}\,\Lambda_{Poi}(\xi)&\mbox{otherwise}\,. \end{cases}
}
Recall that the set of values $\Lambda_{Poi}(\Fkndreg)$ is identical to the set $\lam(\D^{reg}_{(K,N,D)})$. The monotonicity properties of $\lam(\hfrak,d)$ on $(\hfrak,d)\in \D^{reg}_{(K,N,D)}$ was described in Theorem \ref{thm:SpectralMonotonicity}. Therefore we have a full 
 characterization of the minimizing sequences of $\lam(\hfrak,d)$ on $\D^{reg}_{(K,N,D)}$, or equivalently of minimizing sequences of $\Lambda_{Poi}(\xi)$ for $\xi\in \Fkndreg$.

%For $(\hfrak, d)\in \D^{reg}_{(K,N,D)}$ the dependence of $\lam(\hfrak,d)$ on the parameters $\hfrak$ and $d$ was described in Theorem \ref{thm:SpectralMonotonicity}. If $(\hfrak_1,d_1)\neq (\hfrak_2,d_2)$ are two distinct points in $\D^{reg}_{(K,N,D)}$ we may compare $\lam(\hfrak_1,d_1)$ and $\lam(\hfrak_2,d_2)$ using Theorem  \ref{thm:SpectralMonotonicity}.

These observations give as the following expressions for $\lam_{K,N,D}$ subject to the proviso that $D<l_{\delta}$ if $K<0$ and $N\leq 0$:
{\small 
\eq{&\lam_{K,N,D}=\lamM_{K,N,D}\quad=
&\begin{cases}
 1)\,\,  N\in [\max(n,2),\infty) & \\
	\triangleright\begin{cases} 
	\Lambda_{Poi}(\co_{\delta}(x)^{N-1}, [-D/2, D/2] ) & \quad\mbox{if  } D<l_{\delta}\\
	\lim_{d\to l_{\delta}}\Lambda_{Poi}(\co_{\delta}(x)^{N-1}, [-d/2, d/2] )  & \quad\mbox{if  }  D\geq l_{\delta}
	\end{cases}\\
%----------------------------------------------
	2)\,\, N=\infty & \\	
    \triangleright\begin{cases} 
	\Lambda_{Poi}(e^{-\frac{Kx^2}{2}}, [-D/2, D/2] ) & \quad\mbox{if  } D<\infty\\
	\lim_{d\to\infty}\Lambda_{Poi}(e^{-\frac{Kx^2}{2}}, [-d/2, d/2] ) & \quad\mbox{if  }  D=\infty 
	\end{cases}\\
%----------------------------------------------
 3)\,\, N\in (-\infty,-1] & \\
    \triangleright\begin{cases} 
	\Lambda_{Poi}(\co_{\delta}(x)^{N-1}, [-D/2, D/2] ) & \quad\text{if  }D<l_{\delta}\\
	\lim_{d\to \infty}\Lambda_{Poi}(\co_{\delta}(x)^{N-1}, [-d/2, d/2] ) & \quad\mbox{if  } D= \infty \quad (\mbox{and }K\geq 0)
	\end{cases}\\
%----------------------------------------------
 4)\,\, N\in [-1,0] & \\
    \triangleright\begin{cases} 
    	\lim_{\epsilon\to 0+} \Lambda_{Poi}(\si_{\delta}(x)^{N-1}, [\epsilon, \epsilon+D] ) & \qquad\,\,\,\mbox{if  } D<l_{\delta}\\ 
			 \lim_{d\to\infty} \Lambda_{Poi}(\si_{\delta}(x)^{N-1}, [\frac{1}{d}, d] )&\qquad\,\,\,\mbox{if  } D=\infty \quad (\mbox{and }K\geq 0)\,.
	\end{cases}
\end{cases}
}
}

Notice that the estimate $\frac{\pi^2}{D^2}$ in (1c),(2c) and (3c) of the theorem when $K=0$ (and hence $\delta=0$) follows as  explicit solution of the corresponding SL-BVP on $[-\frac{D}{2},\frac{D}{2}]$ with density $p(x)=\co_{0}^{N-1}(x)\equiv 1$ in (1)-(3) above; this is the known Zhong-Yang \cite{ZhY} lower bound.

In order to complete the proof\pinka{,} all that remains is to identify some of these limits as the expressions stated in the theorem for $D\geq l_{\delta}$. For the validity of some of the results which were stated in Subsection \ref{subsec:prelim_results}, one should keep in mind that the measures $\xi=J\cdot m\in \Fknd^M(\R)$ have densities $J\in L^{\infty}_{loc}(int(supp(\xi)))$.
\begin{myitemize}
	\item[(1a)] From Theorem \ref{thm:specCont} it follows that 
	$$\lim_{d\to l_{\delta}}\Lambda_{Poi}(\cos(\sqrt{\delta}x)^{N-1}, [-\frac{d}{2}, \,\frac{d}{2}] )=\Lambda_{Poi}(\cos(\sqrt{\delta}x)^{N-1}, [-\frac{\pi}{2\sqrt{\delta}},\, \frac{\pi}{2\sqrt{\delta}}] )\,.$$ 
	$\lamM_{K,N,D}$, being a sharp lower bound, must equal the sharp Lichnerowicz bound $\frac{KN}{N-1}$ for \red{$CD_b(K,N)$} conditions with $K>0$, $N>1$ when there is no Diameter condition. 
	\item[(2a),(3b)] We conclude from Theorem \ref{thm:specCont}  that in case (2a)
	$$\lim_{d\to\infty}\Lambda_{Poi}(e^{-\frac{Kx^2}{2}}, [-d/2, d/2] )=\Lambda_{Poi}(e^{-\frac{Kx^2}{2}}, (-\infty, \infty))\,.$$ 
	The latter expression is equal to $K$, the \Poinc constant of the Gaussian  (see \cite{BGL} for a proof). In case (3b):
	 $$\lim_{d\to\infty}\Lambda_{Poi}(\cosh(\sqrt{\delta}x)^{N-1},[-d/2, d/2] )=\Lambda_{Poi}(\cosh(\sqrt{\delta}x)^{N-1}, (-\infty, \infty))\,,$$ 
	which must equal $\frac{KN}{N-1}$, since this is the sharp bound proved in \cite{Mil5} (see also \cite{Oht1} when $\partial M=\emptyset$).
		\item[{\Large $\substack{(1b),(1c),(2b),(2c)\\(4a),(3a),(3c)}$}]
	
	In all these cases but cases (3a) and (4a) (in which the estimate\\ $\Lambda_{Poi}(M,\gfrak,\mu)\geq \lam_{K,N,D}$ was valid subject to the proviso $D<l_{\delta}$ if $K<0$ and $N\leq 0$),  we can straightforwardly conclude from  Propositions \ref{prop:RayZero} and \ref{prop:RayZero2} that
	the limits to which $\lamM_{K,N,D}$ equals, are all 0. 
%	\\
%	 $\inf_{(\hfrak,d)\in \D^{reg}_{(K,N,l_{\delta})}}\lam(\hfrak,d)=0$, i.e 0	is the solution to the optimization problem.
	However, it turns out that we can conclude by a similar argument that also in cases $(3a)$ and $(4a)$, when $D\geq l_{\delta}=\frac{\pi}{\sqrt{\delta}}$, the sharp lower bound must also be 0. 
	Indeed, consider a sequence of numbers  $\epsilon_n>0$, and $0<d_n<l_{\delta}$ s.t. 
	\begin{itemize}
  	\item $\epsilon_n\to 0$\,,
		\item $d_n\to l_{\delta}$\,,
		\item $[\epsilon_n, \epsilon_n+d_n]\subset (0,l_{\delta})$\,.
	\end{itemize}
	According to Proposition \ref{prop:RayZero}
	$$\Lambda_{Poi}(\cos(\sqrt{\delta}x)^{N-1}, [-\frac{d_n}{2},\frac{d_n}{2} ] )\stackrel{n\to\infty}{\longrightarrow} 0\,,$$
	hence in case $(3a)$ $\lamM_{K,N,D}=0$. Moreover, according to the monotonicity  Theorem \ref{thm:SpectralMonotonicity}, for the measures $d\xi_n:=\sin(\sqrt{\delta}x)^{N-1}1_{[\epsilon_n, \epsilon_n+d_n]}dm$
	 for every $n\in\Nbb$ it holds that :
	\eq{&\Lambda_{Poi}(\sin(\sqrt{\delta}x)^{N-1}, [\epsilon_n, \epsilon_n+d_n] )\\& \leq \Lambda_{Poi}(\sin(\sqrt{\delta}x)^{N-1}, [\frac{l_{\delta}}{2}-\frac{d_n}{2},\frac{l_{\delta}}{2}+\frac{d_n}{2} ] )=\Lambda_{Poi}(\cos(\sqrt{\delta}x)^{N-1}, [-\frac{d_n}{2},\frac{d_n}{2} ] )\,,}
	hence the argument of case $(3a)$ implies that  $\Lambda_{Poi}(\xi_n)\to 0$ as $n\to\infty$, implying that  $\lamM_{K,N,D}=0$ also in case $(4a)$.

	\item[(4c)] According to  Proposition \ref{prop:RayZero2} we conclude that for $N\in (-1,0]$:
	   $$\lim_{d\to\infty} \Lambda_{Poi}(x^{N-1},  [\frac{1}{d},d])=0\,.$$ 
	   However this must also be the limit when $N=-1$, since
		we know that for $N=-1$ there is equality $\Lambda_{Poi}(1,[-\frac{d}{2},\frac{d}{2}])=\Lambda_{Poi}(x^{N-1},[\epsilon, \epsilon+d])$ for every $\epsilon>0$ and $d>0$, therefore the expression in $(4c)$ must equal the expression given in $(3c)$ for $N=-1$, which we proved to be $0$. 

	\item[(4b)] Due to Theorem \ref{thm:specCont} 
	for every $\epsilon>0$ the following identity is satisfied:
	$$\lim_{d\to\infty} \Lambda_{Poi}(\si_{\delta}(x)^{N-1}, [\epsilon, d] )= \Lambda_{Poi}(\si_{\delta}(x)^{N-1}, [\epsilon, \infty) )\,.$$
	Then by Lemma \ref{D_monotonicity} and Remark \ref{remk:D_monotonicity} we conclude that the solution to the minimization problem is given by
	$$\lamM_{K,N,D}=\lim_{\epsilon\to 0+} \Lambda_{Poi}(\si_{\delta}(x)^{N-1}, [\epsilon, \infty) )\,.$$
    \end{myitemize}

\end{enumerate}	
 \end{proof}

\section{Appendix}

We provide proofs for statements which where presented in the text, but for reasons of clarity and coherence, were embedded in this separate section.

Throughout we denote by $o(1)$ terms which go to 0 as $n\to\infty$, where $n\in\N$ denotes the index of some \mesi{}.  

\subsection{Proofs of Lemmas \ref{lem:lemma1} and \ref{lem:lemma2}}

Before we get into the proof we mention a useful simple identity which will be employed throughout.

Assume $A,B>0$, and let $\epsilon>0$, then for $\epsilon':= \frac{\epsilon B^2}{B+A+\epsilon B}$ holds the identity:
\eql{\label{identity:ratio_epsilon}\frac{A+\epsilon'}{B-\epsilon'}=\frac{A}{B}+\epsilon\,.} 
Clearly this implies that for any $0<\epsilon''\leq \epsilon'$: $\frac{A+\epsilon''}{B-\epsilon''}\leq \frac{A}{B}+\epsilon$.

\begin{proof}[Proof of Lemma \ref{lem:lemma1}]

Define a continuous extension $\tl{f}$ of $f|_{I}$ from $I$ to the entire $\R$ as in Definition \ref{defn:cont_ext}, and by $\tl{f}_s$ denote its convolution with a mollifier as in \eqref{eqn:mollified}. Considering that 
$\deriv{\xi}{m}\in L_{loc}^{\infty}(int(supp(\xi)))$, it follows from Lemma \ref{lem:MollifierConvergence} that $\tl{f}_s\to \tl{f}$ uniformly on compact subsets, and $\tl{f}'_s\to \tl{f}'$  in $L_{loc}^p(int(supp(\xi)); \xi)$ for any $p\in [1,\infty)$.

\bigskip

We define
\[ g_s(x)=\tl{f}_s(x)-c_{s} \qquad \text{where }\quad c_s:=\int \tl{f}_s(x)d\xi(x)\,. \]
Evidently $g_s\in C^{\infty}(\R)$ and $\xi(g_s)=0$ for all $s>0$. 
\bigskip

Define $A:=\int_I f'(x)^2d\xi(x)$ and $B:=\int_I f(x)^2d\xi(x)$ (which is strictly positive by assumption), and set $\epsilon':= \frac{\epsilon B^2}{B+A+\epsilon B}>0$ . 
 Let $I_s:=\{x: d(x,I)\leq s\}$; notice that since  $\tl{f}_s(x)=\tl{f}(x)$ on $ I_s^c$, it follows that $\tl{f}_s'(x)=\tl{f}'(x)=0$ on $ I_s^c$, while on $I_{s_0}$ the functions $\tl{f}_s(x) $ and $\tl{f}'_s(x)$  converge in $L^2(I_{s_0}; \xi)$  to $\tl{f}(x)$ and $\tl{f}'(x)$ respectively, if we pick $s_0>0$ sufficiently small so that $I_{s_0}\subset int(supp(\xi))$. Notice that 
$g_{s_0}'=0$ on $I_{s_0}^c$ and for $s_0>0$ sufficiently small we can ensure that the following estimates/identities hold:
\begin{enumerate}
	\item $\int_{I_{s_0}\setminus I} \tl{f}_{s_0}'(x)^2d\xi(x)\leq\epsilon'/2$. 
	\item $\int_{I} \tl{f}_{s_0}'(x)^2d\xi(x)\leq\int_{I} \tl{f}'(x)^2d\xi(x)+\epsilon'/2$.
	\item $\int_I\tl{f}_{s_0}(x)^2d\xi(x)\geq \int_I\tl{f}(x)^2d\xi(x)-\epsilon'/2$.
	\item $\int_{I_{s_0}\setminus I}\tl{f}_{s_0}(x)^2d\xi(x)\geq \int_{I_{s_0}\setminus I}\tl{f}(x)^2d\xi(x)-\epsilon'/2$.
	\item $\int\tl{f}_{s_0}(x)d\xi(x)= \int\tl{f}(x)d\xi(x)$ (by Fubini).
	\item $\int\tl{f}_{s_0}^2(x)d\xi(x)\leq \int\tl{f}(x)^2d\xi(x)+\epsilon'$.
\end{enumerate}

Notice that $\xi(g_{s_0}^2)=\xi(\tl{f}_{s_0}^2)-\xi(\tl{f}_{s_0})^2\leq \xi(\tl{f}_{s_0}^2)$ ($\xi$ by assumption is a probability measure), since $\tl{f}\in L^2(\xi)$ we conclude from (6) that $g_{s_0}\in L^2(\xi)$.

%We remark that estimate $(1)$ is justified by 
%\[ \int_{I_{s_0}\setminus I} (\tl{f}_{s_0}'(x)^2-\tl{f}'(x)^2)d\xi(x)+\cancel{\int_{I_{s_0}\setminus I}\tl{f}'(x)^2d\xi(x)}\to 0 \,.\] 
By estimates $(1)$ and $(2)$ it follows that
$$ \int g_{s_0}'(x)^2d\xi(x)=\int_{I} \tl{f}_{s_0}'(x)^2d\xi(x)+\int_{I_{s_0}\setminus I} \tl{f}_{s_0}'(x)^2d\xi(x)\leq \int_{I} \tl{f}'(x)^2d\xi(x) +\epsilon'\,.$$

Similarly by estimates $(3),(4)$ and $(5)$: 
{\footnotesize
\eq{\int g_{s_0}(x)^2d\xi(x)&= \int_I\tl{f}_{s_0}^2(x)d\xi(x)+\int_{I^c}\tl{f}_{s_0}^2(x)d\xi(x)-(\int \tl{f}_{s_0}(x)d\xi(x))^2\\
& \geq \int_I\tl{f}^2(x)d\xi(x)-\epsilon'/2+\int_{I_{s_0}\setminus I}\tl{f}^2(x)d\xi(x)-\epsilon'/2+
\int_{ I_{s_0}^c}\tl{f}^2(x)d\xi(x)-(\int \tl{f}(x)d\xi(x))^2\\&
\stackrel{\int_I\tl{f}d\xi=0}{=}\int_If^2(x)d\xi(x)+\int_{I^c}\tl{f}^2(x)d\xi(x)-(\int_{I^c} \tl{f}(x)d\xi(x))^2-\epsilon'\\&
\stackrel{Jensen}{\geq} \int_If^2(x)d\xi(x)-\epsilon'\,.
 } 
}
From the definition of $\epsilon'$ we can conclude that $g:=g_{s_0}\in C^{\infty}(\R)\cap L^2(\R)$ satisfies
$$Ray[g](\xi)\leq \frac{\int_{I} f'(x)^2 d\xi(x)+\epsilon'}{\int_{I} f(x)^2 d\xi(x)-\epsilon'}\leq Ray[f](\xi|_{I})+\epsilon\,.$$

\end{proof}
%-----------------------------------------

\begin{proof}[Proof of Lemma \ref{lem:lemma2}]

Notice that Since $g\in\F_b$ and $\xi(1)<\infty$, it holds that $g,g'\in L^2(\R; \xi)\cap\Cinf_b(\R)$. Clearly we may assume $\xi\neq 0$.

For every $ m\in \Nbb$ we associate a function $\phi_m\in C_c^{\infty}(\R)$ such that the following conditions hold:
\begin{enumerate}
	\item $\phi_m$  is supported on a compact interval $I_m$, with $\xi(\phi_m)>0$. 
	\item On $I_m$ the following estimate holds: 
		$$||\phi_{m}'||_{\infty}\leq \frac{1}{m}\,.$$
	\item $0\leq \phi_m\leq 1$ and  $\phi_m(x)\nearrow_{m\to\infty} 1$ for any $x\in \R$. 
\end{enumerate}
Of course we may also assume that the sequence $(I_m)_{m\in \Nbb}$  is a \mesi{} which exhausts $\R$ with respect to $\xi$. We define
$$  h_m(x):=(g-d_m)\cdot \phi_m \qquad \text{where} \quad d_m:=\frac{\int g\phi_md\xi}{\int \phi_md\xi}\,.$$

Notice that $\xi(h_m)=0$ for all $m\in\Nbb$. In addition since $\xi(g)=0$, $d_m\to 0$ as $m\to\infty$. Consider now the following identity
\eql{\label{ident:diff_h_m} \int h_m'^2(x)d\xi(x)&=\int g'(x)^2\phi_m(x)^2d\xi(x)+2\int g'(x)(g(x)-d_m)\phi_m(x)\phi_m'(x)d\xi(x)\\ \nonumber&+\int\phi_m'(x)^2(g(x)-d_m)^2d\xi(x)\,.
}
By the Cauchy-Schwarz inequality:
$$\Abs{\int g'(x)(g(x)-d_m)\phi_m(x)\phi_m'(x)d\xi(x)}\leq ||g'\phi_m||_{L^2(\xi)}||(g-d_m)\phi_m'||_{L^2(\xi)}\,.$$
Considering that $||g'\phi_m||_{L^2(\xi)}<\infty$, 
 since $\phi_m$ and $g'$ are bounded, 
  and that $||(g-d_m)\phi_m'||_{L^2(\xi)}\stackrel{m\to\infty}{\longrightarrow}0$, since
  $(g-d_m)$ is bounded and $||\phi'_m||_{L^2(\xi)}\leq\frac{1}{m}$, we conclude that 
$$\int g'(x)(g(x)-d_m)\phi_m(x)\phi_m'(x)d\xi(x)=o(1)$$
as $m\to\infty$.

In addition 
\eq{&\int\phi_m'(x)^2(g(x)-d_m)^2d\xi(x)\\&=\int \phi_m'(x)^2g^2(x)d\xi(x)-2d_m\int g(x)\phi_m'(x)^2d\xi(x)+d_m^2\int\phi_m'(x)^2d\xi(x)=o(1)\,,}
since $d_m\stackrel{m\to\infty}{\longrightarrow} 0$, $g\in L^2(\xi)$ and $||\phi_m'||_{\infty}\leq \frac{1}{m}\stackrel{m\to\infty}{\longrightarrow} 0$.

By property 3, $\phi_m\leq 1$, hence by identity \eqref{ident:diff_h_m}
$$ \int h_m'^2(x)d\xi(x)\leq \int g'(x)^2d\xi(x)+o(1)\,.$$
Since $d_m\stackrel{m\to\infty}{\to} 0$, $\phi_m\nearrow 1$ and $g\in L^2(\xi)$, we also conclude that
\eq{&\int h_m(x)^2 d\xi(x)=\int(g(x)-d_m)^2\phi_m(x)^2d\xi(x)\\&=\int g^2(x)\phi_m(x)^2d\xi(x)-2d_m\int g(x)\phi_m(x)^2d\xi(x)+d_m^2\int \phi_m(x)^2d\xi(x)=\int g^2(x)d\xi(x)+o(1)\,. }
Hence given $\epsilon>0$, for sufficiently large $m$, say $m=m_0$, the following inequality holds 
$$ \Ry{h_{m_0}}{\xi}\leq \Ry{g}{\xi}+\epsilon \,.$$
We conclude that  $h:=h_{m_0}\in \F_{Poi}(\xi)$ satisfies the asserted statement.

\end{proof}

\subsection{Proofs of Propositions \ref{prop:RayZero} and \ref{prop:RayZero2}}

\begin{proof}[Proof of Proposition \ref{prop:RayZero}]
Recall that $I=(-\frac{d}{2},\frac{d}{2})$ where $d\in (0,\infty]$. 
Consider the \mesi{} $(E_n)_{n\in \N}$ where $E_n=[-\frac{d_n}{2}, \frac{d_n}{2}]\subset I$, and $d_n\uparrow d$. Define $\xi_n=\xi|_{E_n}$. Let $d'$ be any fixed positive number smaller than $d_1$, and let  $f_n(x)$ be a sequence of odd functions in $\Cinf_c(\R)$ such that
\begin{itemize}
	\item  $f_n(x)=\frac{3}{d'}x$ on  $[-\frac{d'}{3},\frac{d'}{3}]$.
	\item  $|f_n(x)|=M$ on $E_n\setminus [-\frac{d'}{2},\frac{d'}{2}]$, where $M>1$ is some constant.
	\item $|f_n'(x)|$ are uniformly bounded on $I$ by a constant $l>0$.
\end{itemize} 
Notice that $\int f_n\,d\xi_n=0$ for all $n\in N$, and 
$$
\Ry{f_n}{\xi_n} \leq\frac{\prnt{\frac{3}{d'}}^2 2\cdot\frac{d'}{3}+l^2 2\cdot\frac{d'}{6}}{\int_{-d_n/2}^{-d'/2} M^2 J(x) dm   +\int^{d_n/2}_{d'/2} M^2 J(x) dm\,   }\,.
$$

In addition we notice that the nominator is a fixed number while the denominator diverges as $n\to \infty$ since $\xi(I\setminus (-\frac{d'}{2}, \frac{d'}{2})\,)=\infty$. Thus for each $k>0$ there is $n_k>0$ such that $\Ry{f_{n_k}}{\xi}<\frac{1}{k}$.  Therefore $\lim_{n\to\infty}\Ry{f_n}{\xi_n}=0$, whence $\lim_{n\to\infty}\Lambda_{Poi}(\xi_n)=0$. This was proved for the particular sequence $(E_n)_{n\in \N}$, however this would hold for any other exhausting sequence $(I_n)_{n\in \N}$; indeed, assume $(I_{n_k})_{k\in\N}$ is a sub-sequence of $(I_n)_{n\in \N}$ s.t. $\lim_{k\to\infty}\Lambda_{Poi}(\xi|_{I_{n_k}})=\uplim_{n\to\infty} \Lambda_{Poi}(\xi|_{I_{n}})$. By passing to a sub-sequence we may further assume $E_k\subset I_{n_k}$ for all $k\in \N$. By the diameter monotonicity Lemma \ref{D_monotonicity} it follows that $\Lambda(\xi|_{I_{n_k}})\leq \Lambda(\xi|_{E_k})$ for all $k\in \N$, whence $\uplim_{k\to\infty}\Lambda_{Poi}(\xi_{I_{n_k}})\leq \uplim_{k\to\infty}\Lambda_{Poi}(\xi_{E_k})=0$.

%If $d<\infty$ we define $h_{k}(x):=f_{n_k}(x)\phi_{(-d/2, d/2)}\in C_c^{\infty}(\R)$ and $\Ry{h_k}{\xi}=\Ry{f_{n_k}}{\xi}<\frac{1}{k}$. If $d=\infty$ Let $h_{k}(x):=f_{n_k}(x)\phi_{(-d_{n_k}/2, d_{n_k}/2)}$ such that $|\phi'(x)|\leq \frac{\frac{e^{-\frac{x^2}{4}}}}{J\epsilon}{M}
%According to lemma \ref{lem:lemma2} for any $\epsilon>0$ there exists $h_{k}\in C_c^{\infty}(\R)$ such that
%\[ \Ry{h_k}{\xi}\leq \Ry{f_{n_k}}{\xi}+\epsilon \,.\]
%Hence $\lim_k \Ry{h_k}{\xi}\leq \epsilon$, whence $Poin(\xi)\leq \epsilon$.   Since $\epsilon$ is arbitrary we conclude that $Poin(\xi)=0$. 
\qedhere
\end{proof}

\begin{proof}[Proof of Proposition \ref{prop:RayZero2}]
As in the previous lemma, it is enough to consider the particular \mesi{} $(E_n)_{n\in \Nbb}$, where $E_n=[\frac{1}{n}, n]$, and respectively $\xi_n=x^{N-1}1_{E_n} \cdot m$. 
Let $f_n$ be a smooth compactly supported function on $(0,\infty)$ such that $f_n(x)=(x-c_n)$ on $E_n$, where $c_n=\frac{1}{\xi_n(E_n)}\int xd\xi_n$. Notice that $\int f_nd\xi_n=0$ and furthermore:

\eq{ &\int f_n'(x)^2d\xi_n(x)=\int_{\frac{1}{n}}^nx^{N-1}dx\\
=&\begin{cases} \frac{n^{N}-(\frac{1}{n})^N}{N}=\frac{(\frac{1}{n})^N}{N}((n^2)^N-1)\stackrel{N<0}{=}-\frac{(\frac{1}{n})^N}{N}(1+ o(1)) &\mbox{ if } N\neq 0\\
2\log(n) &\mbox{ if } N=0\,, \end{cases} }
and
\eq{ \int f_n^2(x)d\xi_n(x) = \int_{\frac{1}{n}}^n(x-c_{n})^2x^{N-1}dx&= \int_{\frac{1}{n}}^nx^{N+1}dx-2c_{n}\int_{\frac{1}{n}}^nx^Ndx +c_{n}^2\int_{\frac{1}{n}}^nx^{N-1}dx\\&=\int_{\frac{1}{n}}^n x^{N+1}dx-\frac{(\int_{\frac{1}{n}}^nx^N)^2}{\int_{\frac{1}{n}}^nx^{N-1}dx}:=(\star)\,.}
For $N=0$ 
\eq{(\star)&=\half\prnt{n^2-(\frac{1}{n})^2)}-\frac{(n-\frac{1}{n})^2}{2\log(n)}=n^2(1+o(1))\,,}
and therefore $\Ry{f_n}{\xi_n}=\frac{2\log(n)}{n^2(1+o(1))}\stackrel{n\to\infty}{\longrightarrow} 0$. \\
For $N\neq 0$ 
{\small
\eq{(\star)&=
\prnt{\frac{n^{N+2}}{N+2}-\frac{(\frac{1}{n})^{N+2}}{N+2} }- \frac{ \prnt{\frac{n^{N+1}}{N+1}-\frac{(\frac{1}{n})^{N+1}}{N+1}}^2 }{\frac{n^{N}}{N}-\frac{(\frac{1}{n})^{N}}{N}}= \frac{n^{N+2}}{N+2}(1-(\frac{1}{n^2})^{N+2})-
\frac{n^{2N+2}}{(N+1)^2}\frac{N}{(\frac{1}{n})^N}   \frac{(1-(\frac{1}{n^2})^{N+1})^2  }{((n^2)^N-1) }\\&\stackrel{N+1>0}{=}\frac{n^{N+2}}{N+2}-\frac{N}{(N+1)^2}n^{3N+2}+o(1)\,.
}}
In this case $$\Ry{f_n}{\xi_n}=
\frac{1+ o(1)}{-\frac{N}{N+2}n^{2N+2}+\frac{N^2}{(N+1)^2}n^{4N+2}+o(1) }\stackrel{n\to\infty}{\longrightarrow} 0\,, $$
since $N\in (-1,0)$ and the leading term in the denominator is $n^{2N+2}$ (where $2N+2>0$). Therefore for $N\in (-1,0]$ it holds that $\lim_{n\to\infty}\Lambda_{Poi}(\xi_n)=0$. 
\end{proof}
Note that this proof does not apply to $N=-1$. 

\subsection{A stability theorem for regular SLPs}
In this sub-section we prove Theorem \ref{cont:SL_Eval} which was crucially used in the proof of the monotonicity Theorem \ref{thm:SpectralMonotonicity}. 
Let $Y_0(x)$ be a given non-negative function and let $a_0<b_0$ be two fixed points in $\R$. Let $\epsilon>0$ be such that $Y_0(x)$ is $C^1((a_0-\epsilon, b_0+\epsilon))$ and strictly positive on $[a_0-\epsilon, b_0+\epsilon]$. Given $s\in (-\epsilon,\epsilon)$ we define $Y_s(x):=Y_0(x+s)$.
For all $s\in (-\epsilon, \epsilon)$ the SLP 
\eql{\label{eq:SL_Y} (Y_s(x)u'(x))'=-\lambda Y_s(x)u(x)\qquad u'(a_0)=u'(b_0)=0\,,}
is regular. Denote by $\lambda_1(s,b)$ the function which maps $s$ and $b$ (the position of the right endpoint) to the first non-zero eigenvalue $\lambda_1$ of the problem:
\eql{\label{eq:SL_Y2} (Y_s(x)u'(x))'=-\lambda Y_s(x)u(x)\qquad u'(a_0)=u'(b)=0\,.}

By continuous dependence of the eigenvalues and the eigenfunctions of regular SLPs on the parameters we may further assume that $\epsilon$ is sufficiently small so that for all $(s, b)\in B_{(0,b_0)}(\epsilon)$ it holds that $\min_{x\in [a_0,b]}Y_s(x)>\half\min_{x\in [a_0,b_0]} Y_0(x)$ (so \eqref{eq:SL_Y2} corresponds to a regular SLP), and $\lam(s,b)\leq 2\lam(0,b_0)$. 

\begin{thm}\label{cont:SL_Eval} There exists $\epsilon'>0$ such that for any $s\in (-\epsilon', \epsilon')$ there exists $b'=b'(s)$ such that $\lambda_1(s,b'(s))=\lambda_1(0,b_0)$, and $|b'(s)-b_0|\to 0$ as $s\to 0$.
\end{thm}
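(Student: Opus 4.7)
The plan is to use an intermediate-value argument in the variable $b$: combine continuous dependence of the first positive Neumann eigenvalue $\lambda_1(s,b)$ on both parameters with strict monotonicity of $b \mapsto \lambda_1(0,b)$ at $b_0$, and then apply the IVT.

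First, I would establish continuity of $(s,b) \mapsto \lambda_1(s,b)$ at $(0,b_0)$ by reducing to Theorem~\ref{SLResults2}. Setting $\omega_{s,b} := (a_0, b, Y_s) \in \Omega_E$, the claim is that $d(\omega_{s,b}, \omega_{0,b_0}) \to 0$ as $(s,b) \to (0, b_0)$ in the SL-BVP metric~\eqref{SL_BVP_Metric}. The $|b - b_0|$ term is trivial; the two $L^1$ integral terms are handled by writing each difference as $(Y_s - Y_0) 1_{[a_0, b_0]}$ plus a contribution from the symmetric difference of the intervals, and noting that $Y_0$ and $1/Y_0$ are uniformly bounded on any compact sub-interval of $(a_0 - \epsilon, b_0 + \epsilon)$ (since $Y_0$ is $C^1$ and strictly positive there), while translation acts continuously in $L^1$. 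Theorem~\ref{SLResults2} then gives continuity of $\lambda_1$ at $(0, b_0)$.

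Next, I would prove $\partial_b \lambda_1(0, b_0) < 0$. The classical Hadamard variation formula for regular Neumann SLPs yields $\partial_b \lambda_1(0,b_0) = -\lambda_1(0,b_0)\, Y_0(b_0)\, u(b_0)^2 \big/ \int_{a_0}^{b_0} Y_0 u^2\, dm$, where $u$ is the first nonzero eigenfunction. The denominator is positive and $\lambda_1(0, b_0) > 0$ by Theorem~\ref{SLResults1}; moreover $u(b_0) \neq 0$, because $u'(b_0) = 0$ together with $u(b_0) = 0$ would force $u \equiv 0$ by uniqueness for the linear Cauchy problem $(Y_0 u')' = -\lambda_1 Y_0 u$. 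Hence the derivative is strictly negative. With both ingredients in hand, fixing small $\eta > 0$ gives $\lambda_1(0, b_0 - \eta) > \lambda_1(0, b_0) > \lambda_1(0, b_0 + \eta)$; by continuity the strict inequalities persist with $\lambda_1(s, b_0 \pm \eta)$ on the outer sides for all $|s|$ below some $\epsilon'(\eta) > 0$, and the IVT applied to the continuous map $b \mapsto \lambda_1(s, b)$ produces the desired $b'(s) \in (b_0 - \eta, b_0 + \eta)$ with $\lambda_1(s, b'(s)) = \lambda_1(0, b_0)$. Shrinking $\eta$ and $\epsilon'$ simultaneously yields $|b'(s) - b_0| \to 0$ as $s \to 0$.

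The main delicate point is justifying the derivative formula; while it is a standard fact of regular Sturm--Liouville theory, a self-contained proof requires differentiating the eigenfunction with respect to $b$. A circumspect alternative is to deduce strict monotonicity by a direct variational (Rayleigh quotient) comparison: given $b_1 < b_2$, extend the minimizer realizing $\lambda_1(0, b_1)$ by an appropriate constant on $[b_1, b_2]$, subtract its mean to restore orthogonality to constants on $[a_0, b_2]$ in $L^2(Y_0\, dm)$, and observe that the resulting test function yields a strictly smaller Rayleigh quotient (strict because otherwise the extension would itself solve the eigenvalue ODE on $[b_1, b_2]$ with zero derivative at $b_1$, which is impossible by uniqueness for the Cauchy problem). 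Either route delivers Step~2 without circularity with respect to part (1) of Theorem~\ref{thm:SpectralMonotonicity}.
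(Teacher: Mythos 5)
Your proposal is correct, but it takes a genuinely different route from the paper. The paper proves the statement via the inverse function theorem applied to $(s,b)\mapsto (s,\lambda_1(s,b))$, which forces it to establish full $C^1$-regularity of $\lambda_1$ in both parameters; the bulk of the paper's argument is devoted to showing that $\partial_s\lambda_1(s,b)$ and $\partial_b\lambda_1(s,b)$ are \emph{continuous} on a neighborhood of $(0,b_0)$, using the explicit Hadamard-type identities from Zettl together with uniform bounds on the eigenfunctions and dominated convergence. You instead use only (i) joint continuity of $\lambda_1(s,b)$, which you reduce to Theorem~\ref{SLResults2} by estimating the SL-BVP metric directly (this reduction is clean and correct — the uniform positivity and continuity of $Y_0$ on compact subintervals plus $L^1$-continuity of translation do the job), and (ii) a single pointwise sign condition $\partial_b\lambda_1(0,b_0)<0$; then the intermediate value theorem closes the argument. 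This is more elementary: you entirely sidestep the continuity of the partial derivatives, which is the technically heaviest part of the paper's proof. What you give up is the conclusion that $b'(s)$ is itself a $C^1$ function of $s$ (which the implicit/inverse function theorem would yield), but the theorem as stated only asks for existence and $b'(s)\to b_0$, so nothing is lost. Your fallback for step (ii) — the Rayleigh-quotient extension argument that avoids the Hadamard formula altogether — is also sound: extending the eigenfunction by the (nonzero) boundary value constant leaves the Dirichlet energy unchanged and, after re-centering, strictly increases the $L^2$-weight (precisely because $u(b_1)\neq 0$ by ODE uniqueness), so $\lambda_1(0,b)$ is strictly decreasing in $b$, not just at $b_0$. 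One small point to keep in mind when converting this into a fully rigorous write-up: the IVT a priori gives a non-unique $b'(s)$, so to conclude $|b'(s)-b_0|\to 0$ one should fix a selection (e.g.\ the solution closest to $b_0$) or phrase the shrinking of $\eta$ as a $\forall\eta\,\exists\epsilon'(\eta)$ statement; you gesture at this with ``shrinking $\eta$ and $\epsilon'$ simultaneously,'' and it is easily made precise.
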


\begin{proof}

The theorem is a consequence of the following two claims: 
\begin{enumerate}
	\item $\partial_s\lambda_1(s,b)$ and $\partial_b\lambda_1(s,b)$ are continuous on $B_{(0,b_0)}(\epsilon)$. 
	\item $\partial_b\lambda_1(0,b_0)\neq 0$. 
\end{enumerate}
Indeed by (1) and (2), on $B_{(0,b_0)}(\epsilon)$ the map $f:(s,b)\mapsto (s,\lambda_1(s,b))$ is continuously-differentiable and its differential at $(0,b_0)$: 
$$Df_{(0,b_0)}=
\begin{pmatrix}
	1 & 0\\
	\partial_s(\lambda_1)(0,b_0) & \partial_b(\lambda_1)(0,b_0)
\end{pmatrix}\, $$
 is invertible, hence by the inverse function theorem it is a local homeomorphism at $(0,b_0)$. For the continuity of  $\partial_s\lambda_1$ and $\partial_b\lambda_1$ at $(0,b_0)$ one can use the results of Kong and Zettl \cite{KoZet}, but for completeness we provide an independent proof. We use the following identities, valid at a general point $(s,b)\in B_{(0,b_0)}(\epsilon)$ (cf. \cite[p. 56 and p.80]{Zet}):
\begin{enumerate}
	\item $(\partial_s\lambda_1)(s,b)=\int_{a_0}^{b} \partial_s{Y_{s}}(x)\prnt{u_{s,b}'(x)^2-\lambda_1(s,b)u_{s,b}(x)^2}dx$\,,
	\item {$(\partial_b\lambda_1)(s,b)=-u_{s,b}(b)^2\lambda_1(s,b) Y_{s}(b)$\,,}
\end{enumerate} 
where $u_{s_0,b_0}$ is the normalized eigenfunction (i.e. $\int_{a_0}^{b_0}u_{s_0,b_0}(y)^2Y_{s_0}(y)dy=1$) which corresponds to $\lambda_1(s_0,b_0)$. Being a regular SLP in some small neighborhood of $(s_0,b_0)$ the eigenfunctions $u_{s,b}(x)$ as well as $\lambda_1(s,b)$ depend continuously on $s$ and $b$ \cite[p.55]{Zet}. By integration of \eqref{eq:SL_Y2}
from $a_0$ to $x$ we conclude the following identity for any $(s,b)\in B_{(0,b_0)}(\epsilon)$: 
\eql{ \label{diff_u_eqn} u_{s,b}'(x)=-\lambda(s,b)Y_s(x)^{-1} \int_{a_0}^xY_s(y)u_{s,b}(y)dy\,. }

Notice that if $z_{s,b}\in(a_0,b)$ is the point where $u_{s,b}$ vanishes, then:
{\small
\eq{ &||u_{s,b}||_{\infty}\leq\sup_{x\in [a_0,b]}\int_{z_{s,b}}^x|u_{s,b}'(y)|dy\stackrel{\text{by } C.S.}{\leq} \sqrt{|b-a_0|}\sqrt{\int_{a_0}^b|u_{s,b}'(y)|^2dy}\\&
\leq \sqrt{\frac{|b-a_0|}{\min_{y\in [a_0,b]}Y_s(y)}}\sqrt{\int_{a_0}^b|u_{s,b}'(y)|^2Y_s(y)dy}=\sqrt{\frac{|b-a_0|}{\min_{y\in [a_0,b]}Y_s(y)}}\sqrt{\lambda_1(s,b)\int_{a_0}^b|u_{s,b}(y)|^2Y_s(y)dy}\\&=\sqrt{\lambda_1(s,b)\cdot \frac{|b-a_0|}{\min_{y\in [a_0,b]}Y_s(y)}}\leq 
\sqrt{2\lambda_1(0,b_0)\cdot \frac{|b_0-a_0|+\epsilon}{\half \min_{y\in [a_0,b_0]}Y_s(y)}}:=M_{a_0,b_0,\epsilon}\,.
}}
Therefore the eigenfunctions $u_{s,b}$ are uniformly bounded by $M_{a_0,b_0,\epsilon}$ on $B_{(0,b_0)}(\epsilon)$. Thus if $(s,b) \in B_{(0,b_0)}(\epsilon)$ and $(s_n,b_n)\to (s,b)$, where w.l.o.g. we may assume $\{(s_n,b_n)\}_{n\in \Nbb}\subset B_{(0,b_0)}(\epsilon)$, by Lebesgue dominated convergence:
%
%Choose $0<\epsilon''<\epsilon$ sufficiently small so that $\min_{x\in [a_0,b]}Y_s(x)>\half\min_{x\in [a_0,b_0]} Y_0(x)$ for all $(s,b)\in B_{(s_0,b_0)}(\epsilon'')$.
%
%Define $W_{(0,b_0)}:=B_{(0,b_0)}(\frac{\epsilon}{2})$. Notice that for any $(s,b)\in W_{(0,b_0)}$ it holds that $B_{(s,b)}(\frac{\epsilon}{2})\subset B_{(0,b_0)}(\epsilon)$, hence $||u_{s,b}||_{\infty}\leq M_{a_0,b_0,\epsilon}$ (uniformly bounded).
 %Thus  $(s,b)\in W_{(0,b_0)}$:
\eq{\lim_{(s,b)\to (s_0,b_0)} u_{s,b}'(x)&=-\lambda(s_0,b_0)Y_{s_0}(x)^{-1} \int_{a_0}^{x} \lim_{(s,b)\to (s_0,b_0)}\prnt{Y_s(y)u_{s,b}(y)}dy \\&=-\lambda(s_0,b_0)Y_{s_0}(x)^{-1} \int_{a_0}^{x} Y_{s_0}(y)u_{s_0,b_0}(y)dy = u_{s_0,b_0}'(x)\,,}
showing that $u_{s,b}'(x)$ is continuous at $(s,b)$ (as a function of $s$ and $b$). Furthermore by identity \eqref{diff_u_eqn} the functions $u_{s,b}'(x)$ are also uniformly bounded on $B_{(0,b_0)}(\epsilon)$. 

Looking back at identities 1 and 2, by the continuous dependence of $\partial_s{Y_s}(x), u_{s,b}(x),u'_{s,b}(x)$ and $\lambda_1(s,b)$ on $s$ and $b$, it follows (using dominated convergence again) that $\partial_s\lambda_1(s,b)$ and $\partial_b\lambda_1(s,b)$ are continuous on $B_{(0,b_0)}(\epsilon)$. Furthermore, since $u'_{0,b_0}(b_0)=0$, a simple ODE uniqueness argument implies that 
 $|u_{0,b_0}(b_0)|\neq 0$, whence $(\partial_b\lambda_1)(0,b_0)\neq 0$.
\end{proof}

		%Due to  we know that the sequence of intervals $(I_n)_{n\in\Nbb}$ must exhaust $(0,l_{\delta})$ in the sense that $\bigcup_{n\in\Nbb}I_n=(0,\infty)$. This justifies the statement given in the theorem. 

 \newpage
 
%\usepackage{enumitem}

%In these works the reduction to 1-dimensional BVPs was based on the gradient comparison technique (which is based on maximum principles, hence restricted to compact manifolds), which had been used by P.Li and S.T.Yau in \cite{LY80} to derive the estimate $\lambda_2\geq \frac{\pi^2}{2D^2}$. After this reduction had been established, a secondary step of refining the optimal density among a class of model densities was necessary. The resolution of this second problem was achieved by completely different methods. 
%
%
%In this section our main goal is to present an improvement to the secondary refinement problem. 

\chapter{Functional Inequalities: Explicit Lower Bounds\\ \bigskip The p-\Poinc Inequality}
%\chapter{The p-\Poinc constant}
\label{chp:pPoinc}
In this chapter we derive sharp lower-bounds for the $p$-\Poinc constant for \cwrm{} which satisfy $CDD_b(K,N,D)$. 

We open with a disclaimer that this was not one of the primary goals of this work, therefore the analysis will not be as comprehensive as we provided for the $p=2$ case. We will prove a monotonicity result for $\Lambda_{Poi}^{(p)}$  for $p\in (1,\infty)$ analogous to that we established for the 2-Laplacian in Theorem  \ref{thm:SpectralMonotonicity} over the domain $\Fkndreg$.
As we saw in the previous Chapters, the solution to the minimization problem of $\xi\mapsto \Lambda_{Poi}(\xi)$ over $\Fknd^M(\R)$, turns out to be equal to the solution over the class $\Fkndreg$, due to upper semi-continuity of $\xi\mapsto \Lambda_{Poi}(\xi)$ on the space $\overline{\Fkndreg}$. Considering the analysis provided for the case $p=2$, the same should hold w.r.t. the map $\xi\mapsto \Lambda_{Poi}^{(p)}(\xi)$ with $p\in (1,\infty)$; the arguments are almost the same, therefore a proof of this property is omitted.

As in the case $p=2$, we approach the  minimization problem over $\Fkndreg$ for $p\in (1,\infty)$  via the Euler-Lagrange equation associated with the $p$-\Poinc constant. We will refer to the corresponding problem as `$p$-Sturm--Liouville BVP', and to the relevant theory as `$p$-Sturm--Liouville theory'; it bears this name due to its similarity to the case $p=2$. Many important results which are known to be valid for $p=2$, have been extended to general $p\in (1,\infty)$. Yet, at present its development is still ongoing, and we have no certainty whether certain gaps were bridged. In our approach we were taking a certain assumption regarding the validity of a specific crucial result. The specific assumption is detailed in Subsection \ref{subsec:p_SL_Theory}. It was verified for various `$p$-Sturm--Liouville BVPs' of similar but not identical form to the BVP associated with $\Lambda_{Poi}^{(p)}(\xi)$. We didn't verify the extension to our problem, knowing that it would cause an undesirable digression from the main goals.

\bigskip

Recall the localization theorem yielded Theorem \ref{thm:PoinInequality} which gave a lower bound $\lamp_{K,N,D}$ to $\Lambda^{(p)}_{Poi}$; complementing this result by the extreme points characterization Theorem  \ref{thm:ExtremePoints} yielded Corollary \ref{cor:CC} from which we conclude the following theorem about the sharp lower bound for  $\Lambda^{(p)}_{Poi}(M,\gfrak,\mu)$:
 %when $K\in \R$, $N\in(-\infty, 0]\cup [\max(n,2),\infty]$ and $D>0$:

\begin{thm}\label{LpPoinc:Estimate_gen} If $(M^n,\gfrak, \mu)$ is a \cwrm{}, which satisfies $CDD_b(K,N,D)$, where $K\in \R$ and $N\in (-\infty,0]\cup [\max(n,2),\infty]$, then subject to the proviso  $D<l_{\delta}$ if $K<0$ and $N\leq 0$:
\eql{ \label{pPoinc:Estimate} \Lambda_{Poi}^{(p)}(M,\gfrak,\mu)\geq \lamp_{K,N,D}\,,}
where 
\eql{\label{prob:pLap} \lamp_{K,N,D}:=\inf_{\xi\in \Fknd^{M}(\R)}\Lambda_{Poi}^{(p)}(\xi)\qquad \text{with}\quad \Lambda_{Poi}^{(p)}(\xi):=\inf_{f\in \F_{Poi}^{(p)}(\xi)} \left\{ \frac{\int_{\R} |f'(t)|^pd\xi(t)}{\int_{\R} |f(t)|^pd\xi(t)}\,\, \right\}\,.} 
\end{thm}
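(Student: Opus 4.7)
The plan is to combine two results already established in the paper: the one-dimensional localization bound in Theorem \ref{thm:PoinInequality} and the extreme-points reduction in Corollary \ref{cor:CC}. First I would recognize that the $p$-Poincar\'{e} problem fits precisely into the abstract framework of Subsection \ref{subsec:abstract_form} with the assignments $u_f(x) = |f'(x)|^p$, $v_f(x) = |f(x)|^p$ and $h_f(x) = f(x)|f(x)|^{p-2}$, as recorded in \eqref{dfn:specifications}. The four properties listed in Remark \ref{remk:uvh_prop} are all immediate: these functions lie in $C_c(\R)$ because $f \in C_c^{\infty}(\R)$; the support inclusions $supp(u_f) \subset supp(v_f) \subset supp(h_f)$ reduce to the tautology that all three supports are contained in $\overline{\{f \neq 0\}}$; the identification $\F^{(p)}_{Poi}(\xi) = \{f \in \F^a_* : h_f^*(\xi) = 0\}$ is the very definition of $\F^{(p)}_{Poi}(\xi)$; and $v_f^*(\xi) = \int |f|^p d\xi \geq 0$ is automatic.

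With the framework matched, Theorem \ref{thm:PoinInequality} directly yields
\[
\Lambda_{Poi}^{(p)}(M,\gfrak,\mu) \geq \inf_{0 \neq \xi \in \Fknd^{C^{\infty}}} \Lambda_{Poi}^{(p)}(\xi).
\]
What remains is to identify the right-hand side with $\inf_{\xi \in \Fknd^M(\R)} \Lambda_{Poi}^{(p)}(\xi)$. This is exactly what Corollary \ref{cor:CC} delivers: under the hypothesis $N \in (-\infty, 0] \cup [2, \infty]$ and the proviso $D < l_{\delta}$ when $K < 0$ and $N \leq 0$, it asserts that the constant $\alpha_{K,N,D}$ defined over the whole class $\Fknd(\R)$ reduces to the infimum over the model class $\Fknd^M(\R)$. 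The hypotheses on $N$ are satisfied here because $N \in (-\infty, 0] \cup [\max(n,2), \infty]$ automatically lies in $(-\infty, 0] \cup [2, \infty]$, and the proviso on $D$ is assumed. The hierarchy $\Fknd^M(\R) \subset \Fknd^{C^{\infty}} \subset \Fknd(\R)$ then sandwiches the three infima:
\[
\inf_{\xi \in \Fknd(\R)} \Lambda_{Poi}^{(p)}(\xi) \leq \inf_{\xi \in \Fknd^{C^{\infty}}} \Lambda_{Poi}^{(p)}(\xi) \leq \inf_{\xi \in \Fknd^M(\R)} \Lambda_{Poi}^{(p)}(\xi),
\]
and the outer equality provided by Corollary \ref{cor:CC} forces all three to coincide, completing the proof.

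The main obstacle here is not conceptual but verificatory: one must carefully check that the abstract hypotheses behind Corollary \ref{cor:CC} really apply to the $p$-Poincar\'{e} functionals for every $p \in (1, \infty)$ and not only for $p = 2$. In particular, the extreme-points characterization Theorem \ref{thm:ExtremePoints} was the workhorse behind Corollary \ref{cor:CC}, and its proof exploits only the abstract structure $(u_f, v_f, h_f)$ together with compactness/closure properties of the class $\Pknd(I_h)$; since these structural features hold verbatim for $(|f'|^p, |f|^p, f|f|^{p-2})$, no new analytic ingredients are required. I would devote a brief remark to this point, noting in particular that $h_f = f|f|^{p-2} \in C_c(\R)$ even though it is only $C^1$ when $p \geq 2$, which is sufficient for all the integrability and continuity considerations used in the reduction.
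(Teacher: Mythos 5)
Your proposal is correct and follows precisely the route the paper takes: Theorem \ref{thm:PoinInequality} supplies the localization bound $\Lambda_{Poi}^{(p)}(M,\gfrak,\mu)\geq \inf_{\xi\in \Fknd^{C^{\infty}}}\Lambda_{Poi}^{(p)}(\xi)$, Corollary \ref{cor:CC} collapses $\inf_{\Fknd(\R)}$ to $\inf_{\Fknd^M(\R)}$, and the inclusion $\Fknd^M(\R)\subset\Fknd^{C^{\infty}}(\R)\subset\Fknd(\R)$ sandwiches the three infima into equality. Your verification that the abstract framework of Subsection \ref{subsec:abstract_form} applies for all $p\in(1,\infty)$ (including the observation that only $h_f\in C_c(\R)$, not higher regularity of $h_f$, is needed) is exactly the implicit content of the paper's derivation.
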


%In general solutions to the p-Laplace 'eigenvalue' equation are at least $C^{1,\alpha}$ for some $\alpha>0$ (\cite{Tol} or \cite{NaVa}).

%It is known  that for each $\xi\in\Fknd^{M}$ the first minimization 
		%\eq{		\inf \left\{ \frac{\int_{\R} |f'(t)|^p\xi(t)}{\int_{\R} |f(t)|^pd\xi(t)}:\,\,0\neq f\in C_c^{\infty}(\R)\, \mbox{ and } \, \int_{\R} h_f(t)d\xi(t)=0 \right\}   }
	%problem admits a minimizer \cite{Bre, Dac, Lin1} and we can study the problem through the associated Euler-Lagrange equation. 
	
\section{The eigenvalue equation on the line}
	Assume $\xi=J\cdot m$, $supp(\xi)=[a,b]$, and that $J(x)>0$ is continuous and positive on $[a,b]$.
Existence of minimizers realizing $\Lambda_{Poi}^{(p)}(\xi)$ is proved by direct methods \cite{Lin}, hence it is justified to study the problem via the Euler-Lagrange equations. 
	
%	By localization we have reduced from the original problem  of finding $\Lambda_{Poi}^{(p)}(M,\gfrak,\mu)$, to an optimization problem where comparison of the \Poinc constants $\Lambda_{Poi}^{(p)}(\xi)$ of measures $\xi\in \Fknd^M(\R)$ is involved. 
%	As for the 2-Laplacian it is natural to approach the problem \eqref{prob:pLap} via the associated Euler-Lagrange equations. 
	
	\bigskip
	
 Then a minimizer realizing $\Lambda_{Poi}^{(p)}(\xi)$ is a weak solution to the following BVP on $\R$: 
	\eql{\label{eqn:P_EL} \deriv{}{x}(J(x)f^{\prime(p-1)}(x))+\lambda J(x)f^{(p-1)}(x)=0 \qquad f'(a)=f'(b)=0\,,}
	where $f^{(p-1)}(x)$ stands for  
\[ \gls{f_p_1}:=|f(x)|^{p-2}f(x)=|f(x)|^{p-1}sgn(f(x))\,. \]
We may equivalently write 
	\pinka{\[ \Delta_{p,\xi}f(x) := (p-1)f^{\prime(p-2)}(x)f''(x)+\frac{J'(x)}{J(x)}f^{\prime(p-1)}(x)=-\lambda f^{(p-1)}(x)\,, \]
	where $\gls{Delta_p}$ stands for the weighted  $p$-Laplacian associated with  the measure $\xi$.} 
%As for the spectral-gap problem for the 
 For the study of the solutions to these ODEs we will rely on the same type of Pr\"{u}ffer transformation which was employed by Naber and Valtorta in \cite{Val, NaVa}. 

\subsection{Sturm-Liouville theory of equation \eqref{eqn:P_EL} }\label{subsec:p_SL_Theory}
%We refer the reader to  and references within for the definitions and results which we state here.

Equation \eqref{eqn:P_EL} has a form which is reminiscent of the Sturm-Liouville (SL) problem, which we naturally encounter when we study the eigenvalues of the 2-Laplacian on the interval. It is not a linear problem, but only half-linear; yet, an analogous SL theory for such BVPs on the interval has been developed. Our presentation  of the `$p$-Sturm--Liouville theory' is partially based on \cite[p. 108]{DrKu}.

%; indeed, a general treatment requires the inclusion of the cases in which $J(x)$ vanishes at one of the endpoints, or the interval is infinite, however in view of the arguments which were presented in Chapter \ref{chp:Poinc}, there is no loss of generality in considering only  densities $J(x)$ which satisfy the assumption.
%As we mentioned, we will not provide 'boundary measures analysis' for the p-Laplacian. 
\bigskip
In our present study we assume that $J(x)$ is smooth and strictly positive on $[a,b]$. 
By a solution to \eqref{eqn:P_EL} we understand a function $f$ s.t.  $f\in C^1(a,b)$, $f^{\prime(p-1)}=|f'|^{p-2}f'\in C^1(a,b)$, the equation \eqref{eqn:P_EL} holds at every point, and the boundary conditions are satisfied.
%, and the Dirichlet integral $\int_a^bJ(x)|u'(x)|^pdx$ is finite.

We refer to the parameter $\lambda$ as an  `eigenvalue' of the problem, if the problem has a non trivial (i.e. non-zero) solution with this $\lambda$. We refer to such a solution as the `eigenfunction associated with $\lambda$'. 
\subsubsection{Properties of the spectrum and the eigenfunctions}\label{Properties:pLap_egns}

It is known (e.g \cite{TakKuNa1,TakKuNa2,BiDr,Rei,DoRe}) that the following results apply to the Neumann BVP \eqref{eqn:P_EL} (and other similar BVPs)
\begin{enumerate}
	\item The set of all eigenvalues form an increasing sequence $(\lambda_n)_{n\in\N}$, s.t. $\lambda_1>0$ and $\lim_{n\to\infty}\lambda_{n}=+\infty$. 
	\item Every $\lambda_n$ is simple (in the sense that all eigenfunctions $u_n$ associated with $\lambda_n$ are mutually proportional). 
	\item The eigenfunction $u_n$ has precisely $n$ zeros in $(a,b)$.
\end{enumerate}
In \cite{MYZ} (and some references therein) it is stated that for a similar problem 
\eq{ \deriv{}{x}(f^{\prime(p-1)}(x))+\lambda J(x)f^{(p-1)}(x)=0\,,}
with boundary conditions which are more general than Neumann,
an analogue of Theorem \ref{SLResults2} also holds, i.e the eigenvalues are continuously differentiable in the weight (in the Fr\'{e}chet sense), and the eigenfunctions depend continuously on the weight (explicit expressions for the eigenvalue derivatives are proved in  \cite{MYZ} \pinka{in a slightly different context}). We proceed  assuming the validity of this result to the present Neumann BVP. We will refer to it as the `technical assumption'.

%
%
%We assume that 
%\begin{enumerate}
	%\item
	%\item For any $x\in (a,b)$ we have $J\in L^1(a,x)$ and $J^{1-p'}\in L^1(x,b)$, where $\frac{1}{p}+\frac{1}{p'}=1$ (e actually need not assume $J,J^{1-p'}\in L^1(a,b)$). 
%\end{enumerate}
   %
%
%
%\begin{defn}
%The SL property for \eqref{p_Lap_SL_eqn} is satisfied iff the following two conditions hold (\cite[p. 108]{DrKu})
%\eq{  &\lim_{t\to a+}\prnt{\int_a^t p(x)dx}\prnt{\int_t^bp(t')^{1-p'}(t')dt'}^{p-1}=0\\&
%\lim_{t\to b-}\prnt{\int_a^t p(x)dx}\prnt{\int_t^bp(t')^{1-p'}(t')dt'}^{p-1}=0
%}
%\end{defn}
%The reader is referred also to 

\section{Comparison of eigenvalues of equation \eqref{eqn:P_EL}}

Our goal is to derive a eigenvalue monotonicity theorem analogous to Theorem  \ref{thm:SpectralMonotonicity} for general $p\in (1,\infty)$ (which at present applies only to positive $N$ values). 
The strategy will be the same as for the 2-Laplacian. Start with the BVP \eqref{eqn:P_EL} that $f$ satisfies on $(a,b)$ (which corresponds to the eigenvalue $\lambda$). We would like to compare $\lambda$ with the eigenvalue $\tilde{\lambda}$ of  \eqref{eqn:P_EL}, where we replace $J$ with a density $\tilde{J}$, which we consider as a perturbation of $J$. 
 Rather than comparing the eigenvalues of these equations directly, we consider the following IVP:
\eql{\label{p_Lap_SL_eqn} \deriv{}{x}(\tilde{J}(x)f^{\prime(p-1)})+\lambda \tilde{J}(x)f^{(p-1)}=0 \qquad f(a)=-1,\, f'(a)=0\,.}
\begin{remk} It is known (\cite{NaVa} prop. 4.6,4.7) that $f$ and $f^{\prime(p-1)}$ are of class $C^1(\R)$, and depend continuously on the parameters $N,K$ and $a$ in the sense of local uniform convergence of $f$ and $f^{\prime(p-1)}$.
\end{remk}

\red{As we showed for the 2-Laplacian, subject to the technical assumption of Subsection \ref{subsec:p_SL_Theory},} for sufficiently small perturbations the solution to the IVP $f$ will satisfy $f'(a)=f'(b')=0$ for some $b'>a$ in a neighborhood of $b$. The principle of diameter comparison is that if $b-a\leq b'-a$ (resp. $b-a\geq b'-a$) we can conclude that $\lambda\leq \tilde{\lambda}$ ($\lambda\geq \tilde{\lambda}$). Up to this point the arguments are not different from those which led to the proof of Theorem  \ref{thm:SpectralMonotonicity}. Yet, the case $p=2$ was exceptional due to the fact that the resulting EL equation was linear, while for general $p\in(1,\infty)$ the equation is only  half-linear. As can be verified, the Liouville transformation which we applied in the case $p=2$ is not very useful for general $p$. Yet, the Pr\"{u}ffer transformation, which we have previously mentioned applies to linear as well as to half-linear 2nd order ODEs. Naber and Valtorta \cite{Val,NaVa} have successfully used it in order to refine the optimal $\xi\in\FF_{[K,n,D]}^M(\R)$ in \eqref{prob:pLap}. We will also use this transformation which greatly simplifies the problem.

\bigskip

%For the refinement problem, i.e finding the 'optimal' $\xi\in \Fknd^M$ in problem \eqref{prob:pLap}, it is sufficient (in view of the expressions \eqref{DensityExpressions} for $J_{K,N,\hfrak}$) to compare translations (i.e $x\to x+s$) of the following  four densities
%\begin{enumerate}
  %\item $J(x)=x^{N-1}$\qquad $\delta=0$
	%\item $J(x)=\cos^{N-1}(\sqrt{\delta}x)$\qquad $\delta>0$
	%\item $J(x)=\cosh^{N-1}(\sqrt{|\delta|}x)$\qquad $\delta<0$
	%\item $J(x)=\sinh^{N-1}(\sqrt{|\delta|}x)$\qquad $\delta<0$
%\end{enumerate}.

\subsection{The Pr\"{u}ffer transformation}

\begin{defn} Let $p\in (1,\infty)$, we define the positive number $\pi_p$ by
\[ \gls{pi_p}:=\int_{-1}^{1}\frac{ds}{(1-s^p)^{\frac{1}{p}}}=\frac{2\pi}{p\sin(\frac{\pi}{p})}\,,\]
and  we implicitly define the function $\sin_p(x)$ by
\eq{ &x=\int_0^{\gls{sin_p}}\frac{1}{(1-s^p)^{\frac{1}{p}}}ds \qquad \text{ if } x\in [-\pip, \pip]\\
    &\sin_p(x)=\sin_p(\pi_p-x) \qquad \text{ if } x\in [\pip, \frac{3\pi_p}{2} ] \,. 
}

This function was defined on $[-\pip, \frac{3\pi_p}{2}]$, but extends to a $C^1$ function on $\R$  by periodicity. We set $\gls{cos_p}:=\deriv{}{x}\sin_p(x)$. The usual trigonometric identity $|\sin_p(x)|^p+|\cos_p(x)|^p=1$ holds. From this one can conclude that \blue{$\cos_p^{(p-1)}(x):=|\cos_p(x)|^{p-2}\cos_p(x)\in C^1(\R)$}.

The reader is referred to \cite{Girg} for a comprehensive discussion about further differentiability properties of these functions (in particular lemma 4.3).

%From this one can conclude that $\cos_p^{(p-1)}(x)\in C^1(\R)$. 
\end{defn}
\bigskip

Consider the equation
\[ \deriv{}{x}(J(x) f^{\prime(p-1)}(x))+\lambda J(x) f^{(p-1)}(x)=0\, \]
on $[a,b]$, where $J\in C^1([a,b], \R_+^*)$. Equivalently, defining $T(x):=-(\log(J(x)))'$, we consider the equation 
\[ (p-1)f^{\prime(p-2)}(x)f''(x)-T(x)f^{\prime(p-1)}(x))+\lambda f^{(p-1)}(x)=0\,.\]
The Pr\"{u}ffer transformation turns this  equation  into two 1st order equations in two variables $(e,\phi)$ (the radial and the polar variable respectively). 

To this end set $\gls{alpha_lambda}:=\prnt{\frac{\lambda}{p-1}}^{\frac{1}{p}}$ (a fixed constant) and define $e(x)$ and $\phi(x)$ implicitly by the following relations:
\[ \alpha f(x):=e(x) \sin_p(\phi(x)) \qquad f'(x):=e(x) \cos_p(\phi(x))\,. \]
According to this definition $e(x)=(f'(x)^p+\alpha^p f(x)^p)^{\frac{1}{p}}$ and for $\phi(x)\in [-\pip, \pip]$ we can write \blue{ $\phi(x)=\arctan_p(\frac{\alpha f(x)}{\dot{f}(x)})$}. 

By taking derivatives one can verify that $\phi(x)$ and $e(x)$ satisfy the following ODEs:
\eql{\label{eqn:PolarEquations}\phi'(x)&=\alpha -T(x)\Theta(\phi(x))\qquad \text{where} \quad\Theta(\phi(x)):=\frac{1}{p-1}\cos_p^{p-1}(\phi(x))\sin_p(\phi(x))\,\,\text{and}\\
e'(x)&=\frac{T(x)e(x)}{p-1}\cos_p^p(\phi(x))\,.
}
The points $x\in [a,b]$ where $\phi(x)\in \{\frac{\pi_p}{2}+\pi_p k : k\in \mathbb{Z}\}$ correspond to the zeros of $f'(x)$.

\subsubsection{The statement and formulation of the problem}

We recall the definitions in \ref{defn:Dreg}. Given $K\in \R$ and $N\in (-\infty,0]\cup (1,\infty]$, the parametric domain of regularity is defined by
\blue{
$$\D^{reg}_{(K,N,D)}:=\{ (\hfrak, d)\in \R\times (0,D_{\delta}): \,\, [-\frac{d}{2}, \frac{d}{2}]\subset int\prnt{ \isupp\prnt{ J_{K,N,\hfrak}  }} \}\,. $$
}
We denote by $\pi_{\hfrak},\pi_d:\D^{reg}_{(K,N,D)}\to \R$ the natural projections defined by:
\eq{ &\pi_{\hfrak}(\hfrak_0, d_0)=\hfrak_0 \qquad \text{and}\qquad
\pi_{d}(\hfrak_0, d_0)=d_0 \,.}
We also define the measure valued map $\xi_{(K,N)}: \D^{reg}_{(K,N,D)}\to \Fkndreg\subset \Fknd^M$, by $\xi_{(K,N)}(\hfrak, d)=J_{K,N,\hfrak}1_{[-\frac{d}{2}, \frac{d}{2}]} \cdot m$. We set  $\lamp:\D^{reg}_{(K,N,D)}\to \R_+$  by $\lamp(\hfrak, d):=\Lambda_{Poi}^{(p)}(\xi_{(K,N)}(\hfrak, d))$. 
Recall also the definition of $\Fkndreg$ given in \red{Definition} \ref{defn:Dreg}. We have the equality

$$\Lambda^{(p)}_{Poi}\prnt{\Fkndreg}=\Lambda^{(p)}_{Poi}\prnt{\xi_{(K,N)}\prnt{\D^{reg}_{(K,N,D)}}}\qquad\prnt{=\lamp\prnt{\D^{reg}_{(K,N,D)}}}\,.$$

As for the 2-Laplacian we solve the optimization problem of finding $\inf_{\xi\in \Fkndreg} \Lambda^{(p)}_{Poi}(\xi)$, or equivalently 
$$\text{Find:}\qquad \inf_{(\hfrak, d)\in \D^{reg}_{(K,N,D)}}\lamp(\hfrak, d)\,.$$

%
%
%These observations motivate the following definitions.
%\begin{defn}[$\D^{reg}_{(K,N,D)}$,\,$\xi_{(K,N),r}$,\,$\FF^{reg}_{(K,N,D)}$]\label{defn:Dreg}
 %Assume $K\in \R$, $N\in(-\infty,\infty]$ and $D\in (0,\infty]$. We define the 'parametric domain of regularity':
%$$\D^{reg}_{(K,N,D)}:=\{ (\hfrak, d)\in \R\times (0,D]: \,\, [-\frac{d}{2}, \frac{d}{2}]\subset \prnt{ \zfrak_{-}(J_{K,N,\hfrak}),\zfrak_{+}(J_{K,N,\hfrak})} \}\,. $$
%We denote by $\pi_{\hfrak},\pi_d:\D^{reg}_{(K,N,D)}\to \R$ the natural projections defined by:
%\eq{ &\pi_{\hfrak_0}\D^{reg}_{(K,N,D)}=\{d:\,(\hfrak_0, d)\in \D^{reg}_{(K,N,D)}\}\\&
%\pi_{d_0}\D^{reg}_{(K,N,D)}=\{\hfrak:\,(\hfrak, d_0)\in \D^{reg}_{(K,N,D)}\} }
%For $r\in \R$ and $c>0$ we define $$\xi_{(K,N),r,c}: \D^{reg}_{(K,N,D)}\to \Fknd^M$$  by 
%$$\xi_{(K,N),r,c}(\hfrak, d)(x):=cJ_{K,N,\hfrak}(x+r)1_{[-\frac{d}{2}, \frac{d}{2}]}(x+r) \cdot m\,,$$
%
%and set 
%$$\FF^{reg}_{(K,N,D)}:=\R_+^*\cdot \prnt{\bigcup_{r\in\R}\xi_{(K,N),r}\prnt{\D^{reg}_{(K,N,D)}}}\,.$$ 
%where $\R_+^*:=\{c\in \R:\, c>0\}$. 
%This is a subset of $\Fknd(\R)$. We will refer to it as the 'domain of regularity'. 
%\end{defn}

\bigskip

The following result is analogous to Theorem  \ref{thm:SpectralMonotonicity} 
regarding the monotonic dependence of the 2-\Poinc constant on the parameters $\hfrak$ and $d$. 
Naber and Valtorta proved  in \cite{Val,NaVa} that for $K\leq 0$ when $N=n$ (i.e. $\mu=\mu_g$) the solution to the optimization problem will have $\hfrak=0$ (i.e. symmetric density around the origin); 

Matei \cite{Mat} proved a \blue{Lichnerowicz-type estimate} (\red{i.e.} $CD_b(K,N)$ estimate, which is independent of $D$) for the range $K>0$. The statement below generalizes the Naber-Valtorta result to $N\geq n$, $K\in\R$, $D\in\R$ ($CDD_b(K,N,D)$ conditions), and shows how $\lamp(\hfrak, d)$ depends {\bf monotonically} on $\hfrak$. The main result of this chapter is the following:

\begin{thm} \label{thm:SpectralMonotonicity_p}
\begin{enumerate}
   \item For any fixed $\hfrak_0\in \pi_{h}\D^{reg}_{(K,N,D)}$, the function $d\mapsto \lamp(\hfrak_0, d)$ on $\pi_{\hfrak}^{-1}(h_0)$ decreases as $d$ increases.
    \item
        For any fixed $d_0\in \pi_{d}\D^{reg}_{(K,N,D)}$ the function $\hfrak\mapsto \lamp(\xi_{(K,N)}(\hfrak, d_0))$ on  $\pi_{d}^{-1}(d_0)$ monotonically increases as $|\hfrak|$ increases if $N\in  (1,\infty]$.
\end{enumerate}
\end{thm}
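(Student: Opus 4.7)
The strategy is to mirror the two-part proof of Theorem~\ref{thm:SpectralMonotonicity}: part~(1) will follow from the general diameter-monotonicity lemma, while part~(2) will use a Prüffer-angle adaptation of Kröger's diameter-comparison method. For part~(1), I apply Lemma~\ref{D_monotonicity} with $g(y,r)=|y+r|^p$. The continuity of $f\mapsto v^{(g)*}_f(\bar\xi_0)$ in $\|\cdot\|_{L^\infty(I_0)}$ was established in Example~\ref{exmp:p_Poinc} and Remark~\ref{remk:D_monotonicity}, and the density $J_{K,N,\hfrak}$ is locally bounded on the interior of its essential support, so the hypotheses of the lemma apply to any restriction of $\xi_{(K,N)}(\hfrak_0,d)$ to a sub-interval.

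For part~(2), I first use the dictionary \eqref{DensityExpressions_0}--\eqref{DensityExpressions} to convert variations over $\hfrak$ into translations $s$ of a fixed model density $Y_0$, with $|\hfrak|\nearrow$ corresponding (up to reflection symmetry) to $|s|\nearrow$; the case $N=\infty$ is then recovered from $N\in(1,\infty)$ by the same continuity-in-$N$ argument as Proposition~\ref{prop:case:infty}. After translation, $\lamp(\hfrak,d_0)$ equals the first positive eigenvalue $\lambda_{1,s}$ of the $p$-Laplacian Neumann BVP on $[-\tfrac{d_0}{2}+s,\tfrac{d_0}{2}+s]$ with weight $Y_0$. Applying the Prüffer transformation of the previous subsection turns this BVP into the scalar first-order ODE
\begin{equation*}
\phi'(x) = \alpha_{1,s} - T(x)\Theta(\phi(x)),\qquad \phi(-\tfrac{d_0}{2}+s)=-\tfrac{\pi_p}{2},\quad \phi(\tfrac{d_0}{2}+s)=\tfrac{\pi_p}{2},
\end{equation*}
with $T=-(\log Y_0)'$ and $\alpha_{1,s}=(\lambda_{1,s}/(p-1))^{1/p}$. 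Under the technical assumption of Subsection~\ref{Properties:pLap_egns}, $s\mapsto\lambda_{1,s}$ is $C^1$, so for $s'=s+\epsilon$ the same-$\alpha_{1,s}$ Prüffer IVP starting from $-\tfrac{\pi_p}{2}$ at $-\tfrac{d_0}{2}+s'$ first reaches $\tfrac{\pi_p}{2}$ at a point $\tfrac{d_0}{2}+s'+\epsilon_{s'-s}$ with $\epsilon_{s'-s}=o(1)$ as $\epsilon\to 0$; by part~(1) of the theorem, $\lamp(s',d_0)\geq\lamp(s,d_0)$ if and only if $\epsilon_{s'-s}$ has the correct sign.

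It remains to determine this sign by an ODE comparison on the Prüffer angle. Translating the perturbed IVP back to start at $-\tfrac{d_0}{2}+s$, one compares $\phi_s$ (solving $\phi'=\alpha-T(x)\Theta(\phi)$) with the shifted solution $\bar\phi_s(x):=\phi_{s'}(x+(s'-s))$ (solving $\phi'=\alpha-T(x+(s'-s))\Theta(\phi)$ with the same starting value). For $s\geq 0$, the perturbation $T(x+(s'-s))-T(x)\approx(s'-s)T'(x)$ has a definite sign on each half-interval: $T$ is odd in all four cases (a),(b1),(c1),(c2) of \eqref{DensityExpressions_1}, and direct computation (compare \eqref{exp:H0_2}) gives $T'>0$ on $(0,\tfrac{l_\delta}{2})$ in cases (a),(c1) and $T'<0$ there in cases (b1),(c2). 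Combining this with the reflection-symmetry argument of the $p=2$ proof (which uses Sturm's separation theorem and the fact that $\phi_s$ is strictly monotone, being the first Prüffer angle) one extracts the required sign of $\epsilon_{s'-s}$, and hence the $|\hfrak|$-monotonicity.

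The main obstacle is precisely this last step: in the $p=2$ case the sign was captured cleanly by Picone's identity applied to the linear Liouville-transformed equation \eqref{ODE1}, but for general $p\in(1,\infty)$ the Liouville transformation is not available and the comparison must be carried out directly on the nonlinear first-order Prüffer ODE. My plan is to perform a Grönwall-type comparison on $\psi:=\phi_s-\bar\phi_s$, which satisfies a linear ODE of the form $\psi'+T(x)\Theta'(\bar\phi_s+\theta\psi)\psi=(T(x+(s'-s))-T(x))\Theta(\bar\phi_s)$ for some intermediate $\theta\in[0,1]$, and to extract the sign of $\psi(\tfrac{d_0}{2}+s)$ by the same integral-sign analysis used in \eqref{eqn:signatureIntegral}---this time with $\Theta(\bar\phi_s)$ playing the role of $\Psi_s^2$. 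This parallels in spirit the Prüffer-based argument of Naber--Valtorta~\cite{NaVa} for $K\leq 0$, $N=n$, and its adaptation to both signs of $K$ and to $N\in(1,\infty]$ (in particular to both signs of $T'$) will constitute the technical core of the proof.
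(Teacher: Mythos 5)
Part (1) of your proposal is identical to the paper's argument: both apply Lemma~\ref{D_monotonicity} with $g(y,r)=|y+r|^p$, citing Example~\ref{exmp:p_Poinc} and Remark~\ref{remk:D_monotonicity} to verify the hypotheses. The setup for part (2) — passing from $\hfrak$ to translations $s$, reducing $N=\infty$ to $N\in(1,\infty)$ by continuity as in Proposition~\ref{prop:case:infty}, and invoking the technical assumption to justify the diameter-comparison framework — also matches the paper.

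The genuine divergence, and the genuine gap, is in the sign analysis in part (2). You propose to compare $\phi_s$ and $\bar\phi_s=\phi_{s'}(\cdot+(s'-s))$ directly by writing a linear inhomogeneous ODE for $\psi=\phi_s-\bar\phi_s$ and controlling the sign of $\psi(\tfrac{d_0}{2}+s)$ by an integral representation. The forcing term in that ODE is (to first order) $(s'-s)T'(x)\,\Theta(\bar\phi_s(x))$, weighted by a positive integrating factor. The trouble is that $\Theta(\phi)=\tfrac{1}{p-1}\cos_p^{p-1}(\phi)\sin_p(\phi)$ changes sign at $\phi=0$, i.e.\ at the unique interior zero of the first eigenfunction. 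Even though $T'$ has fixed sign on the relevant half-interval (as you correctly note), the product $T'(x)\Theta(\bar\phi_s(x))$ therefore changes sign once, and your proposed analogue of \eqref{eqn:signatureIntegral} — with $\Theta(\bar\phi_s)$ ``playing the role of $\Psi_s^2$'' — breaks down: in the $p=2$ Picone argument the weight $\Psi_s^2$ was \emph{non-negative}, and the reflection argument via Sturm's separation theorem controlled the odd part of $H_0'$ against that non-negative weight; there is no such non-negativity here, and no Sturm-separation analogue for the nonlinear first-order Prüffer ODE. You flag this as ``the technical core,'' but the Grönwall plan as stated does not yet supply the missing sign control.

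The paper resolves exactly this difficulty by working not with $\phi_{\alpha,s}(x)$ but with its inverse $\varphi_{\alpha,s}(y)=\phi_{\alpha,s}^{-1}(y)$, which is legitimate because Lemma~\ref{lem:p_lap_phi_mono} shows $\phi_{\alpha,s}$ is strictly increasing. After inversion, $\varphi_{\alpha,s}$ satisfies $\varphi'=F(y,\varphi)$ with $F(y,w)=(\alpha-T(w)\Theta(y))^{-1}$, and the $s$-derivative $D_s[\varphi_{\alpha,s}]$ satisfies a \emph{homogeneous} linear ODE with coefficient $G_1(y)=F(y,\varphi)^2\,T'(\varphi)\,\Theta(y)$. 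The key is that now $\Theta$ is evaluated at the \emph{independent} variable $y$, not at the Prüffer angle of the solution, and $\Theta(y)\geq 0$ on $[0,\pi_p/2]$. Comparing $G_1$ with the coefficient $G_2$ coming from the reflected inverse $\bar\varphi_{\alpha,s}(y)=-\varphi_{\alpha,s}(-y)$, and matching the two at $y=0$, the sign of $D_s r_s$ reduces to the sign of $\int_0^{\pi_p/2}(G_1-G_2)\,dy$, and $G_1-G_2$ has a \emph{pointwise} definite sign because of the fixed sign of $\Theta(y)$ together with the monotonicity and sign properties of $T'$ and $F$ on the relevant domain (properties (a)--(d) in the paper's proposition). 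This inverse-function maneuver is the ingredient your plan is missing; without it, the sign-changing factor $\Theta(\bar\phi_s(x))$ obstructs the direct comparison.
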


\par{
We remind the reader that here we identify $\pi_{d}^{-1}(d_0)$ (resp. $\pi_{\hfrak}^{-1}(\hfrak_0)$) with the set of points $\pi_{\hfrak}\prnt{\pi_{d}^{-1}(d_0)}=\{\hfrak: \, (\hfrak, d_0)\in D^{reg}_{(K,N,D)}\}$ (resp. $\pi_{d}\prnt{\pi_{\hfrak}^{-1}(\hfrak_0)}=\{d: \, (\hfrak_0, d)\in D^{reg}_{(K,N,D)}\}$).
In \eqref{DensityExpressions} we interpreted the variation over the parameter $\hfrak$ as translation of a fixed density (with the exception of the case (d2)). Thanks to the technical \pinka{assumption} listed in Subsection \ref{subsec:p_SL_Theory}, specifically the continuous dependence of the eigenvalues on the weight w.r.t. the SL-BVP metric, \blue{in analogy} to the proof of Theorem \ref{thm:SpectralMonotonicity}, \blue{it suffices} to prove the statements of Theorem \ref{thm:SpectralMonotonicity_p} for $N<\infty$, and without consideration of the exceptional cases (b2) and (c3). }

Then the only dictionary we need for translating from the parameter $\hfrak$ to $s$ and vice-versa is given by the following table: 
{\footnotesize
\eql{\label{hfrak_s} 
&J_{K, N, \hfrak}(x)=(\co_{\delta}(x)+\frac{\hfrak}{N-1} \si_{\delta}(x))_+^{N-1} =g_s\cdot Y_{K,N,\hfrak}(x+s) \qquad \text{where}\\
&g_s=\begin{cases}
\frac{1}{\cos(s)^{N-1}} \quad&\text{with}\quad s=-\tan^{-1}(\frac{\hfrak}{(N-1)\sqrt{\delta}})\quad \\
 \frac{1}{s^{N-1}} \quad&\text{with}\quad s=\frac{N-1}{\hfrak}  \\    
 \frac{1}{\cosh(s)^{N-1}} \quad&\text{with}\quad s=\tanh^{-1}(\frac{\hfrak}{(N-1)\sqrt{-\delta}}) \\         
 \frac{1}{\sinh(s)^{N-1}} \quad&\text{with}\quad s=\coth^{-1}(\frac{\hfrak}{(N-1)\sqrt{-\delta}}) \\        
% \end{cases}
\end{cases}\,,
Y_{K,N,\hfrak}(x)=\begin{cases}
 \cos(\sqrt{\delta} x)^{N-1} & \,\,\mbox{(1) if } \delta >0\\
 x^{N-1}_+ & \,\,\mbox{(2) if } \delta =0\\
 \cosh(\sqrt{-\delta} x)^{N-1} & \mbox{ (3) if } \delta <0 \text{ and } |\frac{\hfrak}{N-1}|<1 \\
 \sinh(\sqrt{-\delta} x)^{N-1}_+ & \mbox{ (4) if } \delta <0 \text{ and } |\frac{\hfrak}{N-1}|> 1 \\
% \end{cases}
\end{cases}\,.
}
}
We define $T(x):=-\log\prnt{J_{K,N,\hfrak}(x)}'$; \blue{explicitly,} $T(x)$ corresponds to one of the following functions: 
\eql{\label{eqn:Tx} }
\begin{enumerate}
  \item $(N-1)\sqrt{|\delta|}\tan(\sqrt{|\delta|}x)$ with $x\in (-\frac{\pi_p}{2\sqrt{\delta}},\frac{\pi_p}{2\sqrt{\delta}})$ when $\delta>0$.
	\item $-(N-1)\frac{1}{x}$ with $x\in(0,\infty)$ when $\delta=0$.
	\item $-(N-1)\sqrt{|\delta|}\tanh(\sqrt{|\delta|}x)$ when $\delta<0$ and $|\frac{\hfrak}{N-1}|<1$.
	\item $-(N-1)\sqrt{|\delta|}\coth(\sqrt{|\delta|}x)$ with $x\in(0,\infty)$ when $\delta<0$ and $|\frac{\hfrak}{N-1}|> 1$.
\end{enumerate} 
We set $$T_s(x):=T(x+s)\,,$$ where throughout  \red{we only consider translations by $s\in\R$ s.t. }
\blue{
$$
  [s-\frac{d}{2}, s+\frac{d}{2}]\subset int\prnt{\isupp\prnt{ Y_{K,N,\hfrak}  }}\,.$$
  }
Furthermore, we will also assume $s(\hfrak)>0$ in all cases specified in \eqref{eqn:Tx}, considering that in cases 1 and 3\blue{, $T(x)$ is an even function,  and in cases 2 and 4, the function $T(x)$ is defined on $(0,\infty)$}.
   
Let $f(x)$ be the solution to the BVP:
\eql{\label{eqn:p_Poinc_f} (p-1)f^{\prime(p-2)}(x)f''(x)-T_{s}(x)f^{\prime(p-1)}(x)+\lambda f^{(p-1)}(x)=0\qquad f'(-\frac{d}{2})=f'(\frac{d}{2})=0\,.}
Using the Pr\"{u}ffer transformation we write it as \red{
\eq{&\phi'(x)=\alpha -T_{s}(x)\Theta(\phi(x))\qquad \phi(-\frac{d}{2})\stackrel{Mod\,\,\pi_p}{=}\phi(\frac{d}{2})\stackrel{Mod\,\,\pi_p}{=}\pip\,, 
\\ & e'(x)=\frac{T(x)e(x)}{p-1}\cos_{p}^p(\phi)\,,\\
&\text{where}\\
&\Theta(\phi(x))=\frac{1}{p-1}\cos_p^{p-1}(\phi(x))\sin_p(\phi(x))\,\quad \text{and}\qquad \alpha=\prnt{\frac{\lambda}{p-1}}^{\frac{1}{p}}\,.
}
}
\subsubsection{Monotonicity of the polar functions}

\bigskip 
Our analysis will consider the inverses of the polar functions $\phi(x)$, hence in order to justify this approach we prove that they  are monotonic between two consecutive zeros of $f'$.

\begin{lem}\label{lem:p_lap_phi_mono} The polar function $\phi(x)$ which is associated with $f(x)$ (which solves \eqref{eqn:p_Poinc_f}) is strictly increasing on $[-\frac{d}{2}, \frac{d}{2}]$. 
\end{lem}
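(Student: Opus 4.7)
The plan is to combine the $p$-Sturm-Liouville spectral data about the first non-zero eigenfunction with a sign analysis of the Prüffer ODE $\phi' = \alpha - T_s(x)\Theta(\phi)$, where $\Theta(\phi) := \tfrac{1}{p-1}\cos_p^{p-1}(\phi)\sin_p(\phi)$.

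First I would translate the spectral information into properties of $\phi$. By properties (2)--(3) of Subsection \ref{Properties:pLap_egns}, the eigenfunction $f$ realizing the first non-zero eigenvalue of the Neumann BVP \eqref{eqn:p_Poinc_f} is simple, has exactly one zero in $(-d/2, d/2)$, and is (up to sign) strictly monotone on that interval; in particular $f'$ does not vanish in the interior, while $f'(\pm d/2) = 0$ by the Neumann condition. The Prüffer radius $e(x) = (|f'(x)|^p + \alpha^p |f(x)|^p)^{1/p}$ is strictly positive because $f$ and $f^{\prime(p-1)}$ cannot vanish simultaneously, by uniqueness for the IVP associated to \eqref{p_Lap_SL_eqn}. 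From $f' = e\cos_p(\phi)$ and $\alpha f = e\sin_p(\phi)$ one reads off $\cos_p(\phi) \neq 0$ on $(-d/2, d/2)$ and $\cos_p(\phi(\pm d/2)) = 0$, so $\phi(\pm d/2) \equiv \pi_p/2 \pmod{\pi_p}$; evaluating the Prüffer ODE at the endpoints (where $\Theta$ vanishes) gives $\phi'(\pm d/2) = \alpha > 0$. Fixing the branch by $\phi(-d/2) = \pi_p/2$, continuity together with $\cos_p(\phi) \neq 0$ confines $\phi$ to $(\pi_p/2, 3\pi_p/2)$ on $(-d/2, d/2)$; and the unique interior zero of $f$ pins down a unique $x_0$ with $\phi(x_0) = \pi_p$, forcing $\phi(d/2) = 3\pi_p/2$.

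Second, I would upgrade this to strict monotonicity by ruling out interior critical points of $\phi$. At any such $x_* \in (-d/2, d/2)$ the ODE gives $\alpha = T_s(x_*)\Theta(\phi(x_*)) > 0$, and differentiating once more yields
\[
\phi''(x_*) = -T_s'(x_*)\,\Theta(\phi(x_*)).
\]
The sign of $\Theta(\phi)$ is constant on each of the two sub-intervals identified above, negative on $(\pi_p/2, \pi_p)$ and positive on $(\pi_p, 3\pi_p/2)$. The sign of $T_s'$ is also constant on the natural domain of each of the four forms of $T_s$ listed in \eqref{eqn:Tx}, determined by $\mathrm{sgn}(N-1)$. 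Hence in each combination of sub-interval and case, $\phi''(x_*)$ has a fixed sign, and by the second derivative test every interior critical point of $\phi$ in a given sub-interval is a strict extremum of the same type. Combined with $\phi'(\pm d/2) = \alpha > 0$, this rules out all such critical points: if every critical point in the sub-interval were a strict local minimum, the first one $x_1$ would require $\phi' < 0$ immediately before it, contradicting $\phi' > 0$ on $(-d/2, x_1)$; if every critical point were a strict local maximum, the first one would be followed by a decrease which then must reverse in order to reach the right sub-interval boundary value, producing a strict local minimum downstream and contradicting the uniform extremum type. Together with $\phi'(\pm d/2) = \alpha > 0$, this yields $\phi'(x) > 0$ on all of $[-d/2, d/2]$.

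The main obstacle will be the sign book-keeping in the second step: one must verify that $\Theta$ and $T_s'$ indeed have constant signs on the relevant domains for each of the four cases of \eqref{eqn:Tx} and both sub-intervals of $\phi$, and that the dichotomy ``all-min versus all-max'' in fact covers every resulting sign combination. A degenerate point with $\phi''(x_*) = 0$ would require inspecting higher-order derivatives, but the analyticity of $T_s$ and $\Theta$ on each sub-interval reduces this to a finite-order check that still produces the required sign obstruction.
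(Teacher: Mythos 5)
Your plan follows the same overall strategy as the paper's proof — contradiction via the second-derivative test at interior critical points of the phase $\phi$ — but two genuine gaps remain.

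\textbf{Step 1 is not justified by the cited properties.} You assert that the first eigenfunction $f$ ``is (up to sign) strictly monotone... in particular $f'$ does not vanish in the interior'' and attribute this to properties (2)--(3) of Subsection \ref{Properties:pLap_egns}. Those properties only state simplicity and the zero count of $u_n$; they say nothing about the zeros of $f'$. In fact non-vanishing of $f'$ in the interior is (modulo $e>0$) equivalent to $\cos_p(\phi)\neq 0$ there, which is a consequence of, not an ingredient for, the lemma. The paper never needs to pin down the range of $\phi$: the range of $\phi(x_*)$ at a critical point is controlled automatically by the critical-point equation $\alpha = T_s(x_*)\Theta(\phi(x_*))$, which forces $\Theta(\phi(x_*))\neq 0$ and in fact $\Theta(\phi(x_*)) = \alpha/T_s(x_*)$. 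Substituting this gives the clean formula
\[
\phi''(x_*) = -\frac{\alpha}{T_s(x_*)}\,T_s'(x_*)\,,
\]
whose sign depends only on $x_*$ (through $T_s'/T_s$), not on the $\phi$-sub-interval at all. This is the paper's key move: it collapses your sub-interval bookkeeping into a straightforward sign analysis on $x_*$ (split by $\mathrm{sgn}(x_*+s)$ in cases (1) and (3), unsplit in cases (2) and (4)), and it also kills the degeneracy worry you raise at the end, since $T_s'\neq 0$ in all four cases makes $\phi''(x_*)\neq 0$ automatically.

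\textbf{The ``all-min versus all-max'' dichotomy in Step 2 is not closed.} In cases (1) and (3), when $-s\in(-d/2,d/2)$, the critical points are genuinely mixed: local minima to the left of $-s$, local maxima to the right. Your dichotomy, as phrased, is applied to ``the'' sub-interval, implicitly assuming the first critical point $x_1$ and the downstream local minimum $x_2$ produced by the ``all-max'' branch live in the same $\phi$-sub-interval. That is true, but it is because $\phi' = \alpha > 0$ at every level $\phi\equiv 0\pmod{\pi_p/2}$ (so $\phi$ cannot cross a sub-interval boundary while decreasing) — an observation you do not state. The paper sidesteps this entirely: for cases (2),(4) every critical point is a local minimum so already the first one $x_0=\min S$ contradicts $\phi'(-d/2)=\alpha>0$; for cases (1),(3) the same argument shows there are no critical points on $[-d/2,-s]$, and then the last critical point $x_1=\max S$ lies in $(-s,d/2]$, is a local maximum, and contradicts $\phi'(d/2)=\alpha>0$. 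Your proof can be repaired along these lines, but as written it relies on unproven facts in Step 1 and glosses over the mixed case in Step 2.
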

\begin{proof} Denote by $S=\cprnt{x\in [-\frac{d}{2}, \frac{d}{2}]:\,\,\phi'(x)=0}$ the set of critical points of $\phi$. Assume by contradiction that $S\neq \emptyset$. By \eqref{eqn:PolarEquations} at every $x_*\in S$ holds  the equation:
\eql{\label{eqn:PolarEqnDeriv} 0=\phi'(x_*)=\alpha-T(x_*+s)\Theta(\phi(x_*))\,. }
Since $\alpha>0$ we conclude that $\Theta(\phi(x_*))\neq 0$ and $T(x_*+s)\neq 0$. In addition 
\[ \phi''(x_*)=-T'(x_*+s)\Theta(\phi(x_*))-T(x_*+s)\Theta(\phi(x_*))\phi'(x_*)=-T'(x_*+s)\Theta(\phi(x_*))\,.\]
By \eqref{eqn:PolarEqnDeriv} $\Theta(\phi(x_*))=\frac{\alpha}{T(x_*+s)}$ whence
\[ \phi''(x_*)= -\frac{\alpha}{T(x_*+s)}T'(x_*+s)\,.\]
Since  $T'(x_*+s)\neq 0$ in cases 1-4 of \eqref{eqn:Tx}, we conclude that there are no  inflection points.  By \eqref{eqn:PolarEqnDeriv} at the endpoints $\phi'(-\frac{d}{2})=\phi'(\frac{d}{2})=\alpha>0$. Set $x_0=\min S$ and $x_1=\max S$, and notice that:
 
 %, this implies that the point $x_0$ can be at most a local maximum but not a minimum. In view of \ref{eqn:Tx} we consider cases 1-4:
\begin{myitemize}
	\item[Cases 2 and 4] In case 2: $\phi''(x_*)=\frac{\alpha}{x_0+s}>0$, and in case 4: $\phi''(x_*)=\frac{\alpha}{\si_{\delta}(x_0+s)\co_{\delta}(x_*+s)}>0$, since $x_*+s>0$; in particular any critical point $x_*$ must be a local minimum; however since $\phi'(-\frac{d}{2})=\alpha>0$ it must hold that $\phi''(x_0)<0$, which is a contradiction. 
  \item[Cases 1 and 3] By \eqref{eqn:PolarEqnDeriv} we know that $-s\notin S$; in addition at a critical point $x_*$: $\phi''(x_*)=-\frac{\alpha}{\si_{\delta}(x_*+s)\co_{\delta}(x_*+s)}$. Therefore if $x_*<-s$ then $\phi''(x_*)>0$ and if $x_*>-s$ then $\phi''(x_*)<0$.
  Then
  \begin{itemize}
      \item If $0\notin (-\frac{d}{2}+s, \frac{d}{2}+s)$, considering that for every $x_*\in S$ it holds that $\phi''(x_*)<0$, we conclude that in particular $\phi''(x_1)<0$, which is impossible since  $\phi'(\frac{d}{2})=\alpha>0$. 
      \item  If $0\in (-\frac{d}{2}+s, \frac{d}{2}+s)$, then considering that for any $x_*\in S$ s.t. $x_*<-s$ it holds that $\phi''(x_*)>0$, and that
      $\phi'(-\frac{d}{2})>0$ we conclude that $\phi'(x)>0$ on $[-\frac{d}{2}, -s]$ (recall $-s\notin S$). Therefore
      $x_1>-s$, but then $\phi''(x_1)<0$, 
      which as we mentioned above, contradicts that   $\phi'(\frac{d}{2})=\alpha>0$.
  \end{itemize}
	\end{myitemize}
	We conclude that $S=\emptyset$; this implies that  $\phi(x)$  is strictly increasing on $[-\frac{d}{2}, \frac{d}{2}]$. 
 
	%\item In this case $\phi''(x_0)=-\frac{\alpha}{\sin(x+s)\cos(x+s)}=-\frac{2\alpha}{\sin(2(x+s))}$. Since $x_0$ must be a local-maximum, $x_0+s>0$, however this would imply that at any subsequent critical point $x_1>x_0$ it holds that $\phi''(x_1)<0$, which is impossible. Hence $\phi(x)$ must be decreasing for $x>x_0$, however since $\phi'(\frac{d}{2})>0$ this is impossible. 
	%\item In this case $\phi''(x_0)=-\frac{\alpha}{x_0+s}<0$ (since $x_0+s\in (0,\infty)$. Like in the previous case this implies that $\phi'(x)<0 $ on $x_0<x$, which is a contradiction to $\phi'(\frac{d}{2})>0$ .
	%\item In this case $\phi''(x_0)=\frac{\alpha\sqrt{|\delta|}}{\sinh(x_0+s)\cosh(x_0+s)}$.
%
	%\item In this case $\phi''(x_0)=-\frac{\alpha}{\sinh(x+s)\cosh(x_s)}$.
	%As we remarked, $x_0$ must be a local maximum. However since $a>0$ it holds that $x_0+s>0$, implying that $\phi''(x_0)>0$, which is a contradiction.

\end{proof}

%For each fixed $s=s_0$ we would like to determine how $T_{s_0}(x)=\lamp(J_{k,N,\hfrak(s_0)}1_{[-\frac{d}{2}, \frac{d}{2}]})$ changes as we vary $s$ in a a small neighborhood of $s_0$. 

\subsubsection{Formulation of the problem}

For $s=s_0+\epsilon>s_0$ with $\epsilon$ sufficiently small so that $(\hfrak(s_0+\epsilon),d)\in D^{reg}_{(K,N,D)}$ the solutions $f_{s}(x)$ to the IVPs (with the same $\lambda$)
\eql{\label{eqn:p_lap_s_func} (p-1)f_{s}^{\prime(p-2)}f''_{s}(x)-T_{s}(x)f_{s}^{\prime(p-1)}+\lambda f_{s}^{(p-1)}=0\qquad f_{s}'(-\frac{d}{2})=0,\, f_{s}(-\frac{d}{2})=-1}
are such that $r_{s}:=\inf\{r\in \R:\, f'_{s}(\frac{d}{2}+r)=0\}<\infty$, and $r_s\to 0$ as $\epsilon \to 0$. 

Under the technical assumption stated in \blue{Subsection} \ref{Properties:pLap_egns}, this can be justified almost exactly as for the 2-Laplacian\blue{,  following} the arguments in the proof of Theorem \ref{cont:SL_Eval}. Indeed the theorem relies on continuous differentiability of the eigenvalues in the weight, explicit expressions for the derivatives, and on continuous dependence of the eigenfunctions on the weight. These, as we stated in Subsection \ref{Properties:pLap_egns}, \blue{were} verified in  \cite{MYZ} for a similar problem\blue{, yet, the details need to be verified for our specific BVP}.

\bigskip

We will show that the p-Laplacian eigenvalues manifest the same dependence on the diameter as the 2-Laplacian; then the same strategy of studying the dependence of the 2-Laplacian eigenvalues on $\hfrak$ can be implemented here, i.e. the inequality\\ 
$\lamp(J_{K,N,\hfrak(s_0)}1_{[-\frac{d}{2},\frac{d}{2}]})\leq \lamp(J_{K,N,\hfrak(s)}1_{[-\frac{d}{2},\frac{d}{2}]})$ (resp. $''\geq''$) holds whenever $r_{s}\geq 0$ (resp. $r_{s}\leq 0$). This will hold for any $s>s_0$ sufficiently close to $s_0$, hence by differentiability of the eigenvalues w.r.t. to the weight we can \blue{determine} what is the sign of $D_{\hfrak}\prnt{\lamp(J_{K,N,\hfrak}1_{[-\frac{d}{2},\frac{d}{2}]})}$. 

\bigskip

%Using the function $\phi_{\alpha, s}(x)=\phi_{\alpha}(x+s)$ (this amounts to the coordinate transformation $x\to x-s$) the polar equation \eqref{eqn:PolarEqn} takes the form:

We rephrase the problem in terms of the polar functions: let $\phi_{\alpha,s_0}$ and $\phi_{\alpha,s}$ ($s$ in a small neighborhood of $s_0$) be the polar functions associated with the functions $f_{s_0}(x-s_0)$  and  $f_s(x-s)$ (where $f_{s_0}(x)$ and $f_s(x)$ are solutions to \eqref{eqn:p_lap_s_func} with $T_{s_0}$ and $T_s$ respectively), and $\alpha=\prnt{\frac{\lambda}{p-1}}^{\frac{1}{p}}$, i.e

\eql{\label{polar_s_0} \phi_{\alpha, s_0}'(x)&=\alpha -T(x)\Theta(\phi_{\alpha, s_0}(x))\qquad \phi_{\alpha, s_0}(s_0-\frac{d}{2})\stackrel{Mod\,\,\pi_p}{=}\phi_{\alpha, s_0}(s_0+\frac{d}{2})\stackrel{Mod\,\,\pi_p}{=}\pip\,,  }

%\eql{\label{polar_s} \phi_{\alpha, s}'(x)&=\alpha -T_{s-s_0}(x)\Theta(\phi_{\alpha, s}(x))\qquad \phi_{\alpha, s}(s_0-\frac{d}{2})\stackrel{Mod\,\,\pi_p}{=}\phi_{\alpha, s}(s_0+\frac{d}{2}+r_s)\stackrel{Mod\,\,\pi_p}{=}\pip  } 
\eql{\label{polar_s} \phi_{\alpha, s}'(x)&=\alpha -T(x)\Theta(\phi_{\alpha, s}(x))\qquad \phi_{\alpha, s}(s-\frac{d}{2})\stackrel{Mod\,\,\pi_p}{=}\phi_{\alpha, s}(s+\frac{d}{2}+r_s)\stackrel{Mod\,\,\pi_p}{=}\pip \,.} 
\red{Considering the monotonicity Lemma \ref{lem:p_lap_phi_mono}, we may assume w.l.o.g. that $\phi_{\alpha, s_0}(s_0-\frac{d}{2})=-\pip$ and $\phi_{\alpha, s_0}(s_0+\frac{d}{2})=\pip$.}
From now on we carry out the analysis via a study of the polar functions. 
%\eql{\label{eqn:PolarEqn} \dot{\phi}_s(x)&=\alpha -T_s(x)\Theta(\phi_s(x))\qquad \phi_s(-\frac{d}{2})\stackrel{Mod\,\,\pi_p}{=}\phi_s(\frac{d}{2})\stackrel{Mod\,\,\pi_p}{=}\pip  }
%and let $\phi_{s+\epsilon}$ be the polar function associated with $f_{\epsilon}$. 

\red{Due to the technical assumption of Subsection \ref{Properties:pLap_egns}, we may approach the problem as follows:}

\red{{\bf  Rephrasing the problem:}} Determine the sign of $r_{s}$ \red{for $s>s_0>0$ in a small neighborhood of $s_0$.}

%assume $\phi(-\frac{d}{2})=-\pip$ and $\phi(\frac{d}{2})=\pip$ 
%In terms of the Pr\"{u}ffer transformation polar function $\phi$ this translates to the following :
%let and by $\phi_s$ the solution to the BVP:
 %
%We consider the following problem for $s$ sufficiently small 
%\eq{ \dot{\phi}_x)&=\alpha -T_{s'}(x)\Theta(\phi_s(x))\qquad \phi_s(-\frac{d}{2})\stackrel{Mod\,\,\pi_p}{=}\phi_s(\frac{d}{2}+\upsilon_s)\stackrel{Mod\,\,\pi_p}{=}\pip  }   

%, therefore $\phi(a)=-\pip \text{ mod } \pi_p $.

%We 
%It is a straightforward consequence of the above lemma that the function $\phi(x)$ admits an inverse $\phi^{-1}:(-\pip, \pip)\to (a,b)$. 

\subsection{Proving Theorem \ref{thm:SpectralMonotonicity_p}}

%\begin{thm} \label{thm:SpectralMonotonicity:p}
%\begin{enumerate}
   %\item For any fixed $h_0\in \pi_{h}\D^{reg}_{(K,N,D)}$, the function $d\mapsto \lamp(\xi(h_0, d))$ on $\pi_{h}^{-1}(h_0)$ strictly decreases as $d$ increases.
    %\item For any fixed $d_0\in \pi_{d}\D^{reg}_{(K,N,D)}$ the function $h\mapsto \lamp(\xi_{(K,N)}(h, d_0))$ on  $\pi_{d}^{-1}(d_0)$ 
        %monotonically increases as $|h|$ increases if $N\in  (1,\infty]$
%\end{enumerate}
%\end{thm}
%
%\bigskip
\begin{proof}[Proof of Theorem \ref{thm:SpectralMonotonicity_p}]
\begin{enumerate}
	\item The statement is a straightforward consequence of Lemma  \eqref{D_monotonicity} with function $g(x,r):=(x+r)^p$; the choice of this function and the verification of the \red{l}emma conditions, are detailed in Example \ref{exmp:p_Poinc} and Remark \ref{remk:D_monotonicity}. 
	
	%; indeed, since $f'(x)$ is invariant under addition of constants $f\mapsto f+r$, and if $r$ minimizes $\inf_{r\in \R}\int |f(x)+r|^p d\xi(x)$ then $\int |f(x)+r|^{p-1}sgn(f(x)+r) d\xi(x)=\int |f(x)+r|^{p-2}(f(x)+r) d\xi(x)=0$ (to justify that the minimizer $r$ is global one can use convexity of $r\mapsto \prnt{\int |f(x)+r|^p d\xi(x)}^{\frac{1}{p}}$). We can thus make the following identification
%\eq{		\inf \left\{ \frac{\int_{\R} |f'(x)|^p\xi(x)}{\int_{\R} |f(x)|^pd\xi(x)}:\,\,0\neq f\in C_c^{\infty}(\R)\, \mbox{ and } \, \int_{\R} h_f(x)d\xi(x)=0 \right\}   =\inf_{f\in C_c^{\infty}(\R)} \left\{ \frac{\int_{\R} |f'(x)|^p\xi(t)}{\inf_{r\in\R}\int_{\R} |f(x)+r|^pd\xi(x)} \right\}   }
%which proves the claim. 

\item We consider the polar \red{equations \eqref{polar_s_0} and \eqref{polar_s}}.
%Using the function $\phi_{\alpha, s}(x)=\phi_{\alpha}(x+s)$ (this amounts to the coordinate transformation $x\to x-s$) the polar equation \eqref{eqn:PolarEqn} takes the form:
%\eq{ \phi_{\alpha, s}'(x)&=\alpha -T(x)\Theta(\phi_{\alpha, s}(x))\qquad \phi_{\alpha, s}(s-\frac{d}{2})\stackrel{Mod\,\,\pi_p}{=}\phi_{\alpha, s}(s+\frac{d}{2})\stackrel{Mod\,\,\pi_p}{=}\pip  } 
Set $\varphi_{\alpha, s}(y):=\phi_{\alpha, s}^{-1}(y)$.
%(=s+\phi_{\alpha}^{-1}(y))
 Define
\[ F(y,w):=\prnt{\alpha-T(w)\Theta(y)}^{-1}\,.\]
\red{Then $\varphi_{\alpha, s_0}$ and $\varphi_{\alpha, s}$ satisfy  }
\eql{\label{eqn:varphiODE} &\varphi_{\alpha, s_0}'(y)=\frac{1}{\phi_{\alpha, s_0}'(\phi_{\alpha, s_0}^{-1}(y))}=F(y,\varphi_{\alpha, s_0}(y))  &\varphi_{\alpha, s_0}(\pm\pip)=s_0\pm\frac{d}{2}\,,\\ \nonumber
&\varphi_{\alpha, s}'(y)=\frac{1}{\phi_{\alpha, s}'(\phi_{\alpha, s}^{-1}(y))}=F(y,\varphi_{\alpha, s}(y))  &\varphi_{\alpha, s}(-\pip)=s-\frac{d}{2}
\,.  }

\red{We remark that since $\tl{F}(y,w,s'):=F(y,w+s')$ is continuously differentiable in $w$ and $s'$ in some neighborhood of $  [-\frac{d}{2}, \frac{d}{2}]\times \{s_0\}$, considering the ODE satisfied by $\varphi_{\alpha, s}-s$, from standard ODE results regarding differentiability of solutions in the parameter (e.g \cite[p.89-93]{Sid}), we can conclude that $ \varphi_{\alpha, s}$ is continuously differentiable in $s$. }

Our problem can be phrased as finding the sign of  $D_s(r_s+d)=D_s[\varphi_{\alpha, s}(y)]-D_s[\varphi_{\alpha, s}(-y)]$ at $y=\pip$ and $s=s_0$. 

We set $\bar{\varphi}_{\alpha, s}:=-\varphi_{\alpha, s}(-y)$, then similarly
\eql{\label{eqn:barvarphiODE} \bar{\varphi}_{\alpha, {s_0}}'(y)=F(y,\bar{\varphi}_{\alpha, s_0}(y)) \qquad \bar{\varphi}_{\alpha, s_0}(-\frac{\pi}{2})=-s_0- \frac{d}{2} \,.}
\red{
$\varphi_{\alpha, s_0}$ and $\bar{\varphi}_{\alpha, s_0}$ satisfy the same ODE on $[-\pip, \pip]$ (with different initial conditions); since $\varphi_{\alpha, s_0}(-\pip)=s_0-\frac{d}{2}>-s_0-\frac{d}{2}=\bar{\varphi}_{\alpha, s}(-\pip)$,  by uniqueness theorem for 1st order ODEs it follows that
$\varphi_{\alpha, s_0}(0)\geq \bar{\varphi}_{\alpha, s_0}(0)=-\varphi_{\alpha, s_0}(0)$, in particular $\varphi_{\alpha, s_0}(0)\geq 0$. }

\red{
A similar argument shows that given $s> s_0$, due to the inequality $\varphi_{\alpha, s}(-\pip)> \varphi_{\alpha, s_0}(-\pip)$, it also follows that $\varphi_{\alpha, s}(y)\geq\varphi_{\alpha, s_0}(y)$ on $[-\pip,0]$;
in particular one can conclude that $\varphi_{\alpha, s}(0)\geq \varphi_{\alpha, s_0}(0)$, whence $D_s[\varphi_{\alpha, s}(0)]\geq 0$.
}

By \eqref{eqn:varphiODE} and \eqref{eqn:barvarphiODE},  taking derivative with respect to $s$ of \eqref{eqn:varphiODE} gives the equations
{\footnotesize
\eq{
(D_s[\varphi_{\alpha, s}](y))'_{s=s_0}&=G_1(y)\cdot D_s[\varphi_{\alpha, s}(y)]_{s=s_0} \, &\text{where} \quad &G_1(y)=F(y,\varphi_{\alpha, s_0}(y))^2T'(\varphi_{\alpha, s_0}(y))\Theta(y)\\
(D_s[\varphi_{\alpha, s}(-y)])'_{s=s_0}&=G_2(y)\cdot D_s[\varphi_{\alpha, s}(-y)]_{s=s_0}\, &\text{where} \quad &G_2(y)=F(y,\bar{\varphi}_{\alpha, s_0}(y))^2T'(\bar{\varphi}_{\alpha, s_0}(y))\Theta(y)
\,.}
}
From these equations we get the following explicit expressions for $D_s[\varphi_{\alpha, s}(y)]_{s=s_0}$ and $D_s[\varphi_{\alpha, s}(-y)]_{s=s_0}$:
\eq{
D_s[\varphi_{\alpha, s}(y)]_{s=s_0}&=C_1 exp(\int_0^y G_1(y)dy ) \\
D_s[\varphi_{\alpha, s}(-y)]_{s=s_0}&=C_2 exp(\int_0^yG_2(y)dy)=C_1 exp(\int_0^yG_2(y)dy )\,,
}
where the second equality follows due to the matching at $y=0$, since by definition $C_1=C_2=D_s[\varphi_{\alpha, s}(0)]_{s=s_0}$, which is a non-negative number as we previously showed. 

Thus the sign of  $D_s[\varphi_{\alpha, s}(y)]_{s=s_0}-D_s[\varphi_{\alpha, s}(-y)]_{s=s_0}$ at $y=\pip$ is determined by 
\eql{\label{eqn:integralG} &\int_0^{\pip} \prnt{G_1(y)-G_2(y)}dy\\ \nonumber&=\int_0^{\pip}\cprnt{F(y,\varphi_{\alpha, s_0}(y))^2T'(\varphi_{\alpha,s_0}(y))-F(y, \bar{\varphi}_{\alpha, s_0}(y))^2T'(\bar{\varphi}_{\alpha, s_0}(y))}\Theta(y) dy\,. }

It turns out that for $N\in (1,\infty)$ the function $G_1(y)-G_2(y)$ is either non-negative or either non-positive for all $y\in [0,\pip]$, what makes  determination of the sign of \eqref{eqn:integralG} straightforward.  This is a consequence of the following proposition, whose proof concludes the proof of Theorem  \ref{thm:SpectralMonotonicity_p}.

\begin{prop} Assume $N\in (1,\infty)$, then in cases $1$ and $3$: $G_1(y)\geq G_2(y)$ while in cases $2$ and $4$: $G_1(y)\leq G_2(y)$, for all $y\in [0,\pip]$.
\end{prop}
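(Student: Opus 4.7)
The plan is to factor the common trigonometric factor and reduce the proposition to a one-variable monotonicity question for an auxiliary function $h_y$ of the radial variable $w$, whose sign-analysis is done case by case using the explicit formulas in \eqref{eqn:Tx}.

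First, I would isolate the common structure. Writing $h_y(w):=F(y,w)^{2}\,T'(w)$ with $y$ fixed, we have
\[
G_1(y)-G_2(y)\;=\;\Theta(y)\cdot\bigl[\,h_y(\varphi_{\alpha,s_0}(y))\,-\,h_y(\bar{\varphi}_{\alpha,s_0}(y))\,\bigr].
\]
Two inputs are already available: (i) the parity $\Theta(-y)=-\Theta(y)$ together with $\Theta\ge 0$ on $[0,\pi_p/2]$ and $\Theta\le 0$ on $[\pi_p/2,\pi_p]$; and (ii) the inequality $\varphi_{\alpha,s_0}(y)\ge \bar{\varphi}_{\alpha,s_0}(y)$ on $[0,\pi_p]$, which follows from the ODE-uniqueness argument immediately preceding the proposition (valid because $s_0>0$ forces $\varphi_{\alpha,s_0}(-\pi_p)>\bar{\varphi}_{\alpha,s_0}(-\pi_p)$, and both solve the same first-order ODE on $[-\pi_p,\pi_p]$). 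Hence the sign of $G_1-G_2$ is controlled by the sign of $\Theta(y)$ together with the sign of $\partial_w h_y$ on the relevant range of $w$.

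Next, I would differentiate explicitly. Using $\partial_w F(y,w)=F(y,w)^{2}T'(w)\Theta(y)$ one obtains
\[
\partial_w h_y(w)\;=\;\frac{T''(w)\,\bigl(\alpha-T(w)\Theta(y)\bigr)\;+\;2\,T'(w)^{2}\Theta(y)}{\bigl(\alpha-T(w)\Theta(y)\bigr)^{3}}.
\]
By Lemma~\ref{lem:p_lap_phi_mono}, $\phi'_{\alpha,s_0}=\alpha-T\Theta>0$, so the denominator is strictly positive. Thus only the sign of the numerator
\[
\mathcal{N}(w,y)\;:=\;T''(w)\bigl(\alpha-T(w)\Theta(y)\bigr)\,+\,2\,T'(w)^{2}\Theta(y)
\]
matters. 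A direct computation from \eqref{eqn:Tx} gives the decisive parity fact: \emph{in cases $1$ and $3$, $T''(w)$ has the same sign as $w$}, whereas \emph{in cases $2$ and $4$, $T''(w)$ has the opposite sign of $w$} (in the latter two cases, $w$ stays positive in the whole range under consideration, so $T''(w)<0$ throughout). Together with the signs of $T$, $T'$ read off from \eqref{eqn:Tx} and the factorization of $\mathcal{N}(w,y)$, one checks that $\operatorname{sgn}\mathcal{N}(w,y)=\operatorname{sgn}\Theta(y)$ in cases $1,3$ and $\operatorname{sgn}\mathcal{N}(w,y)=-\operatorname{sgn}\Theta(y)$ in cases $2,4$, uniformly in the relevant range of $w$.

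The conclusion then follows by chaining the pieces together: since $\varphi_{\alpha,s_0}(y)\ge \bar{\varphi}_{\alpha,s_0}(y)$, the bracketed difference $h_y(\varphi)-h_y(\bar{\varphi})$ has the sign of $\partial_w h_y$ on the interval joining the two values. Multiplying by $\Theta(y)$ gives $G_1(y)-G_2(y)\ge 0$ in cases $1,3$ and $G_1(y)-G_2(y)\le 0$ in cases $2,4$, as claimed. I expect the main obstacle to be the verification of the pointwise sign of $\mathcal{N}(w,y)$: because $\mathcal{N}$ mixes $T''$, $T'$ and $T$ through the factor $\alpha-T(w)\Theta(y)$, the sign is not immediate from the sign of $T''$ alone. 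The cleanest way out should be to use, in cases $1$ and $3$, the oddness of $T$ together with $\Theta(-y)=-\Theta(y)$ to derive the reflection identity $G_2(y)=-G_1(-y)$ and reduce the problem to showing that $H(y):=F(y,\varphi(y))^{2}T'(\varphi(y))$ satisfies $\operatorname{sgn}\bigl(H(y)-H(-y)\bigr)=\operatorname{sgn}\Theta(y)$ on $[0,\pi_p]$, which after differentiating along the polar ODE reduces to a single inequality on $\mathcal{N}$ that can be handled by the elementary properties of $\tan,\tanh$. For cases $2$ and $4$, no oddness extension is available, and one must directly exploit the negativity of $T''$ and the positivity of $T'(w)^{2}$ together with $T(w)\Theta(y)$ having the sign of $\Theta(y)$ on the admissible $w$-range to get the reverse inequality.
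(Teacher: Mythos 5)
Your proposal takes a genuinely different route from the paper, but it has gaps that you cannot patch along the lines sketched.

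\textbf{Where the approach breaks.} You try to deduce the sign of $h_y(\varphi_{\alpha,s_0}(y))-h_y(\bar{\varphi}_{\alpha,s_0}(y))$ from the sign of $\partial_w h_y$ on the segment joining the two points. The difficulty is that this sign is not constant on that segment, and the segment may not even lie in the domain where $h_y$ is regular. Concretely:

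(i) \emph{In case 1 the sign of $\mathcal N$ is not uniformly $\operatorname{sgn}\Theta(y)$.} Writing $c=\cos(\sqrt\delta w)$, $s=\sin(\sqrt\delta w)$, one computes
\[
\mathcal N(w,y)=\frac{2(N-1)\delta^{3/2}}{c^3}\bigl[\,s\,\alpha+(N-1)\sqrt\delta\,c\,\Theta(y)\,\bigr].
\]
For $w\geq 0$ the bracket is $\geq 0$, but for $w<0$ the term $s\alpha$ is negative, and since $\Theta(y)\to 0$ as $y\to 0^+$ or $y\to\pi_p^-$, the bracket is strictly negative whenever $\bar{\varphi}_{\alpha,s_0}(y)<0$ and $\Theta(y)$ is small enough. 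So $\partial_w h_y$ changes sign on $[\bar{\varphi}_{\alpha,s_0}(y),\varphi_{\alpha,s_0}(y)]$, and your pointwise sign argument does not close. The analogous thing happens in case 3.

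(ii) \emph{In cases 2 and 4 the claim that ``$w$ stays positive in the whole range under consideration'' is false.} Observation (d) in the proof is precisely that $\bar{\varphi}_{\alpha,s_0}(y)<0$ in these cases, because $\varphi_{\alpha,s_0}(-y)>0$ forces $\bar{\varphi}_{\alpha,s_0}(y)=-\varphi_{\alpha,s_0}(-y)<0$. Worse, $h_y$ has a genuine pole between $\bar{\varphi}_{\alpha,s_0}(y)$ and $\varphi_{\alpha,s_0}(y)$: in case 2 one has the closed form $h_y(w)=\frac{N-1}{(\alpha w+(N-1)\Theta(y))^2}$, which blows up at $w_*=-(N-1)\Theta(y)/\alpha$, and the positivity of $F$ at both endpoints shows exactly that $\bar{\varphi}_{\alpha,s_0}(y)<w_*<\varphi_{\alpha,s_0}(y)$. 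So the fundamental-theorem-of-calculus argument on $[\bar{\varphi}_{\alpha,s_0}(y),\varphi_{\alpha,s_0}(y)]$ is not available.

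(iii) \emph{The reflection fallback is circular.} The identity $G_2(y)=-G_1(-y)$ is correct (it uses the odd extension of $T$ together with $\Theta(-y)=-\Theta(y)$), but $H(y)\geq H(-y)$, with $H(y)=F(y,\varphi_{\alpha,s_0}(y))^2T'(\varphi_{\alpha,s_0}(y))$, is by that very identity the same statement as $G_1(y)\geq G_2(y)$; it is a restatement, not a reduction.

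\textbf{What the paper does instead.} It does not integrate $\partial_w h_y$; it compares values pointwise. The key inputs are (a) $\varphi_{\alpha,s_0}$ and $\bar{\varphi}_{\alpha,s_0}$ are increasing, (b) $\varphi_{\alpha,s_0}(0)\geq -\varphi_{\alpha,s_0}(0)\geq 0$ by ODE comparison, hence (c) $|\varphi_{\alpha,s_0}(y)|\geq|\bar{\varphi}_{\alpha,s_0}(y)|$ on $[0,\pi_p/2]$, and (d) $\bar{\varphi}_{\alpha,s_0}(y)<0$ in cases 2 and 4. Since $T'$ depends only on $|w|$ and is monotone in $|w|$, and $F(y,\cdot)$ is monotone in $T(\cdot)$, one reads off directly the inequalities $T'(\varphi_{\alpha,s_0})\gtrless T'(\bar{\varphi}_{\alpha,s_0})$ and $F(y,\varphi_{\alpha,s_0})\gtrless F(y,\bar{\varphi}_{\alpha,s_0})$ with the right orientation in each case, then multiplies by $\Theta(y)\geq 0$. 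This bypasses the behavior of $h_y$ on the segment between the two points entirely, which is why it survives the sign changes and the pole that kill your argument.
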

\begin{proof}

In table \ref{table:1} we provide explicit expressions for $F(x,w)$ and $T'(x)$ corresponding to the four cases of \eqref{eqn:Tx}.
\begin{changemargin}{0.0cm}{0cm} 
\begin{table}[H]
\centering
\begin{tabular}{ ||p{0.7cm}||p{2.5cm}||p{6.7cm}||p{2.3cm}||  }
% \hline
 %\multicolumn{4}{|c|}{} \\
 \hline
 Case & \text{Density} & $F(y,w)$ & $T'(w)$\\
 \hline
1 & $\cos^{N-1}(\sqrt{\delta}w)$  & $\Prnt{\alpha-(N-1)\sqrt{\delta}\tan(\sqrt{\delta}w)\Theta(y)}^{-1}$ &  $\frac{\delta(N-1)}{\cos^2(\sqrt{\delta}w)}$\\
 \hline
2 & $w^{N-1}$  & $\Prnt{\alpha+(N-1)\frac{1}{w}\Theta(y)}^{-1}$ &  $\frac{N-1}{w^2}$\\
 \hline
3 & $\cosh^{N-1}(\sqrt{|\delta|}w)$ & $\Prnt{\alpha+(N-1)\sqrt{|\delta|}\tanh(\sqrt{|\delta|}w)\Theta(y)}^{-1}$ & $-\frac{|\delta|(N-1)}{\cosh^2(\sqrt{|\delta|}w)}$\\
\hline
4 & $\sinh^{N-1}(\sqrt{|\delta|}w)$ & $\Prnt{\alpha+(N-1)\sqrt{|\delta|}\coth(\sqrt{|\delta|}w)\Theta(y)}^{-1}$ &  $\frac{|\delta|(N-1)}{\sinh^2(\sqrt{|\delta|}w)}$\\
\hline
\end{tabular}
\caption{The 4 cases of \eqref{eqn:Tx} which we need to consider. }\label{table:1}
\end{table}
\end{changemargin}
By the foregoing the following facts can be stated about the functions $\varphi_{\alpha, s_0}(y)$ and $\bar{\varphi}_{\alpha, s_0}(y)$:
%\begin{enumerate}[label=\alph*]
\begin{enumerate}[(a)]
	\item $\varphi_{\alpha, s_0}(y)$ and $\bar{\varphi}_{\alpha, s_0}(y)$ are increasing on $[0,\pip]$. 
	\item \red{$\varphi_{\alpha, s_0}(0)\geq \bar{\varphi}_{\alpha, s_0}(0)=-\varphi_{\alpha, s_0}(0)$, and $ \varphi_{\alpha, s_0}(0)\geq 0$. }
	%$\varphi_{\alpha, s}(y)\geq \bar{\varphi}_{\alpha, s}(y)$ for all $y\in [0,\pip]$, and in particular at $y=0$: $\varphi_{\alpha, s}(0)\geq 0\geq \bar{\varphi}_{\alpha, s}(0)=-\varphi_{\alpha, s}(0)$. 
	\item From (a) and (b) it follows that $|\varphi_{\alpha, s_0}(y)|\geq |\bar{\varphi}_{\alpha, s_0}(y)|$ \red{on $[0,\pip]$.}
	\item In cases 2 and 4, since $\varphi_{\alpha, s_0}(-y)> 0$  for $y\in [0,\pip]$, it follows that $\bar{\varphi}_{\alpha, s_0}(y)< 0$ for $y\in [0,\pip]$. 
\end{enumerate} 
In view of these facts the proposition now follows from the following observations: 

{\bf In case 1:} $T'(\varphi_{\alpha, s_0}(y))\geq T'(\bar{\varphi}_{\alpha, s_0}(y))>0$ and $F(y, \varphi_{\alpha, s_0}(y))\geq F(y, \bar{\varphi}_{\alpha, s_0}(y))$.\\
{\bf In case 3:} $T'(\bar{\varphi}_{\alpha, s_0}(y))\leq T'(\varphi_{\alpha, s_0}(y)) <0$ and $F(y, \bar{\varphi}_{\alpha, s_0}(y))\geq F(y, \varphi_{\alpha, s_0}(y))$.\\
{\bf In cases 2,4:} $T'(\bar{\varphi}_{\alpha, s_0}(y))\geq T'(\varphi_{\alpha, s_0}(y)) >0$ and $F(y, \bar{\varphi}_{\alpha, s_0}(y))\geq F(y, \varphi_{\alpha, s_0}(y))$ (due to observation $(d)$).
Thus (considering that $\Theta(y)\geq 0$ on $[0,\pip]$):
\eq{G_1(y)&=F(y,\varphi_{\alpha, s_0}(y))^2T'(\varphi_{\alpha, s_0}(y))\Theta(y)\,,\\
G_2(y)&=F(y,\bar{\varphi}_{\alpha, s_0}(y))^2T'(\bar{\varphi}_{\alpha, s_0}(y))\Theta(y)\,,}
satisfy the stated inequalities.
%{\bf In case 4} $T'(\bar{\varphi}_{\alpha, s}(y))\geq T'(\varphi_{\alpha, s}(y)) >0$ and $F(y, \bar{\varphi}_{\alpha, s}(y))\geq F(y, \varphi_{\alpha, s}(y))$ (due to observation $(d)$).\\
%for simplicity we set  $\omega:=\varphi_{\alpha, s}(y)$ and $\bar{\varphi}_{\alpha, s}(y)$. Notice that
%
\end{proof}
From the proposition it follows that \red{$D_s(r_s)_{s=s_0}\geq 0$} in cases 1 and 3, while \red{$D_s(r_s)_{s=s_0}\leq 0$} in cases 2 and 4. 
By diameter comparison we conclude that in cases 1 and 3 for all $s>s_0$ (sufficiently close to $s_0$)
$\lamp(\hfrak(s_0),d)\leq \lamp(\hfrak(s),d)$, and in cases 2 and 4 for all $s>s_0$ (sufficiently close to $s_0$)
$\lamp(\hfrak(s_0),d)\geq \lamp(\hfrak(s),d)$. We translate from the parameter $s$ to $\hfrak$ according to \eqref{hfrak_s}. We conclude that for $N\in (1,\infty)$:
$$\inf_{(\hfrak,d)\in D^{reg}_{(K,N,D)}}\lamp(\hfrak,d)=\lamp(0,d)\,,$$
and moreover for any fixed $d_0\in (0,D)$ (s.t. $d_0<l_{\delta}$) on $\pi_{d}^{-1}(d_0)$ it holds that $\lamp(\hfrak,d_0)$ depends monotonically on $|\hfrak|$. 
\bigskip

Lastly, \red{subject to the technical assumption of Subsection \ref{Properties:pLap_egns},} the justification for the case $N=\infty$ is due to Theorem \ref{prop:case:infty}; indeed it was proved for the 2-Laplacian, considering that its eigenvalues are continuous in the weight (more precisely, in the SL-BVP metric), but since the p-Laplacian eigenvalues for $p\in(1,\infty)$ are also continuous in the weight, the proof extends to the p-Laplacian eigenvalue, showing that also for $N=\infty$, for fixed $d_0\in (0,D)$, it holds that $\lamp(\hfrak,d_0)$ depends monotonically on $|\hfrak|$. 

\end{enumerate}
\end{proof}
Therefore as long as we consider the parameter space $\D^{reg}_{(K,N,D)}$ and $D<l_{\delta}$ 
then the infimum of $\lamp(J_{K,N,\hfrak}1_{[-\frac{d}{2},\frac{d}{2}]})$ (which is the solution to the minimization problem of $\xi\mapsto \Lambda_{Poi}^{(p)}(\xi)$ over $\Fkndreg$) corresponds to the first non-zero eigenvalue of the equation 
\pinka{
$$
(p-1)f^{\prime(p-2)}f''+\frac{J'(x)}{J(x)}f^{\prime(p-1)}=-\lambda f^{(p-1)}\qquad f'(-\frac{D}{2})=f'(\frac{D}{2})=0\,,$$ 
}
where $J(x)=J_{K,N,0}(x)$ (as defined in Definition \ref{defn:Jknh}). 
This gives the solution to the minimization problem of $\Lambda_{Poi}^{(p)}(\xi)$ over $\Fkndreg$. As for the 2-\Poinc inequality we justify that this is also the form of the solution to the minimization problem over $\Fknd^M(\R)$, by showing upper semi-continuity of $\xi\mapsto \Lambda_{Poi}^{(p)}(\xi)$; as we mentioned, the technical details are similar to showing \red{u.s.c.}  for $\xi\mapsto \Lambda_{Poi}(\xi)$ and are therefore omitted. 

%In order to justify that $\lamp_{K,N,D}$ equals this eigenvalue, in particular when $D\geq l_{\delta}$, we need to consider the augmented space of measures, which includes the boundary points, in order to cover all of $\Fknd^M(\R)$.  For the 2-\Poinc constant we have provided a detailed analysis, as we mentioned we do not repeat the same (laborious) analysis for the $p$-\Poinc constant.

In view of Theorem \ref{LpPoinc:Estimate_gen} we can conclude the following theorem:

\begin{thm}
Let $(M,\gfrak,\mu)$ be a \cwrm{} which satisfies $CDD_b(K,N,D)$, where $K\in \R$, $N\in [\max(n,2),\infty]$ and $D\in (0,\infty]$. Then under the technical assumption detailed in Subsection \ref{subsec:p_SL_Theory}: 
\[ \Lambda_{Poi}(M,\gfrak)\geq \lamp_{K,N,D}\,,\]
 where 
\begin{itemize}
	\item If $N\in [n,\infty)$ then $\lamp_{K,N,D}=\Lambda^{(p)}_{Poi}( \co_{\delta}^{N-1}(x) 1_{[-\frac{D_{\delta}}{2},\frac{D_{\delta}}{2}]}(x)\cdot m)$,\qquad $D_{\delta}:=\min(D,l_{\delta})$\,;
	\item If $N=\infty$ then $\lamp_{K,\infty,D}=\Lambda^{(p)}_{Poi}( e^{-\frac{Kx^2}{2}} 1_{[-\frac{D}{2},\frac{D}{2}]}\cdot m)$\,.
\end{itemize}
Moreover, these estimates are sharp. 
\end{thm}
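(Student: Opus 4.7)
The plan is to combine the localization-plus-extreme-points reduction with the monotonicity established in Theorem~\ref{thm:SpectralMonotonicity_p}, then pass to the closure of the regularity domain via upper semi-continuity, exactly as in the $p=2$ analysis of Chapter~\ref{chp:Poinc}. First I would invoke Theorem~\ref{LpPoinc:Estimate_gen}, which gives unconditionally (since $N\geq 2>0$, so the proviso $D<l_\delta$ is vacuous) the reduction
\[
\Lambda^{(p)}_{Poi}(M,\gfrak,\mu) \;\geq\; \inf_{\xi\in \Fknd^M(\R)} \Lambda^{(p)}_{Poi}(\xi),
\]
so the entire task is to compute the RHS and identify it with the Rayleigh quotient of the distinguished density.

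Next I would parametrize $\Fkndreg$ via the regularity chart $\xi_{(K,N)}:\D^{reg}_{(K,N,D)}\to\Fkndreg$ and apply Theorem~\ref{thm:SpectralMonotonicity_p}: $d\mapsto \lamp(\hfrak_0,d)$ is non-increasing, and for fixed $d_0$ the function $\hfrak\mapsto \lamp(\hfrak,d_0)$ is minimized at $\hfrak=0$ throughout the range $N\in(1,\infty]$. Since $J_{K,N,0}(x)=\co_\delta^{N-1}(x)$ for $N<\infty$ and $J_{K,\infty,0}(x)=e^{-Kx^2/2}$, this yields
\[
\inf_{\xi\in \Fkndreg}\Lambda^{(p)}_{Poi}(\xi) \;=\; \lim_{d\uparrow D_\delta}\lamp(0,d),
\]
with $D_\delta=\min(D,l_\delta)$ (and $D_\delta=D$ when $N=\infty$).

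To upgrade the infimum from $\Fkndreg$ to $\Fknd^M(\R)$ I would prove weak upper semi-continuity of $\xi\mapsto\Lambda^{(p)}_{Poi}(\xi)$ on $\M_b(\R)$, following the template of Theorem~\ref{thm:PoiUSC}: express $\Lambda^{(p)}_{Poi}(\xi)$ as a pointwise infimum over $f\in\F_b$ of the $p$-analogue of $\Ryb{f}{\xi}$, namely $\int|f'|^p d\xi/\inf_{r\in\R}\int|f+r|^pd\xi$; for each fixed $f\in\F_b$, the numerator and denominator are weakly continuous in $\xi$ (they are integrals of bounded continuous functions, and the infimum over $r$ of a weakly continuous family is upper semi-continuous), so the pointwise infimum over $f$ is weakly u.s.c. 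Combining with Theorem~\ref{thm:identify_weak_lim} and Proposition~\ref{prop:usc_min} gives
\[
\inf_{\xi\in\Fknd^M(\R)}\Lambda^{(p)}_{Poi}(\xi) \;=\; \inf_{\xi\in\Fkndreg}\Lambda^{(p)}_{Poi}(\xi).
\]
A \mesi{}-continuity step analogous to Theorem~\ref{thm:specCont}, applied to the smooth positive weight $J_{K,N,0}$ on $\mathring{I}_\infty=(-D_\delta/2,D_\delta/2)$ (resp.\ $(-D/2,D/2)$ for $N=\infty$), identifies the limit $\lim_{d\uparrow D_\delta}\lamp(0,d)$ with $\Lambda^{(p)}_{Poi}$ of the asserted limiting measure, giving the displayed formulas.

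Sharpness is witnessed already in dimension one by the segment $([-D_\delta/2,D_\delta/2],|\cdot|,\co_\delta^{N-1}\cdot m)$ (resp.\ the Gaussian segment for $N=\infty$), which is a $CDD_b(K,N,D)$ \cwrm{} realizing the bound as equality; for general topological dimension $n$ one can transfer sharpness via the tubular neighborhood / thin-shell construction of Milman \cite{Mil2}. The main obstacle throughout is the technical assumption of Subsection~\ref{Properties:pLap_egns}: the Pr\"uffer-based argument for the $|\hfrak|$-monotonicity crucially invokes Fr\'echet-differentiability of the principal eigenvalue with respect to the weight and endpoints of the Neumann half-linear problem, together with continuous dependence of eigenfunctions. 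Verifying this in full generality for our specific $p$-Sturm--Liouville BVP (rather than the closely related BVPs for which it is recorded in the literature) is the crux that currently forces the statement to be conditional.
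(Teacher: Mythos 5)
Your proposal follows the paper's proof in essentially every step: reduce via Theorem~\ref{LpPoinc:Estimate_gen} to the model class (noting the proviso is vacuous for $N\geq 2$), characterize the minimizing sequences in $\Fkndreg$ by the monotonicity Theorem~\ref{thm:SpectralMonotonicity_p} (infimum at $\hfrak=0$, decreasing in $d$), pass from $\Fkndreg$ to $\Fknd^M(\R)$ via weak upper semi-continuity of $\xi\mapsto\Lambda^{(p)}_{Poi}(\xi)$ and Proposition~\ref{prop:usc_min}, apply a \mesi{}-continuity argument to identify the limiting Rayleigh quotient, and conclude sharpness from one-dimensional models and the construction of~\cite{Mil2}. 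The paper defers the u.s.c.\ step with the remark that it is ``similar to the case $p=2$,'' and you are right to supply it; the architecture is the same.

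One small but genuine slip in your u.s.c.\ justification: you observe that $\xi\mapsto v^{(g)*}_f(\xi)=\inf_{r\in\R}\int|f+r|^p\,d\xi$ is upper semi-continuous because it is an infimum of weakly continuous functions. That much is true, but it is insufficient for the ratio $u_f^*/v^{(g)*}_f$ to be u.s.c.: the denominator must be \emph{lower} semi-continuous (hence continuous) to ensure $\limsup(u/v)\leq u/v$ along weak limits. What actually works (and matches the $p=2$ case in Lemma~\ref{lem:RayRayb}, where $Var_{b\,\xi}(f)$ is manifestly weakly continuous) is the observation that for $f\in\F_b$ the minimizer $r_*(\xi)$ is confined to the fixed compact set $\{|r|\leq 2\|f\|_\infty\}$ uniformly over $\xi\in\P(I)$ (compare $\int|f+r|^p\,d\xi\geq(|r|-\|f\|_\infty)^p$ with the value at $r=0$). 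Thus $\inf_r$ is an infimum over a compact set of a function jointly continuous in $(r,\xi)$, and it follows that $\xi\mapsto v^{(g)*}_f(\xi)$ is weakly continuous, after which u.s.c.\ of the quotient (with the convention $\Phi=+\infty$ when the denominator vanishes) is immediate. With this correction the argument closes, and indeed this is the point the paper alludes to when declaring the details ``similar'' to the $p=2$ case.
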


%The expression for $G_2(y)$ was written in terms of $\bar{\varphi}$ on purpose, motivate by the fact that $\varphi$ and $\bar{\varphi}$ satisfy the same ODE; indeed: 
%\eql{\label{eqn:BarVarphiODE} \bar{\varphi}'(y)=F_s(y,\bar{\varphi}(y)) }
%By uniqeness theorem for 1st order ODEs if some $y_0$ there is an intersection between $\varphi$ and $\bar{\varphi}$ they must coincide on $(y_0,\pip]$. Since $\bar{\varphi}(0)=-\varphi(0)$, and it satisfies the same ODE (in particular $\bar{\varphi}'(0)=\frac{1}{\alpha}\geq 0$ on $[0,\pip]$) $\varphi$ and $\bar{\varphi}$ verify the following statements: 
%\begin{enumerate}
	%\item $\bar{\varphi}_s(0)=-\varphi_s(0)\leq 0$, since $\varphi_s(0)\geq 0$
	%
	%, moreover $\varphi_s(y)\geq \bar{\varphi}_s(y)$ for all $y\in [0,\pip]$. Indeed, since if at some $y_0\in (0,\pip]$ it holds that $\varphi_s(y_0)=\bar{\varphi}_s(y_0)$, then
	%
	%
	%
	%\item $ \varphi_s'(y), \bar{\varphi}_s'(y)> 0$ on $[0,\pip]$.
	%\item If $\varphi_s$ is non-negative on $[0,\pip]$ then $\bar{\varphi}_s$ is non-positive on $[0,\pip]$ (as in cases 2 and 4).
	%
	 %
%\end{enumerate}

%\begin{remk} The above approach does not work for $N\leq 0$, however one should notice that the condition $D_s\varphi_s(y)\geq D_s\varphi_s(-y)$ is only sufficient and definitely not a necessary for the monotonicity implication. One can require for much less; what is actually necessary for the comparison is to compare $\int_0^{\pip} G_1(y)$ and $\int_0^{\pip} G_2(y)$
%\end{remk}

\begin{remk} Sharpness is proved by explicit construction of \cwrm, or sequences of \cwrm, which the limits of their p-\Poinc constants assume these estimates (the same construction which justified sharpness of the \Poinc constant lower bound $\lam_{K,N,D}$). The reader is referred to \cite{NaVa} and \cite{Mil2} for a discussion about such constructions. 
\end{remk}

 \newpage
 \chapter{Functional Inequalities: Explicit Lower Bounds\\ \bigskip The Log-Sobolev Inequality}
\label{chp:LS}
%{cor:CC}
\section{Main goals}

For a \cwrm{} which satisfies $CDD_b(K,N,D)$, where $K\in \R$ and $N\in (-\infty,1)\cup [n,\infty]$ we obtained the estimate \eql{\label{eqn:LS_Bound_repeat}\Lambda_{LS}(M,\gfrak,\mu)\geq \rho_{K,N,D}\,,}
in Theorem \ref{thm:LogSobolevInequality}, 
where $\rho_{K,N,D}$ is defined by
\eql{
\rho_{K,N,D}:=\inf_{0\neq \xi\in \Fknd^{\Cinf}(\R)}\Lambda_{LS}(\xi) \,,} 
with
\eql{\label{eq:LS_Constant_Exp2}
  \Lambda_{LS}(\xi)=\inf_{f\in \F_{LS}(\xi)}\left\{ \frac{2\int f'(t)^2d\xi(t)}{\int f(t)^2\log \prnt{f(t)^2} d\xi(t)}\right\} \,.}

This can also be expressed in the following form (see Subsection \ref{subsec:intro_func_ineq}):
\eq{ 
&\rho_{K,N,D}=\inf_{\bar{\xi}\in \Pknd^{\Cinf}(\R)}\Lambda_{LS}(\bar{\xi})\,,}
where
\eql{\label{eq:LS_Constant_Exp}
  \Lambda_{LS}(\bar{\xi})=\inf_{f\in\tl{\F}_{LS}(\bar{\xi})}\left\{ \frac{2\int f'(t)^2d\bar{\xi}(t)}{Ent_{\bar{\xi}}(f^2)}\right\}\,. }

The extreme points characterization Theorem \ref{thm:ExtremePoints} yielded Corollary \ref{cor:CC}, from which we conclude that for $K\in \R$, $N\in (-\infty,0]\cup [2,\infty]$, subject to the proviso  $D<l_{\delta}$ if $K<0$ and $N\leq 0$,
\eql{ \label{LS:Estimate} \rho_{K,N,D}= \inf_{\xi\in \Fknd^M(\R)}\Lambda_{LS}(\xi)\,.}
%Notice that 
%we  identify $\Lambda_{LS}(\xi)$ as $\Lambda_{LS}(\bar{\xi})$ where $\bar{\xi}:=\frac{1}{\xi(1)}\xi$. 
%
 %Throughout we will also consider finite measures $\xi$ which are not probabilities; for such measures $\xi$ 

%The statement \eqref{eqn:LS_Bound_repeat} is valid also in the case $n=1$ as long as $N\in (-\infty,0]\cup [2,\infty]$, since we may still apply Corollary \ref{cor:CC}. 

Hence we can summarize:

%When $n=1$ and there is no need for localization, and it straightforwardly follows from the extreme points characterization that when $K\in \R$ and $N\in (-\infty, 0]\cup [2,\infty]$ then \eqref{LS:Estimate} is valid. We can thus summarize

\begin{thm}\label{LS:Estimate_gen} If $(M^n,\gfrak, \mu)$ is a \cwrm{}, which satisfies $CDD_b(K,N,D)$, where $K\in \R$ and $N\in (-\infty,0]\cup [\max(n,2),\infty]$, then subject to the proviso  $D<l_{\delta}$ if $K<0$ and $N\leq 0$
\eql{ \label{LS:Estimate} \Lambda_{LS}(M,\gfrak,\mu)\geq \rho_{K,N,D}\,.}
\end{thm}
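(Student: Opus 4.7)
The plan is to observe that Theorem \ref{LS:Estimate_gen} is essentially an assembly result combining two pieces already established in the excerpt: the localization-based lower bound from Theorem \ref{thm:LogSobolevInequality}, and the extreme-points reduction from Corollary \ref{cor:CC} specialized to the log-Sobolev setting. Neither step requires any new analysis beyond verifying that the hypotheses of both results are simultaneously satisfied.

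First I would apply Theorem \ref{thm:LogSobolevInequality} directly to obtain
\[
\Lambda_{LS}(M,\gfrak,\mu)\;\geq\; \inf_{0\neq \xi\in \Fknd^{C^{\infty}}(\R)}\Lambda_{LS}(\xi),
\]
which is valid on the full range $N\in(-\infty,1)\cup[n,\infty]$ provided by the Klartag localization. No proviso on $D$ is required at this stage.

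Next I would invoke Corollary \ref{cor:CC} with the specific assignment corresponding to the log-Sobolev problem, namely $u_f(x):=2f'(x)^2$, $v_f(x):=f(x)^2\log f(x)^2$, $h_f(x):=f(x)^2-1$, and $\F_*=\F_{LS}$, as set up in \eqref{dfn:specifications} and verified in Remark \ref{remk:uvh_prop} (in particular the non-negativity $v_f^*(\xi)\geq 0$ follows from Jensen's inequality under the constraint $h_f^*(\xi)=0$). Corollary \ref{cor:CC} then yields
\[
\CC \;=\; \inf_{\xi\in \Fknd^M(\R)}\,\inf_{f\in \F_{LS}(\xi)} \Phi_{u_f^*,v_f^*}(\xi) \;=\; \inf_{\xi\in \Fknd^M(\R)}\Lambda_{LS}(\xi),
\]
valid for $N\in(-\infty,0]\cup[2,\infty]$ subject to the proviso $D<l_{\delta}$ whenever $\delta>0$ and $K<0$ (equivalently $K<0$ and $N\leq 0$). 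Since the measures of $\Fknd^M(\R)$ all lie in $\Fknd^{C^{\infty}}(\R)$ (see Remark \ref{rmk:Model_equivalent_def}) while the reduction identity supplied by Corollary \ref{cor:CC} shows the reverse inequality, the two infima coincide and equal $\rho_{K,N,D}$.

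Combining these two steps on the intersection of their respective parameter ranges, i.e.\ $N\in (-\infty,0]\cup[\max(n,2),\infty]$, and carrying over the proviso $D<l_{\delta}$ when $K<0$ and $N\leq 0$, we conclude \eqref{LS:Estimate}. There is no genuine obstacle in this argument: it is purely a bookkeeping of hypothesis ranges, and the non-trivial work has already been absorbed into Theorem \ref{thm:LogSobolevInequality} (Klartag localization plus the one-dimensional needle reduction) and Theorem \ref{thm:ExtremePoints} (the extreme-point characterization underlying Corollary \ref{cor:CC}). The only point that deserves a line of verification is that the log-Sobolev data $(u_f,v_f,h_f)$ satisfy the structural properties listed in Remark \ref{remk:uvh_prop} required to apply the reduction; this is immediate from the definitions and Jensen's inequality.
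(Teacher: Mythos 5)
Your proof is correct and follows essentially the same route as the paper, which offers no explicit argument for Theorem~\ref{LS:Estimate_gen} beyond the phrase ``Hence we can summarize'' after citing Theorem~\ref{thm:LogSobolevInequality} and Corollary~\ref{cor:CC}. You correctly identify the two ingredients, the parameter ranges on which each applies, and how they intersect; you are also slightly more careful than the paper on a small point, namely that $\rho_{K,N,D}$ is defined as an infimum over $\Fknd^{C^{\infty}}(\R)$ whereas Corollary~\ref{cor:CC} concerns $\CC$, an infimum over the larger synthetic class $\Fknd(\R)$ — your sandwich argument via $\Fknd^{M}\subset\Fknd^{C^{\infty}}\subset\Fknd$ is exactly what is needed to close this gap, which the paper elides.
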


\bigskip

Our main goal in this chapter is to derive explicit  lower bounds for $\Lambda_{LS}(M,\gfrak,\mu)$ which are valid when $N\in [\max(n,2),\infty]$; in this work we do not attempt to derive estimates valid for $N\in (-\infty,0]$. 
Specifically we solve the optimization problem associated with $\rho_{K,N,D}$ in the case $K\in\R$ and $N=\infty$, expressing the (sharp) constant $\rho_{K,\infty,D}$ up to universal numeric constants.  We solve it only for $N=\infty$, since the lower bound which we will obtain will also be valid to $N\in [\max(n,2),\infty)$, as for $N$ in this range $CDD_b(K,N,D)\Rightarrow CDD_b(K,\infty,D)$. In addition, in view of previous estimates, in particular the Bakry-\'{E}mery estimate (Theorem \ref{thm:BakryEmeryLS}) with the factor $\frac{N}{N-1}$, we don't expect the estimates to be highly dependent on the effective dimension $N$ (and anyway we express the solution for $\rho_{K,\infty,D}$ up to numeric constants). 
\bigskip

Our approach will not rely on the Euler-Lagrange equation  associated with $\Lambda_{LS}(\xi)$ but on a criterion of Bobkov-G\"{o}tze \cite{BobG}. Yet we precede the proof of the main result by showing that minimizers realizing $\Lambda_{LS}(\xi)$ exist for $\xi\in \Fknd^M(\R)$, with the only exception of measures $\xi$ whose density is symmetric around the center of their support. This result makes it possible to approach the optimization problem via the Euler-Lagrange equations; however, due to its non-linear nature, we have taken a more direct approach which results in expressing $\Lambda_{LS}(\xi)$ up to numeric constants, and eventually expressing $\rho_{K,\infty,D}$ up to universal numeric constants.

%\subsection{The LS-constant for $CDD(K,\infty,D)$ w.r.m with $K=-k\leq 0$ and $D<\infty$}

\section{On the existence of Log-Sobolev minimizers for measures supported on $\R$}

In contrast to the \Poinc inequality, even for measures $\xi$ supported on $\R$ the question of  whether the numbers $\Lambda_{LS}(\xi)$ are attained infima, has no immediate answer at present. 
%
%about the existence of minimizers $f\in \Cinf(\R)$ (or even its completion) for the quotient
%\eql{\label{eq:LS_Constant_Exp} \inf_{\text{const}\,\neq f\in \Cinf(\R),\, \int f^2d\xi=1}\Phi(f,\mu) \qquad \text{where}\quad   }
%has no immediate answer. 
%(assuming Neumann boundary conditions in case of non-empty boundary)  
As an instructive example consider $M=S^1=[0,2\pi]_{0\sim 2\pi}$ (with the uniform measure). It is known that $\Lambda_{Poi}(S^1)= \Lambda_{LS}(S^1)=1$. Consider the following realization of $\Lambda_{LS}(S^1)$ by a minimizing sequence: let $u(x)$ be the Laplacian eigenfunction $\frac{1}{\sqrt{\pi}}\cos(x)$. Let $\epsilon>0$ and define $f_{\epsilon}(x):=1+\epsilon u(x)$, then the family $\{f_{\epsilon}(x)\}_{\epsilon>0}$ is a minimizing sequence realizing $\Lambda_{LS}(S^1)$, as one can verify by  expansion of the entropy as in \eqref{Implication_LS_Poinc}; however 
$\lim_{\epsilon\to 0}f_{\epsilon}=1$, thus one might guess that $\Lambda_{LS}(S^1)$ could not be realized by a non-constant function. 
The same argument shows that a similar problem is encountered when considering the interval with the uniform measure. Unfortunately this is indeed the case; in \cite{Led3} it was shown that on the sphere $S^n$ with $n\geq 2$, no non-constant function realizing the log-Sobolev constant exists. As it was noted to us by the author of that work, the argument can be extended to $S^1$, a case which is also essentially equivalent to the uniform density on the interval. 

 On the other hand on $(\R,\frac{1}{\sqrt{2\pi}}e^{-\frac{x^2}{2}}m)$ holds $LS(1)$ (and also $Poi(1)$), and $\Lambda_{LS}(\R, \frac{1}{\sqrt{2\pi}}e^{-\frac{x^2}{2}}m)$ is realized by functions of the form $f(x)=e^{ax+b}$ (e.g \cite{BGL} p.259). 

%For example on $S^1$ holds sharp $Poi(1)$ as well as sharp $LS(1)$. The log-Sobolev can be realized by sequences whose limit is the constant function
%
%However a minimizer realizing $\Lambda_{LS}(S^1)$ might not exist; for example,  function. 

Let $\bar{\xi}\in \P(\R)$ and $\bar{p}(x):=\deriv{\bar{\xi}}{m}$, then if a normalized minimizer $f$ (i.e. $\bar{\xi}(f^2)=1$) realizing $\Lambda_{LS}(\bar{\xi})$ exists, then it satisfies the Euler Lagrange equation \cite{Wan1}: 
\[ (\bar{p}(x)f'(x))'=-\frac{\Lambda_{LS}(\bar{\xi})}{2} \bar{p}(x)f(x) \log f^2(x)\,.\]
If $\bar{p}(x)>0$ on $supp(\xi)=[a,b]$, a compact interval, then it is also accompanied by Neumann/periodic boundary conditions;  and conversely, a non-constant normalized solution to this equation is a minimizer realizing \eqref{eq:LS_Constant_Exp} (\cite{Wan1} or \cite{BGL} p. 273).

The main result proved in this preliminary section can informally be phrased as follows: 

for all $\xi\in \Fknd^M$, but those whose density is symmetric w.r.t. the center of their support, the constant  $\Lambda_{LS}(\xi)$ is an attained infimum; furthermore, if  $\xi_1$ is the restriction to a smaller interval of $\xi_2$ then $\Lambda_{LS}(\xi_1)\geq \Lambda_{LS}(\xi_2)$.

Proving these statements is easy but to this end some preparation is necessary. Around the 80's O.Rothaus published several pioneering works on LSI \cite{Rot2, Rot1,Rot1a,Rot3,Rot4}, and in particular he proved an interesting `alternative' regarding the existence of minimizers, which is \pink{quite} common to Sobolev inequalities. His results started with analysis over intervals and were later extended to Riemannian manifolds $(M,\mu_{\gfrak})$, but can evidently be extended also to weighted Riemannian manifolds $(M,\gfrak,\mu=U\cdot \mu_{\gfrak})$ s.t. $U=\deriv{\mu}{\mu_{\gfrak}}=e^{-V}$ where $V\in \Cinf(M;\R)$ (e.g \cite{Rot5}). We briefly review his approach to LSI. 
Let $\F$ denote one of the function spaces\footnote{In his papers Rothaus assumed the function space $\Cinf_c(\Omega)$ of functions compactly supported inside a domain $\Omega$, however the proof of $a_{\beta}$ being an attained minimum does not rely on the compact support assumption, and applies equally well to the function space $\Cinf(\Omega)$. For the details of the proof the reader is referred to \cite{Rot1, Rot1a}. }  $\Cinf(M)$ or $\Cinf_c(M)$. Given $\beta\in\R$, we define 
\eq{ a_{\beta}:=\inf_{f\in\F} \prnt{\log \int f^2d\mu+\frac{1}{\int f^2d\mu}\int [2\beta |\nab_{\gfrak} f|^2 -f^2\log f^2 ]d\mu}\,,}
which can also be identified as $\inf_{f\in\F} \int [2\beta |\nab_{\gfrak} f|^2 -f^2\log f^2 ]d\mu$ subject to the proviso $\int f^2d\mu=1$. Rothaus showed that $a_{\beta}$ is an attained infimum. Due to the inequality $|\nab_{\gfrak}|f||\leq |\nab_{\gfrak} f|$ a.e. (e.g \cite[p.152]{LiLo}) a minimizer can be assumed to be non-negative. A normalized minimizer $f_{\beta}$  satisfies the non-linear PDE
\eq{ \beta\Delta_{\gfrak,\mu}f_{\beta}+f_{\beta}\log f_{\beta}^2+a_{\beta}f_{\beta} =0\,, } 
where
\[
\Delta_{\gfrak,\mu}=\Delta_{\gfrak}-\Avg{\nab_{\gfrak} V}{\nab_{\gfrak}}\,.
\]
Notice that by definition
\eq{ 2\beta\int|\nab_{\gfrak} f|^2d\mu\geq \int f^2\log f^2d\mu-\int f^2d\mu\log\int f^2d\mu -a_{\beta}\int f^2d\mu\,, }
what motivates naming $a_{\beta}$ the `defect' \cite{Rot5}. 
\begin{remk}\label{rmk:abeta} By definition of the LS constant it follows that  $a_{\beta}=0$ for $\beta\geq \frac{1}{\Lambda_{LS}}$ (since then $\inf_{f\in\F} \int [2\beta |\nab f|^2 -f^2\log f^2 ]d\mu$ is non-negative, hence it is minimized by constants),
 and $a_{\beta}<0$ for $\beta<\frac{1}{\Lambda_{LS}}$. Hence we can identify $\frac{1}{\Lambda_{LS}}$ as the minimal point where the function $\beta\to a_{\beta}$ becomes zero. 
\end{remk}
%\begin{enumerate}
	%\item It is finite.
	%\item It is monotonically non-decreasing in $\beta$
	%\item There is $\beta_0>0$ such that $a_{\beta}=0$ for $\beta\geq \beta_0$, and $a_{\beta}<0$ for $\beta<\beta_0$. 
	%\item $\Lambda_{LS}(M,g,\mu)=\frac{1}{\beta_0}$. 
%\end{enumerate}

  %Assume $M$ is a compact oriented manifold and $\bar{\mu}_{g}$ is the normalized Riemannian measure on $M$. We denote by $|| \cdot ||$ the norm $||f||=\int_M f^2 +\int_M |\nab_g f|^2$, and let  $H_0^1$ be the 
\bigskip
We continue the discussion about log-Sobolev minimizers, restricting our attention to a measure $\xi$ supported on a compact  interval $I$, which w.l.o.g. we assume it to be $[0,1]$. Define $\Hcal([0,1])$ to be the completion of $\Cinf([0,1])$ with respect to the norm $||f||:=\sqrt{\int \prnt{f^2+f^{'2}}d\xi}$. This space can be identified as a closed subspace of $L^2(\xi)\times L^2(\xi)$, hence it is a reflexive Banach space; this implies in particular that bounded sequences of functions admit weakly convergent sub-sequences. 

The inequality $\Lambda_{LS}(\xi)\leq\Lambda_{Poi}(\xi)$ always holds \cite[p.238]{BGL}, however in the examples we presented above we actually had the equality $\Lambda_{LS}(\xi)=\Lambda_{Poi}(\xi)$.  The following theorem of Rothaus indicates on a relation between such an equality and the non-existence of minimizers. 
%We use the term 'alternative' borrowed from the Fredholm alternative theorem, however as will be apparent from the proof, the Rothaus alternative claims that 'at least one' of the statements must hold, and not 'only one', for that we use the term 'weak alternative' when it seems right to stress the distinction from  the 'only-one' alternative.  
\begin{thm}[Rothaus '80 \cite{Rot1}]\label{thm:existenceMinimizers} Assume $\bar{\xi}$ is a probability measure on $[0,1]$ which satisfies the log-Sobolev inequality with constant $\Lambda_{LS}(\bar{\xi})>0$. 
At least one of the following possibilities holds:

\begin{enumerate}
	\item there is a non-constant function $f\in \Hcal([0,1])$ realizing $\Lambda_{LS}(\bar{\xi})$, or
	\item $\Lambda_{LS}(\bar{\xi})=\Lambda_{Poi}(\bar{\xi})$. 
\end{enumerate}
\end{thm}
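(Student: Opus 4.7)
The plan is to identify $\Lambda_{LS}(\bar{\xi})^{-1}$ with the critical parameter of the auxiliary defect functional from the preceding discussion, and to exploit that the defect is always an attained minimum. For $\beta>0$ set
\eq{ a_{\beta}:=\inf\cprnt{ \int 2\beta f'(x)^{2}d\bar{\xi}-\int f(x)^{2}\log f(x)^{2}d\bar{\xi}:\,\, f\in \mathcal{H}([0,1]),\,\,\int f^{2}d\bar{\xi}=1 }\,. }
By Rothaus's cited earlier result, $a_{\beta}$ is attained at some non-negative normalized $f_{\beta}\in\mathcal{H}([0,1])$. By Remark \ref{rmk:abeta}, $\beta\mapsto a_{\beta}$ is non-decreasing, non-positive, vanishes precisely on $[\beta^{*},\infty)$ with $\beta^{*}:=1/\Lambda_{LS}(\bar{\xi})$, and is strictly negative below $\beta^{*}$; as an infimum of a family affine in $\beta$ it is also concave, hence continuous.

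First I would inspect $f_{\beta^{*}}$. From $a_{\beta^{*}}=0$ and $\int f_{\beta^{*}}^{2}d\bar{\xi}=1$ one reads
\eq{ 2\beta^{*}\int (f_{\beta^{*}}')^{2}d\bar{\xi}=\int f_{\beta^{*}}^{2}\log f_{\beta^{*}}^{2}d\bar{\xi}=Ent_{\bar{\xi}}(f_{\beta^{*}}^{2})\,, }
which is precisely the equality case of the LS inequality with constant $\Lambda_{LS}(\bar{\xi})$. Hence if $f_{\beta^{*}}$ is non-constant, it realizes $\Lambda_{LS}(\bar{\xi})$ and case (1) holds.

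The remaining case is that every normalized minimizer at $\beta^{*}$ is constant (necessarily $\equiv 1$). Pick $\beta_n\nearrow \beta^{*}$ with (non-constant) normalized minimizers $f_n:=f_{\beta_n}$. I would first show $f_n\to 1$ strongly in $L^{2}(\bar{\xi})$: any weak $\mathcal{H}$-subsequential limit minimizes $a_{\beta^{*}}$ by lower semi-continuity of the defect and continuity of $\beta\mapsto a_{\beta}$, hence equals $1$ by assumption, and the compact embedding $\mathcal{H}([0,1])\hookrightarrow L^{2}(\bar{\xi})$ upgrades weak to strong convergence. Writing $f_n=1+\epsilon_n g_n$ with $\int g_n^{2}d\bar{\xi}=1$ and $\epsilon_n\to 0^{+}$, the standard expansions near the constant
\eq{ Ent_{\bar{\xi}}(f_n^{2})=2\epsilon_n^{2}\int (g_n-\bar{\xi}(g_n))^{2}d\bar{\xi}+O(\epsilon_n^{3})\,,\qquad \int (f_n')^{2}d\bar{\xi}=\epsilon_n^{2}\int (g_n')^{2}d\bar{\xi}\,, }
combined with the defect identity $2\beta_n \int (f_n')^{2}d\bar{\xi}-Ent_{\bar{\xi}}(f_n^{2})=a_{\beta_n}\leq 0$, yield after division by $\epsilon_n^{2}$
\eq{ \beta_n\int (g_n')^{2}d\bar{\xi}\leq \int (g_n-\bar{\xi}(g_n))^{2}d\bar{\xi}+O(\epsilon_n)\leq 1+O(\epsilon_n)\,. }
Thus $\{g_n\}$ is bounded in $\mathcal{H}([0,1])$; extracting a subsequence with $g_n\rightharpoonup g_{*}$ weakly in $\mathcal{H}$ and strongly in $L^{2}(\bar{\xi})$, the limit satisfies $\int g_{*}^{2}d\bar{\xi}=1$ and $\bar{\xi}(g_{*})=0$ (the latter from $\bar{\xi}(f_n)=1+\epsilon_n\bar{\xi}(g_n)$ together with $\bar{\xi}(f_n)\to 1$), while weak lower semi-continuity gives $\int (g_{*}')^{2}d\bar{\xi}\leq 1/\beta^{*}=\Lambda_{LS}(\bar{\xi})$. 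Using $g_{*}$ as a test function for $\Lambda_{Poi}(\bar{\xi})$ produces $\Lambda_{Poi}(\bar{\xi})\leq \Lambda_{LS}(\bar{\xi})$; combined with the universal inequality $\Lambda_{LS}(\bar{\xi})\leq \Lambda_{Poi}(\bar{\xi})$ this is case (2).

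The delicate step is the a priori $\mathcal{H}$-boundedness of $(f_n)$ needed for extracting the weak subsequential limit invoked above: the defect identity alone only controls the Dirichlet form by the entropy, which is itself bounded above by the Dirichlet form via the LS inequality, yielding a tautology. I would circumvent this by working directly with the Euler--Lagrange equation for $f_n$ together with standard elliptic estimates on the compact interval $[0,1]$ (with the smooth positive weight $\deriv{\bar{\xi}}{m}$), from which the normalization $\int f_n^{2}d\bar{\xi}=1$ yields the required $\mathcal{H}$-bound.
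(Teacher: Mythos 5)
Your route is genuinely different from the paper's. You run the argument through the auxiliary Rothaus defect $a_\beta$ and a limiting argument as $\beta\nearrow\beta^*:=1/\Lambda_{LS}(\bar\xi)$; the paper instead works directly with a minimizing sequence $(f_n)$ for $\Lambda_{LS}(\bar\xi)$ itself, decomposing $f_n=c_n+\theta_n$ with the normalization $\int\theta_n\,d\bar\xi=0$ and $\int(\theta_n')^2\,d\bar\xi=1$. That choice of normalization is the crux of the difference. By pinning the Dirichlet energy of the mean-zero part to $1$, the paper gets $\int\theta_n^2\,d\bar\xi\leq 1/\Lambda_{LS}(\bar\xi)$ immediately from $\Lambda_{LS}\leq\Lambda_{Poi}$ together with the Poincar\'{e} inequality, so $(\theta_n)$ is $\mathcal{H}$-bounded for free and the dichotomy is driven purely by whether $c_n$ stays bounded (case 1) or diverges (case 2). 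Your normalization $\int f_n^2\,d\bar\xi=1$ fixes the $L^2$ norm but leaves the Dirichlet form free, which is exactly why the tautology you correctly flag arises.

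The gap you acknowledge is real, and the proposed fix does not close it. First, the theorem is stated for an arbitrary probability measure $\bar\xi$ on $[0,1]$, so invoking ``the smooth positive weight $\deriv{\bar\xi}{m}$'' adds a hypothesis not present in the statement. Second, even granted smoothness, multiplying the Euler--Lagrange equation by $f_n$ and integrating only reproduces the defect identity, and the nonlinearity $f_n\log f_n^2$ is not controlled by the $L^2$ normalization alone; there is no elliptic bootstrap from $L^2$ to $H^1$ for this equation. What you actually need is a bound on $\int (f_{\beta}')^2\,d\bar\xi$ for $\beta$ in a left neighborhood of $\beta^*$, and you already have the ingredient in hand but use it only to deduce continuity of $a_\cdot$: since $a_\beta=\inf_f\{2\beta\int (f')^2-Ent_{\bar\xi}(f^2)\}$ is a pointwise infimum of affine functions of $\beta$ with slopes $2\int (f')^2$, concavity implies the superdifferentials of $a_\cdot$ are uniformly bounded on compact subintervals of the open interval where $a_\cdot$ is finite, and $2\int (f_\beta')^2$ belongs to the superdifferential at $\beta$ for every minimizer $f_\beta$. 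Granting the Rothaus existence result on a left neighborhood of $\beta^*$ (which you must grant to produce the $f_{\beta_n}$ at all), this gives the missing $\mathcal{H}$-bound without any smoothness hypothesis on the density. A secondary gap: the quadratic expansion of $Ent_{\bar\xi}(f_n^2)$ in $\epsilon_n$ is valid only once $\epsilon_n g_n=f_n-1$ is small pointwise, not merely in $L^2(\bar\xi)$; the analogous step in the paper is protected by an Arzel\`{a}--Ascoli argument upgrading weak $\mathcal{H}$-convergence of $\theta_n$ to uniform convergence, and your sketch needs the same equicontinuity control on the $g_n$ inserted before invoking the expansion.
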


\begin{remk} Rothaus' Theorem implies that whenever $\Lambda_{Poi}(\xi)-\Lambda_{LS}(\xi)>0$ the infimum in \eqref{eq:LS_Constant_Exp}  is attained.
However, at present it is not known in general if the two 
possibilities are mutually exclusive. 
Thus when $\Lambda_{LS}(\xi)=\Lambda_{Poi}(\xi)$ it seems like there is not much we can say (a non-constant minimizer might still exist). Nonetheless the proof will imply the following statement: at least one minimizing sequence converges to a constant. It is worth mentioning that in \cite{Rot5} Rothaus shows that for every compact homogeneous Riemannian manifold  there are a continuum of choices of $U=\deriv{\mu}{\mu_{\gfrak}}$ for which we get $\Lambda_{LS}(M,\gfrak,\mu)=\Lambda_{Poi}(M,\gfrak,\mu)$.

\end{remk}

\begin{proof}[Proof of Theorem \ref{thm:existenceMinimizers} (following \cite{Rot1}) ]
By definition of $\Lambda_{LS}(\xi)$, given a sequence of positive numbers $\epsilon_n\to 0$, there are functions $f_n\in\Cinf([0,1])$ such that 
\eql{\label{eq:LS_inequality_fn} \prnt{\frac{\Lambda_{LS}(\bar{\xi})}{2}+\epsilon_n} Ent_{\bar{\xi}}f_n^2>\int f_n^{'2}d\bar{\xi}\,.}

We can replace $f_n$ by any multiplicative copy of itself, hence we may assume w.l.o.g. that $f_n=c_n+\theta_n$, where $c_n\geq 0$, $\int\theta_nd\bar{\xi}=0$ and $\int \theta_n^{'2}d\bar{\xi}=1$. Since $\Lambda_{LS}(\bar{\xi}) \leq \Lambda_{Poi}(\bar{\xi})$
it holds that $\Lambda_{LS}(\bar{\xi})\int \theta_n^2d\bar{\xi}\leq \int\theta_n^{'2}d\bar{\xi}=1$. Therefore $\{\theta_n\}_{n\in \Nbb}$ is bounded in $\Hcal([0,1])$. We may thus assume $\theta_n$ converges weakly to $\theta\in \Hcal([0,1]$. By weak lower-semicontinuity of the norm $\int \theta^{'2}d\bar{\xi}\leq \lim\int\theta_n^{'2}d\bar{\xi}=1$, and $0=\lim\int\theta_nd\bar{\xi}=\int\theta d\bar{\xi}$. We write $\theta_n(x)=\theta_n(\half)+z_n$, where $z_n(x)=\theta_n(x)-\theta_n(\half)$. Since whenever $x\leq y$
\[ |z_n(y)-z_n(x)|=|\int_x^y\theta_n'(t)dt|\leq \prnt{\int_0^1\theta_n^{'2}(t)dt}^{\half}\prnt{\int_x^y dt}^{\half}=\sqrt{y-x}\,, \]
we conclude that $\{z_n\}_{n\in \Nbb}$ is uniformly bounded (take $y=\half$) and equicontinuous on $[0,1]$. By Arzela-Ascoli we may further assume $z_n\to z$ uniformly on $[0,1]$. In addition since $0=\lim_{n\to\infty}\int \theta_nd\bar{\xi}=\lim_{n\to\infty} \theta_n(\half)\int d\bar{\xi}+\lim_{n\to\infty}\int z_nd\bar{\xi}$, we conclude that the limit $\lim_{n\to\infty}\theta_n(\half)$ exists as well, whence $\theta_n$ converges uniformly on $[0,1]$ to $\theta$ (by uniqueness of the weak limit). Therefore $\theta_n\to \theta$ strongly in $L^2([0,1])$, and in particular $\int\theta_n^2d\bar{\xi}\to \int\theta^2d\bar{\xi}$. 

Evidently we have the following alternative: either
\begin{enumerate}
  \item $c_n\to c$ a finite limit, or
	\item $c_n\to\infty$.
\end{enumerate}
If $c_n\to c\in R_{+}$, then $f_n\to f=c+\theta$ strongly in $L^2([0,1])$ and uniformly on $[0,1]$. 
On the one hand
\[ \frac{\Lambda_{LS}(\bar{\xi})}{2}Ent_{\bar{\xi}}f^2 \leq \int f^{'2}d\bar{\xi}\,. \]
Moreover,
\[ \frac{\Lambda_{LS}(\bar{\xi})}{2}Ent_{\bar{\xi}}f^2\geq \lim_{n\to\infty}\int f_n^{'2}d\bar{\xi}=1\,. \]
Since $\lim_{n\to\infty}\int f_n^{'2}d\bar{\xi}\geq \int f^{'2}d\bar{\xi}$ we conclude that $\int f^{'2}d\bar{\xi}=1$, and in particular $f$ is not constant.
 
\bigskip

If $c_n\to\infty$, using homogeneity of the entropy we may substitute $g_n:=c_n^{-1}f_n=1+\frac{\theta_n}{c_n}$  into  \eqref{eq:LS_inequality_fn}. Consequently for each $n\in \Nbb$, we have (after multiplication by $c_n^2$)
\[ (\frac{\Lambda_{LS}(\bar{\xi})}{2}+\epsilon_n)c_n^2\prnt{\int g_n^2\log g_n^2d\bar{\xi}-\prnt{\int g_n^2d\bar{\xi}} \log\prnt{\int g_n^2d\bar{\xi}}} >c_n^2\int g_n^{'2}d\bar{\xi}\,.\]
In the limit the RHS is $\lim_{n\to\infty}\int \theta_n^{'2}d\bar{\xi}= 1\geq\int \theta^{'2}d\bar{\xi}$. By Taylor expansions one can verify that 
$c_n^2\int g_n^2\log g_n^2 d\bar{\xi} =3\int\theta_n^2d\bar{\xi} +O(\frac{1}{c_n})$ and 
$c_n^2\int g_n^2\log\int g_n^2c_n^2=\int \theta_n^2+O(\frac{1}{c_n})$, therefore in the limit the LHS is $\Lambda_{LS}(\bar{\xi})\int\theta^2d\bar{\xi}$.  It follows that 
\[ \Lambda_{LS}(\bar{\xi})\int \theta^2d\bar{\xi}\geq \lim_{n\to\infty}\int\theta_n^{'2}d\bar{\xi}=1\geq \int\theta^{'2}d\bar{\xi}\,.\]
However since $\Lambda_{LS}(\bar{\xi})\leq \Lambda_{Poi}(\bar{\xi})$ and $\Lambda_{Poi}(\bar{\xi})=\inf_{\substack{0\neq f\in \Hcal\\\int fd\xi=0}}\frac{\int f^{'2}d\bar{\xi}}{\int f^2d\bar{\xi}}$, we conclude that  $\Lambda_{LS}(\bar{\xi})=\Lambda_{Poi}(\bar{\xi})$ (since $\Lambda_{LS}(\bar{\xi})>0$ the inequality implies $\theta\neq 0$).  
%(however $\lim_{n\to\infty}\frac{\theta_n}{c_n}$ might very well be zero, which is indeed the situation for the function $f_{\epsilon}$ in the circle example).  
\end{proof}

\bigskip
Assume $\bar{\xi}=\bar{p}\cdot m$ is a probability measure on $\R$, which is supported on a compact interval $I=[a,b]$. Recall from Chapter \ref{chp:Poinc}  that we can identify the \Poinc constant $\Lambda_{Poi}(\bar{\xi})$ as the first non-zero eigenvalue of the regular Sturm-Liouville problem 
\[ L_{\bar{\xi}}u=\lambda u\qquad u'(a)=u'(b)=0 \,.\]
Here $L_{\bar{\xi}}=-\Delta_{\bar{\xi}}$, where $\Delta_{\bar{\xi}}$ is the weighted Laplacian defined by $\Delta_{\bar{\xi}}u= \frac{1}{\bar{p}}(\bar{p}u')'$, and the equation is satisfied in the weak sense. 

The next corollary was originally formulated in \cite{Rot3} for manifolds. It relies on the previous statement, which was proved for intervals, hence for the sake of completeness and simplicity, we formulate it for intervals as well. 
\begin{cor}[Rothaus '81 ]\label{RothausLemma} If some $L_{\bar{\xi}}$ eigenfunction $u$ associated to the first non-zero eigenvalue $\lam=\Lambda_{Poi}(\bar{\xi})$ satisfies $\int u(x)^3d\bar{\xi}(x)\neq 0$ then $\Lambda_{Poi}(\bar{\xi})> \Lambda_{LS}(\bar{\xi})$. In particular there is a function realizing $\Lambda_{LS}(\bar{\xi})$.
\end{cor}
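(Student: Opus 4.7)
The plan is to use a perturbative test function of the form $f_\epsilon = 1 + \epsilon u$, where $u$ is the eigenfunction of $L_{\bar\xi}$ associated with $\Lambda_{Poi}(\bar\xi)$, and expand the Log-Sobolev Rayleigh quotient to higher order in $\epsilon$. Normalizing so that $\int u\, d\bar\xi = 0$, the function $f_\epsilon^2 = 1 + (2\epsilon u + \epsilon^2 u^2)$ can, for small $\epsilon$, be written in the form $c_\epsilon + g_\epsilon$ with $c_\epsilon > 0$ and $\int g_\epsilon\, d\bar\xi = 0$, so $f_\epsilon \in \tilde{\mathcal{F}}_{LS}(\bar\xi)$ and is a valid competitor.

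First I would compute the Dirichlet-type numerator: since $u$ is the Poincar\'e eigenfunction with Neumann boundary conditions on $[a,b]$, integration by parts gives
\[
\int f_\epsilon'^2\, d\bar\xi = \epsilon^2 \int u'^2\, d\bar\xi = \epsilon^2 \Lambda_{Poi}(\bar\xi)\int u^2\, d\bar\xi.
\]
Next I would Taylor-expand $\phi(t) = t\log t$ around $t=1$ to obtain
\[
\mathrm{Ent}_{\bar\xi}(f_\epsilon^2) = \tfrac{1}{2}\int g^2\, d\bar\xi - \tfrac{1}{6}\int g^3\, d\bar\xi - \tfrac{1}{2}G^2 + O(\epsilon^4),
\]
with $g = 2\epsilon u + \epsilon^2 u^2$ and $G = \int g\, d\bar\xi = \epsilon^2\int u^2\, d\bar\xi$. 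Collecting orders in $\epsilon$ and using $\int u\, d\bar\xi = 0$ carefully, the $\epsilon^2$ contributions combine to $2\epsilon^2 \int u^2\, d\bar\xi$, while the $\epsilon^3$ contributions from $\tfrac{1}{2}\int g^2\, d\bar\xi$ and $-\tfrac{1}{6}\int g^3\, d\bar\xi$ combine to $\tfrac{2\epsilon^3}{3}\int u^3\, d\bar\xi$. Hence
\[
\mathrm{Ent}_{\bar\xi}(f_\epsilon^2) = 2\epsilon^2 \int u^2\, d\bar\xi + \tfrac{2\epsilon^3}{3}\int u^3\, d\bar\xi + O(\epsilon^4).
\]

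Dividing, the LS Rayleigh quotient for $f_\epsilon$ equals
\[
\frac{2\int f_\epsilon'^2\, d\bar\xi}{\mathrm{Ent}_{\bar\xi}(f_\epsilon^2)} = \Lambda_{Poi}(\bar\xi)\left(1 - \frac{\epsilon \int u^3\, d\bar\xi}{3\int u^2\, d\bar\xi} + O(\epsilon^2)\right).
\]
Since $\int u^3\, d\bar\xi \neq 0$ by hypothesis, we can choose the sign of $\epsilon$ so that the first-order correction is strictly negative; for $|\epsilon|$ sufficiently small the right-hand side is therefore strictly less than $\Lambda_{Poi}(\bar\xi)$. By the variational definition of $\Lambda_{LS}(\bar\xi)$ this yields $\Lambda_{LS}(\bar\xi) < \Lambda_{Poi}(\bar\xi)$. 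Invoking Rothaus' dichotomy (Theorem \ref{thm:existenceMinimizers}), alternative (2) is excluded, so alternative (1) must hold and a non-constant minimizer of $\Lambda_{LS}(\bar\xi)$ in $\mathcal{H}([0,1])$ exists.

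The only subtle point is the bookkeeping of the $\epsilon^3$ terms in the entropy expansion; the cancellation of the $\epsilon^2$ cross terms against $G$ and the precise coefficient $\tfrac{2}{3}$ in front of $\int u^3\, d\bar\xi$ must be verified carefully, because the whole argument hinges on the sign of a single first-order term being controllable. Beyond this, the rest is a straightforward application of the Rothaus alternative.
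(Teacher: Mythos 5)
Your proof is correct and follows essentially the same perturbative strategy as the paper: expand around the constant with the test function $f_\epsilon = 1+\epsilon u$, identify the third-order coefficient $\tfrac{2}{3}\epsilon^3\int u^3\,d\bar\xi$ in the entropy, choose the sign of $\epsilon$ to make the LS Rayleigh quotient strictly below $\Lambda_{Poi}(\bar\xi)$, and then invoke the Rothaus dichotomy. The only cosmetic difference is that the paper packages the computation through Rothaus' defect functional $a_\beta$ and its characterization from Remark~\ref{rmk:abeta}, whereas you compute the LS Rayleigh quotient directly; your expansion of the entropy and the coefficient $\tfrac{2}{3}$ are correct, so the argument goes through as stated.
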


\begin{proof} The conclusion will follow from   Remark \ref{rmk:abeta}. We defined $a_{\beta}$ as the infimum over  expressions 
\[ \log \int f^2d\bar{\xi}+\frac{1}{\int f^2d\bar{\xi}}\int [2\beta |\nab f|^2 -f^2\log f^2 ]d\bar{\xi} \,.\]

Take $f=1+\epsilon u$, where $\int u^2d\bar{\xi}=1$. The zeroth and 1st order terms in $\epsilon$ vanish. The 2nd order term is $(2\beta\lam-2)\epsilon^2$, and the 3rd order term is $-\frac{2}{3}\epsilon^3\int u^3d\bar{\xi}$. Hence if $\beta\lam<1$ then $a_{\beta}<0$ (i.e. $\beta$ is below the threshold $\Lambda_{LS}(\bar{\xi})$) while if $\beta\lam=1$ and $\int u^3 d\bar{\xi}\neq 0$ we still have $a_{\beta}<0$ (implying that $\frac{1}{\lam}<\frac{1}{\Lambda_{LS}(\bar{\xi})}$). 
%
%
%
%
%For $\beta>\beta_0$ the minimizer is uniquely the constant function. If $\beta_0>\frac{1}{\Lambda_{Poi}(M,g,\mu)}$ then by the Rothaus alternative $\Lambda_{LS}(M,g,\mu)=\frac{1}{\beta_0}$ is an attained minimum. Now the proof of \ref{RothausLemma} is simple.  
\end{proof}

The first result which was informally stated at the beginning of this section is a straightforward consequence of the following corollary. 
\begin{cor}\label{cor:LSMinimizer} Let $I=(a,b)\subset \R$ be a bounded interval s.t. $a+b>0$, and let $d\xi=pdm$ be a measure on $I$, where $p\in C^1(I)$ is a positive function which satisfies one of the following conditions: 
\begin{itemize}
	\item If $0\notin I$: then $p'(x)\neq 0$ for all $x\in I$.
	\item If $0\in I$: $p(x)=p(-x)$ on $(a,-a)$ and $p'(x)\neq 0$ on $(0,b)$. 
\end{itemize}
Then a minimizer realizing the LS constant $\Lambda_{LS}(\xi)$ exists.
\end{cor}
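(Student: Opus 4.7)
The plan is to apply Rothaus' Corollary \ref{RothausLemma}: if the first non-zero Neumann eigenfunction $u$ of the Sturm-Liouville problem $(pu')' = -\lambda_1 p u$ on $I = (a,b)$, with $u'(a) = u'(b) = 0$, satisfies $\int_I u^3 p\, dm \neq 0$, then $\Lambda_{LS}(\xi) < \Lambda_{Poi}(\xi)$, and Rothaus' dichotomy (Theorem \ref{thm:existenceMinimizers}) forces the first alternative, producing a non-constant minimizer. The hypothesis that $p \in C^1(I)$ is strictly positive on $\overline{I}$ places us in the regular Sturm-Liouville setting, so by Theorem \ref{SLResults1} the first non-zero eigenvalue $\lambda_1$ is simple, and $u$ has a unique interior zero $z \in (a,b)$ and is strictly monotonic on each side of $z$. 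Normalize so that $\int u^2 p\, dm = 1$ with $u' \geq 0$; orthogonality with constants gives $\int u\, p\, dm = 0$.

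The third moment admits a useful reformulation. Set $w(x) := \int_a^x u\, p\, dm$, so $w(a) = w(b) = 0$ and $w' = up$; integrating the eigenvalue equation once using $u'(a) = 0$ yields $pu' = -\lambda_1 w$. Two successive integrations by parts then produce
\[
\int_a^b u^3 p\, dm \;=\; \int_a^b u^2 w'\, dm \;=\; -2\int_a^b u u' w\, dm \;=\; 2\lambda_1 \int_a^b u\, \frac{w^2}{p}\, dm,
\]
so the task reduces to ruling out exact cancellation between the nonpositive contribution on $(a,z)$ and the nonnegative contribution on $(z,b)$.

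The main obstacle is converting the asymmetry hypotheses into this strict nonvanishing. The guiding principle is that exact cancellation is rigid: if the weighted problem were invariant under the reflection $\sigma(x) := a+b-x$ about the midpoint $m_0 := (a+b)/2$ of $I$, then by uniqueness of the first eigenfunction, $u \circ \sigma = -u$, and the third moment would vanish by odd parity. The hypotheses preclude $\sigma$-invariance of $p$: in Case 1, $p' \neq 0$ on $I$ forbids $p'(m_0) = 0$, whereas $\sigma$-symmetry of $p$ forces $p'(m_0) = 0$; in Case 2, $a + b > 0$ places $m_0 \in (0,b)$, and $\sigma$-symmetry would again force $p'(m_0) = 0$, contradicting $p' \neq 0$ on $(0,b)$. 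My plan to upgrade this qualitative obstruction to the desired strict nonvanishing is a homotopy argument: set $\hat p := p \circ \sigma$ and interpolate $p_t := (1-t)p + t\hat p$ for $t \in [0,1]$, so that $p_{1/2}$ is $\sigma$-symmetric and $\Phi(t) := \int_I u_t^3 p_t\, dm$ vanishes at $t = 1/2$ by odd parity, while $p_0 \neq \hat p_0$. Smooth dependence of the first eigenpair on the weight (Theorem \ref{SLResults2}) makes $\Phi$ real-analytic in $t$; the technical heart of the argument will be to compute the leading nontrivial Taylor coefficient of $\Phi$ at $t = 1/2$ and verify that it is driven to be nonzero by the asymmetry $p - \hat p$, whence $\Phi(0) \neq 0$, completing the proof.
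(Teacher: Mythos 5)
You start the same way as the paper: invoke Rothaus' Corollary~\ref{RothausLemma} (via Theorem~\ref{thm:existenceMinimizers}) and reduce to showing the cubic moment $\int_I u^3\,d\xi \neq 0$ for the first non-trivial Neumann eigenfunction $u$. Your reformulation of that moment is also correct, but it is a \emph{different} one from the paper's. You write $\int u^3 p\,dm = 2\lambda_1\int u\,w^2/p\,dm$ with $w=\int_a^{\cdot}up\,dm$; the paper instead integrates by parts twice in a different direction to arrive at
\[
\int_I u^3\,d\xi \;=\; -\tfrac{2}{3\lambda^2}\int_I A(x)\,v(x)^3\,dx, \qquad v:=p^{1/2}u',\quad A:=\tfrac{p'}{p^{3/2}}.
\]
The paper's choice is the productive one: $v>0$ on $(a,b)$, so the sign of the integrand is governed by $A$, i.e.\ by $p'$. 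In Case~1 ($0\notin I$, $p$ strictly monotone) the integrand has a fixed sign and the conclusion is immediate. In Case~2 the paper uses the Liouville transformation so that $v$ solves a Dirichlet problem $v''+Hv=0$ with $H$ even (after even extension of $p$), compares $v$ with its reflection $\bar v(x)=v(-x)$ via the Sturm separation theorem, and concludes that $v\geq \bar v$ on $[0,b]$ with strict inequality somewhere, while $A$ is odd on $(a,-a)$ and has a fixed sign on $(0,b)$; this pins down the sign of the integral. Your reformulation $\int u\,w^2/p$ still has an integrand changing sign at the interior zero of $u$, so it does not by itself isolate the asymmetry.

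The genuine gap is the homotopy argument. Even granting real-analytic dependence of $\Phi(t)=\int u_t^3 p_t\,dm$ on $t$, knowing that the leading Taylor coefficient of $\Phi$ at $t=\tfrac12$ is nonzero only tells you that $\Phi$ is nonzero in a punctured neighbourhood of $t=\tfrac12$. It in no way implies $\Phi(0)\neq 0$: a real-analytic function that is not identically zero may still vanish at $t=0$. To make the homotopy strategy work you would need to show $\Phi$ has a fixed sign on $[0,\tfrac12)$ (or some other global control), and nothing in your outline does that. The paper avoids this entirely by the direct sign analysis sketched above, which is both more elementary and actually closes the argument. I would recommend abandoning the homotopy step and instead performing integration by parts towards $v=p^{1/2}u'$ as the paper does, then arguing about signs pointwise: Case~1 is trivial once you observe $v>0$ and $A$ has a fixed sign; Case~2 requires the reflection-and-Sturm-comparison step, which is the real content you are missing.
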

\begin{remk} The condition $a+b>0$ is critical, as this result is incorrect when $a=-b$, considering what we have mentioned in the foregoing regarding the uniform density on the interval. 
\end{remk}
\begin{proof} Let $u$ be an eigenfunction corresponding to the first non-zero eigenvalue of $\Delta_{\xi}$:
\eql{\label{SLeqn}  (p(x)u(x)')'=-\lam p(x)u(x) \qquad \qquad u'(a)=u'(b)=0\,. }
By Corollary \ref{RothausLemma} it is sufficient to prove that $\int u(x)^3d\xi\neq 0$. 
Being the first non-constant eigenfunction,  $u'(x)\neq 0$ inside $I$ (see for example the argument in \cite[Example 1.1]{Wan1}), hence we may assume w.l.o.g. $u'(x)>0$ inside $I$. 
Using \eqref{SLeqn} and integration by parts:
{\small 
\eq{\int_{I} u(x)^3 d\xi &= \int_{I} p(x) u(x) \cdot u(x)^2dx = -\frac{1}{\lambda}\int_{I} (p(x) u(x)')'u(x)^2dx
\\&=\frac{2}{\lambda}\int_{I} p(x)(u(x)')^2u(x)dx=-\frac{2}{\lambda^2}\int_{I} (p(x) u(x)')'(u(x)')^2dx=
\frac{2}{\lambda^2}\int_{I} p(x)(u(x)')^2u(x)''dx\\&=\frac{2}{\lambda^2}\int_{I} p(x) (\frac{(u(x)')^3}{3})'dx=-\frac{2}{3\lambda^2}\int_{I}\frac{p(x)'}{p(x)^{3/2}} \prnt{p(x)^{1/2}u(x)'}^3 dx:=-\frac{2}{3\lambda^2}\int_{I} A(x) v(x)^3 dx\,,    }}
where $v(x):=p(x)^{1/2}u(x)'$ and $A(x):=\frac{p'(x)}{p(x)^{3/2}}$.
In case 1, since $p(x)$ is strictly monotone on $I$ (in particular $A(x)\neq 0$) and $v(x)>0$ inside $I$, the last integral is non-zero. 
In case 2, we notice that the transformation $v(x):=p(x)^{\half}u'(x)$ turns the BVP \eqref{SLeqn} into the following Dirichlet BVP \cite{PaWe, Kro}:
\eq{ v''(x)+H(x)v(x)=0\qquad\qquad \text{with \,\, BC } \quad v(a)=v(b)=0 \,,}
where
\eq{H(x):=
\lam+\half \prnt{ p''(x)-\frac{1}{2}\prnt{\frac{p'(x)}{p(x)}}^2 }\,. }
Since $u'(x)\neq 0$ on $(a,b)$ the function $v(x)\neq 0$ as well on $(a,b)$. We may thus assume w.l.o.g. that $v(x)>0$ on $(a,b)$.
Since $p(x)=p(-x)$ on $(a,-a)$ there is no problem defining an odd extension of it on $(-b, b)$ which we still denote by $p$. Since $H(x)=H(-x)$ on $(-b, b)$ the function $\bar{v}(x):=v(-x)$ satisfies the same ODE with BC $\bar{v}(-a)=\bar{v}(-b)=0$. Clearly $\bar{v}(0)=v(0)$, and $b>-a$ by assumption, 
unless $v(x)$ and $\bar{v}(x)$ coincide they have no additional intersection point inside $(0,b)$; indeed, if $x_1\in (0,b)$ is a second intersection point, then the function $g_{v}(x):= v(x)-\bar{v}(x)$ satisfies $$g_{v}''(x)+H(x)g_{v}(x)=0\qquad g_{v}(0)=g_{v}(x_1)=0\,,$$
then by Sturm's separation  theorem (e.g \cite[p.314]{BiRo}), since $v(x)$ and $g_{v}(x)$ are not proportional on $[0,x_1]$ (considering that $g_{v}(0)=0$),  $v(x)$ must have a zero inside $(0,x_1)$ which is a contradiction. By assumption $b>-a$ therefore $v(x)\geq \bar{v}(x)$ on $[0, b]$. 
Since they do not coincide there is one point $x\in [0,b]$ where the inequality must be strict. Notice that $A(x)=-A(-x)$ on $(a, -a)$, and by  the assumption $p'(x)\neq 0$ on $(0,b)$ it follows that $A(x)$ is non-vanishing on $(0,b)$. Therefore the sign of the integral 
\eq{ \int_{a}^{b} A(x) v(x)^3 dx=\int_{a}^{-a}A(x) v(x)^3 dx+\int_{-a}^{b} A(x) v(x)^3 dx=\\
\int_0^{-a}A(x) \prnt{v(x)^3-\bar{v}(x)^3} dx+\int_{-a}^{b} A(x) v(x)^3 dx
}
coincides with the sign of $A$ on $(0,b)$, in particular it is non-zero.  
\end{proof}
Of course the same can be concluded if we assume instead that $a+b<0$ or that there is some point $s\in I$ such that $p(x-s)$ satisfies one of the two conditions mentioned. \smallskip

The reader is referred to Definition \ref{defn:Jknh} for the definition of the functions $J_{K,N,\hfrak}$ for $N\in (-\infty,\infty]\setminus \{1\}$. 
\begin{cor} Let $K\in \R$, $N\in (-\infty,\infty]\setminus \{1\}$, $d\in (0,\infty)$, and  $\hfrak\in \R$. Define $\xi_{\hfrak, d}:=J_{K,N,\hfrak}1_{[-\frac{d}{2},\frac{d}{2}]}m\in \Fknd^M$ (where $[-\frac{d}{2},\frac{d}{2}]\subset \isupp(J_{K,N,\hfrak})$).

Set $\delta=\delta(K,N)$  as in Definition \ref{dfn:deltaSymbols}.
If 
\begin{itemize}
    \item $\delta<0$, or
    \item $\delta\geq 0$ and $\hfrak\neq 0$,
\end{itemize}
then $\Lambda(\xi_{\hfrak, d})$ is attained by a minimizing function. 
\end{cor}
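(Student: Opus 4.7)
The plan is to exploit the invariance of $\Lambda_{LS}(\xi)$ under positive scaling, translation and reflection, and then verify the hypotheses of Corollary \ref{cor:LSMinimizer} case-by-case. Using the factorization $J_{K,N,\hfrak}(x)=g_s\,Y_{K,N,\hfrak}(x+s)$ with $s=s(K,N,\hfrak)$ from \eqref{DensityExpressions_0}--\eqref{DensityExpressions}, I would first translate $\xi_{\hfrak,d}$ by $-s$, so the reduced measure has density $Y_{K,N,\hfrak}$ on an interval of length $d$ centered at $s$. By a reflection, WLOG $s\ge 0$.

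The density $Y_{K,N,\hfrak}$ falls into one of two qualitative families. In the \emph{strictly monotone} cases (b1), (c2), (c3), (d2), after a further translation of the measure if necessary, the supporting interval lies in $(0,\infty)$ with $a+b>0$, the derivative $p'$ does not vanish there, and Corollary \ref{cor:LSMinimizer} case 1 applies. In the \emph{symmetric-peaked} cases (a), (c1), (d1), the density $Y_{K,N,\hfrak}(x)$ is even about $0$ with a unique interior critical point at $0$. Here, if $s>d/2$ (which forces $\hfrak\neq 0$) the critical point of $p$ lies outside the interval $[s-d/2,s+d/2]$ and case 1 of the corollary applies; while if $0<s\le d/2$ the interval contains $0$ with $a+b=2s>0$, the evenness of $p$ about $0$ yields $p(x)=p(-x)$ on $(s-d/2,d/2-s)$, and $p'\neq 0$ on $(0,s+d/2)$, so case 2 of the corollary applies.

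The only remaining configuration is $s=0$ in the symmetric-peaked family, i.e.\ $\hfrak=0$. Under the hypothesis of the corollary this sub-case survives only when $\delta<0$ (the $\delta\ge 0, \hfrak=0$ sub-case is precisely the one excluded). Here $a+b=0$, so Corollary \ref{cor:LSMinimizer} cannot be invoked directly; moreover the first non-zero eigenfunction of $\Delta_\xi$ is odd by the reflection symmetry of $p$ and $I$, so $\int u^3\,d\xi=0$ and Corollary \ref{RothausLemma} is also inconclusive. For this one residual configuration I would invoke Rothaus' alternative (Theorem \ref{thm:existenceMinimizers}) and rule out the equality $\Lambda_{LS}(\xi_{0,d})=\Lambda_{Poi}(\xi_{0,d})$ by testing the LS inequality against a non-constant \emph{even} trial function $f=1+\varepsilon\varphi$ tailored to the strictly log-convex density ($\cosh^{N-1}(\sqrt{-\delta}\,x)$ or $e^{|K|x^2/2}$); a second-order expansion of the defect $a_\beta$ from Remark \ref{rmk:abeta} at $\beta=1/\Lambda_{Poi}(\xi_{0,d})$ — together with the strict convexity of $p$ available precisely when $\delta<0$ — should yield $a_{1/\Lambda_{Poi}}<0$, hence $\Lambda_{LS}<\Lambda_{Poi}$, and Rothaus' theorem then delivers the minimizer.

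The expected hard part is this last step: producing an explicit even trial function whose log-Sobolev ratio strictly undercuts the first Poincar\'e eigenvalue when $\hfrak=0$ and $\delta<0$. This is where the assumption $\delta<0$ is essential — the density then has a strict minimum in the interior, in sharp contrast to the log-concave $\delta\ge 0$ regime where $\Lambda_{LS}=\Lambda_{Poi}$ can genuinely occur (as on the circle) and the hypothesis of the corollary correctly rules that sub-case out.
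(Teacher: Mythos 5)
Your case-by-case translation argument for $\hfrak\neq 0$ is precisely the (implicit) reasoning in the paper: the corollary is stated immediately after Corollary~\ref{cor:LSMinimizer} and the one-sentence extension remark about applying it after a translation by some $s\in I$; no independent proof is given, and your reduction to the canonical forms $Y_{K,N,\hfrak}$ with a subsequent translation by $s_{K,N,\hfrak}$ (monotone cases giving case~1, symmetric-peaked cases giving case~1 or case~2 depending on whether the critical point lies inside the interval) is exactly what that remark is gesturing at. That part is correct and in the spirit of the paper.

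The genuine gap is in the residual case $\hfrak=0$, $\delta<0$, and you correctly identify that Corollary~\ref{cor:LSMinimizer} cannot be invoked there (the density $\cosh(\sqrt{-\delta}x)^{N-1}$ is symmetric about the midpoint $0$ of $[-\frac{d}{2},\frac{d}{2}]$, so after any translation by $t\neq 0$ neither case of that corollary holds: case~1 fails because the critical point stays inside the interval, case~2 fails because the density is symmetric about $t$, not $0$), and that Corollary~\ref{RothausLemma} is inconclusive since the first Neumann eigenfunction is odd so $\int u^3\,d\xi=0$. But your proposed fix does not work as written. Expanding the defect $a_\beta$ of Remark~\ref{rmk:abeta} at $\beta=1/\Lambda_{Poi}(\xi_{0,d})$ against $f=1+\varepsilon\varphi$ with $\varphi$ mean-zero, the second-order coefficient is
\[
2\Big(\beta\int\varphi'^2\,d\bar\xi-\int\varphi^2\,d\bar\xi\Big)
=2\int\varphi^2\,d\bar\xi\,\Big(\tfrac{1}{\Lambda_{Poi}}\tfrac{\int\varphi'^2}{\int\varphi^2}-1\Big)\ge 0,
\]
with equality only when $\varphi$ is proportional to the odd first eigenfunction $u_1$. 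An even trial function is therefore \emph{not} proportional to $u_1$, so the second-order term is strictly positive — it pushes $a_\beta$ upward, not below zero. Taking $\varphi=u_1$ instead kills the second-order term, but then the third-order term $-\frac{2}{3}\varepsilon^3\int u_1^3\,d\bar\xi$ also vanishes by oddness, so one is left with no sign information without a genuine fourth-order (or otherwise more delicate) computation, which you do not supply. Strict log-convexity of $p$ does not rescue the argument at the level you invoke it. I note that the paper itself provides no proof for this sub-case and immediately follows the corollary with the remark ``At present we do not know if $\Lambda(\xi_{0,d})$ is attained,'' so either the corollary's hypothesis is intended to read $\hfrak\neq 0$ throughout, or the paper itself overreaches here; in any event your proposal inherits the same unresolved gap and the sketched remedy would not close it.
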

\begin{remk} At present we do not know if $\Lambda(\xi_{0,d})$ is attained; we suspect that like the case $K=0$ there is no minimizer.
\end{remk}

For the \Poinc constant we know that if $\xi_1$ is a restriction of $\xi_2$ then $\Lambda_{Poi}(\xi_1)\geq \Lambda_{Poi}(\xi_2)$. We show that $\Lambda_{LS}$ manifests  this property too. The proof is based on a variational identity due to Holley and Stroock \cite{HoSt} (see also \cite[p.240]{BGL}). 

\begin{lem}[Dependence on $D$]\label{lem:Dependence_d} Assume $J\in L^{\infty}(I_0)$ where $I_0\subset \R$ is a compact interval. If $\xi_0=J\cdot m\in \M_b$ and $\xi_1$ is a restriction of $\xi_0$ to $I_1\subset I_0$, then $\Lambda_{LS}(\xi_1)\geq \Lambda_{LS}(\xi_2)$.
\end{lem}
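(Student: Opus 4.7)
My plan is to obtain this as a direct corollary of the general diameter monotonicity Lemma \ref{D_monotonicity}, together with the Holley--Stroock variational representation of the entropy that was already recorded in Example \ref{exmp:LS}. The core observation is that $\Lambda_{LS}(\bar{\xi})$ fits precisely the abstract template of Lemma \ref{D_monotonicity}, so no fresh mollification or truncation argument is needed here---all of that work has been done once and for all in the general lemma.

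First, I would set $p=2$ and choose $g(y,r):=\phi(y^{2})-\phi(r)-\phi'(r)(y^{2}-r)$ with $\phi(r)=r\log r$, so that $g\geq 0$ by convexity of $\phi$. By Holley--Stroock, for every compactly supported, a.c.\ probability measure $\bar{\xi}$,
\[
Ent_{\bar{\xi}}(f^{2}) \;=\; \inf_{r>0}\int g(f(x),r)\,d\bar{\xi}(x) \;=\; v^{(g)*}_{f}(\bar{\xi}).
\]
Using 2-homogeneity of the entropy in $f^{2}$, a standard rewriting gives
\[
\Lambda_{LS}(\bar{\xi}) \;=\; \inf_{f\in\F_{LS}^{a}} \Phi_{u_{f}^{*},\,v_{f}^{(g)*}}(\bar{\xi}),\qquad u_{f}^{*}(\bar{\xi}):=2\int (f')^{2}\,d\bar{\xi},
\]
exactly in the form required by Lemma \ref{D_monotonicity} (with $p=2$ and $\mathcal{I}=(0,\infty)$).

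Second, I would check the two hypotheses of Lemma \ref{D_monotonicity} for the measure $\xi_{0}=J\cdot m$: (i) $J\in L^{\infty}(I_{0})$ is given; (ii) continuity of $f\mapsto v^{(g)*}_{f}(\bar{\xi}_{0})$ in $\|\cdot\|_{L^{\infty}(I_{0})}$ is precisely what is verified in Remark \ref{remk:D_monotonicity} for the LS problem (using uniform continuity of $\phi_{1}(s)=s^{2}\log s^{2}$ and $\phi_{2}(s)=s^{2}$ on the image of $f$ and Lebesgue dominated convergence). Thus Lemma \ref{D_monotonicity} applies verbatim and yields $\Lambda_{LS}(\xi_{1})\geq \Lambda_{LS}(\xi_{0})$.

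There is essentially no obstacle, since the only analytic ingredient---smoothing of the trivial extension by constants and the $L^{\infty}$-continuity of $f\mapsto Ent_{\bar{\xi}_{0}}(f^{2})$---has already been absorbed into Lemma \ref{D_monotonicity} and Remark \ref{remk:D_monotonicity}. The one point worth double-checking is that the infimum in the Holley--Stroock formula can indeed be taken over $r\in(0,\infty)$ (it is coercive and strictly convex in $r$), which is the mild technical fact that makes the auxiliary functional $v_{f}^{(g)*}$ well-defined and finite on $\F_{LS}^{a}$; once this is noted, the statement follows immediately.
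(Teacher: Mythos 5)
Your proof is correct and takes essentially the same route as the paper's: both reduce the statement to Lemma \ref{D_monotonicity} using the Holley--Stroock variational representation of the entropy from Example \ref{exmp:LS}, with the hypotheses of that lemma discharged by Remark \ref{remk:D_monotonicity}. (As a minor aside, your version $\inf_{r>0}\int g(f(x),r)\,d\bar{\xi}$ with $g(y,r)=\phi(y^{2})-\phi(r)-\phi'(r)(y^{2}-r)$ is the internally consistent form matching Example \ref{exmp:LS}; the $g(f^{2}(x),r)$ appearing in the paper's one-line proof is a small notational slip.)
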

\begin{proof} The theorem is a straightforward consequence of Lemma \ref{D_monotonicity}, whose relevance was demonstrated in  Example \ref{exmp:LS}, and whose conditions were verified in Remark \ref{remk:D_monotonicity}. For its implementation we used the function  $g(y,r)=\phi(y^2)-\phi(r)-\phi'(r)(y^2-r)$ with $\phi(r)=r\log r$; the crucial properties of this function are that by convexity of $\phi$ it follows that $g(y,r)\geq 0$, and in addition it verifies the following  variational identity:
$$\inf_{r>0}\int g(f^2(x),r)d\bar{\xi}(x)=Ent_{\bar{\xi}}(f^2)\,.$$

\end{proof}

\section{Lower bounds for $\Lambda_{LS}(M,\gfrak,\mu)$ under $CDD_b(K,\infty,D)$}

\subsection{Preliminaries}
%{eqn:LS_Bound_repeat}

Let $(M,\gfrak,\mu)$ be a \cwrm{} which satisfies $CDD_b(K,N,D)$. 
According to Theorem \ref{LS:Estimate_gen} we have the lower bound $\Lambda_{LS}(M,\gfrak,\mu)\geq \rho_{K,\infty,D}$, where 
\eql{ \label{LS_rhoknd}
&\rho_{K,\infty,D}=\inf_{0\neq \xi\in \Fkinfd^M(\R)}\Lambda_{LS}(\xi) \,,\\\nonumber  &\text{and}\\& \Lambda_{LS}(\xi)=\inf_{f\in\tl{\F}_{LS}(\bar{\xi})}\left\{ \frac{2\int f'(t)^2d\bar{\xi}(t)}{Ent_{\bar{\xi}}(f^2)}\right\}=
  \inf_{f\in \F_{LS}(\xi)}\left\{ \frac{2\int f'(t)^2d\xi(t)}{\int f(t)^2\log \prnt{f(t)^2} d\xi(t)}\right\}
\,.
}

Yet determination of $\rho_{K,\infty,D}$ is incomplete; we need to refine the measures $\xi\in\Fkinfd^M(\R)$ for which $\Lambda_{LS}(\xi)$ is minimal. One can attempt to study $\Lambda_{LS}(\xi)$ via the associated Euler-Lagrange equation, however despite being an ODE on the interval, the equation is non-linear and there does not seem to be a straightforward method to do that. Rather attempting to give a precise calculation of $\rho_{K,\infty,D}$ we provide estimates up to numeric constants; equivalently we provide lower bounds on $\Lambda_{LS}(\xi)$ for $\xi\in \Fkinfd^M(\R)$, and identify the $\bar{\xi}_{min}\in\Pkinfd^M(\R)$, for which the  lower bound on $\Lambda_{LS}(\xi)$, which is, up to numeric constants, minimal.
%Indeed apriori we don't know that the infimum over $\xi\in \Pknd^M$ is attained, and which $\xi\in\Pknd^M$ corresponds to this minimum, but if for each $\xi\in\Pknd^M$ we have estimates for $\Z(\xi)$, we can determine which $\xi$ gives the best (minimal) asymptotic estimates. 

\subsubsection{A convenient setting}

Throughout we abbreviate and write $\Fkinfd^M$ (resp. probability measures $\Pkinfd^M$) for the measure sets $\Fkinfd^M(\R)$ (resp. $\Pkinfd^M(\R)$). 
 The measures $\xi\in \Fkinfd^M$ have densities which are restrictions of the function $J_{K,\infty,\hfrak}=\exp(\hfrak x-\frac{K}{2}x^2)$ where $\hfrak\in\R$. Whenever $K\neq 0$ we can write $J_{K,\infty,\hfrak}=e^{-\half K(x+s)^2}e^{-\half \frac{\hfrak^2}{K}}$ where $s(\hfrak):=-\frac{\hfrak}{K}$.  Therefore we can take a parameter space $(s, d)\in \R\times (0,D]$, and compare the values of $s\mapsto \Lambda_{LS}(\xi_K(s,d))$; here $\xi_K(s,d)$ is the measure valued map: $\xi_K(s,d)=e^{-\half Kx^2}1_{I_s}$ where $I_s:=[s-\frac{d}{2},s+\frac{d}{2}]$. When $K=0$ we have $J_{K,\infty,\hfrak}=e^{\hfrak x}$; if we change $x\to x+a$ the density changes merely by a multiplicative constant and therefore $\Lambda_{LS}$ is unchanged, so we may always assume the measures $\xi$ are supported on $[0,d]$. In addition $\Lambda_{LS}(e^{\hfrak x}, [0,d])=\hfrak^2\Lambda_{LS}(e^x,  [0,\hfrak d])$ (this follows from simple change of coordinates in \eqref{LS_rhoknd}, and will be explicitly proved in the next section). Therefore similar to the case $K\neq 0$, we can consider a parameter space $(s, d)\in (0,\infty)\times (0,D]$, and compare the values of $s\mapsto s^2\Lambda_{LS}(\xi_0(s,d))$; here $\xi_0(s,d)$ is the measure valued map: $\xi_0(s,d):=e^{x}1_{I_s}$ where $I_s:=[0,sd]$. 
Using the parameter $s$ we have a unified formulation of the problem for all $K\in\R$: we consider a fixed density $p(x)$ on $\R$, and optimize over the   following target functions:
\begin{myitemize}
	\item[If $K\neq 0$:\, ] Target function:  $\R\ni s\mapsto  \Lambda_{LS}(p(x)1_{I_s}\cdot m)$, with $p(x)=e^{-\half Kx^2}$ and $I_s=[s-\frac{d}{2}, s+\frac{d}{2}]$\,.
	\item[If $K=0$:\, ] Target function:  $(0,\infty)\ni s\mapsto  s^2\Lambda_{LS}(p(x)1_{I_s}\cdot m)$, with $p(x)=e^{x}$ and $I_s=[0, sd]$\,.
	%\item[If $K<0$:\, ] $p(x)=e^{-\half Kx^2}$ and $I_s=[s-\frac{D}{2}, s+\frac{D}{2}]$.
\end{myitemize}
Here we substituted $d=D$, the maximal diameter, due to Lemma \ref{lem:Dependence_d}. This motivates the following definitions.

%We will firstly present the notation conventions which will bring the problem into a convenient formulation. We denote by $d\xi_s(x)=p(x)1_{I_s}dm(x)$ a family of measures, where $\{I_s\}$ is a family of intervals parametrized by a number $s\geq 0$, and $p(x)$ is a fixed positive continuous function on $\R$. As we previously discussed, the exhaustion of all values of $\Lambda_{LS}(\bar{\xi})$ over $\Pkinfd^M$, is possible by considering such a family of measures (where $s$ is interpreted as translation if $K\neq 0$ and as dilation if $K=0$).  

We define the normalization factor $H_s:=\int d\xi_s=\int_{I_s}p(x)dx$. 
 In addition we define $F_s:I_s\to [0,1]$ to be the cumulative  distribution function on $I_s$ associated with the probability measure $d\bar{\xi}_s:=\bar{p}_s(x)1_{I_s}dm$ where $\bar{p}_s:=H_s^{-1}p(x)$, i.e. $F_s(t)=\bar{\xi}_s((-\infty, t])$. Given $t\in [0,1]$ we set $a_s(t)=F_s^{-1}(t)$ and $b_s(t)=F_s^{-1}(1-t)$ the points in $I_s$ such that
\eql{ \label{def:a_sb_s}\int^{a_s(t)}_{-\infty}d\xi_s=t\,H_s \qquad \text{and}\qquad \int_{b_s(t)}^{\infty}d\xi_s=t\,H_s \,.}
%---------------------------------
\subsubsection{Reformulation of the problem}
We use the Bobkov-G\"{o}tze Estimates \cite{BobG} to evaluate $\Lambda_{LS}(\xi_s)$ up to multiplicative constants. By these estimates we have the equivalence $$\Lambda_{LS}(\xi_s)^{-1}\eqsim \B_{-}^{(s)}+\B_{+}^{(s)}\,,$$ 
where
{\small 
\eq{
\gls{B_minus}^{(s)}&=\sup_{r<m}\prnt{F_s(r)\log\frac{1}{F_s(r)}}\int_r^m\frac{1}{\bar{p}_s(x)}dx=\sup_{t\in [0,\frac{1}{2})}t\log\prnt{\frac{1}{t}}\int^{F_s^{-1}\prnt{\frac{1}{2}}}_{F_s^{-1}\prnt{t}}\frac{1}{\bar{p}_s(x)}dx\,, \qquad \text{and}\\ \gls{B_plus}^{(s)}&=\sup_{r>m}\prnt{(1-F_s(r))\log\frac{1}{(1-F_s(r))}}\int_m^r\frac{1}{\bar{p}_s(x)}dx=\sup_{t\in (\frac{1}{2},1]}(1-t)\log\prnt{\frac{1}{1-t}}\int^{F_s^{-1}\prnt{t}}_{F_s^{-1}\prnt{\frac{1}{2}}}\frac{1}{\bar{p}_s(x)}dx\\
&=\sup_{t\in [0,\frac{1}{2})}t\log\prnt{\frac{1}{t}}\int^{F_s^{-1}\prnt{1-t}}_{F_s^{-1}\prnt{\frac{1}{2}}}\frac{1}{\bar{p}_s(x)}dx\,.
 }}
Rather than estimating these terms separately we obtain estimates for
\eq{ \Us:=\sup_{t\in [0,\frac{1}{2})}t\log\prnt{\frac{1}{t}} \prnt{\int_{F_s^{-1}\prnt{t}}^{F_s^{-1}\prnt{1-t}}\frac{1}{\bar{p}_s(x)}dx}\,.} 
Since
\eq{ \Us\leq \B_{-}^{(s)}+\B_{+}^{(s)} \leq 2\Us\,,}
estimations of $\Us$ are equivalent to estimations of $\B_{-}^{(s)}+\B_{+}^{(s)}$. 
We define 
\[ \fs:=\int_{a_s(t)}^{b_s(t)}\frac{1}{p(x)}dx\qquad \text{and} \qquad \Ust:=t\log\prnt{\frac{1}{t}} \fs H_s\,.\] 
By definition $\Us:=\sup_{t\in [0,\frac{1}{2})}\Ust$ and so our goal 
is to maximize $\Us$ over $(t,s)\in [0,\half)\times S$, where we take $S=(0,\infty)$ for $CDD_b(0,\infty,D)$ and $S=[0,\infty)$ for $CDD_b(K,\infty,D)$ when $K\neq 0$ (since $p(x)=p(-x)$ when $K\neq 0$ by assumption).  

\bigskip 

The results of this section can be informally phrased as the single inequality $\UU^{(0)}\gtrsim \Us$, where the $\sim$ notation stands for validness of the statement up to numeric multiplicative constants (uniformly for all $s$ in the permissible domain $S$). By definition of $\rho_{LS}(K,N,D)$ such a statement translates into 
$$\rho_{LS}(K,N,D)\gtrsim (\UU^{(0)})^{-1}\,.$$
\bigskip

\subsubsection{General estimates }

Our problem initially started by considering a single interval of a fixed diameter $D$, and a family of measures on that interval, from which we wanted to extract the minimal log-Sobolev constant. The following lemma, known as the Holley-Stroock bounded perturbation lemma \cite{HoSt, BGL}, shows that under controlled perturbations of the densities, we have controlled perturbation of the corresponding log-Sobolev constants.

\begin{lem}[Holley-Stroock \cite{HoSt}]\label{lem:HolStrook} If $\bar{\xi}_1=\bar{p}_1\cdot m$ and $d\bar{\xi}_2=\bar{p}_2\cdot m$ are probability measures on $\Omega$, and $\frac{1}{a}\leq \deriv{\xi_2}{\xi_1}\leq b$ $[m]$ a.e. on $\Omega$ for some constants $a,b>0$, then 
 $\Lambda_{LS}(\Omega; \xi_2)\geq \frac{1}{a\cdot b}\Lambda_{LS}(\Omega; \xi_1)$. In particular $\Lambda_{LS}(\Omega; \xi_2)\eqsim \Lambda_{LS}(\Omega; \xi_1)$.
\end{lem}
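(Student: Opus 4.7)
My plan is to exploit the variational (Gibbs-type) representation of the entropy functional that was already used in Lemma \ref{D_monotonicity} and Example \ref{exmp:LS}: with $\phi(r) = r\log r$, one has the pointwise non-negative integrand $g(y,c) = \phi(y^2) - \phi(c) - \phi'(c)(y^2-c) \geq 0$ by convexity of $\phi$, and
\[
 Ent_{\bar{\xi}}(f^2) \;=\; \inf_{c>0}\int g(f(x),c)\, d\bar{\xi}(x).
\]
This representation is precisely what converts entropy — which is a non-linear functional of $\bar{\xi}$ — into an infimum of \emph{linear} functionals of $\bar{\xi}$ with a non-negative integrand, and it is the key that makes absolute continuity bounds between $\bar{\xi}_1$ and $\bar{\xi}_2$ usable.

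The first step will be to translate the hypothesis $\tfrac{1}{a}\leq \tfrac{d\bar{\xi}_2}{d\bar{\xi}_1}\leq b$ $[m]$-a.e.\ into two pointwise measure inequalities, namely $d\bar{\xi}_2 \geq \tfrac{1}{a}\,d\bar{\xi}_1$ and $d\bar{\xi}_2 \leq b\, d\bar{\xi}_1$. Applying the first to the non-negative integrand $f'(x)^2$ immediately yields the Dirichlet-energy comparison
\[
 \int_\Omega f'(x)^2\, d\bar{\xi}_2(x) \;\geq\; \frac{1}{a}\int_\Omega f'(x)^2\, d\bar{\xi}_1(x).
\]
The second, combined with the variational representation above (and the non-negativity of $g(f(x),c)$, which lets us pass the inequality under both the integral and the infimum in $c$), yields the entropy comparison
\[
 Ent_{\bar{\xi}_2}(f^2) \;=\; \inf_{c>0}\int g(f(x),c)\, d\bar{\xi}_2(x) \;\leq\; b\,\inf_{c>0}\int g(f(x),c)\, d\bar{\xi}_1(x) \;=\; b\cdot Ent_{\bar{\xi}_1}(f^2).
\]

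Finally I will combine the two one-sided estimates in the ratio defining $\Lambda_{LS}$: for every admissible test function $f$,
\[
 \frac{2\int f'^2\, d\bar{\xi}_2}{Ent_{\bar{\xi}_2}(f^2)} \;\geq\; \frac{1}{ab}\cdot \frac{2\int f'^2\, d\bar{\xi}_1}{Ent_{\bar{\xi}_1}(f^2)},
\]
and taking the infimum over $f\in \tlF_{LS}(\Omega,\bar{\xi}_1)$ (noting that under the assumption $\tfrac{1}{a}\leq d\bar{\xi}_2/d\bar{\xi}_1\leq b$, the two admissible classes are essentially the same, modulo re-centering the constant $c$ in the definition of $\tlF_{LS}$) gives $\Lambda_{LS}(\Omega;\bar{\xi}_2)\geq \tfrac{1}{ab}\Lambda_{LS}(\Omega;\bar{\xi}_1)$. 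The symmetric inequality $\Lambda_{LS}(\Omega;\bar{\xi}_1)\geq \tfrac{1}{ab}\Lambda_{LS}(\Omega;\bar{\xi}_2)$ follows by swapping the roles of the two measures, yielding the asserted equivalence $\Lambda_{LS}(\Omega;\bar{\xi}_2)\eqsim \Lambda_{LS}(\Omega;\bar{\xi}_1)$.

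I do not expect any serious obstacle here: the only subtle point is the use of the variational representation of entropy, because the naive identity $Ent_{\bar{\xi}}(f^2) = \int f^2\log f^2\,d\bar{\xi} - \bar{\xi}(f^2)\log\bar{\xi}(f^2)$ is \emph{not} pointwise non-negative, and a direct comparison of its two terms under $d\bar{\xi}_2 \leq b\,d\bar{\xi}_1$ would go in conflicting directions on the second (concave) term. Rewriting the entropy as an infimum of non-negative linear functionals is precisely what bypasses this difficulty.
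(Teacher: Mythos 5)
The paper states Lemma \ref{lem:HolStrook} with a citation to \cite{HoSt} but does not give a proof of its own, so there is nothing in the paper to compare against directly. Your argument is the standard Holley--Stroock perturbation proof, and it is correct. The two half-steps are exactly right: the energy lower bound $\int f'^2\,d\bar{\xi}_2 \geq \tfrac{1}{a}\int f'^2\,d\bar{\xi}_1$ is immediate from $d\bar{\xi}_2 \geq \tfrac{1}{a}\,d\bar{\xi}_1$ since the integrand is non-negative, and the entropy upper bound $Ent_{\bar{\xi}_2}(f^2)\leq b\,Ent_{\bar{\xi}_1}(f^2)$ uses precisely the variational form
\[
Ent_{\bar{\xi}}(f^2)=\inf_{c>0}\int\bigl[\phi(f^2)-\phi(c)-\phi'(c)(f^2-c)\bigr]\,d\bar{\xi},\qquad \phi(r)=r\log r,
\]
whose integrand is pointwise non-negative by convexity of $\phi$, so that $d\bar{\xi}_2\leq b\,d\bar{\xi}_1$ can be applied termwise for each fixed $c$ and then passed through the infimum. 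You correctly flag why the naive two-term expression for the entropy would not work (the subtracted term $\bar{\xi}(f^2)\log\bar{\xi}(f^2)$ goes the wrong way), which is exactly the point of the variational rewriting. The only minor technicality, which you also note, concerns matching the admissible test classes for the two measures; since the Dirichlet form and the entropy are both invariant under $f\mapsto\lambda f$ and the comparison holds pointwise for each $f$, this does not affect the infimum. Overall this is a complete, correct proof along the standard lines, filling in what the paper left to the reference.
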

If on $\Omega$ we have a class of probability measures $\{\bar{\xi}_s\}$ which are related by uniformly bounded perturbations, i.e. between any two members of the class $\bar{\xi}_{s_1}$ and $\bar{\xi}_{s_2}$ it holds that $\frac{1}{c}\leq \deriv{\xi_{s_2}}{\xi_{s_1}}\leq c$\,\, $[m]$ a.e. on $\Omega$, then the log-Sobolev constants $\{\Lambda_{LS}(\Omega; \xi_s)\}$ are all equivalent, uniformly by the same numeric constant. However,  this is not the general case, and specifically this does not apply to the class of measures in our present problem.
Hence in order to derive more general estimates we will need to use the Bobkov-G\"{o}tze criterion; however for very specific steps in the proof we will apply the Holley-Stroock lemma.

We begin with some general observations regarding the solution of the Bobkov-G\"{o}tze optimization problem for estimation of the LS constant on intervals. 
The function $t\mapsto t\log\prnt{\frac{1}{t}}$ is non-negative on $[0,1]$, it vanishes at $t=0$ and is strictly-increasing on $(0,\frac{1}{e})$ and strictly-decreasing on $(\frac{1}{e}, \half]$, while $\fs$ is strictly-decreasing in $t$ and vanishes at $t=\frac{1}{2}$. The next lemma is a simple, yet important, consequence of this observation:
%
%As a consequence there is a critical point $t_*\in (0,\frac{1}{2})$ at which a maximum of $\Ust$ is attained. 
\begin{lem} If $t_*$ is an extremum of $t\mapsto \Ust$, then $t_*\in (0,e^{-1}]$. 
\end{lem}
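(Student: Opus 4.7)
The plan is to exploit the product structure $\Ust = \bigl(t\log(1/t)\bigr)\cdot \fs \cdot H_s$, where $H_s$ is a positive constant in $t$, and show that on the subinterval $(e^{-1}, \tfrac12)$ both non-negative factors $t\mapsto t\log(1/t)$ and $t\mapsto \fs$ are strictly decreasing, so their product cannot attain an interior extremum there. First I would dispose of the endpoints: at $t=0$ the factor $t\log(1/t)$ vanishes, and at $t=\tfrac12$ the function $\fs$ vanishes because $a_s(\tfrac12)=b_s(\tfrac12)=F_s^{-1}(\tfrac12)$ by definition \eqref{def:a_sb_s}. Hence $\Us^{(s)}(0)=\Us^{(s)}(\tfrac12)=0$, and any extremum $t_*$ (in the sense relevant to the supremum defining $\Us$) must lie in the open interval $(0,\tfrac12)$.

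Next I would verify the two monotonicity facts already invoked in the text. The function $t\mapsto t\log(1/t)$ has derivative $\log(1/t)-1$, which is negative precisely for $t>e^{-1}$; so it is strictly decreasing on $(e^{-1},\tfrac12]$. For $t\mapsto \fs$, differentiating under the integral and using $F_s(a_s(t))=t$ and $F_s(b_s(t))=1-t$ gives $a_s'(t)=1/\bar p_s(a_s(t))>0$ and $b_s'(t)=-1/\bar p_s(b_s(t))<0$, whence
\[
\deriv{\fs}{t} = \frac{1}{p(b_s(t))}b_s'(t)-\frac{1}{p(a_s(t))}a_s'(t) < 0 \qquad \text{on } (0,\tfrac12).
\]
Moreover both factors are strictly positive on $(0,\tfrac12)$. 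Therefore on $(e^{-1},\tfrac12)$ the product $\Ust/H_s$ is the product of two strictly positive, strictly decreasing functions, hence itself strictly decreasing.

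Consequently any extremum of $t\mapsto\Ust$ in $(0,\tfrac12)$ lies in the complement $(0,e^{-1}]$, which is exactly the claim. The main (essentially only) obstacle is the trivial care needed to justify that $\fs$ is differentiable and strictly decreasing on $(0,\tfrac12)$, which is immediate once one recalls that $p$ is smooth and strictly positive on $I_s$ so that $F_s$ is a $C^1$-diffeomorphism with positive derivative.
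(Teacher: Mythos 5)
Your proof is correct and follows essentially the same route as the paper's: it observes that both factors $t\log(1/t)$ and $\fs$ are non-negative and strictly decreasing on $(e^{-1},\tfrac12)$, hence the product cannot have an extremum there. You are slightly more careful than the paper in two respects — you explicitly note that strict positivity of both factors is needed for the product of decreasing functions to be decreasing, and you dispose of the endpoints $t=0$ and $t=\tfrac12$ — but these are details the paper states just before the lemma and treats as evident.
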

\begin{proof}
For $t\in (e^{-1},\half]$ both $t\mapsto t\log\prnt{\frac{1}{t}}$ and $t\mapsto \fs$ are decreasing, therefore $t_*\notin (e^{-1},\half]$.
\end{proof}

At an extremum point $t_*$ the equation $\partial_t|_{t=t_*} \Ust=0$ is equivalent to:
\eql{\label{eqn:DerZeroOptimization1}
(\log\prnt{\frac{1}{t_*}}-1)\fss + t_*\log\prnt{\frac{1}{t_*}} \partial_t|_{t=t_*}\fs =0\,.
}
According to definition \eqref{def:a_sb_s}, $a_s(t)$ and $b_s(t)$ are differentiable in $t$ and satisfy the following ODEs:
\eql{ \label{eqn:a_sb_s_deriv}
a_s'(t)=p^{-1}(a_s(t))H_s\qquad \text{and} \qquad b_s'(t)=-p^{-1}(b_s(t))H_s \,.
}
Therefore a maximum $t_*\in (0,e^{-1})$ must satisfy the following condition:
\eql{\label{eqn:DerZeroOptimization2}
\frac{t_*\log\prnt{\frac{1}{t_*}}}{\log\prnt{\frac{1}{t_*}}-1}=\Ats \qquad \text{where}\qquad \At:=\frac{\fs}{\cprnt{p^{-2}(a_s(t))+p^{-2}(b_s(t))}H_s}\,.
}
From this equation it follows that $t_*\leq \Ats$. Furthermore, if $t_*$ is sufficiently small so that $\log\prnt{\frac{1}{t_*}} \geq \frac{e}{e-1}$, then $\frac{\log\prnt{\frac{1}{t_*}}}{\log\prnt{\frac{1}{t_*}}-1}\leq e$ whence $et_*\geq \Ats$. We can thus conclude that in particular 
\[ \prnt{ \log\prnt{\frac{1}{\At}}\leq\,\, }\quad \log\prnt{\frac{1}{t_*}}\leq \max\cprnt{\frac{e}{e-1}, \log\prnt{\frac{e}{\At}} }\,.\]
Thus in view of \eqref{eqn:DerZeroOptimization2} we obtain the following upper-bound for $\Uss$:
{\small 
\eql{\label{expr:USS} \Uss&=t_*\log\prnt{\frac{1}{t_*}} \fss H_s\\ \nonumber&=(\log\prnt{\frac{1}{t_*}}-1)\prnt{\At \fss H_s}\leq \max\cprnt{\frac{1}{e-1}, \log\prnt{\frac{1}{\At}} }\prnt{\Ats \fss H_s} \,.}}
Therefore a useful strategy to get an upper bound for $\Uss$ is to bound \\$\Ats \fss H_s=\frac{\fs^2}{\cprnt{p^{-2}(a_s(t))+p^{-2}(b_s(t))}}$ from above and $\frac{1}{\Ats}$ from below. As the following lemma shows bounding the former is \pink{quite} straightforward for monotonic densities.

\begin{lem}\label{lem:BndAFH1} Assume $p(x)$ is monotonically increasing on $I_s$. For all $t\in (0,\half)$ such that $a_s(t)>0$ it holds that $\At \fs H_s\leq diam(I_s)^2$. 
\end{lem}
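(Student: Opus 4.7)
My plan is to unwind the definition of the quantity $\At \fs H_s$ and reduce the claim to a simple estimate on $\fs$ using the monotonicity hypothesis on $p$. Substituting the explicit form of $\At$ from \eqref{eqn:DerZeroOptimization2}, I observe the cancellation
\[
\At\,\fs\,H_s \;=\; \frac{\fs}{\{p^{-2}(a_s(t))+p^{-2}(b_s(t))\}H_s}\cdot \fs\,H_s \;=\; \frac{\fs^{\,2}}{p^{-2}(a_s(t))+p^{-2}(b_s(t))}\,,
\]
so the claim is equivalent to the inequality
\[
\fs^{\,2} \;\leq\; \mathrm{diam}(I_s)^2\,\bigl\{p^{-2}(a_s(t))+p^{-2}(b_s(t))\bigr\}\,.
\]

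Next I exploit monotonicity. Since $p$ is non-decreasing on $I_s$ and $[a_s(t),b_s(t)]\subset I_s$, for every $x\in[a_s(t),b_s(t)]$ we have $p(x)\geq p(a_s(t))$, hence $1/p(x)\leq 1/p(a_s(t))$. Integrating,
\[
\fs \;=\; \int_{a_s(t)}^{b_s(t)}\!\frac{dx}{p(x)} \;\leq\; \frac{b_s(t)-a_s(t)}{p(a_s(t))} \;\leq\; \frac{\mathrm{diam}(I_s)}{p(a_s(t))}\,.
\]
Squaring and using the trivial bound $p^{-2}(a_s(t))\leq p^{-2}(a_s(t))+p^{-2}(b_s(t))$ finishes the argument:
\[
\fs^{\,2}\;\leq\;\frac{\mathrm{diam}(I_s)^2}{p^{2}(a_s(t))}\;\leq\;\mathrm{diam}(I_s)^2\bigl\{p^{-2}(a_s(t))+p^{-2}(b_s(t))\bigr\}\,.
\]

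There is no real obstacle here: the hypothesis $a_s(t)>0$ is not strictly needed for this chain of inequalities, but it ensures $p(a_s(t))$ is evaluated at a point where the monotonicity hypothesis gives a meaningful lower bound on $p$, and it is the regime that will matter for the subsequent application (via \eqref{expr:USS}) where $\frac{1}{\At}$ has to be bounded from below as well. The only minor point to keep track of is that the assumption $t\in(0,\tfrac12)$ guarantees $a_s(t)<b_s(t)$, so that $\fs>0$ and all expressions are well defined.
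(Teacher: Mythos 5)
Your proof is correct and follows essentially the same route as the paper's: substitute the formula for $\At$, cancel $H_s$, bound $\fs$ by $\mathrm{diam}(I_s)/p(a_s(t))$ using monotonicity, and drop the $p^{-2}(b_s(t))$ term in the denominator. The observation that $a_s(t)>0$ is not needed for the chain of inequalities (given that monotonicity of $p$ on $I_s$ is assumed) is accurate; in the paper it is carried along because in the applications it is the positivity of $a_s(t)$ that guarantees the monotonicity hypothesis holds on $[a_s(t),b_s(t)]$ for $p(x)=e^{kx^2/2}$.
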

\begin{proof} Since $p^{-1}(x)$ is monotonically decreasing on $I_s$  for any $x\in [a_s(t), b_s(t)]$ the inequality $p^{-1}(x)\leq p^{-1}(a_s(t))$ holds, whence
\eq{ \At \fs H_s&=\frac{\prnt{\int_{a_s(t)}^{b_s(t)}p^{-1}(x)dx}^2}{\prnt{p^{-2}(a_s(t))+p^{-2}(b_s(t))}\cancel{H_s}}\cdot \cancel{H_s}\\
&\leq \frac{\cancel{p^{-2}(a_s(t))}(b_s(t)-a_s(t))^2}{\cancel{p^{-2}(a_s(t))}}\leq diam(I_s)^2 \,.     }
\qedhere

The following statement will be used to control the term $b_s(t)$ when $t\in [0,\half)$; in essence it is bounded below by its counterpart $\tl{b}(t)$, which corresponds to the uniform density.

\begin{prop}\label{est:5}
Let $I=[x_0,x_1]$ be an interval, and let $\bar{\xi}$ be an a.c.  probability measure on $I$. Set $\bar{p}(x)=\deriv{\xi}{m}$ and $\bar{u}(x)=const=\frac{1}{diam(I)}$, and let respectively $F(x)=\int_{x_0}^{x}\bar{p}(x')dx'$ and $\tilde{F}(x)=\int_{x_0}^{x}\bar{u}(x')dx'$ be their cumulative  distribution functions. Set $b(t):=F^{-1}(1-t)$ and $\tilde{b}(t):=\tilde{F}^{-1}(1-t)$. Assume $\bar{p}(x)$ is continuous monotonically increasing on $[b(\half),x_1]$. If $b(\half)> \tilde{b}(\half)$ then 
$b(t)\geq \tilde{b}(t)$ for all $t\in [0,\half]$. In particular $b(t)\geq \half (1-2t)diam(I)$ for all $t\in [0,\half]$.
\end{prop}
\begin{proof}
Since $\int_{b(\half)}^{x_1}\bar{p}(x)dx=\half=\int_{\tilde{b}(\half)}^{x_1}\bar{u}(x)dx$ and $b(\half)> \tilde{b}(\half)$, it follows by monotonicity of $\bar{p}$ that $\bar{p}(x_1)=\max_{x\in [F^{-1}(\half),x_1]}\bar{p}(x)> \bar{u}(x)=\frac{1}{diam(I)}$.
%Considering the mass balance of $\bar{p}$ and $\bar{u}$ on the interval $I_s$, the respective medians verify the following inequality: $b_s(\half)=F_s^{-1}(\half)\geq \tilde{F}_s^{-1}(\half)$.
Hence, for $t$ sufficiently small (i.e. $b(t)$ sufficiently close to $x_1$): $\int_{b(t)}^{x_1}\bar{p}(x)dx> \int_{b(t)}^{x_1}\bar{u}(x)dx$; equivalently, since $b(0)=\tilde{b}(0)=x_1$,  in some neighborhood of $0$: $b(t)\geq \tilde{b}(t)$.  Assume by contradiction that there exists $t_0\in (0,\half]$ such that $b(t_0)=\tilde{b}(t_0)$.  
Then $t_0=\int_{b(t_0)}^{x_1}\bar{p}(x)dx=\int_{\tilde{b}(t_0)}^{x_1}\bar{u}(x)dx $ and therefore,
\eql{\label{ineq:bu} \half-t_0=\int_{b(\half)}^{b(t_0)}\bar{p}(x)dx=\int_{\tilde{b}(\half)}^{\tilde{b}(t_0)}\bar{u}(x)dx
\stackrel{\substack{\tilde{b}(\half)<b(\half),\\ \tilde{b}(t_0)=b(t_0)}}{>}
\int_{b(\half)}^{b(t_0)}\bar{u}(x)dx\,. }

However since $\bar{p}(x)$ is monotone on $[b(\half), x_1]$, the inequality $\bar{p}(x_1)>\bar{u}(x_1)$ implies that $\bar{p}(b(t_0))<\bar{u}(b(t_0))$ (due to mass balance considerations), but then $\bar{p}(x)< \bar{u}(x)$ on the whole interval $[b(\half),b(t_0)]$, which contradicts inequality \eqref{ineq:bu}. 

The last part of the statement follows from the inequalities
\[ b(t)\geq \tilde{b}(t)=x_1-t(x_1-x_0)\geq \half(x_1-x_0)-t(x_1-x_0)=\half (1-2t)diam(I)\,.\]

\qedhere
\end{proof}

\subsection{Lower bounds for $\Lambda_{LS}(M,\gfrak,\mu)$ under $CDD_b(K,\infty, D)$}

%We begin with estimates for $\Lambda_{LS}(0,\infty,D)$; these can be justified by different arguments, yet for pedagogic reasons we employ the same strategy which we plan to apply to the more difficult case - $\rho(K,\infty,D)$ with $K<0$. Due to essential technical estimates the latter case is significantly more involved. 

\subsubsection{\underline{Lower estimates for $\Lambda_{LS}(M,\gfrak,\mu)$ under $CDD_b(0,\infty,D)$}}

Using the tools we have presented, we will prove the following well known proposition.
\begin{prop} 
Assume $(M^n, \gfrak, \mu)$ is a \cwrm{}, which satisfies $CDD_b(0,\infty, D)$, where $N\in [\max(n,2),\infty]$. Then  $$\Lambda_{LS}(M,\gfrak,\mu)\gtrsim\frac{1}{D^2}\,.$$
\end{prop}

The proposition follows straightforwardly from Theorem \ref{LS:Estimate_gen} and an estimate which we now derive for $\rho_{0,\infty,D}$ using the Bobkov-G\"{o}tze estimates. This can also be achieved by different methods, but we hope that this approach will have a pedagogic value, being a preparation to the more involved estimate of $\rho(K,\infty,D)$ with $K<0$.

Due to invariance under translation and inversion we may restrict to an interval $[0,D]$ and densities $p_s(x)=e^{sx}$, where $s\in (0,\infty)$; we write $\bar{\xi}_s$ for the corresponding probability measures supported on $[0,D]$.  
For $s=0$ we get the known log-Sobolev constant of the uniform density $\Lambda_{LS}([0,D], \frac{1}{D}m)=\frac{\pi^2}{D^2}$, in particular $\UU^{(0)}\eqsim D^2$. Employing the approach we have previously presented we prove the following proposition:

\begin{prop}\label{prop:Xi_00} For all $s\in (0,\infty)$ it holds that $D^2\eqsim \UU^{(0)}\gtrsim \UU^{(s)}$; in particular $\rho_{0,\infty,D}\gtrsim \frac{1}{D^2}$. 
\end{prop}

\begin{proof}
Notice that under the change of variables $y=T(x)$ with $T(x):=sx$, we get a probability measure $d\bar{\nu}_s=d(T_{\sharp}\bar{\xi}_s)$ supported on $[0,sD]$ (here $T_{\sharp}\xi$ is the push-forward by $T$ of the measure $\xi$) and the following identity holds:
\[ \inf_{f\in\F_{LS}(\bar{\xi}_s)}\frac{ \int_0^D f'(x)^2 d\bar{\xi}_s(x)}{Ent_{\bar{\xi}_s}f^2}=\inf_{g\in \F_{LS}(\bar{\nu}_s)}\frac{ s^2\int_0^{sD} g'(y)^2 d\bar{\nu}_s(y) }{Ent_{\bar{\nu}_s}g^2}\,, \]
where in the RHS (valid when $s> 0$) we made the identification $g(y)=f(\frac{y}{s})$. 
Thus as we claimed before the problem can be formulated in terms of a fixed density $p(x)=e^{x}$, a family of intervals $I_s:=[0,sD]$ and a modified target function
\eq{\Ust=\frac{1}{s^2}\sup_{t\in [0,\half)}t\log\prnt{\frac{1}{t}} \fs H_s(x)\,. }
We have $\fs:=\int_{a_s(t)}^{b_s(t)}p^{-1}(x)dx=p^{-1}(a_s(t))-p^{-1}(b_s(t))$ and $H_s(x):=\int_{I_s}p(x)dx=p(sD)-1$.
We separately derive estimates for the case $sD\leq 1$ and the case $sD>1$. 
In case $sD\leq 1$ one can apply a Holley-Stroock argument, but we rather present an alternative equivalent argument, of the same flavor. We write $\fs=\int_{a_s(t)}^{b_s(t)}p(x)^{-2}\cdot p(x)dx $ and using the identity $\int_{a_s(t)}^{b_s(t)}p(x)dx=(1-2t)H_s$,  we conclude that 
\eq{p^{-2}(sD)(1-2t)H_s\leq \int_{a_s(t)}^{b_s(t)}p(x)^{-1}dx\leq p^{-2}(a_s(t))\int_{a_s(t)}^{b_s(t)}p(x)dx\leq p^{-2}(0)(1-2t)H_s \,.}
When $sD\leq 1$ it holds that $p^{-2}(1)\leq p^{-2}(sD)$, whence
\eq{\Ust \eqsim \sup_{t\in [0,\half)} \prnt{t\log\prnt{\frac{1}{t}}(1-2t)}\frac{1}{s^2}(e^{sD}-1)^2 \eqsim D^2\,,}
where we used monotonicity and boundedness properties of the function $\frac{e^x-1}{x}$. 
%e^{-1}\prnt{\frac{e^{1}-1}{1}}^2\cdot D^2\eqsim D^2}
We will now assume $sD>1$. Clearly $\int_{a_s(t)}^{b_s(t)}p(x)^{-1}\leq e^{-a_s(t)}$, therefore $\Ust\leq \Ustvar$ where  $\Ustvar:=\frac{1}{s^2}\sup_{t\in [0,\half)}t\log\prnt{\frac{1}{t}} \fsvar H_s(x)$ with $\fsvar:=e^{-a_s(t)}$. Then by invoking Estimate \ref{expr:USS} (applied to $\Ustvar$ with $\At$ associated to $\fsvar$):
\eq{\Ussvar \leq \frac{1}{s^2}\max\cprnt{\frac{1}{e-1}, \log\prnt{\frac{1}{\At}} }\prnt{\Ats \fssvar H_s}\,. }
Explicit computation of the integrals in definition \eqref{def:a_sb_s} shows that $e^{a_s(t)}=1+tH_s$ hence $(e^{a_s(t)})'=H_s$; we can thus conclude that  $\partial_t|_{t=t_*}\fsvar=\prnt{e^{-a_s(t)}}'=-\frac{\partial_t|_{t=t_*}(e^{a_s(t)})}{(e^{a_s(t_*)})^2}=-H_s\fssvar^2$. It follows that
$\Ats=-\frac{\fssvar}{D_t|_{t=t_*}\fsvar}=\frac{\fssvar^{-1}}{H_s}=\frac{e^{a_s(t_*)}}{e^{sD}-1}$
whence $\log\frac{1}{\Ats}\leq \log\prnt{e^{sD}}=sD $. Furthermore $\Ats \fssvar H_s=1$. Since by assumption $\frac{1}{s}<D$, we conclude that 
\eq{\Ussvar \leq \frac{1}{s^2}\max\cprnt{\frac{1}{e-1}, sD }\cdot 1 \lesssim D^2 \,.}

\end{proof}
%\newpage
%\Line
%\begin{thm} For $\xi\in \Pknd^M$ with  the following estimate holds: $\Lambda_{LS}(\xi)\geq \max\{\frac{KN}{N-1},\frac{c}{D^2}\}$ . 
%\end{thm}
%

%The Cheeger Mazya inequality gives us a relation bet
%According to a result of Bobkov, since the measures $\xi\in \Pknd$ are log-concave under our assumptions on $K$ and $N$, it follows that \frac{1}{4}\inf_{t\in (0,1) \frac{I(t)}{\min(t,1-t) }
%
%By the Cheeger-Mazya inequality 
%\[ \Lambda_1\geq \frac{1}{4}h_{Che}=\frac{1}{4} } \]
%We $h_{Che}$ is Cheegr's isoperimetric constant. 

\subsubsection{\underline{Lower estimates for $\Lambda_{LS}(K,\infty,D)$ when $K\geq 0$}}
The following proposition is a straightforward consequence of the $CDD_b(0,\infty,D)$ estimate. 
\begin{prop}\label{prop:estLS_nonNeg_K} Assume $(M^n,\gfrak,\mu)$ is a \cwrm{}, and assume it satisfies $CDD_b(K,\infty,D)$ with $K\geq 0$. Then 
\eql{\label{LS:Estimate_Pos_K} \Lambda_{LS}(M,\gfrak,\mu) \gtrsim \max\cprnt{ K,\frac{1}{D^2}} \,.}
\end{prop}  
%\cite[p.270]{BGL}
\begin{proof} By the Bakry-\'{E}mery estimate (Theorem \ref{thm:BakryEmeryLS}) $\Lambda_{LS}(M,\gfrak,\mu)\geq\frac{KN}{N-1}$ whenever $N\in (1,\infty]$. By Theorem \ref{LS:Estimate_gen} we have the estimate 
$\Lambda_{LS}(M,\gfrak,\mu)\geq \rho_{K,\infty,D}$.
By Proposition \ref{prop:Xi_00}  $\rho_{K,\infty,D}\gtrsim \frac{1}{D^2}$. Hence in any case it holds that 
$\Lambda_{LS}(M,\gfrak,\mu)\gtrsim \max\{ K,\frac{1}{D^2} \}  $.
\end{proof}

\begin{remk}[Sharpness]

For the one dimensional space $([-\frac{D}{2},\frac{D}{2}], m)$, i.e. the uniform density on the interval, as we previously mentioned $\Lambda_{LS}([-\frac{D}{2},\frac{D}{2}], m)$ is not attained, hence by Theorem  \ref{thm:existenceMinimizers}:
$$\Lambda_{LS}([-\frac{D}{2},\frac{D}{2}], m)=\Lambda_{Poi}([-\frac{D}{2},\frac{D}{2}], m) =\frac{\pi^2}{D^2}\,.$$
In addition for $N\in (1,\infty]$ the Lichnerowitz estimate for the \Poinc constant\\ $\Lambda_{Poi}(M,\gfrak, \mu)\geq\frac{KN}{N-1}$ is sharp; by \eqref{LS:Estimate_Pos_K} it holds that $\Lambda_{LS}(M,\gfrak, \mu)\gtrsim K$; considering that \\$\Lambda_{LS}(M,\gfrak, \mu)\leq \Lambda_{Poi}(M,\gfrak, \mu)$, we conclude that Estimate \eqref{LS:Estimate_Pos_K} is sharp up to numeric constants.

It might also be possible to establish the sharpness (up to constants) on a \cwrm{} of arbitrary topological dimension $n\geq 2$, as in the construction in \cite{Mil2}, however we did not verify the details.
\end{remk}

\subsubsection{\underline{Lower estimates for $\Lambda_{LS}(K,\infty,D)$ when $K=-k<0$}}

We will prove the following proposition, which is the main result of this chapter.
\begin{thm}\label{thm:negK} If $(M^n,\gfrak,\mu)$ is a \cwrm{} which satisfies $CDD_b(K,\infty,D)$ with $K=-k<0$ then the following (sharp up to numeric constants) estimate holds:
\[ \Lambda_{LS}(M,\gfrak,\mu) \gtrsim \max\{ \sqrt{k}, \frac{1}{D}\}\frac{kD}{e^{k\frac{D^2}{8}}-1} \eqsim \begin{cases}
k^{\frac{3}{2}}De^{-\frac{k D^2}{8}}& \qquad \sqrt{k}D>1\\ \frac{1}{D^2}& \qquad \sqrt{k}D\leq 1\,.
\end{cases}      \]
\end{thm}
The proposition is a consequence of the estimate $\Lambda_{LS}(M,\gfrak,\mu)\geq \rho_{-k,\infty,D}$, however in contrast to the previous estimates, getting this lower bound for $\rho_{-k,\infty,D}$ will require significantly more work. 

We consider the fixed density function $p(x)=e^{\half kx^2}$ and a family of intervals $\{I_s\}$ of diameter $D$, defined by $I_s:=[s-\frac{D}{2}, s+\frac{D}{2}]$ where w.l.o.g $s\in [0,\infty]$.  
%We write $d\bar{\xi}_s=\bar{p}\,1_{I_s}dm$ for the corresponding probability measures. 

We will prove the following propositions, which straightforwardly imply Theorem \ref{thm:negK}:
\begin{prop}\label{prop:Xi_0} $\Lambda_{LS}(I_0, \xi_0)\eqsim \Upsilon_0(k,D):=\cprnt{\min\cprnt{ \frac{1}{\sqrt{k}}, D}\prnt{\frac{p(D/2)-1}{kD}}}^{-1}$.
\end{prop}
\begin{prop} \label{prop:Xi_s} For all $s\in [0,\infty)$ it holds that $\Lambda_{LS}(I_s, \xi_s)\gtrsim \Upsilon_0(k,D)$, in particular  $\rho_{-k,\infty,D}\gtrsim \Upsilon_0(k,D)$.
\end{prop}

\subsubsection{Ad-hoc estimates }
We precede the proof of Propositions \ref{prop:Xi_0} and \ref{prop:Xi_s} with the derivation of several useful estimates, which will be crucial for their proof. Throughout we identify $p(x):=e^{\frac{kx^2}{2}}$ with $k>0$. 
\begin{est}\label{est:1} Let $R>0$. Then
\[ \int_0^Re^{\half kx^2}dx\eqsim \frac{e^{\half kR^2}-1}{kR}\,,\]
uniformly for all $R>0$.
\end{est}
\begin{proof}
On $[0, R]$ the convex function $x\mapsto\half kx^2 $ is bounded above by $x\mapsto \frac{k}{2}Rx$  (a line whose graph connects $(0,0)$ to $(R, e^{\half kR^2})$), and on $[\frac{R}{2},R]$ it is bounded below by $x\mapsto kR(x-\half R)$ (whose graph is the tangent to the graph of $x\mapsto\half kx^2$ at $(R,e^{\half kR^2})$). Therefore by monotonicity of $x\mapsto e^x$:
\[ \frac{e^{\half kR^2}-1}{kR}=\int_{\half R}^{R}e^{kR(x-\half R)}dx\leq\int_0^{R}e^{\frac{kx^2}{2}}dx\leq \int_0^{R}e^{\frac{k}{2}Rx}dx=2\frac{e^{\half kR^2}-1}{kR}\,.\]
\end{proof}
%Given $R_1<0<R_2$, since $\int_{0}^{R_2}p\leq \int_{R_1}^{R_2}p\leq 2\int_{0}^{R_2}p$, therefore $\int_{R_1}^{R_2}p\eqsim \frac{p(R_2)-1}{kR_2}$ .
 
\begin{est}\label{est:2} For all $R,k>0$:
$$ \int_{0}^{R}e^{-\half kx^2}dx\eqsim \min\cprnt{\frac{1}{\sqrt{k}}, R}\,.$$

\end{est}
\begin{proof} We express the integrand as a standard normalized Gaussian density:
\[\int_{0}^{R}e^{-\half kx^2}dx=\sqrt{\frac{2\pi}{k}}\prnt{\sqrt{\frac{1}{2\pi}}\int_{0}^{\sqrt{k}R}e^{-\half w^2}dw}\,.\]
For the normalized Gaussian function $\frac{1}{\sqrt{2\pi}}e^{-\half x^2}$ we have the following estimates
\[
\begin{cases} \frac{c}{\sqrt{2\pi}}e^{-\half c^2} & \sqrt{k}R>c\\ \frac{\sqrt{k}R}{\sqrt{2\pi}}e^{-\half c^2} & \sqrt{k}R\leq c \end{cases} \leq
\int_{0}^{\sqrt{k}R}\frac{1}{\sqrt{2\pi}}e^{-\frac{w^2}{2}}dw
\leq \begin{cases} \half & \sqrt{k}R>c\\ \frac{\sqrt{k}R}{\sqrt{2\pi}} & \sqrt{k}R\leq c \end{cases}\,,
\]
which using any fixed constant $c>0$ yields the claimed estimate. 
\end{proof}

\begin{est}\label{est:3} Let $0<a<b$, then  
\[\frac{e^{\frac{kb^2}{2}}-e^{\frac{ka^2}{2}}}{kb}\leq \int_{a}^{b}e^{\frac{kx^2}{2}}dx\leq \frac{e^{\frac{kb^2}{2}}-e^{\frac{ka^2}{2}}}{ka}\,\] 
and
\[\frac{e^{\frac{-ka^2}{2}}-e^{-\frac{kb^2}{2}}}{kb}\leq \int_{a}^{b}e^{-\frac{kx^2}{2}}dx\leq \frac{e^{-\frac{ka^2}{2}}-e^{-\frac{kb^2}{2}}}{ka}\,.\] 
\end{est}
\begin{proof}
Notice that on $[a,b]$: 
\[
\int_{a}^{b}e^{\frac{kx^2}{2}}dx\leq \int_{a}^{b}\frac{x}{a}\cdot e^{\frac{kx^2}{2}}dx=\frac{e^{\frac{kb^2}{2}}-e^{\frac{ka^2}{2}}}{ka}\,.
\]
All other estimates are proved in an identical manner (multiplying by $\frac{x}{b}$ instead of $\frac{x}{a}$ to get estimates in the opposite direction). 
\end{proof}

%\begin{est}\label{est:4} Let $R_1$ be a positive number then 
%\[ \int_0^Rp^{-1}(x)dx\eqsim \frac{p(R)-1}{kR}\,.\] 
%\end{est}
\begin{est}\label{est:6} Assume $c>0$ is some fixed constant, then on $[c,\infty)$ the following inequality holds:
\[(1-e^{-\half c^2})\frac{e^{\half x^2}}{x}\leq \frac{e^{\half x^2}-1}{x}\leq \frac{e^{\half x^2}}{x}\,.\]
\end{est}
\begin{proof} 
Only the LHS of the inequality requires justification.  Notice that for $x\geq c$ it holds that $e^{\half (x^2-c^2)}\geq 1$, hence
$\frac{e^{\half x^2}-1}{x}\geq (1-e^{-\half c^2})\frac{e^{\half x^2}}{x}$.
\end{proof}

\begin{est}\label{lem:BndAFH2} If $s-\frac{D}{2}>0$ then  $\At \fs H_s\leq \frac{1}{k^2(s-\frac{D}{2})^2}$.
%\leq\frac{1-e^{-ksD}}{(s-R)^2k^2}$. 
\end{est}

\begin{proof}
The condition $s-\frac{D}{2}>0$ implies that $I_s\subset (0,\infty)$ and therefore $a_s(t)>0$ and $p(x)$ is increasing on $I_s$. Then by Definition \eqref{eqn:DerZeroOptimization2} (for $\At$) and Estimate \ref{est:3}: $\fs\leq \frac{1}{ka_s(t)}(p^{-1}(a_s(t))-p^{-1}(b_s(t)))$, and it follows that 
\eq{ \At \fs H_s&\leq \prnt{\frac{1}{ka_s(t)}(p^{-1}(a_s(t))-p^{-1}(b_s(t)))}^2\frac{1}{p^{-2}(a_s(t))+p^{-2}(b_s(t)) }\\&
\leq \frac{1}{k^2a_s(t)^2}\cdot 1\leq \frac{1}{k^2(s-\frac{D}{2})^2}\,.
%= \frac{1}{k^2a_s(t)^2 }\frac{\prnt{1-\frac{p(a_s(t))}{p(b_s(t))}}^2}{1+\frac{p(a_s(t))^2}{p(b_s(t))^2}}\\& 
%\stackrel{\substack{p(b_s(t))\leq p(s+\frac{D}{2}),\\ p(a_s(t))\geq p(s-\frac{D}{2})}}{\leq}
%\frac{1}{k^2(s-\frac{D}{2})^2 }\frac{\prnt{p(s+\frac{D}{2})-p(s-\frac{D}{2})}^2}{p(s-\frac{D}{2})^2+p(s+\frac{D}{2})^2}
%\leq \frac{1}{k^2(s-\frac{D}{2})^2}\,.
%\\& \stackrel{\frac{p(s-\frac{D}{2})}{p(s+\frac{D}{2})}=e^{-kDs}}{=}\frac{1}{k^2(s-\frac{D}{2})^2}\frac{(1-e^{-kDs})^2}{1+e^{-2kDs}}
   }
%For $p(x)=e^{\half kx^2}$ the identity $$ holds, whence
	%%=\frac{1}{k^2(\frac{D}{2}-R)^2}\frac{p(\frac{D}{2}+R)(1-e^{-kD})}{
	%\[\Ats \fss H_s\leq \frac{1}{k^2(s-\frac{D}{2})^2}\frac{1-e^{-kD}}{e^{-kD}} \]
\qedhere
\end{proof}

\begin{est} \label{lem:Bnd_A} If $s-\frac{D}{2}>0$ then $\frac{1}{\Ats}\lesssim e^{2kDs} $.
\end{est}
\begin{proof}
As in the previous estimate, $s-\frac{D}{2}>0$ implies that $a_s(t)>0$ and $p(x)$ is increasing on $I_s$. 
By Definition \eqref{eqn:DerZeroOptimization2} and Estimate \ref{est:3}
{\small 
\eql{\label{eq:AstBound} \frac{1}{\At}&\leq\frac{\prnt{p^{-2}(a_s(t))-p^{-2}(b_s(t))}H_s}{\prnt{p^{-1}(a_s(t))-p^{-1}(b_s(t))}/kb_s(t)}=\prnt{p^{-1}(a_s(t))+p^{-1}(b_s(t))}H_skb_s(t)\\ \nonumber &\leq 2p^{-1}(a_s(t))H_skb_s(t)\leq 2p^{-1}(s-\frac{D}{2})H_sk(s+\frac{D}{2})\stackrel{s>\frac{D}{2}}{\leq}  4p^{-1}(s-\frac{D}{2})H_sk s\,.
}
}

By Estimate \eqref{est:3} $H_s\leq \frac{p(s+\frac{D}{2})-p(s-\frac{D}{2})}{ks}$. 
We can thus conclude that
\[ \frac{1}{\Ats}\lesssim  \frac{p(s+\frac{D}{2})-p(s-\frac{D}{2})}{p(s-\frac{D}{2})\cancel{ks}}\cancel{ks}\stackrel{\frac{p(s+\frac{D}{2})}{p(s-\frac{D}{2})}=e^{2kDs}}{\leq} e^{2kDs}\,.\]

%2\frac{p(s+\frac{D}{2})-p(s-\frac{D}{2})}{p(s-\frac{D}{2})}=2\prnt{e^{kDs}-1}

%Since $p^{-1}(x)$ is monotone decreasing on $x>0$
%\eql{\label{eq:AstBound} \frac{1}{\Ats}=\frac{\prnt{p^{-2}(a_s(t_*))-p^{-2}(b_s(t_*))}H_s}{\prnt{p^{-1}(a_s(t_*))-p^{-1}(b_s(t_*))}/ks}\leq \frac{\frac{p(b_s(t_*))}{p(a_s(t_*))}H_s}{\prnt{p(b_s(t_*))-p(a_s(t_*))}/ks}\leq \frac{\frac{p(s+R)}{p(s-R)}H_sks}{p(b_s(t_*))-p(a_s(t_*))} 
%}
%Since $a_s(t_*)+b_s(t_*)\leq 2(s+\frac{D}{2})$ and $t_*\leq \Ats$ we have by \ref{lem:btat_est} the estimate
%\eq{&\frac{p(s+R)}{p(s-R)}H_sks\geq \frac{1}{\Ats}(p(b_s(t_*))-p(a_s(t_*)))\\&\geq \frac{1}{\Ats}\cprnt{\prnt{p(s+\frac{D}{2})-p(s-\frac{D}{2})}-kH_s\int_0^{\Ats}2(s+\frac{D}{2})dx}\\&\geq \frac{p(s+\frac{D}{2})-p(s-\frac{D}{2})}{\Ats}-kH_s2(s+\frac{D}{2})
%\geq\frac{p(s+\frac{D}{2})-p(s-\frac{D}{2})}{\Ats}-4kH_s s  }
%where the last inequality follows from the assumption $s>\frac{D}{2}$. 
%
%Substitution of this estimate into \eqref{eq:AstBound} and using the identity $\frac{p(s+\frac{D}{2})}{p(s-\frac{D}{2})}=e^{kDs}$ gives
%\eq{ &\frac{1}{\Ats}\prnt{p(s+\frac{D}{2})-p(s-\frac{D}{2})}=\frac{1}{\Ats}p(s+\frac{D}{2})\prnt{1-e^{-kDs}}\\&
%=\leq  4H_s ks+e^{kDs}H_sks\leq 2kHs(2+2e^{kDs})\lesssim ksH_se^{kDs} 
%}
%According to estimate \eqref{est:3} $H_s\leq \frac{p(s+\frac{D}{2})-p(s-\frac{D}{2})}{ks}$, we can thus conclude
%\eq{ \frac{1}{\Ats}\lesssim e^{kDs} }
\qedhere
\end{proof}

\subsubsection{Proving Propositions \ref{prop:Xi_0} and \ref{prop:Xi_s}}

\begin{proof}[Proof of Proposition \ref{prop:Xi_0}]
By Estimate \ref{est:2}
\eql{ \label{eqn:equiv_f}   \ff_0(t)=\int_{a_0(t)}^{b_0(t)}p^{-1}(x)dx= 2\int_0^{b_0(t)}p^{-1}(x)dx\eqsim\min\cprnt{\frac{1}{\sqrt{k}}, b_0(t)} \,.}
%We denote by $t_*$ the extremum point where $D_t|_{t=t_*}\ff_0(t)=0$. 
For any $t\in (0, e^{-1})$, it follows from Estimate \ref{est:5} that $b_0(t)\geq b_0(e^{-1})\geq \delta_0 \frac{D}{2}$ where
\[ \delta_0:=1-2e^{-1}\,. \]

Hence $\delta_0\cdot 2\cdot  \frac{D}{2}\leq b_s(t_*)-a_s(t_*) =2b_s(t_*) \leq 2\cdot \frac{D}{2} $. Then for $t\in(0,e^{-1}]$ we have the following equivalence:
%from lemma \ref{est:2} with  
\[      \fs=2\int_0^{b_s(t)}p^{-1}(x)dx\eqsim\min\cprnt{\frac{1}{\sqrt{k}}, D} \,.\]
In addition, according to Estimate \ref{est:1} we have the following estimate for $H_0$ 
\[ H_0\eqsim \frac{p(D/2)-1}{kD}\,. \]
Therefore 
\eq{ \sup_{t\in (0,e^{-1}]}t\log\prnt{\frac{1}{t}}\ff_0(t) H_0 &\eqsim \sup_{t\in (0,e^{-1}]}t\log\prnt{\frac{1}{t}}\min\cprnt{\frac{1}{\sqrt{k}}, D}\prnt{\frac{p(D/2)-1}{kD}}\\&=e^{-1}\min\cprnt{\frac{1}{\sqrt{k}}, D}\prnt{\frac{p(D/2)-1}{kD}}\,,}
where the last equality follows from the monotonicity of $t\mapsto t\log\prnt{\frac{1}{t}}$ on $(0,e^{-1}]$.
\end{proof}

\begin{proof}[Proof of Proposition \ref{prop:Xi_s}]  

We set again $\delta_0:=1-2e^{-1}$. One and only one of the following holds: either  $\sqrt{k} D\leq 1$ or  $\sqrt{k} D> 1$. Notice that the function
$\Upsilon_0(k,D)^{-1}=\min\cprnt{ \frac{1}{\sqrt{k}}, D}\prnt{\frac{p(D/2)-1}{kD}}$ verifies the following estimates
\eq{\Upsilon_0(k,D)^{-1}\eqsim \begin{cases} \frac{1}{k}\prnt{p(D/2)-1} &\\
\frac{1}{k^{3/2}D}\prnt{p(D/2)-1}  &
\end{cases}
\gtrsim
\begin{cases}
 D^2 & \sqrt{k} D\leq 1\\ 
\frac{1}{k}+D^2+kD^4 &\sqrt{k} D>1\,,
\end{cases}
}
where the inequalities can be justified for example by Taylor expansion.

\begin{myitemize}
	\item[If $\sqrt{k} D\leq 1$\,\,:]

			We write $x\in[s-\frac{D}{2}, s+\frac{D}{2}]$ as $x=x'+s$ where $x'\in [-\frac{D}{2},\frac{D}{2}]$. 
			Then since by assumption $\sqrt{k} D\leq 1$ the following inequality holds:
			\[ e^{0+ksx'+\frac{ks^2}{2}}\leq p_s(x'):=e^{\frac{k}{2}(x'+s)^2}\leq e^{\half\cdot \frac{1}{4}+ksx'+\frac{ks^2}{2} }\,.\]
			Consider the density $\tilde{p}_s(x'):=e^{ks(x'+\frac{s}{2})}$ and notice that :
			\[ 1=exp(0)\leq \frac{p(x')}{\tilde{p}_s(x')}\leq exp\prnt{\frac{1}{8}+ksx'+\frac{ks^2}{2}-ksx'-\frac{ks^2}{2}   }=e^{\frac{1}{8}} \,.\]
			Set $H_s=\int_{[-\frac{D}{2},\frac{D}{2}]}p_s(x')dx'$ and $\tilde{H}_s=\int_{[-\frac{D}{2},\frac{D}{2}]}\tilde{p}_s(x')dx'$ (the last estimate shows $H_s\eqsim \tilde{H}_s$). Then by the Holley-Stroock Lemma \ref{lem:HolStrook} and Proposition \ref{prop:Xi_00} it follows that: 
\eq{&\Lambda_{LS}\prnt{H_s^{-1}p_s\cdot 1_{[-\frac{D}{2},\frac{D}{2}]}m }\eqsim \Lambda_{LS}\prnt{\tilde{H}_s^{-1}\tilde{p}_s\cdot 1_{[-\frac{D}{2},\frac{D}{2}]} m }\\&\gtrsim \Lambda_{LS}\prnt{\tilde{H}_0^{-1}\tilde{p}_0\cdot 1_{[0,D]} m }\eqsim \frac{1}{D^2}\gtrsim \Upsilon_0(k,D)\,.}

  \item[If $\sqrt{k} D> 1$\,\,:]
We consider several possible cases distinguished by where $I_s$ is centered. We show that in all cases $\Uss\lesssim\frac{1}{k}+D^2+kD^4 \,\,\,(\lesssim \Upsilon_0(k,D)^{-1})$. 

\begin{enumerate}
	\item[\bf Case A: ${\bf s-\frac{D}{2}> 0}$]
According to Estimate \ref{lem:Bnd_A} :
\[ \log\prnt{\frac{1}{\Ats}}\lesssim 1+kDs\,.\]
\begin{enumerate}
	\item[If $\frac{D}{2}<s\leq D$:]  
	then by Lemmas \ref{lem:BndAFH1} and \ref{lem:Bnd_A} and :
	%\substack{s\leq D,\\ \sqrt{k}\delta_0 D\geq 1}
\[ \Uss\leq \log\prnt{\frac{1}{\Ats}}\prnt{\Ats \fss H_s}\lesssim (1+kDs)D^2 \stackrel{s\leq D}{\leq} D^2+kD^4\,. \]
	\item[If $s>D$:]  By Estimate \ref{lem:BndAFH2}, considering that $s\mapsto \frac{s}{(s-\frac{D}{2})^2}$ is bounded and decreasing for $s\in[D,\infty)$, it follows that:
	\eq{ \Uss&\leq\log\prnt{\frac{1}{\Ats}}\prnt{\Ats \fss H_s}\\&\lesssim
		\prnt{\Ats \fss H_s}+kDs\prnt{\Ats \fss H_s}\\&\stackrel{\ref{lem:BndAFH1},\ref{lem:BndAFH2}}{\lesssim} D^2+\frac{kDs}{k^2(s-\frac{D}{2})^2}\leq D^2+kD^2\frac{1}{(\frac{D}{2})^2k^2}\lesssim D^2+\frac{1}{k}\,.}
\end{enumerate}

%The proof of \ref{prop:Xi_s} will occupy the rest of this section. 

\item[\bf Case B: ${\bf s-\frac{D}{2}\leq 0}$]
We approach this case by considering several sub-cases, corresponding to the possible positions of $a_s(t_*)$.
\begin{enumerate}
	\item[If $\frac{1}{4}\delta_0 D\leq a_s(t_*)<s+\frac{D}{2}$:]
	By definition $\int_{s-\frac{D}{2}}^{a_s(t_*)}p(x)dx=t_* H_s$ hence $t_*>\frac{\int_0^{\frac{1}{4}\delta_0 D}p(x)dx}{H_s}$. 
	According to Estimate \ref{est:1} and equivalence \ref{est:6} :
	{\small
\eq{&H_s\leq H_{\frac{D}{2}}\eqsim \frac{p(D)-1}{kD} \stackrel{\sqrt{k} D> 1}{\eqsim}
\frac{p(D)}{kD}\qquad \text{and}\\   &\int_0^{\frac{1}{4}\delta_0 D}p(x)dx\eqsim \frac{p(\frac{1}{4}\delta_0 D)-1}{k\delta_0 D}\stackrel{\sqrt{k}\frac{1}{4}\delta_0 D> \frac{1}{4}\delta_0}{\eqsim} \frac{p(\delta_0 D)}{k\delta_0 D} \quad (\text{taking } c=\frac{1}{4}\delta_0\, \,\text{in \ref{est:6}})\,. } }

By \eqref{eqn:DerZeroOptimization2} $t_*\leq \Ats$ we conclude that
{\small
\eq{  
\log\prnt{\frac{1}{\Ats}}&\leq \log\prnt{\frac{1}{t_*}}  \leq \log\prnt{\frac{H_{\frac{D}{2}}}{\int_0^{\frac{1}{4}\delta_0 D}p(x)dx}}\eqsim
1+\log\prnt{ \frac{p(D)}{kD}\middle/   \frac{p(\frac{1}{4}\delta_0 D)}{k\delta_0 D}      }\\&\lesssim 1+\log\prnt{\frac{p(D)}{p(\frac{1}{4}\delta_0 D)}}=1+\half k\prnt{D^2-(\frac{1}{4}\delta_0 D)^2}\stackrel{\sqrt{k} D> 1}{\lesssim}  kD^2\,. 
}}
Since by Lemma \ref{lem:BndAFH1} we have the estimate $\Ats \fss H_s\leq D^2$ we conclude that $\Uss\lesssim kD^4$.

	\item[If $0<a_s(t_*)\leq \frac{1}{4}\delta_0 D$:]
	By Estimate \ref{est:5} \, $b_s(e^{-1})\geq \delta_0 \frac{D}{2}$; this implies that $b_s(t_*)-a_s(t_*)\geq  \frac{1}{4}\delta_0 D$. We can thus conclude (see \eqref{eqn:DerZeroOptimization2}) that
{\small
\eq{
\frac{1}{\Ats}&=\frac{p^{-2}(a_s(t_*))+p^{-2}(b_s(t_*))}{\int_{a_s(t_*}^{b_s(t_*)}p^{-1}(x)dx}\cdot H_s\leq \frac{p^{-2}(a_s(t_*))}{p^{-1}(b_s(t_*))\prnt{b_s(t_*)-a_s(t_*)} }\cdot H_s\\&\stackrel{H_s\leq H_{\frac{D}{2}}}{\lesssim} p(D)\cdot \frac{H_{\frac{D}{2}}}{\delta_0 D}
\,,}}
where the last inequality follows from $p(b_s(t_*))\leq p(s+\frac{D}{2})\leq p(D)$ and $p(a_s(t_*))\geq 1$. Using the Estimates \ref{est:1} and \ref{est:6} (recall $\sqrt{k} D> 1$ by assumption):
\[ \frac{H_{\frac{D}{2}}}{D}\lesssim \frac{p(D)}{kD^2}\stackrel{kD^2\geq 1}\leq p(D)\,.\]
It follows that
\[ \log\prnt{\frac{1}{\Ats}}\lesssim 1+\log(p(D)^2)\lesssim kD^2\,. \]
Together with the estimate $\Ats \fss H_s\leq D^2$ from Lemma \ref{lem:BndAFH1} we conclude that $\Uss\lesssim kD^4$.

\item[If $s-\frac{D}{2}< a_s(t_*)\leq 0$:]

For $t\in (0,e^{-1})$ we have the simple inequality
\[ t\log\prnt{\frac{1}{t}}=2t\log\prnt{\frac{1}{\sqrt{t}}}\leq 2t\cdot\frac{1}{\sqrt{t}}=2\sqrt{t}\,,\]
so if $t_*\leq \prnt{\frac{H_0}{H_s}}^2e^{-1}$ then
\[ \Uss=t_*\log\prnt{\frac{1}{t_*}} \fss H_s\lesssim \frac{H_0}{\cancel{H_s}}\fss \cancel{H_s}\lesssim \frac{1}{\sqrt{k}}H_0\eqsim \Upsilon_0(k,D)^{-1}\,,\]
where we used the inequality $\fss\leq \sqrt{\frac{2\pi}{k}}$. So it remains to consider the case $t_*>\prnt{\frac{H_0}{H_s}}^2e^{-1}$.

By considering the equivalence $(e^{x^2}-1)/x\eqsim e^{x^2}/x$ for $x\geq 1$ we have the bound
{\small 
\[ \frac{1}{\sqrt{t}}\leq\frac{H_s}{H_0}\eqsim \frac{p(s+\frac{D}{2})-1}{k(s+\frac{D}{2})}\frac{k\frac{D}{2}}{p(\frac{D}{2})-1}\lesssim \frac{p(s+\frac{D}{2})}{k(s+\frac{D}{2})}\frac{k\frac{D}{2}}{p(\frac{D}{2})}\leq \frac{p(s+\frac{D}{2})}{p(\frac{D}{2})}=e^{kD+\half k s^2} \,. \]}
In order to bound $t_*$ we use the identity $\int_{s-\frac{D}{2}}^{a_s(t_*)}p(x)dx=t_*H_s$, whence
\[ t_*=\frac{\int_{s-\frac{D}{2}}^{a_s(t_*)}p(x)dx}{H_s}\leq \frac{\int_{s-\frac{D}{2}}^{0}p(x)dx}{H_s}
\,.\]
Since $\int_{s-\frac{D}{2}}^{0}p(x)dx\eqsim \frac{p(\frac{D}{2}-s)-1}{k(\frac{D}{2}-s)}$ we conclude 
{\small
\eq{
\Uss&=t_*\log\prnt{\frac{1}{t_*}} \fss H_s\leq \prnt{\frac{\int_{s-\frac{D}{2}}^{0}p}{\cancel{H_s}}}\log\prnt{\frac{e}{(H_0/H_s)^2}}\fss \cancel{H_s}
\\&\lesssim \frac{p(\frac{D}{2}-s)-1}{k(\frac{D}{2}-s)}\prnt{1+k(Ds+\half s^2)}\lesssim \frac{p(\frac{D}{2}-s)-1}{k(\frac{D}{2}-s)}\prnt{1+ kDs}\,,
}}
where the last inequality is due to the assumption $s<\frac{D}{2}$. To complete the proof we prove the claim that $\frac{p(\frac{D}{2}-s)-1}{k(\frac{D}{2}-s)}\prnt{1+kDs}\lesssim \frac{p(\frac{D}{2})-1}{k\frac{D}{2}}$. To this end we show that their ratio is bounded by a constant (the same constant for all $s$). One of the following must hold: 
\begin{enumerate}
  \item $\frac{D}{2}-s\leq\frac{D}{4}$
	\item $\frac{D}{2}-s>\frac{D}{4}$	
\end{enumerate}
Case A: by monotonicity of $\frac{e^{\half kx^2}-1}{x}$ on $(0,\infty)$:
\eq{&\frac{\frac{p(\frac{D}{2}-s)-1}{k(\frac{D}{2}-s)}\prnt{1+ kDs}}{\frac{p(\frac{D}{2})-1}{kD}}\stackrel{\frac{D}{4}\leq s\leq \frac{D}{2}}{\leq} \frac{\frac{p(\frac{D}{4})-1}{k(\frac{D}{4})}\prnt{1+ \half k D^2}}{\frac{p(\frac{D}{2})-1}{kD}}
\lesssim\frac{p(\frac{D}{4})}{p(\frac{D}{2})}\prnt{1+ \half k D^2}\\ &\lesssim e^{-\frac{3}{8}k(\frac{D}{2})^2}(1+\frac{3}{8}k(\frac{D}{2})^2)\lesssim e^{-\frac{3}{8}k(\frac{D}{2})^2}e^{\frac{3}{8}k(\frac{D}{2})^2}=1\,.
}

Case B: Equivalence \ref{est:6} followed by the inequality $\frac{\frac{D}{2}}{\frac{D}{2}-s}\leq 2$ impliy that 

\eq{ &\frac{\frac{p(\frac{D}{2}-s)-1}{k(\frac{D}{2}-s)}\prnt{1+ kDs}}{\frac{p(\frac{D}{2})-1}{kD}}\stackrel{\sqrt{k}(\frac{D}{2}-s)\geq \frac{1}{4}}{\eqsim}\frac{e^{\half k((\frac{D}{2})^2-sD+s^2)}}{e^{\half k(\frac{D}{2})^2}}\prnt{\frac{\frac{D}{2}}{\frac{D}{2}-s}}(1+\frac{1}{4} kDs)\\ \lesssim &
e^{-\half kDs+\half ks^2}e^{\frac{1}{4} kDs}\stackrel{s\leq\frac{D}{2}}\leq e^{-\frac{1}{4} kDs}e^{\frac{1}{4} kDs}=1\,.
}

\end{enumerate}

%End enumeration cases A,B
\end{enumerate} 

\end{myitemize}

\end{proof}

\end{proof}

\begin{remk}[Sharpness]
Computing the log-Sobolev constant for the one dimensional space $([-\frac{D}{2},\frac{D}{2}], e^{-\half Kx^2}dx)$, we get the previous estimates. This is the extreme case, which like in the \Poinc  constant problem, corresponds to a symmetric density around $0$. 
As we mentioned regarding the case $K\geq 0$, also here it is possible to establish the sharpness (up to constants) on a \cwrm{} of arbitrary topological dimension $n\geq 2$, following the construction in \cite{Mil2}; however we did not verify the details.

\end{remk}

\subsection{Conclusions: connections with the \Poinc constant}

Considering our results from the previous sections, we know that when $(M,\gfrak,\mu)$ satisfies $CDD_b(K,\infty,D)$ then
\[  \Lambda_{Poi}(M,\gfrak,\mu)\geq \Lambda_{Poi}(\xi_K) \qquad \text{and}\qquad \Lambda_{LS}(M,\gfrak,\mu)\gtrsim \Lambda_{LS}(\xi_K)\,,\]
where $$d\xi_K(x):= \frac{1}{c_K}e^{ -\half Kx^2}1_{[-\frac{D}{2},\frac{D}{2}]}(x)dm(x) \qquad \text{with} \qquad c_K:=\int_{-\frac{D}{2}}^{\frac{D}{2}}e^{ -\half Kx^2}dx\,, $$ 
is the probability measure $\xi_K\in \Pkinfd^M(\R)$, such that the {\bf sharp} lower bounds $\lam_{K,\infty,D}$ and $\rho_{K,\infty,D}$ to $\Lambda_{Poi}(M,\gfrak,\mu)$ and $\Lambda_{LS}(M,\gfrak,\mu)$ respectively verify
\eql{\label{SharpEquivalences} &\Lambda_{Poi}(M,\gfrak,\mu)\geq\lam_{K,\infty,D}\eqsim \Lambda_{Poi}(\xi_K) \quad\mbox{and    }\\ \nonumber &\Lambda_{LS}(M,\gfrak,\mu)\geq \rho_{K,\infty,D}\eqsim \Lambda_{LS}(\xi_K)\,.}
Our main result in this concluding section is:
\begin{thm}
Under $CDD_b(K,\infty,D)$, where $K\in\R$, it holds that $$\Lambda_{LS}(\xi_K)\eqsim \Lambda_{Poi}(\xi_K)\,.$$
\end{thm}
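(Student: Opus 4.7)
The direction $\Lambda_{LS}(\xi_K)\le \Lambda_{Poi}(\xi_K)$ is the standard implication \eqref{Implication_LS_Poinc}: linearizing the log-Sobolev inequality around $f\equiv 1$ yields the Poincaré inequality with the same constant. So the task reduces to establishing $\Lambda_{Poi}(\xi_K)\lesssim \Lambda_{LS}(\xi_K)$. The plan is to pair the already-proven estimates for $\Lambda_{LS}(\xi_K)$ (Proposition \ref{prop:estLS_nonNeg_K} and Proposition \ref{prop:Xi_0}) with separate two-sided estimates for $\Lambda_{Poi}(\xi_K)$ obtained via the Muckenhoupt criterion (Theorem \ref{thm:Muck1}), and then verify that the two pairs of expressions coincide up to universal constants, case by case according to the sign of $K$ and the size of $|K|D^2$.

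Case~A: $|K|D^2\le 1$. The density $e^{-Kx^2/2}$ is pinched between two absolute constants on $[-D/2,D/2]$, so by the Holley-Stroock Lemma \ref{lem:HolStrook} both $\Lambda_{Poi}(\xi_K)$ and $\Lambda_{LS}(\xi_K)$ are equivalent (up to universal factors) to the corresponding constants of the uniform probability measure on $[-D/2,D/2]$. For the uniform measure Rothaus' alternative (Theorem \ref{thm:existenceMinimizers}) combined with symmetry of the density excludes a non-constant LS minimizer, so $\Lambda_{LS}(\mathrm{unif})=\Lambda_{Poi}(\mathrm{unif})=\pi^2/D^2$, and both constants for $\xi_K$ are $\eqsim 1/D^2$.

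Case~B: $K>0$ and $KD^2>1$. A lower bound $\Lambda_{Poi}(\xi_K)\geq K$ comes from the Bakry-Qian--Lichnerowicz estimate of Chapter \ref{chp:Poinc}, while testing with $f(x)=x$, which satisfies $\int f\,d\xi_K=0$ by symmetry, yields $\Lambda_{Poi}(\xi_K)\le \bigl(\int x^2 d\xi_K\bigr)^{-1}\eqsim K$ in the Gaussian-like regime $KD^2>1$. Thus $\Lambda_{Poi}(\xi_K)\eqsim K\eqsim \Lambda_{LS}(\xi_K)$ by Proposition \ref{prop:estLS_nonNeg_K}. Combining Cases A and B, for $K\ge 0$ one has $\Lambda_{Poi}(\xi_K)\eqsim \max\{K,1/D^2\}\eqsim \Lambda_{LS}(\xi_K)$.

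Case~C: $K=-k<0$ and $kD^2>1$. This is the main case. By symmetry the median of $\xi_K$ is $\eta=0$, and the Muckenhoupt criterion gives $\Lambda_{Poi}(\xi_K)^{-1}\eqsim B_++B_-$ with $B_+=\sup_{x>0}\phi_+(x)$, where
\[
  \phi_+(x)\;:=\;\xi_K([x,D/2])\int_0^x \frac{1}{\tfrac{d\xi_K}{dm}(y)}\,dy\;=\;c_K^{-1}\Bigl(\int_x^{D/2}e^{ky^2/2}dy\Bigr)\cdot c_K\Bigl(\int_0^x e^{-ky^2/2}dy\Bigr).
\]
Using Estimates \ref{est:1}--\ref{est:3}: the normalization satisfies $c_K\eqsim e^{kD^2/8}/(kD)$; the mass $F^c(x)=\xi_K([x,D/2])$ remains $\eqsim 1$ for $x$ in the "bulk" $[0,D/2-c/\sqrt{k}]$ and decays only in a boundary layer of width $\sim 1/\sqrt{k}$ near $D/2$; and $\int_0^x e^{-ky^2/2}dy$ saturates at $\eqsim 1/\sqrt{k}$ once $x\gtrsim 1/\sqrt{k}$. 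Choosing any $x$ in the bulk with $x\gtrsim 1/\sqrt{k}$ gives $\phi_+(x)\eqsim c_K/\sqrt{k}\eqsim e^{kD^2/8}/(k^{3/2}D)$; the estimates also show this is the order of magnitude of the supremum. Hence $\Lambda_{Poi}(\xi_K)\eqsim k^{3/2}D\,e^{-kD^2/8}$, which matches exactly the estimate for $\Lambda_{LS}(\xi_K)$ in Theorem \ref{thm:negK} (since $kD^2>1$ forces $\max\{\sqrt{k},1/D\}=\sqrt{k}$).

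The main obstacle lies in Case~C: one must verify that the Muckenhoupt supremum is genuinely realized in the bulk region and not inflated by the boundary layer. Both $F^c(x)$ and $\phi_+(x)$ vanish as $x\to D/2$, so one checks that $\phi_+$ decays at the same exponential rate as $F^c$ and therefore never exceeds the bulk value. Equivalently, for the Bobkov-Götze expression the additional factor $\log(1/F^c(x))$ is $O(1)$ at the maximizer (since $F^c\eqsim 1$ there), while in the boundary layer $F^c\log(1/F^c)\to 0$ dominates the growth of $\log(1/F^c)$. These two observations together show that the Muckenhoupt and Bobkov-Götze suprema are comparable, which is precisely the content of $\Lambda_{Poi}(\xi_K)\eqsim \Lambda_{LS}(\xi_K)$.
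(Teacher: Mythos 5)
Your proposal is correct, but it takes a genuinely different route from the paper's. The paper reduces the comparison of $\Lambda_{Poi}(\xi_K)$ and $\Lambda_{LS}(\xi_K)$ to isoperimetric quantities: it uses Bobkov's characterization of the isoperimetric profile of a log-concave measure on $\R$ (Theorem \ref{thm:BobkovLogConc}), the Cheeger/Mazya inequality (Theorem \ref{thm:Cheeger}) and the Buser--Ledoux and Beckner--Ledoux inequalities (Theorems \ref{thm:BuserLedoux}--\ref{thm:BuserLedoux2}) to squeeze the Cheeger constant $\hChe(\xi_K)$, the Ledoux constant $\lLed(\xi_K)$, and the two minima $E_{Poi}$, $E_{LS}$ all between $\eqsim \frac{1}{H_{k,D}D}$; the equivalence $\Lambda_{Poi}(\xi_K)\eqsim\Lambda_{LS}(\xi_K)$ is then extracted from $E_{Poi}(\xi_K)\eqsim E_{LS}(\xi_K)$. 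You instead compute $\Lambda_{Poi}(\xi_K)$ directly: Muckenhoupt's criterion (Theorem \ref{thm:Muck1}) for $K<0$, a variance test function and Lichnerowicz for $K>0$, and Holley--Stroock in the pinched regime, and you then check the formulas against those already proved for $\Lambda_{LS}(\xi_K)$. Your argument is more elementary, avoiding the isoperimetric machinery entirely; the paper's argument, in exchange, establishes the full chain $\hChe(\xi_K)\eqsim\lLed(\xi_K)\eqsim E_{Poi}(\xi_K)\eqsim E_{LS}(\xi_K)$ (Corollary \ref{equivalences_Els_Epoi}) as a structurally interesting byproduct, which yours does not produce.

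Two small points worth tightening. First, the split between Case~A and Case~C at $|K|D^2=1$ is a little delicate: in Case~C your Muckenhoupt argument needs a point $x$ simultaneously satisfying $x\gtrsim 1/\sqrt{k}$ and $x\le D/2-\epsilon$ with $\epsilon$ controlled, which forces $kD^2$ above some absolute constant larger than $1$. This is harmless because Holley--Stroock actually covers $|K|D^2\le C_0$ for any fixed $C_0$, so just move the threshold. Second, your worry about the boundary layer near $D/2$ is easy to dispel: since $\int_0^x e^{-ky^2/2}\,dy$ is globally bounded by $\eqsim 1/\sqrt k$ and $F^c(x)\le\frac{1}{2}$, the supremum of $\phi_+$ is at most $\eqsim c_K/\sqrt k$ outright, and it is achieved in the bulk, so there is no competition from the boundary region.
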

In other words, the worst \Poinc and log-Sobolev constants, $\lam_{K,\infty, D}$ and $\rho_{K,\infty,D}$, coincide up to numeric constants. 
\bigskip

\subsubsection{`Isoperimetric type' inequalities}

\begin{defn}[Volume and Surface area] Let $(M,\gfrak,\mu=e^{-V}\mu_{\gfrak})$ be a \wrm{}. We define
$$\Omega_{sets}:=\cprnt{A\subset M: \text{A is open with smooth boundary } \partial A}\,.$$
In addition we define the following notions for the members $A$ of $\Omega_{sets}$
\begin{enumerate}
	\item The volume measure of $A$:  $\mu(A):=\int_{A}e^{-V(x)}d\mu_{\gfrak}(x)$.
	\item The surface measure of $A$:  $\mu_s(A):=\int_{\partial A}e^{-V(x)}d\Hcal^{n-1}(x)$,
	where $\Hcal^{n-1}$ stands for the $n-1$ dimensional Hausdorff measure.
	%; we can identify it as
	%$$\mu_s(A)=||\nab_{\gfrak}1_{A}||_{TV(e^{-V}d\mu_{\gfrak})}=\sup\{\int_{A}div_{\gfrak,\mu}X\,e^{-V}d\mu_{\gfrak}:\,\,X\in \Cinf_c(M; TM),\, \gfrak(X,X)\leq 1\} \,.$$
	%where $div_{\gfrak,\mu}X:=div_{\gfrak} X-\Avg{X}{\nab_{\gfrak} V}e^{-V}d\mu_{\gfrak}$ is the weighted divergence. 
	\item The Minkowski surface measure of $A$: $\mu^{+}(A):=\liminf_{\epsilon\to 0}\frac{\mu(A_{\epsilon}^{d_{\gfrak}})-\mu(A)}{\epsilon}$ where $A_{\epsilon}^{d_{\gfrak}}:=\{q\in M:\, \exists p\in A\,\, d_{\gfrak}(p,q)<\epsilon\}$ (here $d_{\gfrak}$ denotes the geodesic distance). 
\end{enumerate}
\end{defn}
It is known for $A\in \Omega_{sets}(M)$ that $\mu_s(A)$ and $\mu^+(A)$ define identical notions of surface area. Therefore for sets $A\in \Omega_{sets}(M)$ there is no ambiguity in the notion of surface area, and we always denote it by $\mu_s(A)$.

Throughout this section we assume that $\mu$ is a probability measure. 
We define the following isoperimetric functions:
\begin{defn}
The isoperimetric function $\gls{Isoper}=\Ical(M,\gfrak,\mu)$ is defined as the pointwise maximal function $\Ical:[0,1]\to\R_{+}$ so that
$\mu_s(A)\geq \Ical(\mu(A))$ for all $A\in \Omega_{sets(M)}$. 
\end{defn}
It turns out that on $\R$ there is another isoperimetric function which is important to consider.  
\begin{defn} We define $\gls{isop_flat}(t)$ as the pointwise maximal function $\Ical^{\flat}(t):[0,1]\to \R_{+}$ s.t. $\mu^+(A)\geq \Ical^{\flat}(\mu(A))$ 
for all $A\in \Omega_{rays}(\R)$ where
\[ \Omega_{rays}(\R):=\bigcup_{a\in \R}\prnt{\{(-\infty, a)\}\cup \{(a,\infty)\}}\,.\]
\end{defn}
Evidently when the underlying space $M$ is $\R$,  we have the inequality $\Ical^{\flat}(t)\geq \Ical(t)$ for any probability measure $\mu$.
However, a useful theorem of Bobkov shows that for the calculation of $\Ical(t)$ on $\R$ equipped with a log-concave measure $\mu$, it is sufficient to consider a smaller subset $\Omega_{rays}(\R)\subset \Omega_{sets}(\R)$ which contains only rays; this is formally stated in the following theorem:
\begin{thm}[Bobkov \cite{Bob2}]\label{thm:BobkovLogConc} On a space $(\R,|\cdot |, \mu)$ which satisfies \red{$CD_b(0,\infty)$} (i.e. $-\log(\deriv{\mu}{m})$ is convex) holds the identity $\Ical=\Ical^{\flat}$. 

\end{thm}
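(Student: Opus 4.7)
The easy direction $\Ical(t) \leq \Ical^{\flat}(t)$ is immediate from $\Omega_{rays}(\R) \subset \Omega_{sets}(\R)$; the substance is the reverse inequality, i.e.\ for every $A \in \Omega_{sets}(\R)$ with $\mu(A) = t$, $\mu^+(A) \geq \Ical^{\flat}(t)$. First I would reduce to the case where $A = \bigsqcup_{i=1}^N(a_i,b_i)$ is a finite disjoint union of open intervals (with $a_1 = -\infty$ or $b_N = +\infty$ permitted), using that a smooth open subset of $\R$ is a countable union of open intervals together with a truncation at large $|x|$ and lower semi-continuity of the perimeter.

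Next, set $p := d\mu/dm$ and $F(x) := \mu((-\infty,x])$, and introduce the CDF coordinates $\alpha_i := F(a_i)$, $\beta_i := F(b_i)$. The perimeter becomes
\[
\mu^+(A) \;=\; \sum_{i=1}^N \bigl[q(\alpha_i) + q(\beta_i)\bigr], \qquad q(s) := p(F^{-1}(s)),
\]
with the natural convention that contributions at $\pm\infty$ vanish (so effectively $q(0) = q(1) = 0$), under the constraint $\sum_i(\beta_i - \alpha_i) = t$; moreover $\Ical^{\flat}(t) = \min(q(t), q(1-t))$. The key structural input is that log-concavity of $p$ translates into concavity of $q$ on $[0,1]$: where $p > 0$, $q'(s) = (\log p)'(F^{-1}(s))$, which is non-increasing as the composition of a non-increasing function with a non-decreasing one. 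Together with $q(0) = q(1) = 0$ this yields subadditivity $q(a+b) \leq q(a) + q(b)$ on $[0,1]$ via the standard chord argument.

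The problem thus reduces to a purely one-dimensional claim about concave functions: for such a $q$ and any ordered configuration with $\sum_i(\beta_i - \alpha_i) = t$,
\[
\sum_{i=1}^N \bigl[q(\alpha_i) + q(\beta_i)\bigr] \;\geq\; \min\bigl(q(t),\, q(1-t)\bigr).
\]
The base case $N = 1$ is immediate: $\alpha \mapsto q(\alpha) + q(\alpha + t)$ is concave on $[0,1-t]$ (a sum of concave functions), so its minimum is attained at an endpoint $\alpha \in \{0, 1-t\}$, giving precisely $\min(q(t), q(1-t))$. I would then induct on $N$: at each step, exhibit a measure-preserving reduction that absorbs one interval into an adjacent one and weakly decreases the weighted perimeter — for instance, replacing $(\alpha_1,\beta_1)\cup(\alpha_2,\beta_2)$ by the single interval $(\alpha_2-(\beta_1-\alpha_1),\beta_2)$ when $\alpha_2 \geq \beta_1 - \alpha_1$, or by a symmetric rightward variant otherwise.

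The hard part will be verifying the perimeter-decrease inequality $q(\alpha_1) + q(\beta_1) + q(\alpha_2) \geq q(\alpha_2 - (\beta_1 - \alpha_1))$ at each reduction step. Subadditivity alone is not enough, because the target point $u := \alpha_2 - (\beta_1 - \alpha_1)$ may lie near the peak of $q$ where its value exceeds those at neighbouring configuration points. The case analysis splits according to whether the points in play lie on the increasing or decreasing branch of $q$ about its unique maximum $s^\star$; in the delicate regime where the peak sits between the two intervals, it is often cleaner to pass to the complement $A^c$ (using $\mu^+(A) = \mu^+(A^c)$ and $\mu(A^c) = 1 - t$) to reduce to a configuration whose relevant boundary points all lie on a single monotone branch of $q$, where the inequality becomes immediate from monotonicity. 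Iterating the reduction $N - 1$ times reaches the base case and completes the proof.
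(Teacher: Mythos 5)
You are reconstructing the proof — the paper cites Bobkov~\cite{Bob2} and gives no argument of its own — so I evaluate the proposal on its merits. The overall plan is the right one: reduce to finite unions of intervals, pass to CDF coordinates where $q(s) := p(F^{-1}(s))$ is concave on $[0,1]$ with $q(0)=q(1)=0$ and $\Ical^{\flat}(t)=\min(q(t),q(1-t))$, settle $N=1$ by observing that $\alpha \mapsto q(\alpha)+q(\alpha+t)$ is concave so its minimum over $[0,1-t]$ sits at an endpoint, and induct on $N$. The $N=1$ step is correct, and you are right that $q'(s)=(\log p)'(F^{-1}(s))$ is non-increasing, so $q$ is concave.

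The gap is in the induction step. The inequality you need, $q(\alpha_1)+q(\beta_1)+q(\alpha_2)\geq q(u)$ with $u=\alpha_2-(\beta_1-\alpha_1)$, is genuinely false for concave $q$. Take $q$ piecewise linear with $q(s)=s$ on $[0,0.9]$ and $q(s)=9(1-s)$ on $[0.9,1]$ (this comes from a two-sided exponential $p(x)\propto e^{x}$ for $x\le x_0$, $p(x)\propto e^{-9x}$ for $x\ge x_0$, which is log-concave), and the configuration $\alpha_1=0$, $\beta_1=0.01$, $\alpha_2=0.91$, $\beta_2=1$. Then $u=0.9$ sits at the peak, $q(u)=0.9$, but $q(\alpha_1)+q(\beta_1)+q(\alpha_2)=0+0.01+0.81=0.82<0.9$: the merge you prescribe strictly \emph{increases} perimeter. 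Your selection rule ``shift left when $\alpha_2 \geq \beta_1 - \alpha_1$'' is essentially always satisfied (since $\alpha_2 \geq \beta_1 > \beta_1 - \alpha_1$ whenever $\alpha_1>0$), so it does not route you around this case; and passing to the complement does not help either, because $\partial A = \partial A^c$, so complementation leaves the set of boundary points — and hence their position relative to the peak of $q$ — unchanged.

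The fix is close to what you wrote but must not hard-code a direction. Fix all intervals but one and move that one (either by rigid translation of an interior interval, or by sliding the shared mass between two adjacent pieces when one of them is a ray). Along such a one-parameter family, the perimeter is a sum of terms $q(\text{affine in }\delta)$ and is therefore concave in $\delta$; a concave function on a compact interval of feasible $\delta$ attains its \emph{minimum at an endpoint}, and each endpoint of the feasible range either merges two intervals or sends an endpoint to $0$ or $1$ (a ray), reducing the number of finite boundary points by at least one. Concavity guarantees that at least one endpoint has perimeter $\leq$ the original configuration without your having to guess which one; iterating lands you at $N=1$. (Equivalently: minimize the concave functional $\sum_i[q(\alpha_i)+q(\beta_i)]$ over the polytope $\{0\le\alpha_1\le\beta_1\le\dots\le\beta_N\le 1,\ \sum(\beta_i-\alpha_i)=t\}$; the minimum is at a vertex, and vertices are precisely the degenerate/merged/ray configurations.) This is the standard localization argument, and it replaces the case analysis you flagged as ``the hard part'' with a single structural observation.
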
 
 
\bigskip
%\subsubsection{Isoperimetric like inequalities under curvature conditions and the \Poinc/log-Sobolev Constants}

Below we briefly survey important results, relating surface and volume measure.

\begin{defn} 
We define the Cheeger constant $\gls{Cheg_const}(M,\gfrak,\mu)$ (resp. Ledoux constant $\gls{Ledo_const}(M,\gfrak,\mu)$) to be the largest constant $c\geq 0$ such that $\mu_{s}(\partial A)\geq c\cdot  \mu(A)$ (resp. 
$\mu_{s}(\partial A)\geq c\cdot \mu(A)\prnt{\log\prnt{\frac{1}{\mu(A)}}}^{\half}$) for all  $A\in \Omega_{sets}(M)$ with $\mu(A)\in (0,\half]$. Equivalently:
 \[ \hChe(M,\gfrak,\mu)=\inf_{A\in \Omega_{sets}(M):\,\,0<\mu(A)\leq \half}\frac{\mu_{s}(\partial A)}{\mu(A)}=\inf_{t\in (0,\half]} \frac{\Ical(t)}{t} \]
and
\[ \lLed(M,\gfrak,\mu)=\inf_{A\in \Omega_{sets}(M):\,\,0<\mu(A)\leq \half}\frac{\mu_{s}(\partial A)}{\mu(A)\prnt{\log\prnt{\frac{1}{\mu(A)}}}^{\half}}=\inf_{t\in (0,\half]} \frac{\Ical(t)}{t\sqrt{\log(\frac{1}{t})}}\,, \]
\end{defn}
where $\Ical=\Ical(M,\gfrak,\mu)$. 
\bigskip

%The Cheeger constant $\hChe(M,\gfrak,\mu)$ (resp. the LEis defined as the largest constant $\hfrak\geq 0$ such that 
%
%
%, or equivalently 

We briefly survey several known results relating volume and surface measure on a \cwrm{} $(M,\gfrak,\mu=e^{-V}\mu_{\gfrak})$, where throughout we assume $\mu$ is a probability measure and $\mu_s$ the associated surface measure as defined above.  

\begin{thm}[Cheeger \cite{Che}, Mazya\cite{Maz1,Maz2,Maz3}]\label{thm:Cheeger} Without  any curvature assumptions:
	\[ \half \hChe(M,\gfrak,\mu)\leq\sqrt{\Lambda_{Poi}(M,\gfrak,\mu)}\,.\]
\end{thm}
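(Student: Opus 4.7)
The plan is to establish the equivalent form $\Lambda_{Poi}(M,\gfrak,\mu)\geq \tfrac{1}{4}\hChe(M,\gfrak,\mu)^{2}$ via the co-area formula applied to the positive and negative parts of a test function. The central reduction is to pass from the mean-zero constraint defining $\F_{Poi}(M,\mu)$ to a median-zero constraint, after which the Cheeger constant can be applied to every super-level set.

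First, given $f\in \F_{Poi}(M,\mu)$ let $m$ be any median of $f$ with respect to $\mu$, and set $g:=f-m$. Since $\int f d\mu = 0$ we have $\int g^{2}d\mu = \int f^{2}d\mu + m^{2}\geq \int f^{2}d\mu$, while $|\nab_{\gfrak} g|=|\nab_{\gfrak} f|$. Thus the desired inequality for $f$ follows from the inequality
\[
\hChe^{2}\int_{M} g^{2}d\mu \;\leq\; 4\int_{M}|\nab_{\gfrak} g|^{2}d\mu
\]
applied to any smooth $g$ with $0$ a median of $g$, which is what I will establish. By definition of the median, both sets $\{g>0\}$ and $\{g<0\}$ have $\mu$-measure at most $\tfrac{1}{2}$.

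Next I split $g=g_{+}-g_{-}$ with $g_{\pm}\geq 0$ supported where $\pm g\geq 0$ respectively. For each $t>0$ the super-level set $\{g_{+}^{2}>t\}$ is contained in $\{g>0\}$ and therefore has $\mu$-measure at most $\tfrac12$. Applying the co-area formula to $g_{+}^{2}$ and the Cheeger inequality $\mu_{s}(A)\geq \hChe\cdot\mu(A)$ on each level set,
\[
\int_{M}|\nab_{\gfrak}(g_{+}^{2})|\,d\mu \;=\; \int_{0}^{\infty}\mu_{s}(\{g_{+}^{2}>t\})\,dt \;\geq\; \hChe\int_{0}^{\infty}\mu(\{g_{+}^{2}>t\})\,dt \;=\; \hChe\int_{M}g_{+}^{2}\,d\mu.
\]
Since $|\nab_{\gfrak}(g_{+}^{2})|=2g_{+}|\nab_{\gfrak} g|\cdot 1_{\{g>0\}}$, Cauchy--Schwarz gives
\[
\hChe\int_{M}g_{+}^{2}d\mu \;\leq\; 2\int_{\{g>0\}}g_{+}|\nab_{\gfrak} g|\,d\mu \;\leq\; 2\Bigl(\int_{M}g_{+}^{2}d\mu\Bigr)^{1/2}\Bigl(\int_{\{g>0\}}|\nab_{\gfrak} g|^{2}d\mu\Bigr)^{1/2},
\]
which upon squaring and dividing yields $\hChe^{2}\int g_{+}^{2}d\mu\leq 4\int_{\{g>0\}}|\nab_{\gfrak} g|^{2}d\mu$. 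The same argument applied to $g_{-}$ produces the analogous bound on $\{g<0\}$, and summing the two gives the claim, hence the theorem.

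The main technical point to handle carefully is the invocation of the co-area formula, since it requires that level sets $\{g_{+}^{2}>t\}$ be of the class appearing in the definition of $\hChe$. For $g\in C_{c}^{\infty}(M)$ this is true for a.e.\ $t$ by Sard's theorem (so the boundaries are smooth hypersurfaces and the Minkowski surface measure $\mu^{+}$ coincides with $\mu_{s}$), and an approximation argument extends the inequality across the measure-zero set of critical values. A secondary subtlety is that we only have the mean-zero Cheeger bound on the one-sided sets $\{g>0\}$ and $\{g<0\}$, which is precisely the reason for passing to a median in the reduction step; everything else is essentially a one-line application of Cauchy--Schwarz.
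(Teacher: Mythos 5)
The paper does not give a proof of this theorem; it is stated as a classical result cited from Cheeger and Mazya, and the surrounding discussion simply uses it. Your proof is the standard co-area argument for Cheeger's inequality (as in, for example, \cite{BGL} or \cite{Led2}), and it is correct. You correctly identify the essential reduction: the Cheeger constant only controls sets of $\mu$-measure at most $\half$, so one must recenter $f$ at a median rather than the mean, and since $\mu$ is a probability measure the computation $\int (f-m)^2 d\mu = \int f^2 d\mu + m^2 \geq \int f^2 d\mu$ shows this recentering can only decrease the Rayleigh quotient, which is the direction one needs. The co-area formula applied to $g_\pm^2$, the Cheeger bound on each super-level set (each lying inside $\{g>0\}$ or $\{g<0\}$ and hence of measure at most $\half$), the identity $\nab_{\gfrak}(g_\pm^2)=2g_\pm\nab_{\gfrak} g$, and Cauchy--Schwarz yield the constant $4$ exactly; this is the sharp form of the argument, and your handling of the regularity of level sets via Sard's theorem is the right way to match the definition of $\hChe$ over $\Omega_{sets}(M)$. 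One small point you do not mention, but which causes no trouble: when $m\neq 0$ the function $g_+$ (or $g_-$) is not compactly supported even though $f\in C_c^{\infty}(M)$, because $g$ equals the nonzero constant $-m$ outside $\mathrm{supp}(f)$. However $g_+$ stays bounded and smooth, and for $t>0$ the boundaries $\partial\{g_+^2>t\}=\{f=m+\sqrt{t}\}$ are (for a.e.\ $t$) compact smooth hypersurfaces contained in $\mathrm{supp}(f)$, so the co-area formula, the finiteness of the surface measures, and the Sard argument all go through as you claim.
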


Under \red{$CD_b(K,\infty)$} the following reverse form of the Cheeger inequality can also be proved:

\begin{thm}\label{thm:BuserLedoux} 
\begin{enumerate}
%	\item (Buser \cite{Bus}, Ledoux \cite{Led2}) Under curvature conditions $CD(0,\infty)$:
%	\[ \hChe(M,\gfrak,\mu)\gtrsim \sqrt{\Lambda_{Poi}(M,\gfrak,\mu)}\,.\]
	
	\item (Buser \cite{Bus}, Ledoux \cite{Led2}) Under curvature conditions \red{$CD_b(K,\infty)$} where $K=-k\leq 0$: \[\hChe(M,\gfrak,\mu)\gtrsim E_{Poi}(M,\gfrak,\mu)\,,\] with
	   \[ \gls{Bus_Led_fcn}(M,\gfrak,\mu):=\min\prnt{ \frac{\Lambda_{Poi}(M,\gfrak,\mu)}{\sqrt k}, \sqrt{\Lambda_{Poi}(M,\gfrak,\mu)} }\,. \]
	%$\mu_{s}(\partial A)\geq \frac{1}{34}E_{LS}(M,\gfrak,\mu)\mu(A)\prnt{\log\prnt{\frac{1}{\mu(A)}}}^{\half}$ where
\end{enumerate}
\end{thm}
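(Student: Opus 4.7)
My plan is to follow Ledoux's semigroup approach, which is the standard route for producing reverse Cheeger inequalities under a Bakry--Émery lower curvature bound. The central object is the symmetric heat semigroup $P_t = e^{t\,\Delta_{\gfrak,\mu}}$ generated by $L_{\gfrak,\mu}$ (see the integration-by-parts formula \eqref{eq:intByParts}). Under $CD_b(-k,\infty)$ the Bakry--Émery calculus yields the pointwise gradient commutation
\[
|\nabla_{\gfrak} P_t f|(x) \leq e^{kt}\, P_t(|\nabla_{\gfrak} f|)(x),
\]
which is the key geometric input. I would fix $A\in\Omega_{sets}(M)$ with $a:=\mu(A)\leq \tfrac12$ and, by smooth approximation, apply the analysis to (mollifications of) $f=1_A$, so that $\int |\nabla_{\gfrak} f|\,d\mu$ is replaced in the limit by the surface measure $\mu_s(\partial A)$.

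The first half of the argument is a short-time estimate. Using $P_tf-f = \int_0^t \Delta_{\gfrak,\mu} P_sf\,ds$, the duality between $L^1$ and $L^\infty$, self-adjointness of $P_s$, and the Cauchy--Schwarz inequality, one derives a Ledoux-type bound of the form
\[
\|P_tf - f\|_{L^1(\mu)} \;\lesssim\; \sqrt{\frac{e^{2kt}-1}{k}}\cdot \int |\nabla_{\gfrak} f|\,d\mu,
\]
with the convention that the prefactor reduces to $\sqrt{2t}$ as $k\to 0$. Specialising to $f=1_A$ gives $\|P_t 1_A-1_A\|_{L^1(\mu)} \lesssim c(t,k)\cdot \mu_s(\partial A)$.

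The second half bounds $\|P_t 1_A - 1_A\|_{L^1(\mu)}$ from below using the spectral gap. By symmetry of $P_t$ one has $\int 1_A P_t 1_A\,d\mu = \int (P_{t/2}1_A)^2\,d\mu$, hence
\[
\|P_t 1_A - 1_A\|_{L^1(\mu)} \;=\; 2\!\int_A (1-P_t 1_A)\,d\mu \;=\; 2\bigl(a - \|P_{t/2}1_A\|_{L^2(\mu)}^2\bigr).
\]
Applying the Poincaré inequality $Poi(\Lambda)$ with $\Lambda=\Lambda_{Poi}(M,\gfrak,\mu)$ to $P_{t/2}1_A - a$ and using exponential variance decay gives $\mathrm{Var}_\mu(P_{t/2}1_A)\leq e^{-\Lambda t}\,a(1-a)$, so
\[
\|P_t 1_A - 1_A\|_{L^1(\mu)} \;\geq\; 2\bigl(1-e^{-\Lambda t}\bigr)\,a(1-a) \;\gtrsim\; \min(1,\Lambda t)\cdot a.
\]
Combining the two inequalities yields
\[
\mu_s(\partial A) \;\gtrsim\; \frac{\min(1,\Lambda t)}{\sqrt{(e^{2kt}-1)/k}}\cdot a,
\]
uniformly in $t>0$.

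The remaining, and conceptually main, step is to optimise over $t$. Here the two regimes in the definition of $E_{Poi}$ emerge naturally: when $k\leq \Lambda$ one chooses $t\sim 1/\Lambda$, the numerator saturates and the denominator is $\sim \sqrt{t}$, producing the $\sqrt{\Lambda}$ bound; when $k>\Lambda$ one chooses $t\sim 1/k$, so that $\Lambda t$ is small and the denominator is $\sim 1/\sqrt{k}$, producing the $\Lambda/\sqrt{k}$ bound. Supremising over $A$ with $\mu(A)\leq 1/2$ then gives $\hChe(M,\gfrak,\mu)\gtrsim E_{Poi}(M,\gfrak,\mu)$. I expect the main obstacle to be a rigorous justification of the Ledoux-type $L^1$-gradient bound for the non-smooth function $1_A$: one must handle the smooth approximation and the fact that $|\nabla_{\gfrak} 1_A|$ is a surface measure, and verify that the Bakry--Émery $\Gamma_2$-calculus applies in this weighted and possibly non-compact geodesically convex setting; once this analytic step is in place, the rest of the argument is an algebraic optimisation.
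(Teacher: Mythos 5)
The paper does not prove this statement — it quotes it as a classical result of Buser and Ledoux and simply cites \cite{Bus, Led2} — and your proposal is a correct reconstruction of exactly Ledoux's semigroup proof from the cited source: the $L^1$ heat-flow bound $\|P_t f - f\|_{L^1(\mu)} \lesssim \sqrt{(e^{2kt}-1)/k}\,\int |\nabla_{\gfrak} f|\,d\mu$ is valid (it dominates $\int_0^t \|\nabla_{\gfrak} P_s g\|_\infty\,ds$ obtained from the reverse local Poincar\'e inequality under $CD_b(-k,\infty)$), the lower bound via $\|P_{t/2}1_A\|_{L^2}^2$ and exponential variance decay is standard, and your choice $t \sim 1/\Lambda$ versus $t \sim 1/k$ produces the two branches of $E_{Poi}$ correctly. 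The only technical point, which you have already flagged, is the approximation of $1_A$ by Lipschitz functions so that $\int|\nabla_{\gfrak} f|\,d\mu$ degenerates to $\mu^+(A)$; with that in place the argument is complete.
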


Moreover, similar inequalities hold for the log-Sobolev constant, conditioned that  $\lLed(M,\gfrak,\mu)$ takes the role of $\hChe(M,\gfrak,\mu)$:
\begin{thm}(Beckner, Ledoux \cite{Led4}) \label{thm:BcknerLedoux} Without any curvature assumptions
\[  \lLed(M,\gfrak,\mu)\lesssim\sqrt{\Lambda_{LS}(M,\gfrak,\mu)}\,.\]
\end{thm}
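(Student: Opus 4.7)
The plan is to derive the log-Sobolev inequality from the Ledoux-type isoperimetric inequality via a coarea/layer-cake argument, mirroring the classical Cheeger-to-Poincar\'e derivation but with the extra $\sqrt{\log(1/t)}$ weight that distinguishes Ledoux's constant from Cheeger's. No curvature assumption is needed because we never use Bochner; we work purely with the level set geometry.

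First, I would reduce the task to an integrated estimate. By density, it suffices to prove the LS inequality for smooth compactly supported $f$, and by homogeneity of $\Lambda_{LS}$ and the translation invariance of the entropy, I may replace $f$ by $g = |f|$ and arrange that $\mu(\{g > 0\}) \leq 1/2$ (e.g., by applying the argument separately to $(f-m)_+$ and $(m-f)_+$, where $m$ is a median of $f$, and summing; the splitting loses only universal constants). For such $g$, the smooth coarea formula on the weighted manifold gives
\[
\int_M |\nabla g|\,d\mu \;=\; \int_0^\infty \mu_s\!\bigl(\partial\{g > t\}\bigr)\,dt,
\]
and each superlevel set satisfies $\mu(\{g > t\}) \leq 1/2$, so the Ledoux hypothesis $\mu_s(\partial A) \geq \lLed \cdot \mu(A)\sqrt{\log(1/\mu(A))}$ yields the $L^1$-type Ledoux-Sobolev inequality
\[
\int_M |\nabla g|\,d\mu \;\geq\; \lLed \int_0^\infty \mu(\{g > t\})\sqrt{\log\bigl(1/\mu(\{g>t\})\bigr)}\,dt .
\]

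Next I would upgrade this $L^1$ inequality to the $L^2$ entropy form by testing it against $g^2$ in place of $g$. Since $|\nabla g^2| = 2g|\nabla g|$, the Cauchy-Schwarz inequality bounds the left side of the above (applied to $g^2$) by $2\bigl(\int g^2 d\mu\bigr)^{1/2}\bigl(\int |\nabla g|^2 d\mu\bigr)^{1/2}$. So it remains to prove the one-dimensional rearrangement inequality
\[
\int_0^\infty F(t)\sqrt{\log\bigl(1/F(t)\bigr)}\,dt \;\geq\; c\,\sqrt{\,\Bigl(\textstyle\int_0^\infty F(t)\,dt\Bigr)\cdot\mathrm{Ent}(F)\,},
\]
where $F(t) = \mu(\{g^2 > t\})$ and $\mathrm{Ent}(F)$ is the corresponding entropy functional, which by the layer cake representation equals $Ent_\mu(g^2)$. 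Chaining the two inequalities then gives $\lLed \sqrt{Ent_\mu(g^2)} \lesssim \sqrt{\int |\nabla g|^2 d\mu}$, which is exactly $\lLed^2 \lesssim \Lambda_{LS}$.

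The main obstacle will be the one-dimensional rearrangement step, i.e.\ showing that the layer-cake integral $\int F\sqrt{\log(1/F)}\,dt$ dominates $\sqrt{\|F\|_1 \cdot \mathrm{Ent}(F)}$ up to a universal constant. This is a purely analytic fact about non-increasing functions $F:[0,\infty)\to[0,1/2]$; the natural strategy is to split the $t$-integral at a level chosen so that, on one side, the $\sqrt{\log(1/F)}$ factor dominates $\sqrt{\log(F/\|F\|_1^{-1})}^+$ (giving control of the positive part of the entropy), while on the other side direct Cauchy-Schwarz with the weight $\sqrt{\log(1/F)}$ against the measure $dt$ absorbs the missing pieces. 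The half-measure normalization $F \leq 1/2$ guarantees $\sqrt{\log(1/F)} \geq \sqrt{\log 2} > 0$, keeping the weight bounded away from zero, which is what rules out degeneracies. Once this lemma is in hand, reassembling the pieces and tracking the constants produces $\lLed(M,\gfrak,\mu) \lesssim \sqrt{\Lambda_{LS}(M,\gfrak,\mu)}$ with an explicit universal constant.
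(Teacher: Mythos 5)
Your architecture (coarea formula giving an $L^1$ Ledoux--Sobolev inequality, application to $g^2$, Cauchy--Schwarz, then a one-dimensional layer-cake lemma) is the natural first attempt at this cited result of Ledoux, and the first two steps are sound. The fatal problem is the one-dimensional lemma to which you reduce everything: the inequality
\[
\int_0^\infty F(t)\sqrt{\log\prnt{1/F(t)}}\,dt \;\geq\; c\,\sqrt{\Bigl(\int_0^\infty F(t)\,dt\Bigr)\cdot \mathrm{Ent}_\mu(g^2)}
\]
is \emph{false} for non-increasing $F:[0,\infty)\to[0,\tfrac12]$. Take $g^2 = T\cdot 1_B + 1_C$ with $B,C$ disjoint, $\mu(B)=\epsilon$, $\mu(C)=\tfrac12-\epsilon$, and set $s:=\epsilon T$. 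Then the left-hand side equals $\tfrac12\sqrt{\log 2}+(T-1)\epsilon\sqrt{\log(1/\epsilon)}\approx O(1)+s\sqrt{\log T}$, while $\int_0^\infty F\,dt\approx \tfrac12+s$ and $\mathrm{Ent}_\mu(g^2)=s\log T-(s+\tfrac12)\log(s+\tfrac12)\approx s\log T+O(1)$. Choosing $s=(\log T)^{-1/2}$ and letting $T\to\infty$, the left side stays bounded while the right side grows like $(\log T)^{1/4}$, so no universal $c>0$ exists. The underlying reason is that the global Cauchy--Schwarz step $\int|\nabla g^2|\,d\mu\le 2\|g\|_2\|\nabla g\|_2$ discards too much: after it, the only information you retain about $g$ is its distribution function $F$, and that information genuinely does not control the entropy by the Lorentz-type quantity $\int F\sqrt{\log(1/F)}$.

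The known proofs avoid this by never applying Cauchy--Schwarz globally. One either (i) runs a Maz'ya-type truncation argument, applying the $L^1$ coarea inequality to the dyadic truncations $g_k=\min(\max(g-\rho^k,0),\rho^{k+1}-\rho^k)$, using Cauchy--Schwarz only within each dyadic layer, and summing; or (ii) passes through the Orlicz reformulation of Bobkov--G\"otze used elsewhere in this work, bounding $\mathrm{Ent}(g^2)\lesssim \|g^2\|_{N_2}$ and deducing the requisite Orlicz--Sobolev inequality from the isoperimetric hypothesis via capacities. A secondary, fixable issue: your reduction to $\mu(\{g>0\})\le\tfrac12$ by splitting at a median is asserted to cost ``only universal constants,'' but reassembling $\mathrm{Ent}(f^2)$ from $\mathrm{Ent}((f-m)_\pm^2)$ requires Rothaus' lemma together with a Poincar\'e inequality to absorb the additive term $2\int(f-m)^2\,d\mu$; the latter is available because $\lLed\lesssim\hChe\lesssim\sqrt{\Lambda_{Poi}(M,\gfrak,\mu)}$, but this must be said. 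The first issue, however, is the one that sinks the proof as written.
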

\begin{thm}(Ledoux \cite{Led2}) \label{thm:BuserLedoux2} Under curvature conditions \red{$CD_b(K,\infty)$} where $K=-k\leq 0$:
$$\lLed(M,\gfrak,\mu)\gtrsim E_{LS}(M,\gfrak,\mu)\,,$$ 
with
	\[ \gls{Led_fcn}(M,\gfrak,\mu):=\min\prnt{ \frac{\Lambda_{LS}(M,\gfrak,\mu)}{\sqrt k}, \sqrt{\Lambda_{LS}(M,\gfrak,\mu)}}  \,. \]
%\begin{enumerate}
%  \item Under curvature conditions $CD(0,\infty)$:
%	\[ \hChe(M,\gfrak,\mu)\gtrsim \sqrt{\Lambda_{LS}(M,\gfrak,\mu)}\,.\]

%	\item  

%\end{enumerate}
\end{thm}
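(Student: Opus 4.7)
The plan is to adapt Ledoux's heat-semigroup approach, which was used for the Cheeger-type reverse estimate in Theorem \ref{thm:BuserLedoux}, to the log-Sobolev setting. Let $(P_t)_{t \geq 0}$ denote the heat semigroup generated by $L_{\gfrak,\mu}$. The key analytic input of the $CD_b(-k,\infty)$ condition, via Bakry--Émery's $\Gamma_2$-calculus, is the pointwise gradient estimate $|\nabla P_t f| \leq e^{kt} P_t(|\nabla f|)$, valid for smooth $f$. Fix $A \in \Omega_{sets}$ with $\mu(A) \in (0, \tfrac{1}{2}]$; the goal is to lower bound $\mu_s(\partial A)$ in terms of $\mu(A)\sqrt{\log(1/\mu(A))}$. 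After interpreting $f = 1_A$ in a BV sense and setting $u_t := P_t f$ (so that $\int u_t\, d\mu = \mu(A)$), integrating the gradient estimate against $\mu$ yields
\[
\int |\nabla u_t|\, d\mu \leq e^{kt}\,\mu_s(\partial A).
\]

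Next I would combine this with the log-Sobolev inequality. Applied to $\sqrt{u_t}$, the LSI gives $\int \frac{|\nabla u_t|^2}{u_t}\,d\mu \geq 2\Lambda_{LS}\, Ent_{\mu}(u_t)$. Applying the Cauchy--Schwarz inequality $\bigl(\int |\nabla u_t|\,d\mu\bigr)^2 \leq \mu(A)\int \frac{|\nabla u_t|^2}{u_t}\,d\mu$ and then the gradient estimate leads to the master inequality
\[
2\Lambda_{LS}\,Ent_{\mu}(u_t)\,\mu(A) \;\leq\; e^{2kt}\,\mu_s(\partial A)^2.
\]
The remaining ingredient is a quantitative lower bound on $Ent_\mu(u_t)$ that is robust for an appropriate range of $t$. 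Since $u_t \to 1_A$ as $t \to 0^+$ with $0 \leq u_t \leq 1$, dominated convergence gives $Ent_\mu(u_t) \to Ent_\mu(1_A) = \mu(A)\log(1/\mu(A))$, and for $0 < t \lesssim 1/\Lambda_{LS}$ the entropy remains comparable to this initial value, by the integral identity $Ent_\mu(1_A) - Ent_\mu(u_t) = \int_0^t \int \frac{|\nabla u_s|^2}{u_s}\,d\mu\,ds$ controlled in turn by the LSI-driven exponential decay bound $Ent_\mu(u_t) \geq e^{-2\Lambda_{LS} t}\,\mu(A)\log(1/\mu(A))$ obtainable from Grönwall.

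With this in hand, the bound $\mu_s(\partial A) \gtrsim \sqrt{\Lambda_{LS}}\, e^{-kt}\sqrt{Ent_\mu(u_t)/\mu(A)^{-1}}$ is optimized over $t > 0$, producing the two regimes in the definition of $E_{LS}$. When $\Lambda_{LS} \geq k$, the choice $t \sim 1/\Lambda_{LS}$ keeps both $e^{-kt}$ and $Ent_\mu(u_t)/Ent_\mu(1_A)$ of order one, giving $\lLed \gtrsim \sqrt{\Lambda_{LS}}$. When $\Lambda_{LS} < k$, a larger time scale $t \sim \tfrac{1}{2k}\log(k/\Lambda_{LS})$ is optimal; then $\Lambda_{LS} t \to 0$ so the entropy is essentially undamped, while $e^{-kt} \sim \sqrt{\Lambda_{LS}/k}$, producing $\lLed \gtrsim \Lambda_{LS}/\sqrt{k}$. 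Taking the minimum of the two regimes gives the desired bound $\lLed \gtrsim E_{LS}$.

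The hard part will be the technical handling of $f = 1_A$: the quantity $\int \frac{|\nabla u_t|^2}{u_t}\,d\mu$ blows up as $t \to 0^+$, so the Cauchy--Schwarz step and the LSI must be carried out at strictly positive $t$, with care in passing to the limit. This is exactly where the semigroup smoothing plays its role: it converts the BV datum $1_A$ into a smooth positive function on which both LSI and Cauchy--Schwarz apply, while the gradient estimate from $CD_b(-k,\infty)$ controls the price paid in terms of the perimeter. Extracting the $\sqrt{\log(1/\mu(A))}$ factor (absent from the Cheeger case) from the interplay between entropy decay and the $e^{kt}$ gradient penalty is the decisive feature that distinguishes this argument from its $L^2$ counterpart in Theorem~\ref{thm:BuserLedoux}.
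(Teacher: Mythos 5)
This is a cited result of Ledoux \cite{Led2} and the paper supplies no proof of it, so you are proposing an original argument rather than reproducing one. Your setup (the gradient commutation $|\nabla P_t f| \leq e^{kt}P_t|\nabla f|$ and the resulting $\int |\nabla u_t|\,d\mu \leq e^{kt}\mu_s(\partial A)$) is fine, but the step producing the ``master inequality'' does not go through. You invoke the LSI as $2\Lambda_{LS}\,Ent_\mu(u_t)\leq \int \frac{|\nabla u_t|^2}{u_t}\,d\mu$ and Cauchy--Schwarz as $\bigl(\int|\nabla u_t|\,d\mu\bigr)^2 \leq \mu(A)\int\frac{|\nabla u_t|^2}{u_t}\,d\mu$. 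Both of these place a \emph{lower} bound on the Fisher information $\int\frac{|\nabla u_t|^2}{u_t}\,d\mu$; they cannot be chained to deduce $2\Lambda_{LS}\,Ent_\mu(u_t)\,\mu(A)\leq\bigl(\int|\nabla u_t|\,d\mu\bigr)^2$, which is what you would need before applying the gradient estimate. Nothing in your ingredients compares those two quantities, and no such comparison exists: as $t\to 0^+$ the Fisher information $\int\frac{|\nabla u_t|^2}{u_t}\,d\mu$ diverges while $\int|\nabla u_t|\,d\mu\to\mu_s(\partial A)$ remains bounded, so the Cauchy--Schwarz gap is unbounded and your master inequality would already fail for small $t$.

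The argument Ledoux actually uses replaces entropy decay by \emph{hypercontractivity}. Under LS$(\Lambda_{LS})$ one has $\|P_t 1_A\|_2\leq\|1_A\|_p=\mu(A)^{1/p}$ with $p=1+e^{-2\Lambda_{LS}t}$, while the $CD_b(-k,\infty)$ gradient bound yields by duality $\|P_{2t}1_A-1_A\|_1\lesssim c(2t)\,\mu_s(\partial A)$ with $c(s)\eqsim\min(\sqrt{s},1/\sqrt{k})$. Writing $a:=\mu(A)$ and using $a-\|P_t 1_A\|_2^2 = \int 1_A(1_A-P_{2t}1_A)\,d\mu\leq\|P_{2t}1_A-1_A\|_1$, one obtains $\mu_s(\partial A)\gtrsim\frac{a\bigl(1-a^{2/p-1}\bigr)}{c(2t)}$. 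Since $2/p-1=\tanh(\Lambda_{LS}t)$, optimizing over $t$ (taking $t\eqsim 1/(\Lambda_{LS}\log(1/a))$ when this is $\lesssim 1/k$, and $t\to\infty$ otherwise) yields $\mu_s(\partial A)\gtrsim\min\bigl(\Lambda_{LS}/\sqrt{k},\sqrt{\Lambda_{LS}}\bigr)\,a\sqrt{\log(1/a)}$, which is the claim. The $\sqrt{\log(1/a)}$ factor you are after comes from the $p\to 1$ limit of hypercontractivity, not from entropy decay along the flow.
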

\bigskip
Of course when $K=0$ the first expression in the above minima is interpreted as $\infty$ and plays no role.
\bigskip

In the case $K\geq 0$ it is easy to show that the previous results imply the equivalence $\Lambda_{LS}(\xi_K)\eqsim \Lambda_{Poi}(\xi_K)$, as the following theorem shows. 
\begin{thm} Under $CDD_b(K,\infty,D)$ with $K=k\geq 0$, the following equivalence holds: $$\Lambda_{LS}(\xi_K)\eqsim \Lambda_{Poi}(\xi_K)\eqsim \hfrak_{Che}(\xi_K)^2\eqsim \max(K, \frac{1}{D^2})\,.$$
\end{thm}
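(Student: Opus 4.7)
The plan is to reduce the four-term chain of equivalences to the single key computation $\Lambda_{Poi}(\xi_K)\eqsim \max(K,1/D^2)$, and then deduce the remaining equivalences from the general theorems already assembled in this section.

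First I would establish $\Lambda_{Poi}(\xi_K)\eqsim \max(K,1/D^2)$ by splitting into two regimes. In the regime $KD^2\leq 1$, the density $e^{-Kx^2/2}$ is pinched between $e^{-1/8}$ and $1$ on $[-D/2,D/2]$, so $\xi_K$ is a uniformly bounded multiplicative perturbation of the normalized Lebesgue measure on $[-D/2,D/2]$; the Holley--Stroock lemma (Lemma \ref{lem:HolStrook}) then gives $\Lambda_{Poi}(\xi_K)\eqsim (\pi/D)^2\eqsim 1/D^2$, which equals $\max(K,1/D^2)$ in this regime. In the regime $KD^2>1$ the lower bound $\Lambda_{Poi}(\xi_K)\geq K$ follows from the Lichnerowicz/Bakry--\'{E}mery estimate applied to the $1$-dimensional CWRM $(\R,|\cdot|,\xi_K)$, which satisfies $CDD_b(K,\infty,D)$ with $K>0$. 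For the matching upper bound I would test the Rayleigh quotient against $f(x)=x$, which is admissible because $\int x\,d\xi_K=0$ by symmetry; the quotient then equals $1/\mathrm{Var}(\xi_K)$, and a straightforward truncated-Gaussian computation gives $\mathrm{Var}(\xi_K)\gtrsim 1/K$ whenever $KD^2>1$, yielding $\Lambda_{Poi}(\xi_K)\lesssim K=\max(K,1/D^2)$.

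Second, $\Lambda_{LS}(\xi_K)\eqsim \max(K,1/D^2)$ follows at once: the lower bound is a direct application of Proposition \ref{prop:estLS_nonNeg_K} to $(\R,|\cdot|,\xi_K)$ (which satisfies $CDD_b(K,\infty,D)$ with $K\geq 0$), while the matching upper bound comes from the universal inequality $\Lambda_{LS}\leq \Lambda_{Poi}$ combined with the first step. This already gives $\Lambda_{LS}(\xi_K)\eqsim \Lambda_{Poi}(\xi_K)$.

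Finally, to obtain $\hfrak_{Che}(\xi_K)^2\eqsim \max(K,1/D^2)$, one direction is Cheeger's inequality (Theorem \ref{thm:Cheeger}): $\hfrak_{Che}(\xi_K)^2\leq 4\Lambda_{Poi}(\xi_K)\lesssim \max(K,1/D^2)$. For the reverse direction I would observe that $CDD_b(K,\infty,D)$ with $K\geq 0$ implies $CDD_b(0,\infty,D)$, so the Buser--Ledoux inequality (Theorem \ref{thm:BuserLedoux}, applied with $k=0$, for which $E_{Poi}$ reduces to $\sqrt{\Lambda_{Poi}}$) gives $\hfrak_{Che}(\xi_K)\gtrsim \sqrt{\Lambda_{Poi}(\xi_K)}$, and hence $\hfrak_{Che}(\xi_K)^2\gtrsim \Lambda_{Poi}(\xi_K)\eqsim \max(K,1/D^2)$. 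The main obstacle I expect is the upper bound step for $\Lambda_{Poi}(\xi_K)$ in the regime $KD^2>1$, i.e.\ verifying the lower variance estimate $\int x^2\,d\xi_K\gtrsim 1/K$ uniformly once $KD^2>1$; this is elementary (split on $[-1/\sqrt{K},1/\sqrt{K}]$, where the truncated density is bounded below by a universal constant times its value at $0$, and the second moment there is $\Theta(1/K)$) but does require a careful uniform calculation. All other steps are direct invocations of general lemmas stated earlier in this chapter.
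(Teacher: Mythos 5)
Your proof is correct, but it reorganizes the logic compared to the paper's argument in a genuine way. The paper takes $\hfrak_{Che}(\xi_K)$ as the anchor: it first establishes $\Lambda_{Poi}(\xi_K)\eqsim \hfrak_{Che}(\xi_K)^2$ via Cheeger plus Buser--Ledoux (using log-concavity, which reduces $E_{Poi}$ to $\sqrt{\Lambda_{Poi}}$), then identifies $\hfrak_{Che}(\xi_K)$ exactly as $2\Ical^{\flat}(\tfrac{1}{2})=2/\int_{-D/2}^{D/2}e^{-Kx^2/2}dx$ by observing that the isoperimetric profile of a log-concave measure is concave and symmetric, so its slope $\Ical^{\flat}(t)/t$ is minimized at $t=\tfrac{1}{2}$, and finally applies Estimate~\ref{est:2} to convert that integral into $\min(1/\sqrt{K},D)$. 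You instead take $\Lambda_{Poi}(\xi_K)$ as the anchor and compute it directly by regime-splitting: a bounded-perturbation argument against the uniform density when $KD^2\leq 1$, and Lichnerowicz together with the test function $f(x)=x$ when $KD^2>1$. You then derive the Cheeger equivalence from the Poincar\'e one using exactly the same two inequalities (Cheeger and Buser--Ledoux), just in the opposite logical direction. Both approaches are valid; yours is more elementary (it avoids the concavity-of-$\Ical^\flat$ observation and the exact identification of the Cheeger constant, at the cost of an explicit truncated-Gaussian variance estimate), while the paper's has the advantage of producing the clean closed form $\hfrak_{Che}(\xi_K)=2/\int_{-D/2}^{D/2}e^{-Kx^2/2}\,dx$ rather than only an asymptotic equivalence. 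One small slip worth flagging: you cite Lemma~\ref{lem:HolStrook} for the Poincar\'e constant, but as stated in the paper that lemma covers only $\Lambda_{LS}$. The bounded-perturbation principle for $\Lambda_{Poi}$ is equally standard (and in fact simpler, since the variance is a quadratic infimum), so this is not a gap in substance, but you should either state the Poincar\'e version explicitly or note that the LS lemma's proof carries over verbatim.
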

\begin{proof} From Theorems \ref{thm:Cheeger} and \ref{thm:BuserLedoux} complemented by Bobkov's theorem \ref{thm:BobkovLogConc} we can conclude that under curvature conditions \red{$CD_b(0,\infty)$}:
\eql{\label{PoincEquivCheeger}
\Lambda_{Poi}(\xi_K)\eqsim \hChe(\xi_K)^2=\prnt{\inf_{t\in (0,\half]}\frac{\Ical^{\flat}(t)}{t}}^2\,.
}
One can verify that $\Ical^{\flat}(t)$ (which for a probability density $\bar{p}$ with distribution function $F$ is  just $\bar{p}(F^{-1}(t))$) satisfies the ODE $I^{\flat}I^{\flat''}=-k<0$; we can thus conclude  that $\Ical^{\flat}(t)$ is a concave function on $[0,1]$. Being a non-negative concave function (vanishing only at 0 and 1) and symmetric around $\half$, we conclude that on $(0,\half]$ the slope $\frac{\Ical^{\flat}(t)}{t}$ is minimal at $t=\half$.  In view of that we conclude 
\[ \Lambda_{Poi}(\xi_K)\eqsim\prnt{ \inf_{t\in(0,\half]}\frac{\Ical^{\flat}(t)}{t}}^2=4\Ical^{\flat}(\half)^2=\frac{4}{\prnt{\int_{-\frac{D}{2}}^{\frac{D}{2}}e^{-\half kx^2}dx }^2}\,.\]

By inequality \eqref{SharpEquivalences} and Estimate \ref{est:2} we conclude $\Lambda_{Poi}(\xi_K)\eqsim\max\prnt{k,\frac{1}{D^2}}$. Considering Proposition \ref{prop:estLS_nonNeg_K} and that $\Lambda_{LS}(\xi_K)\leq \Lambda_{Poi}(\xi_K)$   we conclude $\Lambda_{LS}(\xi_K)\eqsim \max\prnt{k,\frac{1}{D^2}}$. It thus follows that  
$$\Lambda_{LS}(\xi_K)\eqsim \Lambda_{Poi}(\xi_K)\eqsim \hfrak(\xi_K)^2\eqsim \max\prnt{k,\frac{1}{D^2}}\,.$$
\end{proof}
%\ref{thm:negK}

\bigskip
Our next goal is to derive a similar result pertaining to the case $K=-k\leq 0$. Our final goal here is to prove the following analogous theorem:
\begin{thm}\label{equiv:PoiLs_xi_K} Under $CDD_b(K,\infty,D)$ with $K=-k< 0$  holds the equivalence $$\Lambda_{Poi}(\xi_K)\eqsim \Lambda_{LS}(\xi_K)\eqsim \max\{ \sqrt{k}, \frac{1}{D}\}\frac{kD}{e^{k\frac{D^2}{8}}-1}\,.$$
\end{thm}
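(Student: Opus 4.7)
The plan is to leverage what has already been done in Proposition \ref{prop:Xi_0}, which gives $\Lambda_{LS}(\xi_K)\eqsim \Upsilon_0(k,D)^{-1}$, and rewrite
\[
\Upsilon_0(k,D)^{-1} = \frac{1}{\min\{1/\sqrt{k},\,D\}}\cdot\frac{kD}{e^{kD^2/8}-1}=\max\{\sqrt{k},\,1/D\}\cdot\frac{kD}{e^{kD^2/8}-1},
\]
which is the right-hand side of the claimed equivalence. Since $\Lambda_{LS}(\xi_K)\leq\Lambda_{Poi}(\xi_K)$ always (cf. \eqref{Implication_LS_Poinc}), one direction is for free; it therefore suffices to prove the matching upper bound $\Lambda_{Poi}(\xi_K)\lesssim \Upsilon_0(k,D)^{-1}$, which by the preceding chain of estimates will force $\Lambda_{Poi}(\xi_K)\eqsim\Lambda_{LS}(\xi_K)\eqsim \Upsilon_0(k,D)^{-1}$.

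First I would invoke the Muckenhoupt criterion (Theorem \ref{thm:Muck1}) applied to the normalized probability measure $\bar{\xi}_K$ with density $\bar{p}_K(x)=c_K^{-1}e^{kx^2/2}$ on $[-D/2,D/2]$. By the reflection symmetry $x\mapsto -x$ the median is $\eta=0$ and the two Muckenhoupt terms agree: $B_+=B_-$, so $\Lambda_{Poi}(\xi_K)^{-1}\eqsim B_+$. Thus the task reduces to showing $B_+\gtrsim \Upsilon_0(k,D)$, where
\[
B_+ \;=\; \sup_{x\in(0,D/2]}\; \bar{\xi}_K([x,D/2])\;\int_0^{x}\bar{p}_K(t)^{-1}\,dt.
\]
The natural test point is $x=D/4$; I would show directly that the product at this value is already $\gtrsim \Upsilon_0(k,D)$.

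Next I would carry out the computation by splitting into two regimes according to the size of $\sqrt{k}D$. By Estimate \ref{est:1} applied to $c_K=2\int_0^{D/2}e^{kt^2/2}dt$ one gets $c_K\eqsim (e^{kD^2/8}-1)/(kD)$, and by Estimate \ref{est:2} applied to $\int_0^{D/4}e^{-kt^2/2}dt$ one gets $\int_0^{D/4}\bar{p}_K(t)^{-1}dt=c_K\int_0^{D/4}e^{-kt^2/2}dt\eqsim c_K\min\{1/\sqrt{k},\,D\}$. In the regime $\sqrt{k}D\lesssim 1$ the density $\bar p_K$ is bounded above and below by positive constants, so Holley-Stroock (Lemma \ref{lem:HolStrook}) applied to the reference uniform measure immediately gives $\Lambda_{Poi}(\xi_K)\eqsim \Lambda_{Poi}(\tfrac{1}{D}1_{[-D/2,D/2]}m)\eqsim 1/D^2\eqsim \Upsilon_0(k,D)^{-1}$, and one is done. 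In the regime $\sqrt{k}D\gtrsim 1$ one uses Estimate \ref{est:3} to bound $\bar\xi_K([D/4,D/2])=c_K^{-1}\int_{D/4}^{D/2}e^{kt^2/2}dt$ from below by an absolute positive constant (since the integral is $\eqsim e^{kD^2/8}/(kD)\eqsim c_K$ once $e^{kD^2/8}$ dominates $1$), and multiplies by the previous estimate for $\int_0^{D/4}\bar p_K(t)^{-1}dt$ to obtain exactly $\Upsilon_0(k,D)$.

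The proof is therefore essentially bookkeeping on top of the ad-hoc Estimates \ref{est:1}-\ref{est:6}; there is no new conceptual ingredient. The only minor obstacle I anticipate is handling the transition region $\sqrt{k}D\eqsim 1$ cleanly so that the two regime-specific estimates match without extra constants being absorbed the wrong way; this is easily resolved because the implicit constants in $\min\{1/\sqrt{k},D\}$ and $e^{kD^2/8}-1$ are absolute and the Muckenhoupt estimate $\Lambda_{Poi}(\xi_K)^{-1}\eqsim B_++B_-\geq B_+$ is only required to hold up to universal multiplicative constants, consistent with the $\eqsim$-formulation of the theorem.
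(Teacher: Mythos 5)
Your argument is correct and reaches the claimed equivalence, but it takes a genuinely different route from the paper. You only need an \emph{upper} bound $\Lambda_{Poi}(\xi_K)\lesssim \Upsilon_0(k,D)^{-1}$ (everything else follows from Proposition~\ref{prop:Xi_0} together with $\Lambda_{LS}\leq\Lambda_{Poi}$), and you obtain it directly from the Muckenhoupt two-sided estimate (Theorem~\ref{thm:Muck1}), exploiting the reflection symmetry of $\xi_K$ so that $\Lambda_{Poi}(\xi_K)^{-1}\eqsim B_+$, and then evaluating $B_+$ at the test point $x=D/4$ using Estimates~\ref{est:1}--\ref{est:3}, with a Holley--Stroock perturbation argument absorbing the bounded regime $\sqrt{k}D\lesssim 1$ and any transition window. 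The paper instead routes through the isoperimetric layer: it first establishes in Corollary~\ref{equivalences_Els_Epoi} that $\hChe(\xi_K)\eqsim\lLed(\xi_K)\eqsim E_{Poi}(\xi_K)\eqsim E_{LS}(\xi_K)\eqsim(H_{k,D}D)^{-1}$, drawing on Bobkov's characterization of the isoperimetric profile of log-concave measures on $\R$, the Buser--Ledoux reverse Cheeger inequality, Ledoux's estimate for $\lLed$, and Proposition~\ref{estim:E_LS_Lam_LS}, and then extracts $\Lambda_{Poi}(\xi_K)\lesssim(H_{k,D}D)^{-1}\max\{\sqrt{k},1/D\}$ by a case analysis on the expression $E_{Poi}$. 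Your route is more elementary and self-contained; the paper's route is longer but yields the additional geometric information recorded in the remark following the theorem, namely that $\xi_K$ saturates the Buser--Ledoux and Ledoux inequalities and that the Cheeger and Ledoux constants of $\xi_K$ coincide up to universal constants.
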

 
Before we prove the statement we will prove some preliminary results. We express the estimates from Proposition  \ref{prop:Xi_s} as 
$$\Lambda_{LS}(M,\gfrak,\mu)\gtrsim \Lambda_{LS}(\xi_K)\eqsim f_{k,D}\,,$$ 
under $CDD_b(K,\infty,D)$ with $K=-k<0$, where
\eql{\label{def:fkd_H} f_{k,D}:=\max\prnt{\sqrt{k}, \frac{1}{D}}\frac{1}{H_{k,D}D}\qquad \text{with}\qquad H_{k,D}:=\frac{e^{\half k \prnt{\frac{D}{2}}^2}-1}{kD^2}\,. }
Notice that $H_{k,D}\eqsim\frac{1}{D}\int_{-\frac{D}{2}}^{\frac{D}{2}}e^{\half kx^2}dx$ and $H_{k,D}\gtrsim  1$ (as one can verify by considering the inequality $\frac{e^x-1}{x}\geq 1$).

The following lemma is valid without any Curvature-Dimension conditions:
\begin{lem}\label{lem:hlEstimate_a} $\lLed(M,\gfrak,\mu)\lesssim \hChe(M,\gfrak,\mu)$.
\end{lem}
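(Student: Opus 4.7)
The plan is to exploit the fact that on the range $(0,\tfrac{1}{2}]$ the function $\sqrt{\log(1/t)}$ is bounded below by a positive constant, which immediately forces the Ledoux ratio to be pointwise dominated by a multiple of the Cheeger ratio. No curvature or structural assumption on $(M,\gfrak,\mu)$ is needed, since the argument is a purely numerical comparison between the two defining infima.

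First, I would recall the two characterizations
\[
\hChe(M,\gfrak,\mu)=\inf_{t\in (0,1/2]}\frac{\Ical(t)}{t},\qquad \lLed(M,\gfrak,\mu)=\inf_{t\in (0,1/2]}\frac{\Ical(t)}{t\,\sqrt{\log(1/t)}},
\]
both written in terms of the isoperimetric profile $\Ical=\Ical(M,\gfrak,\mu)$.

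Next, for every $t\in(0,\tfrac{1}{2}]$ one has $\log(1/t)\geq \log 2>0$, and hence $\sqrt{\log(1/t)}\geq \sqrt{\log 2}$. Since $\Ical(t)\geq 0$, this yields the pointwise inequality
\[
\frac{\Ical(t)}{t\,\sqrt{\log(1/t)}}\;\leq\;\frac{1}{\sqrt{\log 2}}\cdot\frac{\Ical(t)}{t},\qquad t\in(0,\tfrac{1}{2}].
\]
Taking the infimum on both sides over $t\in (0,\tfrac12]$ gives
\[
\lLed(M,\gfrak,\mu)\;\leq\;\frac{1}{\sqrt{\log 2}}\,\hChe(M,\gfrak,\mu),
\]
which is exactly the desired estimate $\lLed\lesssim \hChe$ with the universal numeric constant $(\log 2)^{-1/2}$.

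There is essentially no obstacle here; the only thing to verify is that the restriction of the infimum to $(0,\tfrac12]$ is common to both definitions (it is, by construction, since both constants consider only sets with measure at most $\tfrac12$), so that the lower bound on $\sqrt{\log(1/t)}$ can be applied uniformly. This is what makes the argument go through without any appeal to curvature-dimension hypotheses, and explains why the lemma is stated in its unqualified form.
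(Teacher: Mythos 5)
Your proof is correct and is essentially the paper's own argument: both rest on the single observation that $\log(1/t)\geq\log 2$ for $t\in(0,\tfrac12]$, which yields the pointwise bound $\Ical(t)/(t\sqrt{\log(1/t)})\leq (\log 2)^{-1/2}\,\Ical(t)/t$ and hence $\sqrt{\log 2}\,\lLed\leq\hChe$ after taking infima. The paper just states this in compressed form; no substantive difference.
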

\begin{proof}
$\sqrt{\log(2)} \lLed(M,\gfrak,\mu)\leq \hChe(M,\gfrak,\mu)$ since for $t\in (0,\half]$ it holds that $\frac{\log(2)}{\log\prnt{\frac{1}{t}}}\leq 1$\,.
\end{proof}
\begin{lem} 
\label{lem:hlEstimate} $\hChe(\xi_K)\lesssim \frac{1}{H_{k,D}D}$.
\end{lem}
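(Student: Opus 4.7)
The plan is to prove the upper bound by producing a single explicit test set $A$ and computing the ratio $\mu^+(A)/\mu(A)$, which will majorize the Cheeger constant by definition. The natural candidate is the half-interval $A = (-D/2, 0)$, since $\xi_K$ is symmetric about $0$ and this set is precisely a median set.

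First I would observe that $\xi_K = \bar p_K \cdot m$ with $\bar p_K(x) = \tfrac{1}{c_K} e^{\frac{k}{2}x^2} \mathbf{1}_{[-D/2, D/2]}(x)$, where $c_K := \int_{-D/2}^{D/2} e^{\frac{k}{2}x^2}\,dx$. By symmetry of the density, $\xi_K(A) = 1/2$. For the Minkowski boundary measure, only the endpoint $0$ lies in the interior of $\text{supp}(\xi_K)$, so a direct computation yields
\[
\mu^+(A) \;=\; \lim_{\varepsilon \to 0^+} \frac{1}{\varepsilon}\,\xi_K\bigl((0,\varepsilon)\bigr) \;=\; \bar p_K(0) \;=\; \frac{1}{c_K}.
\]
Consequently
\[
\hChe(\xi_K) \;\leq\; \frac{\mu^+(A)}{\xi_K(A)} \;=\; \frac{2}{c_K}.
\]

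Next I would estimate $c_K$ using Estimate \ref{est:1}: with $R = D/2$,
\[
\int_0^{D/2} e^{\frac{k}{2}x^2}\,dx \;\eqsim\; \frac{e^{\frac{k}{2}(D/2)^2}-1}{k(D/2)} \;=\; \frac{2(e^{kD^2/8}-1)}{kD}.
\]
Recalling that $H_{k,D} = \dfrac{e^{\frac{k}{2}(D/2)^2}-1}{kD^2}$ from \eqref{def:fkd_H}, this gives $c_K = 2\int_0^{D/2} e^{\frac{k}{2}x^2}\,dx \eqsim H_{k,D}\,D$. Substituting into the previous display,
\[
\hChe(\xi_K) \;\leq\; \frac{2}{c_K} \;\lesssim\; \frac{1}{H_{k,D}\,D},
\]
which is the claimed estimate.

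The argument is essentially a one-line computation, and no part of it is delicate: the symmetry of the density makes the choice of $A$ with $\xi_K(A) = 1/2$ automatic, and the Minkowski perimeter at the single interior boundary point $0$ is just the density value there. The only point requiring mild care is ensuring that the endpoint $-D/2$ does not contribute to $\mu^+(A)$, which follows since the one-sided $\varepsilon$-neighborhood of $A$ beyond $-D/2$ has zero $\xi_K$-measure. Thus no genuine obstacle arises, and the bound $\hChe(\xi_K) \lesssim (H_{k,D} D)^{-1}$ drops out directly from the test-set calculation combined with Estimate \ref{est:1}.
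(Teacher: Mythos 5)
Your proposal is correct and is essentially the paper's own argument: the paper bounds $\hChe(\xi_K) \leq 2\,\Ical^{\flat}(\tfrac{1}{2}) = 2/c_K$ and then applies Estimate \ref{est:1}, which is exactly the same computation as taking your median test set $A = (-D/2,0)$ and evaluating its perimeter-to-volume ratio. The only cosmetic difference is that the paper phrases the test-set step through the isoperimetric profile $\Ical^{\flat}$ evaluated at $t=\tfrac{1}{2}$ (which conveniently absorbs your small point about the boundary endpoint $-D/2$ not contributing), whereas you carry out the one-sided Minkowski-content computation directly; both yield the identical bound.
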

\begin{proof}
 Since $\mathcal{I}\leq \mathcal{I}^{\flat}$:

%\eq{\hChe(M,\gfrak,\mu)\lesssim \Lambda_{Poi}(M,\gfrak,\mu)\eqsim  \Lambda_{Poi}(\xi_K)

\eq{\hChe(\xi_K)=\inf_{t\in(0,\half]}\frac{\Ical(t)}{t}\leq  \inf_{t\in(0,\half]}\frac{\Ical^{\flat}(t)}{t} \leq 2\Ical^{\flat}(\half)=\frac{2}{\int_{-\frac{D}{2}}^{\frac{D}{2}}e^{\half kx^2}dx}\eqsim \frac{1}{H_{k,D}D} \,.   }
\end{proof}

\begin{prop}\label{estim:E_LS_Lam_LS} Under $CDD_b(K,\infty,D)$ with $K=-k<0$ holds the inequality: $E_{LS}(\xi_K)\gtrsim \frac{1}{H_{k,D}D}$.
\end{prop}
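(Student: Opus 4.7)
The plan is to unfold the definition
$$E_{LS}(\xi_K) \;=\; \min\!\Prnt{\frac{\Lambda_{LS}(\xi_K)}{\sqrt{k}},\; \sqrt{\Lambda_{LS}(\xi_K)}}$$
and show that each of the two entries of the minimum is already $\gtrsim \frac{1}{H_{k,D}D}$. The only inputs needed are the sharp (up to numeric constants) lower bound on $\Lambda_{LS}(\xi_K)$ from Proposition \ref{prop:Xi_s}, namely $\Lambda_{LS}(\xi_K) \gtrsim f_{k,D} = \max(\sqrt{k},\tfrac{1}{D}) \cdot \frac{1}{H_{k,D}D}$, together with the trivial observation (recorded just after \eqref{def:fkd_H}) that $H_{k,D}\gtrsim 1$.

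For the first entry, divide the bound from Proposition \ref{prop:Xi_s} by $\sqrt{k}$:
$$\frac{\Lambda_{LS}(\xi_K)}{\sqrt{k}} \;\gtrsim\; \frac{\max(\sqrt{k},1/D)}{\sqrt{k}\,H_{k,D}D} \;=\; \frac{\max(1,\,1/(\sqrt{k}D))}{H_{k,D}D} \;\geq\; \frac{1}{H_{k,D}D}.$$
For the second entry, use the crude lower bound $\max(\sqrt{k},1/D)\geq 1/D$ inside $f_{k,D}$ to get $\Lambda_{LS}(\xi_K) \gtrsim \frac{1}{H_{k,D}D^2}$, then take square roots:
$$\sqrt{\Lambda_{LS}(\xi_K)} \;\gtrsim\; \frac{1}{\sqrt{H_{k,D}}\,D} \;\gtrsim\; \frac{1}{H_{k,D}\,D},$$
where the last inequality uses $\sqrt{H_{k,D}}\leq H_{k,D}$, which follows from $H_{k,D}\gtrsim 1$ (after absorbing the numeric constant).

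Taking the minimum of the two estimates yields $E_{LS}(\xi_K) \gtrsim \frac{1}{H_{k,D}D}$, as required. There is no real obstacle: the argument is purely algebraic manipulation of the bound already produced in Proposition \ref{prop:Xi_s}, and the only subtlety is recognizing that the two scales $\sqrt{k}$ and $1/D$ appearing in $f_{k,D}$ are precisely calibrated so that dividing by $\sqrt{k}$ (first entry) and taking the square root (second entry) both produce the same target scale $1/(H_{k,D}D)$.
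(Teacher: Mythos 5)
Your proof is correct and uses essentially the same ingredients as the paper's: the definition of $E_{LS}$, the lower bound $\Lambda_{LS}(\xi_K)\gtrsim f_{k,D}$, and the elementary observation $H_{k,D}\gtrsim 1$. The only difference is presentational — the paper splits into the cases $\sqrt{k}\gtrless 1/D$ and evaluates the $\min$ directly, whereas you bound each entry of the $\min$ separately by keeping the $\max$ symbolic; this avoids the case split but involves no new idea.
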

\begin{proof} Recall $\Lambda_{LS}(\xi_K)\eqsim f_{k,D}$ where $f_{k,D}$ was  defined in \eqref{def:fkd_H}. By definition $E_{LS}(\xi_K)=\min\prnt{ \frac{\Lambda_{LS}(\xi_K)}{\sqrt{k}},\sqrt{\Lambda_{LS}(\xi_K)} }$, hence:
	\begin{enumerate}
	\item \pmb{ If $\sqrt{k}>\frac{1}{D}$}: then $f_{k,D}=\frac{\sqrt{k}}{H_{k,D}D}$, whence
	\eq{ &\min\prnt{ \frac{f_{k,D}}{\sqrt{k}},\sqrt{f_{k,D}} } = \min\prnt{\frac{1}{H_{k,D}D}, \sqrt{\frac{\sqrt{k}}{H_{k,D}D}}}\geq 
	\min\prnt{\frac{1}{H_{k,D}D}, \sqrt{\frac{1}{H_{k,D}D^2}}}\\&\stackrel{H_{k,D}\gtrsim 1}{\gtrsim} \min\prnt{\frac{1}{H_{k,D}D}, \sqrt{\frac{1}{H_{k,D}^2D^2}}}=\frac{1}{H_{k,D}D}\,. }
	\item \pmb{ If $\sqrt{k}\leq \frac{1}{D}$}: then $f_{k,D}=\frac{1}{H_{k,D}D^2}$, whence
	\eq{ &\min\prnt{ \frac{f_{k,D}}{\sqrt{k}},\sqrt{f_{k,D}} }=\min\prnt{\frac{1}{\sqrt{k}H_{k,D}D^2}, \sqrt{\frac{1}{H_{k,D}D^2}}} \\&\stackrel{\sqrt{k}D\leq 1,\,H_{k,D}\gtrsim 1}{\gtrsim} \min\prnt{\frac{1}{H_{k,D}D}, \sqrt{\frac{1}{H_{k,D}^2D^2}}}=\frac{1}{H_{k,D}D}\,.}
	\end{enumerate}
	Thus in either case: $\min\prnt{ \frac{\Lambda_{LS}(\xi_K)}{\sqrt{k}},\sqrt{\Lambda_{LS}(\xi_K)} }\gtrsim \frac{1}{H_{k,D}D}$.
\end{proof}

\begin{cor}\label{equivalences_Els_Epoi} Under $CDD_b(K,\infty,D)$ with $K=-k< 0$ the following equivalences are satisfied: $\hChe(\xi_K)\eqsim \lLed(\xi_K)\eqsim E_{Poi}(\xi_K)\eqsim E_{LS}(\xi_K)\eqsim \frac{1}{H_{k,D}D}$.
\end{cor}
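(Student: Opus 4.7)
The plan is to close the loop on the chain
\[
\frac{1}{H_{k,D}D} \;\gtrsim\; \hChe(\xi_K) \;\gtrsim\; \lLed(\xi_K) \;\gtrsim\; E_{LS}(\xi_K) \;\gtrsim\; \frac{1}{H_{k,D}D},
\]
which, once established, immediately yields $\hChe(\xi_K)\eqsim \lLed(\xi_K)\eqsim E_{LS}(\xi_K)\eqsim \frac{1}{H_{k,D}D}$. Then the equivalence for $E_{Poi}(\xi_K)$ is inserted by sandwiching it between known quantities.

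First I would read off the four inequalities of the chain from results already available. The bound $\hChe(\xi_K)\lesssim \frac{1}{H_{k,D}D}$ is exactly Lemma \ref{lem:hlEstimate}, obtained by testing the half-line $(-\infty,0)$ via $\Ical\le \Ical^\flat$ and using Estimate \ref{est:1}. The relation $\hChe(\xi_K)\gtrsim \lLed(\xi_K)$ is Lemma \ref{lem:hlEstimate_a}, which holds with no curvature assumption. Next, since $(\R,|\cdot|,\xi_K)$ satisfies $CD_b(-k,\infty)$, Ledoux's Theorem \ref{thm:BuserLedoux2} gives $\lLed(\xi_K)\gtrsim E_{LS}(\xi_K)$. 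Finally $E_{LS}(\xi_K)\gtrsim \frac{1}{H_{k,D}D}$ is precisely Proposition \ref{estim:E_LS_Lam_LS}. Concatenating these four bounds closes the loop and produces the three equivalences among $\hChe$, $\lLed$ and $E_{LS}$, each equivalent to $\frac{1}{H_{k,D}D}$.

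It remains to insert $E_{Poi}(\xi_K)$ into the equivalence. For the upper bound, the Buser–Ledoux Theorem \ref{thm:BuserLedoux}, applied under $CD_b(-k,\infty)$, gives $\hChe(\xi_K)\gtrsim E_{Poi}(\xi_K)$, hence $E_{Poi}(\xi_K)\lesssim \frac{1}{H_{k,D}D}$. For the matching lower bound, I would use the elementary observation that the map
\[
a\;\longmapsto\; \min\!\Bigl(\tfrac{a}{\sqrt{k}},\sqrt{a}\Bigr)
\]
is non-decreasing on $\R_+$, together with the universal inequality $\Lambda_{Poi}(\xi_K)\ge \Lambda_{LS}(\xi_K)$. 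This yields $E_{Poi}(\xi_K)\ge E_{LS}(\xi_K)\gtrsim \frac{1}{H_{k,D}D}$, which places $E_{Poi}(\xi_K)$ into the same equivalence class.

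There is no real obstacle here: the proof is an assembly of four previously proved one-sided bounds arranged into a cycle, with a short monotonicity argument to bring $E_{Poi}$ into the picture. The only delicate point is checking that one is permitted to invoke Theorems \ref{thm:BuserLedoux} and \ref{thm:BuserLedoux2} for the one-dimensional probability space $(\R,|\cdot|,\xi_K)$, but this is immediate since $-\log(d\xi_K/dm) = \tfrac{1}{2}kx^2+\mathrm{const}$ is smooth and convex on the interior of the support, so $\xi_K$ trivially realizes $CD_b(-k,\infty)$ on its support. Note that the corollary will then feed directly into the proof of Theorem \ref{equiv:PoiLs_xi_K}, where the equivalence $\Lambda_{Poi}(\xi_K)\eqsim \Lambda_{LS}(\xi_K)$ will follow by combining $\sqrt{\Lambda_{Poi}(\xi_K)}\gtrsim \tfrac{1}{2}\hChe(\xi_K)$ (Cheeger, Theorem \ref{thm:Cheeger}) with the equivalences obtained here.
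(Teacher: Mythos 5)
Your proposal is correct and uses the same ingredients as the paper's proof (Lemmas \ref{lem:hlEstimate_a}, \ref{lem:hlEstimate}, Theorems \ref{thm:BuserLedoux}, \ref{thm:BuserLedoux2}, Proposition \ref{estim:E_LS_Lam_LS}, and the monotonicity observation $E_{Poi}\geq E_{LS}$), arranged into a cycle plus a sandwich. The only cosmetic difference is which quantity sits on the main four-link cycle: you put $\lLed$ there and sandwich $E_{Poi}$ afterward, while the paper puts $E_{Poi}$ on the cycle and sandwiches $\lLed$ — the two arrangements are interchangeable.
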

\begin{proof} By Lemma \ref{lem:hlEstimate} and Ledoux's Theorem \ref{thm:BuserLedoux}:
\[ \frac{1}{H_{k,D}D}\gtrsim \hChe(\xi_K)\gtrsim E_{Poi}(\xi_K)\,.\]
On the other hand, by Proposition \ref{estim:E_LS_Lam_LS} and the  inequality $\Lambda_{Poi}(\xi_K)\geq \Lambda_{LS}(\xi_K)$ we also have
\[ E_{Poi}(\xi_K)\geq E_{LS}(\xi_K)\gtrsim \frac{1}{H_{k,D}D} \,.\]
We may now conclude that 
$$\hChe(\xi_K)\eqsim E_{Poi}(\xi_K)\eqsim E_{LS}(\xi_K)\eqsim \frac{1}{H_{k,D}D}\,.$$
However, according to Theorem  \ref{thm:BuserLedoux2} $\lLed(\xi_K)\gtrsim E_{Poi}(\xi_K)$, and by Lemma \ref{lem:hlEstimate_a}\\ $\lLed(M,\gfrak,\mu)\lesssim \hChe(M,\gfrak,\mu)$, so $\hChe(M,\gfrak,\mu)\gtrsim \lLed(M,\gfrak,\mu)\gtrsim E_{Poi}(\xi_K)$ implying that
\\$\lLed(M,\gfrak,\mu)\eqsim \frac{1}{H_{k,D}D}$. This completes the proof of the statement. 
\end{proof}
\bigskip

We will now prove Theorem \ref{equiv:PoiLs_xi_K}:
\begin{proof}[Proof of Theorem \ref{equiv:PoiLs_xi_K}]
According to Corollary \ref{equivalences_Els_Epoi} the following equivalences are satisfied $E_{Poi}(\xi_K)\eqsim E_{LS}(\xi_K)\eqsim \frac{1}{H_{k,D}D}$.

\begin{enumerate}
	\item If $\frac{\Lambda_{Poi}(\xi_K)}{\sqrt{k}}< \sqrt{\Lambda_{Poi}(\xi_K)}$: then $\frac{\Lambda_{Poi}(\xi_K)}{\sqrt{k}}\eqsim \frac{1}{H_{k,D}D}$, i.e. $\Lambda_{Poi}(\xi_K)\eqsim \frac{\sqrt{k}}{H_{k,D}}\frac{1}{D}$. 
\item If $\frac{\Lambda_{Poi}(\xi_K)}{\sqrt{k}}\geq \sqrt{\Lambda_{Poi}(\xi_K)}$:	then $\sqrt{\Lambda_{Poi}(\xi_K)}\eqsim \frac{1}{H_{k,D}}\frac{1}{D}$ implying that $\Lambda_{Poi}(\xi_K)\eqsim \frac{1}{H_{k,D}^2D^2}\lesssim \frac{1}{H_{k,D}D^2}$ (since $H_{k,D}\gtrsim 1$). 
\end{enumerate}
\smallskip

We conclude that in either case 
$$\Lambda_{Poi}(\xi_K)\lesssim \frac{1}{H_{k,D}D}\max\{\sqrt{k},\,\frac{1}{D}\}\,.$$
However since $\Lambda_{LS}(\xi_K)\eqsim \frac{1}{H_{k,D}D}\max\{\sqrt{k},\,\frac{1}{D}\}$ (see \eqref{def:fkd_H}) and $\Lambda_{Poi}(\xi_K)\geq \Lambda_{LS}(\xi_K)$  we conclude that $\Lambda_{Poi}(\xi_K)\eqsim \Lambda_{LS}(\xi_K)\eqsim \frac{1}{H_{k,D}D}\max\{\sqrt{k},\,\frac{1}{D}\}$.
\end{proof}

\begin{remk}
When $D\geq \frac{1}{\sqrt{|K|}}$ our derivation shows that \eq{\lLed(\xi_K)&\eqsim\frac{\Lambda_{LS}(\xi_K)}{\sqrt{|K|}}\quad (\eqsim \frac{1}{H_{k,D}D})\,, \quad \text{and}\\
\hChe(\xi_K)&\eqsim\frac{\Lambda_{Poi}(\xi_K)}{\sqrt{|K|}}\quad (\eqsim \frac{1}{H_{k,D}D})\,.}
In other words, the model measure $d\xi_K(x)=c_K\cdot e^{-\frac{Kx^2}{2}}1_{[-\frac{D}{2},\frac{D}{2}]}(x)dx$ realizes the extremal cases in the Buser-Ledoux Theorem \ref{thm:BuserLedoux}  and the Ledoux Theorem \ref{thm:BuserLedoux2}.
\end{remk}

 \chapter{Epilogue}\label{chp:epilogue}

We conclude this work with a summary of the main results and other contributions.  

\begin{enumerate}
\item Our first major result  was the extreme points characterization theorem which  complements Klartag's needle decomposition theorem \cite{Kla}. A corollary of the theorem was that for a solution to the 1d optimization problem formulated by B. Klartag on the set $\Fknd^{\Cinf}$, which characterizes the sharp lower bound on the constant associated with a given functional inequality on a \cwrm{} $(M^n,\gfrak,\mu)$ which satisfies $CDD_b(K,N,D)$, it is sufficient to consider the subclass of model measures $\Fknd^M(\R)$.
The members of this class assume a specific form, which  brings the optimization problem into a simpler tangible form.

To this end, \pink{we employed} a new approach inspired by Fradelizi-Gu\'{e}don \cite{FG},
based on functional analysis arguments. In particular, classification of extreme points of a corresponding set of measures. The development of this approach was not direct; we digressed into defining and studying an abstract class of measures $\Fknd(\R)$ which satisfy `synthetic $CDD_b(K,N,D)$ conditions'. The discussion on the synthetic class and its properties was \pink{quite}  lengthy, however it eventually led to concrete conclusions regarding our original optimization problem. 
The motivation for this approach is its generality:
\begin{itemize}
    \item it applies to diverse type of functional inequalities, in particular of Poincar\'{e}, p-Poincar\'{e} and log-Sobolev type;
    \item moreover, the reduction to the model class is almost straightforward, and requires little effort when we switch from one functional inequality to another;
    \item it applies to $CDD_b(K,N,D)$ conditions with parameters 
    $K\in\R$, $N\in (-\infty,0]\cup [\max(2,n),\infty]$ and $D\in (0,\infty]$; since it does not rely on eigenfunctions, it is not restricted to compact manifolds (i.e. $D<\infty$),  as in the previous works of \cite{Kro, BaQi, Val, NaVa} et al.
    \end{itemize}
These observations justify the lengthy discussion of Chapter \ref{chp:ExPoints}. 

\item Our first major application of the method was in proving sharp lower bounds on the \Poinc constant of manifolds which satisfy 
$CDD_b(K,N,D)$. Our most important contribution was filling the gap concerning the range $N\in (-\infty,0]$. We gave a complete characterization of the sharp lower bound on the \Poinc constant, assuming $CDD_b(K,N,D)$ conditions with $N$ in this range. In addition we also showed how the \Poinc constant of the class $\Fknd^M(\R)$ depends monotonically on the slope parameter $\hfrak$. Our study showed that 
while for $N\in (-\infty,-1)$ the dependence is similar to the previously studied range of $1\neq N\in [n,\infty]$, in the range $N\in (-1,0]$
it is reversed, and hence the characterization of the sharp lower bound is of completely different nature. 

\bigskip

\item
Following a similar approach we also made a small contribution to the p-\Poinc functional inequality. We showed that (under a certain technical assumption)  the derivation of Naber and Valtorta of a sharp lower bound for the p-\Poinc constant, under $CDD_b(K,N,D)$ with $K\leq 0$, $N=n$ and $D<\infty$, can be extended with minor efforts to $K\in \R$, $N\in [\max(2,n),\infty]$ and $D\in (0,\infty]$. As for the \Poinc constant, we showed that the p-\Poinc constant of the class $\Fknd^M(\R)$ depends monotonically on the slope parameter $\hfrak$.

\item Lastly, we expressed, up to numeric constants, the   sharp lower bound on the log-Sobolev constant of \pink{a} \cwrm{} which \pink{satisfies} $CDD_b(K,\infty,D)$; moreover, we showed that up to numeric constants it is equivalent to the sharp bound we found for the \Poinc constant of \pink{a} \cwrm{} which \pink{satisfies} $CDD_b(K,\infty,D)$. 

Our general study of the log-Sobolev constant of the class $\Pknd^M(\R)$ showed that for almost all members of the class, but those which are symmetric w.r.t. the center of their support, the log-Sobolev constant is attained. This  observation paves the way for an alternative approach for the characterization of the  sharp lower bound on the log-Sobolev constant, via the Euler-Lagrange equations; such an approach can lead to characterization of the sharp lower bound on the log-Sobolev not just up to numeric constants. 
\end{enumerate} 

% \input{bibliography}
%\bibliography{References.bib}{}
\bibliographystyle{apa}
\bibliography{bib-file-name}

\begin{bibdiv}
\begin{biblist}
\bib{AK}{book}{
   author={Aliprantis, Charalambos D.},
   author={Border, Kim C.},
   title={Infinite-dimensional analysis, A hitchhiker's guide},
   edition={2},
   publisher={Springer},
   date={1999},

}

\bib{Bac}{article}{
   author={Bacher, Kathrin},
   title={On Borell-Brascamp-Lieb inequalities on metric measure spaces},
   journal={Potential Anal.},
   volume={33},
   date={2010},
   number={1},
   pages={1--15},

}

\bib{BS}{article}{
   author={Bacher, Kathrin},
   author={Sturm, Karl-Theodor},
   title={Localization and tensorization properties of the
   curvature-dimension condition for metric measure spaces},
   journal={J. Funct. Anal.},
   volume={259},
   date={2010},
   number={1},
   pages={28--56},
}
\bib{BE1}{article}{
   author={Bakry, Dominique},
   title={L'hypercontractivit\'{e} et son utilisation en th\'{e}orie des
   semigroupes},
   language={French},
   conference={
      title={Lectures on probability theory},
      address={Saint-Flour},
      year={1992},
   },
   book={
      series={Lecture Notes in Math.},
      volume={1581},
      publisher={Springer},
   },
   date={1994},
   pages={1--114},
}	
\bib{BE2}{article}{
   author={Bakry, D.},
   author={\'{E}mery, Michel},
   title={Diffusions hypercontractives},
   language={French},
   conference={
      title={S\'{e}minaire de probabilit\'{e}s, XIX, 1983/84},
   },
   book={
      series={Lecture Notes in Math.},
      volume={1123},
      publisher={Springer},
   },
   date={1985},
   pages={177--206},
}
\bib{BGL}{book}{
   author={Bakry, Dominique},
   author={Gentil, Ivan},
   author={Ledoux, Michel},
   title={Analysis and geometry of Markov diffusion operators},
   series={Grundlehren der Mathematischen Wissenschaften [Fundamental
   Principles of Mathematical Sciences]},
   volume={348},
   publisher={Springer},
   date={2014},
}

\bib{BMZ}{article}{
   author={Barthe, Franck},
   author={Ma, Yutao},
   author={Zhang, Zhengliang},
   title={Logarithmic Sobolev inequalities for harmonic measures on spheres},
   journal={J. Math. Pures Appl. (9)},
   volume={102},
   date={2014},
   number={1},
   pages={234--248},
}
\bib{BaQi}{article}{
   author={Bakry, Dominique},
   author={Qian, Zhongmin},
   title={Some new results on eigenvectors via dimension, diameter, and
   Ricci curvature},
   journal={Adv. Math.},
   volume={155},
   date={2000},
   number={1},
   pages={98--153},
}
		
\bib{BaRo}{article}{
   author={Barthe, F.},
   author={Roberto, C.},
   title={Sobolev inequalities for probability measures on the real line},
   note={Dedicated to Professor Aleksander Pe\l czy\'{n}ski on the occasion of his
   70th birthday},
   journal={Studia Math.},
   volume={159},
   date={2003},
   number={3},
   pages={481--497},
}
\bib{Bar}{book}{
   author={Barvinok, Alexander},
   title={A course in convexity},
   series={Graduate Studies in Mathematics},
   volume={54},
   publisher={American Mathematical Society},
   date={2002},
}

\bib{Bay}{thesis}{
   author={Bayle, Vincent},
   title={Propri{\'e}t{\'e}s de concavit{\'e} du profil isop{\'e}rim{\'e}trique et applications},
   date={2003},
   school={Th\'{e}se de doctorat dirig\'{e}e par Besson, G\'{e}rard Math\'{e}matiques Université Joseph Fourier},
   language={French},
}
\bib{BiDr}{article}{
   author={Binding, P.},
   author={Dr\'{a}bek, P.},
   title={Sturm-Liouville theory for the $p$-Laplacian},
   journal={Studia Sci. Math. Hungar.},
   volume={40},
   date={2003},
   number={4},
   pages={375--396},
}
\bib{BiRo}{book}{
   author={Birkhoff, Garrett},
   author={Rota, Gian-Carlo},
   title={Ordinary differential equations},
   edition={4},
   publisher={John Wiley \& Sons, Inc.},
   date={1989},
}

\bib{Bis}{article}{
   author={Bishop, Richard L.},
   title={Infinitesimal convexity implies local convexity},
   journal={Indiana Univ. Math. J.},
   volume={24},
   date={1974/75},
   pages={169--172},
}

\bib{Bo3}{article}{
   author={Borell, Christer},
   title={Convex measures on locally convex spaces},
   journal={Ark. Mat.},
   volume={12},
   date={1974},
   pages={239--252},
}					

\bib{Bo1}{article}{
   author={Borell, Christer},
   title={Convex set functions in $d$-space},
   journal={Period. Math. Hungar.},
   volume={6},
   date={1975},
   number={2},
   pages={111--136},
}

\bib{Bo2}{article}{
   author={Borell, Christer},
   title={Geometric properties of some familiar diffusions in ${\bf R}^n$},
   journal={Ann. Probab.},
   volume={21},
   date={1993},
   number={1},
   pages={482--489},
}
\bib{Bob2}{article}{
   author={Bobkov, Sergey G.},
   title={Extremal properties of half-spaces for log-concave distributions},
   journal={Ann. Probab.},
   volume={24},
   date={1996},
   number={1},
   pages={35--48},
}
\bib{Bob1}{article}{
   author={Bobkov, Sergey G.},
   title={Large deviations and isoperimetry over convex probability measures
   with heavy tails},
   journal={Electron. J. Probab.},
   volume={12},
   date={2007},
   pages={1072--1100},
}
\bib{BobG}{article}{
   author={Bobkov, S. G.},
   author={G\"{o}tze, F.},
   title={Exponential integrability and transportation cost related to
   logarithmic Sobolev inequalities},
   journal={J. Funct. Anal.},
   volume={163},
   date={1999},
   number={1},
   pages={1--28},
}
			
\bib{BoLed}{article}{
   author={Bobkov, Sergey G.},
   author={Ledoux, Michel},
   title={Weighted Poincar\'{e}-type inequalities for Cauchy and other convex
   measures},
   journal={Ann. Probab.},
   volume={37},
   date={2009},
   number={2},
   pages={403--427},
}
\bib{Bou}{book}{
   author={Bourbaki, N.},
   title={Topological vector spaces. Chapters 1--5},
   series={Elements of Mathematics (Berlin)},
   note={Translated from the French by H. G. Eggleston and S. Madan},
   publisher={Springer},
   date={1987},
}				
\bib{BL}{article}{
   author={Brascamp, Herm Jan},
   author={Lieb, Elliott H.},
   title={On extensions of the Brunn-Minkowski and Pr\'{e}kopa-Leindler
   theorems, including inequalities for log concave functions, and with an
   application to the diffusion equation},
   journal={J. Functional Analysis},
   volume={22},
   date={1976},
   number={4},
   pages={366--389},
}
\bib{BGVV}{book}{
   author={Brazitikos, Silouanos},
   author={Giannopoulos, Apostolos},
   author={Valettas, Petros},
   author={Vritsiou, Beatrice-Helen},
   title={Geometry of isotropic convex bodies},
   series={Mathematical Surveys and Monographs},
   volume={196},
   publisher={American Mathematical Society},
   date={2014},
}

\bib{Bus}{article}{
   author={Buser, Peter},
   title={A note on the isoperimetric constant},
   journal={Ann. Sci. \'{E}cole Norm. Sup. (4)},
   volume={15},
   date={1982},
   number={2},
   pages={213--230},
}
		
\bib{CFM}{article}{
   author={Caffarelli, Luis A.},
   author={Feldman, Mikhail},
   author={McCann, Robert J.},
   title={Constructing optimal maps for Monge's transport problem as a limit
   of strictly convex costs},
   journal={J. Amer. Math. Soc.},
   volume={15},
   date={2002},
   number={1},
   pages={1--26},
}	

\bib{Cav}{article}{
   author={Cavalletti, Fabio},
   title={An overview of $L^1$ optimal transportation on metric measure
   spaces},
   conference={
      title={Measure theory in non-smooth spaces},
   },
   book={
      series={Partial Differ. Equ. Meas. Theory},
      publisher={De Gruyter Open},
   },
   date={2017},
   pages={98--144},
}

\bib{CaMil}{article}{

author={Cavalletti, Fabio},
   author={Milman, Emanuel},
        title = {The Globalization Theorem for the Curvature Dimension Condition},
      journal = {arXiv e-prints},
     keywords = {Mathematics - Metric Geometry, Mathematics - Functional Analysis},
         date = {2016},
archivePrefix = {arXiv},
       eprint = {1612.07623},
 primaryClass = {math.MG},
}

\bib{CM1}{article}{
   author={Cavalletti, Fabio},
   author={Mondino, Andrea},
   title={Sharp geometric and functional inequalities in metric measure
   spaces with lower Ricci curvature bounds},
   journal={Geom. Topol.},
   volume={21},
   date={2017},
   number={1},
   pages={603--645},
}

\bib{Cha1}{book}{
   author={Chavel, Isaac},
   title={Riemannian geometry---a modern introduction},
   series={Cambridge Tracts in Mathematics},
   volume={108},
   publisher={Cambridge University Press},
   date={1993},

}

\bib{Che}{article}{
   author={Cheeger, Jeff},
   title={A lower bound for the smallest eigenvalue of the Laplacian},
   conference={
      title={Problems in analysis},
      address={Papers dedicated to Salomon Bochner},
      date={1969},
   },
   book={
      publisher={Princeton Univ. Press},
   },
   date={1970},
   pages={195--199},
}
		
\bib{Che1}{book}{
   author={Chen, Mufa},
   title={Eigenvalues, inequalities, and ergodic theory},
   series={Probability and its Applications},
   publisher={Springer},
   date={2005},
   pages={xiv+228},
}
\bib{Che2}{article}{
   author={Chen, Mufa},
   title={Explicit bounds of the first eigenvalue},
   journal={Sci. China Ser. A},
   volume={43},
   date={2000},
   number={10},
   pages={1051--1059},

}

\bib{Che3}{article}{
   author={Chen, Mufa},
   author={Scacciatelli, E.},
   author={Yao, Liang},
   title={Linear approximation of the first eigenvalue on compact manifolds},
   journal={Sci. China Ser. A},
   volume={45},
   date={2002},
   number={4},
   pages={450--461},
}

\bib{ChWa}{article}{
   author={Chen, Mufa},
   author={Wang, Fengyu},
   title={General formula for lower bound of the first eigenvalue on
   Riemannian manifolds},
   journal={Sci. China Ser. A},
   volume={40},
   date={1997},
   number={4},
   pages={384--394},
}
\bib{DHe}{article}{
   author={Dauge, Monique},
   author={Helffer, Bernard},
   title={Eigenvalues variation. I. Neumann problem for Sturm-Liouville
   operators},
   journal={J. Differential Equations},
   volume={104},
   date={1993},
   number={2},
   pages={243--262},

}
\bib{DoRe}{book}{
   author={Do\v{s}l\'{y}, Ond\v{r}ej},
   author={\v{R}eh\'{a}k, Pavel},
   title={Half-linear differential equations},
   series={North-Holland Mathematics Studies},
   volume={202},
   publisher={Elsevier Science},
   date={2005},
   pages={xiv+517},
}

\bib{DrKu}{article}{
   author={Dr\'{a}bek, Pavel},
   author={Kuliev, Komil},
   title={Half-linear Sturm-Liouville problem with weights},
   journal={Bull. Belg. Math. Soc. Simon Stevin},
   volume={19},
   date={2012},
   number={1},
   pages={107--119},
}

\bib{Eva2}{article}{
   author={Evans, Lawrence C.},
   title={Partial differential equations and Monge-Kantorovich mass
   transfer},
   conference={
      title={Current developments in mathematics, 1997 (Cambridge, MA)},
   },
   book={
      publisher={Int. Press},
   },
   date={1999},
   pages={65--126},
}
\bib{Eva}{book}{
   author={Evans, Lawrence C.},
   title={Partial differential equations},
   series={Graduate Studies in Mathematics},
   volume={19},
   edition={2},
   publisher={American Mathematical Society},
   date={2010},

}
\bib{EvGa}{book}{
   author={Evans, Lawrence C.},
   author={Gariepy, Ronald F.},
   title={Measure theory and fine properties of functions},
   series={Textbooks in Mathematics},
   edition={Revised edition},
   publisher={CRC Press, Boca Raton, FL},
   date={2015},
   pages={xiv+299},
}
		
\bib{FG}{article}{
   author={Fradelizi, Matthieu},
   author={Gu\'{e}don, Olivier},
   title={The extreme points of subsets of $s$-concave probabilities and a
   geometric localization theorem},
   journal={Discrete Comput. Geom.},
   volume={31},
   date={2004},
   number={2},
   pages={327--335},
}
% \bib{MeLi}{article}{
%   author={Garc\'{i}a Meli\'{a}n, Jorge},
%   author={Sabina de Lis, Jos\'{e}},
%   title={On the perturbation of eigenvalues for the $p$-Laplacian},
%   language={English},
%   journal={C. R. Acad. Sci. Paris S\'{e}r. I Math.},
%   volume={332},
%   date={2001},
%   number={10},
%   pages={893--898},

% }	
\bib{Gar}{article}{
   author={Gardner, R. J.},
   title={The Brunn-Minkowski inequality},
   journal={Bull. Amer. Math. Soc. (N.S.)},
   volume={39},
   date={2002},
   number={3},
   pages={355--405},
}
\bib{GMS}{article}{
   author={Ghang, Whan},
   author={Martin, Zane},
   author={Waruhiu, Steven},
   title={The sharp log-Sobolev inequality on a compact interval},
   journal={Involve},
   volume={7},
   date={2014},
   number={2},
   pages={181--186},
   issn={1944-4176},

}

\bib{Girg}{article}{
   author={Girg, Petr},
   author={Kotrla, Luk\'{a}\v{s}},
   title={Differentiability properties of $p$-trigonometric functions},
   conference={
      title={Proceedings of the Variational and Topological Methods: Theory,
      Applications, Numerical Simulations, and Open Problems},
   },
   book={
      series={Electron. J. Differ. Equ. Conf.},
      volume={21},
      publisher={Texas State Univ.},
   },
   date={2014},
   pages={101--127},

}

\bib{GM}{article}{
   author={Gromov, M.},
   author={Milman, V. D.},
   title={Generalization of the spherical isoperimetric inequality to
   uniformly convex Banach spaces},
   journal={Compositio Math.},
   volume={62},
   date={1987},
   number={3},
   pages={263--282},
}

\bib{Gro}{article}{
   author={Gross, Leonard},
   title={Logarithmic Sobolev inequalities},
   journal={Amer. J. Math.},
   volume={97},
   date={1975},
   number={4},
   pages={1061--1083},

}

\bib{HaLi}{book}{
   author={Hardy, G. H.},
   author={Littlewood, J. E.},
   author={P\'{o}lya, G.},
   title={Inequalities},
   series={Cambridge Mathematical Library},
   note={Reprint of the 1952 edition},
   publisher={Cambridge University Press},
   date={1988},
}

\bib{HeKa}{article}{
   author={Heintze, Ernst},
   author={Karcher, Hermann},
   title={A general comparison theorem with applications to volume estimates
   for submanifolds},
   journal={Ann. Sci. \'{E}cole Norm. Sup. (4)},
   volume={11},
   date={1978},
   number={4},
   pages={451--470},
}

\bib{HoSt}{article}{
   author={Holley, Richard},
   author={Stroock, Daniel},
   title={Logarithmic Sobolev inequalities and stochastic Ising models},
   journal={J. Statist. Phys.},
   volume={46},
   date={1987},
   number={5-6},
   pages={1159--1194},

}				
\bib{Geo}{book}{
   author={Holmes, Richard B.},
   title={Geometric functional analysis and its applications},
   note={Graduate Texts in Mathematics, No. 24},
   publisher={Springer},
   date={1975},
}

\bib{KLS}{article}{
   author={Kannan, R.},
   author={Lov\'{a}sz, L.},
   author={Simonovits, M.},
   title={Isoperimetric problems for convex bodies and a localization lemma},
   journal={Discrete Comput. Geom.},
   volume={13},
   date={1995},
   number={3-4},
   pages={541--559},
}
\bib{KaAk}{book}{	
author={Kantorovich, L.V.},
author={Akilov, G.P.},
  title={Functional Analysis},
  date={2016},
  publisher={Elsevier Science},
}	
\bib{KN}{article}{
   author={Kawai, Shigeo},
   author={Nakauchi, Nobumitsu},
   title={The first eigenvalue of the $p$-Laplacian on a compact Riemannian
   manifold},
   journal={Nonlinear Anal.},
   volume={55},
   date={2003},
   number={1-2},
   pages={33--46},
}	
\bib{Kla}{article}{
   author={Klartag, Bo'az},
   title={Needle decompositions in Riemannian geometry},
   journal={Mem. Amer. Math. Soc.},
   volume={249},
   date={2017},
   number={1180},
}

\bib{Kle}{book}{
   author={Klenke, Achim},
   title={Probability theory, A comprehensive course},
   series={Universitext},
   edition={2},
   edition={Translation from the German edition},
   publisher={Springer},
   date={2014},
}	

\bib{Mil4}{article}{
   author={Kolesnikov, Alexander V.},
   author={Milman, Emanuel},
   title={Riemannian metrics on convex sets with applications to Poincar\'{e}
   and log-Sobolev inequalities},
   journal={Calc. Var. Partial Differential Equations},
   volume={55},
   date={2016},
   number={4},
   pages={Art. 77, 36},
}
\bib{Mil5}{article}{
   author={Kolesnikov, Alexander V.},
   author={Milman, Emanuel},
   title={Brascamp-Lieb-type inequalities on weighted Riemannian manifolds
   with boundary},
   journal={J. Geom. Anal.},
   volume={27},
   date={2017},
   number={2},
   pages={1680--1702},
}			
	
\bib{Mil6}{article}{
   author={Kolesnikov, Alexander V.},
   author={Milman, Emanuel},
   title={Poincar\'{e} and Brunn-Minkowski inequalities on the boundary of
   weighted Riemannian manifolds},
   journal={Amer. J. Math.},
   volume={140},
   date={2018},
   number={5},
   pages={1147--1185},
}
\bib{KoZet}{article}{
   author={Kong, Q.},
   author={Zettl, A.},
   title={Eigenvalues of regular Sturm-Liouville problems},
   journal={J. Differential Equations},
   volume={131},
   date={1996},
   number={1},
   pages={1--19},
}

\bib{Kro}{article}{
   author={Kr\"{o}ger, Pawel},
   title={On the spectral gap for compact manifolds},
   journal={J. Differential Geom.},
   volume={36},
   date={1992},
   number={2},
   pages={315--330},
}

\bib{TakKuNa2}{article}{
   author={Kusano, Taka\^{s}i},
   author={Naito, Manabu},
   title={Sturm-Liouville eigenvalue problems for half-linear ordinary
   differential equations},
     journal={S\={u}rikaisekikenky\={u}sho Koky\={u}roku },
   number={1083},
   date={1999},
   pages={32--43},
}
\bib{TakKuNa1}{article}{
   author={Kusano, Taka\^{s}i},
   author={Naito, Manabu},
   author={Tanigawa, Tomoyuki},
   title={Second-order half-linear eigenvalue problems},
   journal={Fukuoka Univ. Sci. Rep.},
   volume={27},
   date={1997},
   number={1},
   pages={1--7},
}			

\bib{Lic}{book}{
   author={Lichnerowicz, Andr\'{e}},
   title={G\'{e}om\'{e}trie des groupes de transformations},
   language={French},
   publisher={Travaux et Recherches Math\'{e}matiques, III. Dunod, Paris},
   date={1958},
}

\bib{Lic1}{article}{
   author={Lichnerowicz, Andr\'{e}},
   title={Vari\'{e}t\'{e}s riemanniennes \`a tenseur C non n\'{e}gatif},
   language={French},
   journal={C. R. Acad. Sci. Paris S\'{e}r. A-B},
   volume={271},
   date={1970},
   pages={650--653},

}
\bib{Lic2}{article}{
   author={Lichnerowicz, Andr\'{e}},
   title={Vari\'{e}t\'{e}s k\"{a}hl\'{e}riennes \`a premi\`ere classe de Chern non negative et
   vari\'{e}t\'{e}s riemanniennes \`a courbure de Ricci g\'{e}n\'{e}ralis\'{e}e non negative},
   language={French},
   journal={J. Differential Geometry},
   volume={6},
   date={1971/72},
   pages={47--94},
}

\bib{Led3}{article}{
   author={Ledoux, Michel},
   title={On an integral criterion for hypercontractivity of diffusion
   semigroups and extremal functions},
   journal={J. Funct. Anal.},
   volume={105},
   date={1992},
   number={2},
   pages={444--465},

}	
	
	\bib{Led4}{book}{
   author={Ledoux, Michel},
   title={The concentration of measure phenomenon},
   series={Mathematical Surveys and Monographs},
   volume={89},
   publisher={American Mathematical Society},
   date={2001},
}
\bib{Led2}{article}{
   author={Ledoux, Michel},
   title={Spectral gap, logarithmic Sobolev constant, and geometric bounds},
   conference={
      title={Surveys in differential geometry. Vol. IX},
   },
   book={
      series={Surv. Differ. Geom.},
      volume={9},
      publisher={Int. Press},
   },
   date={2004},
   pages={219--240},
}
\bib{Lei}{article}{
   author={Leindler, L.},
   title={On a certain converse of H\"{o}lder's inequality. II},
   journal={Acta Sci. Math.},
   volume={33},
   date={1972},
   number={3-4},
   pages={217--223},
}
	\bib{LY}{article}{
   author={Li, Peter},
   author={Yau, Shing Tung},
   title={Estimates of eigenvalues of a compact Riemannian manifold},
   conference={
      title={Geometry of the Laplace operator},
      address={Proc. Sympos. Pure Math., Univ. Hawaii},
      date={1979},
   },
   book={
      series={Proc. Sympos. Pure Math., XXXVI},
      publisher={Amer. Math. Soc.},
   },
   date={1980},
   pages={205--239},

}	
\bib{LiLo}{book}{
   author={Lieb, Elliott H.},
   author={Loss, Michael},
   title={Analysis},
   series={Graduate Studies in Mathematics},
   volume={14},
   edition={2},
   publisher={American Mathematical Society},
   date={2001},
}
	
\bib{Lin}{book}{
   author={Lindqvist, Peter},
   title={Notes on the $p$-Laplace equation},
   series={Report. University of Jyv\"{a}skyl\"{a} Department of Mathematics and
   Statistics},
   volume={102},
   publisher={University of Jyv\"{a}skyl\"{a}},
   date={2006},
}

\bib{LV}{article}{
   author={Lott, John},
   author={Villani, C\'{e}dric},
   title={Ricci curvature for metric-measure spaces via optimal transport},
   journal={Ann. of Math. (2)},
   volume={169},
   date={2009},
   number={3},
   pages={903--991},
}
\bib{LS}{article}{
   author={Lov\'{a}sz, L.},
   author={Simonovits, M.},
   title={Random walks in a convex body and an improved volume algorithm},
   journal={Random Structures Algorithms},
   volume={4},
   date={1993},
   number={4},
   pages={359--412},
}

\bib{Mat}{article}{
   author={Matei, Ana-Maria},
   title={First eigenvalue for the $p$-Laplace operator},
   journal={Nonlinear Anal.},
   volume={39},
   date={2000},
   number={8, Ser. A: Theory Methods},
   pages={1051--1068},
}

\bib{Maz1}{article}{
   author={Maz\cprime ja, V. G.},
   title={Classes of domains and imbedding theorems for function spaces},
   journal={Soviet Math. Dokl.},
   volume={1},
   date={1960},
   language={Russian},
   pages={882--885},
}		
	
\bib{Maz2}{article}{
   author={Maz\cprime ja, V. G.},
   title={$p$-conductivity and theorems on imbedding certain functional
   spaces into a $C$-space},
   language={Russian},
   journal={Dokl. Akad. Nauk SSSR},
   volume={140},
   date={1961},
   pages={299--302},
}

\bib{Maz3}{article}{
   author={Maz\cprime ja, V. G.},
   title={The negative spectrum of the higher-dimensional Schr\"{o}dinger
   operator},
   language={Russian},
   journal={Dokl. Akad. Nauk SSSR},
   volume={144},
   date={1962},
   pages={721--722},

}
		
\bib{MYZ}{article}{
   author={Meng, Gang},
   author={Yan, Ping},
   author={Zhang, Meirong},
   title={Spectrum of one-dimensional $p$-Laplacian with an indefinite
   integrable weight},
   journal={Mediterr. J. Math.},
   volume={7},
   date={2010},
   number={2},
   pages={225--248},
}
	
\bib{Mil0}{article}{
   author={Milman, Emanuel},
   title={On the role of convexity in isoperimetry, spectral gap and concentration},
   journal={Inventiones Mathematicae},
   volume={177},
   date={2009},
   number={1},
   pages={1--43},
}

\bib{Mil2}{article}{
   author={Milman, Emanuel},
   title={Sharp isoperimetric inequalities and model spaces for the
   curvature-dimension-diameter condition},
   journal={J. Eur. Math. Soc. (JEMS)},
   volume={17},
   date={2015},
   number={5},
   pages={1041--1078},
}		

\bib{Mil3}{article}{
   author={Milman, Emanuel},
   title={Beyond traditional curvature-dimension I: new model spaces for
   isoperimetric and concentration inequalities in negative dimension},
   journal={Trans. Amer. Math. Soc.},
   volume={369},
   date={2017},
   number={5},
   pages={3605--3637},
}
	
\bib{Mil7}{article}{
   author={Milman, Emanuel},
   title={Harmonic measures on the sphere via curvature-dimension},
   journal={Ann. Fac. Sci. Toulouse Math. (6)},
   volume={26},
   date={2017},
   number={2},
   pages={437--449},
}

\bib{Mor}{article}{
   author={Morgan, Frank},
   title={Manifolds with density},
   journal={Notices Amer. Math. Soc.},
   volume={52},
   date={2005},
   number={8},
   pages={853--858},
}

\bib{Muck}{article}{
   author={Muckenhoupt, Benjamin},
   title={Hardy's inequality with weights},
   note={Collection of articles honoring the completion by Antoni Zygmund of
   50 years of scientific activity, I},
   journal={Studia Math.},
   volume={44},
   date={1972},
   pages={31--38},

}

\bib{NaVa}{article}{
   author={Naber, Aaron},
   author={Valtorta, Daniele},
   title={Sharp estimates on the first eigenvalue of the $p$-Laplacian with
   negative Ricci lower bound},
   journal={Math. Z.},
   volume={277},
   date={2014},
   number={3-4},
   pages={867--891},
}

\bib{Nel}{article}{
   author={Nelson, Edward},
   title={The free Markoff field},
   journal={J. Functional Analysis},
   volume={12},
   date={1973},
   pages={211--227},
}

\bib{Oht1}{article}{
   author={Ohta, Shin-ichi},
   title={$(K,N)$-convexity and the curvature-dimension condition for
   negative $N$},
   journal={J. Geom. Anal.},
   volume={26},
   date={2016},
   number={3},
   pages={2067--2096},
}

\bib{Oht3}{article}{
   author={Ohta, Shin-ichi},
   title={Needle decompositions and isoperimetric inequalities in Finsler
   geometry},
   journal={J. Math. Soc. Japan},
   volume={70},
   date={2018},
   number={2},
   pages={651--693},
}

\bib{Oht2}{article}{
   author={Ohta, Shin-ichi},
   title={Ricci curvature, entropy, and optimal transport},
   conference={
      title={Optimal transportation},
   },
   book={
      series={London Math. Soc. Lecture Note Ser.},
      volume={413},
      publisher={Cambridge Univ. Press, Cambridge},
   },
   date={2014},
   pages={145--199},
}

\bib{OT1}{article}{
   author={Ohta, Shin-ichi},
   author={Takatsu, Asuka},
   title={Displacement convexity of generalized relative entropies},
   journal={Adv. Math.},
   volume={228},
   date={2011},
   number={3},
   pages={1742--1787},
}

\bib{PaWe}{article}{
   author={Payne, L. E.},
   author={Weinberger, H. F.},
   title={An optimal Poincar\'{e} inequality for convex domains},
   journal={Arch. Rational Mech. Anal.},
   volume={5},
   date={1960},
   pages={286--292 },

}

\bib{Phe}{book}{
   author={Phelps, Robert R.},
   title={Lectures on Choquet's theorem},
   series={Lecture Notes in Mathematics},
   volume={1757},
   edition={2},
   publisher={Springer},
   date={2001},
}
	
\bib{Pic}{article}{
   author={Picone, Mauro},
   title={Sui valori eccezionali di un parametro da cui dipende un'equazione
   differenziale lineare ordinaria del second'ordine},
   language={Italian},
   journal={Ann. Scuola Norm. Sup. Pisa Cl. Sci.},
   volume={11},
   date={1910},
   pages={144},
}
\bib{PiRu}{book}{
   author={Pinchover, Yehuda},
   author={Rubinstein, Jacob},
   title={An introduction to partial differential equations},
   publisher={Cambridge University Press},
   date={2005},
}
\bib{Pre1}{article}{
   author={Pr\'{e}kopa, Andr\'{a}s},
   title={Logarithmic concave measures with application to stochastic
   programming},
   journal={Acta Sci. Math.},
   volume={32},
   date={1971},
   pages={301--316},
}

\bib{Pre2}{article}{
   author={Pr\'{e}kopa, Andr\'{a}s},
   title={On logarithmic concave measures and functions},
   journal={Acta Sci. Math.},
   volume={34},
   date={1973},
   pages={335--343},
}

\bib{Rei}{article}{
   author={Reichel, Wolfgang},
   author={Walter, Wolfgang},
   title={Sturm-Liouville type problems for the $p$-Laplacian under
   asymptotic non-resonance conditions},
   journal={J. Differential Equations},
   volume={156},
   date={1999},
   number={1},
   pages={50--70},
}

\bib{Rot2}{article}{
   author={Rothaus, O. S.},
   title={Lower bounds for eigenvalues of regular Sturm-Liouville operators
   and the logarithmic Sobolev inequality},
   journal={Duke Math. J.},
   volume={45},
   date={1978},
   number={2},
   pages={351--362},
}

\bib{Rot1}{article}{
   author={Rothaus, O. S.},
   title={Logarithmic Sobolev inequalities and the spectrum of
   Sturm-Liouville operators},
   journal={J. Funct. Anal.},
   volume={39},
   date={1980},
   number={1},
   pages={42--56},
}

\bib{Rot1a}{article}{
   author={Rothaus, O. S.},
   title={Logarithmic Sobolev inequalities and the spectrum of Schr\"{o}dinger
   operators},
   journal={J. Funct. Anal.},
   volume={42},
   date={1981},
   number={1},
   pages={110--120},
}

\bib{Rot3}{article}{
   author={Rothaus, O. S.},
   title={Diffusion on compact Riemannian manifolds and logarithmic Sobolev
   inequalities},
   journal={J. Funct. Anal.},
   volume={42},
   date={1981},
   number={1},
   pages={102--109},
}

\bib{Rot4}{article}{
   author={Rothaus, O. S.},
   title={Hypercontractivity and the Bakry-Emery criterion for compact Lie
   groups},
   journal={J. Funct. Anal.},
   volume={65},
   date={1986},
   number={3},
   pages={358--367},
}
			
\bib{Rot5}{article}{
   author={Rothaus, O. S.},
   title={Sharp log-Sobolev inequalities},
   journal={Proc. Amer. Math. Soc.},
   volume={126},
   date={1998},
   number={10},
   pages={2903--2904},
}
\bib{Roy}{book}{
   author={Royden, H. L.},
   title={Real analysis, 2ed.},
   publisher={The Macmillan Co.},
   date={1968},
}

\bib{Rud1}{book}{
   author={Rudin, Walter},
   title={Real and complex analysis},
   edition={3},
   publisher={McGraw-Hill Book Co.},
   date={1987},
}

\bib{SaC}{article}{
   author={Saloff-Coste, L.},
   title={Convergence to equilibrium and logarithmic Sobolev constant on
   manifolds with Ricci curvature bounded below},
   journal={Colloq. Math.},
   volume={67},
   date={1994},
   number={1},
   pages={109--121},
}			
\bib{Sid}{book}{
   author={Sideris, Thomas C.},
   title={Ordinary differential equations and dynamical systems},
   series={Atlantis Studies in Differential Equations},
   volume={2},
   publisher={Atlantis Press},
   date={2013},
}
%Check if appears
\bib{Sti}{article}{
   author={Strichartz, Robert S.},
   title={Analysis of the Laplacian on the complete Riemannian manifold},
   journal={J. Funct. Anal.},
   volume={52},
   date={1983},
   number={1},
   pages={48--79},
}
\bib{KTS1}{article}{
   author={Sturm, Karl-Theodor},
   title={Generalized Ricci bounds and convergence of metric measure spaces},
   journal={C. R. Math. Acad. Sci. Paris},
   volume={340},
   date={2005},
   number={3},
   pages={235--238},
}
	
\bib{KTS2}{article}{
   author={Sturm, Karl-Theodor},
   title={On the geometry of metric measure spaces. I},
   journal={Acta Math.},
   volume={196},
   date={2006},
   number={1},
   pages={65--131},
}

\bib{KTS3}{article}{
   author={Sturm, Karl-Theodor},
   title={On the geometry of metric measure spaces. II},
   journal={Acta Math.},
   volume={196},
   date={2006},
   number={1},
   pages={133--177},
}

\bib{Tay1}{book}{
   author={Taylor, Michael E.},
   title={Partial differential equations II. Qualitative studies of linear
   equations},
   series={Applied Mathematical Sciences},
   volume={116},
   edition={2},
   publisher={Springer},
   date={2011},
}	

% \bib{Tol}{article}{
%   author={Tolksdorf, Peter},
%   title={Regularity for a more general class of quasilinear elliptic
%   equations},
%   journal={J. Differential Equations},
%   volume={51},
%   date={1984},
%   number={1},
%   pages={126--150},
% }

\bib{Val}{article}{
   author={Valtorta, Daniele},
   title={Sharp estimate on the first eigenvalue of the $p$-Laplacian},
   journal={Nonlinear Anal.},
   volume={75},
   date={2012},
   number={13},
   pages={4974--4994},
}

% \bib{MR2721045}{article}{
%   author={Figalli, Alessio},
%   title={{\it Optimal transport: old and new} [book review of MR2459454]},
%   journal={Bull. Amer. Math. Soc. (N.S.)},
%   volume={47},
%   date={2010},
%   number={4},
%   pages={723--727},
%   issn={0273-0979},
%   review={\MR{2721045}},
%   doi={10.1090/S0273-0979-10-01285-1},
% }

\bib{Vil}{book}{
   author={Villani, C{\'e}dric},
   title={Optimal transport: old and new},
   series={Grundlehren der mathematischen Wissenschaften},
   volume={338},
   publisher={Springer},
   date={2009},
}	
\bib{KTSR}{article}{
   author={von Renesse, Max-K.},
   author={Sturm, Karl-Theodor},
   title={Transport inequalities, gradient estimates, entropy, and Ricci
   curvature},
   journal={Comm. Pure Appl. Math.},
   volume={58},
   date={2005},
   number={7},
   pages={923--940},
}

\bib{Wan1}{article}{
   author={Wang, Feng-Yu},
   title={Harnack inequalities for log-Sobolev functions and estimates of
   log-Sobolev constants},
   journal={Ann. Probab.},
   volume={27},
   date={1999},
   number={2},
   pages={653--663},
}	

\bib{Wan2}{article}{
   author={Wang, Feng-Yu},
   title={Logarithmic Sobolev inequalities on noncompact Riemannian
   manifolds},
   journal={Probab. Theory Related Fields},
   volume={109},
   date={1997},
   number={3},
   pages={417--424},
}
\bib{Wei}{article}{
   author={Weissler, Fred B.},
   title={Logarithmic Sobolev inequalities and hypercontractive estimates on
   the circle},
   journal={J. Funct. Anal.},
   volume={37},
   date={1980},
   number={2},
   pages={218--234},
}

\bib{Zet}{book}{
   author={Zettl, Anton},
   title={Sturm-Liouville theory},
   series={Mathematical Surveys and Monographs},
   volume={121},
   publisher={American Mathematical Society},
   date={2005},
}	
	
\bib{Zha}{article}{
   author={Zhang, Huichun},
   title={Lower bounds for the first eigenvalue of the $p$-Laplace operator
   on compact manifolds with nonnegative Ricci curvature},
   journal={Adv. Geom.},
   volume={7},
   date={2007},
   number={1},
   pages={145--155},
}

\bib{ZhY}{article}{
   author={Zhong, Jia Qing},
   author={Yang, Hong Cang},
   title={On the estimate of the first eigenvalue of a compact Riemannian
   manifold},
   journal={Sci. Sinica Ser. A},
   volume={27},
   date={1984},
   number={12},
   pages={1265--1273},
}

\bib{Zim}{book}{
   author={Zimmer, Robert J.},
   title={Essential results of functional analysis},
   series={Chicago Lectures in Mathematics},
   publisher={University of Chicago Press},
   date={1990},
}

\end{biblist}
\end{bibdiv}

\includepdf[pages=1-,pagecommand={}]{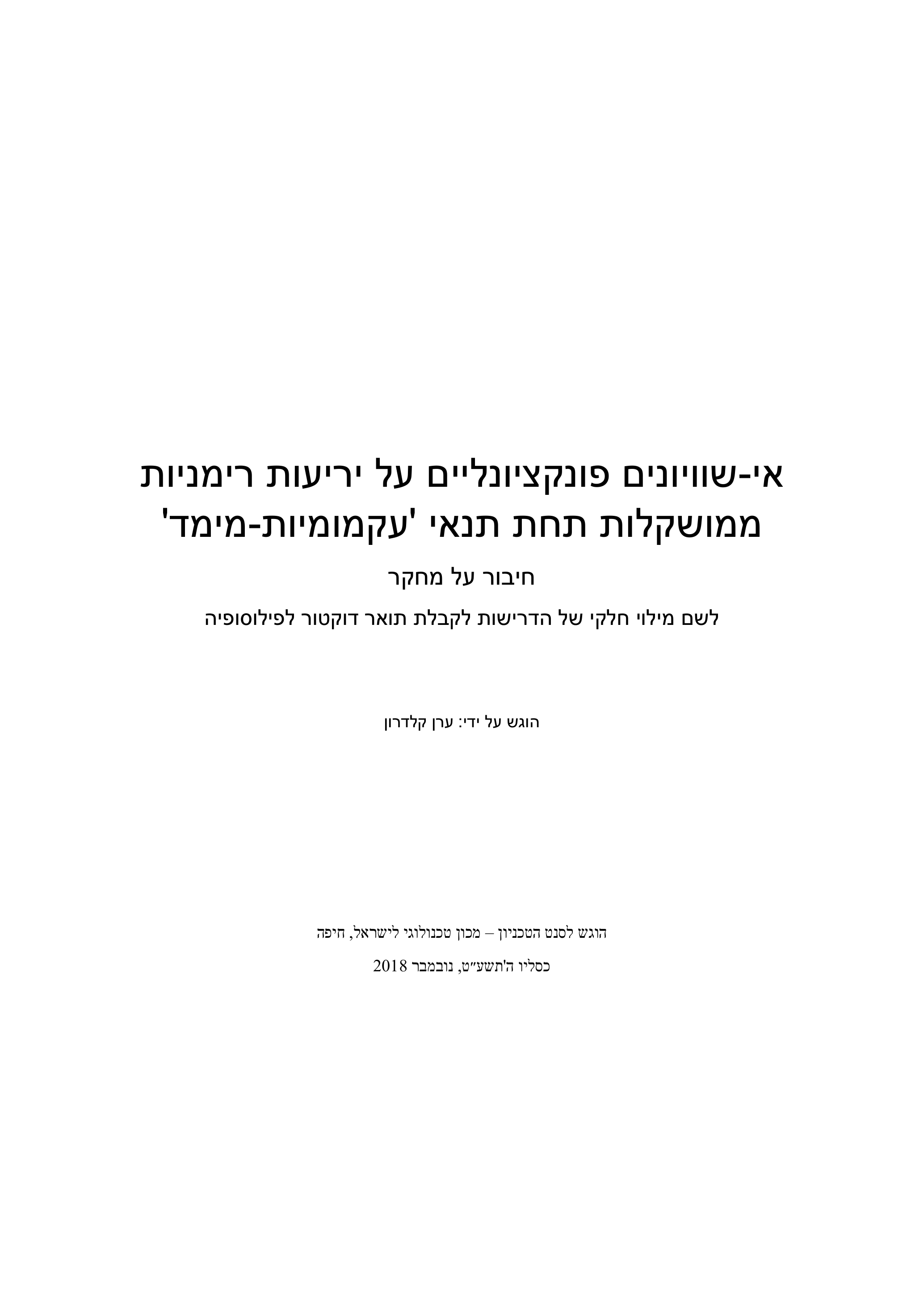}

\end{document}